\documentclass[english, 10pt]{amsart}
\usepackage{amsxtra,amscd,amsthm,amsmath,amssymb,mathtools}
\usepackage{longtable} \usepackage{bigstrut}
 \usepackage{enumerate}
\usepackage{verbatim}
\usepackage{graphicx}
\usepackage{xspace}
\usepackage{ifthen}
\usepackage{tabularx}
\usepackage[all,cmtip,line]{xy}

 \textheight 8.16in 
\usepackage{color}
\usepackage{mathrsfs}

\usepackage{amsthm}

\theoremstyle{plain}
\numberwithin{equation}{subsection}
\newtheorem{thm}[equation]{Theorem}

\newtheorem{prop}[equation]{Proposition}
\newtheorem{lemma}[equation]{Lemma}
\newtheorem*{lemma*}{Lemma}
\newtheorem{Definition}[equation]{Definition}

\newtheorem{Remark}[equation]{Remark}

\newtheorem{stass}[equation]{Standard assumptions}

\newtheorem{cor}[equation]{Corollary}

\theoremstyle{remark}
\newtheorem{para}[equation]{\bf}
\theoremstyle{plain}
\renewcommand{\subsubsection}{\addtocounter{equation}{1}{\vskip 6pt \bf\arabic{section}.\arabic{subsection}.\arabic{equation}}}
\theoremstyle{definition}

\setcounter{tocdepth}{2}

\newcommand{\quash}[1]{}  
\newcommand{\nc}{\newcommand}
\nc{\on}{\operatorname}

\newcommand{\lps}{[\![}
\newcommand{\rps}{]\!]}
\newcommand{\llps}{(\!(}
\newcommand{\lrps}{)\!)}

\newcommand{\ur}{{\rm ur}}
\renewcommand{\phi}{\varphi}
\newcommand{\loc}{{\rm loc}}

\newcommand{\Isom}{{\rm Isom}}

\newcommand{\Lie}{{\rm Lie\,}}
\newcommand{\der}{{\rm der}}
\newcommand{\ab}{{\rm ab}}

\newcommand{\scrG}{{\mathscr{G}}}

\newcommand{\fraka}{{\mathfrak a}}

\newcommand{\frakm}{{\mathfrak m}}

\newcommand{\frakM}{{\mathfrak M}}

\newcommand{\Sh}{{\rm Sh}}
\newcommand{\SSh}{{\mathscr S}}
\newcommand{\frakS}{{\mathfrak S}}
\newcommand{\gS}{{\mathfrak S}}

\newcommand{\bbA}{{\mathbb A}}

\newcommand{\bbC}{{\mathbb C}}
\newcommand{\bbD}{{\mathbb D}}
\newcommand{\DD}{{\mathbb D}}

\newcommand{\bbG}{{\mathbb G}}
\newcommand{\bbH}{{\mathbb H}}

\newcommand{\bbM}{{\mathbb M}}
\newcommand{\wtM}{{\widetilde M}}

\newcommand{\bbQ}{{\mathbb Q}}
\newcommand{\bbR}{{\mathbb R}}
\newcommand{\bbS}{{\mathbb S}}

\newcommand{\bbZ}{{\mathbb Z}}
\newcommand{\calA}{{\mathcal A}}
\newcommand{\calB}{{\mathcal B}}

\newcommand{\calE}{{\mathcal E}}

\newcommand{\calG}{{\mathcal G}}

\newcommand{\calH}{{\mathcal H}}

\newcommand{\calJ}{{\mathcal J}}

\newcommand{\calL}{{\mathcal L}}

\newcommand{\calN}{{\mathcal N}}
\newcommand{\calO}{{\mathcal O}}

\newcommand{\calT}{{\mathcal T}}

\newcommand{\calV}{{\mathcal V}}
\newcommand{\calW}{{\mathcal W}}

\newcommand{\whW}{{\widehat W}}

\newcommand{\calZ}{{\mathcal Z}}

\newcommand{\ov}{\overline}

\newcommand{\mF}{\ensuremath{\mathbb{F}}\xspace}

\newcommand{\brQ}{\ensuremath{\breve{\mathbb{Q}}_p}}

\nc{\al}{{\alpha}} \nc{\be}{{\beta}}
\newcommand{\ga}{{\gamma}}
\nc{\ve}{{\varepsilon}} \nc{\Ga}{{\Gamma}}
\newcommand{\la}{{\lambda}}
\nc{\La}{{\Lambda}}

\def\0{\circ}


\newcommand{\cal}{\mathcal}

\newcommand{\fa}{\mathfrak a}

\newcommand{\G}{{\bf G}}

\newcommand{\br}{\breve}

\newcommand{\C}{{\mathbb C}}
\newcommand{\R}{{\mathbb R}}
\newcommand{\Q}{{\mathbb Q}}

\newcommand{\ep}{\epsilon}
\newcommand{\et}{{\text{\rm \'et}}}
\newcommand{\B}{{\mathcal B}}

\newcommand{\Hom}{{\rm Hom}}

\newcommand{\Gg}{{\mathcal G}}

\newcommand{\Hh}{{\mathcal H}}
\newcommand{\Gm}{{{\mathbb G}_{\rm m}}}
\newcommand{\Z}{{\mathbb Z}}
\newcommand{\F}{{\mathcal F}}
\newcommand{\ti}{\tilde}
\newcommand{\Spec}{{\rm Spec \, } }
\newcommand{\Spf}{{\rm Spf } }
\newcommand{\ad}{{\rm ad } }
 \renewcommand{\O}{{\mathcal O}}

\newcommand{\Gr}{{\rm Gr}}

\newcommand{\GL}{{\rm GL}}

\newcommand{\und}{\underline}

\renewcommand{\L}{{\mathcal L}}
\newcommand{\K}{{\mathcal K}}

\newcommand{\Res}{{\rm Res}}

\newcommand{\Tr}{{\rm Tr}}
\newcommand{\Ind}{{\rm Ind}}
\newcommand{\Rep}{{\rm Rep}}

\newcommand{\N}{{\mathcal N}}

\newcommand{\rM}{{\rm M}}

\def\thfill{\null\nobreak\hfill}

\def\endproof{\thfill\vbox{\hrule
  \hbox{\vrule\hbox to 5pt{\vbox to 5pt{\vfil}\hfil}\vrule}\hrule}}

\newcommand{\Gal}{{\rm Gal}}

\newcommand{\GSp}{{\rm GSp}}
\newcommand{\GO}{{\rm GO}}

\newcommand{\wh}{\widehat}

\newcommand{\Gam}{\Gamma}

\newcommand{\hook}{\hookrightarrow}

\newcommand{\Mloc}{{\rm M}^{\rm loc}}

\newcommand{\BMloc}{{\mathbb M}^{\rm loc}}

\newcommand{\un}{{\rm un}}

\newcommand{\fkT}{\ensuremath{\mathfrak{T}}}

\newcommand{\fkb}{\ensuremath{\mathfrak{b}}}

\newcommand{\fkg}{\ensuremath{\mathfrak{g}}}

\newcommand{\fkl}{\ensuremath{\mathfrak{l}}}
\newcommand{\fkm}{\ensuremath{\mathfrak{m}}}

\newcommand{\fkp}{\ensuremath{\mathfrak{p}}}

\newcommand{\fks}{\ensuremath{\mathfrak{s}}}
\newcommand{\fkt}{\ensuremath{\mathfrak{t}}}

\newcommand{\SL}{\ensuremath{\mathrm{SL}}}

\newcommand{\cur}{\ensuremath{\mathrm{cur}}}   
\newcommand{\FM}{\ensuremath{\mathrm{FM}}}
\newcommand{\lleq}{\ensuremath{\preccurlyeq}}
\newcommand{\ggeq}{\ensuremath{\succcurlyeq}}
\newcommand{\dom}{\ensuremath{\mathrm{dom}}}

\newcommand{\bx}{\ensuremath{\mathbf x}}

\newcommand{\bk}{\ensuremath{\mathbf k}}


\newcommand{\rmF}{{\mathrm{F}}}

\newcommand{\rmH}{{\mathrm{H}}}

\newcommand{\rmK}{{\mathrm{K}}}

\newcommand{\bfC}{{\mathbf{C}}}

\newcommand{\bfE}{{\mathbf{E}}}

\newcommand{\bfG}{{\mathbf{G}}}
\newcommand{\bfH}{{\mathbf{H}}}

\newcommand{\bfS}{{\mathbf{S}}}
\newcommand{\bfT}{{\mathbf{T}}}

\newcommand{\bfZ}{{\mathbf{Z}}}
\newcommand{\bfGSp}{{\mathbf{GSp}}}

\newenvironment{altenumerate}
   {\begin{list}
      {\textup{(\theenumi)} }
      {\usecounter{enumi}
       \setlength{\labelwidth}{15pt}
       \setlength{\labelsep}{5pt}
       \setlength{\leftmargin}{24pt}
       \setlength{\itemsep}{\the\smallskipamount}
       \renewcommand{\theenumi}{\arabic{enumi}}
      }}
   {\end{list}}
\newenvironment{altitemize}
   {\begin{list}
      {$\bullet$}
      {\setlength{\labelwidth}{15pt}
	   \setlength{\itemindent}{0pt}
       \setlength{\labelsep}{5pt}
       \setlength{\leftmargin}{24pt}
       \setlength{\itemsep}{\the\smallskipamount}
      }}
 {\end{list}}


\begin{document}


\title[Integral models of Shimura varieties]{Integral models of Shimura varieties with parahoric level structure, II} 

\author{Mark Kisin} 
\address{Department  of Mathematics, Harvard University, Cambridge, MA 02138, USA}
\email{kisin@math.harvard.edu}
 
\author{Georgios Pappas} 
\address{Department  of Mathematics, Michigan State University, E. Lansing, MI 48824, USA}
\email{pappasg@msu.edu}

\author{Rong Zhou}
\address{Department of Pure Mathematics and Mathematical Statistics, University of Cambridge, Cambridge, UK, CB3 0WA}
\email{rz240@dpmms.cam.ac.uk}

\begin{abstract}
We construct integral models of Shimura varieties of abelian type with parahoric level structure over odd primes.
These models are \'etale locally isomorphic to corresponding local models.
\end{abstract}

\date{\today}
\maketitle

\tableofcontents
 
\addtocontents{toc}{\protect\setcounter{tocdepth}{2}}

\section{Introduction}

 \subsection{Statement of the main result}\label{ss:Statement}

Let $(\G, X)$ be a Shimura datum in the  sense  of Deligne \cite{DelBour}, \cite{DeligneCorvallis}, so that $\G$ is a
reductive group over  $\Q$ and $X$ is a $\G_{\mathbb R}$-conjugacy class of homomorphisms
$h: {\mathbb S}={\rm Res}_{\C/\R}{\mathbb G}_m\to \G_{\R}$, satisfying the assumptions in \emph{loc. cit..} Let ${\mathbb A}_f$ denote the finite adeles of $\Q$ and suppose $\rmK\subset \G({\mathbb A}_f)$ is a open compact subgroup. The  Shimura variety $\Sh_{\rmK}(\G, X)$  is defined over  the reflex number field ${\mathbf E}\subset \C$ and has  complex points given by the double quotient
\[
\Sh_{\rmK}(\G, X)(\C)=\G(\Q)\backslash (X \times \G({\mathbb A}^f)/\rmK).
\]

The varieties $\Sh_{\rmK}(\G, X)$ are important for many applications in number theory, which often require a study of corresponding integral models. These are schemes which extend  $\Sh_{\rmK}(\G, X)$ over the 
ring of integers $\O_{\mathbf E}$ of $\mathbf E$, or over localizations or completions of $\O_{\mathbf E}$. In this paper, we consider the completions of $\O_{\mathbf E}$ at primes of $\mathbf E$ which lie over an odd rational prime $p$. We construct 
integral models over these completions when the Shimura datum $(\G, X)$ is of abelian type and the level subgroup $\rmK$ is parahoric or a stabilizer at $p$; we will explain these terms below. Our results extend the construction of \cite{KP} to all Shimura varieties of abelian type over odd primes. In particular, we dispense with the blanket restriction in \emph{op. cit.} that the group $\G$ splits over an extension of $\Q$ which is tamely ramified over $p$. In addition, we correct a serious gap in \cite{KP}
which also propagated to previous versions of \cite{KZhou}, see paragraphs \ref{par:Explain} and \ref{ss:ReltoKP}.

Recall that $(\G, X)$ is said to be of \emph{Hodge type} if there is an embedding
 $(\G, X)\hookrightarrow ({\mathbf {GSp}}_{2g}, S^{\pm })$ into the Shimura datum for a symplectic similitude group. This implies that the corresponding Shimura variety
 $\Sh_{\rmK}(\G, X)$ can be described as a moduli space for abelian varieties equipped with certain Hodge cycles. A Shimura datum
$(\G, X)$ is said to be of \emph{abelian type} if there is a datum of Hodge type $(\bfG_1,X_1)$
and a central isogeny between the derived groups $\G^\der_1\to \G^\der$ which induces an isomorphism 
$(\G^\ad_1, X^\ad_1)\xrightarrow{\sim} (\G^\ad, X^\ad)$. The class of Shimura data of abelian type is very general and includes 
almost all cases in which $\G$ is a classical group.

Now let us discuss the assumption on the level subgroup. We fix a prime $p>2$ and a prime $v$ of $\mathbf E$ which lies above $p$. Let $\Gg$ be a Bruhat-Tits \emph{stabilizer}  group scheme over $\Z_p$ with generic fiber the base change $G=\G_{\Q_p}$; this stabilizer is defined using the action of the group on its  affine building.  
The $\Z_p$-points of $\Gg$ give a level subgroup $\rmK_p=\Gg(\Z_p)\subset G(\Q_p)$ at $p$.
The corresponding \emph{parahoric} group scheme is the neutral connected component $\Gg^\circ$ of $\Gg$; we also consider the parahoric level subgroup $\rmK^\circ_p=\Gg^\circ(\Z_p)$ at $p$. Let ${\mathbb A}^p_f$ be the prime to $p$ finite adeles and let $\rmK^p\subset \G({\mathbb A}^p_f)$ be a sufficiently small compact open subgroup. We take the level subgroup to be $\rmK^\circ=\rmK^p\rmK^\circ_p\subset \G({\mathbb A}_f)$ or $\rmK=\rmK^p\rmK_p\subset \G({\mathbb A}_f)$
and consider $\Sh_{\rmK^\circ}(\G, X)$ or $\Sh_{\rmK}(\G, X)$.\footnote{Some of our results can be extended to the case that $\rmK_p$ is \emph{quasi-parahoric}, i.e. it lies between a stabilizer and its corresponding parahoric, see \cite{DvHKZ}.} 
Note that our assumption that  the level subgroup at $p$ is either a parahoric or a stabilizer   is quite natural. It allows all cases with $\Gg$ reductive, when we have smooth reduction at $v$ (\cite{KisinJAMS}),
 but also includes many Shimura varieties with non-smooth reduction. In fact, for any reductive $\G$  over $\Q$ and prime $p$, the group $G(\Q_p)$ always contains parahoric subgroups. The case of stabilizer level  $\rmK_p$ naturally occurs when considering Shimura varieties described as moduli schemes and more generally for $(\G, X)$ of Hodge type and it plays a central role in the proofs.

Set $E={\mathbf E}_v$ for the completion at $v$. Our goal is to construct $\O_E$-integral models for $\Sh_{\rmK^\circ}(\G, X)$  and $\Sh_{\rmK}(\G, X)$
which satisfy two requirements roughly as follows; they are both important for applications.
First, the integral model is ``as proper as possible", i.e. 
it does not miss points in positive characteristic that should appear as reductions of points of the Shimura variety. Second, the \'etale local structure of the integral model is controlled  by a corresponding local model. We refer the reader to \cite{Picm} and \cite{PRS} for an account of past work on such integral models and local models.

Before stating the main result, we briefly recall some basic information about local models; these play a crucial role in the theory. Let $\{\mu\}$ be the geometric conjugacy class of the cocharacter $\mu=\mu_h$ of $G$ which corresponds to the hermitian symmetric domain $X$. 
The local model $\bbM^{\mathrm{loc}}_{\Gg, \mu}=\bbM^{\mathrm{loc}}_{\Gg^\circ, \mu}$ is associated to the triple $(G,\{\mu\}, \Gg^\circ)$, see \S \ref{ss:LM}.
It is a flat and normal proper scheme  over $\O_E$ and supports a $\Gg$-action with a finite number of orbits. Its generic fiber is the  homogeneous space over $E$ parametrizing parabolics in the conjugacy class of $P_{\mu^{-1}}$,  the parabolic subgroup corresponding to the minuscule cocharacter $\mu^{-1}$, and is an $E$-form of the compact dual of  $X$. Its special fiber is reduced and, in fact,  $\bbM^{\mathrm{loc}}_{\Gg, \mu}$ is uniquely determined by its corresponding $v$-sheaf on perfectoid spaces, which is given a priori by Scholze-Weinstein \cite{Schber}.

\begin{para}
The main theorem of this paper is the following:

\begin{thm}\label{introthm: LMD abelian type}
	Assume $p>2$. Let $(\bfG,X)$ be a Shimura datum  of abelian type and $\rmK_{p}^\circ=\calG^\circ(\bbZ_p)$ a parahoric subgroup. There exists a pro-system of $\calO_{E}$-schemes   $\mathscr{S}_{\rmK^\circ_{p}\rmK^p}(\bfG,X)$ with generic fibers $\Sh_{\rmK_p^\circ\rmK^p}(\bfG,X)$ and with finite \'etale transition maps, for varying sufficiently small $\rmK^p\subset \G({\mathbb A}^p_f)$, such that the $\calO_{E}$-scheme
 \[
 \mathscr{S}_{\rmK^\circ_{p}}(\bfG,X)=\varprojlim_{\rmK^p}\SSh_{\rmK^\circ_{p}\rmK^p}(\bfG,X)
 \]
 with $\bfG(\bbA^p_f)$-action extends  $\Sh_{\rmK^\circ_{p}}(\bfG,X)=\varprojlim_{\rmK^p}\Sh_{\rmK^\circ_{p}\rmK^p}(\bfG,X)$ and satisfies

	\begin{altenumerate}

		\item For  $R$  a discrete valuation ring of mixed characteristic  $(0,p)$, the map 
		$$\mathscr{S}_{\rmK^\circ_{p}}(\bfG,X)(R)\rightarrow\Sh_{\rmK^\circ_{p}}(\bfG,X)(R[1/p])$$ 
		is a bijection.  
		
		\item For $\rmK^p$ a sufficiently small compact open subgroup, 
$	\SSh_{\rmK^\circ_{p}\rmK^p}(\bfG,X)	$
		 is \'etale locally isomorphic to $\bbM^{\mathrm{loc}}_{\calG,\mu}$.
	
		\item There  exists a diagram 
\[\begin{aligned}
		\xymatrix{ &\widetilde{\mathscr{S}}^{\mathrm{ad}}_{\rmK^\circ_{p}}(\bfG,X)\ar[dr]^q\ar[dl]_\pi&\\
			\mathscr{S}_{\rmK^\circ_{p}}(\bfG,X) & &\bbM^{\mathrm{loc}}_{\Gg, \mu },}
\end{aligned}\]
where the morphism $\pi$ is a $\bfG(\bbA_f^p)$-equivariant ${\calG}^{\ad}$-torsor and the morphism $q$ is ${\calG}^{\ad}$-equivariant, smooth and 
		$\bfG(\bbA_f^p)$-equivariant, when $\bbM^{\mathrm{loc}}_{\calG, \mu}$ is equipped with the trivial $\bfG(\bbA_f^p)$-action.
				If in addition $(\bfG,X)$ is (NE), then  $\pi$ reduces to a ${\calG}^{\ad,\circ}$-torsor which still maps to $\bbM^{\mathrm{loc}}_{\calG, \mu}$ via the restriction of $q$.
	\end{altenumerate}
\end{thm}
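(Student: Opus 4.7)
The plan is to follow the two-stage approach pioneered by Deligne and executed in the parahoric context in \cite{KP}: first establish the theorem when $(\bfG,X)$ is of Hodge type, then deduce the abelian type case by twisting with the adjoint group and exploiting Deligne's description of $\Sh_{\rmK^\circ}(\bfG,X)$ in terms of connected components of a Hodge type Shimura variety.

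\textbf{Hodge type case.} Fix a symplectic embedding $(\bfG_1,X_1)\hookrightarrow (\bfGSp_{2g},S^{\pm})$ and choose a $\bbZ_p$-lattice such that the chosen parahoric extends, so that one obtains a map from $\Sh_{\rmK^\circ_p\rmK^p}(\bfG_1,X_1)$ into a Siegel modular scheme $\SSh_{\rmK'}(\bfGSp,S^{\pm})$ over $\bbZ_{(p)}$. Define $\SSh_{\rmK^\circ_p}(\bfG_1,X_1)$ as the normalization of the closure of the image with its induced $\bfG_1(\bbA_f^p)$-action. Property (1) for $\bfG_1$ is then formal from normality together with the valuative criterion applied to the Siegel moduli problem.

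\textbf{The local model diagram in the Hodge type case.} For (2) and (3) we must, at each closed point $x$ of the special fiber, produce an étale neighborhood $U\to \SSh_{\rmK^\circ_p\rmK^p}(\bfG_1,X_1)$ and a smooth $\Gg_1^\circ$-equivariant morphism $\widetilde U\to \bbM^{\mathrm{loc}}_{\Gg_1^\circ,\mu}$ with $\widetilde U\to U$ a $\Gg_1^\circ$-torsor, compatibly as $x$ varies. Attached to $x$ is a $p$-divisible group $\mathscr X_x$ carrying a family of crystalline Tate tensors cutting out a $\Gg_1$-structure, and one must identify the deformation functor of $(\mathscr X_x,\{s_\alpha\})$ with the completion of $\bbM^{\mathrm{loc}}_{\Gg_1^\circ,\mu}$ at a suitable point. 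This is precisely where the gap in \cite{KP} sits, and the main obstacle is to perform this identification without assuming that $G$ splits over a tame extension of $\bbQ_p$; the tame $(G,\mu)$-display technology of \emph{loc.\ cit.}\ is unavailable here. My approach would be to use the Scholze–Weinstein characterization of $\bbM^{\mathrm{loc}}_{\Gg_1^\circ,\mu}$ in terms of its $v$-sheaf, together with prismatic $F$-gauges with $\Gg_1$-structure, to define a canonical morphism from the deformation functor of $(\mathscr X_x,\{s_\alpha\})$ into $\bbM^{\mathrm{loc}}_{\Gg_1^\circ,\mu}$, and to check on complete local rings that it is an isomorphism by comparing tangent spaces and appealing to formal smoothness of both sides. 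The torsor $\widetilde{\SSh}^{\mathrm{ad}}_{\rmK^\circ_p}(\bfG_1,X_1)$ in (3) is then constructed as the $\Gg_1^{\mathrm{ad}}$-torsor of trivializations of the relevant prismatic/crystalline $\Gg_1$-bundle on the integral model.

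\textbf{Passage to abelian type.} Given $(\bfG,X)$ abelian type, choose a Hodge type datum $(\bfG_1,X_1)$ and a central isogeny $\bfG_1^{\mathrm{der}}\to\bfG^{\mathrm{der}}$ inducing an isomorphism of adjoint data. Deligne's construction realizes each geometric connected component of $\Sh_{\rmK^\circ}(\bfG,X)$ as a quotient of a connected component of $\Sh_{\rmK^\circ_{1}}(\bfG_1,X_1)_{E}$ by an action of a suitable arithmetic subgroup of $\bfG^{\mathrm{ad}}(\bbQ)^+$, after which the full Shimura variety is rebuilt via a $\bfG(\bbA_f)$-action. I would mimic this construction integrally: using the $\Gg_1^{\mathrm{ad}}$-equivariance of the Hodge type local model diagram, twist the connected components of $\SSh_{\rmK^\circ_{1,p}}(\bfG_1,X_1)$ over $\calO_E$ and glue to obtain $\SSh_{\rmK^\circ_p}(\bfG,X)$; the adjoint map to $\bbM^{\mathrm{loc}}_{\Gg^\circ,\mu}$ is inherited because $\bbM^{\mathrm{loc}}$ depends only on the adjoint data. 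Property (1) propagates from the Hodge type case by finite étaleness of the transition maps together with Deligne's quotient presentation applied to the valuation; (2) is a formal consequence of (3) since étale-local isomorphisms descend through $\Gg^{\mathrm{ad}}$-torsors with smooth equivariant maps to $\bbM^{\mathrm{loc}}_{\Gg^\circ,\mu}$; and the refinement to $\Gg^{\mathrm{ad},\circ}$ under the (NE) hypothesis reduces to checking that the component group $\pi_0(\Gg^{\mathrm{ad}})$ acts trivially on the relevant torsor, which is forced by the non-exceptionality assumption.
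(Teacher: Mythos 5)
Your overall two-stage architecture (Hodge type first, then twist to abelian type via Deligne's recipe and $R$-smoothness-style arguments) matches the paper. But the core step -- producing the local model diagram at a closed point of the special fiber -- is where you diverge, and your proposed replacement does not work.

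You propose to build the map from the deformation functor of $(\mathscr X_x,\{s_\alpha\})$ to $\bbM^{\mathrm{loc}}_{\Gg_1^\circ,\mu}$ via the Scholze--Weinstein $v$-sheaf characterization and prismatic $F$-gauges with $\Gg_1$-structure, and then to ``check on complete local rings that it is an isomorphism by comparing tangent spaces and appealing to formal smoothness of both sides.'' There are two problems. First, neither side is formally smooth: the whole point of the local model diagram is that the integral model has the \emph{same singularities} as $\bbM^{\mathrm{loc}}_{\Gg_1^\circ,\mu}$, which is in general a non-smooth scheme; a tangent-space comparison plus formal smoothness cannot produce an isomorphism of complete local rings here. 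Second, the shtuka/$v$-sheaf route by itself does not produce a scheme-theoretic isomorphism of local rings -- it produces statements about diamonds and hence at best about perfections or strict henselizations (this is exactly what the paper obtains, via \cite{PRShtuka} and \cite{DvHKZ}, only in the weaker Theorem \ref{thm:sh} concerning $\calO^{\rm sh}$, and those results themselves feed back into the very results you would be invoking, creating a circularity). The paper's actual mechanism is to construct a versal $p$-divisible group over the completion $R_G$ of the local ring of $\bbM^{\mathrm{loc}}_{\Gg,\mu}$ using Zink's Dieudonn\'e display theory, and the piece that makes this work -- and that was missing in \cite{KP} -- is the notion of a \emph{very good} integral Hodge embedding (Definition \ref{def:vgood}): one must choose the Hodge embedding so that the tensors $\ti s_a\in\wtM_1^\otimes$ are \emph{horizontal} for the canonical connection isomorphism $c$ of Lemma \ref{319}. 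Your proposal names the location of the gap but does not engage with horizontality at all.

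Filling that gap is the bulk of the paper: one shows (i) that tensors given by endomorphisms are automatically horizontal (Lemma \ref{endo}), (ii) that tensors are horizontal whenever the tangent space of the special fiber of $\bbM^{\mathrm{loc}}_{\Gg,\mu}$ is spanned by smooth formal curves (Proposition \ref{span}), (iii) that this spanning property holds for $\Gg=\Res_{\calO_K/\calO_F}\calH$ with $\calH$ reductive and $(G^{\ad},\mu^{\ad})$ of abelian type without $D^{\bbH}$ factors (Theorem \ref{corLMSpan}, proved by the combinatorial analysis of \S\ref{s:RootCurves}), and (iv) that general parahorics can be cut out inside such a $\Res_{\calO_{\ti K}/\calO_F}\ti\calH$ by endomorphisms, using the presentation of stabilizer group schemes as tame Galois fixed points of Weil restrictions of split reductive group schemes (Proposition \ref{Fixed}, Proposition \ref{prop: better lattice}). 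Combining (i)--(iv) gives Theorem \ref{thm:main}. The exceptional types $D^{\bbH}$ and the type $A$ cases involving division algebras of index divisible by $p$ genuinely fail the spanning criterion and require the separate ad hoc constructions of \S\ref{ss:DnH} and \S\ref{ss:A_n}, which your proposal does not anticipate. Finally, the reduction of $\pi$ to a $\calG^{\ad,\circ}$-torsor in the (NE) case is not a statement about $\pi_0(\calG^{\ad})$ acting trivially; it is arranged by choosing the Hodge type lifting so that $X_*(G^{\ab})_I$ is torsion-free (hence $\calG=\calG^\circ$), which is possible precisely when the exceptional type $A$ factors are absent (Proposition \ref{prop: HT lift general}).
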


Above, $\Gg^\ad$ is a smooth group scheme over $\Z_p$ with generic fiber the adjoint group $G^\ad$ of $G$.
It is not necessarily a stabilizer group scheme.
The neutral connected component $\calG^{\ad,\circ}$ is the parahoric group scheme of the adjoint group $G^{\ad}$ associated to $\calG$,
see Theorem \ref{thm: LMD abelian type} and \S \ref{par:7.1.12} in the text.   Using the smoothness of $\Gg$ one sees that (3) implies (2).   
The condition (NE) in the statement is explained in paragraph \ref{ss:strategyProof} below. In fact, by combining this result with work of Daniels--van Hoften--Kim--Zhang \cite{DvHKZ} which uses the theory of $p$-adic shtukas, we see that the condition (NE) can be removed, cf. Corollary \ref{cor: torsor for connected gp}. 

\begin{cor}\label{corintro:PlusShtukas}
The $\calG^{\ad}$-torsor $\pi$ in  Theorem \ref{introthm: LMD abelian type} (3) can be refined to a $\calG^{\ad,\circ}$-torsor and this fits in a $\calG^{\ad,\circ}$-equivariant local model diagram refining the diagram in Theorem \ref{introthm: LMD abelian type} (3).
\end{cor}

Here, by a \emph{local model diagram} we mean a diagram of morphisms with the smoothness and equivariance properties mentioned in (3) above. We will also give more precise results that refine the diagram (3) under certain additional conditions, and similar results for the Shimura (pro-)varieties $\Sh_{\rmK_p}(\G, X)$ with stabilizer level subgroup $\rmK_p$. The reader is referred to \S \ref{sec:SV} for these. 
 In addition, we refer the reader to paragraph 1.3 of this introduction for a discussion of other related results and, in particular, for a comparison 
with corresponding statements in \cite{KP} and previous versions of \cite{KZhou}.
 \end{para}

\begin{para}

The results of this paper have several applications. 

The integral models we construct are  used in \cite{KZhou} to show $\ell$-independence of Frobenius conjugacy classes for abelian varieties. The proof in \emph{loc. cit.} uses the existence of the local model diagram in Theorem \ref{introthm: LMD abelian type} to define the Kottwitz--Rapoport stratification on the models in order to apply an ``induction on strata'' argument. 
 
 The local model diagram is also used as a crucial input in determining the local zeta function at $p$ of the Shimura variety via the Langlands-Kottwitz method in \cite{HZZ}, as it allows us to understand the nearby cycles at points on the special fiber of integral models. 
 
 As explained below, the proof of Theorem \ref{introthm: LMD abelian type} uses the construction of the universal deformation space of a $p$-divisible groups equipped with crystalline  tensors. This
construction is applied in a different way to prove the representability of integral local Shimura varieties of abelian type in \cite{PRlsv}. 
\end{para}

 \subsection{Strategy of the proof}\label{ss:strategyProof}
We will now discuss the proof of Theorem \ref{introthm: LMD abelian type}. The overall strategy is the same as in \cite{KP} which covered only tamely ramified groups $G$. However, there is a complication:  An important condition which is necessary for the construction was erroneously omitted in \emph{loc. cit.}. As we will explain below, the condition is needed for the construction in \cite[\S 3]{KP} of the universal deformation of a $p$-divisible group equipped with crystalline tensors; the error was brought to the authors' attention by Manuel Hoff, see \cite[Rem. 2.29]{Hoff} and Remark \ref{remark:Hoff}.

In this paper, we correct the omission in \cite{KP} and also explain why this condition is satisfied in enough cases so that the proofs go through. 
In addition, we provide simplifications and generalizations of several other arguments of \emph{loc. cit.}. As a result, we can now also cover all groups $\G$ with $(\G,X)$ of abelian type.

\begin{para}\label{par:Explain}
Let us explain  this in some more detail: Suppose that the Shimura datum $(\G, X)$ is of Hodge type; this is the crucial case. The argument in \cite{KP} starts by finding a   Hodge embedding $\rho: (\G, X)\hook ({\mathbf {GSp}}(V,\psi), S^{\pm})$ and a $\Z_p$-lattice $\La$ in the $\Q_p$-vector space $V_{\Q_p}$ such that $G\hook \GL(V_{\Q_p})$ extends to a closed immersion of group schemes $\Gg\hook \GL(\La)$.   
Moreover, it is arranged so that the alternating form $\psi$ takes $\Z_p$-integral values on $\La$. Then $\rho$ induces an embedding of the Shimura variety $\Sh_{\rmK}(\G, X)$
in a Siegel moduli variety of polarized abelian schemes with appropriate level structure. This level structure is determined by a subgroup $\rmK'$ of the adelic symplectic similitude group whose choice depends on $\rmK$. This Siegel variety has  a $\Z_p$-integral model $\calA_{g, \rmK'}$ given by the natural extension of the moduli functor to schemes over $\Z_p$. Then, the normalization  of the Zariski closure of $\Sh_{\rmK}(\G, X)$ in $\calA_{g, \rmK'}\otimes_{\Z_p}\O_E$ gives an $\O_E$-integral model $\mathscr{S}_{\rmK}(\G,X)$ of $\Sh_{\rmK}(\G, X)$. Even if the notation does not indicate this, the scheme $\mathscr{S}_{\rmK}(\G,X)$ a priori depends on the above choices of the Hodge embedding and the lattice. 

The essential point now becomes to control the structure of $\mathscr{S}_{\rmK}(\G,X)$. In particular, the desired result is that  $\mathscr{S}_{\rmK}(\G,X)$ is  \'etale locally isomorphic to the local model  $\BMloc_{\Gg,\mu}$.
In fact, one aims for a more precise result: the existence of a local model diagram. 
This  amounts to a smooth morphism 
\[
 \mathscr{S}_{\rmK}(\G,X)\xrightarrow{ \ \ } [\Gg\backslash\,\BMloc_{\Gg,\mu}],
\]
with target the stack quotient of the $\Gg$-scheme $\BMloc_{\Gg,\mu}$. 

To achieve this control, we need to choose the Hodge embedding and the lattice $\La$ carefully. We first arrange so that the embedding $\Gg\hook\GL(\La)$ 
induces a closed immersion $\BMloc_{\Gg,\mu}\hook {\rm Gr}(d,\La)_{\O_E}$ of the local model in the base change of a Grassmannian scheme, where $d$ depends on $\mu$. When this closed immersion occurs, we say that we have an integral local Hodge embedding $(\Gg,\{\mu\})\hook (\GL(\La),\mu_d)$ which is ``good". In what follows, we assume that this has been arranged.

We now consider a finite collection of tensors $(s_a)$ in the tensor algebra $V_{\bbZ_{(p)}}^\otimes$ which ``cut out'' $\G_{\bbZ_{(p)}}$, cf. \S \ref{ss:VG definition}. Here $V_{\bbZ_{(p)}}$ is the unique 
$\Z_{(p)}$-lattice in $V$ whose $p$-adic completion $V_{\bbZ_p}$ is $\La$ and $\G_{\bbZ_{(p)}}$ the unique   affine $\Z_{(p)}$-model of $\G$ whose $p$-adic completion is $\Gg$. Then $(s_a)$ also cut out $\Gg$ in $\GL(\La)$.
The Betti-\'etale comparison isomorphism gives corresponding  tensors $s_{a,\et}\in \calV_p^\otimes$, where $\calV_p$ is the $\bbZ_p$-local system on $\Sh_{\rmK}(\bfG,X)$ corresponding to the  dual of the $p$-adic Tate-module of the pullback of the  universal abelian variety. 

Now consider $x\in  \mathscr{S}_{\rmK}(\G,X)(k)$, where $k$ is an algebraic closure 
of the residue field $k_E$ of $\O_E$ and set $\breve\bbQ_p=W(k)[1/p]$. We let $\scrG_x$ denote the $p$-divisible group of the abelian variety associated to $x$ and let $\bbD$ be the Dieudonn\'e module of $\scrG_x$. For a finite field extension $K/\breve\bbQ_p$  and  $\tilde{x}\in\SSh_{\rmK}(\bfG,X)(\calO_K)$ a point lifting $x$, the $p$-adic comparison isomorphism gives rise to tensors $s_{\alpha,0}\in \bbD[1/p]^\otimes$. These tensors lie in the submodule $\bbD^\otimes$ and are independent of the choice of lift $\tilde{x}$. Moreover, the scheme of tensor preserving isomorphisms $\underline{\Isom}_{s_a,s_{a,0}}(\La\otimes_{\Z_p}W(k),\bbD)$ is a trivial $\calG$-torsor and we can choose an identification $\bbD=\La\otimes_{\Z_p}W(k)$ matching $s_{a,0}$ with $s_a\otimes 1$. These facts follow by the argument in \cite[\S 3.3]{KP} using the general purity result of \cite{An} to cover the case of a general $\G.$   
We also see that the de Rham filtration on $\bbD\otimes_{W(k)}k$ corresponds to a point $y\in \Gr(d, \Lambda)(k)$ which lies in $\BMloc_{\calG,\mu}(k)$. 

Let $A $ denote the completion of the local ring of $\BMloc_{\calG,\mu}$  at $y$. (In the text this is usually denoted by $R_G$.) The crux of the matter is to show that the completion of the local ring of $\SSh_{\rmK}(\bfG,X)$ at $x$ is isomorphic to $A$. Roughly speaking, this follows if we construct a suitable   deformation of the $p$-divisible group $\scrG_x$  over $A$ which is equipped with tensors extending $s_{a,0}$. When $\Gg$ is reductive such a   deformation is given in \cite{KisinJAMS} following a construction of Faltings. For the general case, \cite{KP} use Zink's theory of displays. In the following discussion, we will use the usual notations of this theory, see \S \ref{1.1}, \cite[\S 3]{KP}.

Set $M=\La\otimes_{\Z_p}\whW(A)$ and denote by $\hat I_{A}M\subset M_1\subset M$ the unique $\whW(A)$-submodule corresponding to the   $A$-valued point of the Grassmannian given by $\BMloc_{\Gg,\mu}\hook {\rm Gr}(d,\La)_{\O_E}$.  To the ``Dieudonn\'e pair'' $(M, M_1)$, we  associate a finite free $\whW(A)$-module $\wtM_1$ with 
\[
p\phi^*M\subset \wtM_1 \subset \phi^*M.
\]

Now set $\fa =\frakm ^2+\pi_EA  \subset A $, where $\frakm $ is the maximal ideal of $A$ and $\pi_E$ a uniformizer of $E$. There is a canonical ``infinitesimal connection'' isomorphism 
\begin{equation*} 
c:  \widetilde{\bbD}_{1}\otimes_{W(k)}\whW(A /\fa )\xrightarrow{\sim} \wtM_1\otimes_{\whW(R )}\whW(A /\fa ),
\end{equation*}
see Lemma \ref{319}.
Here, $\widetilde{\bbD}_{1}$ is the $W(k)$-submodule of $\phi^*\bbD$ obtained
by the same construction but over $k$. 

The tensors $ \ti s_a:=s_a\otimes 1\in  \La^\otimes\otimes_{\Z_p}\whW(A)=(\phi^*M)^\otimes$ lie in $\wtM_1^{\otimes}$. Similarly, $s_{a,0}\in \bbD^{\otimes}$ lie in $\widetilde{\bbD}_{1}^{\otimes}$.   
We say ``the  tensor $\ti s_a$ is horizontal'' if
\[
c( s_{a,0}\otimes 1)=\ti s_a\otimes 1.
\]
If this holds for all $\ti s_a$, then the arguments in \cite{KP} construct the desired   deformation of the $p$-divisible group $\scrG_x$ over $A$ and the rest follows. 

However, it is not clear that the tensors $\ti s_a$ are horizontal in general. This is implicitly claimed to hold in \cite[3.2.12]{KP} but the argument depends on an erroneous construction  of the isomorphism $c$ in \cite[Lem. 3.1.9]{KP}, see the proof of Lemma \ref{319} for more details. 

When $\Gg $  is cut out 
by tensors $(s_a)\subset \La^\otimes$ such that all $\ti s_a$ are horizontal, we say that the integral Hodge embedding is ``very good'' (Definition \ref{def:vgood}). The constructions of \cite{KP} carry through under this additional condition, see Theorem \ref{thm:  main SV Hodge}.  Much of the work in the current paper is about showing that we can almost always choose an integral Hodge embedding which is very good. In fact, we conjecture that any good integral Hodge embedding is also very good, though we are not able to show this in general.
\end{para}

 \begin{para}
The main technique we use to produce sufficiently many very good embeddings relies on the following two properties. We let $s_a\in \Lambda^\otimes$ be fixed by $\Gg$ and $\ti s_a\in \wtM_1^\otimes$ the corresponding tensor.
\begin{altenumerate}

	\item If the tangent space $\BMloc_{\Gg,\mu}\otimes_{\O_E}k$ at $y$ is spanned as a $k$-vector space by the images of tangent spaces
	 of smooth formal curves, then $\ti s_a$ is horizontal; see Definition  \ref{def:span}, Proposition \ref{span}.
	
	\item If $s_a$ is an endomorphism (i.e. $s_a\in \Lambda\otimes_{\bbZ_p} \Lambda^\vee$), then $\ti s_a$ is horizontal;  see Lemma \ref{endo}.
\end{altenumerate}

 To produce very good embeddings, we first show  (Theorem \ref{corLMSpan}):

\begin{thm}\label{thm:spanIntro}
Let $(G,\{\mu\},\Gg)$ be a local model triple with $\Gg=\Res_{\O_F/\Z_p}\Hh$, the restriction of scalars of a   reductive group scheme $\Hh$ of classical type over $\O_F$. Suppose that the pair $(G^\ad, \{\mu^\ad\})$ is of abelian type and does not have a factor of type $D^{\mathbb H}$.
 Then the tangent spaces
 of $\BMloc_{\Gg,\mu}\otimes_{\O_E}k$ at all $k$-points are spanned by smooth formal curves.
 \end{thm}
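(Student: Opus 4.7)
\begin{para}
The plan is to combine the smoothness of the parahoric action with a case-by-case analysis in the classical types. First I would use that $\Gg=\Res_{\O_F/\Z_p}\Hh$ is smooth over $\Z_p$ and acts on $\bbM^{\mathrm{loc}}_{\Gg,\mu}$ with finitely many locally closed orbits. For any $k$-point $y$ and any $\xi\in\Lie(\Gg\otimes k)$, a lift of $\xi$ to a formal one-parameter family $\gamma_\xi\colon \Spf W(k)\lps t\rps\to\Gg$ through the identity exists since $\Gg$ is smooth; composing with the orbit map yields a formal curve $t\mapsto \gamma_\xi(t)\cdot y$ through $y$ in $\bbM^{\mathrm{loc}}_{\Gg,\mu}$ whose tangent vector is the image of $\xi$ in $T_y(\Gg\cdot y)$. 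Since the $\Gg$-orbit of $y$ is smooth at $y$ (being a homogeneous space for a smooth group), these are smooth formal curves in the sense required. Hence it is enough to exhibit smooth formal curves through $y$ whose tangent vectors span a complement to $T_y(\Gg\cdot y)$ inside $T_y\bbM^{\mathrm{loc}}_{\Gg,\mu}\otimes k$.
\end{para}

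\begin{para}
Next I would decompose along the factorization of $\Hh$. After base change to $\O_{\breve E}$ (harmless for the spanning property on the special fiber) the cocharacter $\mu$ splits as $(\mu_\tau)_\tau$ indexed by embeddings $\tau\colon F\hookrightarrow \breve E$, and $\bbM^{\mathrm{loc}}_{\Gg,\mu}$ decomposes as a product over absolutely simple factors of $\Hh$ of the corresponding Weil-restricted local models. Thus one reduces to the case of absolutely simple $\Hh$ of type $A$, $B$, $C$, or $D$ (no $D^{\mathbb{H}}$ factor appearing in $(G^\ad,\{\mu^\ad\})$). For each type I would invoke the explicit moduli description of $\bbM^{\mathrm{loc}}_{\Gg,\mu}$ as a closed subscheme of a product of Grassmannians. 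In type $A$, it is cut out by chain-compatibility conditions that are linear in matrix coordinates, so any transversal tangent vector extends tautologically to a smooth formal curve. In types $B$, $C$, and $D$ outside $D^{\mathbb{H}}$, the defining conditions are the isotropy (symmetric/alternating/quadratic plus wedge or spin) conditions of Pappas--Rapoport. In each case, I would write down in coordinates a first-order deformation $y+tv$ of a given transversal tangent $v$ and show, by a direct matrix computation exploiting the nondegeneracy of the defining pairing and the smoothness of the ambient Grassmannian, that the obstructions to successive higher-order liftings all vanish, producing an honest smooth formal curve lying in $\bbM^{\mathrm{loc}}_{\Gg,\mu}$.
\end{para}

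\begin{para}
The main obstacle lies in the higher-order lifting step in types $B$, $C$, $D$. The moduli conditions are at least quadratic, so a first-order deformation comes with a sequence of potentially non-trivial quadratic obstructions in the conormal sheaf of $\bbM^{\mathrm{loc}}_{\Gg,\mu}$ inside its ambient Grassmannian. One needs the structural result that, for classical types outside $D^{\mathbb{H}}$, these obstructions can always be killed by adjusting the deformation by suitable matrix entries whose span exhausts the tangent space transverse to the $\Gg$-orbit; this rests on the reducedness and flatness of the local model's special fiber established in the classical cases by Pappas--Rapoport and G\"ortz. The exclusion of $D^{\mathbb{H}}$ is essential here, since in that case the spin condition produces genuine quadratic obstructions at the worst-type points which are not removable by any higher-order correction, so certain tangent vectors at those points are not realized by smooth formal curves.
\end{para}
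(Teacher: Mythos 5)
Your proposal takes a different route from the paper and, as written, has a serious gap at exactly the point the theorem is trying to address. The paper's proof (Theorem \ref{corLMSpan}, resting on Theorems \ref{thm: root direction equality} and \ref{thm: spanning cartan directions}) does not work in moduli coordinates at all: it identifies $\BMloc_{\Gg,\mu}\otimes_{\O_E}k$ with an affine Schubert variety $S_{\underline{\mu}}$ in the affine Grassmannian of an equicharacteristic split group (Lemma \ref{lemma:LMtomodp}), establishes an \emph{upper} bound $\Phi^{\FM}_{\lambda,\mu}$ on the tangent space via the Finkelberg--Mirkovic ind-subscheme $S^{\FM}_\mu$, produces a matching \emph{lower} bound from curves $a\mapsto t^\lambda x_\alpha(t^{-k}a)$ in the unipotent affine-root subgroups of the loop group, and shows by a root-system computation in types $A,B,C,D^{\bbR}$ that the two bounds agree. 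The curves that do the work live in $LG$, not just in $\Gg$ or in a product of finite Grassmannians, and the exclusion of $D^{\bbH}$ is forced by a concrete failure of the combinatorial equality $k_\alpha=l_\alpha$ (see the example after Proposition \ref{prop: cartan spanning DH}).

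The gap in your plan is the step ``the obstructions to successive higher-order liftings all vanish,'' which you justify by appealing to reducedness and flatness of the special fiber. But reducedness and flatness do not imply that tangent vectors lift to smooth formal curves: Remark \ref{remSpanHilbert}(2) gives a normal surface $\Spec k[x,y,z]/(x^2+xyz)$ occurring as an open chart of a local model in which the tangent space at the origin is three-dimensional while the curve-span is two-dimensional. Even in type $A$, the claim that tangent vectors ``extend tautologically'' fails: for ramified $F/\Q_p$ the special fiber of $\BMloc_{\Res_{\O_F/\Z_p}\Hh,\mu}$ is a single Schubert variety for a non-minuscule cocharacter $\underline\mu$ and is not cut out of a product of finite Grassmannians by linear conditions; its singularities at $t^\lambda$ are governed by Demazure module data, and whether they are spanned by curves is precisely the nontrivial content here. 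Your starting reduction (spanning $T_y(\Gg\cdot y)$ via orbit maps and then handling a complement) is also mismatched with the geometry: the complement in $T_y\BMloc_{\Gg,\mu}$ is not naturally controlled by any transversal slice, and the curves needed are not transverse perturbations but genuine loop-group directions. Finally, your heuristic that $D^{\bbH}$ fails ``at the worst-type points'' is the opposite of what happens: the paper proves (Theorem \ref{thm: spanning cartan directions}(2)) that spanning holds at the minimal (most singular) stratum even in type $D^{\bbH}$; the failure is at intermediate $\lambda$.
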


 To prove this, we view $\BMloc_{\Gg,\mu}\otimes_{\O_E}k$ as a union of Schubert varieties in an affine Grassmannian for a certain equicharacteristic group over $k\lps t\rps$ which is of the same type as $\Hh$.  The smooth formal curves  are produced by using the curves coming from (conjugates of) the unipotent groups associated to affine roots.  The tangent directions spanned by these curves are then compared to an upper bound for the tangent space of $\BMloc_{\Gg,\mu}\otimes_{\O_E}k$ arising from a construction which is motivated by a conjectural modular description of Schubert varieties due to Finkelberg--Mirkovic \cite{FM}, see also  \cite[\S6]{HainesPlucker}. A detailed combinatorial analysis of these bounds carried out in \S\ref{s:RootCurves}, which may be of independent interest,  then proves the spanning property in the above cases,  see Theorem \ref{thm:curve span}. By property (1) above, this ensures that for any such group, a good embedding is also very good.

This remarkable property of  tangent spaces does not hold for the  local models of general stabilizer group schemes $\Gg$.  For example, it fails for $\Gg={\rm Res}_{\O_F/\Z_p}{\mathcal I}$, when $F/\Q_p$ is a ramified quadratic extension and $\mathcal I$ is an Iwahori group scheme for $\GL_{2}/F$, see Remark \ref{remSpanHilbert} (2).  However, we can still handle most of these cases as follows: We first present stabilizer group schemes  as the (tame) Galois fixed points of the Weil restriction of scalars of split reductive group schemes. This presentation is shown by applying a -more or less- standard argument with subdivision of apartments in the corresponding Bruhat-Tits buildings and crucially uses that $p$ is odd, see Proposition \ref{Fixed}. Tameness is important here so we can apply ``Edixhoven's lemma":  The fixed point locus of a tame finite group action on a smooth scheme is smooth. Now consider the fixed point group scheme $\Res_{\O_{K'}/\O_K}\GL(\La')^\Ga$ where $\La'$ is an $\O_{K'}$-lattice which is stable under the Galois group $\Ga=\Gal(K'/K)$. This fixed point scheme is cut out in $\Res_{\O_{K'}/\O_K}\GL(\La')$ by the endomorphisms of $\La'$ (considered as a $\O_K$-lattice by restriction of scalars) which are given by the Galois action. We then use this observation to show that there is a good Hodge embedding in which the group $\Gg$ is the stabilizer of the union of two sets of tensors: the first cuts out the Weil restriction of scalars of a split group and the second is given by endomorphisms, cf. Proposition \ref{prop: better lattice}. Since tensors given by endomorphisms  are always horizontal we can combine with the above to conclude that we have a very good embedding, cf. Theorem \ref{thm:main}.

The above argument cannot handle directly two types of ``exceptional cases": The first is when $(G^\ad, \mu^\ad)$ contains factors of type $D^{\mathbb H}_n$. The second is when  the adjoint group $G^\ad$ contains factors of the form ${\rm Res}_{F/\Q_p}{\rm PGL}_m(D)$, where $D$ is a central division algebra over $F$ with index divisible by $p$.  We call these cases ``exceptional type D'' and ``exceptional type A'' respectively. When  $(G^{\ad},
\mu^{\ad})$ does not contain factors of these forms, we say that $(\bfG,X)$ is ``non-exceptional" \emph{(NE)}, see \S\ref{sec: NE cases}. The reason for the first exception was already mentioned above. The second exception occurs because, in that case, the stabilizer group schemes cannot be written as the tame Galois fixed points of the Weil restrictions of split groups. Although there is a similar description for the stabilizer groups for a wild Galois action, taking wild fixed points does not always preserve smoothness. So there is no corresponding description for the group schemes.  
Fortunately, in both of these cases there are integral Hodge embeddings in which the group at $p$ is cut out in a symplectic group scheme by endomorphisms of the lattice (one could call these cases ``essentially of PEL type"). We show that these embeddings are very good by a modified version of the argument above, see \S \ref{ss:DnH}, \S \ref{ss:A_n}. However, in the exceptional cases, this somewhat restricts the Hodge embeddings that can be shown to be very good.  

This roughly explains the argument for most Shimura varieties of Hodge type. Extending the results to the rest and to Shimura varieties of abelian type is done by finding suitable Hodge type lifts in the sense of Deligne and closely follows \cite{KP}.   Here we need to make sure that we can find Hodge type lifts that support very good embeddings. There are some additional technical complications imposed by the aforementioned restriction on the Hodge embeddings we can use in the exceptional cases and, in the paper, we go in detail over the parts of the argument that are different.   We can then apply the argument in  \cite[\S4.4-6]{KP} in our setting to give Theorem \ref{introthm: LMD abelian type}. A crucial ingredient for this  is the notion of $R$-smoothness for tori developed in \cite{KZhou} which is used to extend the twisting construction of \cite{KP} beyond the tamely ramified case.
\end{para}

\begin{para}
We now return to briefly discuss the initial step of constructing good integral Hodge embeddings $(\Gg,\mu)\hook (\GL(\La), \mu_d)$ (which are later shown to be very good). 

The paper \cite{KP} uses results of Landvogt about functoriality of Bruhat--Tits buildings and arguments with Weyl modules to establish the existence of lattices $\La$ which give good integral Hodge embeddings $(\Gg,\mu)\hook (\GL(\La), \mu_d)$. Again using $R$-smoothness, it is possible to generalize this and to prove the result without the tameness hypothesis; this was the approach taken in earlier versions of 
\cite{KZhou}.
Here we give a different and simpler argument which does not use the results of \cite{Lan}, but,  instead, starts with finding linear closed embeddings for  reductive group schemes, see Proposition \ref{prop:findlattice}. Then,  appropriate closed embeddings for general stabilizers are constructed by taking Galois fixed points, after using Proposition \ref{Fixed}. We also take advantage of the improvement to the theory of local models  provided by Scholze-Weinstein in \cite{Schber} by the use of $v$-sheaves over perfectoid spaces. Indeed, \cite{Schber} gives  a characterization of local models via their associated $v$-sheaves and this implies that local models are functorial. 
This also makes it is easier to find lattices which give  good  integral Hodge embeddings. 
In \cite{KP}, these are  obtained  from suitable embeddings of buildings, constructed using \cite{Lan}. These produce compatible embeddings of group schemes over $\Z_p[u]$ and eventually give corresponding closed embeddings of local models. Now instead, if the local model satisfies the Scholze-Weinstein conjecture, then for each lattice giving an integral Hodge embedding, we know by functoriality that there is a uniquely determined morphism from the local model to a corresponding Grassmannian. We just have to make sure it is a closed embedding.
To apply this argument, we show that  the local models we use in this paper, which are given following the constructions of \cite{PZ}, \cite{Levin}, satisfy the characterization of \cite{Schber}, i.e. they satisfy the Scholze-Weinstein conjecture, see Theorem \ref{thm:twoLM}. The proof of this result follows a standard blueprint of reducing to the case of $\GL_n$ and is intertwined with the construction of good integral Hodge embeddings as above, see Theorem \ref{thm: LM embedding}, Theorem \ref{thm:twoLM}. It again uses the technique of writing
stabilizer group schemes as the tame Galois fixed points of the Weil restrictions of split groups. 
 
\end{para}

 \begin{para}
We emphasize that Theorem \ref{introthm: LMD abelian type} and most of the  main results of the current paper are shown completely independently of the theory of $p$-adic shtukas.  
In fact, techniques that use $p$-adic shtukas alone do not seem enough to construct integral models which are  \'etale locally isomorphic to the corresponding local model, not even in a single non-trivial example.
\end{para}

 \subsection{Corrections to \cite{KP} and relation to the current paper}\label{ss:ReltoKP}
 
 In this paragraph, we list the parts of \cite{KP} that need correction and explain how to replace them with corresponding parts of the current paper. We then compare Theorem \ref{introthm: LMD abelian type} and other results of this paper with statements that appear in  \cite{KP} (and previous versions of \cite{KZhou}).

\begin{para} We first discuss corrections to \cite{KP}.
 The construction of  the commutative diagram in Lemma 3.1.9  of \cite{KP} is not correct and has to be replaced by the construction in Lemma \ref{319} of this paper, see Remark \ref{remark:Hoff}. The construction of the isomorphism $\Psi_{R_G}$ in \cite[3.2.12]{KP} is not correct. Such an isomorphism only exists under the additional assumption that the Hodge embedding is \emph{very good}, with this term as defined in this paper. Lemma 3.2.14 and Propositions 3.2.17 and 3.3.13 of \cite{KP} require $\Psi_{R_G}$ and hence they are applicable only under the assumption that the Hodge embedding is very good, with the same proofs. The same is true
 for \cite[Theorem 4.2.7]{KP} and \cite[Cor. 4.2.12, Cor. 4.2.13]{KP} which now require the additional assumption that the Hodge embedding is very good. In fact, the rest of the arguments in \cite[Sec. 4]{KP} are unaffected. The conclusions in the statements hold when one adds the assumption that the Hodge embeddings that appear are very good, so that the above corrections to \cite[\S 3.1, \S 3.2, \S 3.3]{KP} can be applied. In particular, \cite[Theorem 4.2.7]{KP} should be replaced by Theorem \ref{thm: main SV Hodge} (which also relaxes the tameness assumption). This affects the final statement \cite[Theorem 4.6.23]{KP} about the existence of the local model diagram in abelian type cases, which should 
 be replaced by Theorem \ref{thm: LMD abelian type} (Theorem \ref{introthm: LMD abelian type}). Some additional minor corrections to proofs in \cite{KP} are listed in
\S \ref{ss:Errata}.
 \end{para}
 
 \begin{para}  
 Theorem 4.6.23 of \cite{KP} is a version  of Theorem \ref{introthm: LMD abelian type} in which the restriction in part (3) only rules out factors of type $D^{\bbH}$. (A corresponding result appeared in previous versions of \cite{KZhou}.)  Thus, Theorem \ref{introthm: LMD abelian type} gives a slightly weaker result in the case that $(G^{\ad},\mu^{\ad})$ contains factors of ``exceptional type A''. However, this extra restriction is removed by Corollary \ref{corintro:PlusShtukas}.   Also, in Hodge type cases, the argument of \cite{KP}, as corrected and extended in this paper under the assumption that the Hodge embedding is very good, gives  
 Theorem \ref{thm: main SV Hodge}. This  theorem implies that the normalization of the Zariski closure of the Shimura variety for stabilizer level under the Hodge embedding has the correct \'etale local structure. This last, slightly weaker, statement can be shown without the assumption that the Hodge embedding is very good, see Theorem \ref{thm:sh}. Both of these improvements require additional inputs from \cite{PRShtuka}, \cite{PRlsv}, \cite{DvHKZ}, which use Scholze's theory of $p$-adic shtukas.
\end{para}
 
 \subsection{Organization of the paper} We conclude the introduction by explaining the organization of the paper in some more detail.

 In \S \ref{s:ParahoricFixed}, under certain conditions, we show how to write stabilizer group schemes  as the tame Galois fixed points of the Weil restriction of scalars of split reductive group schemes. We give some applications to showing that certain representations of reductive groups extend to closed immersions between stabilizer schemes. These results are also applied later in showing existence of certain good  embeddings in sections \S \ref{s:LM} and very good embeddings in \S \ref{sec: very good embeddings}.

 In \S \ref{s:LM} we discuss  local models of Shimura varieties and prove the cases of the Scholze-Weinstein conjecture on local models that we need. This is intertwined with the construction of good embeddings mentioned above.

In \S \ref{s:RootCurves} we study  tangent spaces of local models of abelian type for restriction of scalars of reductive group schemes. We prove Theorem \ref{thm:spanIntro} which shows that they are spanned by smooth curves with very few exceptions. This involves quite heavy combinatorial computations.

In  \S \ref{s:Displays} we explain the connection isomorphism for displays, the omitted condition in \cite{KP} and give the key definition  of a very good  embedding. We show the main properties of very good embeddings that we will use in the sequel.

 The main constructions of very good embeddings are contained in \S \ref{sec: very good embeddings}; these are divided in the non-exceptional (NE) and exceptional cases. 
 
 Finally, in \S \ref{sec:SV} we give the application to integral models of Shimura varieties and state and prove the main results. We also give some errata for \cite{KP} and \cite{PCan}.
 
 \medskip
 
 \emph{Acknowledgements:} We would like to thank M. Hoff for pointing out the gap in \cite{KP} and 
M. Rapoport, Y. Luo and P. Wu for useful comments. We thank the referees for a careful reading of the manuscript and for many useful comments which greatly improved the paper. M.\,K. is supported by NSF grant \#DMS-2200449. G.\,P. is supported by NSF grant \#DMS-2100743. R.\,Z. is supported by  EPSRC grant ref. EP/Y030648/1.
 
\medskip
 
\textbf{Notations:} 
If $F/\Q_p$ is a non-archimedean local field, we let $\overline{F}$ be a fixed choice of algebraic closure of $F$. We let $\br F$ denote the completion of the maximal unramified extension of $F$ in $\overline{F}$. The rings of integers are denoted by $\calO_F$, resp. by $\br\calO_F$. We denote by $k_F$ the residue field of $F$. For most of the paper, $k$ is an algebraic closure of a finite field. 

If $X$ is an $A$-scheme and $B$ an $A$-algebra we write $X\otimes_AB$ or $X_B$ instead of $X\times_{\Spec(A)}\Spec(B)$.

For a connected reductive group $G$ over a field, we let $G^{\der}$ (resp. $G^{\ad}$) denote the derived group (resp. adjoint group) of $G$, and we let $G^{\mathrm{sc}}$ denote the simply-connected cover of $G^{\der}$. We denote by $\pi_1(G)$ Borovoi's algebraic fundamental group of $G$, i.e. $\pi_1(G)$ is the quotient of the cocharacter group by the coroot lattice over a separable closure of the ground field.


    \bigskip
    
    \addtocontents{toc}{\protect\setcounter{tocdepth}{2}}
  
  \section{Parahorics and embeddings of group schemes}\label{s:ParahoricFixed}

This section mainly contains preliminaries about parahoric and stabilizer group schemes that we will use later. This includes the notion of $R$-smoothness for tori which is recalled in \S2.1, and results from \cite{PRtame} on realizing parahorics and stabilizers as  fixed points of reductive group schemes in \S2.2.
  
  \subsection{Stabilizers, parahorics and buildings}

  \begin{para}
  Fix a prime $p>2$.  Let  $K$ be a  finite extension of $\bbQ_p$ or a finite extension of $\brQ$ and let $G$ be a (connected) reductive group over $K$. 
   We let $\calB(G, K)$ denote the extended building and $\bar\calB(G, K)=\calB(G^\ad, K)$ the reduced (``classical") building \cite{BT1}, \cite{BT2}. Recall that a quasi-parahoric  group scheme for $G$  is a smooth affine scheme $\Gg$ over the integers $\O=\O_K$ with $G=\Gg\otimes_{\O}K$,
  	whose neutral connected component  is a parahoric group scheme and
  	with $\br\O$-valued points satisfying
  	\[
  	\Gg_\bx^\circ(\br\O)\subset \Gg(\br\O)\subset \Gg_\bx(\br\O),
  	\]
  	for some point $\bx$ in the extended building $ \B(G, K)$ of $G$ over $K$, \cite{BT2}, \cite{KalethaPrasad}. Here $\Gg_{\bx}$ is the Bruhat--Tits  stabilizer group scheme 
  	associated to $\bx$ by Bruhat-Tits in \cite{BT2}. 
  	Then the neutral component $\Gg^\circ=\Gg^\circ_\bx$ is the associated parahoric
  	 and the inclusions above give quotients which are 
  	finite abelian groups, see \cite{HR}. Most of the time we will consider the case $\Gg=\Gg_\bx$, for some $\bx\in \B(G, K)$.

  \end{para}

\begin{para}
  	If $\ti K/K$ is a finite extension, then we have the building $\calB(G,\ti K)$ over $\ti K$, and for $\bx'\in \calB(G,\ti K)$, we let $\ti \calG_{\bx'}$ denote the stabilizer scheme over $\ti \calO=\calO_{\ti K}$ associated to $\bx'$. Let $H=\Res_{\ti K/K}G_{\ti K}$. Then by \cite[Prop. 4.6]{HaRi}, we have an identification $\calB(G,\ti K)\cong \calB(H,K)$ and there is  an isomorphism $\Res_{\ti \calO/\cal O}\ti\calG_{\bx'}\cong \calH_{\bx'}$, where $\calH_{\bx'}$ is the stabilizer scheme of $H$ for $\bx'$ considered as a point in $\calB(H,K)$.
  	
	Now assume $\ti K/K$ is a finite tame Galois extension with Galois group $\Ga=\Gal(\ti K/K)$ contained in an algebraic closure $\bar K$. 
By \cite{PYu}, the natural map $\B(G,K)\to \B(G,\ti K)$ gives  identifications
\begin{equation}\label{PYtame} 
\B(G, K)=\B(G, \ti K)^\Gamma,\quad \bar\B(G, K)=\bar\B(G, \ti K)^\Gamma
\end{equation}
with the fixed points by the natural action of $\Ga$.

  \end{para}
  
\begin{para}We now recall the notion of $R$-smoothness from \cite{KZhou} which will play an important role in what follows. 
	
	Let $T$ be a torus over $K$ and let $\ti K/K$ be a finite extension. We let $\calT$ (resp. $\ti \calT$) denote the lft N\'eron model for  $T$ (resp. the base change $T_{\ti K}$); see \cite[\S10]{BLR}. Then $\Res_{\ti \calO/\calO}\calT_{\ti K}$ is the lft N\'eron model for $\Res_{\ti K/K}T_{\ti K}$. 
	
Now fix a $\ti K/K$ such that $T$ splits over $\ti K$. Recall \cite[Def. 2.4.3]{KZhou} that the torus $T$ is said to be \emph{$R$-smooth} if the Zariski closure of $T$ inside $\Res_{\ti \calO/\calO}\ti\calT$ is smooth.\footnote{As explained in \cite[\S4.4.8]{BT2},  this definition is independent of the choice of splitting field $\ti K$.} If $G$ is a reductive group over $K$, we say that $G$ is $R$-smooth if the centralizer of a (equivalently any) maximal $\breve K$-split torus in $G$ is $R$-smooth. The following summarizes the main results on $R$-smoothness from \cite{KZhou} that we will need.

\begin{prop}\label{prop: R-smoothness properties}
	\begin{altenumerate}
		\item Let $T\cong\Res_{K_i/K}S_i$ where $K_i/K$ is finite and $S_i$ is a torus over $K_i$ which splits over a tamely ramified extension of $K_i$ (we call such a torus {\rm quasi-tame}, cf. Definition \ref{def: quasi-tame}). Then $T$ is $R$-smooth. 
		
		\item  If $T$ is the extension of an $R$-smooth torus by an $R$-smooth torus, then $T$ is $R$-smooth.
		
		\item Let  $\ti K/K$ be a finite extension and $G\rightarrow G'$ be a closed immersion of reductive groups which induces an isomorphism $G^{\der}\cong G'^{\der}$  and let $\bx\in \calB(G,K)$ with image $\bx'\in \calB(G',\ti K)$. Assume $p>2$ and that $G$ is $R$-smooth. Then the natural morphism $G\rightarrow \Res_{\ti K/K}G_{\ti K}'$ extends to a closed immersion of stabilizer schemes $$\calG_{\bx}\rightarrow\Res_{\ti \calO/\calO}\ti\calG'_{\bx'}.$$
	\end{altenumerate}
\end{prop}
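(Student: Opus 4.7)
The plan is that all three assertions are extracted from the systematic development of $R$-smoothness in \cite{KZhou}, so the bulk of the work is already available; my job here is mainly to check that the hypotheses match and to indicate the essential inputs. I would organize the sketch in the order (1)--(2)--(3), since (3) is the one that is actually used repeatedly in the sequel and rests on the first two.

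For (1), I would begin by unfolding the definition: $T$ is $R$-smooth when, for some (equivalently, any) splitting extension $\ti K/K$, the Zariski closure of $T$ inside $\Res_{\ti\calO/\calO}\ti\calT$ is smooth. Since Weil restriction commutes with the formation of lft N\'eron models of tori and with scheme-theoretic closure, the claim for $T=\Res_{K_i/K}S_i$ reduces factor-by-factor to the assertion that $\Res_{K_i/K}S_i$ is $R$-smooth whenever $S_i$ is quasi-tame over $K_i$. For quasi-tame $S_i$ one has an explicit description of the splitting extension as a tame extension of $K_i$, and the smoothness of the closure is then the content of the corresponding proposition in \cite{KZhou}, proved by tame descent. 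For (2), the standard technique, also carried out in \cite{KZhou}, is to take a common splitting extension, write down the short exact sequence of lft N\'eron models and pass to Zariski closures; the outer terms have smooth closures by hypothesis, so the middle term is smooth by the conormal sequence and a dimension count (one has to check that the sequence of closures is still short exact, which follows from flatness considerations).

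For (3), the main point is to use $R$-smoothness of $G$ to produce the closed immersion at the level of the maximal torus part of $\calG_\bx$. Let $T\subset G$ be the centralizer of a maximal $\breve K$-split torus and let $T'$ be its image in $G'$; the hypothesis $G^\der\cong G'^\der$ forces $T\to \Res_{\ti K/K}T'_{\ti K}$ to land in a natural way, and $R$-smoothness of $T$ gives a closed immersion $\calT\hookrightarrow \Res_{\ti\calO/\calO}\ti\calT'$ on lft N\'eron models. Using the identification $\calB(G,K)=\calB(\Res_{\ti K/K}G_{\ti K},K)$ from \cite{HaRi}, and the affine root group decomposition of Bruhat--Tits stabilizers \cite{BT2}, one then extends this to a morphism $\calG_\bx\to \Res_{\ti\calO/\calO}\ti\calG'_{\bx'}$ compatible with the given closed immersion $G^\der\hookrightarrow G'^\der$ on derived groups, and checks it is a closed immersion by combining the torus and derived-group pieces. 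The assumption $p>2$ is needed to apply Edixhoven's lemma, which guarantees that tame fixed-point loci on smooth schemes remain smooth, and thus allows one to identify the relevant closures with the stabilizer group schemes on the target side.

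The main obstacle, were one to reprove (3) from scratch, would be this last gluing step: controlling the closure of $\calG_\bx$ away from its maximal torus requires a careful use of the affine root datum together with the tame fixed-point analysis. Since this is precisely the content of \cite{KZhou}, I expect the actual proof in the paper to be very short, consisting of pointers to the corresponding propositions there, together with a brief indication that Weil restriction and N\'eron model formation commute in the relevant way.
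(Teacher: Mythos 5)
Your overall approach matches the paper's, which indeed just cites \cite{KZhou}: parts (1) and (2) are \cite[Prop.\ 2.4.6]{KZhou}, and (3) follows the argument of \cite[Prop.\ 2.4.10]{KZhou} together with the closed immersion of finite type N\'eron models from \cite[Lem.\ 2.4.4]{KZhou}. Two imprecisions in your sketch of (3) are worth flagging. First, $T'$ should not be defined as ``the image of $T$ in $G'$''; it is the centralizer in $G'$ of a maximal $\breve K$-split torus that \emph{contains} the image of $S$ (the maximal $\breve K$-split torus of $G$ fixed to define $T$). In general $T\hookrightarrow T'$ is a proper inclusion because $G'$ can have a larger center than $G$, and the whole point of invoking $R$-smoothness of $T$ is to get a closed immersion of the \emph{finite type} (not lft) N\'eron models $\calT\hookrightarrow\Res_{\ti\calO/\calO}\ti\calT'$ despite this. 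If $T'$ were just the image of $T$ the statement would be vacuous. Second, your attribution of the hypothesis $p>2$ to Edixhoven's lemma is not what is happening here: Edixhoven's result is used elsewhere (Prop.\ \ref{Fixed}, Rem.\ \ref{rem:findlattice}) to realize parahorics as tame fixed points, whereas in \cite[Prop.\ 2.4.10]{KZhou} the odd-$p$ assumption enters in the argument that the closure of $G$ in the big-cell/root-group decomposition of $\Res_{\ti\calO/\calO}\ti\calG'_{\bx'}$ produces a smooth group scheme and a closed immersion. These do not affect the validity of the citation-based plan, but the description of the torus step and of the role of $p>2$ should be corrected.
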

\begin{proof}
Part (1) and (2) is \cite[Prop. 2.4.6]{KZhou}, and (3) follows from the argument of \cite[Prop. 2.4.10]{KZhou} using that $T\rightarrow T'$ extends to a closed immersion of finite type N\'eron models by \cite[Lem. 2.4.4]{KZhou}. Here $T$ is a centralizer of a maximal $\breve K$-split torus $S$ in $G$ whose apartment contains $\bx$, and $T'$ is the corresponding centralizer for some maximal $\breve K$-split torus of $G'$ which contains the image of $S$.
\end{proof}

\end{para}

  \subsection{Parahorics as Galois fixed points of reductive group schemes}\label{ss:Fixed}

   \begin{para}\label{ss:Fixed2} We now assume that $G$ is a classical reductive group over $K$ (i.e. there are no exceptional factors in $G^{\ad}$;  by convention,  this also excludes triality forms.) We will show that the identification \eqref{PYtame} allows us to realize stabilizer schemes as the Galois fixed points of hyperspecials over a tame extension.
   	
  	 Let $H_0$ be the Chevalley (split) form of $G$ over $\Z_p$. We   assume that $G$ is tamely ramified, i.e. there is a tame finite (Galois) extension $\ti K/K$ of ramification degree $e$ with $\Ga=\Gal(\ti K/K)$ such that $G\otimes_K\ti K\simeq H_0\otimes_{\Z_p}\ti K$. By adjoining an unramified extension, we can always assume that $\ti K$ contains a uniformizer $\ti\pi$ with $\ti\pi^e\in K^\un$, where $K^\un$ is the maximal unramified extension of $K$ contained in $\ti K$.

  	\begin{prop}\label{Fixed}
  		Assume that $G$ is as above. If $G^\ad$ contains a simple factor isomorphic to $\Res_{L/K}({\rm PGL}_m(D))$, where $D$ is a division central $L$-algebra
  		and $L/K$ is a tame extension, assume in addition that the index of $D$ is prime to $p$.

  		Suppose that $\Gg=\Gg_\bx$ is the Bruhat-Tits group scheme over $\O=\O_K$ with $\Gg\otimes_\O K=G$ which is the stabilizer of a point $\bx\in \calB(G, K)$  generic in its facet. \footnote{``generic'' here is meant in the sense that $\Gg_\bx=\Gg_{\bx'}$, for all $\bx'$ in some open nbd of $\bx$ in that facet.}
  		\begin{altenumerate}
  		
  		\item There is a point $\bx'\in \B(G, K)$ such that $\Gg_{\bx}=\Gg_{\bx'}$ and a finite Galois tame  extension $\ti K/K$ with Galois group $\Gamma=\Gal(\ti K/K)$ such that $G\otimes_K \ti K$ is split and $\bx'$ is hyperspecial in $\B(G, \ti K)$. 
  		
  		\item  The corresponding stabilizer group scheme $\ti\Gg_{\bx}$ over $\ti \O=\O_{\ti K}$ with generic fiber $G\otimes_K\ti K$ is reductive and supports a $\ti \O$-semilinear 
  		$\Gamma$-action which extends the $\Gamma$-action on $G\otimes_K\ti K\simeq H_0\otimes_{\Z_p} \ti K$. 
  		The isomorphism $G\simeq \Res_{\ti K/K}(G\otimes_K \ti K)^{\Gam}$
  		extends to  an isomorphism of group schemes
  		\[
  		\Gg\simeq (\Res_{\ti \O/\O}\ti \Gg_\bx)^{\Gam}.
  		\]
  		\end{altenumerate}
  	\end{prop}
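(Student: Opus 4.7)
My plan is to combine a standard apartment-subdivision argument with Edixhoven's smoothness lemma for fixed points of tame group actions. Since $G$ is tamely ramified --- the restriction on inner forms $\Res_{L/K}\mathrm{PGL}_m(D)$ being precisely what guarantees the existence of a tame splitting field for $G$ --- I begin with a finite tame Galois extension $\tilde K_0/K$, with $\Gam_0=\Gal(\tilde K_0/K)$, over which $G$ splits, so that $G\otimes_K\tilde K_0\simeq H_0\otimes_{\Z_p}\tilde K_0$.

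For part (1), I would work inside a fixed $\tilde K_0$-apartment $A\subset\calB(G,\tilde K_0)$ containing $\bx$; by \eqref{PYtame} the fixed locus $A^{\Gam_0}$ is a $K$-apartment of $\calB(G,K)$, and the $K$-facet $F$ of $\bx$ is a non-empty open polyhedron in $A^{\Gam_0}$. For a tamely totally ramified extension $\tilde K/\tilde K_0$ of degree $e'$ with $\gcd(e',p)=1$, the affine root hyperplane arrangement in $A$ is refined by the factor $e'$, and the hyperspecial vertices of $\calB(G,\tilde K)$ inside $A$ form a $\Gam=\Gal(\tilde K/K)$-stable lattice $\Lambda_{e'}\subset A$ that is $e'$-times as dense as the $\tilde K_0$-lattice. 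Its fixed locus $\Lambda_{e'}^{\Gam}$ is a lattice in $A^{\Gam}=A^{\Gam_0}$, and as $e'$ runs through integers prime to $p$ it becomes arbitrarily dense. Choosing $e'$ sufficiently divisible produces a hyperspecial $\tilde K$-vertex $\bx'\in F$; since $\bx$ and $\bx'$ lie in the same open $K$-facet, $\Gg_{\bx}=\Gg_{\bx'}$.

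For part (2), the stabilizer $\tilde\Gg_{\bx'}$ is a reductive $\tilde\O$-group scheme because $\bx'$ is hyperspecial over $\tilde K$, and the $\Gam$-fixation of $\bx'$ in $\calB(G,\tilde K)$ supplies an $\tilde\O$-semilinear $\Gam$-action on $\tilde\Gg_{\bx'}$ extending the Galois action on its generic fiber. The Weil restriction $\calH=\Res_{\tilde\O/\O}\tilde\Gg_{\bx'}$ is smooth over $\O$ and carries the induced $\Gam$-action via the restriction factor. Since $|\Gam|$ is prime to $p>2$, Edixhoven's lemma guarantees that the fixed-point subscheme $\calH^{\Gam}$ is a smooth closed $\O$-subgroup scheme. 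Its generic fiber is $(\Res_{\tilde K/K}G_{\tilde K})^{\Gam}=G$, and on $\O$-points one finds $\tilde\Gg_{\bx'}(\tilde\O)^{\Gam}=\mathrm{Stab}_{G(K)}(\bx')$ using $\calB(G,K)=\calB(G,\tilde K)^{\Gam}$. By the characterization of Bruhat--Tits stabilizer schemes as the unique smooth $\O$-models with prescribed groups of integral points, this identifies $\calH^{\Gam}\simeq\Gg_{\bx'}=\Gg_{\bx}$.

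The main obstacle I anticipate is the density claim in (1): one must verify that $\Lambda_{e'}^{\Gam}$ is in fact a full-rank lattice in $A^{\Gam}$, which requires showing that the $\Gam$-action on $A$ does not collapse rank transverse to the fixed subspace. This is a case-by-case combinatorial check on the action of $\Gam$ on the root datum of $H_0$, and it is where the classical-type and tame hypotheses --- in particular the exclusion of triality $D_4$ and of $\mathrm{PGL}_m(D)$ with wild $D$ --- enter decisively; outside these regimes the subdivision argument cannot be guaranteed to place a hyperspecial $\tilde K$-vertex inside the given $K$-facet of $\bx$.
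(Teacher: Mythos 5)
Your overall strategy — tame subdivision of the apartment to place a hyperspecial $\tilde K$-vertex inside the facet, then Edixhoven's lemma for smoothness of fixed points — is the same one the paper uses. But the proposal has two substantive gaps, and they sit exactly where the real work lies. First, the density claim: you assert that the hyperspecial $\tilde K$-vertices in $A$ form a $\Gam$-stable lattice $\Lambda_{e'}$ whose fixed locus $\Lambda_{e'}^\Gam$ becomes dense in $A^\Gam$, and you flag the full-rank statement as the main obstacle. That flag is correct, and you have not resolved it. Note also that hyperspecial vertices in an apartment of a split reductive group are in general a union of coweight-lattice cosets rather than a single lattice, so even the description of $\Lambda_{e'}$ needs care. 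The paper handles precisely this step by first decomposing $G^{\mathrm{sc}}\simeq\prod_i\Res_{L_i/K}G_i$ into absolutely simple simply-connected factors $G_i$ over $L_i$, so that $\B(G,K)=\prod_i\B(G_i,L_i)$, and then invoking the tame-subdivision result of \cite[Prop. 2.8]{PRtame} per factor; the hypotheses on classical type and on division algebras of prime-to-$p$ index are exactly what that result requires. After finding nearby hyperspecial $\bx'_i$ one still has to check, as the paper does, that the composite point $(\tau_*(\bx'_i))_{i,\alpha}$ is hyperspecial in $\B(G,\tilde K)=\prod_{(i,\alpha)}\B(G_i,\tilde K)$, and that $\tilde K/K$ can be taken Galois by passing to a normal closure (hyperspecial stays hyperspecial under further extension). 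Your direct argument over $\tilde K_0$ skips this reduction, and with it the only available mechanism for the case-by-case verification you acknowledge is needed.

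Second, your deduction that $\Gg_{\bx}=\Gg_{\bx'}$ "since $\bx$ and $\bx'$ lie in the same open $K$-facet" is stated without proof and is not automatic for general reductive $G$. The extended building carries a central direction, and the Bruhat–Tits stabilizer is cut out by the Kottwitz-homomorphism condition $G(\br K)_\bx = G(\br K)_{\bar\bx}\cap G(\br K)^1$, where $\bar\bx$ is the image in the classical building. The paper makes this explicit and uses it to reduce the equality of stabilizers to a statement about $\bar\bx$ and $\bar\bx'$ lying in the same facet of $\bar\B(G,K)$, and then lifts the result for $G^{\der}$ to $G$. Without this reduction your claim is a genuine hole for groups with nontrivial central torus. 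Part (2) of your proposal — Edixhoven's lemma plus identifying the stabilizer scheme by its $\br\O$-points via the smoothness characterization in \cite[1.7.6]{BT2} — is correct and matches the "standard argument" the paper alludes to.
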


  	Suppose $\bx\in \B(G, K)$ is such that $\Gg_\bx$ is connected. Then, if ${\bf y}\in \B(G, K)$ is generic in the smallest facet containing $\bx$, we have $\Gg_{\bf y}=\Gg_\bx$.
  	Hence, the result applies to all stabilizers group schemes that are parahoric, i.e. connected.

  	\begin{proof} The statement is a variation of \cite[Prop. 2.8]{PRtame}. We will explain how the proof in \emph{loc. cit.} can be extended to give this result. First we note that it is enough to show:
  		
  		($*$) There is a point $\bx'\in \B(G, K)$ such that $\Gg_{\bx}=\Gg_{\bx'}$ and a finite tame  extension $\ti K/K$ such that $G\otimes_K \ti K$ is split and $\bx'$ is hyperspecial in $\B(G, \ti K)$. 
  		
  		A hyperspecial point remains hyperspecial after every finite field extension. Hence, assuming ($*$) we can pass to the normal closure and  make sure that $\ti K/K$ is in addition Galois with group $\Gamma=\Gal(\ti K/K)$. Then the rest follows by the standard argument which uses the smoothness of fixed points of a smooth scheme for a tame finite group action (\cite[3.4: Prop.]{Edix}). 
  		
  		Statement ($*$) is shown in the course of the proof of \cite[Prop. 2.7, Prop. 2.8]{PRtame} when $G$ is absolutely simple and simply connected. We will show how this argument extends under our assumptions.
  		
  		First let us assume that $G$ is semi-simple. Write 
  		\[
  		G^{\rm sc}=\prod_i \Res_{L_i/K} G_i
  		\]
  		with $G_i$ over $L_i$, simply connected and absolutely simple. This gives
  		\[
  		\B(G, K)=\B(G^{\rm sc}, K)=\prod_i \B(G_i, L_i); \quad \bx\mapsto (\bx_i).
  		\]
  		Since $\bx$ is generic in its facet, each $\bx_i\in \B(G_i, L_i)$ is generic in its facet. By applying the argument in the proof of \cite[Prop. 2.8]{PRtame} which considers a ``tame subdivision'' of the apartment with its simplicial structure, we see that there exists a ``nearby'' $\bx'_i\in \B(G_i, L_i)$ which is hyperspecial in $\B(G_i, \ti L_i)$, where $\ti L_i$ is a finite tame extension of $K$. In fact, by enlarging $\ti L_i$, we can find $\bx'_i$ with these properties which, using the standard metric of the apartment, is as close to $\bx_i$ as we like. The assumption for groups of type A enters  in  the existence of the suitable tame subdivision, see the proof of  \cite[Prop. 2.8]{PRtame},  also \cite[Rem. 2.9]{PRtame}. Consider $\bx'=(\bx'_i)\in \B(G, K)$ which is then close to $\bx$ and defines the same stabilizer group scheme as $\bx$. By passing to the normal closure $\ti K$ of the join of the $\ti L_i$'s in an algebraic closure of $K$, we can assume that $\bx'_i\in \B(G_i, \ti K)$ is hyperspecial for all $i$. We now have
  		\[
  		\B(G, \ti K)=\prod_{(i, \alpha)} \B(G_i, \ti K)
  		\]  
  		the indexing set also including all $\alpha: L_i\hookrightarrow \ti K$ over $K$. For   $\alpha: L_i\hookrightarrow \ti K$, find $\tau\in \Gamma=\Gal(\ti K/K)$ such that $\alpha=\tau_{|L_i}: L_i\hookrightarrow \ti K$. Then the projection of the image of $\bx'$ to the factor indexed by $(i, \alpha)$,  is $\tau_*(\bx'_i)$. In this, $$ \tau_*: \B(G_i\otimes_{L_i}\ti K, \ti K)\to \B(G_i\otimes_{L_i, \tau}\ti K, \ti K)$$ is induced by functoriality of buildings by the Galois automorphism $ \tau: \ti K\to \ti K$. Hence, $ \tau_*(\bx'_i)$ is hyperspecial, and so $\bx'=( \tau_*(\bx'_i))_{i, \alpha}$ is hyperspecial in $\B(G,\ti K)$. This shows ($*$) when $G$ is semi-simple.

  		Now we discuss the general reductive case. Note that for a split group $H$, a point in $\bx\in \B(H, K)$ is hyperspecial if and only if its image $\bar \bx\in \bar\B(H,K)=\B(H^\der, \ti K)$ is hyperspecial. 
  		
  		We have $G(\br K)_{\bx}=G(\br K)_{\bar \bx}\cap G(\br K)^1$. Here, $G(\br K)_{\bar \bx}$ is the stabilizer of $\bar \bx\in \bar\B(G, K)\subset \bar\B(G, \br K)$ under the natural action of $G(\br K)$ on $\bar\B(G, \br K)$; the group $G(\br K)^1$ is  the kernel of 
  		\[
  		G(\br K)\xrightarrow{\kappa_G} \pi_1(G)_I\to \pi_1(G)_I/\{\rm torsion\}
  		\]
  		obtained from the Kottwitz homomorphism, see \cite[Rem. 11]{HR}, \cite[4.2.16]{BT2}. 
  		If $\bar \bx$ is generic in a facet and $\bar \bx'$ is nearby, $G(\br K)_{\bar \bx}=G(\br K)_{\bar \bx'}$; hence we also have $G(\br K)_{\bx}=G(\br K)_{\bx'}$ and so $\Gg_\bx=\Gg_{\bx'}$. 
  		
  		Recall that we know ($*$) for $G^\der$. Consider $\bx\in \B(G, K)$ generic in its facet with corresponding point $\bar \bx\in \B(G^\der,  K)$, also generic in its facet. By  ($*$) for $G^\der$, there is nearby $\bx'_\der\in \B(G^\der, K)$   
  		and a tame Galois extension $\ti K/K$ which splits $G^\der$ such that $\bx'_\der\in \B(G^\der, \ti K)$ 
  		is hyperspecial. By enlarging $\ti K/K$ we can assure that $G$ is also split over $\ti K$. 
  		Now lift $\bx'_\der$ to $\bx'\in \B(G, K)$, i.e. with $\bar \bx'=\bx'_\der$. The point $\bx'$ is hyperspecial in $\B(G, \ti K)$ and by the argument above $\Gg_\bx=\Gg_{\bx'}$. This shows ($*$) for $G$ and $\bx$.
  	\end{proof}
  \end{para}

  \subsection{Lattices and parahoric subgroups}\label{lattices}
 Let $V$ be a finite dimensional $K$-vector space. In this subsection, we give a more explicit  description of the construction in Proposition \ref{Fixed} in the case  $G=\GL(V)$.
 
 Fix, once and for all, a  volume form  on $V$, i.e. an isomorphism $\wedge^{\dim(V)} V\simeq K$. This allows us to identify the (extended) building $\B(\GL(V), K)$ with pairs $(\L, c)$ consisting of a periodic $\O$-lattice chain $ \L=\{ \La_\bullet\}$ in $V $ and a grading function $c: \L\to {\mathbb R}$ (see \cite{BTclass}, \cite[Cor. 5.1.28]{KalethaPrasad}).
  For each periodic lattice chain $\L$ we can choose a  ``determining segment",
  \[
  \La_s=\pi\La_{0}\subset \La_{s-1}\subset\cdots \subset \La_0
  \]
  in the obvious sense. If $\bx=(\L, c)$, then the corresponding parahoric subgroup of $\GL(V)$ is the common stabilizer of the lattices in the lattice chain, or in a determining segment of the lattice chain.
  The corresponding parahoric group scheme, which we write simply as $\GL(\L)$, is determined by its $\br \O$-points  (\cite[Prop. 1.7.6]{BT2}) which are
  \[
  \GL(\L)(\br \O)=\bigcap_i \GL(\La_i\otimes_\O\br \O)= \bigcap_{i=0}^{s-1} \GL(\La_i\otimes_\O\br \O).
  \]
 In this situation, we set
  \[
  {\rm tot}(\L):=\La_0\oplus\La_1\oplus\cdots \oplus \La_{s-1}\subset V^{\oplus s}
  \]
  for the direct sum of the lattices in the segment. We can consider the stabilizer group scheme $\GL({\rm tot}(\L))$.

  \begin{lemma}\label{BTGL}  (\cite[3.8]{BTclass}, cf. \cite[Prop. A.4]{R-Z})
  	There is a group scheme homomorphism
  	$
  	\GL(\L)\to \GL({\rm tot}(\L))
  	$
  	which extends the diagonal 
  	\[
  	\GL(V)\to \GL(V)^s\hook \GL(V^{\oplus s})
  	\]
  	and which is a closed immersion.\qed
  \end{lemma}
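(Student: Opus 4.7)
The plan is to build the map from the natural integral actions of $\GL(\L)$ on each lattice in the determining segment. By the Bruhat--Tits construction of the parahoric group scheme \cite{BTclass}, $\GL(\L)$ comes equipped with a compatible family of integral representations $\rho_i : \GL(\L) \to \GL(\La_i)$ for $i = 0, \ldots, s-1$, each of which restricts to the identity on the generic fiber $\GL(V)$. Their direct sum $\rho := \bigoplus_i \rho_i : \GL(\L) \to \GL({\rm tot}(\L))$ then extends the diagonal embedding $g \mapsto \mathrm{diag}(g, \ldots, g)$ on the generic fiber by construction.

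Next I would verify that $\rho$ is a closed immersion by factoring it as
\[
\GL(\L) \xrightarrow{\ (\rho_0, \ldots, \rho_{s-1})\ } \prod_{i=0}^{s-1} \GL(\La_i) \hookrightarrow \GL({\rm tot}(\L)),
\]
where the second arrow is the block-diagonal Levi embedding, a standard closed immersion of reductive group schemes over $\O$. It thus suffices to prove that the first arrow is a closed immersion. It is a monomorphism on functor of points: any $g \in \GL(\L)(R)$ is determined by its compatible family of component actions $\rho_i(g)$, as these are all restrictions of the same underlying generic automorphism of $V$. On the generic fiber, the map is the diagonal $\GL(V) \to \GL(V)^s$, which is closed. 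Since both source and target are smooth and of finite presentation over $\O$, it then suffices to check that the map is a closed immersion on the special fiber.

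The main obstacle is this special fiber analysis, where one needs injectivity of $\GL(\L)_k \to \prod_i \GL(\La_i)_k$ on both $k$-points and on tangent spaces at the identity. For the tangent space check, one identifies $\mathrm{Lie}\,\GL(\L)$ with the $\O$-submodule $\{X \in \End_\O(\La_0) : X(\La_i) \subset \La_i \text{ for all } i\}$ and verifies that the induced map $X \mapsto (X|_{\La_i})_i$ remains injective after reduction modulo $\pi$; this reduces to an explicit linear algebra calculation using bases adapted to the lattice chain (e.g.\ for the Iwahori in $\GL_2$, the ``upper triangular nilpotent'' direction lost in the action on $\La_0$ is recovered in the action on $\La_1$, and vice versa). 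Alternatively one directly invokes the Bruhat--Tits construction, in which $\GL(\L)$ is built essentially as such a closed subgroup scheme of $\prod_i \GL(\La_i)$. Either way, the desired closed immersion follows by composition.
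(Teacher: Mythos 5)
The paper gives no proof of this lemma: it is stated with a citation to \cite[3.8]{BTclass}, where $\GL(\L)$ is in fact \emph{constructed} as a (smooth) closed subgroup scheme of $\GL({\rm tot}(\L))$. Your closing remark --- that one may ``directly invoke the Bruhat--Tits construction, in which $\GL(\L)$ is built essentially as such a closed subgroup scheme'' --- is therefore exactly the content of the citation, so that route is correct and is what the paper intends.

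Your attempted self-contained reconstruction has the right skeleton but contains gaps as written. First, the claim that $(\rho_0,\dots,\rho_{s-1})$ is a monomorphism on $R$-points relies on the ``underlying generic automorphism of $V$,'' which is unavailable when $R$ is not $\O$-flat (for a $k$-algebra $R$ one has $V\otimes_\O R=0$); injectivity for non-flat $R$ is precisely what the special-fiber analysis has to establish and cannot be taken for granted at the outset. Second, the reduction ``monomorphism $+$ smooth source and target $+$ closed immersion on the generic fiber $\Rightarrow$ enough to check the special fiber'' is a nontrivial assertion over a DVR and needs a reference or an argument (Nakayama does not apply directly, since the coordinate rings are not finite $\O$-modules). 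Third, your special-fiber check treats only injectivity on tangent spaces at the identity; for a homomorphism of group schemes over a field to be a closed immersion one needs the kernel to be trivial, which also requires injectivity on $\bar k$-points --- a kernel with trivial Lie algebra could a priori still be a nontrivial finite \'etale group scheme. On the other hand, the tangent-space injectivity you do address is simpler than your sketch suggests: if $X\in\bigcap_i\End_\O(\La_i)$ satisfies $X(\La_i)\subset\pi\La_i$ for all $i$, then $X/\pi$ again stabilizes every $\La_i$, so $X\in\pi\cdot\mathrm{Lie}\,\GL(\L)$; no basis computation is needed. None of these gaps is fatal, but as written the detailed argument is not a complete proof, and the clean path is the one in the cite.
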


  \begin{para}\label{latticesI} Let $\ti K/K$ be a finite tame Galois extension with Galois group $\Ga$, inertia subgroup $I\subset \Ga$, and ramification index $e=|I|$.  Let $\ti\La\subset V\otimes_K\ti K$
  	be an $\ti\O$-lattice. We assume that $\ti\La$ is $\Ga$-stable. Let $\ti\L$ be the periodic lattice chain given by all scalar multiples $\ti\pi^i\ti\La$ of $\ti\La$
  	and consider the grading function $\ti c$ given by $\ti c(\ti\pi^i\ti\La)=i$. Then $(\ti\L, \ti c)$ is a periodic graded $\ti \O$-lattice chain in $V\otimes_K \ti K$ corresponding to a point $\ti \bx\in \B(\GL(V), \ti K)$ which is fixed by $\Ga$. The corresponding parahoric group scheme for  $\GL(V\otimes_K\ti K)$ over $\ti\O$ is   the group scheme  of $\ti\O$-linear automorphisms of $\ti\La$;
  	we denote this group scheme simply by $\GL(\ti\La)$.
  	
  	By tame descent on buildings (\ref{PYtame}), $\ti \bx$ is  identified with a point $\bx\in \B(\GL(V), K)$ which
  	corresponds to a periodic graded lattice chain $(\L, c)$ in $V$. 
  	We have
  	\[
  	\GL(\L)={\rm Res}_{\ti \O/\O}\GL(\ti\La)^\Ga
  	\]
  	for the parahoric $\GL(\L)$ of $\GL(V)$ given as the stabilizer of $\bx$.

  	\begin{lemma}\label{fixedptsGL}
  		The parahoric $\Res_{\ti \O/\O}\GL(\ti\La)^\Ga$ of $\GL(V)$ is equal to the stabilizer $\GL(\L)$ of the periodic lattice chain $\L$ given by $\{\Lambda_i\}_{i\in \Z}$ 
  		where
  		\[
  		\Lambda_i=(\ti\pi^i\ti\La)^\Ga\subset (V\otimes_K \ti K)^\Ga=V
  		\]
  		and $\Lambda_{i+1}\to \Lambda_i$ are the natural injective maps given by $\ti\pi^{i+1}\ti \La\subset\ti\pi^i\ti \La$.
  	\end{lemma}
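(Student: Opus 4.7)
The plan is to describe the $\Gamma$-action on $\ti\La$ explicitly via its isotypic components, identify the lattice chain $\L$, and then verify that both sides agree as functors on $\O$-algebras. Both are smooth affine group schemes over $\O$ with generic fiber $\GL(V)$: the left hand side is smooth by Edixhoven's lemma applied to the tame $\Gamma$-action on the smooth scheme $\Res_{\ti\O/\O}\GL(\ti\La)$, while the right hand side is smooth by standard Bruhat--Tits theory. Since both are flat over $\O$, it suffices to show that the two functors of points agree on a sufficiently rich class of $\O$-algebras.

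After possibly enlarging $K$ by a finite unramified extension (which affects neither the stabilizer scheme at $\bx$ nor the statement), we may assume $\ti K/K$ is totally tamely ramified cyclic of degree $e$, with $\Gamma=\langle\gamma\rangle$ and $\gamma(\ti\pi)=\zeta\ti\pi$ for a primitive $e$-th root of unity $\zeta\in\O^\times$. Then
\[
V\otimes_K \ti K \;=\; \bigoplus_{i=0}^{e-1}V\cdot\ti\pi^i
\]
is the $\Gamma$-eigenspace decomposition, with $\Gamma$ acting on the $i$-th summand through $\gamma\mapsto\zeta^i$. Since $|\Gamma|=e$ is a unit in $\O$, the $\Gamma$-stable lattice $\ti\La$ splits correspondingly as $\ti\La=\bigoplus_{i=0}^{e-1}M_i\cdot\ti\pi^i$, with each $M_i\subset V$ an $\O$-lattice. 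Writing $\ti\pi^e=ua$ for some uniformizer $a$ of $K$ and $u\in\O^\times$, the condition $\ti\pi\ti\La\subset\ti\La$ yields the chain $M_0\subset M_1\subset\cdots\subset M_{e-1}\subset\pi^{-1}M_0$. A direct computation of the trivial isotypic component of $\ti\pi^j\ti\La$, for $j=qe+r$ with $0\leq r<e$, gives
\[
\Lambda_j \;=\; \ti\pi^j\ti\La\cap V \;=\;
\begin{cases} \pi^q M_0, & r=0,\\ \pi^{q+1}M_{e-r}, & 0<r<e. \end{cases}
\]
In particular, the underlying set of lattices $\{\Lambda_j\}_{j\in\Z}$ coincides, up to scaling by powers of $\pi$, with $\{M_0,M_1,\ldots,M_{e-1}\}$.

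Finally, I would compare the two schemes on $R$-points. For an $\O$-algebra $R$, a $\Gamma$-invariant element of $\GL(\ti\La)(R\otimes_\O\ti\O)$ is an $(R\otimes_\O\ti\O)$-linear automorphism $g$ of $\ti\La\otimes_\O R$ commuting with the diagonal $\Gamma$-action. Such a $g$ must preserve each isotypic summand $(M_i\otimes_\O R)\ti\pi^i$, and $\ti\O$-linearity (i.e. $\ti\pi$-equivariance) forces the induced $R$-linear automorphisms $g_i$ of $M_i\otimes_\O R$, obtained after cancelling $\ti\pi^i$, to be compatible with the inclusions $M_i\subset M_{i+1}$. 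Hence they assemble into a single $R$-linear automorphism of $V\otimes_\O R$ preserving each $M_i\otimes_\O R$, which matches exactly the $R$-points of $\GL(\L)$, since preserving all $\Lambda_j\otimes_\O R$ is equivalent to preserving all $M_i\otimes_\O R$. The main technical step will be this last compatibility argument: one must check that $\ti\O$-linearity and $\Gamma$-equivariance of $g$ genuinely force the collection $(g_i)$ to be the restrictions of a single automorphism of $V\otimes_\O R$. This follows cleanly from the explicit description $\ti\La=\bigoplus M_i\ti\pi^i$ together with the nesting $M_i\subset M_{i+1}$, so no serious obstacle arises.
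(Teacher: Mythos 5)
Your proposal follows essentially the same route as the paper: decompose $V\otimes_K\ti K$ and $\ti\La$ into $\Gamma$-eigenspaces (your $M_i\ti\pi^i$ are the paper's $\ti\La_{i}$, and your $M_i$ are the paper's $\La_{-i}$ up to rescaling by $\ti\pi$), read off the lattice chain from the isotypic pieces, and compare. Two small points of imprecision are worth flagging. First, the reduction ``it suffices to show that the two functors of points agree on a sufficiently rich class of $\O$-algebras'' should be replaced by the precise tool the paper uses: since both sides are \emph{smooth} affine $\O$-group schemes with generic fiber $\GL(V)$, it is enough by \cite[Prop.\ 1.7.6]{BT2} to compare $\br\O$-points; flatness alone is not the right hypothesis. (This also cleanly handles your unramified-base-change step, as passing to $\br\O$ simultaneously kills the unramified part of $\Gamma$.) Second, your closing phrase that the compatible tuple $(g_i)$ ``assembles into a single $R$-linear automorphism of $V\otimes_\O R$'' is only meaningful when $R$ is flat over $\O$: for general $R$ the transition maps $M_i\otimes_\O R\to M_{i+1}\otimes_\O R$ need not be injective and there is no ambient module, and indeed $\GL(\L)(R)$ is by definition the group of compatible tuples, not a subgroup of automorphisms of an $R$-module. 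Once one restricts to $R=\br\O$ this issue disappears, and the comparison becomes exactly the paper's inclusion $\GL(V)\cap\GL(\ti\La)=\bigcap_i\GL(\La_i)$ inside $\GL(V\otimes_K\ti K)$. With these two corrections your argument matches the paper's.
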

  	
  	Note that in the above, we could have $\Lambda_{i+1}=\Lambda_i$ for some $i$. The periodic lattice chain $\L$ given by $\{\Lambda_i\}_{i\in \Z}$ is, by definition, the set of the lattices $\La_i$. Since $\ti\pi^e\ti \O=\pi\ti \O$ we have $\Lambda_{e}=(\pi\ti\La )^\Ga=\pi\Lambda_0$.
  	
  	\begin{proof} 
  	Both the group schemes  $\Res_{\ti \O/\O}\GL(\ti\La)^\Ga$ and $\GL(\L)$ are smooth affine with generic fiber $\GL(V)$ and, by \cite[Prop. 1.7.6]{BT2}, it is enough to show they have the same $\br \O$-points.
  		For this, we base change to $\br \O$ and assume that $K=\br K$. So, it is enough to show
  		\begin{equation}\label{intersect}
  		\GL(V)\cap \GL(\ti\La)=\bigcap_{i=0}^{e-1} \GL(\La_i)
  		\end{equation}
  		(the intersection taking place in $\GL(V\otimes_K\ti K)$.)
  		Let $\ti\pi\in \ti \O$ be a uniformizer with $\ti\pi^e\in \O$. Let $\chi: I\to k^*={\rm Aut}_k((\ti\pi)/(\ti\pi)^2)$ be the standard inertia character. Write
  		\[
  		\ti \La= \bigoplus_{i\in \Z/e\Z} \ti \La_{i}
  		\]
  		for the decomposition into eigenspaces for the action of the inertia.
  		Here 
  		\[
  		\ti\La_i=\ti \La_{i\,{\rm mod}\, e}=\{x\in \ti\La\ |\ \gamma(x)=[\chi(\gamma)]^i\cdot x\},
  		\]
  		with $[\ ]: k^*\to \O^*$ the Teichm\" uller map. The eigenspaces $\ti\La_i$ are $\O$-modules and
  		\[
  		\La_i=(\ti\pi^i\ti\La)^I=\ti\pi^i \ti\La_{-i\,{\rm mod}\, e}\xrightarrow{\sim} \ti\La_{-i\,{\rm mod}\, e},
  		\]
  		the last map given by multiplying by $\ti\pi^{-i}$. So, we have
  		\begin{equation}
  		\ti\La=\bigoplus_{i=0}^{e-1} \ti\pi^{-i}\La_i\subset V\otimes_K \ti K=\bigoplus_{i=0}^{e-1} \ti\pi^{-i}V.
  		\end{equation} 
  		Multiplication by $g\in \GL(V)\cap \GL(\ti\La)$ respects the eigenspace decomposition of $\ti\La$ and commutes with scaling by $\ti\pi$, 
  		so the LHS of (\ref{intersect}) is contained in the RHS. Suppose $g\in \GL(\La_i)$, for all $i$. Then, by the above, $g$ considered in $\GL(V\otimes_K \ti K)$ gives an automorphism of $\ti\pi^i \ti\La_{-i\,{\rm mod}\, e}$ and hence of
  		$\ti\La$. This shows that the RHS is contained in the LHS.
  	\end{proof}
  	
  \end{para}

  \begin{para}\label{TOT} In the  lemma above, $\{\La_i\}_{i\in \Z}$ is given by the $\pi^\Z$ multiples of its segment
  	\[
  	\pi\La_0\subset \La_{e-1}\subset \cdots\subset \La_1\subset \La_0.
  	\]
  	Assuming $\ti\pi^e\in \O$ and that $K=K^{\rm un}$, i.e. $\ti K/K$ is totally ramified, the proof of the lemma gives 
  	\begin{equation}\label{rescale}
  	\La_0\oplus\La_1\oplus\cdots \oplus \La_{e-1}\xrightarrow{\sim}\ti\La_{0}\oplus\ti\La_{-1}\oplus\cdots\oplus \ti\La_{-(e-1)}=\ti\La
  	\end{equation}
  	as $\O$-modules, with the map given by multiplication by $(1,\ti\pi^{-1},\ldots, \ti\pi^{-(e-1)})$. Hence,
  	\[
  	{\rm tot}(\L)\subset \La_0\oplus\La_1\oplus\cdots \oplus \La_{e-1}\simeq \ti\La
  	\]
  	and it is a direct summand. (The inclusion is proper when we have $\La_i=\La_{i+1}$, for some $i$.)  
  	It follows that multiplication by corresponding powers of $\ti\pi$ on the graded pieces gives an isomorphism
  	\begin{equation}\label{generalDirectSum}
  	L\oplus {\rm tot}(\L)\xrightarrow{\ \sim\ } \ti\La.
  	\end{equation}
  	where $L$ is a certain direct sum of $\La_i$.
  \end{para}

  \subsection{Embedding of parahorics}

 \begin{para}Let $p>2$ and $\rho:G\rightarrow \GL(V)$ a faithful representation of a reductive group over $K$. 
 	We have the following proposition which generalizes \cite[Lem. 2.3.1]{KisinJAMS} with a similar proof.
 		\begin{prop}\label{prop:findlattice}
 		
 		Let $\ti K/K$ be a finite Galois extension with $\Ga=\Gal(\ti K/K)$ and with the following property:
 		
 		There is a split reductive group scheme
 		$\ti\Gg$ over $\ti \O$ such that
 		\begin{altitemize}
 			\item[1)] $\ti\Gg\otimes_{\ti \O}\ti K=G\otimes_{K}\ti K$ (in particular, $G$ splits over $\ti K$),
 			
 			\item[2)] $\ti\Gg$ supports an $\ti\O$-semilinear $\Ga$-action which extends the standard $\ti K$-semilinear $\Gamma$-action on $G\otimes_K\ti K$.
 		\end{altitemize}
 		Then there is a $\ti \O$-lattice $\ti\La$ in $V\otimes_K\ti K$ which is $\Ga$-invariant and such that the base change $\rho\otimes_K\ti K: G\otimes_K\ti K\to \GL(V\otimes_K\ti K)$ extends to a closed group scheme immersion
 		\[
 		\ti\Gg\hook \GL(\ti \La).
 		\]
 	\end{prop}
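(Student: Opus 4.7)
The plan is to build $\ti\La$ in three stages: first produce a $\ti\Gg$-stable $\ti\O$-lattice in $V\otimes_K\ti K$; second, symmetrize under $\Ga$ to make it also $\Ga$-stable; and third, show that the resulting morphism $\ti\Gg\to\GL(\ti\La)$ is a closed immersion, enlarging the lattice further if needed.

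For the first stage, I would apply the standard comodule/coaction argument. Since $V\otimes_K\ti K$ is a representation of the finite type affine group scheme $\ti\Gg_{\ti K}$, starting from any $\ti\O$-lattice $\ti\La_0'\subset V\otimes_K\ti K$ and considering the coaction
\[
V\otimes_K\ti K\longrightarrow (V\otimes_K\ti K)\otimes_{\ti K}\ti K[\ti\Gg],
\]
the image of $\ti\La_0'$ lies in $(V\otimes_K\ti K)\otimes_{\ti K}\ti\O[\ti\Gg]$ (after possibly scaling $\ti\La_0'$). Evaluating the coefficients of this coaction along a finite set of $\ti\O$-algebra generators of $\ti\O[\ti\Gg]$ produces a finitely generated $\ti\O$-submodule of $V\otimes_K\ti K$ containing $\ti\La_0'$ and stable under $\ti\Gg$; this is our lattice $\ti\La_0$.

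For the second stage, I set $\ti\La:=\sum_{\gamma\in\Ga}\gamma(\ti\La_0)$, a finite sum of $\ti\O$-lattices (hence an $\ti\O$-lattice), which is tautologically $\Ga$-stable. For $\ti\Gg$-stability I would use the hypothesis that the $\ti\O$-semilinear $\Ga$-action on $\ti\Gg$ extends the natural action on $G\otimes_K\ti K$, together with the fact that $\rho\otimes_K\ti K$ is $\Ga$-equivariant (since $\rho$ is defined over $K$). It follows that under the embedding $\ti\Gg_{\ti K}\hookrightarrow\GL(V\otimes_K\ti K)$ the action of $\Ga$ on the ambient group preserves $\ti\Gg$ as a subscheme, so each translate $\gamma(\ti\La_0)$ is again $\ti\Gg$-stable, and therefore so is the sum.

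For the third stage, the morphism $\ti\Gg\to\GL(\ti\La)$ is a homomorphism of smooth affine $\ti\O$-group schemes whose generic fiber is a closed immersion by the faithfulness of $\rho$. Since a morphism of flat, finitely presented group schemes over the Dedekind ring $\ti\O$ is a closed immersion if and only if it is a closed immersion on every fiber, it remains to handle the special fiber. The kernel of the special-fiber map is a closed normal subgroup scheme of $\ti\Gg\otimes k$, trivial on the generic fiber. Since $\ti\Gg$ is split reductive, I can exploit its integral representation theory: by enlarging $\ti\La_0$ in step one with further $\ti\Gg$-translates of a fixed Chevalley-basis generating set acting on $V\otimes_K\ti K$ (and then re-symmetrizing in step two), I can arrange that the weights occurring in $\ti\La\otimes k$ as a representation of a split maximal torus $\ti T\subset\ti\Gg$ generate the character group of $\ti T$, and that the corresponding root vectors act nontrivially. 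This forces the kernel on the special fiber to be trivial. The main obstacle is precisely this fiber-by-fiber faithfulness check: the generic fiber is automatic, but ruling out pathological kernels on the special fiber requires the flexibility afforded by the split reductive structure of $\ti\Gg$ and the ability to enlarge the lattice $\ti\La_0$ without destroying the $\Gamma$-stability achieved in stage two.
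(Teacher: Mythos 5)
Your first two stages are a valid and arguably cleaner variant of what the paper does: the paper produces a Galois-stable lattice by working over the maximal unramified extension $M/\ti K$ and averaging over the compact group $\ti\Gg(\O_M)\rtimes\Gal(M/K)$ (following \cite[Lem.~2.3.1]{KisinJAMS}) and then descends by $\Gal(M/\ti K)$-invariants, whereas you use the coaction-saturation argument directly over $\ti\O$ and then symmetrize over the finite group $\Ga$. Your verification that each $\gamma(\ti\La_0)$ remains $\ti\Gg$-stable is correct and rests on exactly the hypothesis (2) on the semilinear $\Ga$-action; this is the point of that hypothesis.

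Stage three, however, has a genuine gap. You assert that for flat, finitely presented group schemes over the Dedekind ring $\ti\O$, ``closed immersion on each fiber'' is equivalent to ``closed immersion.'' This is not a theorem you can invoke: surjectivity of $\ti\O[\GL(\ti\La)]\to\ti\O[\ti\Gg]$ does not follow from surjectivity on both fibers, since the relevant cokernel is not finitely generated and Nakayama does not apply. (One can cook up nonzero $\ti\O$-modules $C$ with $C\otimes\ti K = 0$ and $C\otimes k_{\ti K}=0$, e.g.\ divisible torsion modules.) Beyond that, even granting a fiberwise criterion, your plan to force faithfulness of $\ti\Gg\otimes k\to\GL(\ti\La\otimes k)$ by adjoining ``Chevalley-basis translates'' and arranging that weights generate the character lattice is too vague: in characteristic $p$ the kernel of the special fiber map could a priori be an infinitesimal normal subgroup scheme, and it is not clear that enlarging $\ti\La_0$ in the way you describe rules this out, nor that it can be done while preserving the $\Ga$-stability from stage two. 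The paper sidesteps all of this by quoting Prasad--Yu \cite[Cor.~1.3]{PYuGS}: for $p$ odd, any homomorphism from a reductive group scheme over $\ti\O$ to $\GL(\ti\La)$ which is a closed immersion on the generic fiber is automatically a closed immersion. With that result in hand, your stages one and two already finish the proof --- no special-fiber tinkering is needed and no enlargement of the lattice beyond the Galois symmetrization is required.
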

 	\begin{proof}  
 		Let $M$ be the  maximal unramified extension of $\ti K$ in an algebraic closure of $\ti K$. Then $M/K$ is an (infinite) Galois extension. The natural homomorphism $\Gal(M/K)\to \Ga=\Gal(\ti K/K)$ is a surjection with kernel $\Gal(M/\ti K)$ which identifies inertia subgroups. We first show that there is a $\Gal(M/K)$-invariant $\O_M$-lattice
 		$\La_M$ in $V\otimes_K M$ which is also preserved by the action of  $\rho(\ti\Gg(\O_M))$. 
Observe that  $\ti\Gg(\O_M)$ is bounded and the Galois group $\Gal(M/K)$ is compact. We can consider the semi-direct product $\ti\Gg(\O_M)\rtimes \Gal(M/K)$ (obtained by 
 		the $\Gal(M/K)$-action on $\ti\Gg(\O_M)$ given by the semi-linear $\Ga$-action on $\ti\Gg$) and apply the same argument as in the proof of \cite[Lem. 2.3.1]{KisinJAMS} to obtain   the existence  of $\La_M$.
 		Then, by \cite[1.7.6]{BT2}, $\rho$ extends to a group scheme homomorphism
 		\[
 		\ti\Gg\otimes_{\ti \O}\O_M\to \GL(\La_M).
 		\]
 		Since $\ti\Gg$ is reductive and $p$ is odd, this is a closed immersion
 		by \cite[1.3]{PYuGS}. We can then take 
 		\[
 		\ti \La:=( \La_M)^{{\rm Gal}(M/\ti K)}.
 		\]
 		This is an $\ti\O$-lattice in $V\otimes_K\ti K$ by \'etale descent along $\O_M/\ti \O$ and the rest follows.
 	\end{proof}
 	  	\begin{Remark}\label{rem:findlattice}
 		{\rm 
 			a) After applying restriction of scalars and then $\Ga$-fixed points to $\ti\Gg\hook \GL(\ti\La)$, we obtain a closed immersion of group schemes
 			\[
 			(\Res_{\ti\O/\O}\ti\Gg)^\Ga\hook \Res_{\ti\O/\ti O}\GL(\ti \La)^\Ga
 			\]
 			which gives $\rho: G\to \GL(V)$ on generic fibers.
 			
 			b) Note that we do not need that $\ti K/K$ is tame in Proposition \ref{prop:findlattice}. 
 			However, under this additional assumption, we see, using Edixhoven's lemma \cite[3.4:Prop.]{Edix}, that both the target and the source of the closed immersion in (a) above are smooth affine schemes over $\O$.
 			By Lemma \ref{fixedptsGL} and \'etale descent, the target is a parahoric group scheme for $\GL(V)$. In fact, it is 
 			the parahoric group scheme given as the stabilizer of the chain of $\O$-lattices
 			$\{(\ti\pi^i\ti\La)^\Ga\}_{i\in\Z}$.
 			
 		}
 	\end{Remark}
 \end{para}
     \begin{para}\label{2immersions}
  We now assume that $G$ and $\ti K/K$ are as in \S\ref{ss:Fixed} and let $\rho: G\to \GL(V)$ be a faithful representation over $K$.
  Suppose $\bx\in \B(G, K)$ is generic in its facet and that after replacing $\bx$ by a nearby point with the same stabilizer group scheme, $\bx$ is hyperspecial in $\B(G, \ti K)$
  and hence the corresponding parahoric group scheme $\ti\Gg=\ti\Gg_{\bx}$ of $G\otimes_K\ti K$ is reductive. This is possible by Prop. \ref{Fixed}, under the assumptions stated there.

  	By Proposition \ref{prop:findlattice}, there is a $\Ga$-stable $\ti \O$-lattice $\ti\La$ in $V\otimes_K\ti K$ such that $\rho$ extends to a closed immersion of group schemes
  	\[
  	\rho: \ti\Gg_\bx\hook \GL(\ti\La).
  	\]
  	Taking restriction of scalars and then $\Ga$-fixed points gives a closed immersion
  	\begin{equation}\label{im1or}
  		\rho: \Gg_\bx=(\Res_{\ti\O/\O}\ti\Gg_\bx)^\Ga \hook (\Res_{\ti\O/\O}\GL(\ti\La)^\Ga\hook \GL(\ti\La)
  	\end{equation}
  	where in the target we consider $\ti\La$ as an $\O$-module by restriction of scalars.
  	
  \end{para}

\section{Local models and embeddings}\label{s:LM}

In this section, we discuss the formalism of local models, we exhibit local models as closed subschemes of suitable Grassmannians
and (re)prove the cases of the Scholze-Weinstein conjecture that we need. Our arguments are independent of \cite{AGLR}.
The main results are Theorems \ref{thm:twoLM} and Theorem \ref{thm: LM embedding}.
 We also give the related definition of a good integral Hodge embedding, see Definition \ref{def:Good}.
 
\subsection{Local model triples and local models}\label{ss:LM}

  \begin{para}\label{LMparNEW}
In this section, we let $F$ be a finite extension of $\bbQ_p$.
Let  $(G, \{\mu\}, \Gg )$ be a  {\sl local model triple} over $F$.
 By definition, in these triples 
 
 \begin{altitemize}
 
 \item{} $G$ is a (connected) reductive group over $F$,

\item{} $\{\mu\}$ is the $G(\ov F)$-conjugacy class of a minuscule cocharacter $\mu: \Gm_{\ov F}\to G_{\ov F}$,
where $\ov F$ is an algebraic closure of $F$,

 \item{} 
 $\Gg$ is a quasi-parahoric stabilizer group scheme over $\O_F$ for $G$.
 \end{altitemize}

A morphism of local model triples $(G,\{\mu\},\Gg) \rightarrow (G',\{\mu'\},\Gg')$ is a group scheme homomorphism $\Gg \rightarrow \Gg'$ 
taking $\{\mu\}$ to $\{\mu'\}.$ 

As usual, we denote by $E$ the reflex field of the pair $(G, \{\mu\})$. It is a subfield of $\bar F$ containing $F$. To simplify notation, we often write $(G, \mu)$ 
for   $(G, \{\mu\})$ and $(\Gg,\mu)$ instead of  $(G, \{\mu\}, \Gg )$.

\begin{Definition}
We say that the pair $(G, \mu)$ is of (local) \emph{Hodge type}, if there is an embedding $\rho: G\hook \GL(V)$ such that
\begin{altitemize}
\item $\rho$ is a minuscule representation,

\item  $\rho\circ \mu$ is conjugate to the standard minuscule cocharacter $\mu_d$ of $\GL(V_{\ov F})$; here $\mu_d(a)=\mathrm{diag}(a^{(d)},1^{(h-d)})$ where $h=\dim V$,

\item $\rho(G)$ contains the scalars.
\end{altitemize}
Such a $\rho$ will be said to give a Hodge embedding $\rho: (G,\mu)\hook (\GL(V),\mu_d)$.
\end{Definition}

By definition, an \emph{integral} Hodge embedding  for $(\Gg,\mu)$ is a closed immersion of group schemes $\Gg\hook\GL(\La)$ over $\O_F$, where $\La$ is an $\O_F$-lattice in $V$, such that
 the homomorphism of generic fibers $G\hook \GL(V)$ is a Hodge embedding in the sense above. 

\begin{Definition}
We say that the pair $(G, \mu)$ is of (local) \emph{abelian type}, if there is a pair $(G_1,\mu_1)$ of Hodge type and an isomorphism 
$(G^\ad_1, \mu^\ad_1)\simeq (G^\ad,\mu^\ad)$. 
\end{Definition}

\begin{Definition}\label{def: quasi-tame}
\begin{altenumerate}
\item We  say that a reductive group $G$ over $F$ is \emph{quasi-tame}, if $
G\simeq  \prod_{i=1}^s\mathrm{Res}_{K_i/F}H_i
$
where, for all $i$, $K_i/F$ is a finite extension and $H_i$ 
is a reductive group  over $K_i$ which splits over a tamely ramified extension of $K_i$.

\item We  say that a reductive group $G$ over $F$ is \emph{essentially tame}, if $G^\ad$ is quasi-tame, cf. \cite[App.]{PRShtuka}.
\end{altenumerate}
\end{Definition}

\begin{stass}\label{stdASSumptions}
 We assume $p>2$, the pair $(G, \mu)$ is of abelian type and, in addition, that $G$ is essentially tame and classical.
\end{stass}

 In this situation, $G$ is  classical when $G^\ad\simeq  \prod_{i=1}^s\mathrm{Res}_{K_i/F}H_i$, with each $H_i$ splitting over a tamely ramified extension of $K_i$, and of classical type. (By definition, ``classical type" excludes triality groups.)

\begin{Remark}\label{rem: standard assumption}
{\rm a) Suppose $p>2$ and $(G, \mu)$ is of abelian type. 
Write 
$$
(G^\ad,\mu^\ad)\simeq (\prod_{i=1}^s\Res_{K_i/F}H_i, \{\mu_i\}),
$$
 where, for all $i$, $K_i/F$ is a finite extension and $H_i$ is absolutely simple over $K_i$. As we will explain below, if  $\mu_i$ is non-trivial, then  $H_i$ is of classical type, and also splits over a tamely ramified extension of $K_i$. Hence, the additional condition ``essentially tame and classical" in the standard assumptions above, is only relevant when $\mu_i=1$, for some $i$.
 
Indeed, when $\mu_i\neq 1$,  $H_i$  is of type $A$, $B$, $C$, or $D$: This follows from Deligne's argument classifying  Hodge embeddings which also applies in this local case. 
The triality forms of type $D_4$ are excluded:  Indeed, the existence of a rational minuscule embedding implies that the Galois group cannot act transitively on the set of end vertices in the Dynkin diagram of a simple factor of type $D_4$. Now, 
if $p>3$ any reductive group $G$ over $F$ is essentially tame. If $p=3$, there are $G$ which are not essentially tame: However, they are all triality forms and these are excluded. For details, see \cite[Prop. 7.2.1 and its proof]{PRlsv}, cf. \cite[\S 2.3.8]{DeligneCorvallis} and Prop. \ref{prop:PRlsv} below. 

b) If $p>2$ and $(G, \mu_h)$ is obtained, by completion at $p$, from a (global) Shimura datum $(\G, X)$ of abelian type in the sense of \cite{DeligneCorvallis}, then the pair
$(G, \mu_h)$ satisfies the standard assumptions.}
\end{Remark}

 \end{para}

\begin{para}
In what follows, we write $\BMloc_{\Gg,\mu}$ for the local model associated to the local model triple $(G,\{\mu\},\Gg)$. By definition, $\BMloc_{\Gg,\mu}=\BMloc_{\Gg^\circ,\mu}$ and is the unique, up to unique isomorphism, proper flat $\O_E$-scheme with $\Gg$-action, with generic fiber $G/P_\mu$ and reduced special fiber, which represents the $v$-sheaf ${\rm M}^v_{\Gg,\mu}$ over ${\rm Spd}(\O_E)$ defined in \cite{Schber}. (This is denoted by ${\rm Gr}_{\Gg,{\rm Spd}(\O_E),\mu}$ in \cite[Lect. 21]{Schber}.)

The existence of $\BMloc_{\Gg,\mu}$ was conjectured by Scholze-Weinstein \cite[Conj. 21.4.1]{Schber} and is shown in \cite{AGLR} under mild assumptions (which are weaker than the standard assumptions above), and in general in \cite{GL}.

In fact, under the above standard assumptions, we will construct $\BMloc_{\Gg,\mu}$ following  the work in \cite{PZ}, \cite{Levin}, \cite{HPR}, independently of the arguments of \cite{AGLR}, \cite{GL}, see Theorem \ref{thm:twoLM}. Our specific construction of $\BMloc_{\Gg,\mu}$ is important for the rest of the argument,  and is intertwined with the construction of a suitable embedding of the local model in a Grassmannian, see Theorem \ref{thm: LM embedding}.

\end{para}
 
 \begin{para}\label{perfectionLM}
The perfection of the special fiber of the local model $\BMloc_{\Gg,\mu}$ is a closed subscheme of the (perfect)
Witt vector affine Grassmannian ${\rm Gr}_\Gg^W=L^W\Gg/L^{W+}\Gg$ (\cite{ZhuAfGr}, \cite{BS}), see \cite[Thm 2.1, Thm. 7.23]{AGLR}. If $\K$ is an algebraically closed field of characteristic $p$,
\[
{\rm Gr}_\Gg^W(\K)=\frac{G(W(\K)[1/p])}{\Gg(W(\K))},
\]
hence there is a natural equivariant embedding
\[
\BMloc_{\Gg,\mu}(\K)\subset {\rm Gr}_\Gg^W(\K)=\frac{G(W(\K)[1/p])}{\Gg(W(\K))}.
\]

 \end{para}
 
 \begin{para}\label{vsheafmorphisms}
 Now consider local model data $(\Gg, \mu)$ of Hodge type and integral Hodge embeddings $\Gg\hook\GL(\La)$ extending $\rho: (G,\mu)\hook (\GL(V),\mu_d)$. 
 By functoriality and by using the full-faithfulness result of \cite[Prop. 18.4.1]{Schber}, we see that there is a canonical 
 equivariant morphism 
 \[
 \rho_*: \BMloc_{\Gg,\mu}\to {\rm Gr}(d,\La)_{\O_E}=\BMloc_{\GL(\La),\mu_d}\otimes_{\O}\O_E
 \]
  attached to $(\Gg,\mu)\hook (\GL(\La),\mu_d)$, where $\Gr(d,\Lambda)$ is the smooth Grassmannian classifying $d$-dimensional subspaces of $\Lambda$. This morphism identifies $\BMloc_{\Gg,\mu}$ with the normalization of its scheme theoretic image.
  Note that by \cite[Cor. 21.6.10 and its proof]{Schber}, we have ${\rm Gr}(d,\La)^\Diamond={\rm M}^v_{\GL(\La),\mu_d}$, and so $\BMloc_{\GL(\La),\mu_d}={\rm Gr}(d,\La)$
(this proves the Scholze-Weinstein conjecture for $\GL_n$). 
  
Suppose  that $\K$ is an algebraically closed field extension of $k_E$. Then, by combining with \S \ref{perfectionLM}, we obtain
 a commutative diagram of inclusions
\begin{equation}\label{diagram:Witt}
 \begin{aligned}
\xymatrix
{\BMloc_{\Gg,\mu}(\K)  \ar@{^{(}->}[r]\ar@{^{(}->}[d] &  {\rm Gr}_\Gg^W(\K) \ar@{^{(}->}[d]   \\
{\rm Gr}(d, \La)(\K) \ar@{^{(}->}[r] &{\rm Gr}^W_{\GL(\La)}(\K),}
\end{aligned}
\end{equation}
with  the   vertical arrows induced by $\Gg\hook \GL(\La)$.

\subsection{Local models via Beilinson-Drinfeld affine Grassmannians.}\label{sec: affine grassmannian}

\begin{para}\label{par:1.1.1}
	Let    $G$ be a (connected) reductive group over a field $\kappa$.  
	We let $\Gr_G:=LG/L^+G$ denote the affine Grassmannian for $G$; thus $\Gr_{G}$ is the ind-scheme over $\Spec(\kappa)$ which represents the fpqc sheaf associated to the functor given by  $R\mapsto G(R(\!(t)\!))/G(R[\![t]\!])$ on $\kappa$-algebras $R$. The affine Grassmannian $\Gr_G$
	 also  represents the functor on $\kappa$-algebras
	which sends $R$ to the isomorphism classes of pairs $(\calE,\phi)$ where
	\begin{altitemize}\item  $\calE$ is a $G$-torsor over $\Spec R\lps t\rps $,
		\item  $\phi:\calE[1/t]\xrightarrow{\sim} \calE^0[1/t]$ is a trivialization of the restriction $\calE[1/t]$ of the $G$-torsor $\calE$ to $\Spec(R\llps t\lrps)$.
	\end{altitemize}
	Here, $\calE^0$ denotes the trivial $G$-torsor.
	\end{para}

\begin{para}\label{para:Levin1}

Let $K_0/F$ be a finite unramified extension. Let $P(u)\in\calO_{K_0}[u]$ be a monic polynomial and $\underline{\calG}$ a smooth affine group scheme over $\calO_{K_0}[u]$ with geometrically connected fibers. We consider the functor $\mathrm{Fl}^{P(u)}_{\underline{\calG},0}$ on $\calO_{K_0}$-algebras $R$ given by $$\mathrm{Fl}^{P(u)}_{\underline{\calG},0}(R)=\{\text{iso. classes of pairs }(\calE,\beta)\},$$
where $\calE$ is a $\underline{\calG}$-torsor over $R[u]$ and $\beta:\calE|_{R[u][1/P(u)]}\xrightarrow{\sim} \calE^0$ is an isomorphism of $\underline{\calG}$-torsors, where $\calE^0$ denotes the trivial  $\underline{\calG}$-torsor. We then define 
 the mixed characteristic affine Grassmannian
 $$
 \mathrm{Fl}^{P(u)}_{\underline{\calG}}:=\mathrm{Res}_{\calO_{K_0}/\calO_{F}}\mathrm{Fl}^{P(u)}_{\underline{\calG},0}.
 $$
 By embedding $\underline\Gg$ into a general linear group, one deduces as in \cite[Prop. 4.1.4]{Levin}, that 
 $\mathrm{Fl}^{P(u)}_{\underline{\calG}}$ is representable 
  by an ind-scheme over $\calO_F$. 

 \end{para}
 
 \begin{para}\label{para:Levin2}

Let $(G,\{\mu\},\Gg)$  be a local model triple with $G\cong \mathrm{Res}_{K/F}H$.  Assume that $\Gg$ is the stabilizer of a point $\bx\in\calB(G, F)$. Then by \cite[Prop. 4.7]{HaRi}, we have $\Gg\cong \mathrm{Res}_{\O_K/\O_F}\Hh$. 

Assume now that  $H$ splits over a tamely ramified extension of $K$.
Let $K_0$ denote the maximal unramified extension of $F$ contained in $K$ and write $\O_{K_0}$ (resp. $k_0$) for its ring of integers (resp. residue field). We let $\O_{K_0}[u^\pm]$ denote the ring $\calO_{K_0}[u,u^{-1}]$. We fix a uniformizer $\pi$ of $K$ and we write $E(u)\in \calO_{K_0}[u]$ for the Eisenstein polynomial which is the  minimal polynomial for $\pi$ over $K_0$. Fix also a rigidification $(H, A, S, P)$ of $H$ in the sense of \cite[Def. 2.7]{PZ}, cf. \cite[\S 3.1]{Levin}, in which $A$ is a maximal split torus of $H$ over $K$ such that $\bx\in \B(H, K)$ lies in the apartment corresponding to $A$. Denote by $\underline H$ the reductive group scheme over $\O_{K_0}[u^{\pm}]$ constructed by \cite[Prop. 3.1.2]{Levin}. This extends the group $H$ in the sense that the base change of $\underline H$ by $\O_{K_0}[u^{\pm}]\to \O_K$, $u\mapsto \pi$, is $H$. Then, \cite[Thm 3.3.3]{Levin}, cf. \cite[Thm. 4.1]{PZ}, gives a smooth affine group scheme $\underline\Hh^\circ$ over $\O_{K_0}[u]$,  with geometrically connected fibers, extending $\underline H$ which also specializes to $\calH^\circ$ under the map $\calO_{K_0}[u]\rightarrow \calO_{K},$ $u\mapsto \pi$.

 Applying the construction of \S\ref{sec: affine grassmannian} we obtain the ind-scheme $\mathrm{Fl}_{\underline{\calH}^\circ}^{E(u)}$ 
 over $\calO_F$ which is ind-projective by \cite[Thm. 4.2.11]{Levin}.

\begin{Remark}
{\rm In \cite[Thm. 3.3]{Levin}, \cite[Thm 4.1]{PZ}, it is assumed that the group scheme is parahoric, in particular connected.\footnote{In \cite{PZ}, $\Gg_{\bx}$ stands for the connected stabilizer; here this is denoted $\Gg_{\bx}^\circ$.} A similar argument as in loc. cit., can also be used to construct a smooth affine
$\underline\Hh$ over $\O_{K_0}[u]$  extending $\underline H$ which   specializes to the Bruhat-Tits stabilizer $\calH$ under the map $\calO_{K_0}[u]\rightarrow \calO_{K},$ $u\mapsto \pi$. 
Such a construction will appear in \S \ref{par: Levin local models again},  under some additional assumptions.  
}
\end{Remark}
 \end{para}

\begin{para}{}\label{sec: Levin local models}

For a $K_0$-algebra $R$, the completion $\widehat {R[u]}$ of $R[u]$ along the ideal $(E(u))$, contains the completion of $K_0[u]$ along $(E(u))$. The latter ring may be identified with $K\lps t\rps$, by a map sending $t$ to $E(u)$ and inducing the identity on residue fields.
Then $\widehat {R[u]}$ may be identified with $(K\otimes_{K_0} R)\lps t\rps$ by sending $t$ to $E(u)$. 
This induces an isomorphism from 
the generic fiber of $\mathrm{Fl}_{\underline{\calH}^\circ,0}^{E(u)}$ to the affine Grassmannian $\mathrm{Gr}_{\mathrm{Res}_{K/K_0}H}$ (cf. \cite[Cor. 3.5]{HaRi}), and hence an isomorphism from the generic fiber of $\mathrm{Fl}^{E(u)}_{\underline{\calH}^\circ}$ to $\mathrm{Gr}_{\mathrm{Res}_{K/F}H}\cong\mathrm{Gr}_{G}$.

A representative $\mu$ of $\{\mu\}$  over $\bar{F}$ determines an element of $G(\bar{F}\llps t\lrps)$ and hence a point $e_\mu := \mu(t)\in \mathrm{Gr}_G(\bar{F})$. The (affine) Schubert variety $S_\mu$ is the closure of the $G(\bar{F}\lps t\rps)$-orbit of $e_\mu$ in 
$\mathrm{Gr}_G$. The  conjugacy class $\{\mu\}$ has the reflex field $E$ as a minimal field of definition and the Schubert variety $S_\mu\subset \mathrm{Gr}_G$ is defined over $E$.  

\begin{Definition}\label{def:LevinLM}
The local model $\rM_{\Gg, \mu}=\rM_{\Gg^\circ,\mu}$ is defined to be the Zariski closure of $S_\mu$ in $\mathrm{Fl}^{E(u)}_{\underline{\calH}^\circ}\otimes_{\calO_F}\calO_E$.
\end{Definition}

\begin{Remark}\label{rem: local models}{\rm

a) Note that the input for the constructions above is a group scheme $\mathcal{H}$ over $\calO_K$ and a finite extension $K/F$. When $K=F$,  the group scheme $\underline{\calH}^\circ$ and the mixed characteristic affine Grassmannian  $\mathrm{Fl}^{u-\pi}_{\underline{\calH}^\circ}$ agrees with those constructed by in \cite{PZ}.
In this case, it follows from \cite[Thm. 2.7]{HPR} that the local model $\rM_{\Gg, \mu}$ only depends on the local model triple $(G,\{\mu\},\Gg)$ and not on the choice of uniformizer $\pi$. 
 
b) More generally, for an arbitrary $K$ and under some additional assumptions,  we show that the $\rM_{\calG, \mu}$ satisfy Conjecture 21.4.1 of \cite{Schber}, and hence are independent of the choice of $K$, and uniformizer $\pi$ (cf. Theorem \ref{thm:twoLM}).   }
	\end{Remark}

\end{para} 

\begin{para}\label{sec: local models product} In general, if $G$ is quasi-tame,  choose an isomorphism $G\cong\prod_{i=1}^r\mathrm{Res}_{K_i/F}H_i$, with $H_i$ splitting over a tame extension, and   set 
\[
\rM_{\Gg, \mu }:=\prod_{i=1}^r\rM_{\Gg_i, \mu_i}\otimes_{\calO_{E_i}}\calO_E.
\]
 Here $\Gg_i$ with generic fiber $\mathrm{Res}_{K_i/F}H_i$ is determined by $\Gg\cong\prod_{i=1}^r\Gg_i$, $\{\mu_i\}$ is the $\mathrm{Res}_{K_i/F}H_i$-factor of the $G$-conjugacy class $\{\mu\}$, 
 and $E_i$ (resp. $E$) is the field of definition of $\{\mu_i\}$ (resp. $\{\mu\}$). 
 The following theorem follows immediately from \cite[Thm. 4.2.7]{Levin}.

\begin{thm}\label{thm: Levin}
	Suppose $G$   is quasi-tame and that $p$ does not divide the order of $\pi_1(G^{\der})$. Then the  scheme 
$\rM_{\Gg, \mu} $, defined as above, is normal with reduced special fiber. Moreover each geometric irreducible component of $\rM_{\Gg, \mu }\otimes_{\calO_E} k$ is normal and Cohen--Macaulay.\footnote{In fact, $\rM_{\Gg, \mu }$ is Cohen-Macaulay, see \cite[Thm. 2.1]{HR22}.} \qed
\end{thm}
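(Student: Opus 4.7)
The plan is to deduce this essentially as a direct consequence of \cite[Thm.~4.2.7]{Levin}, via the product decomposition built into the definition in \S\ref{sec: local models product}.

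First I would fix, using the quasi-tame hypothesis, an isomorphism $G\cong \prod_{i=1}^r \Res_{K_i/F}H_i$ where each $H_i$ is reductive over $K_i$ and splits over a tame extension. This induces compatible decompositions $\Gg\cong \prod_i \Gg_i$ with $\Gg_i$ the quasi-parahoric for $\Res_{K_i/F}H_i$ attached to the $i$-th component of $\bx$, and $\{\mu\}=\prod_i\{\mu_i\}$, so that by definition
\[
\rM_{\Gg,\mu}=\prod_{i=1}^r \rM_{\Gg_i,\mu_i}\otimes_{\calO_{E_i}}\calO_E,
\]
where $E_i\subseteq E$ is the reflex field of $\{\mu_i\}$. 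Since $|\pi_1((\Res_{K_i/F}H_i)^{\der})|=|\pi_1(H_i^{\der})|^{[K_i:F]}$ and these orders multiply to $|\pi_1(G^{\der})|$, the hypothesis $p\nmid|\pi_1(G^{\der})|$ passes to each factor: $p\nmid|\pi_1(H_i^{\der})|$.

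Next I would apply \cite[Thm.~4.2.7]{Levin} to each individual triple $(\Res_{K_i/F}H_i,\{\mu_i\},\Gg_i)$. Since $H_i$ splits over a tame extension of $K_i$ and the $\pi_1$-hypothesis holds for $H_i^{\der}$, Levin's theorem yields that $\rM_{\Gg_i,\mu_i}$ is flat over $\calO_{E_i}$, normal, with reduced special fiber, and that every geometric irreducible component of $\rM_{\Gg_i,\mu_i}\otimes_{\calO_{E_i}}k$ is normal and Cohen--Macaulay. (Here one first reduces to the connected case $\Gg^\circ$, using that $\rM_{\Gg,\mu}=\rM_{\Gg^\circ,\mu}$ by definition.)

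Finally I would propagate these properties through the base change $\calO_{E_i}\to \calO_E$ and the fiber product over $\calO_E$. The extension $\calO_{E_i}\hookrightarrow\calO_E$ is a finite, hence flat, extension of complete DVRs of residue characteristic $p$; the residue field extension $k_{E_i}\hookrightarrow k_E$ is separable, so reducedness of the special fiber is preserved, as are normality and Cohen--Macaulayness of its geometric irreducible components (which are pulled back identically upon passing to the geometric special fiber). Normality of the total space is preserved along $\calO_{E_i}\to\calO_E$ by Serre's criterion R1+S2, using flatness and the normality of the (geometric) generic and special fibers. The fiber product over the Dedekind base $\calO_E$ of flat, normal schemes with reduced special fibers (and with normal, Cohen--Macaulay components on the geometric special fiber) inherits all of these properties, since tensor products over fields of normal (resp.\ Cohen--Macaulay) schemes are again normal (resp.\ Cohen--Macaulay) in this situation. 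The substance of the proof lies entirely in \cite[Thm.~4.2.7]{Levin}; the only bookkeeping is the base change and product step sketched above, which is routine.
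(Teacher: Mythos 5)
Your proposal is correct and takes the same route as the paper, which simply declares the result to be an immediate consequence of \cite[Thm.~4.2.7]{Levin} and leaves exactly the product-decomposition and base-change bookkeeping you spell out to the reader. One minor slip in your prose: the special fiber is reduced but not in general normal (it is a union of Schubert varieties), and the Serre $R_1{+}S_2$ argument you invoke for normality of the total space in fact only needs flatness together with a normal generic fiber and a \emph{reduced} special fiber, which is what you actually have.
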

\end{para}

\begin{para}

We now extend this construction of local models to a more general situation.

\begin{prop}\label{prop:PRlsv} Suppose $p>2$ and $(G,\mu)$ is of abelian type. Assume that $\{\mu^\ad\}$
is non-trivial in every $F$-simple factor of $G^\ad$. Then we can find 
 $(G',\mu')$ of Hodge type with an isomorphism $(G'^{\ad},\mu'^{\ad})\simeq (G^\ad,\mu^\ad)$ satisfying the following properties:
 
\begin{altitemize}
\item[1)] $p\nmid|\pi_1(G'^\der)|$,

\item[2)] $G'=\prod_{i=1}^r \Res_{K_i/F}H'_i$ where $K_i/F$ are finite extensions and $H'_i$ is a reductive group over $K_i$ which splits 
over a tame extension.

\item[3)] $E'=E^\ad$, where $E'$ (resp. $E^\ad$) is the reflex field for $\{\mu'\}$ (resp. $\{\mu^\ad\}$).

\item[4)] There are faithful minuscule representations $\rho_i: H'_i\to \GL(V_i)$ over $K_i$, such that, for all $i$,
the compositions
\[
 \Res_{K_i/F}H'_i \xrightarrow{\ \Res_{K_i/F}(\rho_i)\ } \Res_{K_i/F}\GL(V_i)\hook \GL(V_i)
\]
 give Hodge embeddings for $(\Res_{K_i/F}H'_i, \{\mu'_i\})$.
 Here, $(\Res_{K_i/F}H'_i, \{\mu'_i\})$ are the local Shimura pairs determined from $(G',\{\mu'\})$.

\end{altitemize}
 \end{prop}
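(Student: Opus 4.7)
The plan is to decompose $(G^\ad,\mu^\ad)$ into $F$-simple factors and build each corresponding factor of $G'$ separately via a case-by-case analysis by Dynkin type. First, write
\[
(G^\ad,\mu^\ad)\simeq \prod_{i=1}^r(\Res_{K_i/F}H_i^\ad,\{\mu_i^\ad\}),
\]
with each $H_i^\ad$ absolutely simple over $K_i$. By the hypothesis that $\mu^\ad$ is non-trivial in each simple factor together with $(G,\mu)$ being of abelian type, the local analogue of Deligne's classification (cf.\ Remark \ref{rem: standard assumption}) forces every $H_i^\ad$ to be of classical type $A$, $B$, $C$, or $D$. Triality forms of $D_4$ are ruled out because a non-trivial rational minuscule cocharacter cannot exist when the Galois action is transitive on the end vertices of the Dynkin diagram. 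Combined with $p>2$, this implies that each $H_i^\ad$ splits over a tamely ramified extension of $K_i$.

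Next, for each $i$ I would construct a reductive group $H_i'$ over $K_i$ together with a faithful minuscule representation $\rho_i\colon H_i'\to \GL(V_i)$ such that $(H_i')^\ad\simeq H_i^\ad$, the derived group $(H_i')^\der$ is simply connected (forcing $\pi_1((H_i')^\der)=1$), $\rho_i(H_i')$ contains the scalars, and $\rho_i\circ\mu_i'$ is conjugate to a standard minuscule cocharacter $\mu_d$ of $\GL(V_i)$, where $\mu_i'$ is a chosen minuscule lift of $\mu_i^\ad$. The construction is type-by-type: in type $A$ use $\GL_m$ (or $\GL_m(D)$ for an inner form, or an appropriate unitary similitude group for outer forms) with its standard representation; in type $C$ use $\GSp_{2n}$ with its standard representation; in types $B$ and $D$ use $\GSpin$ with a (half-)spin representation. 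In each case the derived group is simply connected, the group contains scalars, the representation is minuscule, and (since $p>2$ and no triality occurs) the group splits over a tame extension of $K_i$. The central part of $\mu_i'$ can be chosen to be defined over the reflex field $E_i^\ad$ of $\{\mu_i^\ad\}$, which ensures that the reflex field of $\{\mu_i'\}$ equals $E_i^\ad$.

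Finally, set $G' := \prod_{i=1}^r \Res_{K_i/F} H_i'$ and let $\mu'$ be the cocharacter assembled from the $\mu_i'$. Property (2) is immediate from the construction, and $(G')^\der\cong \prod_i\Res_{K_i/F}(H_i')^\der$ is a product of Weil restrictions of simply connected groups, so $\pi_1((G')^\der)=1$ and (1) holds. The induced isomorphism $(G')^\ad\simeq G^\ad$ respects the adjoint conjugacy class, and (3) follows since $E'$ is the compositum of the $E_i^\ad$, which by construction equals $E^\ad$. Finally, the composition
\[
\Res_{K_i/F}H_i'\xrightarrow{\ \Res_{K_i/F}(\rho_i)\ } \Res_{K_i/F}\GL(V_i)\hook \GL(V_i),
\]
where the second arrow views $V_i$ as an $F$-vector space by restriction of scalars, is a Hodge embedding for $(\Res_{K_i/F}H_i',\{\mu_i'\})$ because Weil restriction sends minuscule faithful representations to minuscule faithful representations and the standard cocharacter transforms correctly; this gives (4), and in particular shows $(G',\mu')$ is of Hodge type.

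The main obstacle will be the case-by-case construction of $H_i'$ in type $A$, where inner forms coming from central division algebras must be handled (ensuring a minuscule faithful representation exists and the derived group can be taken simply connected), together with the simultaneous matching of the reflex field $E'=E^\ad$; the other types follow from standard spin/symplectic constructions.
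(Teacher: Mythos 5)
The paper itself simply cites \cite[Prop.\ 7.2.1]{PRlsv} and points to the global analogue in \S\ref{para: Deligne Hodge type lifting}, so your proposal is an attempt to reconstruct the argument behind that citation. Your overall strategy — decompose $(G^\ad,\mu^\ad)$ into $F$-simple factors $\Res_{K_i/F}H_i^\ad$, verify each $H_i^\ad$ is classical and tamely ramified (using that $\mu_i^\ad$ is non-trivial), choose a lift $H_i'$ with a distinguished minuscule representation type-by-type, twist the central part of $\mu_i'$ to fix the reflex field, and assemble $G'$ as the product of Weil restrictions — is the correct blueprint and does match the cited construction.

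However, there is a concrete error in your type $D$ analysis. You propose to use $\GSpin$ with a (half-)spin representation for all of type $D$, and you claim the derived group can always be taken simply connected. This fails for type $D^{\mathbb H}_n$, where the minuscule coweight $\mu^\ad$ is $\varpi_{n-1}^\vee$ or $\varpi_n^\vee$. Composing a half-spin representation (weights $\tfrac12(\pm e_1,\dots,\pm e_n)$) with $\varpi_n^\vee=\tfrac12\sum e_i$ produces more than two distinct weights, so $\rho\circ\mu$ is not conjugate to any $\mu_d$ in $\GL(V)$ even after a central twist; the representation is therefore not minuscule for this cocharacter. The correct choice in the $D^{\mathbb H}$ case is the standard (vector) representation together with (an inner form of) $\GO^+$ or $\GSO$; this is exactly why the paper (in \S\ref{para: Deligne}, \S\ref{para: Deligne Hodge type lifting}) defines $H^\sharp$ in type $D^{\mathbb H}$ as the image of $H^{\mathrm{sc}}$ in the standard orthogonal representation — namely $\SO_{2n}$, which has $\pi_1\cong\mathbb{Z}/2$, not $1$. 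Your stronger claim that $\pi_1((H_i')^\der)=1$ is thus false in this case; the proposition only asserts the weaker $p\nmid|\pi_1(G'^{\der})|$, which survives because $p>2$ and the fundamental group is a $2$-group. You should replace the half-spin prescription in $D^{\mathbb H}$ with the standard representation, and weaken the simply-connectedness claim accordingly. (For $D^{\mathbb R}_n$, where $\mu^\ad=\varpi_1^\vee$, your spin-representation prescription is the right one.)
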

 
 \begin{proof}
This follows from \cite[Prop. 7.2.1]{PRlsv} and its proof. (A similar argument, in the analogous situation of global Shimura data, also appears in \S \ref{para: Deligne Hodge type lifting}.)
 \end{proof}
 
 \end{para}
 
 \begin{para}\label{sss:generalLM}
Assume now $(G,\{\mu\}, \Gg)$ satisfies the standard assumptions \ref{stdASSumptions}. We construct a local model $\rM^{\rm loc}_{\Gg,\mu}$
for $(G,\{\mu\}, \Gg)$  as follows: We write $G^\ad_1\times G^\ad_2$, where $G^\ad_1$ (resp. $G^\ad_2$) is the product 
of the $F$-simple factors of $G^\ad$ where $\mu^\ad$ is non-trivial (resp. trivial). Let $G_1$ be the kernel of $G\to G^\ad_2$.
Then $\{\mu\}$ factors through $G_1$ and we denote by $\{\mu_1\}$ for the induced conjugacy class of cocharacters. The morphism 
$G_1\to G^\ad_1$ is a central extension and $(G_1,\mu_1)$ is of abelian type and satisfies the assumptions of Proposition \ref{prop:PRlsv} above.
Let $(G',\mu')$ be as in the conclusion of Proposition \ref{prop:PRlsv} applied to  $(G_1,\mu_1)$. Now define
\begin{equation}\label{eq:generalLM}
\rM^{\rm loc}_{\Gg, \mu}:=\rM_{\Gg',\mu'}\otimes_{\O_{E'}}\O_E.
\end{equation}
This is a flat projective $\O_E$-scheme with reduced special fiber, by Theorem \ref{thm: Levin}.

\begin{Remark}
{\rm 
Note that if $G$ is quasi-tame, we also have the ``local model" $\rM_{\Gg,\mu}$ from Definition \ref{def:LevinLM}. However, when $p$ divides $|\pi_1(G^\der)|$, the schemes
 $\rM^{\rm loc}_{\Gg, \mu}$ and $\rM_{\Gg,\mu}$ are not always isomorphic, because $\rM_{\Gg,\mu}$ might not be normal, as was first observed by Haines-Louren\c co-Richarz, see \cite{HLR}.
 }
\end{Remark}

We will show: 
 
\begin{thm}\label{thm:twoLM}
If $(G,\{\mu\}, \Gg)$ satisfies the standard assumptions then $\rM^{\rm loc}_{\Gg, \mu}$, as defined by (\ref{eq:generalLM}) above, satisfies the Scholze-Weinstein conjecture, so
$
\BMloc_{\Gg,\mu}=\rM^{\rm loc}_{\Gg, \mu}.
$
In particular, $\rM^{\rm loc}_{\Gg, \mu}$ is independent, up to unique isomorphism, of all choices made 
in its construction.
\end{thm}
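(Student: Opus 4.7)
The plan is to leverage the integral Hodge embedding framework to reduce to the case of $\GL(\Lambda)$, where $\Gr(d,\Lambda)$ represents $M^v_{\GL(\Lambda),\mu_d}$ by \cite[Cor. 21.6.10]{Schber}. First, by the definition \eqref{eq:generalLM} of $\rM^{\rm loc}_{\Gg,\mu}$ via extension of scalars from a Hodge-type lift, I may replace $(G,\mu)$ by the pair supplied by Proposition \ref{prop:PRlsv} applied to $G_1$. After this reduction, $G = \prod_i \Res_{K_i/F}H_i'$ with each $H_i'$ tame, $p \nmid |\pi_1(G^\der)|$, and each factor admits a faithful minuscule Hodge embedding. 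Using compatibility of both the Beilinson--Drinfeld construction of \S\ref{sec: Levin local models} and the $v$-sheaf $M^v_{\Gg,\mu}$ with products of local model triples, I may further reduce to a single factor $G = \Res_{K/F}H$ with Hodge embedding $\rho: (G,\mu) \hookrightarrow (\GL(V),\mu_d)$.

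Next, I would invoke the companion result Theorem \ref{thm: LM embedding}, developed in parallel in this section, to produce an $\O_F$-lattice $\Lambda \subset V$ with $\Gg \hookrightarrow \GL(\Lambda)$ a closed immersion such that the induced morphism $\rho_*: \rM^{\rm loc}_{\Gg,\mu} \to \Gr(d,\Lambda)_{\O_E}$ is a closed immersion. On the $v$-sheaf side, functoriality of $M^v_{(-)}$ along the closed immersion $\Gg \hookrightarrow \GL(\Lambda)$, combined with \cite[Cor. 21.6.10]{Schber}, yields a closed immersion $M^v_{\Gg,\mu} \hookrightarrow \Gr(d,\Lambda)^\Diamond_{\O_E}$ of $v$-sheaves.

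It then remains to identify, as closed subobjects of $\Gr(d,\Lambda)_{\O_E}$, the scheme $\rM^{\rm loc}_{\Gg,\mu}$ with the one representing $M^v_{\Gg,\mu}$. By Theorem \ref{thm: Levin}, $\rM^{\rm loc}_{\Gg,\mu}$ is flat, proper, normal with reduced special fiber, so by full-faithfulness of the functor from such schemes to $v$-sheaves (\cite[Prop. 18.4.1]{Schber}), it suffices to show that the underlying $v$-sheaf of $\rM^{\rm loc}_{\Gg,\mu}$ coincides with $M^v_{\Gg,\mu}$ inside $\Gr(d,\Lambda)^\Diamond_{\O_E}$. Both share the generic fiber $S_\mu = G/P_\mu$, so agreement there is automatic. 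For the special fiber, using the perfection and the diagram \eqref{diagram:Witt}, I would match $\K$-points inside the Witt vector affine Grassmannian: the $\K$-points of the special fiber of $\rM^{\rm loc}_{\Gg,\mu}$ (via the explicit Beilinson--Drinfeld presentation, which degenerates in characteristic $p$ to an equicharacteristic affine flag variety) correspond to the $\mu$-admissible subset of $G(W(\K)[1/p])/\Gg(W(\K))$, while the $\K$-points of $M^v_{\Gg,\mu}$ over $k_E$ admit the same description via their $B^+_{\mathrm{dR}}$-lattice interpretation together with Scholze's comparison with the Witt vector affine Grassmannian.

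The main obstacle will be carrying out this identification on special fibers: unwinding the Beilinson--Drinfeld presentation of $\rM^{\rm loc}_{\Gg,\mu}$ under $\rho_*$ to pin down its $\K$-points exactly as the $\mu$-admissible locus, and matching this against the $v$-sheaf description of $M^v_{\Gg,\mu}$ over $k_E$. Once this identification is achieved, the combination of flatness over $\O_E$, reducedness of the special fiber, and agreement of the generic fibers forces the two closed subschemes of $\Gr(d,\Lambda)_{\O_E}$ to coincide, giving $\rM^{\rm loc}_{\Gg,\mu} = \BMloc_{\Gg,\mu}$ and hence both the Scholze--Weinstein conjecture in this setting and the intrinsic nature of $\rM^{\rm loc}_{\Gg,\mu}$.
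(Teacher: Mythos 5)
Your overall strategy is close to the paper's: reduce via Proposition~\ref{prop:PRlsv} to the Hodge-type pair $(G',\mu')$, use Theorem~\ref{thm: LM embedding} to embed the local model as a closed subscheme of $\Gr(d,\Lambda)_{\O_E}$, and compare $v$-sheaves inside the Grassmannian diamond via full-faithfulness. Two points deserve attention, one a glossed step and one a genuinely different (and harder) route.

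First, when you ``replace $(G,\mu)$ by the pair supplied by Proposition~\ref{prop:PRlsv}'', this only handles the scheme side by the definition~\eqref{eq:generalLM}; you must also show that the $v$-sheaf $\rM^v_{\Gg,\mu}$ is canonically isomorphic to $\rM^v_{\Gg',\mu'}$, since the left-hand side is intrinsic to $(G,\{\mu\},\Gg)$, not to $(G',\mu')$. The paper does this by passing to adjoint groups via \cite[Prop.~21.4.3, Prop.~21.5.1]{Schber}, using that $\Gg^{\ad}=\Gg_1^{\ad}\times\Gg_2^{\ad}$ with $\mu_2^{\ad}$ trivial so the second factor collapses to $\mathrm{Spd}(\O_E)$. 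This is short but it is a real step and should be supplied.

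Second, and more substantially, your final identification proceeds by matching $\K$-points of the special fiber with the $\mu$-admissible locus in the Witt vector affine Grassmannian. This is a much heavier argument than the paper's and has a gap: matching $\K$-points does not by itself determine a $v$-sheaf, so you would need to either (a) know $\rM^v_{\Gg,\mu}$ is representable by a scheme (which the paper explicitly avoids importing from \cite{AGLR},\cite{GL}) and then compare schemes, or (b) actually pin down the admissible locus on both sides, which amounts to re-proving a coherence-type result. The paper's Lemma~\ref{lemma:goodLM} sidesteps all of this: once $\ov X_{\mu'}$ is flat, normal, and has reduced special fiber (which follows from Theorem~\ref{thm: Levin} via the closed immersion of Theorem~\ref{thm: LM embedding}), the diamond $(\ov X_{\mu'})^\Diamond$ coincides with the $v$-sheaf closure $(X_{\mu'}^\Diamond)^-$ inside $(\Gr(d,\Lambda)_{\O_E})^\Diamond$ (cf.\ \cite[Thm.~2.15]{HPR}), and the latter is by definition $\rM^v_{\Gg',\mu'}$. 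No pointwise description of the special fiber is required. You should replace the admissibility step with this closure argument; everything else in your outline then goes through.
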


This will follow as a consequence of Theorem \ref{thm: LM embedding} below. This implication is shown in \S \ref{proofoftwoLM}.

\end{para}

\subsection{Embeddings of local models}

\begin{para}\label{FixedLevin} Let  $(G,\{\mu\},\Gg)$ be a local model triple over $F$ with $G\simeq \Res_{K/F}H$, where $H$ splits over a tamely ramified extension of $K$. 
	We fix the isomorphism above and just write $G=\Res_{K/F}H$.
	Assume $\bx\in \B(H, K)=\B(G, F)$ is generic in its facet and let 
	$\Hh=\Hh_{\bx}$, resp. $\Gg=\Gg_{\bx}$, be the Bruhat-Tits stabilizer group 
	schemes for $H$, resp. $G$, over $\O_F$, resp. $\O_K$. We have
	\[
	\Gg\cong\Res_{\O_K/\O_F}\Hh.
	\]
	
	Now suppose that the reductive group $H$ over $K$ and $\bx\in \B(H, K)$ satisfies all the assumptions of Proposition \ref{Fixed}.
	Let $\bx'$, $\ti K/K$, $\Gam=\Gal(\ti K/K)$ be as in the conclusion of Proposition \ref{Fixed}: Then $\ti H:=H\otimes_K \ti K\simeq H_0\otimes_{\Z_p}\ti K$ is split and the point $\bx'$ is hyperspecial over $\ti K$. In this, $H_0$ is the Chevalley form of the split group $\ti H$. Again, $\ti \Hh=\ti\Hh_{\bx'}\simeq H_0\otimes_{\Z_p}\O_{\ti K}$ is the corresponding hyperspecial group scheme for $\ti H$ over $\O_{\ti K}$ and we have
	\[
	\Hh\simeq (\Res_{\O_{\ti K}/\O_K}\ti\Hh)^\Gam.
	\]
	Consider the map
	\begin{equation}\label{map541}
	G=\Res_{K/F}H\to \Res_{{\ti K}/F}(H_0\otimes_{\Z_p}\ti K)=\Res_{K/F}(\Res_{{\ti K}/K}(H_0\otimes_{\Z_p}\ti K)).
	\end{equation}
	given by applying restriction of scalars to  $$H\to \Res_{\ti K/K}(H\otimes_K\ti K)\simeq \Res_{{\ti K}/K}(H_0\otimes_{\Z_p}\ti K).$$
	This extends to the closed immersion of group schemes 
	\begin{equation}\label{map542}
	\Gg=\Res_{\O_{K}/\O_F}\Hh\to  \Res_{\O_{\ti K}/\O_F}(H_0\otimes_{\Z_p}\O_{\ti K}).
	\end{equation}
by Proposition \ref{prop: R-smoothness properties}. We let  $\ti\mu$ be the geometric cocharacter of $\Res_{{\ti K}/F}(H_0\otimes_{\Z_p}\ti K)$ which is given by composing $\mu$ with the map (\ref{map541}).
	Then $$(\Res_{{\ti K}/F}(H_0\otimes_{\Z_p}\ti K), \{\ti\mu\}, \Res_{\O_{\ti K}/\O_F}(H_0\otimes_{\Z_p}\O_{\ti K}))$$ is a local model triple with reflex field $\tilde{E}$ and 
	\begin{equation}\label{map543}
	(G, \{\mu\}, \Gg)\to (\Res_{{\ti K}/F}(H\otimes_K\ti K), \{\ti\mu\}, \Res_{\O_{\ti K}/\O_F}(H_0\otimes_{\Z_p}\O_{\ti K}))
	\end{equation}
	a morphism of local model triples.
\end{para}

\begin{para}\label{par: Levin local models again}We will show \eqref{map543} induces a closed immersion of local models $${\rM}_{\Gg,\mu}\to ({\rM}_{\Res_{\O_{\ti K}/\O_F}(H_0\otimes_{\Z_p}\O_{\ti K}),\ti \mu})\otimes_{\O_{\ti E}}\O_E.$$
	To do this, we recall some aspects of the construction of the group schemes $\underline\calH^\circ$ from \S\ref{para:Levin1}. 
We let $K_0$  (resp. $\ti K_0$) be the maximal unramified extension of $F$ in $K$ (resp. $\ti K$), and  
	we  set 
	\[
	\underline{\ti\Hh}=H_0\otimes_{\Z_p}\O_{\ti K_0}[\ti u].
	\]
	If $e$ is the  ramification degree of the tame extension $\ti K/K$, then, after possibly enlarging $\ti K$, we can find a uniformizer $\pi$ of $\O_{K}$ and a uniformizer $\ti\pi$ of $\O_{\ti K}$ such that $\ti\pi^e=\pi$. We can then identify
	$\Gam=\Gal(\ti K/K)$ with the Galois group of the cover $\O_{\ti K_0}[\ti u^{\pm}]/\O_{K_0}[u^{\pm}]$ given by $u\mapsto \ti u^e$; this identification is compatible with the specializations 
	$u\mapsto\pi$, $\ti u\mapsto\ti\pi$. For typesetting simplicity, in what follows we will write 
	\[
	\O_0=\O_{K_0},\quad \ti \O_0:=\O_{\ti K_0}.
	\]
	According to the construction in \cite{Levin}, \cite{PZ}, there is a semi-linear action of $\Gam$ on the group scheme $H_0\otimes_{\Z_p}\ti\O_{0}[v]$ and one considers
	\[
	\underline\Hh:= (\Res_{\ti \O_{0}[\ti u]/\O_{0}[u]}(H_0\otimes_{\Z_p}\ti\O_{0}[\ti u]) )^\Gam.
	\]
	This is an affine group scheme over $\O_{0}[u]$ which is smooth by Edixhoven's lemma. 
	It now follows from the construction in the proof or by the uniqueness statement in \cite[Thm. 3.3]{Levin}, cf. \cite[\S 4.2.1]{PZ}, that, as the notation suggests, the group scheme $\underline \Hh^\circ$ 
	given by \cite[Thm. 3.3]{Levin} is isomorphic to the neutral connected component of $\underline\Hh$. 
	Then
	\[
	\underline\Hh\to \Res_{\ti\O_{0}[\ti u]/\O_{0}[u]}(H_0\otimes_{\Z_p}\ti\O_{0}[\ti u])  
	\]
	is a closed immersion of group schemes over $\O_{0}[u]$ lifting (\ref{map542}), and 
	\[
	\underline\Hh^\circ\to \Res_{\ti\O_{ 0}[\ti u]/\O_{0}[u]}(H_0\otimes_{\Z_p}\ti\O_{0}[\ti u])  
	\]
	is a locally closed immersion. This gives a natural morphism 
	\begin{equation}\label{imm541}
	{\rm Fl}^{E(u)}_{\underline\Hh^\circ}\to {\rm Fl}^{E(u)}_{\Res_{\ti\O_{ 0}[\ti u]/\O_{ 0}[u]}(H_0\otimes_{\Z_p}\ti\O_{ 0}[\ti u])}
	\end{equation}
	between the Beilinson-Drinfeld style affine Grassmannians of \cite{Levin} over $\O_{F}$.  
	\begin{prop}\label{prop:imm542}
		The natural morphism 
		\begin{equation}\label{imm542}
		{\rM}_{\Gg,\mu}={\rM}_{\Res_{\O_K/\O_F}\Hh,\mu}\to ({\rM}_{\Res_{\O_{\ti K}/\O_F}(H_0\otimes_{\Z_p}\O_{\ti K}),\ti \mu})\otimes_{\O_{\ti E}}\O_E,
		\end{equation}
		induced by (\ref{imm541}), is a closed immersion.
	\end{prop}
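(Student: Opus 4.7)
The plan is to descend the claim from the ambient morphism (5.4.8) of BD affine Grassmannians by invoking the properness of the local models. By the discussion preceding the statement, the group scheme map $\underline{\Hh}^\circ \to \Res_{\ti\O_{0}[\ti u]/\O_{0}[u]}(H_0\otimes_{\Z_p}\ti\O_{0}[\ti u])$ factors as the open immersion $\underline{\Hh}^\circ \hookrightarrow \underline{\Hh}$ of the neutral connected component, followed by the closed immersion $\underline{\Hh} \hookrightarrow \Res_{\ti\O_{0}[\ti u]/\O_{0}[u]}(H_0\otimes_{\Z_p}\ti\O_{0}[\ti u])$ arising from the $\Gamma$-fixed-point presentation of $\underline{\Hh}$; in particular it is a locally closed immersion of smooth affine group schemes over $\O_{0}[u]$.

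The first main step would be to verify that (5.4.8) is a locally closed immersion of ind-schemes. The closed-immersion part follows from the standard principle, available in the BD setting by \cite[Prop. 4.1.4]{Levin} and its proof via a faithful embedding into a general linear group scheme, that closed immersions of flat affine group schemes induce closed immersions on their BD affine Grassmannians. For the open immersion $\underline{\Hh}^\circ \hookrightarrow \underline{\Hh}$, the induced morphism on BD affine Grassmannians has fibers parametrizing reductions of a $\underline{\Hh}$-torsor to $\underline{\Hh}^\circ$; these are torsors under sections of the \'etale finite constant component group $\underline{\Hh}/\underline{\Hh}^\circ$, so the map is representable by \'etale morphisms with discrete fibers, and decomposes the source as a union of connected components over the target. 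Composing the two steps shows that (5.4.8) is a locally closed immersion.

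Finally, I would pass to the local models. The map on generic fibers sends $S_\mu$ to a closed subscheme of $S_{\ti\mu}$, using that $S_\mu$ is proper and that $\ti\mu$ is defined as the image of $\mu$ under the enveloping reductive group homomorphism. Taking Zariski closures in the flat $\O_E$-schemes, the morphism (5.4.8) restricts to a map $\rM_{\Gg,\mu} \to \rM_{\Res_{\O_{\ti K}/\O_F}(H_0\otimes_{\Z_p}\O_{\ti K}),\ti\mu} \otimes_{\O_{\ti E}}\O_E$. Since the source is projective over $\O_E$ (as the Zariski closure of a Schubert variety inside the ind-projective $\mathrm{Fl}^{E(u)}_{\underline{\Hh}^\circ}\otimes_{\O_F}\O_E$), the restricted map is proper; being also the restriction of a locally closed immersion to a closed subscheme, it is a proper locally closed immersion, hence a closed immersion.

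The main obstacle will be the first step, and specifically verifying that the open immersion at the group scheme level translates correctly to a map with discrete \'etale fibers on BD affine Grassmannians; if difficulties arose, one could alternatively argue by checking that (5.4.8) is a monomorphism on $R$-points directly from the factorization into a closed immersion and a component-separating map, and then combine monomorphism with properness on the local model.
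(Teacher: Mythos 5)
The central claim of your first step — that (5.4.8) is a locally closed immersion of ind-schemes — does not hold, and this is a genuine gap. The trouble is with the factor coming from the open immersion $\underline{\Hh}^\circ \hookrightarrow \underline{\Hh}$. On the special fiber, the induced map $\mathrm{Fl}_{\underline{\Hh}^\circ_{k\lps u\rps}} \to \mathrm{Fl}_{\underline{\Hh}_{k\lps u\rps}}$ is \emph{surjective} with nontrivial discrete fibers: it is the quotient by the free right action of the finite group $\underline{\Hh}(\br\O)/\underline{\Hh}^\circ(\br\O)$, which acts by translating connected components of the affine flag variety. A quotient by a free finite group action is \'etale, but it is emphatically not a monomorphism and hence not an immersion. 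Your phrase ``decomposes the source as a union of connected components over the target'' is true in the sense that each source component maps isomorphically to a target component, but several source components can map to the same target component, so the total map (5.4.8) is neither an immersion nor a monomorphism. This also defeats your fallback suggestion of verifying that (5.4.8) is a monomorphism on $R$-points. Incidentally, your citation of \cite[Prop.~4.1.4]{Levin} for ``closed immersions of flat affine group schemes induce closed immersions on BD affine Grassmannians'' is not what that proposition says (it is a representability statement), and the general principle actually only gives a quasi-compact \emph{immersion} unless the quotient is affine.

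What saves the statement is that the local model $\rM_{\Gg,\mu}$ lies in a single connected component of $\mathrm{Fl}^{E(u)}_{\underline{\Hh}^\circ}$, and the restriction of (5.4.8) to that component \emph{is} an isomorphism onto its image followed by the (immersive) map for $\underline{\Hh}\hookrightarrow\Res$. But this observation about connected components is exactly the missing idea in your argument. The paper's own proof sidesteps the issue by reducing to the special fiber: there one invokes \cite[Thm.~1.4]{PR} for the closed subgroup scheme over $k\lps u\rps$, uses ind-projectivity of the source to upgrade the resulting locally closed immersion to a closed one, and then lifts to a closed immersion over $\O_E$ by the flatness-plus-Nakayama argument of \cite[Prop.~8.1]{PZ}. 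Your ``properness $+$ immersion'' endgame is sound once one has the immersion property on the restriction, but the proof of that property is where the substance of the argument lies, and your proposal does not supply it.
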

	\begin{proof}
	This follows by the above and the argument in the proof of \cite[Prop. 8.1]{PZ}.
	\end{proof}
\end{para}

\begin{para}\label{ss:2lattices}
We now slightly digress   to give a result about minuscule representations which will be useful later. 

	Let $H_0$ be a split reductive group scheme over $\Z_p$. Let $L$ be a field extension of $\Q_p$ and let $\rho: H_0\otimes_{\Z_p}L\to \GL(V)$ be a  representation over $L$.  Choose a maximal torus $T_0\simeq {\mathbb G}_m^r$ and a Borel $B_0$ of $H_0$ containing $T_0$. Let $\{\lambda_1,\ldots , \lambda_n\}$ be the (distinct) highest weights of $T_0$ that appear in the highest weight decomposition of $V$ and denote by $V_{\Z_p}(\lambda_i)$ the Weyl module with highest weight $\lambda_i$ over $\Z_p$. Then there is an $H_0\otimes_{\Z_p}L$-equivariant isomorphism
	\[
	V\simeq \bigoplus_{i=1}^n V_{\Z_p}(\lambda_i)^{\oplus m_i}\otimes_{\Z_p} L
	\]
	where $m_i\geq 1$ are corresponding multiplicities. Set
	\[
	\La_0=\bigoplus_{i=1}^n V_{\Z_p}(\lambda_i)^{\oplus m_i}
	\]
	which supports an $H_0$-representation, i.e. a group scheme homomorphism
	\[
	\rho_0: H_0\to \GL(\La_0).
	\]
	If  $\rho_0\otimes_{\Z_p}L\simeq \rho$ is faithful, by \cite[Cor. 1.3]{PYuGS}, $\rho_0$ is a closed immersion.
	
	\begin{lemma}[{cf. \cite[Prop. 1.10]{KP}}]\label{2lattices}
		Let $H_0$ be a split reductive group over $\Z_p$. Let $R$ be a discrete valuation ring with fraction field $L$ of characteristic $0$ and $\rho: H_0\otimes_{\Z_p} L\to \GL(V)$ a minuscule representation over $L$.  Suppose that $\La$, $\La' $ are two $R$-lattices in $V$ such that $\rho$ extends to group scheme homomorphisms $\rho(\La): H_0\otimes_{\Z_p}R\to \GL(\La)$ and $\rho(\La'): H_0\otimes_{\Z_p}R\to \GL(\La')$. Then, there is $g\in \GL(V)$ centralizing $\rho(H_0\otimes_{\Z_p} L)$ such that $\La'=g\cdot\La$. In particular, $g$ gives an isomorphism $g: \La\xrightarrow{\sim } \La'$ which intertwines $\rho(\La)$ and $\rho(\La')$.
	\end{lemma}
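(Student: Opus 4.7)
My plan is to establish the lemma by reducing first to the case of a single isotypic component, and then exploiting the rigidity of minuscule Weyl modules as lattices.

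\medskip

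\textbf{Step 1 (Isotypic decomposition).} Fix a split maximal torus $T_0 \subset H_0$. Since $\rho$ is minuscule and $H_0 \otimes_{\Z_p} L$ splits, the $T_0$-weights appearing in $V$ partition into disjoint Weyl group orbits corresponding to the highest weights $\lambda_1,\dots,\lambda_n$; this is where the minuscule hypothesis enters to guarantee that each irreducible constituent $V_L(\lambda_i) := V_{\Z_p}(\lambda_i)\otimes_{\Z_p} L$ is irreducible of multiplicity $m_i$, with weight set a single Weyl orbit $W\lambda_i$. Any $T_0 \otimes_{\Z_p} R$-stable $R$-lattice $\Lambda$ decomposes as $\Lambda = \bigoplus_\mu \Lambda_\mu$, and grouping weights by their Weyl orbit gives a canonical decomposition $\Lambda = \bigoplus_{i} \Lambda^{(i)}$ with $\Lambda^{(i)}$ an $H_0 \otimes_{\Z_p} R$-stable lattice in the isotypic piece $V_L(\lambda_i)^{\oplus m_i}$. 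The same applies to $\Lambda'$. The centralizer of $\rho$ preserves isotypic components, so it suffices to produce, for each $i$ separately, an element $g_i \in \GL_{m_i}(L)$ (acting on the multiplicity space) carrying $\Lambda^{(i)}$ to $\Lambda'^{(i)}$; the product $g = (g_i)$ then lies in the centralizer.

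\medskip

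\textbf{Step 2 (Reduction to the multiplicity space).} We now fix a single isotypic component and write $V = V_L(\lambda) \otimes_L L^m$, where $L^m$ is the multiplicity space identified with the $\lambda$-weight space $V^{(\lambda)}$. Let $V_R(\lambda) := V_{\Z_p}(\lambda)\otimes_{\Z_p} R$ be the Weyl module over $R$; I claim that every $H_0 \otimes_{\Z_p} R$-stable $R$-lattice $\Lambda \subset V$ has the form
\[
\Lambda \;=\; V_R(\lambda) \otimes_R M, \qquad M := \Lambda \cap V^{(\lambda)} \subset L^m.
\]
The inclusion $V_R(\lambda)\otimes_R M \subset \Lambda$ is immediate from $H_0$-stability since $M \subset \Lambda$ and $V_R(\lambda)$ is the $R$-submodule of $V_L(\lambda)$ generated by the highest-weight line under $H_0$. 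For the reverse inclusion, we use the minuscule hypothesis: since all weight spaces of $V_R(\lambda)$ are free of rank one and the Weyl group acts transitively on the weight set $W\lambda$, for each weight $\mu = w\lambda$ one can find a reduced expression whose associated product of normalized Chevalley root operators over $\Z_p$ gives an $R$-module isomorphism $V_R(\lambda)_\lambda \xrightarrow{\sim} V_R(\lambda)_\mu$. Applying the same operators to $\Lambda$ shows $\Lambda_\mu \subset (V_R(\lambda)\otimes_R M)_\mu$, which taken over all $\mu \in W\lambda$ gives $\Lambda \subset V_R(\lambda)\otimes_R M$ by the $T_0$-weight decomposition.

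\medskip

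\textbf{Step 3 (Conclusion).} Writing $\Lambda' = V_R(\lambda)\otimes_R M'$ by the same argument, both $M$ and $M'$ are $R$-lattices in the multiplicity space $L^m$. Since $R$ is a discrete valuation ring, $M$ and $M'$ are both free of rank $m$, so there exists $g \in \GL_m(L)$ with $M' = gM$. Viewing $\GL_m(L)$ as $\GL(L^m)$ acting on the multiplicity space, $g$ centralizes $\rho(H_0\otimes_{\Z_p} L)$ (which acts trivially on $L^m$), and
\[
g \cdot \Lambda \;=\; V_R(\lambda)\otimes_R gM \;=\; V_R(\lambda)\otimes_R M' \;=\; \Lambda'.
\]
Combining these $g_i$ across the isotypic components finishes the proof. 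The main technical point I anticipate is Step 2: verifying that integral Chevalley root operators on the minuscule Weyl module $V_{\Z_p}(\lambda)$ give $R$-linear isomorphisms between distinct weight spaces. This is where the minuscule hypothesis is essential, since otherwise root operators can have nontrivial denominators or nonzero kernels obstructing the rigidity of $H_0$-stable lattices.
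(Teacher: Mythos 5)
Your proof is correct and follows essentially the same strategy as the paper: decompose by weight, observe that the $H_0 \otimes_{\Z_p} R$-stability forces the weight components in a given Weyl orbit to be determined by the highest-weight piece, and then use the remaining freedom (the centralizer acting on the multiplicity spaces) to match $\La$ and $\La'$. The one genuine difference is the transport mechanism. The paper uses Weyl group representatives $n_w \in N_{H_0}(T_0)$, observing that $\La_{w\lambda} = \rho(n_w)\La_\lambda$ and $\La'_{w\lambda}=\rho(n_w)\La'_\lambda$: since $n_w$ is an element of $H_0(\Z_p)\subset H_0(R)$, $\rho(n_w)$ is automatically an $R$-linear automorphism of $\La$ permuting weight spaces, and surjectivity between weight pieces comes for free. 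You instead transport via products of Chevalley root operators in the Kostant $\Z$-form, which requires (as you flag) a supplementary $\fks\fkl_2$-argument — using $e_\alpha f_\alpha = 1$ on the top of a length-two $\alpha$-string — to show these give isomorphisms on the rank-one weight spaces of the minuscule Weyl module, rather than merely maps into them. Both work, and the minuscule hypothesis is what makes your version go through; but the group-element version avoids that extra verification and does not require establishing the stronger structural claim $\La = V_R(\lambda)\otimes_R M$, which your argument proves but then only uses through its weight-by-weight consequences. If you keep the root-operator route, make sure to spell out that the composite of $e$'s from $\lambda$ to $\mu$ and the composite of $f$'s from $\mu$ to $\lambda$ multiply (in either order) to $\pm 1$ on the relevant weight space: that two-sided containment is exactly what yields $\La_\mu = (V_R(\lambda)\otimes_R M)_\mu$ rather than just one inclusion.
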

	
	\begin{proof}
		As above, we fix a maximal torus $T_0$ and a Borel subgroup $B_0$ of $H_0$. Let $\{\lambda_1,\ldots , \lambda_n\}$ be the (distinct) highest weights that appear in the highest weight decomposition of $V$. Then, since $\rho$ is minuscule, all the weights appearing in $V$ are of the form $w\cdot \lambda_i$, $w\in W=N_{H_0}(T_0)/T_0$.  Write
		\[
		\La=\bigoplus_{\lambda\in X^*(T_0)} \La_\la,\quad \La'=\bigoplus_{\lambda\in X^*(T_0)} \La'_\la,
		\]
		for the direct sum decompositions induced by the action of the torus $T_0$ via $\rho(\La)$, $\rho(\La')$; in these, $\La_\la$, $\La'_\la\subset V_\la$ are both lattices in the corresponding $L$-vector space  $V_\la$. For each $w$ pick a  representative $n_w\in N_{H_0}(T_0)$. Then
		we have $\La_{w\cdot\la}=\rho(n_w)\La_{\la}$, $\La'_{w\cdot\la}=\rho(n_w)\La'_{\la}$.

		If $g\in \GL(V)$ centralizes $\rho(H_0\otimes_{\Z_p} L)$, then we can consider $g_{|V_\la}\in \GL(V_\la)$ and set $g_i=g_{|V_{\lambda_i}}$. By Schur's lemma, $g\mapsto (g_i)_i$ gives an isomorphism of the centralizer $Z(H):=Z_{\GL(V)}(\rho(H_0\otimes_{\Z_p}F))$ with the group $\prod_{i=1}^n \GL(V_{\lambda_i})$. Choose $g\in Z(H)\subset \GL(V)$ that corresponds to $(g_i)_i$ with $g_{i}:V_{\la_i}\xrightarrow{\sim} V_{\la_i}$ such that $g_i\cdot \La_{\la_i}=\La'_{\la_i}$. Then, since $\La_{w\cdot\la_i}=\rho(n_w)\La_{\la_i}$, $\La'_{w\cdot\la_i}=\rho(n_w)\La'_{\la_i}$, we also have $g\cdot \La=\La'$.
	\end{proof}
\end{para}

\begin{para} Let us now combine this with the set-up of \S \ref{FixedLevin}.  We  consider  a faithful minuscule representation $\rho: H\to \GL(V)$ 
	over $K$ with base change
	\[
	\rho\otimes_K \ti K: H\otimes_K\ti K\to \GL(V\otimes_K\ti K).
	\]
	Recall that  $H\otimes_K\ti K\simeq H_0\otimes_{\Z_p} \ti K$ is split. We assume that the composition of $\ti \mu$ with 
	$\rho\otimes_K \ti K$ is minuscule. We have a group scheme homomorphism
	\[
	\underline \rho_0:= \rho_0\otimes_{\Z_p}\ti\O_{0}[\ti u]: H_0\otimes_{\Z_p} \ti\O_{0}[\ti u]\to \GL(\La_0\otimes_{\Z_p}\ti\O_{0}[\ti u])
	\]
	over $\ti\O_{ 0}[\ti u]$. By restriction of scalars, this induces
	\begin{equation}\label{map54a}
	\Res_{\ti\O_{ 0}[\ti u]/\O_{ 0}[u]}(H_0\otimes_{\Z_p} \ti\O_{0}[\ti u])\to  \Res_{\ti\O_{0}[\ti u]/\O_{0}[u]}(\GL(\La_0\otimes_{\Z_p}\ti\O_{0}[\ti u]))
	\end{equation}
	over $\O_{0}[u]$. Since $\rho_0$ is a closed immersion \cite[Cor. 1.3]{PYuGS}, $ \underline \rho_0$ and $ \Res_{\ti\O_{ 0}[\ti u]/\O_{ 0}[u]}( \underline \rho_0)$  are also closed immersions of group schemes.

	Base changing the morphism (\ref{map54a}) along $\O_{0}[u]\to \O_K$, $u\mapsto \pi$, gives
	\[
	\Res_{\O_{\ti K}/\O_K}(\rho_0\otimes_{\Z_p}\O_{\ti K}):  \Res_{\O_{\ti K}/\O_K}(H_0\otimes_{\Z_p} \O_{\ti K})\to \Res_{\O_{\ti K}/\O_K}\GL(\La_0\otimes_{\Z_p}\O_{\ti K}) 
	\]
	over $\O_K$.
	
	Since (\ref{map54a}) is a closed immersion, it follows
	that the corresponding morphism  
	\begin{equation}\label{imm543}
	{\rm Fl}^{E(u)}_{\Res_{\ti\O_{0}[\ti u]/\O_{0}[u]}(H_0\otimes_{\Z_p}\ti\O_{0}[\ti u])}\to {\rm Fl}^{E(u)}_{\Res_{\ti\O_{0}[\ti u]/\O_{0}[u]}(\GL(\La_0\otimes_{\Z_p}\ti\O_{0}[\ti u]))}
	\end{equation}
	of affine Grassmannians is a monomorphism and hence a closed immersion of ind-projective schemes
	over $\O_{F}$. As above, this implies
	
	\begin{prop}\label{prop:imm544} The morphism 
		\begin{equation}\label{imm544}
		\rM_{  \Res_{\O_{\ti K}/\O_F}(H_0\otimes_{\Z_p} \O_{\ti K}),\ti\mu}\to (\rM_{\Res_{\O_{\ti K}/\O_F}\GL(\La_0\otimes_{\Z_p}\O_{\ti K}),\ti\mu'})\otimes_{\O_{\ti E'}}\O_{\ti E}
		\end{equation}
		of local models obtained from (\ref{imm543}) is  a closed immersion. \endproof 
	\end{prop}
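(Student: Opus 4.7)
The plan is to mimic essentially verbatim the argument used to establish Proposition \ref{prop:imm542}. The key input is already in place: the morphism (\ref{imm543}) of Beilinson--Drinfeld-style affine Grassmannians is a closed immersion of ind-projective schemes over $\O_F$, by the discussion immediately preceding the proposition (which uses that $\underline\rho_0$ is a closed immersion of group schemes via \cite[Cor. 1.3]{PYuGS}, and that ``closed immersion'' is preserved by restriction of scalars along a finite flat cover and by passage to the associated mixed characteristic affine Grassmannian functor).

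First, I would verify that on the generic fibers, the morphism sends the relevant Schubert variety into the target Schubert variety. Concretely, using the identification of the generic fiber of $\mathrm{Fl}^{E(u)}_{\underline{\calG}}$ with $\Gr_{\Res_{\ti K/F}(H_0 \otimes \ti K)}$ and its $\GL$-analogue (via $t \mapsto E(u)$ as in \S\ref{sec: Levin local models}), the cocharacter $\ti\mu$ maps under $\rho_0 \otimes_{\Z_p} \ti K$ to a cocharacter that is conjugate to $\ti\mu'$; the assumption that the composition with $\rho \otimes_K \ti K$ is minuscule ensures that this image cocharacter is indeed $\ti\mu'$ (up to the standard conjugacy). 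Hence the point $e_{\ti\mu}$ maps to a point in the $\GL$-Schubert orbit, and thus the $G(\ti K\lps t\rps)$-orbit closure $S_{\ti\mu}$ maps into $S_{\ti\mu'} \otimes_{\ti E'} \ti E$.

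Now I would take scheme-theoretic Zariski closures. Since (\ref{imm543}) is a closed immersion of ind-projective schemes, the Zariski closure of $S_{\ti\mu}$ in the source, namely $\rM_{\Res_{\O_{\ti K}/\O_F}(H_0 \otimes \O_{\ti K}),\ti\mu}$, maps to the Zariski closure of the image of $S_{\ti\mu}$ in the target, which is contained in the Zariski closure $\rM_{\Res_{\O_{\ti K}/\O_F}\GL(\La_0 \otimes \O_{\ti K}),\ti\mu'}$ base-changed to $\O_{\ti E}$. The induced morphism of closed subschemes of a common ambient ind-projective scheme under a closed immersion of the ambient spaces is again a closed immersion; this is exactly the mechanism used in the proof of \cite[Prop. 8.1]{PZ} and in Proposition \ref{prop:imm542}. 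Consequently, (\ref{imm544}) is a closed immersion.

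The only point requiring genuine care, and thus the main potential obstacle, is ensuring that $S_{\ti\mu}$ really maps into a single (translate of a) Schubert variety $S_{\ti\mu'}$ rather than a union of such. This is where the minuscule hypothesis on $\rho \circ \ti\mu$ is crucial: without it, one would only obtain a map to a closed sub-ind-scheme whose irreducible components correspond to different $\GL$-conjugacy classes of cocharacters, and passing to closures would not yield a single local model on the target side. Granted minusculity, the rest of the argument is formal closure/closed immersion bookkeeping parallel to Proposition \ref{prop:imm542}.
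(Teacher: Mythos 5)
Your proof is correct and takes the same route the paper intends: once (\ref{imm543}) is established to be a closed immersion of ind-projective schemes, the paper simply asserts that this implies the proposition by the same argument as for Proposition \ref{prop:imm542} (namely, the closure/special-fiber mechanism of \cite[Prop. 8.1]{PZ}, spelled out in the proof of Proposition \ref{prop:closedLMforGL}). Your direct Zariski-closure bookkeeping is a valid, slightly more formal substitute for the special-fiber-plus-Nakayama version in \emph{loc.\ cit.}

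One clarification on your final paragraph: that the image of $S_{\ti\mu}$ lies in a single $\GL$-Schubert variety is automatic from equivariance --- the $G$-orbit of $e_{\ti\mu}$ maps into the $\GL$-orbit of $e_{\ti\mu'}$ and Schubert varieties are single orbit closures --- and has nothing to do with minusculeness. The role of the minuscule hypothesis on $\rho\circ\ti\mu$ is rather to ensure that $(\GL(\La_0\otimes_{\Z_p}\O_{\ti K}),\ti\mu')$ is a genuine local model triple, so that the target $\rM_{\Res_{\O_{\ti K}/\O_F}\GL(\La_0\otimes\O_{\ti K}),\ti\mu'}$ is the (flat, reduced-special-fiber) scheme to which the Nakayama-style comparison of \cite[Prop. 8.1]{PZ} applies.
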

	
	In the above, $\ti\mu'$ is the geometric cocharacter of $\Res_{{\ti K}/F}\GL(V\otimes_K \ti K)$ obtained
	by composing $\Res_{\ti K/F}(\rho\otimes_K\ti K)$
	with $\ti\mu$.
\end{para}

\begin{Remark}
	{\rm 
		Note  that \cite[Cor. 3.6]{HaRi} applied to the finite flat morphism
		$\Spec( \ti\O_0[\ti u])\to\Spec(\O_0[u])$ given by $u\mapsto \ti u^e$, gives a natural isomorphism
		\begin{equation}\label{natural644}
		{\rm Fl}^{E(u)}_{\Res_{\ti\O_{0}[\ti u]/\O_{0}[u]}(\GL(\La_0\otimes_{\Z_p}\ti\O_{0}[\ti u]))}\xrightarrow{\sim} {\rm Fl}^{\ti E(\ti u)}_{\GL(\La_0\otimes_{\Z_p}\ti\O_{0}[\ti u])}
		\end{equation}
		of ind-schemes over $\O_F$.
		Here $\ti E(\ti u)=E(\ti u^e)$ is the Eisenstein polynomial of $\ti\pi$ in $\ti\O_{0}[\ti u]$. This reflects the identifications 
		\[
		\Res_{K/F}(\Res_{\ti K/K}\GL(\La_0\otimes_{\Z_p}\ti K))=
		\Res_{\ti K/F}\GL(\La_0\otimes_{\Z_p}\ti K),
		\]
		\[
		\Res_{\O_K/\O_F}(\Res_{\O_{\ti K}/\O_K}\GL(\La_0\otimes_{\Z_p}\O_{\ti K}))=
		\Res_{\ti K/F}\GL(\La_0\otimes_{\Z_p}\O_{\ti K}).
		\]
		Indeed, since $\ti K/K$ is tame, $\Res_{\ti K/K}\GL(\La_0\otimes_{\Z_p}\ti K)$ splits over the tame extension
		$\ti K/K$ and the two sides in this identification lead to two -a priori different- constructions as in \cite{Levin}.
		However, the 
		isomorphism (\ref{natural644}) above gives an identification between the
		two possible definitions for the local model 
		$\rM_{\Res_{\O_{\ti K}/\O_F}\GL(\La_0\otimes_{\Z_p}\O_{\ti K}),\ti\mu'}$.
		A similar comment applies to the local model $\rM_{  \Res_{\O_{\ti K}/\O_F}(H_0\otimes_{\Z_p} \O_{\ti K}),\ti\mu}$.}
\end{Remark}

 \begin{para}For $S$ an $R$-algebra and $V$ a module over $S$, we write $V^{(R)}$ for the $R$-module obtained by restriction of structure.
	Let $\La$ be any $\O_{K}$-lattice in a finite dimensional $K$-vector space $V$ and $K/F$ a finite extension. (We will eventually apply this to $K$ replaced by $\ti K$, to connect 
	with the previous set-up.)
	Consider  the natural homomorphism
	\begin{equation}\label{BCimm}
	\Res_{\O_{K}/\O_F} \GL(\La)\to \GL(\La^{(\calO_F)})
	\end{equation}
	of group schemes over $\O_F$. 
	We can easily see that this is a closed immersion by writing down the equations giving this morphism. Consider  a geometric minuscule cocharacter
	$\mu$ of $ \Res_{K/F} \GL(V)$ with reflex field $E$.

	\begin{prop}\label{prop:closedLMforGL}
		There is a closed immersion
		\begin{equation}\label{imm55prop}
		\rM_{\Res_{\O_{K}/\O_{F}} \GL(\La ), \mu}\hook   \rM_{\GL(\La^{(\calO_F)} ), \mu} \otimes_{\O_F}\O_{E}
		\end{equation}
		equivariant for the homomorphism (\ref{BCimm}) above which extends the natural morphism between Grassmannians on the generic fibers.
	\end{prop}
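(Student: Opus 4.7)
The plan is to realize the desired closed immersion at the level of ambient BD-type affine Grassmannians and then restrict to the Schubert variety closures that define the two local models. First I would observe that $\iota : \Res_{\O_K/\O_F}\GL(\La) \to \GL(\La)$ is a closed immersion of smooth affine group schemes over $\O_F$ (as noted in the paragraph preceding the proposition), with generic fiber the standard closed immersion $\Res_{K/F}\GL(V) \hookrightarrow \GL(V)$ of reductive $F$-groups. Because $\iota$ has trivial kernel and intertwines the parabolics $P_\mu$ on the two sides, taking parabolic quotients by $P_\mu$ yields a closed immersion $\Res_{K/F}\GL(V)/P_\mu \hookrightarrow \GL(V)/P_\mu = \Gr(d,V)$ of projective homogeneous spaces over $E$; this is the generic fiber of the integral immersion sought.

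Next, I would identify $\rM_{\GL(\La),\mu}$ with the smooth Grassmannian $\Gr(d,\La)_{\O_F}$: this is the case $K = F$ of Definition \ref{def:LevinLM}, where the ambient BD-Grassmannian $\mathrm{Fl}^{u-\pi_F}_{\GL(\La)_{\O_F[u]}}$ admits a familiar lattice-theoretic moduli description and the Zariski closure of the minuscule Schubert variety is the classical Grassmannian. To construct a morphism between the two ambient ind-schemes I would follow the blueprint of Propositions \ref{prop:imm542} and \ref{prop:imm544}: apply \cite[Cor.~3.6]{HaRi} to the finite flat cover $\Spec(\O_{K_0}[u]) \to \Spec(\O_F[u])$ (as in the remark after Prop.~\ref{prop:imm544}) to rewrite $\mathrm{Fl}^{E(u)}_{\Res_{\O_{K_0}[u]/\O_F[u]}(\GL(\La)_{\O_{K_0}[u]})}$ in a tractable moduli form, and then use the group-scheme homomorphism over $\O_F[u]$ extending $\iota$ to induce a natural transformation to $\mathrm{Fl}^{u-\pi_F}_{\GL(\La)_{\O_F[u]}}$. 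As in Prop.~\ref{prop:imm544}, because the underlying morphism of smooth affine group schemes is a closed immersion, the resulting map of BD-Grassmannians is a monomorphism between ind-projective $\O_F$-schemes, and hence itself a closed immersion.

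The last step is then formal: both local models are Zariski closures of the corresponding generic fiber Schubert varieties inside the above ambient ind-schemes; the ambient map is proper and restricts on generic fibers to the closed immersion of flag varieties described above, so passing to Zariski closures yields the required closed immersion $\rM_{\Res_{\O_K/\O_F}\GL(\La),\mu} \hookrightarrow \rM_{\GL(\La),\mu}\otimes_{\O_F}\O_E$, which is $\iota$-equivariant by construction. I expect the main obstacle to be constructing the ambient morphism: the two BD-Grassmannians are built with different defining polynomials ($E(u)$ versus $u-\pi_F$) and different base rings ($\O_{K_0}[u]$ versus $\O_F[u]$), so matching them requires the HaRi identification together with careful moduli-theoretic bookkeeping. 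Once this ambient morphism is in place, everything else is parallel to Propositions \ref{prop:imm542} and \ref{prop:imm544}.
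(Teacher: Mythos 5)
Your high-level strategy — pass to ambient BD affine Grassmannians, establish the map there is a closed immersion, then restrict to Zariski closures of the generic-fiber Schubert cells — is the same outline the paper follows, and the last (Zariski-closure) step is indeed formal once the ambient morphism is in hand. But the construction of the ambient morphism is where the actual content lies, and the proposal leaves a genuine gap there.

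The step that would fail as written is "apply [HaRi, Cor.~3.6] to the finite flat cover $\Spec(\O_{K_0}[u]) \to \Spec(\O_F[u])$ ... to rewrite $\mathrm{Fl}^{E(u)}_{\Res_{\O_{K_0}[u]/\O_F[u]}(\GL(\La)_{\O_{K_0}[u]})}$ in a tractable moduli form." The obvious finite flat morphism $\O_F[u]\to\O_{K_0}[u]$ (extend scalars, $u\mapsto u$) only accounts for the unramified layer $K_0/F$; it does nothing to relate the defining polynomial $E(u)\in\O_{K_0}[u]$ (the Eisenstein polynomial of $\pi$ over $K_0$, encoding the ramified part $K/K_0$) to the polynomial $v-\pi_F$ that defines the $K=F$ local model $\rM_{\GL(\La),\mu}=\Gr(d,\La)_{\O_F}$. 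There is no natural transformation of BD-Grassmannians along this cover that sends one defining divisor to the other. The crucial missing idea is the paper's change of variables $r\colon\O_0[v]\to\O_0[u]$, $v\mapsto E(u)+\pi_F$, a finite flat morphism over $\O_0=\O_{K_0}$ which lifts $\O_F\hookrightarrow\O_{K_0}$ (via $v\mapsto\pi_F$, $u\mapsto\pi$) and, most importantly, sends $v-\pi_F$ to $E(u)$. Weil restriction along $r$ (via HaRi Cor.~3.6) identifies $\mathrm{Fl}^{E(u)}_{\underline{\mathcal{GL}},0}$ with $\mathrm{Fl}^{v-\pi_F}_{\underline{\mathcal{GL}}_{K/K_0},0}$ over $\O_0$; one then pushes out along the natural restriction-of-structure closed immersion $\underline{\mathcal{GL}}_{K/K_0}\to\underline{\mathcal{GL}}_{K_0}$ over $\O_0[v]$, applies $\Res_{\O_0/\O_F}$, and finally composes with the map to $\mathrm{Fl}^{v-\pi_F}_{\underline{\mathcal{GL}}_F}$ coming from the closed immersion $\Res_{\O_0/\O_F}\GL(\La)\hook\GL(\La)$. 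None of this is in your proposal. You correctly flag that matching the two defining polynomials and base rings is "the main obstacle," but you do not actually resolve it, and the cover you name does not do the job.

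A secondary issue: you conclude "the resulting map of BD-Grassmannians is a monomorphism between ind-projective $\O_F$-schemes, and hence itself a closed immersion." The paper instead checks, via the standard argument of \cite[Thm.~1.4]{PR}, that the map on special fibers is a locally closed immersion, and combines this with ind-projectivity of the source to conclude it is closed; then a Nakayama-type argument as in \cite[Prop.~8.1]{PZ} upgrades this to a closed immersion over $\O_E$. This is safer and is what you should cite, rather than appealing to an unproved "mono of ind-projectives is closed."
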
 
	
	\begin{proof} Lift $\La$ to a finite free $\O_0[u]$-module $\underline{\La}$ and consider the smooth affine group scheme $\underline{\mathcal {GL}}=\GL(\underline{\La})$ over $\O_0[u]$. This is the $\O_0[u]$-group scheme associated to $\GL(\La)$ and the extension $K/F$ as in \S \ref{para:Levin2}. Write $\underline{\mathcal {GL}}_F$
		for the group scheme of linear automorphisms
		of $\underline{\La}$ considered as a $\O_F[v]$-module by restriction of scalars by $\O_F[v]\to \O_0[u]$, $v\mapsto E(u)+\pi_F$.
		The group scheme $\underline{\mathcal {GL}}_F$ is split reductive over $\O_F[u]$ and so the local model $\rM_{\GL(\La ), \mu}$ above is naturally a closed subscheme of 
		${\rm Fl}_{\underline{\mathcal {GL}}_F}^{v-\pi_F}$. Here, ${\rm Fl}_{\underline{\mathcal {GL}}_F}^{v-\pi_F}$ 
		is defined by applying the definition in \S \ref{para:Levin1} with $K=F$. We will show that there is a map
		\[
		{\rm Fl}^{E(u)}_{\underline{\mathcal {GL}}}\to {\rm Fl}_{\underline {\mathcal {GL}}_F}^{v-\pi_F}.
		\]

		Consider the $\O_0$-algebra homomorphism
		\[
		r: \O_0[v]\to \O_0[u],\quad v\mapsto E(u)+\pi_F
		\]
		which lifts the inclusion $\O_0\hook \O_K$, via $v\mapsto \pi_F$, $u\mapsto \pi$.
		Then $r$ is  finite and flat. Let $\underline{\mathcal {GL}}_{K/K_0}$ the group scheme
		obtained by Weil restriction of $\underline{\mathcal {GL}}$ along $r$; then the base change of 
		$\underline{\mathcal {GL}}_{K/K_0}$ along $\O_0[v]\to \O_0=$, $v\mapsto \pi_F$, is identified with
		$\Res_{\O_{K}/\O_0}\GL(\La)$. Denote by $\underline{\mathcal {GL}}_{K_0}$ the group scheme of linear
		automorphisms of $\und\La$ regarded as an $\O_0[v]$-module via $r: \O_0[v]\to \O_0[u]$.
		We will first give a map 
		\[
		{\rm Fl}^{E(u)}_{\underline{\mathcal {GL}},0}\to {\rm Fl}_{\underline {\mathcal {GL}}_{K_0}, 0}^{u-\pi_F}
		\]
		over $\O_0$. See \S \ref{para:Levin1} for the definition of these ind-schemes. (This amounts to constructing the map in the special case $F=K_0$.)
		
		We start by giving a morphism
		\[
		i: \underline{\mathcal {GL}}_{K/K_0}\to \underline{\mathcal {GL}}_{K_0}
		\]
		over $\O_0[v]$ extending the morphism  $\Res_{\O_{K}/\O_0}\GL(\La)\to \GL(\La^{(\O_0)})$ of $\O_0$-group schemes
		under the specialization $v\mapsto \pi_F$. This morphism is obtained by viewing an $\O_0[u]$-automorphism
		of $\underline{\La}$ as an $\O_0[v]$-automorphism of $\underline{\La}$  viewed as an $\O_0[v]$-module via $r$.
		The base change of $i$ to $k\lps v\rps$
		\[
		i_{k\lps v\rps}: \underline{\mathcal {GL}}_{K/K_0, k\lps v\rps}\to \underline{\mathcal {GL}}_{K_0, k\lps v\rps}
		\]
		is a closed immersion since it is induced by restriction of scalars from $k\lps u\rps$-lattices to $k\lps v\rps$-lattices under the map
		$v\mapsto u^{[K:K_0]}$.
		
		By \cite[Cor. 3.6]{HaRi}, the Weil restriction of torsors along $r$ induces an isomorphism
		\[
		{\rm Fl}^{E(u)}_{\underline{\mathcal {GL}},0}\xrightarrow{\sim} {\rm Fl}^{u-\pi_F}_{ \underline{\mathcal {GL}}_{K/K_0},0}.
		\]
		Combining this isomorphism with the map given by taking push-outs of torsors along $i$, we obtained the required map
		\[
		\iota_0:  {\rm Fl}^{E(u)}_{\underline{\mathcal {GL}},0}\simeq {\rm Fl}^{v-\pi_F}_{ \underline{\mathcal {GL}}_{K/K_0},0}\to {\rm Fl}^{v-\pi_F}_{\underline{\mathcal {GL}}_{K_0},0}.
		\]
		Applying $\Res_{\O_0/\O_F}$ we obtain a map
		\[
		\iota:  {\rm Fl}^{E(u)}_{\underline{\mathcal {GL}}}\to \Res_{\O_0/\O_F}{\rm Fl}^{v-\pi_F}_{\underline{\mathcal {GL}}_{K_0}}.
		\]
		A standard argument (\cite[Thm. 1.4]{PR}) shows that $\iota\otimes_{\O_F}k$ is a locally closed immersion. Since the domain of this map is ind-projective, it follows that
		$\iota\otimes_{\O_F}k$ is a closed immersion.
		We now compose this with the map
		\[
		\iota': \Res_{\O_0/\O_F}{\rm Fl}^{v-\pi_F}_{\underline{\mathcal {GL}}_{K_0}}\to {\rm Fl}^{v-\pi_F}_{\underline{\mathcal {GL}}_F}
		\]
		obtained by the construction of \cite{Levin} applied to  $\Res_{\O_0/\O_F}\GL(\La^{(\O_0)})\to \GL(\La^{(\O_F)})$. 
		We can easily see that $\iota'\otimes_{\O_F}k$ is a closed immersion, cf. \cite[Prop. 8.1]{PZ}. It follows that 
		the composite map $\iota'\cdot \iota$ is a closed immersion on special fibers. 
		
		Restricting to the local models we obtain a map
		\begin{equation}\label{immLM}
		\rM_{\Res_{\O_{K}/\O_{F}} \GL(\La ), \mu}\hook   \rM_{\GL(\La^{(\calO_F)}), \mu} \otimes_{\O_F}\O_{E}
		\end{equation}
		which is a closed immersion on special fibers. An argument involving Nakayama's lemma as in \cite[Prop. 8.1]{PZ}, 
		shows that (\ref{immLM}) is itself a closed immersion. Finally, it remains to check that (\ref{immLM})
		extends to the natural morphism on generic fibers. This follows from the definitions of local models in \S \ref{sec: Levin local models} and 
		the fact that $r$ takes $v-\pi_F$ to $E(u)$.
	\end{proof}

\end{para}

\begin{para}
We now combine the previous results to show that a suitable Hodge embedding induces a closed immersion of local models. 	

We will consider local model triples $(G, \{\mu\}, \Gg)$  over $F$ with $G$ quasi-tame, $G\simeq \prod_{i=1}^r\Res_{K_i/F}H_i$, with $H_i$ split over a tame extension of $K_i$. Then the local Shimura pair $(G,\{\mu\})$ over $F$ arises as a product of local Shimura pairs
	$(\Res_{K_i/F}H_i, \{\mu_i\})$, $1\leq i \leq r$. Suppose we are given faithful minuscule representations $\rho_i: H_i\to \GL(V_i)$ over $K_i$, such that
	the compositions
	\[
	\Res_{K_i/F}H_i \xrightarrow{\ \Res_{K_i/F}(\rho_i)\ } \Res_{K_i/F}\GL(V_i)\hook \GL(V_{i}^{(F)})
	\]
	give Hodge embeddings for $(\Res_{K_i/F}H_i, \{\mu_i\})$ over $F$, for each $i$.

	We consider 
	\begin{equation}\label{productHodge}
	\rho: G\simeq \prod_{i=1}^r\Res_{K_i/F}H_i\xrightarrow{\ \prod_i \Res_{K_i/F}(\rho_i)\ } \prod_{i=1}^r\Res_{K_i/F}\GL(V_i)\hook \GL(V)
	\end{equation}
	where $V=\oplus_{i=1}^rV_i^{(F)}$ is considered as an $F$-vector space with $F$-structure given by restriction from the $K_i$-structure on each summand.
	Then $\rho$  also gives a 
	Hodge embedding $\rho: (G,\{\mu\})\to (\GL(V),\{\mu_d\})$. In particular, $ (G,\{\mu\})$ is of Hodge type. Note that then, for any $m\geq 1$,  the direct sum representation
	\[
	\rho^{\oplus m}: G\to  \GL(V)\times\cdots \times \GL(V)\hook \GL(V^{\oplus m})
	\]
	also gives a Hodge embedding that factors as in (\ref{productHodge}).
	
	\begin{thm}\label{thm: LM embedding}
		Let $(G, \{\mu\}, \Gg)$ be a local model triple over $F$. Assume $G$ quasi-tame, $G\simeq \prod_{i=1}^r\Res_{K_i/F}H_i$, with $H_i$ split over a tame extension of $K_i$.
	Assume that $p$ is odd and that all the $H_i$ are of classical type.   
	Suppose  $ (G,\{\mu\})$ admits a Hodge embedding $\rho$ of the form (\ref{productHodge}) as above. After replacing the Hodge embedding $\rho$ by a direct sum $\rho^{\oplus m}$ as above, there exists a lattice $\La\subset V$ and a quasi-parahoric  group scheme $\Gg'$ of $G$ with $(\Gg')^\circ=\Gg^\circ$ such that 
		$\rho$ extends to a closed immersion $\Gg'\hook \GL(\La)$ and  there is a closed immersion
		\[
		\rho_*:  \rM_{\Gg', \mu}=\rM_{\Gg,\mu}\to {\rm Gr}(d,\La)_{\O_E}
		\]
		extending the natural map on the generic fiber.
	\end{thm}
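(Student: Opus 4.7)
\emph{Strategy and reduction.} The plan is to reduce to a single factor $G = \Res_{K/F}H$ with $H$ of classical type and split over a tame extension of $K$, realize $\Gg$ as Galois fixed points of a reductive group scheme over a tame cover using Proposition \ref{Fixed}, produce a concrete lattice via Proposition \ref{prop:findlattice}, and then chain together the closed immersions of local models from Propositions \ref{prop:imm542}, \ref{prop:imm544}, and \ref{prop:closedLMforGL}. Since the local model, the Hodge embedding $\rho$, and $\Gg$ all decompose compatibly as products over $\{1,\ldots,r\}$, it suffices to construct the desired closed immersion for each factor and take products (padding with trivial summands to arrange a uniform $m$).

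\emph{Integral Hodge embedding.} For a single factor, Proposition \ref{Fixed} produces a nearby point $\bx'$ with $\Gg_{\bx'} = \Gg_\bx$ and a finite tame Galois extension $\ti K/K$ with $\Gam = \Gal(\ti K/K)$ so that $H\otimes_K\ti K$ is split and $\bx'$ is hyperspecial over $\ti K$. Thus $\Gg \simeq (\Res_{\ti\O/\O_F}\ti\Hh)^{\Gam}$ with $\ti\Hh = \ti\Hh_{\bx'}$ reductive. Applying Proposition \ref{prop:findlattice} to $\rho\otimes_K\ti K$ yields a $\Gam$-stable $\ti\O$-lattice $\ti\La\subset V\otimes_K\ti K$ and a closed immersion $\ti\Hh\hook\GL(\ti\La)$. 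Taking Weil restriction along $\ti\O/\O_F$ and then $\Gam$-fixed points, Lemma \ref{fixedptsGL} and Remark \ref{rem:findlattice} identify the outcome as $\GL(\mathcal{L})$, where $\mathcal{L} = \{(\ti\pi^i\ti\La)^{\Gam}\}_{i\in\Z}$ is a periodic $\O_F$-lattice chain in $V$; this gives a closed immersion $\Gg\hook\GL(\mathcal{L})$. Composing with the diagonal closed immersion $\GL(\mathcal{L})\hook\GL(\mathrm{tot}(\mathcal{L}))$ of Lemma \ref{BTGL}, the generic fiber becomes $\rho^{\oplus s}: G\to\GL(V^{\oplus s})$, where $s$ is the length of the chain. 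Taking $m = s$, $\La := \mathrm{tot}(\mathcal{L})\subset V^{\oplus s}$, and $\Gg'$ the stabilizer of $\La$ in $G$ yields the required integral Hodge embedding $\Gg'\hook\GL(\La)$ with $(\Gg')^\circ = \Gg^\circ$.

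\emph{Closed immersion of local models.} I compose three closed immersions already established in this section. First, Proposition \ref{prop:imm542} gives
$$\rM_{\Gg,\mu}\hook\rM_{\Res_{\ti\O/\O_F}(H_0\otimes\ti\O),\ti\mu}\otimes_{\O_{\ti E}}\O_E.$$
Next, Proposition \ref{prop:imm544}, applied with the Weyl-module lattice $\La_0$ and its closed embedding $\rho_0: H_0\hook\GL(\La_0)$, gives a closed immersion into $\rM_{\Res_{\ti\O/\O_F}\GL(\La_0\otimes\ti\O),\ti\mu'}\otimes\O_{\ti E}$. By Lemma \ref{2lattices} there is an isomorphism $\La_0\otimes\ti\O\xrightarrow{\sim}\ti\La$ intertwining the two $H_0\otimes\ti\O$-actions via an element of $\GL(V\otimes\ti K)$ centralizing $H_0$, identifying this local model with $\rM_{\Res_{\ti\O/\O_F}\GL(\ti\La),\ti\mu'}\otimes\O_{\ti E}$. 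Finally Proposition \ref{prop:closedLMforGL}, applied to $\ti\La$ viewed as an $\O_F$-lattice, gives a closed immersion into $\rM_{\GL(\ti\La),\ti\mu'}\otimes\O_{\ti E'}$, which is a Grassmannian. Using the decomposition $\ti\La\simeq L\oplus\mathrm{tot}(\mathcal{L})$ of \S\ref{TOT} and the equivariance for $\Gg'\hook\GL(\La)$, a final elementary step identifies the scheme-theoretic image of $\rM_{\Gg,\mu}$ with a closed subscheme of $\Gr(d,\La)_{\O_E}$.

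\emph{Main obstacle.} The principal technical point is the reconciliation in the middle step between the abstractly-produced lattice $\ti\La$ of Proposition \ref{prop:findlattice} and the Weyl-module lattice $\La_0\otimes\ti\O$ required by the construction of Proposition \ref{prop:imm544}; this is where Lemma \ref{2lattices} is essential, since the intertwining element centralizes $H_0$ and therefore induces an isomorphism of the relevant local models rather than just of the ambient Grassmannians. The direct-sum replacement $\rho\rightsquigarrow\rho^{\oplus m}$ is the cost of converting a lattice-chain stabilizer into the stabilizer of a single lattice via Lemma \ref{BTGL}, and a uniform $m$ across all factors is arranged by padding.
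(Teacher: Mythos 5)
Your outline follows the paper's route in the case it does treat, but it has a genuine gap: you invoke Proposition~\ref{Fixed} unconditionally, whereas that proposition carries the explicit hypothesis that any simple factor of $G^{\ad}$ of the form $\Res_{L/K}\mathrm{PGL}_m(D)$ has $D$ of index prime to $p$. The theorem you are proving does \emph{not} make this assumption — $H_i$ ``classical and split over a tame extension of $K_i$'' allows $\mathrm{PGL}_m(D)$ with $p\mid\mathrm{ind}(D)$, since such a $D$ splits over an unramified extension. In that case Proposition~\ref{Fixed} simply does not apply, your identification $\Gg\simeq(\Res_{\ti\O/\O_F}\ti\Hh)^{\Gam}$ with $\ti\Hh$ reductive is not available, and the whole construction collapses. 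This is precisely where the paper's proof bifurcates: after handling the prime-to-$p$ case essentially as you do, it passes to a finite unramified extension $F^\natural/F$ over which $H$ becomes quasi-split (eliminating anisotropic factors from division algebras), applies the first case there, and descends through $\Res_{\O_{F^\natural}/\O_F}$. That descent is also exactly where the replacement $\Gg\rightsquigarrow\Gg'$ with $(\Gg')^\circ=\Gg^\circ$ becomes necessary — $\Gg'$ is obtained from the $\Gam^\natural$-invariant stabilizer $\Gg^\natural_{\mathbf y}$ of the relevant facet — and the paper's remark following the theorem explains that one may take $\Gg'=\Gg$ \emph{only} when $p$-divisible division-algebra indices are absent. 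Your proof silently asserts $\Gg'=\Gg$ throughout, which is incorrect in the exceptional case.

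A secondary issue: in the non-exceptional case the paper takes $\La=\La_0\otimes_{\Z_p}\O_{\ti K}$ directly (so $m=[\ti K:K]$), rather than detouring via a $\Gamma$-stable $\ti\La$, the fixed-point chain $\calL$, and $\mathrm{tot}(\calL)$. Your detour can be made to work (it essentially reproves what later appears in Remark~\ref{rem:diagram}), but your ``final elementary step'' — landing in $\Gr(d',\ti\La)$ and passing to a closed subscheme of $\Gr(d,\mathrm{tot}(\calL))_{\O_E}$ — is not elementary as stated: you would need to show that the map $\rM_{\Gg,\mu}\to\Gr(d',\ti\La)$ factors through $\rM_{\GL(\calL),\mu'}$ (equivariance of the local-model morphisms for the group-scheme diagram $\Gg\hook\GL(\calL)\hook\GL(\ti\La)$), then use the ``easily seen'' closed immersion $\rM_{\GL(\calL),\mu'}\hook\rM_{\GL(\mathrm{tot}(\calL)),\mu'}$, and that only after an unramified base change, since $\ti\La\simeq L\oplus\mathrm{tot}(\calL)$ as in \S\ref{TOT} holds in the totally-ramified situation. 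None of this is a fatal error, but it is not spelled out, and the paper's direct choice of lattice avoids it entirely in this theorem.
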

 \begin{Remark}
	{\rm If the target $\pi_1(G)_I$ of the Kottwitz homomorphism is a  torsion-free group, then we always have $\Gg'=\Gg=\Gg^\circ$, see \cite{HR}.
		In the course of the proof we will see that if $\Gg=\Gg_\bx$ for $\bx$ generic in its facet, then we can take in the above $\Gg'=\Gg$,
		provided that $G$ does not involve anisotropic factors coming from division algebras
 of degree divisible by $p$. This last condition comes from Proposition \ref{Fixed}.}
\end{Remark}

\begin{proof}			
	We can  reduce 
to the case $G=\Res_{K/F}H$, with $H$ split over a tamely ramified extension of $K$;  the general case is obtained by taking products. We may assume $\Gg=\Gg_\bx=\Res_{\O_K/\O_F}\Hh_{\bx}$ and $\bx\in \B(G, F)=\B(H, K)$ which we can assume is generic in its facet.
	We  have the Hodge embedding $\rho: G\to \GL(V^{(F)})$ given as a composition
	\[
	G=\Res_{K/F}H\xrightarrow{\Res_{K/F}\rho_1} \Res_{K/F}\GL(V)\to \GL(V^{(F)}),
	\]
	starting from $\rho_1: H\to \GL(V)$, cf. (\ref{productHodge}). 	For notational simplicity, in what follows, we will often drop the superscript ${(F)}, {(\calO_F)}$, from the notation for the restriction of structure, as it should be clear from context over which ring the modules are being taken.
	
	We first assume that $H^\ad$ does not involve division algebras of degree divisible by $p$.
		Then the assumption of Proposition \ref{Fixed} for $H$ is satisfied. Hence, we can find a finite tame Galois extension $\ti K/K$ that splits $H$ and a point $\bx'\in \B(H, K)$   with $\Hh=\Hh_{\bx }=\Hh_{\bx'}$ which is hyperspecial in $\B(H, \ti K)$. 
	Now
	we can   apply the construction of \S \ref{FixedLevin}. In this, we consider the composition of the natural map 
	\[
	G=\Res_{K/F}H\to \Res_{K/F}(\Res_{\ti K/K}(H\otimes_K\ti K))=\Res_{\ti K/F}(H\otimes_K\ti K)
	\]
	with
	\[
	\Res_{\ti K/F}(H\otimes_K\ti K)\xrightarrow{\Res_{\ti K/F}(\rho_1\otimes_K\ti K)} \Res_{\ti K/F}\GL(V\otimes_K{\ti K})\xrightarrow{\ } \GL(V\otimes_K\ti K),
	\]
	as a representation over $F$ which is isomorphic to a direct sum of $[\ti K:K]$-copies of
	$\rho$. 
	This extends to a morphism of $\calO_F$-group schemes
	 \[
	\Gg\hook \Res_{\O_{\ti K}/\O_F} (H_0\otimes_{\Z_p} \O_{\ti K})\hook \Res_{\O_{\ti K}/\O_F} \GL(\La_0\otimes_{\Z_p}\O_{\ti K})\hook \GL(\La_0\otimes_{\Z_p}\O_{\ti K})
	.\]
	Here we fix an isomorphism $\ti \calH\cong H_0\otimes_{\bbZ_p} \calO_{\ti K}$, and identify $\rho_1\otimes_K\ti K$ with the base to $\ti K$ of a represention $H_0\hook\GL(\Lambda_0)$ over $\bbZ_p$.  This  morphism is a closed immersion  by Proposition \ref{prop: R-smoothness properties} and \cite[Cor. 1.3]{PYuGS}.
	
Correspondingly, by composing (\ref{imm542}), (\ref{imm544}) and the morphism (\ref{imm55prop}) of Proposition \ref{prop:closedLMforGL} applied to $\ti K/F$ and the lattice $\La_0\otimes_{\Z_p}\O_{\ti K}$, we obtain equivariant maps
	\begin{equation}\label{imm5}
	{\rM}_{\Gg,\mu}\to  \rM_{\Res_{\O_{\ti K}/\O_F}(H_0\otimes_{\Z_p} \O_{\ti K}), \ti\mu}\to (\rM_{\GL(\La_0\otimes_{\Z_p}\O_{\ti K}), \ti\mu'})=\Gr(d,\Lambda).
	\end{equation}
	with $\Lambda=\Lambda_0\otimes_{\bbZ_p}\calO_{\ti K}$ as $\calO_F$-modules. These extend the natural morphisms on the generic fibers and are all closed immersions.  The result follows in this case.

 We now deal with the general case (i.e. when $H^{\ad}$ could involve division algebras of index divisible by $p$).
We may assume $K/F$ is totally ramified; the general case is easily reduced to this. Let $F^\natural/F$ be a finite unramified extension with ring of integers $\calO_{F^\natural}$ such that $H$ is quasi-split after base changing to $K^\natural=KF^\natural$.  We let $\calG^\natural_{\bx}$ denote the stabilizer scheme of $G^\natural:=G\otimes_{F}F^\natural$ for the image of  $\bx$ in $\calB(G,F^\natural)$. Then we have an identification $\calG^{\natural}_{\bx}\cong \calG_{\bx}\otimes_{\calO_F}\calO_{F^\natural}$. By construction, we also have an isomorphism 
$$
\rM_{\calG,\mu}\otimes_{\calO_F}\calO_{F^\natural}\cong\rM_{\calG^\natural,\mu^\natural}
$$
 where
$	\rM_{\calG^\natural,\mu^\natural}$ is the local model associated to the local model triple 	
\[
(G^\natural, \{\mu^\natural\}, \Gg^\natural):=(G\otimes_FF^\natural,\{\mu\otimes_{F}{F^\natural}\}, \Gg\otimes_{\O_F}\O_{F^\natural}),
\] 
over $F^\natural$.  

Let $\Omega\subset\calB(G^\natural,F^\natural)$ be the facet containing $\bf x$ and ${\bf y}\in \Omega$ a point which is generic in $\Omega$. Then $\Omega$ is stable under $\Gamma^\natural=\Gal(F^\natural/F)$  and $\calG^\natural_{\bf y}$ has the same neutral component as $\calG^\natural$. Since $G^\natural$ is quasi-split, its adjoint group does not involve division algebras of degree divisible by $p$, and so the above argument applied to the base changed embedding $\rho^\natural$  gives (upon replacing $\rho^\natural$ by a direct sum)  closed immersions 
$$
\calG^{\natural}_{\bf y}\hook \GL(\Lambda^\natural),\ \ \ \ \rM_{\calG^{\natural}_{\bf y},\mu^\natural}=\rM_{\Gg , \mu }\otimes_{\O_F}\O_{F^\natural}\hook {\rm Gr}( d, \La^\natural)_{\O^\natural_E}
$$ 
for $\Lambda^{\natural}\subset V\otimes_{\calO_F}\calO^\natural$ an $\calO^\natural$-lattice. By \'etale descent, the natural morphisms 
\[
\rM_{\Gg , \mu } \to \Res_{\O_{F^\natural}/\O_F}(\rM_{\Gg , \mu }\otimes_{\O_F}\O_{F^\natural}),
\quad
\Res_{\O_{F^\natural}/\O_F}{\rm Gr}( d, \La^\natural)_{\O^\natural_E}\to {\rm Gr}(d f, \Lambda)_{\O^\natural_E},
\] 
are closed immersions where $f=[F^\natural:F]$ and $\Lambda$ is $\Lambda^\natural$ considered as an $\calO_F$-module. 
We thus obtain a closed immersion $\rM_{\Gg , \mu }\hook {\rm Gr}(d f, \Lambda^\natural)_{\O^\natural_E}$.

Now note that $\calG^\natural_{\bf y}$ is equal to  the stabilizer $\widehat{\calG}_\Omega$ of $\Omega$, hence $\calG^\natural_{\bf y}$ is $\Gamma^\natural$-invariant, hence arises as the base change to $\O_{F^\natural}$ of a quasi parahoric $\calG'$ of $G$ with $\calG'^\circ=\calG^\circ$. Thus the composition 
$$
\calG'\rightarrow \Res_{\O_{F^\natural}/\calO_F}\calG_{\O_{F^\natural}}'=\Res_{\O_{F^\natural}/\calO_F}\calG^\natural_{\bf y}\rightarrow \Res_{\O_{F^\natural}/\calO_F}\GL(\Lambda^\natural)\rightarrow \GL(\Lambda)
$$ is a closed immersion as desired.
	\end{proof}
\end{para}

\subsection{Proof of Theorem \ref{thm:twoLM}}

We can now complete the proof. We make use of the following lemma.
		\begin{lemma}\label{lemma:goodLM}
	Let $(G, \{\mu\}, \Gg)$ be a local model triple over $\O_F$. Suppose 
	$\rho: (\Gg,\mu)\hook(\GL(\La),\mu_d)$ is an integral Hodge embedding.
	Let $\ov X_\mu$ be the (reduced) Zariski closure of  
	$X_\mu=G/P_\mu\hook {\rm Gr}(d,V)_{E}$ in ${\rm Gr}(d,\La)_{\O_E}$.
	If $\ov X_\mu$  is normal and has reduced special fiber, then 
	$\ov X_\mu$ is the unique scheme over $\O_E$ that satisfies the Scholze-Weinstein conjecture \cite[Conj. 21.4.1]{Schber} for 
	$(G, \{\mu\}, \Gg)$, i.e. we have $\ov X_\mu=\BMloc_{\Gg, \mu}$. In fact,  then the
	closed immersion 
	\[
	\rho_*: \ov X_\mu=\BMloc_{\Gg,\mu}\to 
	{\rm Gr}(d,\La)_{\O_E}
	\]
	is the unique morphism of normal schemes which gives, after applying the diamond functor,
	the  morphism
	$ {\rm M}^v_{\Gg, \mu}\to {\rm M}^v_{\GL(\La),\mu_d}$
	of $v$-sheaves  over ${\rm Spd}(\O_E)$ obtained from 
	$\rho: (\Gg,\mu)\hook(\GL(\La),\mu_d)$  by functoriality, cf. \S \ref{vsheafmorphisms}.
\end{lemma}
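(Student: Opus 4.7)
The plan is to identify the diamond $\ov X_\mu^\Diamond$ with the $v$-sheaf ${\rm M}^v_{\calG,\mu}$ inside $\Gr(d,\La)^\Diamond_{\O_E}$, and then invoke Scholze--Weinstein's full-faithfulness of the diamond functor on flat proper normal $\O_E$-schemes with reduced special fiber. First, as recalled in \S\ref{vsheafmorphisms}, the Scholze--Weinstein conjecture is known for the general linear group, so $\BMloc_{\GL(\La),\mu_d}=\Gr(d,\La)_{\O_E}$ and ${\rm M}^v_{\GL(\La),\mu_d}=\Gr(d,\La)_{\O_E}^\Diamond$. Functoriality of the $v$-sheaf local models, applied to the Hodge embedding $\rho:(\calG,\mu)\hook(\GL(\La),\mu_d)$, yields a canonical morphism of $v$-sheaves
$$
\rho_*^v:\,{\rm M}^v_{\calG,\mu}\longrightarrow {\rm M}^v_{\GL(\La),\mu_d}=\Gr(d,\La)_{\O_E}^\Diamond
$$
whose restriction to the generic fiber is the closed immersion $X_\mu^\Diamond\hook\Gr(d,V)_E^\Diamond$. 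By the construction of ${\rm M}^v_{\calG,\mu}$ as a closed sub-$v$-sheaf of the Beilinson--Drinfeld $v$-sheaf Grassmannian (\cite[Lect.~21]{Schber}), its image under $\rho_*^v$ is precisely the $v$-sheaf closure of $X_\mu^\Diamond$ inside $\Gr(d,\La)_{\O_E}^\Diamond$.

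Next, I would compare this with $\ov X_\mu^\Diamond$. The scheme $\ov X_\mu$ is proper over $\O_E$ (closed in $\Gr(d,\La)_{\O_E}$), flat (being the reduced Zariski closure of its generic fiber $X_\mu$), and by hypothesis normal with reduced special fiber; so it satisfies all the requirements of the Scholze--Weinstein characterization. The diamond functor commutes with closed immersions, and scheme-theoretic closures of flat subschemes pass to $v$-sheaf closures, which shows that the inclusion $\ov X_\mu^\Diamond\hook\Gr(d,\La)_{\O_E}^\Diamond$ is the $v$-sheaf closure of $X_\mu^\Diamond$. Combined with the previous paragraph, this forces an identification of sub-$v$-sheaves of $\Gr(d,\La)_{\O_E}^\Diamond$,
$$
\rho_*^v\bigl({\rm M}^v_{\calG,\mu}\bigr)=\ov X_\mu^\Diamond,
$$
and since $\rho_*^v$ is a monomorphism (it is a closed immersion on the generic fiber and both source and target are separated $v$-sheaves with the same generic fiber and topologically the same closure), we get an isomorphism ${\rm M}^v_{\calG,\mu}\xrightarrow{\sim}\ov X_\mu^\Diamond$.

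Finally, by the full-faithfulness of the diamond functor on flat proper normal $\O_E$-schemes with reduced special fiber \cite[Prop.~18.4.1]{Schber}, the scheme $\ov X_\mu$ represents ${\rm M}^v_{\calG,\mu}$ in the precise sense of \cite[Conj.~21.4.1]{Schber}; and since that representing scheme is unique up to unique isomorphism when it exists, we conclude $\ov X_\mu=\BMloc_{\calG,\mu}$. The same full-faithfulness immediately gives uniqueness of the scheme morphism $\rho_*$ lifting $\rho_*^v$. The delicate step in the argument is the compatibility of the scheme-theoretic Zariski closure with the $v$-sheaf closure inside $\Gr(d,\La)_{\O_E}^\Diamond$, together with the identification of ${\rm M}^v_{\calG,\mu}$ with that closure via $\rho_*^v$; once those topological statements are in hand, everything else is formal. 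The normality and reduced special fiber hypotheses on $\ov X_\mu$ enter precisely through \cite[Prop.~18.4.1]{Schber} to pass back from $v$-sheaves to schemes.
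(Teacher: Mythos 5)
Your proof is correct and follows essentially the same route as the paper: identify $\ov X_\mu^\Diamond$ with the $v$-sheaf closure of $X_\mu^\Diamond$ inside $\Gr(d,\La)_{\O_E}^\Diamond$, identify that closure with ${\rm M}^v_{\Gg,\mu}$ using the definition of the latter as a $v$-sheaf closure in the Beilinson--Drinfeld Grassmannian and the closed immersion induced by $\rho$, and then pass back to schemes via the full-faithfulness of the diamond functor. You spell out the monomorphism/closed-immersion step and the flatness of $\ov X_\mu$ a bit more explicitly than the paper (which compresses the conclusion into a reference to \cite[Thm 2.15]{HPR}), but the key steps and their order are the same.
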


\begin{proof} As above, ${\rm M}^v_{\GL(\La),\mu_d}={\rm Gr}(d,\La)^\Diamond$, and so $\BMloc_{\GL(\La),\mu_d}={\rm Gr}(d,\La)$.
	The $v$-sheaf $(\bar X_\mu)^\Diamond$ over ${\rm Spd}(\O_E)$  given by the Zariski closure $\ov X_\mu$ of $X_\mu$ in ${\rm Gr}(d,\La)_{\O_E}$ agrees with the $v$-sheaf closure
	$(X_\mu^\Diamond)^-$ of $X_\mu^\Diamond$ in 
	\[
	({\rm Gr}(d,\La)_{\O_E})^\Diamond={\rm M}^v_{\GL(\La), \mu}\times_{{\rm Spd}(\O_F)}{\rm Spd}(\O_E).
	\]
	But $(X_\mu^\Diamond)^-$ is  also the $v$-sheaf closure of $X_\mu^\Diamond$  in the $v$-sheaf Beilinson-Drinfeld Grassmannian ${\rm Gr}_{\GL(\La), {\rm Spd}(\O_E)}$. By definition, this last closure  is ${\rm M}^v_{\Gg,\mu}$.  The result follows, cf. 
	\cite[Thm 2.15]{HPR}.
\end{proof}
 \end{para}
 
\begin{para}\label{proofoftwoLM} 
	\begin{proof}[Proof of Theorem \ref{thm:twoLM}.]

		Since $\Mloc_{\Gg, \mu}$ is flat and projective with reduced special fiber, it suffices to show that 
		$(\Mloc_{\Gg,\mu})^\Diamond$ can be identified with $\rM^v_{\Gg,\mu}:=\Gr_{\calG,\mathrm{Spd}(\calO_E),\mu}$. We use the notation of \S\ref{sss:generalLM}, so that $G^{\ad}=G^{\ad}_1\times G^{\ad}_2.$

		By \cite[Prop. 21.4.3]{Schber}, \cite[Prop. 21.5.1]{Schber}, there are natural isomorphisms 
		$$
		\Gr_{\calG,\mathrm{Spd}(\calO_E),\mu}  \cong  \Gr_{\calG^{\ad}, \mathrm{Spd}(\calO_E), \mu^{\ad}},\qquad 
		\Gr_{\calG', \mathrm{Spd}(\calO_E), \mu'}\cong \Gr_{\calG^{\ad}_1, \mathrm{Spd}(\calO_E), \mu_1^{\ad}},
		$$
		induced by the surjective morphisms $G\rightarrow G^{\ad}$ and $G'\rightarrow G_1^{\ad}$. Since $\calG^{\ad}=\calG_1^{\ad}\times \calG_2^{\ad}$, we have an isomorphism $$\Gr_{\calG^{\ad},\mathrm{Spd}(\calO_E),\mu^{\ad}}\cong\Gr_{\calG_1^{\ad},\mathrm{Spd}(\calO_E),\mu_1^{\ad}}\times_{\mathrm{Spd}\calO_E}\Gr_{\calG^{\ad}_2,\mathrm{Spd}(\calO_E),\mu_2^{\mathrm{ad}}},$$
		where for $i=1,2$,  $\{\mu_i^{\ad}\}$ is the factor of $\{\mu^{\ad}\}$ in $G_i$. By assumption, $\mu_2^{\ad}$ is trivial and hence $\Gr_{\calG^{\ad}_2,\mathrm{Spd}(\O_E),\mu_2^{\mathrm{ad}}}\cong \mathrm{Spd}(\O_E)$. It follows that
		$\Gr_{\calG^{\ad},\mathrm{Spd}(\calO_E),\mu^{\ad}}\cong\Gr_{\calG_1^{\ad},\mathrm{Spd}(\calO_E),\mu_1^{\ad}}$ and hence we obtain an isomorphism 
		$$
		\Gr_{\calG,\mathrm{Spd}(\calO_E),\mu}\cong  \Gr_{\calG',\mathrm{Spd}(\calO_E),\mu'}.
		$$
		Since the local model $\bbM^{\mathrm{loc}}_{\calG,\mu}$ is defined using the auxiliary group $G'$ from Proposition \ref{prop:PRlsv}, it suffices to prove the result in the case $(G,\{\mu\},\calG)=(G',\{\mu'\},\calG')$. By Theorem \ref{thm: LM embedding}, upon possibly replacing $\calG'$ with a different quasi-parahoric, we may find an integral Hodge embedding $(\calG',\mu')\hook(\GL(\Lambda),\mu_d)$ such that the natural map $X_{\mu'}\rightarrow \Gr(d,V)_E$ extends to a closed immersion $$\Mloc_{\calG' ,\mu'}\rightarrow \Gr(d,\Lambda)_{\calO_E}.$$It follows that we have an isomorphism $\Mloc_{\calG',\mu'}\cong \overline{X}_{\mu'}$, and hence $\overline{X}_{\mu'}$ is normal and has reduced special fiber by Theorem \ref{thm: Levin}. Thus the result follows by  Lemma \ref{lemma:goodLM}.
	\end{proof}

\end{para}
 \begin{para}\label{par:variantGood}
We introduce some definitions that are needed for later applications. 

\begin{Definition}\label{def:Good} Let $(G,\mu,\calG)$ be a local model triple and $\rho: (\Gg,\mu)\hook(\GL(\La),\mu_d)$ an integral Hodge embedding.
We say that $\rho$  is \emph{good}, if the morphism
	\[
	\rho_*:  \BMloc_{\Gg,\mu}\to {\rm Gr}(d,\La)_{\O_E}=\BMloc_{\GL(\La),\mu_d}\otimes_{\O}\O_E
	\]
	is a closed immersion.
\end{Definition}

Often, we need to consider a variant of the above definition: Let  $\calL=\{\Lambda_i\}_{i\in \Z}$ be a periodic lattice chain in $V$, see \S \ref{lattices}.
Let $\GL(\calL)$ be the parahoric group scheme of $\GL(V)$ which corresponds to the stabilizer of  $\calL$.
Suppose that $\rho: (G, \mu)\hook (\GL(V),\mu_d)$ extends to a closed immersion of group schemes $\Gg\hook \GL(\calL)$. Then 
we say that the integral Hodge embedding $\rho: (\Gg,\mu)\hook(\GL(\calL),\mu_d)$ 
is good, if the natural morphism
\[
\rho_*:  \BMloc_{\Gg,\mu}\to  \BMloc_{\GL(\calL),\mu_d}\otimes_{\O}\O_E
\]
is also a closed immersion. 

Assume $\calL=\{\Lambda_i\}_{i\in \Z}$ has a determining segment 
\[
p\La_0=\La_r\subset \La_{r-1}\subset \cdots \subset\La_0.
\]
As in \S \ref{lattices}, we set ${\rm tot}(\calL)=\La_0\oplus\La_1\oplus \cdots \oplus\Lambda_{r-1}\subset V^{\oplus r}$, a lattice well-determined up to homothety.
The natural morphisms 
\[
\GL(\calL)\to \GL({\rm tot}(\calL)),\quad \BMloc_{\GL(\calL),\mu_d}\to \BMloc_{\GL({\rm tot}(\calL)),\mu_{rd}},
\]
are both closed immersions, resp. by Lemma \ref{BTGL} and the standard construction of parahoric local models for the general linear group.  Hence, $\rho: (\Gg,\mu)\hook(\GL(\calL),\mu_d)$ is a good integral Hodge embedding, if and only if 
$\rho^{\oplus r}: (\Gg,\mu)\hook(\GL({\rm tot}(\calL)),\mu_{rd})$ is a good integral Hodge embedding.

\end{para}
\begin{para}
Now let $(G,\{\mu\},\Gg)$ be a local model triple with $G\cong \Res_{K/F}H$ with $H$ split over a tamely ramified extension. We assume  that $p\nmid|\pi_1(G^{\der})|$, $\calG=\calG_{\bx}$ for some $\bx\in \calB(G,F)$ generic in its facet and that $H^{\ad}$ does not have factors involving division algebras with index divisible by $p$. The proof of Theorem \ref{thm:twoLM} shows that if there is a faithful minuscule representation $\rho_1: H\to \GL(V)$ over $K$, such that
the composition
\[
\Res_{K/F}H \xrightarrow{\ \Res_{K/F}(\rho_1)\ } \Res_{K/F}\GL(V)\hook \GL(V)
\]
give Hodge embeddings, then $(G,\{\mu\},\Gg)$ admits  good Hodge embeddings. These are given by the composition \[
\Gg\hook \Res_{\O_{\ti K}/\O_F} (H_0\otimes_{\Z_p} \O_{\ti K})\hook \Res_{\O_{\ti K}/\O_F} \GL(\La_0\otimes_{\Z_p}\O_{\ti K})\hook \GL(\La_0\otimes_{\Z_p}\O_{\ti K})
.\] where $\ti K/K$ is a tame extension over which $\bx$ becomes hyperspecial and $\La_0\otimes_{\Z_p}\O_{\ti K}
\subset V\otimes_K\ti K$ is considered as an $\calO_F$-lattice. The next proposition shows that we can replace $\Lambda_0\otimes_{\bbZ_p}\calO_{\ti K}$ with a $\Gamma$-stable lattice. This will be a key property that is needed in \S\ref{sec: NE cases}.

By Proposition 	 \ref{prop:findlattice} there exists a $\Gamma$-invariant lattice $\ti\La\subset V\otimes_K\ti K$ such that $\rho \otimes_K \ti K$ extends to a closed immersion \[
\rho_{1,\ti \La}: H_0\otimes_{\Z_p}\O_{\ti K}\to \GL(\ti\La)
\]
We thus obtain a closed immersion 
\[
\rho_{\ti \La}:\Gg\hook \Res_{\O_{\ti K}/\O_F} (H_0\otimes_{\Z_p} \O_{\ti K})\hook \Res_{\O_{\ti K}/\O_F} \GL(\ti\La)\hook \GL(\ti \La)\] where in the last term we consider $\La$ as an $\calO_F$-module. We let $\ti \mu'$ denote the image of the conjugacy class of cocharacters $\mu$.
\begin{prop}\label{prop: better lattice} Under the assumptions above, 
$$
\rho_{\ti \Lambda}:(\calG,\mu)\hook (\GL(\ti\La),\ti \mu')
$$
is a good integral Hodge embedding. 

Moreover we have  an equality:
\[
\Gg=\Res_{\calO_{\ti K}/\calO_F}(H_0\otimes_{\bbZ_p}\calO_{\ti K})\cap \{g\in \GL(\ti\La)\ |\ g\cdot t_a=t_a\cdot g, \forall a\}
\]
where $t_a:\ti\La\to\ti\La$ are the following $\O_F$-linear endomorphisms: $t_\gamma:\ti\La\to \ti\La$ given by the action of $\ga\in \Gamma$ on $\ti\La\subset V\otimes_K\ti K$, and $t_x:\ti\La\to\ti\La$
given by the multiplication by a set of generators $x\in  \O_{\ti K}$ of the $\O_F$-algebra $ \O_{\ti K}$. 
	\end{prop}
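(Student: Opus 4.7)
The overall approach is to combine two ingredients already established in the paper: the tame Galois descent for stabilizer group schemes given by Proposition~\ref{Fixed}, and the comparison Lemma~\ref{2lattices} for $G$-stable lattices in a minuscule representation.

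I would first settle the intersection description claimed in the second assertion. Proposition~\ref{Fixed} applied to $H$ and the point $\bx$ provides an isomorphism $\calH_\bx \cong (\Res_{\calO_{\ti K}/\calO_K}(H_0 \otimes_{\bbZ_p} \calO_{\ti K}))^\Gamma$ of $\calO_K$-group schemes, and applying $\Res_{\calO_K/\calO_F}$ yields $\calG \cong (\Res_{\calO_{\ti K}/\calO_F}(H_0 \otimes_{\bbZ_p} \calO_{\ti K}))^\Gamma$. It then suffices to identify this fixed-point scheme inside $\GL(\ti\La)$. The inclusion $\Res_{\calO_{\ti K}/\calO_F}\GL(\ti\La) \hookrightarrow \GL(\ti\La)$ is cut out precisely by $\calO_{\ti K}$-linearity, which on $\ti\La$ amounts to commuting with the multiplications $t_x$ for an $\calO_F$-algebra generating set of $\calO_{\ti K}$. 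Since the representation $\rho_1 \otimes_K \ti K$ is $\Gamma$-equivariant and $\ti\La$ is $\Gamma$-stable by construction, the embedding $\rho_{1,\ti\La}$ is $\Gamma$-equivariant when $\GL(\ti\La)$ is endowed with the $\Gamma$-action by conjugation with the $t_\gamma$. Since the formation of fixed-point schemes commutes with $\Gamma$-equivariant closed immersions, this gives $\calG = \Res_{\calO_{\ti K}/\calO_F}(H_0 \otimes_{\bbZ_p} \calO_{\ti K}) \cap \GL(\ti\La)^\Gamma$, which is the intersection description claimed.

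For the goodness of $\rho_{\ti\La}$, my plan is to compare it with the ``split-lattice'' embedding $\rho_{\La_0 \otimes \calO_{\ti K}}$ into $\GL(\La_0 \otimes_{\bbZ_p} \calO_{\ti K})$ that was already used in the proof of Theorem~\ref{thm:twoLM}; that embedding is good by the chain of closed immersions~(\ref{imm5}). Both $\La_0 \otimes_{\bbZ_p} \calO_{\ti K}$ and $\ti\La$ are $\calO_{\ti K}$-lattices in $V \otimes_K \ti K$ on which the minuscule representation $H_0 \otimes_{\bbZ_p} \calO_{\ti K}$ acts. Applying Lemma~\ref{2lattices} with $R = \calO_{\ti K}$ produces an element $g \in \GL(V \otimes_K \ti K)$ centralizing $\rho_1(H \otimes_K \ti K)$ and satisfying $g \cdot (\La_0 \otimes_{\bbZ_p} \calO_{\ti K}) = \ti\La$. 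Viewed as a $\ti K$-linear, hence $F$-linear, isomorphism, $g$ produces an $\calO_F$-linear isomorphism of lattices which intertwines the two group-scheme embeddings of $\calG$. Consequently, the induced $G$-equivariant isomorphism of Grassmannians $\Gr(d_0, \La_0 \otimes_{\bbZ_p} \calO_{\ti K})_{\calO_E} \xrightarrow{\sim} \Gr(d_0, \ti\La)_{\calO_E}$ transports $\rho_{\La_0 \otimes \calO_{\ti K}, *}$ to $\rho_{\ti\La, *}$, so the latter is also a closed immersion.

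The principal subtlety I anticipate is tracking the three different ``structures'' on $\ti\La$ simultaneously: the ambient $\calO_F$-module structure underlying $\GL(\ti\La)$, the $\calO_{\ti K}$-module structure which forces the Weil-restriction description and is recorded by the $t_x$, and the semi-linear $\Gamma$-action coming from $\Gamma$-stability and recorded by the $t_\gamma$. All of the compatibilities nevertheless follow from functoriality, since every object in sight is built from the single $\Gamma$-equivariant datum of the representation $\rho_1 \otimes_K \ti K$ on the $\Gamma$-stable lattice $\ti\La$.
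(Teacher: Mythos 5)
Your proof is correct and follows essentially the same route as the paper: both use Lemma~\ref{2lattices} to produce a centralizing $g$ transporting the split lattice $\La_0\otimes_{\bbZ_p}\calO_{\ti K}$ to $\ti\La$ (hence transporting one closed immersion of local models to the other), and both identify $\Gg$ via tame Galois descent and the $\Gamma$-equivariance of $\rho_{1,\ti\La}$, with the $t_x$ encoding the passage from $\GL(\ti\La)$ to $\Res_{\calO_{\ti K}/\calO_F}\GL(\ti\La)$ and the $t_\gamma$ encoding the $\Gamma$-fixed points. Your phrasing with ``$\GL(\ti\La)^\Gamma$'' is notationally compressed but unambiguous once you've fixed the $\Gamma$-action on $\GL(\ti\La)$ by conjugation with $t_\gamma$, and the apparent omission of the $t_x$ from your final intersection formula is harmless since $\calO_{\ti K}$-linearity is already forced by membership in $\Res_{\calO_{\ti K}/\calO_F}(H_0\otimes_{\bbZ_p}\calO_{\ti K})$.
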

\begin{proof}
We apply Lemma \ref{2lattices} to $L=\ti K$ and the lattices $\ti\La$, $\La_0\otimes_{\Z_p}\O_{\ti K}$: It follows that there is $g\in \GL(V\otimes_K\ti K)$   centralizing the image of $H_0\otimes_{\Z_p} \ti K$, such that $g\cdot (\La_0\otimes_{\Z_p}\O_{\ti K})=\ti\La$. Conjugation by $g$ gives an isomorphism 
\[
{\rm ad}_g: \GL(\La_0\otimes_{\Z_p}\O_{\ti K})\xrightarrow{\sim} \GL(\ti\La)
\]
such that
\[
\rho_{1, \ti\La}= {\rm ad}_g\circ \rho_1.
\]
Using this, combined with the fact that $\rho_{\La_0\otimes_{\Z_p}\O_{\ti K}, *}$ is a closed immersion shows that 
\[
\rho_{\ti\La, *}: \BMloc_{\Gg,\mu}\to (\BMloc_{\GL(\ti\La), \ti\mu'})\otimes_{\O_F}\O_{E}
\]
is also a closed immersion. Then $\rho_{\ti\La}: (\Gg,\mu)\hook (\GL(\ti\La), \ti\mu')$  is also a good integral Hodge embedding.

For the ``moreover'' part, note that we have an equality
$$
\calG\cong (\Res_{\calO_{\ti K}/\calO_F}(H_0\otimes_{\bbZ_p}\calO_{\ti K}))^\Gamma=\Res_{\calO_{\ti K}/\calO_F}(H_0\otimes_{\bbZ_p}\calO_{\ti K})\cap (\Res_{\calO_{\ti K}/\calO_F}\GL(\ti \La))^\Gamma
$$
where the last term is a scheme-theoretic intersection. The result then follows since
${\Res}_{\O_{\ti K}/\O_F}\GL(\ti\La)^\Gamma\subset \GL(\ti\La)$ is the scheme-theoretic stabilizer of the $t_a$.
\end{proof}

\begin{Remark}\label{rem:diagram}{\rm Let $\L$ be the lattice chain in $V$ given by $\{(\ti\pi^i\ti\La)^\Ga\}_{i\in\Z}$. Then
there is a commutative diagram with arrows  the natural morphisms between local models
\begin{equation}\label{eq:diagram}
\xymatrix{ \BMloc_{\Res_{\O_{\ti K}/\O_F}(H_0\otimes\O_{\ti K}), \ti\mu}\ar[r]& \BMloc_{\GL(\ti\La), \ti\mu'}\otimes_{\O_F}\O_{E}\\
	\BMloc_{\Gg,\mu}\ar[u]\ar[r] &\BMloc_{\GL(\L), \mu'}\otimes_{\O_F}\O_{E}\ar[u]\ar[r] &\BMloc_{\GL({\rm tot}(\L)), \mu'}\otimes_{\O_F}\O_{E}.}
\end{equation}
In this, the composition of the left vertical with the top horizontal morphism  is $\rho_{\ti\La, *}$ which, by the above, is a closed immersion. The morphism
$\BMloc_{\GL(\L), \mu'}\to \BMloc_{\GL({\rm tot}(\L)), \mu'}$ is easily seen to be a closed immersion. It follows that all the arrows in the diagram
are closed immersions.}
\end{Remark}

\end{para}

	\section{Root curves and spanning tangent spaces}\label{s:RootCurves}
	
	In this section, we study the tangent spaces of certain Schubert varieties inside the affine Grassmannian. We show that in most cases which are related to Shimura varieties, the tangent space can be spanned by the images of tangent spaces to smooth curves.
	
\subsection{Tangent spaces of affine Schubert varieties}
\begin{para}\label{par:1.1.1b}
	Let $\bk$ be an algebraically closed field of characteristic $p$ and  $G$ a (split, connected) reductive group over $\bk$.  Recall the affine Grassmannian 
	$\Gr_G=LG/L^+G$  defined as in \S\ref{par:1.1.1}.

	We fix $T$ a maximal torus of $G$ and $B$ a Borel subgroup containing $T$, and we write $X_*(T)^+$ for the set of dominant cocharacters with respect to $B$. For any $\mu\in X_*(T)$, we let $t^\mu$ denote the $\bk$-point of $LG$ determined by the $\bk\llps t\lrps$-point of $G$ induced by $\mu$. For simplicity, we also let $t^\mu$ denote the image of $t^\mu$ in $\Gr_G$.
	
	For $\mu\in X_*(T)^+$, we let $S_\mu\subset \Gr_G$ denote the affine Schubert variety corresponding to $\mu$. By definition, this is the reduced orbit closure of the $G(\bk\lps t\rps)$-orbit of $t^\mu$.
	We let $\lleq$ denote the dominance ordering on $X_*(T)^+$ so that $\lambda\lleq\mu $ if and only if $\mu-\lambda$ is an integral linear combination of positive coroots with non-negative coefficients. Then we have $S_\lambda\subset S_\mu$ if and only if $\lambda\lleq\mu$. We sometimes write $S_\mu^G$ for $S_\mu$ if we want to make clear the group $G$ that appears.
	
	We will mainly be interested in the cases when the pair $(G,\mu)$ is related to the special fiber of a local model for a Shimura variety of abelian type.

	\begin{Definition}
		Let $(G,\mu)$ be a pair as above. We say $(G,\mu)$ is of \textit{mod $p$ abelian type} if each 
		simple factor $(H_i,\mu_i)$ of $(G^{\ad},\mu^{\ad})$ satisfies one of the following two conditions:
		\begin{altenumerate}
		\item
			$H_i$ is of type $A,B,C$ and $\mu_i$ is a sum of minuscule coweights, 
			
			\item  $H_i$ is of type $D_n$ and $\mu_i=r\varpi_1^\vee$ (type $D^{\mathbb R}_n$) or $\mu_i=s\varpi_{n-1}^\vee+t\varpi_n^\vee$ (type $D^{\mathbb H}_n$), with $r,s,t\in \Z_{\geq 0}$.
		\end{altenumerate}
		
		Here, $\varpi^\vee_j$ denotes the $j^{\mathrm{th}}$-fundamental  coweight, and we use the labeling of roots as in \cite{Bour}. 
\end{Definition}

\begin{Remark} 
{\rm	Let $(G,\{\mu\},\calG)$ be a local model triple over $F$ of abelian type satisfying the standard assumptions as in 
	\S\ref{s:LM} and
		with $\Gg\simeq \prod_{i=1}^r\Res_{\O_{K_i}/\O_F}H_i$, where $H_i$ is a  split reductive group scheme over $\O_{K_i}$. Then there is a pair $(G',\mu')$ over $k$ of mod $p$ abelian type such that $\BMloc_{\calG,\mu}\otimes_{\calO_E}k\cong S_{\mu'}^{G'}$, see Lemma \ref{lemma:LMtomodp}. } 
\end{Remark}

	\begin{Definition}\label{def:span}
	For a scheme $X$ over $\bk$ and $x\in X(\bk)$, we say that the tangent space $T_x(X)$  of $X$ at $x$ is 
	\emph{spanned by smooth formal curves} if the images of the tangent spaces
	by $\bk$-morphisms $\Spec(\bk\lps t\rps)\to X$ with the closed point mapping to $x$ generate the $\bk$-vector space $T_x(X)$.
	\end{Definition}	
\begin{Remark}\label{remSpanHilbert}

{\rm \begin{altenumerate}
\item    Suppose $X$ is of finite type over $\bk$. A necessary condition for $T_x(X)$  to be 
	spanned by smooth formal curves is that $T_x(X)$ is spanned as a $\bk$-vector space by the $\bk$-points of the reduced subscheme $TC_x(X)_{\rm red}$ of the (affine) tangent cone 
	$TC_x(X)$ of $X$ at $x$. 
	
\item Consider the normal surface 
	$
	X=\Spec \bk[x,y,x]/(x^2 +xyz).
	$
	When $\bk=\bar{\mathbb F}_p$, $X$ gives an open affine chart of the local model for the reduction modulo $p$ of a Hilbert modular surface with Iwahori level at an odd prime $p$ which ramifies in the real quadratic field, see \cite[Ex. 4.5]{Phmv}.
	The tangent space $T_0X$ at the origin is 3-dimensional. If $f(t),g(t),h(t)\in t\bk\lps t\rps$ is such that $f(t)^2+f(t)g(t)h(t)=0,$ then the coefficient of $t$ in $f(t)$ is equal to $0$. Thus for $(f(t),g(t), h(t))$ a $\bk\lps t\rps$-point of $X$ lifting $0$, the image of the tangent space of this formal curve lies in the 2-dimensional subspace of $T_0X$ given by $ x=0$.
	Here, the  tangent cone $TC_0(X)$ is $\Spec(\bk[x,y,z]/(x^2))$ and its reduced subscheme $\Spec(\bk[y,z])$ only spans this $2$-dimensional subspace of $T_0(X)$. \end{altenumerate}
	}
	\end{Remark}
	
The main theorem of this section is the following.

	\begin{thm}\label{thm:curve span}
	Assume $(G, \mu)$ is of  \textit{mod $p$ abelian type} with $p\nmid|\pi_1(G^{\der})|$ and has no factors of type $D^{\mathbb H}_n$. Then the tangent space of the affine Schubert variety $S_\mu$ at each $\bk$-valued point is spanned by smooth formal curves. 
	\end{thm}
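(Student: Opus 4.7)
The plan is to first reduce the problem to the absolutely simple case and to torus-fixed points, then produce enough smooth formal curves through each $t^\lambda$ by means of (conjugates of) affine root subgroups, and finally compare the subspace obtained in this way against a combinatorial upper bound coming from a Finkelberg--Mirkovic style description of $S_\mu$.

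First I would observe that $S_\mu$ is $L^+G$-stable and that every $\bk$-point of $S_\mu$ lies in the $L^+G$-orbit of some $T$-fixed point $t^\lambda$ with $\lambda \lleq \mu$. The left-translation action of $L^+G(\bk)$ is by automorphisms of $S_\mu$ that take smooth formal curves to smooth formal curves and induce isomorphisms on tangent spaces, so it suffices to verify the spanning property at each such $t^\lambda$. Using $p \nmid |\pi_1(G^{\der})|$, standard isogeny arguments let me replace $G$ by a convenient group in its isogeny class. The decomposition $(G^{\ad}, \mu^{\ad}) \simeq \prod_i (H_i, \mu_i)$ induces a decomposition $\Gr_{G^{\ad}} \simeq \prod_i \Gr_{H_i}$ that is compatible with Schubert varieties, so the spanning property may be checked one simple factor at a time, reducing us to the case where $(G, \mu)$ has a single simple factor of type $A_n$, $B_n$, $C_n$, or $D_n^{\mathbb R}$ as in the definition of mod $p$ abelian type.

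Next, for each affine root $\alpha$ of $G$ with respect to $T$, the one-parameter subgroup $u_\alpha\colon \bbA^1_\bk \to LG$ yields a morphism $s \mapsto u_\alpha(s)\cdot t^\lambda$ from $\bbA^1_\bk$ to $\Gr_G$. When this morphism factors through $S_\mu$, it provides a smooth formal curve passing through $t^\lambda$ whose tangent vector lies in the affine root space of $\Lie(LG)$ indexed by $\alpha$, viewed inside $T_{t^\lambda}\Gr_G$. Conjugating these subgroups by elements of $L^+G$ stabilizing $t^\lambda$ yields further smooth curves with related tangent directions. Since $S_\mu$ is $T$-stable, its tangent space at $t^\lambda$ decomposes into weight spaces, each at most one-dimensional, and the above construction produces an explicit subspace $V_0 \subseteq T_{t^\lambda}S_\mu$ indexed by those affine roots $\alpha$ for which $u_\alpha(s)\cdot t^\lambda$ stays in $S_\mu$, a condition that translates into the requirement that a certain lattice point lies below $\mu$ in the dominance order.

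The upper bound on $T_{t^\lambda}S_\mu$ is obtained from the Plücker-style embedding of $S_\mu$ into a product of minuscule (hence smooth) Schubert varieties that is suggested by the Finkelberg--Mirkovic description, following \cite[\S 6]{HainesPlucker}. Tangent spaces at $t^\lambda$ embed $T_{t^\lambda}S_\mu$ into a direct sum of weight spaces of certain fundamental representations, cut out by explicit Plücker-type relations. The containment $V_0 \subseteq T_{t^\lambda}S_\mu$ is automatic; the heart of the proof is showing the reverse containment. This reduces to a combinatorial check that every $T$-weight surviving the Plücker conditions can be realized by some admissible affine root, carried out type by type and exploiting the restricted shape of $\mu$ imposed by the mod $p$ abelian type hypothesis. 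The main obstacle is exactly this combinatorial step; the hypothesis ruling out $D_n^{\mathbb H}$ is natural here, since the same analysis shows that in that type certain Plücker directions cannot be reached by affine root curves and the spanning genuinely fails.
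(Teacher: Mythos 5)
Your plan reproduces the structure of the paper's argument quite closely: the reduction to torus-fixed points $t^\lambda$ via the $L^+G$-action, the further reduction (using $p\nmid|\pi_1(G^{\der})|$ and a $z$-extension) to $G$ almost simple with $G^{\der}=G^{\mathrm{sc}}$, the lower bound given by curves $a\mapsto t^\lambda x_\alpha(t^{-k}a)$ from affine root subgroups and their conjugates, the upper bound from a Finkelberg--Mirkovic/Pl\"ucker-style closed condition, and a type-by-type combinatorial comparison that fails exactly in type $D^{\mathbb H}_n$. This is precisely the paper's strategy.

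However, there is one substantive imprecision that hides most of the real work. You assert that the weight spaces of $T_{t^\lambda}S_\mu$ are ``each at most one-dimensional,'' and you use this to reduce the spanning to checking that each surviving weight is realized by a single affine root curve. For the root directions this is fine: under the extended torus $\widetilde T=\Gm\times T$ (with the rotation $\Gm$ acting on $t$), the weight space attached to $(\alpha,r)$ is spanned by $t^{r}X_\alpha$ and is one-dimensional. But for the Cartan directions, i.e.\ the piece $\fkT^{\tan}=T_{t^\lambda}S_\mu\cap t^{-1}\bk[t^{-1}]\fkt$, the weight space of $\widetilde T$-weight $(0,-r)$ is $t^{-r}\fkt\cap T_{t^\lambda}S_\mu$, which can have dimension up to $\mathrm{rank}(G)$. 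Your conjugation trick produces, for each simple $\alpha$, the tangent direction of $t^{-k}(X_\alpha+H_\alpha+X_{-\alpha})$ and hence (via the $\widetilde T$-action) the vector $t^{-k}H_\alpha$; but there is no a priori reason these vectors should span the whole multi-dimensional Cartan piece, and that is exactly where the paper does its hardest combinatorics: for an arbitrary $H=\sum_{\beta\in\Delta}m_\beta H_\beta\in\fkt^{\der}$ it computes the FM bound $l_H$ and shows, type by type, that $l_H=\min_{m_\beta\neq 0}k_{-\beta}$ (Propositions 4.3.5 and 4.3.6), which is what lets it conclude $\fkT^{\cur}=\fkT^{\FM}$. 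A per-weight affine-root argument as you describe would not suffice here, and indeed the $D^{\mathbb H}$ counterexample the paper records lives in the Cartan piece, not in a root weight space. So the route is right, but you should replace the one-dimensionality claim by a genuine analysis of the Cartan block.
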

	
	This will be shown as a consequence of the combination of Theorem \ref{thm: root direction equality} and Theorem \ref{thm: spanning cartan directions}.
	These statements  provide more precise results and include information about cases with factors of type $D^{\mathbb H}_n$.
	 
	\end{para}
	
\begin{para}
We begin by recalling the description of the tangent space of $\Gr_{G}$ at the points $t^\lambda$. Let $\fkg$ denote the Lie algebra of $G$ and let $\fkt$ denote the Lie algebra of $T$.  We write $R$ for the set of roots for $G$ and $R_+$ (resp. $R_-$) the set of positive (resp. negative) roots for $G$, and let $\Delta\subset R_+$  be the set of simple roots. We fix a Chevalley system $(x_\alpha)_{\alpha\in R}$ for $G$, which determines  a set of root vectors $X_\alpha\in \fkg$, for $\alpha\in R$. 
	Then $X_\alpha$ generate  the weight space of $\fkg$ corresponding to $\alpha$. 
	
	Let $L^-G$ denote the negative loop group for $G$. Thus $L^-G$ represents the functor $R\mapsto G(R[t^{-1}])$ on $k$-algebras $R$, and let $L^{--}G$ denote $\ker(L^-G\rightarrow G) , t\mapsto 0$. 	For $\lambda\in X_*(T)^+$, the map 
	$$
	t^\lambda L^{--}Gt^{-\lambda}\to \Gr_G,\quad g\mapsto g t^{\lambda},
	$$
	is representable by an open immersion which maps $1$ to $t^\lambda\in \Gr_G$ (cf. \cite[Lemma 3.1]{HRTestFn}). We thus have an isomorphism
\begin{align*}
 T_{t^\lambda}\Gr_G& \cong t^\lambda\Lie L^{--}G t^{-\lambda}\\ & \cong \fkg\otimes_\bk \bk(\!(t)\!)/t^\lambda(\fkg\otimes_\bk \bk[\![t]\!])t^{\lambda^{-1}}\\&\cong\bigoplus_{\alpha\in R}t^{\langle \lambda,\alpha\rangle-1}\bk[t^{-1}]X_\alpha\oplus t^{-1}\bk[t^{-1}]\fkt.
 \end{align*}

For $\mu \in X_*(T)^+$ with $\lambda\lleq \mu$, we have the subspace $T_{t^\lambda}S_\mu\subset T_{t^\lambda}\Gr_G$. Then  $t^\lambda\in S_\mu$ and hence $T_{t^\lambda}S_\mu$ is   preserved under the action of the torus $\tilde T=\Gm\times T$, with the first factor $\Gm$ acting on $\Gr_G=LG/L^+G$ by `rotations' $t\mapsto at$. Hence, $T_{t^\lambda}S_\mu$ has a basis given by elements of the form $t^{-r}X_\alpha$ together with elements of the form $t^{-r}H$ for $H\in \fkt$.  We let $\Phi^{\tan}_{\lambda,\mu}\subset R\times \bbZ$ denote the subset of pairs $(\alpha, r)$ such that $t^rX_\alpha\in T_{t^\lambda}S_\mu$, and we set
		$$
	\fkT^{\tan}:=T_{t^\lambda}S_\mu\cap t^{-1}\bk[t^{-1}]\fkt.
	$$
	Then we have a decomposition
 $$
	T_{t^\lambda}S_\mu\cong \bigoplus_{(\alpha,k)\in \Phi^{\tan}_{\lambda,\mu}}t^{k}X_\alpha\oplus\fkT^{\tan} .
	$$
	
We now fix $\mu,\lambda\in X_*(T)^+$ with $\lambda\lleq \mu$. We will show that in most cases when $(G,\mu)$ is of mod $p$ abelian type, the tangent space $T_{t^\lambda}S_\mu$ is spanned by smooth formal curves. We deal separately with the subspace spanned by $t^{k}X_\alpha$,
$(\alpha,k)\in \Phi^{\tan}_{\lambda,\mu}$ (the ``root directions") and $\fkT^{\tan}$ (the ``Cartan directions") in the next two sections.

\end{para}

\subsection{Root curves and root tangent directions}

\begin{para}
We first consider the tangent directions along the root vectors $X_\alpha$. In this case, we can span many tangent directions using curves coming from the unipotent root groups as follows.
	
	For $\alpha \in R$, we define  
	$$
k_\alpha^{(\lambda,\mu)}:=\max\{k\in \bbZ\ |\ (\lambda-k\alpha^\vee)_{\dom} \lleq \mu\}.
	$$ 
Here, for $\nu\in X_*(T)$, we denote by $\nu_{\dom}\in X_*(T)^+$ its dominant representative. We will often fix coweights $\mu,\lambda$ as above and write $k_\alpha$ for $k^{(\lambda,\mu)}_\alpha$ when there is no risk of confusion.

 The following is essentially contained in \cite[Proposition 3.6]{PappasZhou}.

\begin{prop}\label{prop: curve bound tangent vector} Let $\lambda,\mu\in X_*(T)^+$ with $\lambda\lleq \mu$.
	\begin{enumerate}\item We have $k_\alpha=k_{-\alpha}+\langle \lambda,\alpha\rangle$.
		\item	Let $1\leq k\leq k_\alpha$. Then the tangent vector $X_\alpha t^{-k+\langle\lambda,\alpha\rangle}\in T_{t^\lambda}\Gr_G$ lies in the subspace $T_{t^\lambda}S_\mu$.
		\end{enumerate}
\end{prop}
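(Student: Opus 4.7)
The plan is to handle the two parts separately. For (1) I will use a Weyl-symmetry of the defining set, and for (2) I will exhibit an explicit smooth $\mathbb{A}^1$-family in $\Gr_G$ through $t^\lambda$, compute its tangent direction, and verify that it lies in $S_\mu$.

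For (1), one checks directly that $s_\alpha(\lambda - j\alpha^\vee) = \lambda - (\langle\lambda,\alpha\rangle - j)\alpha^\vee$, so these two elements share a common $W$-orbit and hence a common dominant representative. Consequently the set $\mathcal{S} := \{j \in \bbZ : (\lambda - j\alpha^\vee)_{\dom} \lleq \mu\}$ is invariant under $j \mapsto \langle\lambda,\alpha\rangle - j$. By definition $k_\alpha = \max \mathcal{S}$, and substituting $-\alpha$ for $\alpha$ together with $(-\alpha)^\vee = -\alpha^\vee$ yields $k_{-\alpha} = -\min \mathcal{S}$. The invariance forces $\max\mathcal{S} + \min\mathcal{S} = \langle\lambda,\alpha\rangle$, giving $k_\alpha = k_{-\alpha} + \langle\lambda,\alpha\rangle$.

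For (2), set $m := -k + \langle\lambda,\alpha\rangle$ and consider $c : \mathbb{A}^1 \to \Gr_G$, $s \mapsto u_\alpha(st^m)\, t^\lambda L^+G$. A direct derivative computation in the right-invariant trivialization identifies the tangent of $c$ at $s=0$ with the class of $X_\alpha t^m$ in $T_{t^\lambda}\Gr_G$, which is nonzero for $k \geq 1$ because $m < \langle\lambda,\alpha\rangle$. It remains to show $c$ factors through $S_\mu$. If $m \geq 0$, then $u_\alpha(st^m) \in L^+G$ as a family in $s$, so $c$ lands in $L^+G \cdot t^\lambda \subset S_\lambda \subset S_\mu$. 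If $m < 0$, apply for $s \neq 0$ the $\SL_2^\alpha$-identity $u_\alpha(x) = u_{-\alpha}(x^{-1})\, n_\alpha(1)\, \alpha^\vee(x^{-1})\, u_{-\alpha}(x^{-1})$ with $x = st^m$: pushing the rightmost $u_{-\alpha}(x^{-1})$ past $t^\lambda$ via $t^\lambda u_{-\alpha}(y) t^{-\lambda} = u_{-\alpha}(t^{-\langle\lambda,\alpha\rangle}y)$ lands it in $L^+G$ precisely because $-m + \langle\lambda,\alpha\rangle = k \geq 1$; the torus factor $\alpha^\vee(s^{-1})$ is in $L^+G$, and $n_\alpha(1) \in L^+G$ implements $s_\alpha$ on $T$. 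Simplification yields the coset $u_{-\alpha}(s^{-1}t^{-m})\, t^{\lambda - k\alpha^\vee} L^+G$; since $-m > 0$, the prefactor lies in $L^+G$, so $c(s) \in L^+G \cdot t^{\lambda - k\alpha^\vee} \subset S_\mu$ by the hypothesis $k \leq k_\alpha$, and closedness of $S_\mu$ includes $s = 0$.

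The main obstacle is the case $m < 0$, where $c$ jumps $L^+G$-orbits: for $s \neq 0$ it sits in $L^+G \cdot t^{\lambda - k\alpha^\vee}$, while $c(0) = t^\lambda$ lies in a smaller orbit. Reconciling the two coset descriptions requires careful use of the $\SL_2^\alpha$-factorization and of the commutation between $t^\lambda$ and the root subgroups $U_{\pm\alpha}$; once this bookkeeping is in place, the containment in $S_\mu$ follows directly from the definition of $k_\alpha$ together with the $L^+G$-invariance and closedness of $S_\mu$.
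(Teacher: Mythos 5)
Your proof is correct, and it follows essentially the same approach as the paper: the paper's proof simply cites \cite[Prop.\ 3.6]{PappasZhou} for both statements and notes that the same curve $f_{\alpha,k}(a)=t^\lambda x_\alpha(t^{-k}a)$ (which agrees with your $c(s)=u_\alpha(st^m)t^\lambda$ after the commutation $t^\lambda u_\alpha(y)t^{-\lambda}=u_\alpha(t^{\langle\lambda,\alpha\rangle}y)$) has tangent $X_\alpha t^{\langle\lambda,\alpha\rangle-k}$ and lands in $S_\mu$. What you have done is reproduce the argument of the cited reference self-contained: the Weyl-symmetry $j\mapsto\langle\lambda,\alpha\rangle-j$ of the set $\{j:(\lambda-j\alpha^\vee)_{\dom}\lleq\mu\}$ for (1), and, for (2), the $\mathrm{SL}_2^\alpha$-factorization of $u_\alpha(x)$ with the observation that $k\geq 1$ makes $u_{-\alpha}(s^{-1}t^{k})\in L^+G$, putting $c(s)$ for $s\neq 0$ in the orbit $L^+G\,t^{\lambda-k\alpha^\vee}\subset S_\mu$, and closedness of $S_\mu$ handles $s=0$. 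I checked the $\SL_2$-identity, the commutation of $n_\alpha(1)$ and $\alpha^\vee(\cdot)$ through $t^\nu$, and the resulting coweight $s_\alpha(\lambda-m\alpha^\vee)=\lambda-k\alpha^\vee$; all are correct. The only minor imprecision is the phrase "right-invariant trivialization" — what you are really using is the chart $t^\lambda L^{--}G\,t^{-\lambda}\to\Gr_G$, $g\mapsto g\,t^\lambda$ in which $c(s)$ literally sits for $k\geq 1$; but your derivative computation comes out right either way.
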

\begin{proof}
Part (1) is \cite[Prop. 3.6]{PappasZhou}. 	

For (2),	we consider the  map
$$
f_{\alpha,k}:\bbA^1\rightarrow\Gr_G
$$  given by $a\mapsto t^\lambda x_\alpha(t^{-k}a)$ whose image lies in $S_\mu$ by \cite[Prop. 3.6]{PappasZhou}. Moreover, by loc. cit., we have  $f(0)=t^{\lambda}$ and the image of the tangent space of $\bbA^1$ at $0$ contains the vector $X_\alpha t^{-k+\langle\lambda,
	\alpha\rangle}$. 
\end{proof}

	 Proposition \ref{prop: curve bound tangent vector} shows that the tangent vector $t^{\langle\lambda,\alpha\rangle-k}X_\alpha$ lies in the subspace of $T_{t^\lambda}S_\mu$ spanned by smooth formal curves. We set  
	 $$
	 \Phi_{\lambda,\mu}^{\cur}=\{(\alpha,k)\, |\, \alpha\in R,\, \langle\lambda,\alpha\rangle-k_\alpha\leq k\leq \langle\lambda,\, \alpha\rangle-1\}\subset R\times \bbZ.
	 $$
	
	Then we have  inclusions $\Phi_{\lambda,\mu}^{\cur}\subset \Phi_{\lambda,\mu}^{\tan}$. The first main result is the following.
	
	\begin{thm}\label{thm: root direction equality}
		Let $(G,\mu)$ be  of mod $p$ abelian type with $p\nmid |\pi_1(G^{\der})|$. Then we have $$\Phi_{\lambda,\mu}^{\cur}= \Phi_{\lambda,\mu}^{\tan}.$$
	\end{thm}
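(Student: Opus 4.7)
The containment $\Phi^{\cur}_{\lambda,\mu} \subseteq \Phi^{\tan}_{\lambda,\mu}$ is immediate from Proposition \ref{prop: curve bound tangent vector}, so the plan is to establish the reverse inclusion. My strategy, motivated by the Finkelberg--Mirkovic description invoked in the introduction (cf.\ \cite[\S 6]{HainesPlucker}), is to upper-bound $T_{t^\lambda}S_\mu$ by means of a closed embedding of $\Gr_G$ into an affine Grassmannian for a suitable general linear group, and to match the resulting combinatorial bound against $\Phi^{\cur}_{\lambda,\mu}$.

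After reducing to the case of $G$ absolutely simple by taking products, I would fix, for each simple type arising in the mod-$p$ abelian classification, a faithful representation $G \to \GL(V)$ built from the fundamental representations whose highest weights correspond to the minuscule coweights appearing in $\mu$; the hypothesis $p \nmid |\pi_1(G^{\der})|$ ensures these exist with the required integral structure. The induced closed embedding $\iota\colon \Gr_G \hookrightarrow \Gr_{\GL(V)}$ takes $t^\lambda$ to a torus-fixed point indexed by the weights of $V$, and takes $S_\mu^G$ into a Schubert variety $S_{\mu_V}^{\GL(V)}$ for an appropriate dominant cocharacter $\mu_V$ of $\GL(V)$; the tangent space of this larger Schubert variety at any torus-fixed point has a fully explicit description in terms of weight components of $V$.

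The derivative $d\iota$ maps $t^k X_\alpha \in T_{t^\lambda}\Gr_G$ to a sum over weight vectors $v_\nu$ of the contributions $t^k \cdot (X_\alpha v_\nu)$. Requiring that this image lie in $T_{\iota(t^\lambda)}S_{\mu_V}^{\GL(V)}$ produces, for each pair of weights $\nu$, $\nu+\alpha$ of $V$, a linear inequality on $k$. The key combinatorial claim is that, with the chosen $V$, the tightest such inequality is exactly $k \geq \langle\lambda, \alpha\rangle - k_\alpha$, which via Proposition \ref{prop: curve bound tangent vector}(1) places $(\alpha, k)$ in $\Phi^{\cur}_{\lambda,\mu}$. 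The main obstacle is verifying this sharpness type-by-type; the $D^{\mathbb H}_n$ case is expected to be the most delicate, since it requires using the two half-spin representations simultaneously and their weight combinatorics is the most intricate.
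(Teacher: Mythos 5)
Your overall strategy matches the paper's: upper-bound $\Phi^{\tan}_{\lambda,\mu}$ by a representation-theoretic condition, then match the bound against $\Phi^{\cur}_{\lambda,\mu}$. The paper formalizes the bound via the closed subfunctor $S^{\FM}_\mu$ (using conditions at \emph{all} dominant weights $\nu$) and then shows in Proposition~\ref{prop: criterion equality k, l bounds} that a well-chosen set $S\subset\Omega^{\min}$ already yields the same bound $l_\alpha$, which equals $k_\alpha$. Your direct embedding $\Gr_G\hookrightarrow\Gr_{\GL(V)}$ with $V$ a sum of minuscule Weyl modules gives the same bound $j_\alpha$, so the two routes are combinatorially equivalent. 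However, there are two genuine gaps.

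First, the proposed \emph{choice} of $V$ is wrong, and this choice is the heart of the argument. You say you would build $V$ from the ``fundamental representations whose highest weights correspond to the minuscule coweights appearing in $\mu$.'' This would fail in types $B$ and $C$: for $B_n$ with $\mu=r\varpi_1^\vee$ the correct set in the paper is $S=\{\varpi_n\}$ (the spin representation, not the standard one); for $C_n$ with $\mu=r\varpi_n^\vee$ it is $S=\{\varpi_1\}$ (the standard representation, not the fundamental representation for $\varpi_n$, which is not even minuscule). For $D_n^{\bbR}$, where $\mu=r\varpi_1^\vee$, the paper takes $S=\{\varpi_{n-1},\varpi_n\}$ and deliberately omits $\varpi_1$. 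The correct characterization of $S$ is the property $(\ast)$ of Proposition~\ref{prop: criterion equality k, l bounds}: $S$ must be a set of minuscule fundamental weights such that dominance $\nu\lleq\mu$ (for $\mu-\nu$ in the coroot lattice) can be detected by the sign of $\langle\mu-\nu,\varpi\rangle$ for $\varpi\in S$. Picking the ``wrong'' $V$ (such as the standard representation in type $B_n$) yields an upper bound strictly larger than $k_\alpha$ in general, and the argument collapses.

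Second, even with the right $V$, the ``key combinatorial claim'' that the tightest inequality equals $\langle\lambda,\alpha\rangle-k_\alpha$ is not a routine verification but requires the nontrivial estimate in the proof of Proposition~\ref{prop: criterion equality k, l bounds}: to show $k_\alpha\geq j_\alpha$ one must verify $(\lambda-j_\alpha\alpha^\vee)_{\dom}\lleq\mu$, which is done by expanding $\langle\mu-(\lambda-j_\alpha\alpha^\vee)_{\dom},\varpi\rangle$ with $w(\lambda-j_\alpha\alpha^\vee)$ dominant and splitting into cases $\langle\alpha^\vee,w(\varpi)\rangle\geq0$ or $<0$, each of which uses minusculity of $\varpi$ in an essential way. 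Your sketch does not contain this argument, and it is not a consequence of Proposition~\ref{prop: curve bound tangent vector}(1) as you suggest; that proposition is merely the symmetry $k_\alpha=k_{-\alpha}+\langle\lambda,\alpha\rangle$.
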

	\end{para}
	\begin{para}\label{para: reduction almost simple}We first explain how to reduce to proving this Theorem in the case when $G^\ad$ is  simple and $G^{\der}=G^{\mathrm{sc}}$. 
	
	Fix $G,\mu,\lambda$ as above. Let $G^{\ad}\cong \prod_{i=1}^rH_i$ be the decomposition of $G^{\ad}$ into simple factors, and $\mu_i$ (resp. $\lambda_i$) the component of $\mu^{\ad}$ (resp. $\lambda^{\ad}$) in $H_i$. Choose a $z$-extension $$1\rightarrow Z\rightarrow\tilde{H}_i\rightarrow H_i\rightarrow 1$$ so that $\tilde{H}_i^{\der}=\tilde{H}_i^{\mathrm{sc}}$ and $\mu_i,\lambda_i$ lift to cocharacters $\tilde{\mu}_i,\tilde{\lambda}_i$ of $\tilde{H}_i$ (see \cite[Prop. 3.1]{MilneShih}).  The maximal torus and Borel $T,B$ of $G$ determine corresponding pairs $T_i,B_i$ in each $\tilde{H}_i$, and we have $\tilde{\mu}_i,\tilde{\lambda}_i\in X_*(T_i)^+$.
	
	We let $S_{\tilde{\mu}_i}$ denote the affine Schubert variety in $\Gr_{\tilde{H}_i}$ corresponding to $\tilde{\mu}_i$. Then as in \cite[Prop. 2.2.7]{KP}, there is an isomorphism 
	$$
	\prod_{i=1}^rS_{\tilde{\mu}_i}\xrightarrow{\sim} S_{\mu}.
	$$
	This induces natural decompositions 
	$$
	\Phi^{\cur}_{\lambda,\mu}=\prod_{i=1}^r\Phi^{\tilde{H}_i,\cur}_{\tilde{\lambda}_i,\tilde{\mu}_i},\qquad
	\Phi^{\tan}_{\lambda,\mu}=\prod_{i=1}^r\Phi^{\tilde{H}_i,\tan}_{\tilde{\lambda}_i,\tilde{\mu}_i}.
	$$
	Thus in order to prove Theorem \ref{thm: root direction equality}, we may and do assume until further notice that $G^{\der}=G^{\mathrm{sc}}$, and that $G^{\ad}$ is simple.
\end{para}

\begin{para}
	The set $\Phi_{\lambda,\mu}^{\tan}$ has a description in terms of Demazure modules for the associated affine Kac--Moody  algebra (cf. \cite[Cor. 4.3, Lem. 5.9]{HLR}). However, it seems difficult to compare this description of $\Phi_{\lambda,\mu}^{\tan}$ with $\Phi_{\lambda,\mu}^{\cur}$. Instead, we will consider a set $$\Phi^{\FM}_{\lambda,\mu}\subset R\times \bbZ$$ which contains $\Phi_{\lambda,\mu}^{\tan}$, but which is more amenable to computation, and can therefore be compared more easily with $\Phi_{\lambda,\mu}^{\cur}$. 
 The definition of the set $\Phi^{\FM}_{\lambda,\mu}$ is inspired by a conjectural modular description of Schubert varieties which is due to Finkelberg-Mirkovic when $\bk$ has characteristic 0 \cite[\S10.3]{FM}. For general fields, such a description is considered in the forthcoming work of Haines--Jin. (If this conjectural description holds, then  $\Phi^{\FM}_{\lambda,\mu}=\Phi^{\tan}_{\lambda,\mu}$. Here, this equality will be shown directly.)

Let $\Rep_\bk G$ denote the category of  finite dimensional representations of $G$ over $\bk$.	For $V\in \Rep_\bk G$, we write $V^*$ for the contragredient representation. For $\nu\in X_*(T)$, we also write $\nu$ for the representation of $B$ obtained by composing $\nu$ with the projection $B\rightarrow T$. We let $W$ denote the Weyl group for $G$ and $w_0\in W$ the longest element of $W$.

For $\nu\in X_*(T)^+$, we let 
$$
V(\nu):=\Ind_B^G(-w_0(\nu))^*
$$
	 denote the Weyl module associated to $\nu$ (cf. \cite[II, Chapter 2]{Jantzen}). We set $d_\nu:=\dim V(\nu)$.

	 Recall, that $\Gr_G$ represents the functor on $\bk$-algebras $R$ classifying isomorphism classes of pairs $(\calE,\phi)$ as in \ref{par:1.1.1}. If $\calE$ is a $G$-torsor, we denote by $\calE(\nu)$ the vector bundle 
	 $$\calE(\nu)=\calE\times^{G}V(\nu)$$ obtained by pushing out the structure group by the representation $\rho(\nu): G\to \GL(V(\nu))$.

	\begin{Definition}\label{def: FM Schubert variety}
 We define the subfunctor $S_\mu^{\FM}$ of $\Gr_G$ as follows.
For a $\bk$-algebra $R$, an $R$-point of $S_\mu^{\FM}$ consists of a pair $(\calE,\phi)\in \Gr_G(R)$ such that for every dominant weight $\nu\in X_*(T)^+$, 	we have
\begin{equation}	\label{eqn: FM bound defn}
	t^{\langle\mu, -w_0\nu\rangle}\phi({\nu})(\calE(\nu))\subset \calE^0(\nu)   \subset t^{\langle\mu, -\nu\rangle}\phi({\nu})(\calE(\nu)),
	\end{equation}
	as subsheaves of $\calE(\nu)[1/t]$.

		\end{Definition}
	
It is easy to see that $S_\mu^{\FM}$ is  represented by a closed ind-subscheme of $\Gr_G$ which is in fact a projective scheme over $\bk$. We can also see that $t^\mu\in S_\mu^{\FM}(\bk)$ and that $S_\mu^{\FM}$ is $G(\bk\lps t\rps)$-invariant. 
Hence, $S_\mu$ is a closed subscheme of $S_\mu^{\FM}$ and we have 
	$$
	S_\mu\hook (S_\mu^{\FM})_{\mathrm{red}}.
	$$
	\begin{Remark}\label{rem: FM conjecture}
		{\rm \begin{altenumerate}
		\item In fact, the above closed immersion induces an identification 
		\[
		S_\mu= (S_\mu^{\FM})_{\mathrm{red}}.
		\] This is shown, when $\bk$ has characteristic $0$, by \cite[Prop. 6.4]{HainesPlucker}, and, for a general perfect field, in forthcoming work of Haines-Jin. However, we will not need this  in what follows.
				\item 
When $\bk$ has characteristic $0$, it is conjectured that $S_\mu^{\FM}$ is reduced and so $S^\FM_\mu=S_\mu$. This is proved when $G$ is of type $A$; see \cite{KMWY}.		
\item In what follows we will only need to use the inclusion $
	t^{\langle\mu, -w_0\nu\rangle}\phi({\nu})(\calE^0(\nu))\subset \calE(\nu)  $ in \eqref{eqn: FM bound defn}. In fact, the right inclusion even follows from this by applying it to the dual representation $V(-w_0\nu)$.

	\end{altenumerate}}
	\end{Remark}
	
	It follows that taking tangent spaces at $t^\lambda$, gives inclusions:
	$$
	T_{t^\lambda}S_\mu\subset T_{t^\lambda}S_\mu^{\FM}\subset T_{t^\lambda}\Gr_G.
	$$
	The subspace $T_{t^\lambda}S_\mu^{\FM}$ is preserved by the action of the  torus $\tilde T$, hence, like $T_{t^\lambda}{S_\mu}$, it admits a basis consisting of elements of the form $t^{-r}X_\alpha$ together with elements of the form $t^{-r}H$ for $H\in \fkt$. We define 
	$$
	\Phi^{\FM}_{\lambda,\mu}:=\{(\alpha,r)\in R\times\bbZ\,|\, t^rX_\alpha\in T_{t^\lambda}S^{\FM}_\mu \}\subset R\times \bbZ,
	$$
	  and we set
	  $$
	  \fkT^{\FM}_{\lambda,\mu}:=T_{t^\lambda}S^{\FM}_\mu\cap t^{-1}\bk[t^{-1}]\fkt.
	$$
\end{para}

\begin{para}
	We can obtain a more explicit description of $\Phi^{\FM}_{\lambda,\mu}$ as follows. 
	For $\alpha\in R$, we let $\calW(\alpha)$ denote the set of pairs $(\varpi,\varpi')$ where $\varpi\in X_*(T)^+$ is a dominant cocharacter and $\varpi'$ is a weight of $V(\varpi)$ such that $X_{\alpha} v_{\varpi'}\neq 0$ for some weight vector $v_{\varpi'}\in V(\varpi)$ of weight $\varpi'$. Equivalently, $(\varpi,\varpi')\in \calW(\alpha)$ if and only if $\varpi'$ and  $\varpi'+\alpha$ are weights of $V(\varpi)$. In particular, we have $(\varpi,\varpi')\in \calW(\alpha)$ if and only if $(\varpi,\varpi'+\alpha)\in \calW(-\alpha)$.
	 
	 We set 
	$$
l^{(\lambda,\mu)}_\alpha=\min_{(\varpi,\varpi')\in \calW(\alpha)}\langle \mu,\varpi \rangle - \langle\lambda, \varpi'\rangle.
	$$
	As with $k_\alpha$, we will often drop the $(\lambda,\mu)$ from the notation and just write $l_\alpha$ when there is no risk of confusion.
	\begin{prop}\label{prop: FM bound relation} Let $\lambda,\mu\in X_*(T)^+$ with $\lambda\lleq\mu$.
		\begin{altenumerate}
			\item  We have $l_\alpha=l_{-\alpha}+\langle\lambda,\alpha\rangle$.
			
			\item Let $(\alpha,\langle\lambda,\alpha\rangle-l)\in \Phi_{\lambda,\mu}^{\FM}$. Then $1\leq l\leq l_\alpha$.
		\end{altenumerate}
	\end{prop}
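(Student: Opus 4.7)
The plan is to deduce (2) from the defining Plücker-type containments of $S_\mu^{\FM}$ in Definition \ref{def: FM Schubert variety}(2) applied at first order, and to obtain (1) as a direct consequence of the natural bijection between $\calW(\alpha)$ and $\calW(-\alpha)$.

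For (1), I would note that $(\varpi,\varpi')\mapsto(\varpi,\varpi'-\alpha)$ gives a bijection $\calW(-\alpha)\xrightarrow{\sim}\calW(\alpha)$, since the condition that $\varpi'$ and $\varpi'-\alpha$ are both weights of $V(\varpi)$ is exactly $(\varpi,\varpi'-\alpha)\in\calW(\alpha)$. Substituting $\tilde{\varpi}'=\varpi'-\alpha$ in the definition of $l_{-\alpha}$ converts $\langle\mu,\varpi\rangle-\langle\lambda,\varpi'\rangle$ into $\langle\mu,\varpi\rangle-\langle\lambda,\tilde{\varpi}'\rangle-\langle\lambda,\alpha\rangle$, so taking the minimum gives $l_{-\alpha}=l_\alpha-\langle\lambda,\alpha\rangle$, as required.

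For (2), the lower bound $l\geq 1$ is immediate from the explicit description of $T_{t^\lambda}\Gr_G$ in \S\ref{par:1.1.1b}. For the upper bound, I plan to realize the tangent direction $t^{\langle\lambda,\alpha\rangle-l}X_\alpha$ via the open neighborhood $L^{--}G\hook\Gr_G$, $h\mapsto t^\lambda h$, of $t^\lambda$. The corresponding first-order deformation is represented in $LG(\bk[\epsilon]/\epsilon^2)$ by $\phi(\epsilon)=t^\lambda(1+\epsilon t^{-l}X_\alpha)$, and a short computation on a weight vector $v_{\varpi'}\in V(\nu)_{\varpi'}$ yields
\[
\phi(\nu)v_{\varpi'}=t^{\langle\lambda,\varpi'\rangle}v_{\varpi'}+\epsilon\, t^{-l+\langle\lambda,\varpi'+\alpha\rangle}X_\alpha v_{\varpi'}.
\]
The inclusion $t^{\langle\mu,-w_0\nu\rangle}\phi(\nu)(\calE^0(\nu))\subset\calE^0(\nu)$ from Definition \ref{def: FM Schubert variety}(2) then forces, whenever $(\nu,\varpi')\in\calW(\alpha)$ admits some $v_{\varpi'}$ with $X_\alpha v_{\varpi'}\neq 0$, the inequality $l\leq\langle\mu,-w_0\nu\rangle+\langle\lambda,\varpi'+\alpha\rangle$.

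The main bookkeeping step will be to rephrase this as a bound in terms of $l_\alpha$. For this I would apply the involution $(\nu,\varpi')\mapsto(-w_0\nu,-\varpi'-\alpha)$ on $\calW(\alpha)$, using that the weights of $V(\nu)^*\cong V(-w_0\nu)$ are the negatives of those of $V(\nu)$; under this bijection the right-hand side above becomes $\langle\mu,\tilde{\varpi}\rangle-\langle\lambda,\tilde{\varpi}'\rangle$, and minimising over $\calW(\alpha)$ yields $l\leq l_\alpha$. The duality substitution $\nu\leftrightarrow -w_0\nu$ together with the shift by $\alpha$ is the only real subtlety; no extra input is required, as by Remark \ref{rem: FM conjecture}(3) the companion containment in Definition \ref{def: FM Schubert variety}(2) follows from the first applied to the dual representation and so yields no further constraint.
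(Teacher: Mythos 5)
Your proof is correct and follows essentially the same route as the paper's: part (1) via the weight-shift bijection between $\calW(\alpha)$ and $\calW(-\alpha)$, and part (2) by applying the defining left containment of $S_\mu^{\FM}$ at first order and converting the resulting inequality via the duality involution on $\calW(\alpha)$ (the paper passes through $\calW(-\alpha)$ first, but the element $(\tilde\varpi,\tilde\varpi')=(-w_0\nu,-\varpi'-\alpha)$ you land on is exactly the one the paper uses). The only typo is that the containment from Definition \ref{def: FM Schubert variety}(2) should read $\subset\calE(\nu)$ rather than $\subset\calE^0(\nu)$; this does not affect the pole computation.
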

	\begin{proof}
	(1)	Let $\alpha\in R$.  Then $(\varpi,\varpi')\in \calW(\alpha)$  if and only if $(\varpi,\varpi'+\alpha)\in \calW(-\alpha)$, and we have  $$\langle \mu,\varpi \rangle - \langle\lambda, \varpi'\rangle-\langle\lambda,\alpha\rangle=\langle \mu,\varpi \rangle - \langle\lambda, \varpi'+\alpha\rangle .$$ It follows $l_\alpha-\langle\lambda,\alpha\rangle=l_{-\alpha}$.
		
	(2) 	Consider the element $t^{\langle\lambda,\alpha\rangle-l}X_\alpha\in T_{t^\lambda}S_\mu^{\FM}$; this corresponds to a $\Spec\bk[\epsilon]/\epsilon^2$-valued point of $S_\mu^{\FM}$. Let $\nu\in X_*(T)^+$ be a dominant weight and $v\in V(\nu)$ a weight vector of weight $\nu'$. Then consider  
		$$
		(1+\epsilon t^{\langle\lambda,\alpha\rangle-l}X_\alpha )t^\lambda v= t^{\langle\lambda,\nu'\rangle}v+\epsilon t^{\langle\lambda,\alpha+\nu'\rangle-l}X_\alpha v\in V(\nu)\otimes_\bk\bk\llps t\lrps\otimes_{\bk}\bk[\epsilon]/\epsilon^2.
		$$
		By the definition of $\Phi_{\lambda,\mu}^{\FM}$, specifically the left inclusion in  \eqref{eqn: FM bound defn}, this element has worst pole $-\langle \mu,-w_0\nu\rangle$. Thus if $X_\alpha v\neq 0$,  we have  $\langle \lambda,\alpha+\nu'\rangle -l \geq-\langle \mu,-w_0\nu\rangle$. 
		
		We set $\varpi=-w_0\nu\in X_*(T)^+$ a dominant weight and $\varpi'=-\nu'$.  If $\nu'+\alpha$ is a weight of $V(\nu)$, $\varpi'-\alpha$ is a weight of $V(\varpi)$.  It follows that $(\varpi,\varpi')\in \calW(-\alpha),$ or equivalently $(\varpi,\varpi'-\alpha)\in \calW(\alpha)$ and $$ l\leq \langle \mu,\varpi\rangle -\langle\lambda,\varpi'-\alpha\rangle.$$
		
		If we let $\nu$ and $v\in V(\nu)$ range over all such pairs with $X_\alpha v\neq 0$, then $(\varpi,\varpi'-\alpha)$ range over all elements of $\calW(\alpha)$. It follows that $1\leq l \leq l_\alpha$.
	\end{proof}

\end{para}

\begin{para}
	
	Note that we have  the following inclusions 
	$$
	\Phi^{\cur}_{\lambda,\mu}\subset \Phi^{\tan}_{\lambda,\mu}\subset\Phi^{\FM}_{\lambda,\mu}.
	$$
	 It follows that we have an inequality 
	\begin{equation}\label{eqn: k,l inequality}
	 k_\alpha \leq l_\alpha,\ \ \forall \alpha \in R,
	\end{equation}
	with  equality if and only if $\Phi^{\cur}_{\lambda,\mu}=\Phi^{\tan}_{\lambda,\mu}=\Phi^{\FM}_{\lambda,\mu}.$ 
The following proposition gives a criterion for when \eqref{eqn: k,l inequality} is an equality; to state it we introduce some notation.
	
 Let $P$ (resp. $P^\vee$) denote the weight (resp. coweight) lattice for $G$ and $P^+$ (resp. $P^{\vee,+})$ the set of dominant weights (resp. coweights). Thus $P$ is the $\bbZ$-dual of the coroot lattice and $P^\vee$ is the $\bbZ$-dual of the root lattice, and there are natural maps $X^*(T)\rightarrow P$ and $X_*(T)\rightarrow P^\vee$. Since $G^{\der}=G^{\mathrm{sc}}$, the map $X^*(T)\rightarrow P$ is surjective.
 
 Let $\Delta=\{\alpha_1,\dotsc,\alpha_n\}$ be the set of simple roots and $\Omega=\{\varpi_1,\dotsc,\varpi_n\}\subset P^+$ the corresponding set of fundamental weights. For each $\varpi_i$, we fix a lift to $X^*(T)^+$ also denoted $\varpi_i$, which we use to identify $\Omega$ with a subset of $X^*(T)^+$.  Recall a weight $\varpi$ is said to be minuscule if $|\langle \alpha^\vee,\varpi\rangle|\leq 1$ for all $\alpha\in R$. We let $\Omega^{\min}\subset \Omega$ denote the subset of minuscule fundamental weights. 
	\begin{prop}\label{prop: criterion equality k, l bounds}
 Let $\mu\in X_*(T)^+$ and $S\subset \Omega^{\min}$ a subset which satisfies the following property:		
 
 {\rm($\ast$)}	For all $\nu\in X_*(T)^+$ such that $\mu-\nu$ lies in the coroot lattice, we have $\nu \lleq\mu$ if and only if $$\langle\mu-\nu,\varpi\rangle\geq 0, \text{ for all $\varpi\in S$}.$$

		Then, for every $\lambda \in X_*(T)^+$ with $\lambda \lleq \mu$ and every $\alpha\in R$, we have $$k^{(\lambda,\mu)}_{\alpha}=\min_{(\varpi,\varpi')\in \calW(\alpha), \varpi\in S} \langle\mu,\varpi\rangle-\langle\lambda,\varpi'\rangle=l_\alpha^{(\lambda,\mu)}.$$
\end{prop}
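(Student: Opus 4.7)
My plan is to prove the proposition by the chain of inequalities
\[
k^{(\lambda,\mu)}_\alpha \;\le\; l^{(\lambda,\mu)}_\alpha \;\le\; m^{(\lambda,\mu)}_\alpha \;\le\; k^{(\lambda,\mu)}_\alpha,
\]
where $m_\alpha$ denotes the minimum in the middle of the desired equality. The first inequality is the content of \eqref{eqn: k,l inequality}, proved via $\Phi^{\cur}_{\lambda,\mu}\subset\Phi^{\FM}_{\lambda,\mu}$ together with Proposition \ref{prop: FM bound relation}. The second is immediate from the inclusion of index sets. The whole substance, therefore, is the third inequality $m_\alpha\le k_\alpha$; equivalently, for $k:=m_\alpha$ I must check that $\nu:=(\lambda-k\alpha^\vee)_{\dom}\lleq\mu$. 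First I note that $\nu-\lambda+k\alpha^\vee$ lies in the coroot lattice (Weyl orbits of coweights differ by elements of the coroot lattice) and $\mu-\lambda$ does too, so $\mu-\nu$ lies in the coroot lattice; thus the criterion $(\ast)$ applies and reduces the check to $\langle\mu-\nu,\varpi\rangle\ge 0$ for each $\varpi\in S$.

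Fix such a $\varpi$. Since $\nu\in X_*(T)^+$ is dominant and $\varpi$ is a dominant weight, the standard fact that $\nu-w\nu\in\sum\bbZ_{\ge 0}\alpha_i^\vee$ for $w\in W$ gives
\[
\langle\nu,\varpi\rangle\;=\;\max_{w\in W}\langle w\nu,\varpi\rangle\;=\;\max_{w\in W}\bigl(\langle\lambda,w^{-1}\varpi\rangle-k\langle\alpha^\vee,w^{-1}\varpi\rangle\bigr),
\]
because $\nu$ lies in the Weyl orbit of $\lambda-k\alpha^\vee$. As $\varpi$ is minuscule, its Weyl orbit coincides with the set of weights of $V(\varpi)$, so as $w$ varies, $\varpi':=w^{-1}\varpi$ ranges over exactly the weights of $V(\varpi)$. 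The required inequality $\langle\mu,\varpi\rangle\ge\langle\nu,\varpi\rangle$ therefore becomes
\[
\langle\mu,\varpi\rangle-\langle\lambda,\varpi'\rangle+k\langle\alpha^\vee,\varpi'\rangle\;\ge\;0 \qquad\text{for every weight }\varpi'\text{ of }V(\varpi).
\]

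Since $\varpi$ is minuscule, $\langle\alpha^\vee,\varpi'\rangle\in\{-1,0,1\}$, and I will split into these three cases. In the cases $\langle\alpha^\vee,\varpi'\rangle\in\{0,1\}$ the inequality follows from the standard decomposition $\langle\mu,\varpi\rangle-\langle\lambda,\varpi'\rangle=\langle\mu-\lambda,\varpi\rangle+\langle\lambda,\varpi-\varpi'\rangle\ge 0$ (the first summand because $\lambda\lleq\mu$ and $\varpi$ is dominant, the second because $\varpi-\varpi'\in\sum\bbZ_{\ge 0}\alpha_i$ and $\lambda$ is dominant), together with $k=m_\alpha\ge 0$ in the second sub-case. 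The critical case is $\langle\alpha^\vee,\varpi'\rangle=-1$; here a short $\alpha$-string argument in the minuscule representation $V(\varpi)$ shows that $\varpi'+\alpha$ is again a weight, so the pair $(\varpi,\varpi')$ belongs to $\calW(\alpha)$ with $\varpi\in S$. The desired inequality $\langle\mu,\varpi\rangle-\langle\lambda,\varpi'\rangle\ge k$ is then precisely the defining inequality of $m_\alpha$, by the choice $k=m_\alpha$.

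The hard part of the argument is bookkeeping rather than ideas: one must correctly translate the dominance condition on $\nu=(\lambda-k\alpha^\vee)_{\dom}$ into a pairing statement over the Weyl orbit, reidentify that orbit with the weights of the minuscule $V(\varpi)$, and then distinguish $\calW(\alpha)$-pairs from the other weights by the sign of $\langle\alpha^\vee,\varpi'\rangle$. Once this set-up is in place, the non-negativity in each case follows either from $\lambda\lleq\mu$ or, in the decisive case, from the very definition of $m_\alpha$ over $S$, and no further combinatorial input is needed.
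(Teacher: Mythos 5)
Your proof is correct and takes essentially the same route as the paper's: both reduce to checking $(\lambda-k\alpha^\vee)_{\dom}\lleq\mu$ for $k$ equal to the middle minimum via condition $(\ast)$, and both split according to the sign of $\langle\alpha^\vee,\varpi'\rangle$, using minusculity to reduce to $\{-1,0,1\}$ and invoking the defining minimum over $\calW(\alpha)\cap(S\times\cdot)$ precisely in the $-1$ case. The only cosmetic difference is that you argue uniformly over all $\alpha\in R$ via the $\max_w$ reformulation of dominance, while the paper first reduces to $\alpha\in R_-$ using Propositions \ref{prop: curve bound tangent vector}(1) and \ref{prop: FM bound relation}(1) and then fixes the dominantizing $w$ — neither reduction is essential, and the substance of the case analysis is identical.
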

	\begin{Remark}\label{rem: dominance order}
	{\rm 		\begin{altenumerate}
			\item For $\alpha\in R$ and $\nu\in X_*(T)$, the pairing $\langle \nu,\alpha\rangle$ only depends on the image of $\nu $ in $P^\vee$. Using this fact, one can check that the statement of Proposition \ref{prop: criterion equality k, l bounds} is independent of the choice of lifting of $\Omega$ to $X^*(T)^+$. For example, let $(\varpi,\varpi')\in \calW(\alpha)$.
			If $\omega$ and $\varpi$ have the same image in $P$, then $\omega=\varpi+\gamma$ where $\gamma\in X^*(G^{\ab})$,  and we have $(\omega,\omega')\in \calW(\alpha)$ where $\omega'=\varpi'+\gamma$. Then \begin{align*}\langle \mu,\omega\rangle -\langle\lambda,\omega'\rangle&=\langle\mu-\lambda,\omega\rangle-\langle\lambda,\omega'-\omega\rangle\\
			&=\langle\mu-\lambda,\varpi\rangle-\langle\lambda,\varpi'-\varpi\rangle\\
			&=\langle \mu,\varpi\rangle -\langle\lambda,\varpi'\rangle.
			\end{align*}
			\item Note that for $\mu,\nu\in X_*(T)^+$ with $\mu-\nu\in X_*(T_{\mathrm{sc}})$, we have 
	$$
	\mu-\nu=\sum_{i=1}^n m_i  \alpha^\vee_i,\  m_i\in \bbZ.
	$$
	 Then $m_i=\langle\mu-\nu,\varpi_i\rangle$, and hence $\nu\lleq\mu$ if and only if $\langle\mu-\nu,\varpi_i\rangle\geq 0$ for all $i=1,\dotsc,n.$ The point of ($\ast$) is that the condition  $\langle\mu-\nu,\varpi\rangle$ for $\varpi\in S$ forces this condition  for all $i$.  Note that the choice of $S$ satisfying $(\ast)$ depends on $\mu$, and that not all $\mu$ affords such an $S$.  For example, in the notation of the proof of Theorem \ref{thm: root direction equality} below, there does not exist an $S$ for the cocharacter $\mu=(2,1,1)$ in the case of type $B_n$.
\end{altenumerate}}
	 \end{Remark}
	\begin{proof}
For $\alpha\in R$, we 	write 
	$$
	j_\alpha:=\min_{(\varpi,\varpi')\in \calW(\alpha), \varpi\in S} \langle\mu,\varpi\rangle-\langle\lambda,\varpi'\rangle.
	$$
	 Then by definition, we have $j_\alpha\geq l_\alpha$. It suffices to prove $k_{\alpha}\geq j_\alpha$, since then $$k_\alpha\geq j_{\alpha}\geq l_\alpha,$$ and hence since $k_\alpha\leq l_\alpha$, we have equality throughout.
		
	By  Proposition \ref{prop: curve bound tangent vector} (1) and (the proof of) Proposition \ref{prop: FM bound relation} (1),  it suffices to prove $k_\alpha\geq j_\alpha$ for $\alpha\in R_-$
		 or equivalently, that 
		 $$
		 (\lambda-j_{\alpha}\alpha^\vee)_{\dom}\lleq \mu
		 $$ 
		 for all $\alpha \in R_-$. 
	We therefore fix $\alpha \in R_-$. Then by ($\ast$), we need to check that 
		$$
		\langle \mu-(\lambda-j_{\alpha}\alpha^\vee)_{\dom},\varpi\rangle\geq 0
		$$
		 for all $\varpi \in S$. Let $w\in W$ be such that $w(\lambda-j_\alpha\alpha^\vee)=(\lambda-j_{\alpha}\alpha^\vee)_{\dom}.$
		Then for any $\varpi\in S$, we have
		\begin{align*}\langle \mu-(\lambda-j_{\alpha}\alpha^\vee)_{\dom},\varpi\rangle&= \langle \mu-w(\lambda-j_{\alpha}\alpha^\vee),\varpi\rangle\\
		&	=\langle \mu,\varpi\rangle-\langle \lambda,w^{-1}(\varpi)\rangle +j_\alpha\langle w(\alpha^\vee),\varpi\rangle\\
		&\geq \begin{cases}
		\langle \mu,\varpi\rangle-\langle \lambda,w^{-1}(\varpi)\rangle & \text{ if $\langle \alpha^\vee,w^{-1}(\varpi)\rangle\geq 0$}\\ \langle \mu,\varpi\rangle-\langle \lambda,w^{-1}(\varpi)\rangle- j_\alpha& \text{ if $\langle \alpha^\vee,w^{-1}(\varpi)\rangle <0$,}
		\end{cases}
		\end{align*}
	where the last inequality follows from the fact that $\varpi$ is minuscule.
		
		Note that since $\lambda,\mu \in X_*(T)^+$ and $\mu \ggeq \lambda$, we have $$\langle \mu,\varpi\rangle \geq \langle \lambda,\varpi\rangle\geq \langle\lambda, w^{-1}(\varpi)\rangle$$
		and hence we are done if $\langle \alpha^\vee,w^{-1}(\varpi)\rangle \geq 0.$
		
		If $\langle \alpha^\vee,w^{-1}(\varpi)\rangle<0$, then we have $(\varpi,w^{-1}(\varpi))\in \calW(\alpha)$, and hence by definition of $j_\alpha$, we have $$\langle \mu,\varpi\rangle-\langle \lambda,w^{-1}(\varpi)\rangle- j_\alpha\geq 0$$
		as  desired.
	\end{proof}
	
\end{para}
\begin{para} We  now use the previous  proposition to prove Theorem \ref{thm: root direction equality}. 

	\begin{proof}[Proof of Theorem \ref{thm: root direction equality}] We prove that if $(G,\mu)$ is of mod $p$ abelian type  with $G^{\ad}$ simple and $G^{\der}=G^{\mathrm{sc}}$, then for all $\alpha\in R$, we have $k_\alpha= l_\alpha$.
		
By Proposition \ref{prop: criterion equality k, l bounds}, it suffices to find $S\subset \Omega^{\min}$ satisfying condition ($\ast$) in the statement of Proposition \ref{prop: criterion equality k, l bounds}. Note that condition ($\ast$) only depends on the image of $\mu$ and $\nu$ in $P^{\vee,+}$;  we will also use $\mu$ and $ \nu$ to denote their respective images in $P^{\vee,+}$. We verify ($\ast$)  case-by-case depending on the type of $(G^{\ad},\mu^{\ad})$ using the standard representations of $P$ and $P^\vee$.

		In what follows we let $e_1,\dotsc,e_n$ be the standard basis of $\bbZ^n$ and we equip $\bbZ^n$ with the bilinear pairing $\bbZ^n\times \bbZ^n\rightarrow \bbZ$ given by $\langle e_i,e_j\rangle=\delta_{ij}$.

		\textit{Type $A_n$.}
		Let $\mu\in P^{\vee,+}$  be any dominant coweight. Then we may take $S=\Omega^{\min}=\Omega$. Then ($\ast$) is clearly satisfied (cf. Remark \ref{rem: dominance order}).
		\smallskip
		
		\textit{Type $B_n$.} We identify $P^\vee$ and $P$ with $\bbZ^n$  equipped with the usual pairing, so that  we have 
		$$
		R=\{a_{\pm i,\pm j}=\pm e_i\pm e_j|1\leq i < j \leq n\}\cup\{a_{\pm i}:=\pm e_i|1\leq i \leq n\},
		$$
		$$
		P^{\vee,+}=\{(\lambda_1,\lambda_2,\dotsc,\lambda_n)\in P^\vee|\lambda_1\geq\lambda_2\geq\dotsc\geq \lambda_n\geq 0\}.
		$$
		The simple roots $\Delta=\{\alpha_1,\dotsc,\alpha_n\}$ are given by $\alpha_i = e_i - e_{i+1}$ for $i = 1, \ldots, n-1$, and $\alpha_n = e_n$,
		and we have 
		$$
		\varpi_i=\sum_{j=1}^i e_i, \text{ for } i=1,\dotsc,n-1,\quad  \varpi_{n}=\left(\frac{1}{2},\dotsc,\frac{1}{2}\right).
		$$

		In this case, the only minuscule coweight is $\varpi_1^\vee=(1,0,\dotsc,0)$ so if $(G,\mu)$ is of mod $p$ abelian type, we have $\mu = (r, 0, \ldots, 0)$. We take
		$$
		S=\left\{\varpi_n=\left(\frac{1}{2},\dotsc,\frac{1}{2}\right)\right\}=\Omega^{\min}.
		$$

		Let $\nu=(\nu_1,\dotsc,\nu_n)\in P^{\vee,+}$ with $\mu-\nu$ in the coroot lattice and suppose $\langle\mu-\nu,\varpi_n\rangle\geq 0$; thus $$r-\sum_{i=1}^n\nu_i\geq 0.$$
		Since $\nu\in P^{\vee,+}$, we have $\nu_i\geq 0$ for  all $i$, and hence 
		$$
		\langle\mu-\nu,\varpi_i\rangle=r-\sum_{j=1}^i\nu_j\geq 0, \text{ for all $i$.}
		$$ 
		Thus $\mu\ggeq\nu$ and ($\ast$) is satisfied. 
		\smallskip
		
		\textit{Type $C_n$.} We identify $P^\vee$ and $P$ with submodules of $\frac{1}{2}\bbZ^n$, so that $P^\vee$ is the submodule generated by $\bbZ^n$ and $(\frac{1}{2},\dotsc,\frac{1}{2})$. Then we have $$R=\{a_{\pm i,\pm j}:=\pm e_i \pm e_j, 1\leq i< j\leq n\} \cup \{a_{\pm i}:=\pm 2e_{i}, 1 \leq i \leq n\}$$
		$$ P^{\vee,+}=\{(\lambda_1,\dotsc,\lambda_n)\in P^\vee|\lambda_1\geq\lambda_2\geq\dotsc\geq\lambda_n\geq 0\}.$$
		The simple roots are given by $\alpha_i=e_i-e_{i+1}, i=1,\dotsc,n-1$ and $\alpha_n=2e_n$, and we have  $\varpi_{i}=\sum_{j=1}^ie_j$.
		
		The only minuscule coweight  is $\varpi_n^\vee=(\frac{1}{2},\dotsc,\frac{1}{2})$. Thus if $(G,\mu)$ is of mod $p$ abelian type, we have $\mu=(\frac{r}{2},\frac{r}{2},\dotsc,\frac{r}{2})$ for $r$ a positive integer. We take $$S=\{\varpi_1=(1,0,\dotsc,0)\}=\Omega^{\min}.$$Let $\nu=(\nu_1,\dotsc,\nu_n)\in  P^{\vee,+}$ with $\mu-\nu$ in the coroot lattice and suppose $\langle\mu-\nu,\varpi_1\rangle\geq 0$. Then $\frac{r}{2}-\nu_1\geq 0$, and hence $\frac{r}{2}-\nu_i\geq 0$ since $\nu\in  P^{\vee,+}$. Thus $\nu\lleq\mu$ and ($\ast$) is satisfied. 
		\smallskip
		
		\textit{Type $D_n^{\R}$ and $D_n^{\mathbb H}$.} We identify $P^\vee$ and $P$ with submodules of $\frac{1}{2}\mathbb{Z}^n$ so that $P^\vee$ is generated by $\bbZ^n$ and $(\frac{1}{2},\dotsc,\frac{1}{2})$. We have 
		$$R=\{a_{\pm i, \pm j} := \pm e_i \pm e_j| 1 \leq i < j \leq n \}$$
		$$ P^{\vee,+}=\{(\lambda_1,\dotsc,\lambda_n)\in P^\vee|\lambda_1\geq\dotsc\geq\lambda_{n-1}\geq|\lambda_n|\}$$
		The simple roots are given by $\alpha_i=e_i-e_{i+1}$ for $i=1,\dotsc,n-1$ and $\alpha_n=e_{n-1}+e_{n-1}$. We have 
		$$
		\varpi_i=\sum_{j=1}^i e_i, \ i=1,\dotsc,n-2,\quad 
		\varpi_{n-1}=\left(\frac{1}{2},\dotsc,\frac{1}{2},-\frac{1}{2}\right),\ \varpi_n=\left(\frac{1}{2},\dotsc,\frac{1}{2},\frac{1}{2}\right).
		$$
		For type $D_n^{\R}$, we have $\mu=r\varpi_1^\vee=(r,0,\dotsc,0)$. We take $$S=\{\varpi_{n-1},\varpi_n\}\subset \Omega^{\min}=\{\varpi_1,\varpi_{n-1},\varpi_n\}.$$
		Let $\nu=(\nu_1,\dotsc,\nu_n)\in  P^{\vee,+}$ with $\mu-\nu$ in the coroot lattice and suppose $\langle\mu-\nu,\varpi\rangle \geq 0$ for $\varpi\in S$. Then we have $$r-\sum_{j=1}^{n-1}\nu_i=\langle\mu-\nu,\varpi_{n-1}+\varpi_n\rangle\geq0.$$
		Hence since $\nu_j\geq0$ for $j=1,\dotsc,n-1$, we have 
		$$
		r-\sum_{j=1}^i\nu_j=\langle\mu-\nu,\varpi_i\rangle\geq0,\ \text{for all}\ i=1,
		\dotsc , n-2.
		$$
		It follows that $\nu\lleq\mu$ and property ($\ast$) is satisfied.
		
		For type $D_n^{\mathbb H}$, we have $\mu=s\varpi_{n-1}^\vee+t\varpi_n^\vee$, $s,t\in \bbZ_{\geq0}$.  We write $ s-t=q, s+t=r$; then we have $\mu=(\frac{r}{2},\dotsc,\frac{r}{2},\frac{q}{2})$. We take $$S=\{\varpi_1,\varpi_{n-1},\varpi_n\}=\Omega^{\min}.$$  Let $\nu=(\nu_1,\dotsc,\nu_n)\in  P^{\vee,+}$ with $\mu-\nu$ in the coroot lattice and suppose $\langle\mu-\nu,\varpi\rangle \geq 0$ for $\varpi\in S$. Then we have $\langle\mu-\nu,\varpi_1\rangle=\frac{r}{2}-\nu_1\geq0$.  Since $\nu_1\geq \nu_j$, $j=1\dotsc,n-2$, we have 
		$$
		\langle\mu-\nu,\varpi_i\rangle=\sum_{j=1}^i\frac{r}{2}-\nu_j\geq0,\ \text{for all}\ i=1,\dotsc,n-2,
		$$
		 and hence ($\ast$) is satisfied.
		\end{proof}
	
	\begin{Remark}\label{rem: bound realized by varpi in S}{\rm 
 By Proposition \ref{prop: criterion equality k, l bounds}, we have that in each case $$k_{\alpha}=\min_{(\varpi,\varpi')\in \calW(\alpha), \varpi\in S} \langle\mu,\varpi\rangle-\langle\lambda,\varpi'\rangle=l_\alpha.$$
}
	\end{Remark}
\end{para}

\begin{para}
In what follows, we will need a more explicit description of $k_{-\alpha}$ for $\alpha$ a simple root. For $\alpha,\alpha'\in \Delta$, a geodesic from $\alpha$ to $\alpha'$ is a sequence of simple roots $\alpha=\alpha_0,\alpha_1,\dotsc,\alpha_r=\alpha'$ such that $\alpha_i,\alpha_{i+1}$ are adjacent in the Dynkin diagram, and all the $\alpha_i$ are distinct. Since $G^{\ad}$ is simple, its Dynkin diagram is connected, so geodesics always exist and it is clear that they are unique. 
	
	Let $\alpha\in \Delta$ and $\varpi\in \Omega$ a fundamental weight corresponding to $\alpha'\in \Delta$.  Let $\alpha=\alpha_0,\alpha_1,\ldots ,\alpha_r=\alpha'$ be a geodesic. We set $$\gamma:=\begin{cases}0 & \text{if $r=0$}\\ 
	s_{\alpha_1} s_{\alpha_2}\cdots s_{\alpha_{r-1}}\alpha'&\text{ if $r>0$} \end{cases}$$ so that  $\varpi_\alpha:=s_{\alpha_1} s_{\alpha_2}\dotsc s_{\alpha_{r-1}}s_{\alpha_r}\varpi=\varpi-\gamma$. Then $\langle\alpha^\vee,\varpi_\alpha\rangle>0$; this is clear if $r=0$, and for $r>0$ we have $\gamma=\sum_{i=1}^{r}m_i\alpha_i$ with $m_i>0$, so that  $\langle \alpha^\vee,\gamma\rangle <0$.
It follows that  $(\varpi,\varpi_\alpha)\in \calW(-\alpha).$
	
	\begin{lemma}\label{lem: varpi_alpha} Assume $\varpi\in \Omega^{\min}$. Then we have 
		$$
		\langle\mu,\varpi\rangle-\langle\lambda,\varpi_\alpha\rangle=\min_{\{\varpi'|(\varpi,\varpi')\in\calW(-\alpha) \}}\langle\mu,\varpi\rangle-\langle \lambda,\varpi'\rangle
		$$
	\end{lemma}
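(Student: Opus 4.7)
The lemma asserts that $\varpi_\alpha$ maximizes $\langle \lambda, \varpi'\rangle$ among weights $\varpi'$ with $(\varpi,\varpi') \in \calW(-\alpha)$. Since $\lambda$ is dominant, a sufficient condition for this is that $\varpi_\alpha - \varpi'$ lies in the non-negative integer span of positive simple roots for every such $\varpi'$; my plan is to establish this combinatorial fact. Because $\varpi$ is minuscule, the set of weights of $V(\varpi)$ equals the Weyl orbit $W\varpi$, every pairing $\langle\beta^\vee,\varpi'\rangle$ takes values in $\{-1,0,1\}$, and $(\varpi,\varpi')\in\calW(-\alpha)$ is equivalent to $\varpi'\in W\varpi$ with $\langle\alpha^\vee,\varpi'\rangle=1$. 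Let $\calM_\alpha$ denote this set. The case $\alpha=\alpha'$ (i.e., $r=0$) is immediate since then $\varpi_\alpha=\varpi$ is the highest weight of $V(\varpi)$, so $\varpi-\varpi'$ is a non-negative sum of positive simple roots for every $\varpi'\in W\varpi$.

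For $r\geq1$, I would first compute $\gamma=\varpi-\varpi_\alpha$ explicitly by applying $s_{\alpha_r}, s_{\alpha_{r-1}},\ldots,s_{\alpha_1}$ to $\varpi$ in sequence. Using $\langle\alpha_i^\vee,\varpi\rangle=\delta_{i,r}$ (so that only $s_{\alpha_r}$ moves $\varpi$ initially) together with the Dynkin adjacency relations $\langle\alpha_i^\vee,\alpha_j\rangle=0$ for $|i-j|>1$, this computation exhibits $\gamma$ as a positive integer combination of $\alpha_1,\ldots,\alpha_r$ and in particular confirms $\varpi_\alpha\in \calM_\alpha$. For any other $\varpi'\in\calM_\alpha$, write $\varpi-\varpi'=\sum_\beta d_\beta\beta$ as a non-negative integer sum of simple roots. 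The condition $\langle\alpha^\vee,\varpi'\rangle=1$, combined with $\langle\alpha^\vee,\varpi\rangle=0$ (since $r\geq 1$), gives the constraint $\sum_\beta d_\beta\langle\alpha^\vee,\beta\rangle=-1$, which forces the support of $\varpi-\varpi'$ to meet $\alpha$ or a root adjacent to $\alpha$. A connectedness argument in the Dynkin diagram, using the integrality of each pairing $\langle\alpha_i^\vee,\varpi'\rangle$ and the minusculity bound $|\langle\alpha_i^\vee,\varpi'\rangle|\leq1$, then propagates this constraint along the geodesic all the way to $\alpha'$ and shows $d_{\alpha_i}\geq c_i$ for $i=1,\ldots,r$, where $c_i$ are the coefficients of $\gamma$. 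This gives $\varpi_\alpha-\varpi'=(\varpi-\varpi')-\gamma$ as a non-negative integer combination of positive simple roots, and the lemma follows upon pairing with $\lambda$.

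The main technical obstacle lies in the non-simply laced types $B$ and $C$, where an edge $\langle\alpha_i^\vee,\alpha_{i+1}\rangle=-2$ can create nonuniform coefficients in $\gamma$, complicating the inequality $d_{\alpha_i}\geq c_i$. Fortunately, the lemma is only invoked for $\varpi$ in the small, explicit set $S\subset\Omega^{\min}$ chosen in the proof of Theorem \ref{thm: root direction equality} (for instance, $S=\{\varpi_n\}$ in type $B_n$ and $S=\{\varpi_1\}$ in type $C_n$), so if a uniform argument proves delicate, one can fall back on a direct case-by-case verification in types $A_n, B_n, C_n, D_n$ using the standard presentations of $P^\vee$ and the explicit forms of the minuscule fundamental weights recalled there.
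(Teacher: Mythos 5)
Your overall strategy matches the paper's: reduce to showing $\varpi_\alpha - \varpi'$ is a non-negative integral combination of simple roots, handle $\alpha=\alpha'$ separately, compute $\gamma = \varpi - \varpi_\alpha$, and argue that the support of $\varpi-\varpi'$ must contain the whole geodesic $\alpha_1,\dotsc,\alpha_r$. But two points need attention, one minor and one a genuine gap.

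The minor point is your worry about nonuniform coefficients in $\gamma$ for non-simply-laced types. This cannot happen under the minuscule hypothesis: by induction along the geodesic, if $s_{\alpha_{i+1}}\cdots s_{\alpha_r}\varpi = \varpi - \alpha_{i+1} - \cdots - \alpha_r$, then $\langle\alpha_i^\vee, s_{\alpha_{i+1}}\cdots s_{\alpha_r}\varpi\rangle = -\langle\alpha_i^\vee,\alpha_{i+1}\rangle \geq 1$ (Cartan entries being $\leq -1$), and since $\varpi$ is minuscule this pairing is $\leq 1$, hence $=1$. So $\gamma = \sum_{i=1}^r\alpha_i$ with all $c_i = 1$, uniformly. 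Your fallback to case-by-case verification is therefore unnecessary on this point; the uniform argument goes through.

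The genuine gap is in the ``connectedness argument.'' You want to show $d_{\alpha_i}\geq 1$ for all $i$, propagating from $\alpha$. The constraint $\sum_\beta d_\beta\langle\alpha^\vee,\beta\rangle = -1$ does show that \emph{some} neighbor of $\alpha$ lies in $\mathrm{supp}(\varpi-\varpi')$, and repeating the argument at $\alpha_1$ (using the minusculity bound $\langle\alpha_1^\vee,\varpi'\rangle \geq -1$) shows some neighbor of $\alpha_1$ lies in the support --- but this neighbor could be $\alpha_0=\alpha$, not $\alpha_2$, so the propagation does not deterministically advance toward $\alpha'$. What is actually needed, and what the paper proves, is that $\mathrm{supp}(\varpi-\varpi')$ is \emph{connected} and \emph{contains $\alpha'$}; combined with containing a neighbor of $\alpha$, connectedness in a tree forces the whole geodesic to lie in the support. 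The paper establishes connectedness and the $\alpha'$-containment by taking a minimal-length $w\in W$ with $w\varpi=\varpi'$ and a reduced expression $w = s_{\alpha_{i_1}}\cdots s_{\alpha_{i_n}}$; the standard fact that $\langle\alpha_{i_j}^\vee, s_{\alpha_{i_{j+1}}}\cdots s_{\alpha_{i_n}}\varpi\rangle > 0$ for all $j$ then gives, by induction on $j$, that each newly-added root is either $\alpha'$ (at $j=n$) or adjacent to the support so far. Integrality and minusculity of the pairings alone, without this reduced-word (or equivalent lowering-operator chain) input, do not yield connectedness. You should make this Weyl-group combinatorial step explicit.
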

	\begin{proof}Suppose $\varpi$ corresponds to $\alpha'\in \Delta$. If $\alpha=\alpha'$, i.e. $\langle\alpha^\vee,\varpi\rangle =1$, then we have  $\varpi_\alpha=\varpi$ and the result is clear since $\langle \lambda,\varpi\rangle\geq \langle\lambda,\varpi'\rangle$ for any $\varpi'$ a weight of $V(\varpi)$.
			
	Now assume $\alpha\neq \alpha'$.	Note that $\langle \alpha_i^\vee,s_{\alpha_{i+1}}\cdots s_{\alpha_r}\varpi\rangle \geq1$ for any $i$, and hence since $\varpi$ is minuscule we have equality. It follows that $\gamma=\sum_{i=1}^r\alpha_i$. Now suppose $(\varpi,\varpi')\in \calW(-\alpha)$  so that $\langle\alpha^\vee, \varpi'\rangle=1$. We write $\varpi-\varpi'=\sum_{\beta\in \Delta}c_\beta \beta$, where $c_\beta\geq0$ since $\varpi$ is dominant. Then it is clear that  the subset $$\mathrm{supp}(\varpi-\varpi'):=\{\beta\in \Delta|c_\beta>0\}$$ is connected and contains $\alpha'$. Indeed,  let $w\in W$ be a minimal length element with $w(\varpi)=\varpi'$ and let  $w=s_{\alpha_{i_1}}\cdots s_{\alpha_{i_n}}$ be a reduced word decomposition. Then $\langle\alpha_{i_j}^\vee,s_{\alpha_{i_{j+1}}}\cdots s_{\alpha_{i_n}}\varpi\rangle>0$ for all $j$, and hence $\alpha_{i_n}=\alpha'$ and $\alpha_{i_j}$ is adjacent to an element of $\mathrm{supp}(\varpi-s_{\alpha_{i_{j+1}}}\cdots s_{\alpha_{i_n}}\varpi)$.  Thus $\mathrm{supp}(\varpi-\varpi')$ is connected and contains $\alpha'$ by induction.
		
		Since $\langle\alpha^\vee,\varpi'\rangle>0$, it follows that $\mathrm{supp}(\varpi-\varpi')$ contains a neighbour of $\alpha^\vee$. Since $\mathrm{supp}(\varpi-\varpi')$ contains $\alpha'$ and is connected, we have $\alpha_1,\dotsc,\alpha_r\in \mathrm{supp}(\varpi-\varpi')$. Thus $\varpi_\alpha-\varpi'$ is a linear combination of positive roots with non-negative coefficients. It follows that $$\langle\mu,\varpi\rangle-\langle\lambda,\varpi'\rangle\geq \langle\mu,\varpi\rangle-\langle\lambda,\varpi_\alpha\rangle$$ since $\lambda$ is dominant.
	\end{proof}

\begin{cor}\label{cor: k_-alpha simple root}
Let $(G,\mu)$ be of mod $p$ abelian type with $G^{\ad}$  simple and $G^{\der}=G^{\mathrm{sc}}$, and let $\lambda\in X_*(T)^+$ with $\lambda\lleq\mu$. Let $S\subset \Omega^{\min}$ be the subset as in the proof Theorem \ref{thm: root direction equality}, then for $\alpha\in \Delta$, we have 
$$
k_{\alpha}=\min_{\varpi\in S} \langle\mu,\varpi\rangle-\langle\lambda,\varpi_\alpha\rangle=l_\alpha.
$$
\end{cor}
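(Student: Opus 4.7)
The plan is to deduce this as a direct consequence of the two preceding results, since by Theorem \ref{thm: root direction equality} together with Remark \ref{rem: bound realized by varpi in S} applied to $-\alpha\in R$, the subset $S\subset\Omega^{\min}$ chosen in that proof already realizes the bound: one has
\[
k_{-\alpha}=\min_{(\varpi,\varpi')\in\calW(-\alpha),\,\varpi\in S}\langle\mu,\varpi\rangle-\langle\lambda,\varpi'\rangle=l_{-\alpha}.
\]
(I am reading the statement of the corollary as referring to $k_{-\alpha}$ and $l_{-\alpha}$, as indicated by the label and matched by the set $\calW(-\alpha)$ appearing in Lemma \ref{lem: varpi_alpha}.)

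Next, I would invoke Lemma \ref{lem: varpi_alpha}, which is exactly tailored to minuscule $\varpi$: for every $\varpi\in S\subset\Omega^{\min}$, the inner minimum over weight-pairs
\[
\min_{\{\varpi'\,:\,(\varpi,\varpi')\in\calW(-\alpha)\}}\langle\mu,\varpi\rangle-\langle\lambda,\varpi'\rangle
\]
is attained at $\varpi'=\varpi_{\alpha}$, the specific weight produced by walking along the unique geodesic in the Dynkin diagram from $\alpha$ to the simple root defining $\varpi$. Substituting this into the double minimum above collapses it to a single minimum over $S$, giving
\[
k_{-\alpha}=\min_{\varpi\in S}\langle\mu,\varpi\rangle-\langle\lambda,\varpi_{\alpha}\rangle=l_{-\alpha}.
\]

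Thus no new combinatorial input is needed; the corollary is really a bookkeeping statement that combines the equality $k=l$ (the genuinely nontrivial content, proved in Theorem \ref{thm: root direction equality}) with the fact that the extremizing $\varpi'$ can be written down explicitly as $\varpi_\alpha$ when $\alpha$ is simple. The only step requiring care is checking that the hypotheses of Lemma \ref{lem: varpi_alpha} apply to every $\varpi\in S$, i.e.\ that $S$ really is contained in $\Omega^{\min}$ in each of the type $A$, $B$, $C$, $D^{\mathbb R}$, $D^{\mathbb H}$ choices made during the proof of Theorem \ref{thm: root direction equality}; this is immediate by inspection of those cases, where $S$ was chosen as a subset of $\Omega^{\min}$. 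There is no real obstacle here: the work has all been done in the preceding lemma and theorem, and the corollary merely packages it into the form that will be convenient for the tangent-space comparisons in the next subsection.
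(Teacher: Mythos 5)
Your proposal is correct and follows essentially the same route as the paper, which simply cites Lemma~\ref{lem: varpi_alpha} together with Remark~\ref{rem: bound realized by varpi in S} (applied with $-\alpha$ in place of $\alpha$). You also correctly diagnose the notational wrinkle in the displayed equality: the label, the use of $\calW(-\alpha)$ in Lemma~\ref{lem: varpi_alpha}, and the subsequent invocation in the proof of Proposition~\ref{prop: cartan spanning 2} all confirm that the intended quantities are $k_{-\alpha}$ and $l_{-\alpha}$.
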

\begin{proof}
	This follows from Lemma \ref{lem: varpi_alpha}, cf. Remark \ref{rem: bound realized by varpi in S}.
\end{proof}
\end{para}

\subsection{Cartan tangent directions}

\begin{para}
We now consider the directions along the Cartan.
	We fix $\mu,\lambda \in X_*(T)^+$ with $\lambda\lleq\mu$ as before. For an element $\alpha\in \Delta$, we write $\mathrm{d}\alpha^\vee:\Lie \bbG_m\rightarrow \fkt$ for the map on Lie algebras induced by $\alpha^\vee$. We set $H_\alpha=\mathrm{d}\alpha^\vee(1)$. Then $H_\alpha, X_\alpha,X_{-\alpha}$ form an $\fks\fkl_2$-triple in $\fkg$.
	
	Let $1\leq k \leq k_\alpha$ and consider $t^{\langle\lambda,\alpha\rangle-k}X_\alpha\in T_{t^\lambda}\Gr_\mu$. Note that $T_{t^\lambda}\Gr_\mu$ is equipped with a natural action of $G(\bk\lps t\rps)\cap t^\lambda G(\bk\lps t\rps)t^{-\lambda}$. Set $u_{-\alpha}=x_{-\alpha}(1)\in G(\bk\lps t\rps)\cap t^\lambda G(\bk\lps t\rps)t^{-\lambda}$. Then we have 
	$$
	u_{-\alpha} t^{\langle\lambda,\alpha\rangle-k}X_\alpha u^{-1}_{-\alpha}=t^{\langle\lambda,\alpha,\rangle-k}(X_\alpha+H_\alpha+X_{-\alpha})\in T_{t^{\lambda}}S_\mu.
	$$
	In particular, we have
	 $t^{\langle\lambda,\alpha\rangle-k}H_\alpha\in T_{t^\lambda}S_\mu$ via the torus action. Moreover, conjugating the curve $a\mapsto t^\lambda U_\alpha(t^{-k}a)$ by $u_{-\alpha}$ gives a smooth formal curve whose tangent space generates the subspace spanned by $t^{\langle\lambda,\alpha\rangle-k}(X_\alpha+H_\alpha+X_{-\alpha})$.
	
	We set $\fkT^{\cur}_{\lambda,\mu}\subset\fkT^{\tan}_{\lambda,\mu}$ to be the subspace spanned by $t^{\langle\lambda,\alpha\rangle-k}H_\alpha$ for $\alpha \in \Delta$ and $1\leq k\leq k_\alpha$.
	
	\begin{thm}\label{thm: spanning cartan directions}
		Let $(G,\mu)$ be of mod $p$ abelian type with $p\nmid|\pi_1(G^{\der})|$.
		
		\begin{altenumerate}\item Assume $(G,\mu)$ has no factors  of type $D^{\mathbb H}$. Then for any $\lambda\in X_*(T)^+$ with $\lambda \lleq \mu$, we have
		 $$
			\fkT^{\cur}_{\lambda,\mu}= \fkT^{\tan}_{\lambda,\mu}.
			$$
			\item If $G^{\ad}$ is simple and  $(G,\mu)$ is  of type $D^{\mathbb H}$, and $\lambda$ satisfies $\langle \lambda,\alpha_{n-1}\rangle =0$ or $\langle\lambda,\alpha_n\rangle=0$; here we use the labelling of the roots as in Theorem \ref{thm: root direction equality}. Then 
			$$\fkT^{\cur}_{\lambda,\mu}=\fkT^{\tan}_{\lambda,\mu}.$$  In particular, this holds when $\lambda$ is the minimal  element in  $\{\nu\in X_*(T)_+|\nu \lleq\mu\}$.
		\end{altenumerate}
	\end{thm}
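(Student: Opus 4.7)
The approach is to argue, as in Theorem~\ref{thm: root direction equality}, that $\fkT^{\cur}_{\lambda,\mu}=\fkT^{\FM}_{\lambda,\mu}$; together with the inclusions $\fkT^{\cur}_{\lambda,\mu}\subseteq\fkT^{\tan}_{\lambda,\mu}\subseteq\fkT^{\FM}_{\lambda,\mu}$, this will imply the theorem. As in~\S\ref{para: reduction almost simple}, the product decomposition of $(G^{\ad},\mu^{\ad})$ into almost simple factors allows us to reduce at once to the case where $G$ is almost simple and $G^{\der}=G^{\mathrm{sc}}$.

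For each integer $r\geq 1$, set $l^{\mathrm{Cart}}_r:=\{H\in\fkt:t^{-r}H\in T_{t^\lambda}S^{\FM}_\mu\}$, so that $\fkT^{\FM}_{\lambda,\mu}=\bigoplus_{r\geq 1}t^{-r}l^{\mathrm{Cart}}_r$. Unwinding the definition of $S^{\FM}_\mu$ exactly as in the proof of Proposition~\ref{prop: FM bound relation}, one sees that $l^{\mathrm{Cart}}_r$ is cut out in $\fkt$ by the orthogonality conditions $\langle H,\varpi'\rangle=0$ as $(\varpi,\varpi')$ ranges over all pairs with $\varpi\in X^*(T)^+$, $\varpi'$ a weight of $V(\varpi)$, and $\langle\mu,\varpi\rangle-\langle\lambda,\varpi'\rangle<r$. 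By construction, $\fkT^{\cur}_{\lambda,\mu}=\bigoplus_{r\geq 1}t^{-r}C_r$, where $C_r$ is the $\bk$-span of the $H_\alpha$ with $\alpha\in\Delta$ and $k_{-\alpha}^{(\lambda,\mu)}\geq r$; by Corollary~\ref{cor: k_-alpha simple root}, the latter condition is equivalent to the inequality $\langle\mu,\varpi\rangle-\langle\lambda,\varpi_\alpha\rangle\geq r$ for every $\varpi$ in the distinguished subset $S\subseteq\Omega^{\min}$ identified in the proof of Theorem~\ref{thm: root direction equality}. The containment $C_r\subseteq l^{\mathrm{Cart}}_r$ is automatic; the real content of the theorem is the reverse inclusion.

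The reverse inclusion will be verified by a case-by-case analysis in each irreducible type. Since every $\varpi\in S$ is minuscule, the weights of $V(\varpi)$ are precisely the Weyl conjugates of $\varpi$, so the constraints cutting out $l^{\mathrm{Cart}}_r$ reduce to a finite list of linear relations on $\fkt$ in standard coordinates; a direct bookkeeping argument then shows that in the types $A_n,B_n,C_n,D_n^{\R}$ every $H\in l^{\mathrm{Cart}}_r$ is a $\bk$-linear combination of the $H_\alpha$ with $k_{-\alpha}\geq r$, establishing part~(1). The principal obstacle is type $D_n^{\mathbb H}$, where $\mu=s\varpi_{n-1}^\vee+t\varpi_n^\vee$ with $s,t>0$ and \emph{both} minuscule weights $\varpi_{n-1}$ and $\varpi_n$ lie in $S$. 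For generic $\lambda$ the two resulting families of constraints are genuinely independent, so their common solution space strictly exceeds the span of $H_{\alpha_{n-1}}$ and $H_{\alpha_n}$ and produces extra Cartan directions in $l^{\mathrm{Cart}}_r$. Under the hypothesis of part~(2) that $\langle\lambda,\alpha_{n-1}\rangle=0$ or $\langle\lambda,\alpha_n\rangle=0$, however, these two families align in the relevant range of $r$, and the bookkeeping argument from the remaining types applies. Finally, the minimal element of $\{\nu\in X_*(T)^+:\nu\lleq\mu\}$ in the $D^{\mathbb H}$ case satisfies $\lambda_{n-1}=\lambda_n$ (equivalently $\langle\lambda,\alpha_{n-1}\rangle=0$), so part~(2) covers the distinguished minimal case automatically.
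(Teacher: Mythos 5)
Your reduction and overall strategy — reduce to $G$ almost simple with $G^{\der}=G^{\mathrm{sc}}$, and then prove $\fkT^{\cur}_{\lambda,\mu}=\fkT^{\FM}_{\lambda,\mu}$ via a type-by-type analysis — match the paper's approach, and your reformulation in terms of the graded pieces $l^{\mathrm{Cart}}_r=\{H:l_H\geq r\}$ is just a dual packaging of the paper's computation of $l_H$ for individual $H=\sum_\beta m_\beta H_\beta$. However, the phrase ``a direct bookkeeping argument then shows'' conceals essentially the entire substance of the theorem, and what's concealed is not routine. The reverse inclusion $l^{\mathrm{Cart}}_r\subseteq C_r$ amounts to showing $l_H\leq \min_{\beta,\,m_\beta\neq 0}k_{-\beta}$, i.e.\ to exhibiting, for each nonzero $H$, a pair $(\varpi,\varpi')\in\calW(H)$ with $\langle\mu,\varpi\rangle-\langle\lambda,\varpi'\rangle=\min_{\beta,\,m_\beta\neq 0}k_{-\beta}$. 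The natural candidate is $(\varpi,\varpi_\alpha)$ for some $\alpha$ realizing the minimum and $\varpi\in S$ realizing $k_{-\alpha}$, but one must verify $\mathrm{d}\varpi_\alpha(H)=\sum_\beta m_\beta\langle\beta^\vee,\varpi_\alpha\rangle\neq 0$, and this can fail through cancellations. In type $A_{n-1}$, for instance, the simple root $\alpha_j$ realizing the minimum and the nearest fundamental weight $\varpi_k$ need not give $\mathrm{d}\varpi_{k,\alpha_j}(H)\neq 0$; the paper has to choose $k$ with $|k-j|$ minimal and then prove $m_{\alpha_{j-1}}=m_{\alpha_{k-1}}=0$ by a nontrivial dominance argument comparing partial sums of $\mu_i,\lambda_i$. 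Your sketch gives no indication of this, nor of the auxiliary reduction (shown in the paper via the Lemma that $l_H=0$ for $H\notin\fkt^{\der}$) that lets one restrict to $\fkt^{\der}$, nor of the induction on the size of $\{\beta:m_\beta\neq 0\}$ used to upgrade the statement about the minimum of the $k_{-\beta}$ to membership of $H$ in $\fkT^{\cur}$.

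In the $D^{\bbH}$ case your assertion that the hypothesis makes ``the two families align'' is similarly underdeveloped: the actual mechanism (when $m_{\alpha_{n-1}}=-m_{\alpha_n}$ and $\varpi=\varpi_1$, one replaces $\varpi_\alpha$ by $\varpi_\alpha-\alpha_{n-1}$ or $\varpi_\alpha-\alpha_n$, which leaves $\langle\lambda,\cdot\rangle$ unchanged exactly because $\langle\lambda,\alpha_{n-1}\rangle=0$ or $\langle\lambda,\alpha_n\rangle=0$, while changing $\mathrm{d}(\cdot)(H)$ to $\mp 2m_{\alpha_{n-1}}$) is a specific trick, not an alignment of constraint families. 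Also be careful with the final sentence: $\langle\lambda,\alpha_{n-1}\rangle=0$ means $\lambda_{n-1}=\lambda_n$, but it is $\langle\lambda,\alpha_n\rangle=0$ that means $\lambda_{n-1}=-\lambda_n$, and for the minimal element (depending on the $\pi_1$-coset of $\mu$) it is one or the other condition, not uniformly the first, that one checks.
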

	
As in \ref{para: reduction almost simple}, we can reduce to proving this in the case  when $G^{\ad}$ is  simple and $G^{\der}=G^{\mathrm{sc}}$. 
\end{para}

\begin{para}
We assume for the rest of the section that  $G^{\ad}$ is simple and $G^{\der}=G^{\mathrm{sc}}$. To prove Theorem \ref{thm: spanning cartan directions}, we again use the series of inclusions 
$$
\fkT^{\cur}_{\lambda,\mu}\subset \fkT^{\tan}_{\lambda,\mu}\subset \fkT^{\FM}_{\lambda,\mu}.
$$
The theorem will then follow if we can show  $\fkT^{\cur}_{\lambda,\mu}=\fkT^{\FM}_{\lambda,\mu}$. 

For an element $H \in \fkt$, we write $\calW(H)$ for the set of pairs $(\varpi,\varpi')$ with $\varpi\in X^*(T)^+$ and  $\varpi'$ a weight of $V(\varpi)$ such that  $Hv_{\varpi'} \neq 0$ for some weight vector $v_{\varpi'}$ of weight $\varpi'$. The latter condition is equivalent to $\mathrm{d}{\varpi'}(H)$ being non-zero. We set
\[ 
l_H := l_H^{(\lambda,\mu)}=\min_{(\varpi,\varpi')\in \calW(H)} \langle\mu, \varpi\rangle - \langle\lambda, \varpi'\rangle.
\]

A similar computation to Proposition \ref{prop: FM bound relation} gives the following.

\begin{prop}
	Let $H\in \fkt$ and assume $t^{-l}H\in \fkT^{\FM}_{\lambda,\mu}$ with $l\geq 1$. Then $1\leq l\leq l_H$.\qed
\end{prop}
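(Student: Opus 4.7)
The plan is to mirror, almost verbatim, the proof of Proposition~\ref{prop: FM bound relation}(2), replacing the root vector $X_\alpha$ by the Cartan element $H$ and tracking how the left inclusion in \eqref{eqn: FM bound defn} constrains poles on weight vectors. The lower bound $l\ge 1$ is built into the hypothesis; the content is to show $l\le l_H$.

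First, I would observe that the element $t^{-l}H\in \fkT^{\FM}_{\lambda,\mu}\subset T_{t^\lambda}S_\mu^{\FM}$ corresponds to a $\Spec\bk[\epsilon]/\epsilon^2$-valued point of $S_\mu^{\FM}$, namely the trivialized $G$-torsor $(\calE^0,\phi)$ with trivialization $\phi = (1+\epsilon t^{-l}H)t^\lambda\in G(\bk\llps t\lrps\otimes_\bk \bk[\epsilon]/\epsilon^2)$. Fix a dominant weight $\nu\in X^*(T)^+$ and a weight vector $v\in V(\nu)$ of weight $\nu'$. Applying $\phi$ to $v$ gives
\[
\phi(\nu)(v)\;=\;(1+\epsilon t^{-l}H)\,t^\lambda v\;=\;t^{\langle\lambda,\nu'\rangle}v\;+\;\epsilon\,t^{\langle\lambda,\nu'\rangle-l}\,\mathrm{d}\nu'(H)\,v,
\]
since $Hv=\mathrm{d}\nu'(H)\,v$ on the weight space.

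Now I would invoke the left inclusion in \eqref{eqn: FM bound defn}, which requires $t^{\langle\mu,-w_0\nu\rangle}\phi(\nu)(\calE^0(\nu))\subset\calE(\nu)$. Since in our chart $\calE=\calE^0$, this means every entry of $\phi(\nu)(v)$, written in a weight basis, has pole at worst $-\langle\mu,-w_0\nu\rangle$. Whenever $\mathrm{d}\nu'(H)\neq 0$, the $\epsilon$-coefficient is a nonzero scalar multiple of $v$, and so its pole $-\langle\lambda,\nu'\rangle+l$ must satisfy
\[
\langle\lambda,\nu'\rangle-l\;\ge\;-\langle\mu,-w_0\nu\rangle.
\]

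Finally, I would relabel by setting $\varpi:=-w_0\nu\in X^*(T)^+$ and $\varpi':=-\nu'$, which is a weight of $V(\varpi)=V(-w_0\nu)$. Since $\mathrm{d}\varpi'(H)=-\mathrm{d}\nu'(H)$, the nonvanishing condition is symmetric, so $(\varpi,\varpi')\in\calW(H)$, and the inequality above rearranges to $l\le \langle\mu,\varpi\rangle-\langle\lambda,\varpi'\rangle$. Ranging over all $(\nu,v)$ with $Hv\neq 0$ causes $(\varpi,\varpi')$ to range over all of $\calW(H)$, and taking the infimum yields $l\le l_H$, as required. There is no real obstacle here beyond keeping the duality between $\nu$ and $\varpi=-w_0\nu$ straight; everything else is the infinitesimal computation dualizing the $X_\alpha$-case already carried out.
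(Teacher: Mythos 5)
Your proof is correct and carries out exactly the adaptation the paper intends when it says ``A similar computation to Proposition~\ref{prop: FM bound relation} gives the following'': you replace $X_\alpha$ by $H$, use that $H$ acts on a $\nu'$-weight vector by $\mathrm{d}\nu'(H)$, extract the pole bound from the left inclusion in \eqref{eqn: FM bound defn}, and pass to $(\varpi,\varpi')=(-w_0\nu,-\nu')\in\calW(H)$. The only point worth flagging is the one you already handle, that $\mathrm{d}\varpi'(H)=-\mathrm{d}\nu'(H)$ so the nonvanishing condition is preserved under the duality $\nu\leftrightarrow -w_0\nu$, which is why $(\varpi,\varpi')$ sweeps out all of $\calW(H)$.
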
 
Note that $t^{\langle\lambda,\alpha\rangle-k_{\alpha}}H_\alpha=t^{-k_{-\alpha}}H_\alpha\in\fkT_{\lambda,\mu}^{\cur}\subset \fkT_{\lambda,\mu}^{\FM}.$ Thus the previous proposition implies we have the inequality $$k_{-\alpha}\leq l_{H_\alpha}.$$
\end{para}

\begin{para}
Fix $\mu,\lambda$ as in the statement of Theorem \ref{thm: spanning cartan directions}. We will show the inclusion \begin{equation}\label{eqn: tangent space Cartan}
\bigoplus_{\alpha\in\Delta}\left(\bigoplus_{i=1}^{k_{-\alpha}}t^{-i}\bk  H_\alpha\right)=\fkT^{\cur}\subset \fkT^{\FM}
\end{equation} is an equality.  However, unlike the case of root directions, it is not a priori clear that $\fkT^{\FM}$  will decompose as a direct sum over $\alpha$ as is the case for $\fkT^{\cur}$ in \eqref{eqn: tangent space Cartan}. We will instead prove this directly by computing $l_H$ for all $H\in \fkt^{\der}$.

Let $T^{\der}=T\cap G^{\der}$, a maximal torus of $G^{\der}$, and write $\fkt^{\der}=\Lie T^{\der}$. We first show there are no non-trivial elements of $\fkT^{\FM}$ outside of $\fkt^{\der}$.

\begin{lemma}\label{lem: Cartan tangent space contained in derived group} 
	Let $H\in \fkt\setminus\fkt^{\der}$. Then $l_H=0$.
\end{lemma}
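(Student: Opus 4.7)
The plan is to produce, for any $H\in\fkt\setminus\fkt^{\der}$, an explicit element of $\calW(H)$ which attains the value $0$, and then show that every element of $\calW(H)$ gives a non-negative contribution.

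First I would use the decomposition $\fkt = \fkt^{\der}\oplus \fkt/\fkt^{\der}$ coming from the surjection $T\to T^{\ab}:=T/T^{\der}$ (where $T^{\der}=T\cap G^{\der}$). Since $H\notin\fkt^{\der}$, its image $\bar H$ in $\Lie T^{\ab}$ is nonzero. The torus $T^{\ab}$ is the maximal torus of the abelianization $G^{\ab}=G/G^{\der}$, so the pairing $X^*(T^{\ab})\otimes_{\bbZ}\bk\to (\Lie T^{\ab})^*$ is an isomorphism. Hence there exists a character $\chi\in X^*(G^{\ab})\subset X^*(T)$ with $d\chi(H)=d\chi(\bar H)\neq 0$. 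Since $\chi$ factors through the abelianization, it is automatically $W$-invariant, hence dominant; moreover the Weyl module $V(\chi)$ is one-dimensional with unique weight $\chi$ itself. In particular $(\chi,\chi)\in \calW(H)$.

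This choice of $(\chi,\chi)$ gives the upper bound
\[
l_H \leq \langle\mu,\chi\rangle-\langle\lambda,\chi\rangle=\langle\mu-\lambda,\chi\rangle.
\]
Because $\lambda\lleq\mu$, the difference $\mu-\lambda$ lies in the coroot lattice; on the other hand $\chi\in X^*(G^{\ab})$ pairs trivially with every coroot $\alpha^\vee\in X_*(T^{\der})$. Therefore $\langle\mu-\lambda,\chi\rangle=0$, giving $l_H\leq 0$.

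For the matching lower bound, for an arbitrary $(\varpi,\varpi')\in\calW(H)$, the weight $\varpi'$ of $V(\varpi)$ lies in the convex hull of $W\cdot\varpi$, so dominance of $\lambda$ yields $\langle\lambda,\varpi'\rangle\leq\langle\lambda,\varpi\rangle$; combined with $\langle\mu,\varpi\rangle\geq\langle\lambda,\varpi\rangle$ (from $\lambda\lleq\mu$ and dominance of $\varpi$), this gives $\langle\mu,\varpi\rangle-\langle\lambda,\varpi'\rangle\geq 0$. Taking the minimum over $\calW(H)$ yields $l_H\geq 0$, and combined with the previous bound we conclude $l_H=0$. There is no substantive obstacle here; the only point worth double-checking is the existence of the central character $\chi$ detecting $\bar H$, which reduces to the standard fact that characters form a $\bk$-basis of the cotangent space of a torus.
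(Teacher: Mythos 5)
Your proof is correct and follows essentially the same route as the paper's: pull back a character of the abelianization $G^{\ab}=T/T^{\der}$ that detects the nonzero image of $H$ in $\Lie T^{\ab}$, note that it is dominant (trivial on coroots) and pairs trivially with $\mu-\lambda$, giving $(\varpi,\varpi)\in\calW(H)$ with value $0$. You are a bit more explicit than the paper in two harmless ways (spelling out the $W$-invariance and the lower bound $l_H\geq 0$, which the paper leaves implicit), and your opening phrase "decomposition $\fkt=\fkt^{\der}\oplus\fkt/\fkt^{\der}$" should really just be the short exact sequence $0\to\fkt^{\der}\to\fkt\to\Lie T^{\ab}\to 0$ — no natural splitting is needed or claimed in the paper — but this does not affect the argument.
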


\begin{proof}
	Let $G^{\ab}$ denote the quotient of $G$ by $G^{\der}$, and $\fkg^{\ab}$ its Lie algebra. Then we have an exact sequence \[ \xymatrix{0\ar[r]  & T^{\der}\ar[r] & T\ar[r] &G^{\ab}\ar[r]&0}\] and hence an exact sequence
	\[\xymatrix{0\ar[r] &\fkt^{\der}\ar[r]& \fkt\ar[r]^{\psi} &\fkg^{\ab}\ar[r]& 0.}\]
	It follows that the image $\psi(H)$ of $H$ in $\fkg^{\ab}$ is non-zero. Since $G^{\ab}$ is a split torus, we may choose a character $\nu$ of $G^{\ab}$ such that  $\mathrm{d}\nu(\psi(H))\neq 0$. Its composition with $G\rightarrow G^{\ab}$ gives rise to a dominant weight  $\varpi\in X^*(T)^+$ with $\mathrm{d}\varpi(H)\neq 0$. Then we have $(\varpi,\varpi)\in \calW(H)$, and $\langle \mu,\varpi\rangle -\langle\lambda,\varpi\rangle =0$, since $\mu-\lambda$ is a sum of coroots. 
	It follows that $l_H=0$.
\end{proof}

\end{para}

\begin{para}

We now consider directions along $\fkt^{\der}$. Note that $\{H_\beta\}_{\beta\in \Delta}$ is a basis for $\fkt^{\der}$, so that any $H\in \fkt^{\der}$ can be written uniquely as $\sum_{\beta\in \Delta} m_\beta H_\beta$, $m_\beta\in \bk$.

\begin{prop}\label{prop: cartan spanning 2}
 Let $H=\sum_{\beta\in \Delta} m_\beta H_\beta\in \fkt^{\der}$,  with $H\neq 0$. Assume $(G,\mu)$ is of mod $p$ abelian type and is not of type $D_n^{\bbH}$. Then  for any $\lambda\in X^*(T)_+$ with $\lambda\lleq\mu$, we have 
	$$
	l_H=\min_{\beta\in\Delta,m_\beta\neq 0} k_{-\beta}.
	$$ 
\end{prop}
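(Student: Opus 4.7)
The plan is to establish the two inequalities $l_H \geq \min_{\beta : m_\beta \neq 0} k_{-\beta}$ and $l_H \leq \min_{\beta : m_\beta \neq 0} k_{-\beta}$ separately.

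For the lower bound, I would argue as follows. Given $(\varpi,\varpi') \in \calW(H)$, the condition $d\varpi'(H) = \sum_\beta m_\beta \langle \beta^\vee, \varpi' \rangle \neq 0$ in $\bk$ forces the existence of some $\beta \in \Delta$ with $m_\beta \neq 0$ and $\langle \beta^\vee, \varpi' \rangle$ nonzero mod $p$; in particular $\langle \beta^\vee, \varpi' \rangle$ is a nonzero integer. Using the $\fks\fkl_2$-chain through $\varpi'$, the pair $(\varpi,\varpi')$ lies in $\calW(\beta)$ or $\calW(-\beta)$ depending on the sign of $\langle\beta^\vee, \varpi'\rangle$. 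Theorem \ref{thm: root direction equality} gives $l_{\pm\beta} = k_{\pm\beta}$, and Proposition \ref{prop: FM bound relation}(1) combined with $\langle \lambda,\beta\rangle \geq 0$ yields $k_\beta \geq k_{-\beta}$. Thus $\langle \mu, \varpi\rangle - \langle\lambda,\varpi'\rangle \geq l_{\pm\beta} \geq k_{-\beta} \geq \min_{\gamma : m_\gamma \neq 0} k_{-\gamma}$, taking the infimum over $(\varpi,\varpi')$ yields the first inequality.

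For the upper bound, I would fix $\beta_0 \in \Delta$ with $m_{\beta_0} \neq 0$ realising the minimum $k_{-\beta_0}$, and try to exhibit a pair $(\varpi,\varpi')$ with $\langle \mu,\varpi\rangle - \langle \lambda,\varpi'\rangle = k_{-\beta_0}$ and $d\varpi'(H) \neq 0$. Corollary \ref{cor: k_-alpha simple root} produces $\varpi \in S$ with $k_{-\beta_0} = \langle \mu, \varpi\rangle - \langle \lambda, \varpi_{\beta_0}\rangle$, giving a natural candidate $(\varpi, \varpi_{\beta_0})$. Since $\varpi$ is minuscule and $(\varpi, \varpi_{\beta_0}) \in \calW(-\beta_0)$, one has $\langle \beta_0^\vee, \varpi_{\beta_0}\rangle = 1$, so $m_{\beta_0}$ appears as a nontrivial summand of $d\varpi_{\beta_0}(H)$. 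The obstruction is cancellation coming from the other summands $m_\gamma \langle \gamma^\vee, \varpi_{\beta_0}\rangle$, where $\langle \gamma^\vee, \varpi_{\beta_0}\rangle \in \{-1,0,1\}$ by minusculeness.

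I would resolve this obstruction by a case analysis on the type of each simple factor of $(G,\mu)$, using the explicit descriptions of $\mu$ and the minuscule set $S$ given in the proof of Theorem \ref{thm: root direction equality}. In types $B$ and $C$, $S$ is a singleton and the shape $\mu=r\varpi_1^\vee$ (resp.\ $\mu=\tfrac{r}{2}(1,\dots,1)$) forces $k_{-\alpha_i}$ to be monotonic in $i$, so $\beta_0$ is extremal among those $\beta$ with $m_\beta\neq 0$; the explicit formula for $\varpi_{\beta_0}$ then shows that each $\gamma$ with $\langle\gamma^\vee,\varpi_{\beta_0}\rangle\neq 0$ is either $\beta_0$ or lies beyond $\beta_0$ in the total order, so $m_\gamma=0$. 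In type $D_n^{\mathbb R}$, $S=\{\varpi_{n-1},\varpi_n\}$ and the two choices provide two functionals that together detect $m_{\beta_0}$ after a similar extremality argument. In type $A_n$, where $S=\Omega$ and several $\varpi\in S$ may realise the minimum, I would vary $\varpi$ among these to produce a family of functionals $d\varpi_{\beta_0}$ and argue that if all of them vanish on $H$, then the resulting linear relations force $m_{\beta_0}=0$, contradicting the choice of $\beta_0$.

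The main obstacle is the case analysis in type $A_n$, where the combinatorics of the weights $\varpi_{\beta_0}$ attached to different geodesics must be handled carefully; it also clarifies why type $D_n^{\bbH}$ is excluded, since there $S$ contains three minuscule weights and the shape $\mu=s\varpi_{n-1}^\vee+t\varpi_n^\vee$ produces systematic cancellations that are not resolved by any pair achieving the bound $k_{-\beta_0}$, except under the additional hypothesis $\langle\lambda,\alpha_{n-1}\rangle=0$ or $\langle\lambda,\alpha_n\rangle=0$ of Theorem \ref{thm: spanning cartan directions}(2).
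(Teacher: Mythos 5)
Your decomposition into a lower bound $l_H \geq \min k_{-\beta}$ and an upper bound $l_H \leq \min k_{-\beta}$ matches the paper's, and your overall scheme for the upper bound — exhibit $(\varpi,\varpi_\alpha)\in\calW(H)$ with $\alpha$ realizing the minimum, case-by-case — is exactly right. Your sketches for types $B$, $C$, $D_n^{\bbR}$ (singleton or two-element $S$, monotonicity of $k_{-\alpha_i}$, extremal $\beta_0$) also line up with the paper's proof.

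Two discrepancies are worth noting, one minor and one a genuine gap.

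\emph{Lower bound.} Your argument (extract some $\beta$ with $m_\beta\neq 0$ and $\langle\beta^\vee,\varpi'\rangle\neq 0$, then pass to $\calW(\pm\beta)$ via the $\fks\fkl_2$-string) is correct but indirect and relies on Theorem \ref{thm: root direction equality}. The paper gets this in one line: since $t^{-k_{-\beta}}H_\beta\in\fkT^{\cur}_{\lambda,\mu}$ for every $\beta$ with $m_\beta\neq 0$, the vector $t^{-k}H$ with $k=\min k_{-\beta}$ already lies in $\fkT^{\tan}_{\lambda,\mu}\subset\fkT^{\FM}_{\lambda,\mu}$, hence $l_H\geq k$. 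You should be aware the simpler route is available.

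\emph{Upper bound, type $A$.} This is where your proposal breaks. You propose to fix $\beta_0$ and vary $\varpi$ among the fundamental weights realizing $k_{-\beta_0}$, hoping that if all $d\varpi_{\beta_0}$ vanish on $H$ then $m_{\beta_0}=0$. That cannot work as stated: there may be only one minimizing $\varpi_k$, in which case you get a single linear relation of the form $m_{\alpha_j}-m_{\alpha_{j+1}}+m_{\alpha_{k+1}}=0$ (for $j<k$), which does not force $m_{\alpha_j}=0$. The paper instead minimizes $|k-j|$ over \emph{all} pairs $(\alpha_j,\varpi_k)$ satisfying (a) and (b) — so one is allowed to change $\alpha$, not just $\varpi$. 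It then shows that if $m_{\alpha_{j\pm 1}}\neq 0$, the chain of inequalities
\[
k_{-\alpha_{j\pm 1}}\leq \langle\mu,\varpi\rangle-\langle\lambda,\varpi_{\alpha_{j\pm 1}}\rangle\leq \langle\mu,\varpi\rangle-\langle\lambda,\varpi_{\alpha_j}\rangle=k_{-\alpha_j}
\]
collapses to equalities, producing a pair $(\alpha_{j\pm 1},\varpi_k)$ with strictly smaller $|k-(j\pm 1)|$ that still satisfies (a) and (b) — a contradiction. A similar dominance argument eliminates $m_{\alpha_{k\mp 1}}$. This two-sided minimality over both $\alpha$ and $\varpi$ is the key idea missing from your proposal, and without it the type $A$ case is not settled.
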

\begin{proof}Note that for $k=\min_{\beta\in\Delta,m_\beta\neq 0} k_{-\beta}$, we have $t^{-k}H\in \fkT_{\lambda,\mu}^{\tan}\subset \fkT^{\FM}_{\lambda,\mu}$, and hence $	l_H\geq\min_{\beta\in\Delta,m_\beta\neq 0} k_{-\beta}.$ Thus it suffices to show the reverse inequality.

Let $S\subset \Omega^{\min}$ be the subset of fundamental weights in the proof of Theorem \ref{thm: root direction equality}. Then by Corollary \ref{cor: k_-alpha simple root}, we have 
$$k_{-\alpha}
		=\min_{\varpi\in S}\langle\mu,\varpi\rangle-\langle\lambda,\varpi_\alpha\rangle$$
		for any $\alpha\in \Delta$. We verify in each case that there exists $\alpha\in \Delta$ and $\varpi\in S$ satisfying   	\begin{enumerate}
			\item[(a)] $m_\alpha\neq 0$ and $k_{-\alpha}=\min_{\beta\in\Delta,m_\beta\neq 0} k_{-\beta}.$
			\item[(b)] $k_{-\alpha}=\langle \mu,\varpi\rangle-\langle\lambda,\varpi_\alpha\rangle$.

			\item[(c)] $\sum_{\beta\in \Delta}m_\beta\langle\beta^\vee,\varpi_\alpha\rangle\neq 0$.
		\end{enumerate}
	
		In this case, the last condition implies for $0\neq v\in V(\varpi)$ a weight vector of weight $\varpi_\alpha$, we have $Hv=\sum_{\beta\in \Delta}m_\beta\langle\beta^\vee,\varpi_\alpha\rangle v\neq 0$, and hence 
	$(\varpi,\varpi_\alpha)\in\calW(H)$. It follows that 
	$$
	k_{-\alpha}=\langle \mu,\varpi\rangle-\langle\lambda,\varpi_\alpha\rangle\geq l_H
	$$as desired. For types $B_n,C_n$ and $D_n$,  we use the same notation for root systems and fundamental weights as in the proof of Theorem \ref{thm: root direction equality}.
		
	\textit{Type $A_{n-1}$:} In this case, we may take $G=GL_n$ and we identify $X_*(T)$ and $X^*(T)$ with $\bbZ^n$ under the usual pairing. Then the roots are given by $\pm e_i\mp e_j$, for $i<j$, with positive roots $e_i-e_j , i<j.$  The simple roots are given by $\Delta=\{\alpha_1,\dotsc,\alpha_{n-1}\}$, where $\alpha_{i}=e_i-e_{i+1}$. In this case, we take $S=\Omega^{\min}=\Omega$.
	
Let 	$\mu=(\mu_1,\dotsc,\mu_n)$, $\lambda=(\lambda_1,\dotsc,\lambda_n)$, and let $H=\sum_{i=1}^{n-1}m_{\alpha_i}H_{\alpha_i}.$ Choose $\alpha=\alpha_j\in \Delta$ and $\varpi=\varpi_k\in \Omega_{\min}$ with $|k-j|$ minimal satisfying (a) and (b). We will show that (c) is also satisfied.

If $k=j$, then  $\langle\alpha^\vee_i,\varpi_\alpha\rangle=\langle \alpha_i^\vee,\varpi_k\rangle=0$ for $i\neq k$, and hence $\sum_{\beta\in \Delta} m_\beta \langle \beta^\vee,\varpi_\alpha\rangle=m_\alpha \langle\alpha^\vee,\varpi_\alpha\rangle\neq 0$. Thus $(c)$ is satisfied.  We therefore  assume $k\neq j$.

We assume $k<j$; the case $j<k$ is symmetric. Note that the only possible $\beta\in \Delta$ such that $\langle\beta^\vee,\varpi_{\alpha}\rangle \neq0$  are $\beta=\alpha_{j-1},\alpha_j, \alpha_{k-1}$; the last case only occuring when $k>1$. Thus it suffices to show that $m_{\alpha_{j-1}},m_{k-1}=0$.

If $m_{\alpha_{j-1}}\neq 0$, note that $\varpi_{\alpha_{j-1}}+\alpha_{j-1}=\varpi_{\alpha_j}$. It follows that $$k_{-\alpha_{j-1}}\leq \langle \mu,\varpi\rangle-\langle \lambda,\varpi_{\alpha_{j-1}}\rangle\leq \langle \mu,\varpi\rangle-\langle \lambda,\varpi_{\alpha_{j}}\rangle=k_{-\alpha_{j}}$$
contradicting minimality of $|k-j|$; thus $m_{\alpha_{j-1}}=0$.

If $m_{\alpha_{k-1}}\neq 0$, then we  have 
\begin{align*}
\sum_{i=1}^{k-1}\mu_i-\lambda_i&=\langle\mu,\varpi_{k-1}\rangle-\langle\lambda,\varpi_{k-1}
\rangle\\
&>\langle\mu,\varpi\rangle-\langle\lambda,\varpi_{\alpha_j}\rangle\\
&=\sum_{i=1}^k\mu_i-(\sum_{i=1}^{k-1}\lambda_i+\lambda_j),
\end{align*}
where the inequality follows from the minimality of $|k-j|$. It follows that $\lambda_j>\mu_k$.

Similarly, we have by minimality that 
\begin{align*}
\sum_{i=1}^{j}\mu_i-\lambda_i&=\langle\mu,\varpi_{j}\rangle-\langle\lambda,\varpi_{j}
\rangle\\
&>\langle\mu,\varpi\rangle-\langle\lambda,\varpi_{\alpha_j}\rangle\\
&=\sum_{i=1}^k\mu_i-(\sum_{i=1}^{k-1}\lambda_i+\lambda_j),
\end{align*}
and hence $\sum_{i=k+1}^j\mu_i>\sum_{i= k}^{j-1}\lambda_{i}$. But since $\mu$ and $\lambda$ are dominant, we have $$\lambda_k\geq \dotsc \geq \lambda_j>\mu_k\geq\dotsc\geq\mu_j,$$ which is a contradiction. It follows that $m_{\alpha_{k-1}}=0$. 
	
	\textit{Type $B_n$:} Let $\mu=(r,0,\dotsc,0)$, $\lambda=(\lambda_1,\dotsc,\lambda_n)\in P^{\vee,+}$, with $r\in \bbZ_{>0}$, and set $\delta= \frac{r-\sum_{i=1}^n\lambda_i}{2}$. We have $\Delta=\{\alpha_1,\dotsc,\alpha_n\}$ and $S=\{\varpi_n\}$, and hence
	$$k_{-\alpha_i}=\begin{cases}\delta+\lambda_{i+1} & \text{ for }i=1,\dotsc,n-1\\ \delta & i=n \end{cases}$$ since $\varpi_{n,\alpha_i}=\varpi_n-e_{i+1}$.
	In particular, we have $k_{-\alpha_1}\geq\dotsc\geq k_{-\alpha_n}$. For $H=\sum_{i=1}^nm_{\alpha_i}H_{\alpha_i}\in \fkt^{\der}$, let $j\in \{1,\dotsc,n\}$ be largest such that $m_{\alpha_j}\neq 0$. Then $\alpha_j$ satisfies (a) and (b) (for $\varpi=\varpi_n$). Since $\langle\alpha_i^\vee,\varpi_{\alpha_j}\rangle= 0$ for $i<j$, we have $\sum_{i=1}^nm_{\alpha_i}\langle\alpha_i^\vee,\varpi_{\alpha_i}\rangle=m_{\alpha_j}\langle\alpha_j^\vee,\varpi_{\alpha_j}\rangle\neq 0$  and hence $(c)$ is satisfied.

	\textit{Type $C_n$:} Let $\mu=(\frac{r}{2},\dotsc,\frac{r}{2}), \lambda=(\lambda_1,\dotsc,\lambda_n)\in P^{\vee,+}$, with $r\in \bbZ_{>0}$. We have $\Delta=\{\alpha_1,\dotsc,\alpha_n\}$ and $S=\{\varpi_1\}$, and hence $$k_{-\alpha_i}=\frac{r}{2}-\lambda_i$$ since $\varpi_{1,\alpha_i}=e_i$. In particular, we have $k_{-\alpha_1}\leq\dotsc\leq k_{-\alpha_n}$.  For $H=\sum_{i=1}^nm_{\alpha_i}H_{\alpha_i}\in \fkt^{\der}$, let $j$ be smallest such that $m_{\alpha_j}\neq 0$. Then $\alpha_j$ satisfies (a) and (b) (for $\varpi=\varpi_1$). Since $\langle\alpha_i^\vee,\varpi_{\alpha_j}\rangle=0$ for $i>j$, we have $\sum_{i=1}^nm_{\alpha_i}\langle\alpha_i^\vee,\varpi_{\alpha_i}\rangle=m_{\alpha_j}\langle\alpha_j^\vee,\varpi_{\alpha_j}\rangle\neq 0$  and hence $(c)$ is satisfied.
	
	\textit{Type $D_n^{\bbR}$:} Let $\mu=(r,0,\dotsc,0),\lambda=(\lambda_1,\dotsc,\lambda_n)\in P^{\vee,+}$ with $r\in \bbZ_{>0}$. Upon applying the automorphism of the Dynkin diagram switching $\alpha_{n-1}$ and $\alpha_n$, we may assume without loss of generality that $\lambda_n \geq 0$. Let $\delta=\frac{r-\sum_{i=1}^n \lambda_i}{2}$. We have $\Delta=\{\alpha_1,\dotsc,\alpha_n\}$ and $S=\{\varpi_{n-1},\varpi_n\}$. Then  we compute that
	 \begin{align*}k_{-\alpha_i}&=\begin{cases}\langle\mu,\varpi_{n-1}\rangle-\langle\lambda, (\varpi_{n-1})_{\alpha_i}\rangle &\text{ if $i=1,\dotsc,n-1$}
	\\ \langle\mu,\varpi_{n}\rangle -\langle\lambda,\varpi_{n}\rangle &\text{ if $i=n$}\end{cases}
\\&=\begin{cases}\delta+\lambda_{i+1}&\text{ if $i=1,\dotsc,n-1$}\\
	\delta &\text{ if $i=n$}.
	\end{cases}
	\end{align*}
In particular, we have $k_{-\alpha_1}\geq\dotsc \geq k_{-\alpha_n}$.  For $H=\sum_{i=1}^nm_{\alpha_i}H_{\alpha_i}\in \fkt^{\der}$, let $j$ be largest such that $m_{\alpha_j}\neq 0$. Then $\alpha_j$ satisfies (a) and (b) for $$\varpi=\begin{cases}\varpi_{n-1} &j=1,\dotsc,n\\
\varpi_n &j=n\ .\end{cases}$$

We compute that $\langle\alpha_i^\vee,\varpi_{\alpha_j}\rangle=0$ for $i<j$, and hence  $\sum_{i=1}^nm_{\alpha_i}\langle\alpha_i^\vee,\varpi_{\alpha_i}\rangle=m_{\alpha_j}\langle\alpha_j^\vee,\varpi_{\alpha_j}\rangle$ is non-zero, i.e. (c) is satisfied.
\end{proof}

			\begin{prop}\label{prop: cartan spanning DH}Let $H=\sum_{\beta\in \Delta} m_\beta H_\beta\in \fkt^{\der}$,  with $H\neq 0$. Assume $(G,\mu)$ is of type $D_n^{\bbH}$ and that either $\langle\lambda,\alpha_{n-1}\rangle=0$ or $\langle\lambda,\alpha_n\rangle=0$. Then we have
				$$
				l_H=\min_{\beta\in\Delta,m_\beta\neq 0} k_{-\beta}.
				$$ 
				\end{prop}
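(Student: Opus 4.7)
The plan is to follow the same template as the proof of Proposition \ref{prop: cartan spanning 2}: for a nonzero $H=\sum_{\beta\in\Delta}m_\beta H_\beta\in\fkt^{\der}$, the goal will be to produce $\alpha\in\Delta$ and $\varpi\in S=\{\varpi_1,\varpi_{n-1},\varpi_n\}$ satisfying the three conditions (a), (b), (c) used in that earlier proof. These jointly force $(\varpi,\varpi_\alpha)\in\calW(H)$ and hence $l_H\leq k_{-\alpha}=\min_{\beta,\,m_\beta\neq 0}k_{-\beta}$; combined with the trivial reverse inequality this yields the proposition.

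The first reduction is to apply the Dynkin diagram automorphism of $D_n$ that swaps $\alpha_{n-1}$ and $\alpha_n$. This acts on the data by swapping $s$ and $t$ in the expression for $\mu$ and negating the last coordinate of $\lambda$, so without loss of generality we may assume $\langle \lambda,\alpha_{n-1}\rangle=0$, equivalently $\lambda_n=\lambda_{n-1}\geq 0$. I would then write down $\varpi_{\alpha_i}$ explicitly for each $\alpha_i\in\Delta$ and each $\varpi\in S$, by following the geodesic in the Dynkin diagram and reflecting through the minuscule weights of $V(\varpi)$. For instance, for $\varpi=\varpi_{n-1}$ and $i\leq n-2$, the weight $\varpi_{n-1,\alpha_i}$ is $(1/2,\dots,1/2)$ with the entry at position $i+1$ flipped to $-1/2$; analogous closed-form formulas hold for $\varpi_n$ and for $i\in\{n-1,n\}$. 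Via Corollary \ref{cor: k_-alpha simple root}, this expresses each $k_{-\alpha_i}$ as the minimum over $\varpi\in S$ of three explicit linear functions of $\mu$ and $\lambda$.

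Let $j$ be an index with $m_{\alpha_j}\neq 0$ realizing $\min_{\beta,m_\beta\neq 0}k_{-\beta}$. I would then split into two cases. If $j\leq n-2$, I take $\varpi\in\{\varpi_{n-1},\varpi_n\}$ to be one which realizes $k_{-\alpha_j}$; the corresponding $\varpi_{\alpha_j}$ has its $-1/2$ entries only at positions $j+1$ and possibly $n$, which forces $\langle\alpha_i^\vee,\varpi_{\alpha_j}\rangle=0$ for all $i\neq j$, and hence $\sum_\beta m_\beta\langle\beta^\vee,\varpi_{\alpha_j}\rangle$ collapses to $m_{\alpha_j}\langle\alpha_j^\vee,\varpi_{\alpha_j}\rangle\neq 0$, giving (c). If instead $j\in\{n-1,n\}$, the diagram automorphism lets me assume $j=n$; I then take $\varpi=\varpi_n$, so that $\varpi_{n,\alpha_n}=\varpi_n$ and $\langle\alpha_i^\vee,\varpi_n\rangle=\delta_{i,n}$, making (c) immediate.

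The main obstacle is condition (b) in the case $j\in\{n-1,n\}$: I must verify that $\varpi_n$ (respectively $\varpi_{n-1}$) actually achieves the minimum $k_{-\alpha_n}$ (respectively $k_{-\alpha_{n-1}}$) over $\varpi\in S$. This reduces to comparing three explicit expressions derived in the previous step, and it is precisely here that the hypothesis $\lambda_n=\lambda_{n-1}$ enters: without it, the three contributions can be in any order and the argument can fail, but under the hypothesis a direct case-by-case comparison shows that the ``matching'' $\varpi$ always realizes the minimum. For the ``in particular'' assertion, a direct check shows that the minimal element $\lambda$ of $\{\nu\in X_*(T)^+:\nu\lleq \mu\}$ satisfies $\lambda_{n-1}=|\lambda_n|$, so one of the two conditions in the hypothesis holds automatically.
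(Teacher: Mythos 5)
There is a genuine gap in the proposed argument, concentrated in the case $j\leq n-2$. The assertion that $\langle\alpha_i^\vee,\varpi_{\alpha_j}\rangle=0$ for all $i\neq j$ when $\varpi\in\{\varpi_{n-1},\varpi_n\}$ is false: with $\varpi=\varpi_{n-1}$ one has $\varpi_{\alpha_j}=\varpi_{n-1}-e_{j+1}+e_n$, so $\langle\alpha_{j+1}^\vee,\varpi_{\alpha_j}\rangle=-1$, and for $j<n-2$ also $\langle\alpha_n^\vee,\varpi_{\alpha_j}\rangle=1$. Thus $\sum_\beta m_\beta\langle\beta^\vee,\varpi_{\alpha_j}\rangle$ does not collapse to $m_{\alpha_j}\langle\alpha_j^\vee,\varpi_{\alpha_j}\rangle$, and you have not arranged for $m_{\alpha_{j+1}}$ or the coefficient at the fork node to vanish. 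Moreover, the restriction of $\varpi$ to $\{\varpi_{n-1},\varpi_n\}$ is not justified: $k_{-\alpha_j}$ may well be realized only by $\varpi_1$ (this happens already for $\lambda$ small relative to $\mu$), in which case condition (b) fails for every $\varpi$ of the form you allow.

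The paper's argument handles exactly these two issues. It chooses the pair $(\alpha_j,\varpi_k)$ satisfying (a)--(b) with minimal geodesic distance in the Dynkin diagram, and deduces from minimality (via $k_{-\gamma_1}\leq k_{-\gamma_0}$) that $m_{\alpha_k}=m_{\gamma_1}=0$, which kills the unwanted contributions except possibly at the node $\gamma_0'$ off the fork. The hypothesis on $\lambda$ then enters only in the residual subcase $\varpi=\varpi_1$, $\{\gamma_0,\gamma_0'\}=\{\alpha_{n-1},\alpha_n\}$, $m_{\alpha_{n-1}}=-m_{\alpha_n}$, where one replaces $\varpi_\alpha$ by the weight $\varpi_\alpha-\alpha_{n-1}$ or $\varpi_\alpha-\alpha_n$ of $V(\varpi_1)$ (which is not of the form $\varpi_\beta$) to escape the cancellation while keeping $\langle\lambda,\cdot\rangle$ unchanged. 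Your proposal has no analogue of the geodesic-minimality device or of this weight-shift step, and the role you ascribe to the hypothesis (verifying (b) when $j\in\{n-1,n\}$) is not where it is actually needed; indeed, even for such $j$ and under the hypothesis, $\varpi_n$ need not realize $k_{-\alpha_n}$, since $\varpi_1$ may still be the unique minimizer. As written, the argument does not prove the proposition.
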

			\begin{proof}
As in Proposition \ref{prop: cartan spanning 2}, it suffices to prove $	l_H\leq\min_{\beta\in\Delta,m_\beta\neq 0} k_{-\beta}.$ Let $$\mu=s\omega_{n-1}+t\omega_{n-1}=(\frac{r}{2},\dotsc,\frac{r}{2},\frac{q}{2})$$ with $s-t=q, s+t=r$, and let $\lambda=(\lambda_1,\dotsc,\lambda_n)$. We have $\Delta=\{\alpha_1,\dotsc,\alpha_n\}$ and $S=\{\varpi_1,\varpi_{n-1},\varpi_n\}$. Let  $\alpha=\alpha_j\in \Delta$ and $\varpi=\varpi_k\in S$ such that the length of the geodesic between $\alpha_j$ and $\alpha_k$ is minimal for those pairs satisfying the following properties:
 \begin{enumerate}
 	\item[(a)] $m_\alpha\neq 0$ and $k_{-\alpha}=\min_{\beta\in\Delta,m_\beta\neq 0} k_{-\beta}.$
 	\item[(b)] $k_{-\alpha}=\langle \mu,\varpi\rangle-\langle\lambda,\varpi_\alpha\rangle$.
 
 \end{enumerate}
	
	If $k=j$, then as in the case of Type $A_{n-1}$ in Proposition \ref{prop: cartan spanning 2}, we have 
	$$
	\sum_{\beta\in \Delta}m_\beta\langle\beta^\vee,\varpi_\alpha\rangle=m_\alpha\langle\alpha^\vee,\varpi_\alpha\rangle\neq 0
	$$
	 and hence we obtain the bound $	l_H\leq\min_{\beta\in\Delta,m_\beta\neq 0} k_{-\beta}$. Thus assume $k\neq j$. Let $\alpha=\gamma_0,\dotsc,\gamma_m=\alpha_k$ be the geodesic from $\alpha$ to $\alpha_k$ so that $\varpi_\alpha=\varpi-\sum_{i=1}^m\gamma_i$. Then we compute that if $\langle\beta^\vee,\varpi_\alpha\rangle\neq 0$ for $\beta\in \Delta$, we have $\beta=\gamma_0,\gamma_1,\gamma_m$ or $\gamma_0'$, where $\gamma_0'$ is a neighbor of $\gamma_1$ not equal to $\gamma_0$ or $\gamma_2$. Note that $\gamma_0'$ only occurs if $\gamma_1=\alpha_{n-2}$. 
	
	By minimality, we have $m_{\alpha_k}=0$. And similar to the Type $A_{n-1}$ case in Proposition \ref{prop: cartan spanning 2}, we have $$k_{-\gamma_1}\leq\langle\mu,\varpi\rangle-\langle\lambda,\varpi_{\gamma_1}\rangle\leq \langle\mu,\varpi\rangle-\langle\lambda,\varpi_{\gamma_0}\rangle=k_{-\gamma_0},$$ and hence $m_{-\gamma_1}=0$ by minimality. If $m_{\gamma_0'}=0$, then   $\sum_{\beta\in \Delta}\langle\beta^\vee,\varpi_{\alpha}\rangle= m_\alpha\langle \alpha^\vee,\varpi_\alpha\rangle\neq 0$ and hence $	l_H\leq\min_{\beta\in\Delta,m_\beta\neq 0} k_{-\beta}$ as desired.
	
	 Now assume $m_{\gamma_0}\neq 0$ and $m_{\gamma_0'}\neq 0$. We consider separate cases depending on the choice of $\varpi$.

	Case (1): $\varpi=\varpi_{n-1}$ or $\varpi_n$. It suffices to consider $\varpi=\varpi_{n-1}$ as the other case is obtained by applying the non-trivial automorphism of the Dynkin diagram. Then we have $\gamma_1=\alpha_{n-2}$ and $\{\gamma_0,\gamma_0'\}=\{\alpha_{n-3},\alpha_n\}.$
	
Note that  $(\varpi_{n-1})_{\alpha_{n-3}}=(\varpi_{n-1})_{\alpha_n}=\varpi_{n-1}-\alpha_{n-1}-\alpha_{n-2}$. By minimality, we have 
\begin{align*}
k_{-\gamma_0'}\geq k_{-\gamma_0}&=\langle \mu,\varpi_{n-1}\rangle-\langle\lambda,(\varpi_{n-1})_{\gamma_0}\rangle
\\&=\langle \mu,\varpi_{n-1}\rangle-\langle\lambda,\varpi_{n-1}-\alpha_{n-1}-\alpha_{n-2}\rangle
\\&=\frac{(n-1)r-q}{4}-\frac{1}{2}(\sum_{i=1}^n\lambda_i) +\lambda_{n-2}\ .
\end{align*} 

In particular, since $\varpi_1\in S$, we  have \begin{align}\label{eqn: ineq DH1}\begin{split}
\frac{r}{2} -\lambda_{n-3}&=\langle\mu,\varpi_1\rangle-\langle\lambda,(\varpi_1)_{\alpha_{n-3}}\rangle  \\& \geq k_{-\alpha_{n-3}}
\\&\geq \frac{(n-1)r-q}{4}- \frac{1}{2}(\sum_{i=1}^n\lambda_i) +\lambda_{n-2}\\&\geq \frac{3r-q}{4}-\frac{1}{2}(\lambda_{n-3}-\lambda_{n-2}+\lambda_{n-1}+\lambda_{n}) \end{split}\end{align}
where the last inequality follows from the fact that $r\geq 2\lambda_i$ for all $i$. This gives
\begin{equation}
\label{eqn: ineq DH2}0\geq (r-q)+2(\lambda_{n-3}+\lambda_{n-2}-\lambda_{n-1}- \lambda_{n}).
\end{equation}
On the other hand, we have $ r\geq q$, and $\lambda_{n-3}\geq\lambda_{n-2}\geq\lambda_{n-1}\geq\lambda_{n}$, so that \eqref{eqn: ineq DH2} is an equality. It follows that every inequality in \eqref{eqn: ineq DH1} is also an equality so, in particular,  $$\frac{r}{2}-\lambda_{n-3}=k_{-\alpha_{n-3}}=k_{-\gamma_0}.$$

We now replace $\varpi$ by $\varpi_1$ and $\alpha$ by $\alpha_l$, where $l\in \{1,\dotsc,n\}$ is least such that $m_{\alpha_l}\neq 0$. Then $l\leq n-3$, and $$k_{-\alpha_l}\leq \langle \mu,\varpi_1\rangle -\langle \lambda,(\varpi_1)_{\alpha_l}\rangle=\frac{r}{2}-\lambda_l\leq k_{-\alpha_{n-3}}, $$ and hence we have equality throughout since $\alpha_{n-3}$ satisfies (a). Thus (a) and (b) are also satisfied for $\alpha=\alpha_l$ and $\varpi=\varpi_1$. Moreover, for $i> l$, we have $\langle \alpha_i^\vee,(\varpi_1)_{\alpha_l}\rangle=0$. It follows that $\sum_{\beta\in \Delta}\langle\beta^\vee,\varpi_{\alpha}\rangle= m_\alpha\langle \alpha^\vee,\varpi_\alpha\rangle\neq 0$ and hence $	l_H\leq\min_{\beta\in\Delta,m_\beta\neq 0} k_{-\beta}$.

	Case (2): $\varpi=\varpi_1$. Then $\gamma_0\in \{\alpha_{n-1},\alpha_n\}$. If $m_{-\alpha_{n-1}}\neq -m_{-\alpha_{n}}$, then we have $$\sum_{\beta\in \Delta} m_\beta\langle \beta^\vee, \varpi_\alpha \rangle = m_{\alpha_{n-1}}+m_{\alpha_n}\neq 0,$$ and we are done. Otherwise assume $m_{-\alpha_{n-1}}=-m_{-\alpha_n}$. By assumption, we have  either $\langle\lambda,\alpha_{n-1}\rangle=0$ or $\langle\lambda,\alpha_n\rangle=0$. We set $$\varpi'=\begin{cases}\varpi_\alpha-\alpha_{n-1} &\text{ if $\langle\lambda,\alpha_{n-1}\rangle=0$ }\\
	\varpi_\alpha-\alpha_n &\text{ if $\langle\lambda,\alpha_{n}\rangle=0$ }.
	\end{cases}$$
	Then $$\sum_{\beta\in \Delta}m_\beta\langle\beta^\vee,\varpi'\rangle= \begin{cases}-2m_{\alpha_{n-1} }& \text{ if $\langle\lambda,\alpha_{n-1}\rangle=0$ }\\
		-2m_{\alpha_n} &\text{ if $\langle\lambda,\alpha_{n}\rangle=0$\ , }
		\end{cases}$$
which is non-zero in either case.	On the other hand,  we have $$\langle \mu,\varpi\rangle -\langle \lambda,\varpi'\rangle=\langle \mu,\varpi\rangle -\langle \lambda,\varpi_\alpha\rangle=k_{-\alpha}$$ and hence $l_H\leq k_{-\alpha}=\min_{\beta\in\Delta,m_\beta\neq 0} k_{-\beta}$ as desired.
\end{proof}

\end{para}
\begin{para}\begin{proof}[Proof of Theorem \ref{thm: spanning cartan directions}]Fix $(G,\mu)$ and $\lambda$ as in the statement, and let $t^{-l}H\in \fkT^{\FM}_{\lambda,\mu}$, with $H\in \fkt$ and $l\geq 1$. Then we have $l\leq l_H$ by Proposition \ref{prop: curve bound tangent vector}.
			By Lemma \ref{lem: Cartan tangent space contained in derived group}, we have $H\in \fkt^{\der}$, and hence we can write $H=\sum_{\beta\in \Delta}m_\beta H_\beta$, for some $m_\beta\in \bk$.  We show that $H\in \fkT_{\lambda,\mu}^{\cur}$ by induction on the number of non-zero $m_\beta$.
		
			Let $\alpha\in \Delta$ with $k_{-\alpha}=\min_{\beta\in \Delta, m_\beta\neq 0}k_{-\beta}$.	By Proposition \ref{prop: cartan spanning 2} for case (1) and Proposition \ref{prop: cartan spanning DH} for case (2), we have $k_{-\alpha}\geq l$. It follows that $t^{-l}H_\alpha\in \fkT^{\cur}_{\lambda,\mu}$. By induction, $H-m_\beta t^{-l}H_\alpha\in \fkT^{\cur}_{\lambda,\mu}$, and hence $H\in \fkT^{\cur}_{\lambda,\mu}$ as desired.
	\end{proof}
	
	\begin{Remark}{\rm We give an example where $(G,\mu)$ is of type $D_4^{\mathbb{H}}$ and $\lambda\in X_*(T)_+,$ with $\lambda\lleq \mu$ for which $\fkT_{\lambda,\mu}^{\cur}\subset \fkT^{\FM}_{\lambda,\mu}$ is not an equality. Let $\mu=3\varpi_{n-1}+3\varpi_n=(3,3,3,0)$ and $\lambda=(1,1,1,0)$. We take $H=H_{\alpha_{n-1}}-H_{\alpha_n}\in \fkt^{\der}$. Then we compute that  $$k_{-\alpha_{n-1}}=k_{-\alpha_n}=\langle\mu,\varpi_1\rangle-\langle\lambda,\varpi_1-\alpha_1-\alpha_2\rangle=2$$ using Corollary \ref{cor: k_-alpha simple root}. On the other hand, we compute that $l_H=3$, and hence $t^{-3}H\in \fkT^{\FM}_{\lambda,\mu}\setminus \fkT^{\cur}_{\lambda,\mu}$.}
		\end{Remark}
\end{para}

\begin{para}\begin{proof}[Proof of Theorem \ref{thm:curve span}]
Theorem \ref{thm: root direction equality} and Theorem \ref{thm: spanning cartan directions} together then imply that for $(G,\mu)$ and $\lambda$ as in Theorem \ref{thm: spanning cartan directions} (1),  the tangent space $T_{t^\lambda}S_\mu$ is spanned by smooth formal curves. The same is then true for any point lying in the $G(\bk\lps t\rps)$-orbit of some $t^\lambda$. In particular, if $(G,\mu)$ has no factors of type $D^{\bbH}$, the tangent space $T_xS_\mu$ is spanned by smooth formal curves for all $x\in S_\mu(\bk)$.
\end{proof}

\begin{Remark}{\rm As mentioned in Remark \ref{rem: FM conjecture}, it is conjectured that $S^{\FM}_\mu=S_\mu$. Theorem \ref{thm:curve span} provides some evidence for this conjecture for $(G,\mu)$ of mod $p$ abelian type without factors of type $D^{\bbH}$. Indeed the theorem implies that $S_{\mu}^{\FM}$ and $S_\mu$ have the same tangent spaces. It may be possible to use similar methods to understand the jet schemes of $S_\mu^{\FM}$, but we do not pursue this here.}
	\end{Remark}
\end{para}

\subsection{Tangent spaces of certain local models}\label{ss:tangentspacesLM}
	 	\begin{para}
		Let us now return to the set-up of \S\ref{ss:LM}.
	Let $(G,\{\mu\},\Gg)$ be a local model triple over $ \O_F$ which satisfies our standard assumptions. In addition, we assume   
	there is a finite extension $K/F$ and a  reductive group scheme $H$ over $ \O_{K}$ such that 
		$$
		\calG\cong \mathrm{Res}_{\O_K/\O_F}H.
		$$

\begin{lemma}\label{lemma:LMtomodp}
	Let $(G,\{\mu\},\calG)$ be a local model triple satisfying the assumptions above.  Then there is a pair $(\underline{G},\underline \mu )$, where $\underline{G}$ is a reductive group over $k$ and $\underline\mu$ a cocharacter of $\underline{G}$, which is of mod $p$ abelian type and with $p\nmid|\pi_1(\underline{G}^{\der})|$, such that there is an isomorphism 
	$$
	\BMloc_{\Gg,\mu}\otimes_{\O_E}k\cong S_{\underline \mu},
	$$
	where $S_{\underline \mu}\subset \Gr_{\underline{G}}$ is the corresponding affine Schubert variety.
\end{lemma}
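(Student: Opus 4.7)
The plan is to identify the special fiber $\BMloc_{\calG,\mu}\otimes_{\O_E}k$ with a Schubert variety in an equicharacteristic affine Grassmannian by first reducing to the Hodge-type auxiliary triple of Proposition~\ref{prop:PRlsv}, and then applying the Beilinson--Drinfeld description of the local model.

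First, I would use the definition \eqref{eq:generalLM} to reduce: $\BMloc_{\calG,\mu}=\BMloc_{\calG',\mu'}\otimes_{\O_{E'}}\O_E$, where $(G',\mu',\calG')$ is the Hodge-type triple from Proposition~\ref{prop:PRlsv}, satisfying $p\nmid|\pi_1(G'^\der)|$ with $G'\cong\prod_i\Res_{K_i/F}H_i'$ and each $H_i'$ tamely split. Since base change commutes with taking special fibers, it suffices to construct $(\underline G,\underline\mu)$ for $(G',\mu',\calG')$. For this triple, $\BMloc_{\calG',\mu'}$ is by Definition~\ref{def:LevinLM} the Zariski closure of $S_{\mu'}$ in the mixed characteristic affine Grassmannian $\mathrm{Fl}^{E(u)}_{\underline{\calH}'^\circ}\otimes_{\O_F}\O_{E'}$. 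I would then invoke the special-fiber identification from~\cite[\S 6]{PZ} and \cite[Thm.~4.2.11]{Levin}: since $\underline{\calH}'^\circ$ is built over $\O_{K_0}[u]$ by tame Galois descent from a split Chevalley form, the geometric special fiber of $\mathrm{Fl}^{E(u)}_{\underline{\calH}'^\circ}$ is canonically identified with the equicharacteristic affine Grassmannian $\Gr_{\underline G}$ of a split reductive group $\underline G$ over $k$ sharing the absolute root datum of $G'$. Under this identification, $S_{\mu'}$ specializes to $S_{\underline\mu}$, for $\underline\mu$ obtained from $\mu'$ via the canonical comparison of cocharacter lattices; reducedness of the special fiber (Theorem~\ref{thm: Levin}) then gives $\BMloc_{\calG',\mu'}\otimes k\cong S_{\underline\mu}$.

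Next I would verify that $(\underline G,\underline \mu)$ is of mod $p$ abelian type. The standard assumptions on $(G,\mu)$ and Remark~\ref{rem: standard assumption} force each simple factor of $G^\ad$ (hence of $\underline G^\ad$) to be of classical type $A$, $B$, $C$, or $D$, excluding triality. On each absolutely simple factor of $\underline G^\ad$, the restriction of $\underline\mu^\ad$ is a non-negative integer sum of minuscule coweights corresponding to the various $\overline{F}$-embeddings of $K_i$ that collapse under the totally ramified part of the specialization. For type $A$, $B$, $C$ factors, condition~(1) of the mod $p$ abelian type definition is immediate. For type $D$ factors, the abelian-type hypothesis on $(G,\mu)$ forces the minuscule summands appearing to be either all of the form $\varpi_1^\vee$ (yielding $D_n^\R$) or all in $\{\varpi_{n-1}^\vee,\varpi_n^\vee\}$ (yielding $D_n^{\mathbb H}$), giving condition~(2). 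Finally, $p\nmid|\pi_1(\underline G^\der)|$ follows from $p\nmid|\pi_1(G'^\der)|$ together with the equality of absolute root data of $\underline G^\der$ and $G'^\der$.

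The main obstacle will be the precise identification of $\underline\mu$ on each absolutely simple factor under the mixed-to-equicharacteristic cocharacter comparison --- in particular, ensuring that no single type-$D$ factor of $\underline G^\ad$ contains a mixture of $D^\R$ and $D^{\mathbb H}$ summands. This requires using that the abelian-type structure of $(G,\mu)$ imposes a uniform ``Hodge type'' on each Galois orbit of absolutely simple factors, a consequence of Deligne's classification of Shimura data of abelian type.
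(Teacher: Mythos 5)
Your proposal is correct and follows essentially the same route as the paper: reduce via Proposition~\ref{prop:PRlsv} to the auxiliary Hodge-type triple $(G',\mu',\calG')$ with $p\nmid|\pi_1(G'^{\der})|$, identify the geometric special fiber of the Levin/Pappas--Zhu local model with an affine Schubert variety $S_{\underline\mu}\subset\Gr_{\underline G}$ via the Beilinson--Drinfeld construction, with $\underline G$ a product of copies of (the split form of) $H$ indexed by embeddings of the maximal unramified subfield's residue field and $\underline\mu$ given by summing the cocharacters over the $\bar F$-embeddings of $K_i$ that collapse at the ramified places, and then invoke the abelian-type classification to check the mod $p$ abelian type conditions, including the uniform $D^{\mathbb R}$ versus $D^{\mathbb H}$ dichotomy on each type-$D$ Galois orbit. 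The only organizational difference is that the paper first passes to an unramified extension to assume $H$ is \emph{split} reductive over $\O_K$ (rather than merely reductive) before unwinding Levin's construction, which streamlines the identification of $\underline G$ as $\prod_{\varphi\colon k_0\to k}H\otimes k$ and of $\underline\mu_\varphi=\sum_{\theta|_{k_0}=\varphi}\mu_\theta$; your argument does not make this step explicit, but since $H$ is reductive over $\O_K$ by hypothesis (so the equicharacteristic group at $u=0$ is hyperspecial) and the base is the algebraically closed field $k$, the same identification of $\Gr_{\underline G}$ with the geometric special fiber goes through.
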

\begin{proof}
Under the above assumptions, we have $\BMloc_{\Gg,\mu}=\Mloc_{\calG,\mu}$, by Theorem \ref{thm:twoLM}. Since $H$ splits after an unramified base change we can easily see that it is enough to show the statement under the additional assumption that $H$ is split reductive over $\O_K$.	Now remark that the group
	$G'$  used in  the construction of $\Mloc_{\Gg,\mu}$ in \S \ref{sss:generalLM}
	 is such that $p\nmid|\pi_1(G'^{\der})|$ and is again of the form $G'=\Res_{K/F}H'$. Denote by $\Gg'$ the stabilizer group scheme
	 of $G'$ which corresponds to $\Gg$. This is  also of the same form $\Gg'=\mathrm{Res}_{\O_K/\O_F}H'$, with $H'$ split and reductive. By the definition of $\Mloc_{\Gg,\mu}$, we have
	 \[
	 \Mloc_{\Gg,\mu}\otimes_{\O_E}k\simeq {\rm M}_{\Gg',\mu'}\otimes_{\O_{E'}}k.
	 \]
Recall that $\Mloc_{\Gg',\mu'}={\rm M}_{\Gg',\mu'}$ and is given  via a Beilinson-Drinfeld affine Grassmannian, as in Definition \ref{def:LevinLM}. This allows us to reduce proving the 
statement for ${\rm M}_{\Gg,\mu}\otimes_{O_E}k$ when $\Gg= \mathrm{Res}_{\O_K/\O_F}H$, with $H$ split reductive, under the additional assumption $p\nmid|\pi_1(G^{\der})|$. The rest of the proof is a case of unpacking the constructions in \cite{Levin} and above.

	We may assume $H^{\ad}$ is simple. Let $K_0$ denote the maximal unramified extension of $F$ contained in $K$, and let $\pi$ be a uniformizer of $K'$. Since $H$ is split, we can take $\underline{\calH}_0=H\otimes \O_{K_0}[u]$  in \cite[\S3.3]{Levin}. Here, by slightly abusing notation, we also write $H$ for the split Chevalley form of $H$. 
	 	
	Let $k_0$ be the residue field of $K_0$, and let 
	$$
	\underline{G}=\prod_{\varphi:k_0\rightarrow k} H\otimes k
	$$
	 a split reductive group scheme over $k$. Then $\Mloc_{\Gg,\mu}\otimes_{\O_E}k$ can be identified with a Schubert variety $S_{\underline{\mu}}\subset \Gr_{\underline{G}}$ for $\underline{\mu}$ a dominant cocharacter of $\underline{G}$. The cocharacter $\underline \mu$ of  $\underline{G}$ can be computed from the cocharacter $\mu$ as follows. We have an isomorphism 
	$$
	G_{\bar {K}}\cong \prod_{\theta: K\rightarrow \bar {K}} H_{\bar {K}}
	$$
	where the product is taken over $F$-algebra morphisms of $K$ into the algebraic closure $\bar K$. We write $\mu_{\theta}$ for the cocharacter of $H_{\bar {K}}$ in the factor corresponding to $\theta$, and similarly we write $\underline{\mu}_{\varphi}$ for the factor of $\underline{\mu}$ corresponding to $\varphi$. We may identify dominant cocharacters of $H_{\bar {K}}=H\otimes \bar  K$ with dominant cocharacters of $H\otimes k$.  Then under this identification, we have 
	$$
	\underline{\mu}_{\varphi}=\sum_{\theta \text{ s.t. } \theta|_{k_0}=\varphi}\mu_\theta.
	$$
	Since $(G, \mu)$ is of    abelian type, the classification of such pairs (cf. \cite[Prop. 7.2.1]{PRlsv} and its proof)
	 implies that $\mu^\ad_{\theta}$ is minuscule, and if $H$ is of type $D_n$, we have either $\mu_\theta^{\ad}\in\{\varpi_1, 1\}$ for all $\theta$, or $\mu_\theta^{\ad}\in\{\varpi_{n-1}, \varpi_{n}, 1\}$ for all $\theta$. The result follows.
\end{proof}

The following Theorem now is immediate from the preceding lemma and Theorems \ref{thm: root direction equality} and \ref{thm: spanning cartan directions}, see also Theorem \ref{thm:curve span}.

\begin{thm}\label{corLMSpan}
	Let $(G,\{\mu\},\calG)$ be a local model triple 
 over $ \O_F$ which satisfies our standard assumptions. In addition, we assume that 
	there is a finite extension $K/F$ and a reductive group scheme $H$ over $ \O_{K}$ such that 
		$
		\calG\cong \mathrm{Res}_{\O_K/\O_F}H.
		$
 \begin{altenumerate}
		\item  If the point $x\in \BMloc_{\calG,\mu}(k)$ lies in the minimal stratum, then the tangent space of $\BMloc_{\calG,\mu}\otimes_{\O_E}k$ at $x$ is spanned by smooth formal curves.
		\item  If $G$ has no factors of type $D_n^{\mathbb H}$, then, for every point $x\in \BMloc_{\calG,\mu}(k)$, the tangent space of $\BMloc_{\calG,\mu}\otimes_{\O_E}k$ at $x$ is spanned by smooth formal curves.\qed
	\end{altenumerate} 
\end{thm}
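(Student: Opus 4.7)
The plan is to reduce directly to the affine Grassmannian setting covered by Theorems \ref{thm: root direction equality} and \ref{thm: spanning cartan directions}. By Lemma \ref{lemma:LMtomodp}, there is a pair $(\underline{G},\underline{\mu})$ of mod $p$ abelian type with $p\nmid|\pi_1(\underline{G}^{\der})|$ and an isomorphism
\[
\BMloc_{\calG,\mu}\otimes_{\O_E}k \;\simeq\; S_{\underline\mu}\subset \Gr_{\underline{G}}.
\]
So it suffices to establish the spanning property for the Schubert variety $S_{\underline\mu}$ at the relevant $k$-points.

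The key observation is that the property ``the tangent space at $x$ is spanned by smooth formal curves'' is invariant under the action of $\underline{G}(k\lps t\rps)$ on $\Gr_{\underline G}$, because each element of this group acts on $S_{\underline\mu}$ by an automorphism and hence transports smooth formal curves to smooth formal curves and induces an isomorphism on tangent spaces. Fixing a maximal torus $T$ and Borel $B\supset T$ in $\underline{G}$ as in \S 4.1, every $\underline{G}(k\lps t\rps)$-orbit in $S_{\underline\mu}$ meets the set of torus-fixed points $\{t^\lambda : \lambda\in X_*(T)^+,\ \lambda\lleq\underline\mu\}$. Thus we only need to verify the spanning property at points of the form $t^\lambda$.

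For such a point $t^\lambda$, the tangent space decomposes as
\[
T_{t^\lambda}S_{\underline\mu} \;=\; \bigoplus_{(\alpha,k)\in \Phi^{\tan}_{\lambda,\underline\mu}} k\cdot t^k X_\alpha \;\oplus\; \mathfrak{T}^{\tan}_{\lambda,\underline\mu}.
\]
Theorem \ref{thm: root direction equality} gives $\Phi^{\tan}_{\lambda,\underline\mu}=\Phi^{\cur}_{\lambda,\underline\mu}$, so every root tangent direction is realized by a smooth formal curve (coming from the root subgroup curves of Proposition \ref{prop: curve bound tangent vector}). It remains to span the Cartan part $\mathfrak{T}^{\tan}_{\lambda,\underline\mu}$, and here we split into the two cases:
\begin{altenumerate}
\item[(ii)] If $(\underline{G},\underline\mu)$ has no factor of type $D^{\mathbb H}_n$, then Theorem \ref{thm: spanning cartan directions}(1) gives $\mathfrak{T}^{\cur}_{\lambda,\underline\mu}=\mathfrak{T}^{\tan}_{\lambda,\underline\mu}$ for every $\lambda$, and the conjugated curves $u_{-\alpha}\cdot(a\mapsto t^\lambda x_\alpha(t^{-k}a))\cdot u_{-\alpha}^{-1}$ produce the required Cartan tangent vectors $t^{-k}H_\alpha$. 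This proves (2).
\item[(i)] For the minimal stratum, the point $x$ lies in the $\underline{G}(k\lps t\rps)$-orbit of $t^\lambda$ where $\lambda$ is minimal in $\{\nu\in X_*(T)^+ : \nu\lleq\underline\mu\}$; for this $\lambda$, Theorem \ref{thm: spanning cartan directions}(2) applies to any factor of type $D^{\mathbb H}_n$, and Theorem \ref{thm: spanning cartan directions}(1) handles the remaining factors, so again $\mathfrak{T}^{\cur}_{\lambda,\underline\mu}=\mathfrak{T}^{\tan}_{\lambda,\underline\mu}$. This proves (1).
\end{altenumerate}
There is no real obstacle; the only point requiring a small verification is that every $k$-point of $S_{\underline\mu}$ in the minimal stratum lies in the orbit of the minimal torus-fixed point, which follows from the description of Schubert strata as $\underline{G}(k\lps t\rps)$-orbits and the fact that the minimal stratum corresponds precisely to this minimal $\lambda$.
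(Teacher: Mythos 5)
Your proof is correct and follows exactly the route the paper takes: reduce via Lemma \ref{lemma:LMtomodp} to a Schubert variety $S_{\underline\mu}$, use $\underline G(k\lps t\rps)$-equivariance to pass to the distinguished points $t^\lambda$, and then invoke Theorems \ref{thm: root direction equality} and \ref{thm: spanning cartan directions} (part (1) for the (NE) case, and part (2) via its ``in particular'' clause for the minimal $\lambda$ in the $D^{\mathbb H}$ factors). Your factor-by-factor treatment in case (i) is exactly what the product decomposition from \S\ref{para: reduction almost simple} justifies, so there is no gap.
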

\end{para}

\section{Displays and very good   embeddings}\label{s:Displays}

In this section, we revisit the theory of \cite{KP} about deformations of Dieudonn\'e displays equipped with tensors,
give the key definition of a very good integral Hodge embedding, and prove various properties of very good embeddings.

\subsection{Displays and deformations}\label{1.1} 

We will mostly use the  notations of \cite[\S 3.1]{KP}.   Suppose $R$ is a Noetherian complete local ring with residue field $k$ and maximal ideal $\frakm$. Fix integers $0\leq d\leq n$.  We let $W(R)$ denote the Witt vectors of $R$. We consider the subring $\widehat{W}(R)\subset W(R)$ given by 
$$
\widehat{W}(R)=W(k)\oplus\widehat{W}(\mathfrak{m})\subset W(R),
$$
 where $\widehat{W}(\mathfrak{m})\subset W(R)$ consists of Witt vectors $(w_i)_{i\geq1}$ with $w_i\in\mathfrak{m}$ and $w_i\rightarrow 0$ in the $\mathfrak{m}$-adic topology. We have $\widehat{W}(R)=\varprojlim_a \widehat{W}(R/\mathfrak{m}^a)$, and $\widehat{W}(R/\mathfrak{m}^a)$, for each $a$, is a (non-Noetherian) complete local ring with residue field $k$; see \cite{ZinkCFT}, \cite{ZinkWindows}, for details. We let $\hat I_R=I_{\whW(R)}$ be the kernel of the ring homomorphism $\whW(R)\to R$ given by projection to the first Witt coordinate.
 Also, we denote by $\phi: \whW(R)\to \whW(R)$ the Frobenius and by $V^{-1}: \hat I_R\to \whW(R)$ the inverse of the Verschiebung (see \cite{ZinkDisplay}, \cite{Lau2}). 
 
 The data $(\whW(R),\hat I_R, \phi, V^{-1})$ give an example of a ``frame" in the sense of Zink and Lau, as we will see next.

 \begin{para}
 By \cite[Definition 2.1]{Lau}, \cite[2.A]{Lau2}, a {\sl frame} is a quadruple ${\mathscr F}=(S, I, \phi, \phi_1)$ consisting of a ring $S$, an ideal $I$ of $S$, a ring endomorphism $\phi: S\to S$, and a $\phi$-linear endomorphism $\phi_1: I\to S$, such that the following hold:
 \begin{itemize}
 \item[i)] $I+pS\subset {\rm Rad}(S)$,
 \item[ii)] $\phi(a)\equiv\,a^p\,{\rm mod}\,pS$, i.e. $\phi$ is a lift of the Frobenius on $S/pS$,
 \item[iii)] $\phi_1(I)$ generates $S$ as an $S$-module.
 \end{itemize}
For our purposes, we will also assume that $\phi(a)=p\phi_1(a)$, for all $a\in I$, i.e. $\theta=p$ in the notation of \cite[Lem. 2.2]{Lau}.  (Recall, we assume $p>2$ throughout.)

By definition, a  morphism of frames $\alpha: {\mathscr F}=(S, I, \phi, \phi_1)\to {\mathscr F}'=(S', I', \phi', \phi'_1)$   
is a ring homomorphism  $\alpha: S\to S'$ such that 
 $\alpha(I)\subset I'$, and 
 $\phi'\cdot \alpha=\alpha\cdot \phi$, $\phi'_1\cdot \alpha=\alpha\cdot \phi_1$. (These are called ``strict morphisms" in \cite[2.A]{Lau2}.)  

We give some examples of frames that we will use:

a) Suppose $R$ is a Noetherian complete local ring with residue field $k$. Then, as above, we have the ``Dieudonn\'e--Witt frame" 
\[
{\mathscr D}_R:=(\whW(R),  \hat I_R, \phi, V^{-1}).
\]

b) Suppose that $B$ and $R$ are Artin local rings with residue field $k$ and $B\to R$ is a surjection whose kernel $\fkb$ is equipped with divided powers. Then, we also have the ``relative 
Dieudonn\'e--Witt frame" 
\[
 {\mathscr D}_{B/R}:=(\whW(B), \hat I_{B/R}, \phi, V^{-1}).
\]
 Here, $\hat I_{B/R}$ is the kernel of the composition
$\whW(B)\to \whW(R)\to R$, and $V^{-1}$ is defined by extending $V^{-1}: \hat I_B\to \whW(B)$ to $\hat I_{B/R}=\hat I_B\oplus [\fkb]$ by setting $V^{-1}([\fkb])=0$; this construction uses Zink's log coordinates, cf. \cite[Lemma 38]{ZinkDisplay}, \cite[2.C, 2.D]{Lau2}. There are natural  morphisms of frames,
\[
 {\mathscr D}_{B}\to  {\mathscr D}_{B/R}\to  {\mathscr D}_{R}.
\]

c) Later, we will also consider the frame $(\gS, (p), \phi, p^{-1}\phi)$, where $\gS=W(k)\lps u\rps$ and $\phi:\gS\to \gS$ is the standard lift of Frobenius with $\phi(u)=u^p$.

\end{para}

\begin{para}\label{par:frametilde}
 Fix a frame  ${\mathscr F}=(S, I, \phi, \phi_1)$ as above. We can  consider the category $P_{\mathscr F}$ of pairs  $(M, M_1)$, where $M$ is a finite free $S$-module and $M_1$ is an $S$-submodule of $M$ such that there is a \emph{normal decomposition} $M=L\oplus T$, $M_1=L\oplus IT$, with $L$, $T$, finite free $S$-modules of rank $d$ and $n-d$ respectively.  Morphisms $(M, M_1)\to (M', M'_1)$ in the category are $S$-homomorphisms 
 $M\to M'$ which take $M_1$ to $M'_1$. We will call an object of the category $P_{{\mathscr F}}$ a pair over the frame ${\mathscr F}$, or simply a pair over $S$, if the frame structure on $S$ is understood.

If $\alpha: {\mathscr F}\to {\mathscr F'}$ is a   frame  morphism then there is a corresponding base change functor $\alpha_*: P_{{\mathscr F}}\to P_{{\mathscr F}'}$ which in terms of normal decompositions 
is given by $(L, T)\mapsto (S'\otimes_SL, S'\otimes_ST)$.

We now  
define a functor 
\[
\tau_{\mathscr F}: P_{{\mathscr F}}\to {\rm Mod}^{\rm ff}_S,\quad (M, M_1)\mapsto \wtM_1,
\] 
 into the category  of finite free $S$-modules 
 as follows:

Choose a basis $\B=(e_1,\ldots, e_n)$ of $M$, such that $(e_1,\ldots, e_d)$ is a basis of $L$ and $(e_{d+1},\ldots, e_n)$ is a basis of $T$.
(We say that such a basis $\B$ of $M$ is {\sl adapted} to the normal decomposition $M=L\oplus T$.) We set $\wtM_1$ to be the free $\whW(R)$-module of rank $n$ with basis $\ti \B=(\ti e_1,\ldots, \ti e_n)$. Let $(M', M'_1)$ be a second pair, with $M'=L'\oplus T'$, $M'_1=L'\oplus IT'$ and $\B'=(e'_1,\ldots, e'_{n})$ an adapted basis. Suppose $f: (M, M_1)\to (M', M'_1)$
is a morphism of pairs. We can write $f$ in terms of $\B$, $\B'$, as a matrix in block form
\[
\begin{pmatrix}
A & B\\
C & D
\end{pmatrix}
\]
with the entries of $C$ in $I$.  Then the functor associates to $f$ the  map $\ti f: \wtM_1\to \wtM_1'$ which, in the bases $\ti \B$ and $\ti \B'$,
is given by
\begin{equation}\label{phiphi1Matrix}
\begin{pmatrix}
\phi(A)& p\phi(B)\\
\phi_1(C) & \phi(D)
\end{pmatrix}.
\end{equation}
We can check, using that $\phi_1$ is $\phi$-linear and $\phi_{|I}=p\phi_1$, that $f\mapsto \ti f$ respects composition.
The functor $\tau_{\mathscr F}$ is, up to natural equivalence, independent
of the choices of bases, cf. \cite[2.3]{BueltelPappas}.

  The functors $\tau_{\mathscr F}$, for variable ${\mathscr F}$, are compatible with base change in the sense that, given a  morphism $\alpha: {\mathscr F}\to {\mathscr F'}$, there are natural isomorphisms
\begin{equation}\label{bctilde}
\wtM'_1\simeq \wtM_1\otimes_S S',
\end{equation}
where we denote by $\wtM'_1$ the finite free $S'$-module associated by $\tau_{\mathscr F'}$ to the object $(M', M'_1)=\alpha_*(M, M_1)$ of $P_{{\mathscr F}'}$.
\end{para}

\begin{para}
We can apply this construction to the Dieudonn\'e--Witt frame ${\mathscr D}_R:=(\whW(R),  \hat I_R, \phi, V^{-1})$, where $R$ is as above. Then pairs $(M, M_1)$ over ${\mathscr D}_R$ amount to pairs $(M, M_1)$ of a finite free $\whW(R)$-module $M$ of rank $n$ and a $\whW(R)$-submodule $M_1\subset M$ such that $M/M_1$ is a finite free $R$-module of rank $n-d$. Indeed, assuming $M/M_1$ is finite free of rank $n-d$ we can write $M=L\oplus T$, $M_1=L\oplus \hat I_RT$, where $L$ and $T$ are finite free $\whW(R)$-modules of rank $d$ and $n-d$ (cf. \cite[2.C]{Lau2}). The above functor 
\[
(M, M_1)\mapsto \wtM_1,
\]
generalizes a construction of \cite[\S 3.1]{KP}, see \cite{Hoff}. 

The base change compatibility (\ref{bctilde}) now gives the following. 
Let $R'\to R$ be a  local homomorphism of complete local rings as before. This induces a frame morphism ${\mathscr D}_{R'}\to {\mathscr D}_{R}$ and a  base change from pairs $(M', M'_1)$ over $\whW(R')$ to pairs $(M, M_1)$ over $\whW(R)$ as in \cite[3.1.6]{KP}. This base change is compatible with the functor above, so  we have natural isomorphisms
\begin{equation}\label{bc5.1}
\wtM_{\whW(R), 1}\simeq \wtM'_{\whW(R'), 1}\otimes_{\whW(R')}\whW(R).
\end{equation}
Here we write $M_{\whW(R)}$, $M_{\whW(R), 1}$ and $\wtM_{\whW(R), 1}$ instead of $M$, $M_1$, $\wtM_1$, to emphasize the ring $\whW(R)$ over which these are modules.

Starting from a pair $(M, M_1)$ over $\whW(R)$ as above, we will denote by $(M_0, M_{0,1})$ the pair of $W(k)$-modules obtained from $(M, M_1)$ by base change by $R\to R/\frakm=k$.
By (\ref{bc5.1}), we have a natural isomorphism
\[
\wtM_{0,1}\simeq \wtM_1\otimes_{\whW(R)}W(k).
\]

\end{para}

\begin{para}\label{par:torsionfree}
There are functorial $S$-homomorphisms
\[
\wtM_1\to \phi^*M=S\otimes_{\phi, S}M,
\]
defined as follows: Suppose $(L, T)$ gives a normal decomposition of $(M, M_1)$. Let $(e_1,\ldots, e_d)$ be a basis of $L$, 
$(e_{d+1},\ldots, e_n)$ a basis of $T$, and let $(\ti e_1,\ldots , \ti e_n)$ be the corresponding basis of $\ti M_1$.
The homomorphism sends $\ti e_i$ to $\phi^*e_i$  for $1\leq i\leq d$ and $\ti e_i$ to $p\phi^*e_i$ for $d+1\leq i\leq n$.
We can see that this does not depend on the choice of bases.

Suppose now that the ring $S$ of the frame $\mathscr F$ is $p$-torsion free. Then the above homomorphism
$
\wtM_1\to \phi^*M 
$
is injective. Using this, we identify
\[
\wtM_1={\rm Im}(\phi^*(i): \phi^*M_1\to \phi^*M)=\phi^*L\oplus p\phi^*T\subset \phi^*L\oplus \phi^*T=\phi^*M
\]
where $i:M_1\rightarrow M$ is the inclusion. Hence
 \[
\wtM_1[1/p]=(\phi^*M)[1/p].
\]
In particular, this applies to $S=\whW(R)$, when $R$ is a Noetherian complete local ring with residue field $k$ and $R$ is $p$-torsion free.
\end{para}

\begin{para}\label{par:relativeconnection}
Suppose $\alpha: {\mathscr F}=(S,I,\phi,\phi_1)\to {\mathscr F}'=(S, I', \phi, \phi'_1)$ is a morphism of frames
underlying the identity ${\rm id}: S\to S$, so $I\subset I'$ and $(\phi'_1)_{|I}=\phi_1$. It induces 
$P_{{\mathscr F}}\to P_{{\mathscr F}'}$ given as $(M, M_1)\mapsto (M, M'_1)$.

\begin{lemma}\label{masterlemma}
The   functor
\[
\tau_{{\mathscr F}}: P_{{\mathscr F}}\to {\rm Mod}^{\rm ff}_{S}; \quad (M, M_1)\mapsto \wtM_1,
\]
is naturally equivalent to the composition
\[
P_{{\mathscr F}}\to P_{ {\mathscr F}'}\xrightarrow{\tau_{{\mathscr F}'}}{\rm Mod}^{\rm ff}_{S}.
\]
\end{lemma}
\begin{proof}
This quickly follows from the definition of the functors $P_{{\mathscr F}}\to P_{ {\mathscr F}'}$, $\tau_{{\mathscr F}}$ and $\tau_{{\mathscr F}'}$ via (\ref{phiphi1Matrix}), by
using $(\phi'_1)_{|I}=\phi_1$. 
\end{proof}

Note that this Lemma applies to the natural frame morphism 
$
 {\mathscr D}_B\to  {\mathscr D}_{B/k}
$
when $B$ is an Artin local ring with $\fkm_B^2=0$ and residue field $B/\fkm_B=k$.

\end{para}

\begin{para}

For the following statement, we let ${\mathscr F}=(S,I,\phi,\phi_1)$ be a frame together with frame morphisms
\[
{\mathscr D}_k \to  {\mathscr F}\xrightarrow{\iota} {\mathscr D}_k
\]
with composition  the identity of $ {\mathscr D}_k$. We denote by $I'$ the kernel of the composition $S\xrightarrow{\iota} W(k)\to k$.

If $(M, M_1)$ is a pair over the frame ${\mathscr F}$ we set
\[
(M_0, M_{0,1})=\iota_*(M, M_1)
\]
for the pair over ${\mathscr D}_k$ obtained by base change.

\begin{lemma}\label{masterlemma2}
Suppose that $I\subset I'$ and that $\phi_1: I\to S$ 
extends to $\phi'_1: I'\to S$, so that ${\mathscr F}'=(S, I', \phi, \phi'_1)$ is a frame.
Let  $(M, M_1)$ be a pair over $ {\mathscr F}$, together with an isomorphism $\psi: M_0\otimes_{W(k)}S\xrightarrow{\sim} M$  whose base change by $S\xrightarrow{\iota} W(k)$ is the identity. Then, there is an isomorphism
\begin{equation}\label{masterconnection}
c_S:  \wtM_{0,1}\otimes_{W(k)}S \xrightarrow{\sim } \wtM_1 
\end{equation}
of finite free $S$-modules which is functorial in pairs $(M, M_1)$ equipped with an isomorphism $\psi$ and is compatible with base change.
\end{lemma}

\begin{proof} Note that the identity of $S$ induces a frame morphism $\alpha: {\mathscr F}\to  {\mathscr F}'$ to which we can apply Lemma \ref{masterlemma}.
We will denote by $(M, M_1)' $ the image of $(M, M_1)$ in $P_{{\mathscr F}'}$ given by the  functor $\alpha_*: P_{\mathscr F}\to P_{{\mathscr F}'}$. Note that $P_{{\mathscr F}'}$    fully embedds in the category of pairs $(N, N_{0,1})$, where $N$ is a finite free $S$-module     and $pN_0\subset N_{0,1}\subset N_0=N\otimes_{S}W(k)$ is a $W(k)$-submodule.
The choice of $\psi$ determines an isomorphism 
\[
\underline \psi: ((M_0, M_{0,1})\otimes_{W(k)}{S})'\xrightarrow{ \sim }(M, M_1)'
\]
in $P_{ {\mathscr F}'}$. Here, the left hand side   is the image of
$(M_0, M_{0,1}) $ given by applying the base change
$P_{ {\mathscr D}_{k}}\to P_{ {\mathscr F}}$ followed by $\alpha_*: P_{ {\mathscr F} }\to P_{ {\mathscr F}'}$. We now consider the isomorphism $\tau_{ {\mathscr F}'}(\underline\psi)$. By applying Lemma \ref{masterlemma} for $\alpha_*: P_{\mathscr F}\to P_{{\mathscr F}'}$ and combining with base change (\ref{bctilde}), we see that $\tau_{ {\mathscr F}'}(\underline\psi)$ produces
 the desired isomorphism 
\[
c_{S}: \wtM_{0,1}\otimes_{W(k)}S \xrightarrow{\sim } \wtM_1.
\]
 This is functorial in pairs $(M, M_1)$ equipped with   $\psi$ and is compatible with base change.
 \end{proof}
 
 \begin{Remark}\label{applymaster}
 {\rm Lemma \ref{masterlemma2} can be applied to the frame ${\mathscr F}=
 {\mathscr D}_B$ where $B$ is an Artin local $k$-algebra with $\fkm_B^2=0$ and residue field $B/\fkm_B=k$ with ${\mathscr D}_B\to  {\mathscr D}_k$, ${\mathscr D}_k\to  {\mathscr D}_B$   the natural morphisms. Then $\alpha: {\mathscr F}\to {\mathscr F}'$ is the natural 
 frame morphism $ {\mathscr D}_B \to  {\mathscr D}_{B/k}$.  }
 \end{Remark}
 \end{para}
  
\begin{para}\label{par:mainlemma}
We now return to the set-up in the beginning of \S \ref{1.1}. 
Set $\fraka=\frakm^2+pR\subset R$ and consider the quotient $R/\fa$. We also have the pair
$(M_{\whW(R/\fa) }, M_{\whW(R/\fa), 1 })$ over $\whW(R/\fa)$ obtained by base change from $(M, M_1)$. We  fix an isomorphism 
\[
M=M_0\otimes_{W(k)}\whW(R)
\]
 reducing to the identity modulo $\fkm_R$.

 \begin{lemma}\label{319}
(cf. \cite[Lem. 3.1.9]{KP}.) 
There is a canonical commutative diagram
 \begin{equation}\begin{aligned} 
 \xymatrix{
\wtM_1\otimes_{\whW(R)} \whW(R/\fa ) \ar[r] & \varphi^*(M_{\whW(R/\fa) })\ar@{=}[d]\\
\wtM_{0,1}\otimes_{W(k)}\whW(R/\fa ) \ar[r]\ar[u]^c & \varphi^*(M_0)\otimes_{W(k)}\whW(R/\fa ).
}\end{aligned}
\end{equation} 
In this, the left vertical map is an isomorphism and the horizontal maps are induced 
by base changing $\wtM_1\to \phi^*M$ and $\wtM_{0,1}\to \phi^*M_0$.
\end{lemma}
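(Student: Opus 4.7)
The plan is to first establish that the submodule $\widetilde M_1\subset \phi^{\ast}M$ is canonically associated to $(M,M_1)$ (independent of the normal decomposition), then choose a normal decomposition of $(M,M_1)$ lifting that of $(M_0,M_{0,1})$, and finally exhibit $c$ by checking that, after reducing modulo $\mathfrak a$, the canonical embeddings identify $\widetilde M_1\otimes \widehat W(R/\mathfrak a)$ and $\widetilde M_{0,1}\otimes_{W(k)}\widehat W(R/\mathfrak a)$ with the same submodule of $\phi^{\ast}M_{\widehat W(R/\mathfrak a)}$. For the canonicity, the image of $\widetilde M_1\hookrightarrow \phi^{\ast}M$ equals $\phi^{\ast}L+p\phi^{\ast}T$; for a second normal decomposition, the change-of-basis matrix $\begin{pmatrix}A & B\\ C & D\end{pmatrix}$ has entries of $C$ in $\widehat I_R$, and the inclusion $\phi(\widehat I_R)\subset p\widehat W(R)$ immediately gives $\phi^{\ast}L+p\phi^{\ast}T=\phi^{\ast}L'+p\phi^{\ast}T'$. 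So $\widetilde M_1\subset \phi^{\ast}M$ and $\widetilde M_{0,1}\subset \phi^{\ast}M_0$ are canonical.

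Next, I would choose a normal decomposition of $(M,M_1)$ compatible with the fixed identification $M=M_0\otimes_{W(k)}\widehat W(R)$. Starting from $(\overline e_1,\ldots,\overline e_n)$ adapted to $M_0=L_0\oplus T_0$, let $T\subset M$ be the submodule generated by the images of $\overline e_{d+1}\otimes 1,\ldots,\overline e_n\otimes 1$. Any existing normal decomposition of $(M,M_1)$ shows that $M_1\to M_{0,1}$ is surjective with kernel $\widehat I_R T$, so a Nakayama-type lift produces $L\subset M_1$ with $M=L\oplus T$ and $M_1=L\oplus \widehat I_R T$; one can then choose an adapted basis $(e_1,\ldots,e_n)$ with $e_i\equiv \overline e_i\otimes 1\pmod{\widehat W(\mathfrak m)}$. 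The change-of-basis matrix $A$ from $(\overline e_j\otimes 1)$ to $(e_i)$ therefore lies in $I+M_n(\widehat W(\mathfrak m))$. The crucial step is then that Frobenius on $\widehat W(R/\mathfrak a)$ annihilates $\widehat W(\mathfrak m/\mathfrak a)$: since $pR\subset \mathfrak a$ and $\mathfrak m^p\subset \mathfrak m^2\subset \mathfrak a$, the ring $R/\mathfrak a$ is an $\mathbb F_p$-algebra in which every element of $\mathfrak m/\mathfrak a$ has zero $p$-th power, and Frobenius on $W(R/\mathfrak a)$ acts coordinate-wise as $x\mapsto x^p$. Hence $\phi(\overline A)=I$ in $\mathrm{GL}_n(\widehat W(R/\mathfrak a))$. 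The image of $\widetilde M_1\otimes \widehat W(R/\mathfrak a)$ in $\phi^{\ast}M_{\widehat W(R/\mathfrak a)}$, spanned by $\phi^{\ast}e_i$ for $i\le d$ and $p\phi^{\ast}e_i$ for $i>d$, therefore reduces to $\phi^{\ast}(\overline e_i\otimes 1)$ and $p\phi^{\ast}(\overline e_i\otimes 1)$, which is exactly the image of $\widetilde M_{0,1}\otimes_{W(k)}\widehat W(R/\mathfrak a)$. Since Witt vectors of an $\mathbb F_p$-algebra are $p$-torsion free, both horizontal maps in the diagram are injective, so there is a unique isomorphism $c$ making the diagram commute; its independence from the auxiliary choices follows from the canonicity of the two submodules together with the same Frobenius argument applied to transitions between adapted bases.

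The principal obstacle, and the source of the error in \cite[Lem.~3.1.9]{KP}, is that the Frobenius-killing argument is valid only after reducing modulo $\mathfrak a$: the operator $\phi$ does not annihilate $\widehat W(\mathfrak m)$ integrally, so no canonical identification of $\widetilde M_1$ with $\widetilde M_{0,1}\otimes_{W(k)}\widehat W(R)$ can be made. The content of the lemma is genuinely a statement about the infinitesimal neighbourhood cut out by $\mathfrak a$, and it is essential that every comparison takes place over $\widehat W(R/\mathfrak a)$ and uses only $\widehat W(\mathfrak m/\mathfrak a)$.
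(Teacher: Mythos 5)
There is a genuine gap, and it is exactly the one the paper flags explicitly: the horizontal maps in the diagram are \emph{not} injective, so commutativity of the diagram does not determine $c$. Your claim that ``Witt vectors of an $\mathbb F_p$-algebra are $p$-torsion free'' is false; $W(A)$ is $p$-torsion free iff $A$ is reduced, and $S=R/\fa$ is typically far from reduced since $\frakm_S^2=0$. Concretely, for $A$ of characteristic $p$ one has $p\cdot(a_0,a_1,\dots)=(0,a_0^p,a_1^p,\dots)$, so any nonzero $a\in\frakm_S$ gives $p$-torsion $V([a])\in\whW(S)$. Thus after base change to $\whW(S)$, the map $\wtM_1\otimes\whW(S)\to\phi^*(M_{\whW(S)})$ (which identifies $\wtM_1$ with $\phi^*L\oplus p\phi^*T$) has kernel coming from the $p$-torsion of $\whW(S)$ acting on the summand $\phi^*T$. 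Your computation that the two images inside $\phi^*(M_{\whW(S)})$ coincide (which is correct, and does use the key observation $\phi(\whW(\frakm_S))=0$) therefore does not produce a canonical isomorphism between the sources, and the uniqueness of $c$ that you invoke fails.

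This is precisely why the paper does not define $c$ by forcing the diagram to commute. Instead, it gives a direct construction: using Zink's logarithmic Witt coordinates one writes $\whW(S)=W(k)\oplus[\frakm_S]\oplus I_{\whW(\frakm_S)}$, extends $V^{-1}$ to $\whW(\frakm_S)+\hat I_S$ by setting $V^{-1}([\frakm_S])=0$, and shows that the functor $(M,M_1)\mapsto\wtM_1$ factors through the category of pairs $(M,M_{0,1})$ — i.e.\ $\wtM_1$ depends only on the reduction of $M_1$ modulo $\whW(\frakm_S)$, functorially. The resulting $c$ is then given explicitly, in adapted bases, by a unipotent matrix whose lower-left block is $V^{-1}(Z)$ applied to the $\whW(\frakm_S)$-valued block $Z$ of the change of basis; the commutativity of the diagram is an \emph{a posteriori} verification using $\phi(\whW(\frakm_S))=0$, not the definition of $c$. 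Note also that in your argument you apply $\phi$ entrywise to the change-of-basis matrix, whereas the relevant functor applies $\phi$ only to the diagonal blocks and $V^{-1}$ (resp.\ $p\phi$) to the off-diagonal blocks; this discrepancy is harmless for comparing images in $\phi^*M_S$ but matters once you try to define a map on the sources.
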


We will call
\[
c: \wtM_{0,1}\otimes_{W(k)}\whW(R/\fa)\xrightarrow{\sim} \wtM_1\otimes_{\whW(R)}\whW(R/\fa)
\] 
the ``connection isomorphism". 

\begin{Remark}\label{remark:Hoff}
{\rm a) As was pointed out to the authors by M. Hoff, the isomorphism which is given by the construction of \cite[Lem. 3.1.9]{KP} is not canonical and hence not ``correct". (The construction there is given 
 using a normal decomposition $M=L\oplus T$, but the resulting map depends on that choice.) In particular, 
\cite[Lem. 3.1.12]{KP} does not hold when $c$ is defined as in the proof of \cite[Lem. 3.1.9]{KP}.  Note that the diagram does not determine  $c$ since the horizontal maps are not always injective, and this can occur even if $\whW(R)$ is $p$-torsion free.

 b) Our corrected construction of $c$ follows \cite{Hoff}; the main idea already appears in Zink's work, see \cite[Theorem 3]{ZinkCFT}.} 
\end{Remark}

\begin{proof}
We apply Lemma \ref{masterlemma2} to the frame ${\mathscr D}_{ R/\fa}$, cf. Remark \ref{applymaster}, and define  $c$ as  the composition of the 
  isomorphism (\ref{masterconnection})
  \[
c_{\whW(R/\fa)}: \wtM_{0,1}\otimes_{W(k)}\whW(R/\fa) \xrightarrow{\sim } \wtM_{\whW(R/\fa), 1},
\]
 with the base change isomorphism (\ref{bctilde})
\[
 \wtM_{\whW(R/\fa), 1} \xrightarrow{\sim} \wtM_1\otimes_{\whW(R)} \whW(R/\fa).
\]  
 Note that the essential ingredient for this construction is the relative Dieudonn\'e frame which uses Zink's logarithmic coordinates, see also \cite[Lemma 1.21]{Hoff}.

The commutativity of the diagram follows from the construction of $c$ and \S\ref{par:torsionfree}. It is also instructive to deduce it from a useful explicit description of $c$ as follows. Fix a normal decomposition $M=L\oplus T$, $M_1=L\oplus \hat I_RT$, and a basis $\B=(e_1,\ldots , e_n)$ adapted to this decomposition as above. Reduce $\B$ modulo $\whW(\frakm)$ to obtain a basis $\B_0$ of $M_0$. In turn, this gives a new basis $\B_0\otimes 1$ of $M$ by base changing by $W(k)\to \whW(R)$. Denote by 
 \[
\begin{pmatrix}
X & Y\\
Z & U
\end{pmatrix}
\]
 the change of basis matrix between $\B$ and $\B_0\otimes 1$. Since $\B$ reduces to $\B_0$, we have 
  \[
\begin{pmatrix}
X & Y\\
Z & U
\end{pmatrix}\equiv I_{n}\, {\rm mod}\, \whW(\frakm),
\]
with $I_n$ the $n\times n$ identity matrix.
 In particular, $Z$ is a matrix with coefficients in $\whW(\frakm)$. 
 Set $B=R/\fa$ and observe that $$\whW(\fkm_B)=I_{\whW(\fkm_B)}\oplus [\fkm_B]\subset \hat I_{B/k}=\hat I_B\oplus [\fkm_B].$$ The isomorphism $c$ is now given, in terms of the bases $\ti \B\otimes_{\whW(R)}\whW(B)$ and $\ti \B_0\otimes_{W(k)}\whW(B)$ of 
 $\wtM_1\otimes_{\whW(R)}\whW(B)$ and $\wtM_{0, 1}\otimes_{W(k)}\whW(B)$, by the matrix
 \begin{equation}\label{uni}
 \begin{pmatrix}
I_{d} & 0\\
V^{-1}(\overline Z) & I_{n-d}
\end{pmatrix},
 \end{equation}
with entries in  $\whW(\frakm_B)$. Here we write   $\overline Z$ for the reduction of $Z$ modulo $\fa$. The commutativity of the diagram now  follows 
by using the description of $\wtM_1\to \phi^*M$ and $\wtM_{0,1}\to \phi^*M_0$ in \S\ref{par:torsionfree} and combining it with the above, together with $\phi(\whW(\frakm_B))=0$.
\end{proof}
 \end{para}

\begin{para}\label{par:Psi}
Suppose now we have a Dieudonn\'e display $(M, M_1, \Phi, \Phi_1)$ over the $p$-torsion free $\whW(R)$ with corresponding $(M, \wtM_1, \Psi)$ as in \cite[Lem. 3.1.5]{KP}. Denote by $(M_0, M_{0,1}, \Phi_0, \Phi_{0,1})=(\DD,\DD_1, \Phi_0,\Phi_{0,1})$ the Dieudonn\'e display over $W(k)$ obtained  by base change by $R\to R/\frakm=k$ as in \cite[3.1.6]{KP}. This has corresponding $(M_0, \wtM_{0,1}, \Psi_0)$. 

As in \cite[3.1.1]{KP}, we say ``$\Psi$ is constant modulo $\fa$'' if the composite 
\[
\wtM_{0,1}\otimes_{W(k)}\whW(R/\fa)\simeq \wtM_1\otimes_{\whW(R)}\whW(R/\fa)\xrightarrow{\Psi}  M\otimes_{\whW(R)}\whW(R/\fa)=M_0\otimes_{W(k)}\whW(R/\fa)
\]
 is $\Psi_0\otimes 1$, where the first map  in the composition
is the isomorphism $c$ of Lemma \ref{319}. Then, with this definition, \cite[Lem. 3.1.12]{KP} holds, see also \cite[Thm. 1.28]{Hoff}.
\end{para}
 
\subsection{Very good embeddings: definition}\label{ss:VG definition}

 Suppose that $\Gg\subset \GL(\La)$ is a closed immersion of group schemes over the $p$-adic discrete valuation ring $\O$, where $\La$ is a finite free $\O$-module. 
Set $\La^\otimes:=\oplus_{m, n\geq 0}\La^{\otimes m}\otimes_\O (\La^\vee)^{\otimes n}$ for  the total tensor algebra of $\La$, where $\La^\vee={\rm Hom}_{\O}(\La,\O)$.
As usual, we say that {\sl $\Gg$ is cut out in $\GL(\La)$ by a set of tensors $(s_a)\subset \La^{\otimes}$}, if for all $\O$-algebras $R$, we have
\[
\Gg(R)=\{g\in \GL(\La\otimes_{\Z_p} R)\ |\ g\cdot (s_a\otimes 1)=s_a\otimes 1,\forall a\}.
\]
Here $s_a\otimes 1$ is the image of $s_a$ under $\La^{\otimes}\to \La^{\otimes}\otimes_{\O}R=(\La\otimes_{\Z_p}R)^\otimes$. 

\begin{para} We now consider  a local model triple $(G,\{\mu\},\Gg)$ and assume that  $\rho: (\Gg,\mu)\hook(\GL(\La),\mu_d)$ is a good integral Hodge embedding. We suppose that $\O$ is unramified over $\Z_p$.

We first assume $\O=\Z_p$.  
Suppose   $x\in \BMloc_{\Gg,\mu}(k)$, where we now take $k=\bar k_E=\bar \mF_p$. Following \cite{KP}, we will denote by $R_{G,x}$, or simply $R_G$, the completion of the local ring of $ \BMloc_{\Gg,\mu}$ at $x$ and by $R_E$ the completion of the local ring of the Grassmannian ${\rm Gr}(d,\Lambda)_{\O_E}$ at the image of the point $x$ under 
the embedding $\BMloc_{\Gg,\mu}\hook {\rm Gr}(d,\La)_{\O_E}$. Then $R_G$ is a quotient of $R_E$ and $R_E$ is non-canonically isomorphic to a power series ring over the integers $\O_EW(k)$ of the completion of the  unramified extension of $E$ with residue field $k$.

Set $M=\La\otimes_{\Z_p}\whW(R_E) $ and denote by $\hat I_{R_E}M\subset M_1\subset M$ the unique $\whW(R_E)$-submodule corresponding to the universal $R_E$-valued point of the Grassmannian. Then $(M, M_1)$ is a pair over $\whW(R_E)$ as considered in the previous paragraph. Usually, we will denote for simplicity also by $(M, M_1)$ the pair of $\whW(R_G)$-modules which is obtained by restricting along $R_E\to R_G$. (If  noting the specific pair $(\Gg,\mu)$ is important, we will denote this by $(M^G,M_1^G)$.) To this pair, we  associate the finite free $\whW(R_G)$-module $\wtM_1$ with 
\[
\wtM_1[1/p]=(\phi^*M)[1/p].
\]
 
Choose  $(s_a)\subset \La^\otimes$ that cut out $\Gg$, cf. \cite[Prop. 1.3.2]{KP}, \cite[3.2.1]{PCan} and set   \[
\ti s_a:=s_a\otimes 1=\phi^*(s_a\otimes 1)\in \La^{\otimes}\otimes_{\Z_p} \whW(R_G)=\phi^*M^{\otimes}\subset (\phi^*M)^{\otimes}[1/p]=\wtM_1^{\otimes}[1/p]
.\]

 Observe that the tensors
\[
\ti s_{a,0}=s_a\otimes 1\in \La^{\otimes}\otimes_{\Z_p} W(k)[1/p]=\wtM_{0,1}^{\otimes}[1/p]
\]
lie in $\wtM_{0,1}^{\otimes}\subset \wtM_{0,1}^{\otimes}[1/p]$: Indeed,
by  
(\ref{diagram:Witt}),
\[
\BMloc_{\Gg,\mu}(k)\subset \frac{G(W(k)[1/p])}{\Gg(W(k))}\subset \frac{\GL(\La\otimes_{\Z_p} W(k)[1/p])}{\GL(\La\otimes_{\Z_p} W(k))}.
\]
This implies that we have $\wtM_{0,1}=g\cdot (\La\otimes_{\Z_p} W(k))$ for some $g\in G(W(k)[1/p])$. Since $g$ preserves the tensors $s_a\otimes 1$, we obtain $\ti s_{a,0}\in \wtM_{0,1}^{\otimes}$ (cf. the proof of Lemma \ref{perfect} below).

By the argument of \cite[Cor. 3.2.11]{KP} (now using also the main result\footnote{under our standard assumptions, a simpler proof of this is given in \cite[Appendix]{PRShtuka}.} of \cite{An}), we then also have  $\ti s_a\in\wtM_1^{\otimes}$ and the scheme
\[
\calT=\underline{\rm Isom}_{(\ti s_a), (s_a)}(\wtM_1, \La\otimes_{\Z_p}\whW(R_G))
\]
of $\whW(R_G)$-linear isomorphisms that preserve the tensors is a trivial $\Gg$-torsor over $\whW(R_G)$.
The scheme $\calT$ is independent of the choice of the set of tensors $(s_a)\subset \La^\otimes$ that cut out $\Gg$.

Set $\fa_G=\frakm_G^2+\pi_ER_G\subset R_G$. Then, by Lemma \ref{319}, there is a canonical  isomorphism
\begin{equation}\label{canonicaliso}
c:  \wtM_{0,1}\otimes_{W(k)}\whW(R_G/\fa_G)\xrightarrow{\sim} \wtM_1\otimes_{\whW(R_G)}\whW(R_G/\fa_G).
\end{equation}
We say that ``the  tensors $\ti s_a$ are horizontal at $x$'' if they are preserved by $c$, i.e. if
\[
c(\ti s_{a,0}\otimes 1)=\ti s_a\otimes 1.
\]
Note here that $\wtM_{0,1}={\rm Im}(\phi^*M_{0,1}\to \phi^*M_0)$. 
Suppose this is the case  for a finite set of tensors $(s_a)\subset \La^{\otimes}$ cutting out $\Gg\hook\GL(\La)$. Then the isomorphism $c$ uniquely descends to an isomorphism of $\Gg$-torsors
\[
c^\Gg: \calT_0\otimes_{W(k)}\whW(R_G/\fa_G)\xrightarrow{\sim} \calT\otimes_{\whW(R_G)}\whW(R_G/\fa_G).
\]

 \begin{lemma}\label{extratensor} 
Suppose $(\Gg,\mu)\hook (\GL(\La),\mu)$ and that $(s_a)\subset \La^{\otimes  }$ cuts out $\Gg$, such that $(\ti s_a)$ are horizontal. If a tensor $t\in \Lambda^{\otimes}$ is fixed by $\Gg$, then $\ti t$ is horizontal. 
\end{lemma}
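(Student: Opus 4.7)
\bigskip

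\noindent\emph{Plan of proof.} The plan is to leverage the descent $c^{\Gg}$ of the connection isomorphism $c$ to the level of $\Gg$-torsors, which is already available from the horizontality of the specific cutting tensors $(\ti s_a)$. Once $c^{\Gg}$ exists as an isomorphism of $\Gg$-torsors, any $\Gg$-fixed tensor will automatically be transported correctly, essentially for tautological reasons: a $\Gg$-fixed tensor gives a canonical section of the associated tensor bundle of any $\Gg$-torsor, and this canonical section is preserved by morphisms of $\Gg$-torsors.

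More concretely, first I would note that $\ti t$ lies in $\wtM_1^{\otimes}$ (rather than just in $\wtM_1^{\otimes}[1/p]$): this follows by the same argument of \cite[Cor. 3.2.11]{KP}, combined with the main result of \cite{An}, that was used in the excerpt to place $\ti s_a \in \wtM_1^\otimes$. Alternatively, it can be deduced directly from the fact that $\calT$ is a trivial $\Gg$-torsor (so $\wtM_1 \simeq \La\otimes_{\Z_p}\whW(R_G)$ as modules with tensors), combined with $\Gg$-invariance of $t$.

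Second, I would trivialize both $\Gg$-torsors in sight. By the existence part of the excerpt, pick a section $\alpha \in \calT(\whW(R_G))$: an $\whW(R_G)$-linear isomorphism $\wtM_1 \xrightarrow{\sim} \La\otimes_{\Z_p}\whW(R_G)$ sending $(\ti s_a) \mapsto (s_a \otimes 1)$. Let $\alpha_0$ be its reduction modulo $\frakm_G$, a trivialization of $\calT_0 = \calT\otimes_{\whW(R_G)} W(k)$. Because $(s_a)$ cut out $\Gg$ in $\GL(\La)$, the isomorphism $\alpha$ takes values in $\Gg(\whW(R_G))$ after choosing any auxiliary identification, and hence preserves every $\Gg$-fixed tensor. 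In particular, $\alpha^{\otimes}(\ti t) = t\otimes 1$ and $\alpha_0^{\otimes}(\ti t_0) = t\otimes 1$.

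Third, the statement that $c$ descends to an isomorphism $c^{\Gg}$ of $\Gg$-torsors means that the composite
\[
(\alpha\otimes 1)\circ c \circ (\alpha_0^{-1}\otimes 1)
\]
lies in $\Gg(\whW(R_G/\fa_G))$. Hence this composite fixes the tensor $t\otimes 1$, which, combined with the two identifications of the preceding paragraph, unwinds to $c(\ti t_0\otimes 1) = \ti t\otimes 1$ in $\wtM_1^{\otimes}\otimes_{\whW(R_G)}\whW(R_G/\fa_G)$. This is exactly the horizontality of $\ti t$. There is no real obstacle here: all of the substantive work has already been done in establishing the descent $c^{\Gg}$, and the lemma is essentially the observation that this descent is independent of the particular choice of cutting tensors.
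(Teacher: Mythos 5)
Your proposal is correct and takes essentially the same approach as the paper: the paper's proof is the one-sentence remark that $\calT$ is independent of the choice of cutting tensors (so that adjoining $t$ to $(s_a)$ changes nothing), and your argument is the same idea unwound through explicit trivializations of $\calT$ and $\calT_0$. The key intermediate claim $\alpha^\otimes(\ti t)=t\otimes 1$ is cleanest when phrased directly via that independence, namely that $(s_a)\cup\{t\}$ is also a cutting set for $\Gg$ and $\calT_{(s_a)\cup\{t\}}=\calT_{(s_a)}$, rather than via an "auxiliary identification"; but the substance matches the paper's.
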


\begin{proof}
This follows by the discussion above, since $\calT$ is independent of the choice of the set $(s_a)$ that cuts out $\Gg$.
\end{proof}
\end{para}

\begin{para}\label{par:VG}
The following notion plays a central role.

\begin{Definition}\label{def:vgood}
Let $\iota:(\Gg,\mu)\hook (\GL(\La),\mu_d)$ be a good integral Hodge embedding. We say that $\iota$ is \emph{very good at $x$} (or just \emph{very good}
 if the point $x$ is understood), if there are tensors $(s_a)\subset \La^{\otimes }$ cutting out $\Gg$ in $\GL(\La)$ such that $(\ti s_a)$ are horizontal at $x$. This is equivalent to asking that the canonical isomorphism $c$ descends to an isomorphism of $\Gg$-torsors $c^\Gg$, as above.
 \end{Definition}
\end{para}

More generally, suppose that $\GL(\calL)$ is the parahoric group scheme 
determined by a periodic lattice chain $\calL$ in $V$. We will say that a good integral Hodge embedding  $(\Gg,\mu)\hook (\GL(\calL),\mu_{d})$  is  very good if the good integral Hodge embedding  $$(\Gg,\mu)\hook (\GL({\rm tot}(\calL)),\mu_{rd})$$ given by composing with the diagonal, is very good in the sense
of Def. \ref{def:vgood} above. Here ${\rm tot}(\calL)$ is the direct sum of the lattices in a determining segment of $\calL$, cf. \S \ref{par:variantGood}.
By Lemma \ref{diagonal} (b) below, this notion does not depend on the choice of determining segment.

\begin{para}
The above definitions extend to the case that $\O\simeq W({\mathbb F}_q)$ is finite unramified over $\Z_p$. In this case, the arguments of \cite[\S 3.2]{KP} show that we have $\phi^*(s_a\otimes 1)\in \wtM_1^\otimes$ and we say that a good integral Hodge embedding is very good when $\phi^*(s_a\otimes 1)$ are horizontal.   
\end{para}

\begin{lemma}\label{unramified} Assume $(\Gg,\mu)\hook (\GL(\La),\mu_d)$ is a good  integral Hodge embedding over $\Z_p$. Let $\O/\Z_p$ be a finite unramified extension. Then there is a natural isomorphism $\BMloc_{\Gg\otimes_{\Z_p}\O,\mu\otimes_{\Z_p}\O}=\BMloc_{\Gg,\mu}\otimes_{\O_E} \O_E\O$ and
$(\Gg,\mu)\hook   (\GL(\La),\mu_d)$ is very good at $x\in \BMloc_{\Gg, \mu}(k)$ if and only if the
 base change $(\Gg\otimes_{\Z_p}{\O},\mu\otimes_{\Z_p}\O)\hook   (\GL(\La\otimes_{\Z_p}{\O}),\mu_d\otimes_{\Z_p}\O)$ is very good at 
$x\in \BMloc_{\Gg\otimes_{\Z_p}\O,\mu\otimes_{\Z_p}\O}(k)$.
\end{lemma}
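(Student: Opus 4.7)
The plan is to verify that every ingredient in the definition of \emph{very good} commutes with unramified base change, so that the horizontality of the tensors $\tilde s_a$ transfers in both directions.

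First I would establish the natural isomorphism of local models. By Theorem \ref{thm:twoLM}, both $\BMloc_{\Gg, \mu} \otimes_{\O_E} \O_EW$ and $\BMloc_{\Gg \otimes_{\Z_p} W, \mu \otimes_{\Z_p} W}$ are flat projective $\O_EW$-schemes with reduced special fibers representing the pullback of the $v$-sheaf $\rmM^v_{\Gg, \mu}$ along $\mathrm{Spd}(\O_EW) \to \mathrm{Spd}(\O_E)$, hence they are canonically isomorphic. Since $k = \bar{\mathbb F}_p$ and $\O_EW/\O_E$ is finite unramified, the $k$-point $x$ on the two sides agrees, and the completed local rings are related by $R_G^W = R_G \otimes_{\O_E} \O_EW$. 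This is a finite \'etale extension of complete local Noetherian rings with common residue field $k$; in particular, the induced map $\whW(R_G) \to \whW(R_G^W)$ is faithfully flat, compatible with Frobenius and with Zink's logarithmic Witt coordinates.

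Next I would observe that all the data used in defining horizontality base change trivially: the universal pair $(M^W, M_1^W)$ on the $W$-side is obtained from $(M, M_1)$ by extension of scalars, so $\wtM_1^W = \wtM_1 \otimes_{\whW(R_G)} \whW(R_G^W)$; the tensors $(s_a) \subset \La^\otimes$ still cut out $\Gg \otimes_{\Z_p} W$ in $\GL(\La \otimes_{\Z_p} W)$, and $\tilde s_a^W$, $\tilde s_{a,0}^W$ are the images of $\tilde s_a$, $\tilde s_{a,0}$. Moreover $\fa_G^W = \fa_G \cdot R_G^W$ since $\pi_E$ remains a uniformizer in $\O_EW$, so $R_G^W/\fa_G^W = R_G/\fa_G \otimes_{\O_E} \O_EW$.

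The main step — and the only non-routine point — is to check that the connection isomorphism $c$ of Lemma \ref{319} commutes with the base change, i.e.\ $c^W$ is the base change of $c$ along $\whW(R_G/\fa_G) \to \whW(R_G^W/\fa_G^W)$. This follows from the explicit matrix formula \eqref{uni}: a normal decomposition $M = L \oplus T$ with adapted basis $\B$ base changes to a normal decomposition of $(M^W, M_1^W)$ with adapted basis $\B \otimes 1$, and the associated matrix $Z$ maps to the analogous matrix $Z^W$ for $(M^W, M_1^W)$; since the extended Verschiebung $V^{-1}$ on $\whW(\mathfrak{m}_S) + \hat I_S$ and the Frobenius $\phi$ are compatible with the finite \'etale base change on Witt vectors, the matrix describing $c^W$ in the bases $\ti\B^W$, $\ti\B_0^W$ is the image of the matrix describing $c$.

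Granted this, the horizontality identity $c(\tilde s_{a,0} \otimes 1) = \tilde s_a \otimes 1$ in $\wtM_1^\otimes \otimes_{\whW(R_G)} \whW(R_G/\fa_G)$ base changes to the analogous identity over $\whW(R_G^W/\fa_G^W)$, giving the forward implication. For the converse, I would use that $R_G/\fa_G \to R_G^W/\fa_G^W$ is finite \'etale (in particular injective), so that $\whW(R_G/\fa_G) \to \whW(R_G^W/\fa_G^W)$ is injective (this is true coordinate-wise in Witt vectors), and the identity on the $W$-side forces the identity over $R_G$. The main obstacle in all of this is thus the functoriality of $c$ under unramified base change; everything else reduces to transparent naturality checks for $\whW$, $\phi^*$, and tensor formation.
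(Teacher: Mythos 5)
Your overall plan is reasonable and ends up in the right place, but there are two issues worth flagging, one structural and one a genuine gap.

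The structural issue concerns the ring $R_G^W$. You write that $R_G^W = R_G \otimes_{\O_E} \O_E W$ and call this a ``finite \'etale extension of complete local Noetherian rings with common residue field $k$.'' That phrase is internally inconsistent: a finite \'etale extension of local rings inducing the identity on residue fields is an isomorphism. What is actually true is that $R_G \otimes_{\O_E} \O_E W$ is a \emph{product} of copies of $R_G$ (one for each $\O_E$-embedding of $\O_E W$ into $W(k)$ compatible with $x$), and the completed local ring of $\BMloc_{\Gg\otimes W, \mu\otimes W}$ at $x$ is the factor picked out by $x$ — canonically identified with $R_G$ itself, just viewed as an $\O_E W$-algebra via $\O_E W \to W(k) \to R_G$. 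Once this is noticed, the ``main step'' you identify (checking via the matrix formula \eqref{uni} that the connection isomorphism commutes with base change along $\whW(R_G) \to \whW(R_G^W)$) is vacuous: there is no nontrivial base change to track, because $\whW(R_G) = \whW(R_G^W)$, $M = M^W$, $\wtM_1 = \wtM_1^W$, and $c = c^W$ on the nose. This is what the paper means when it says the second half of the lemma ``follows from the definitions,'' after citing [HPR, Prop.~2.14] for the local model isomorphism (a different route from your appeal to Theorem~\ref{thm:twoLM}, though both are fine).

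The genuine gap is in the converse direction. You argue that horizontality of the $\ti s_a$ on the $W$-side, together with injectivity of $\whW(R_G/\fa_G) \to \whW(R_G^W/\fa_G^W)$, forces horizontality on the $\Z_p$-side. But this only shows that a \emph{fixed} family of tensors $(s_a) \subset \La^\otimes$ is horizontal for the $\Z_p$-embedding iff $(s_a\otimes 1)$ is horizontal for the $W$-embedding. It does not, by itself, prove the converse: ``very good'' for $(\Gg\otimes W, \mu\otimes W)$ only guarantees that \emph{some} family $(t_b) \subset (\La\otimes_{\Z_p}W)^\otimes$ cutting out $\Gg\otimes W$ is horizontal, and $(t_b)$ need not be of the form $(s_a\otimes 1)$. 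To close this, you must invoke the tensor-independence of the very good condition — either via Lemma~\ref{extratensor} (which shows that once one set of cutting-out tensors is horizontal, every $\Gg\otimes W$-invariant tensor is), or via the equivalent formulation in Definition~\ref{def:vgood} that $c$ descends to an isomorphism $c^\Gg$ of $\Gg$-torsors, which is manifestly intrinsic and unchanged by the unramified base change. With that addendum the converse goes through, and the forward direction was already correct.
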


\begin{proof}
In the above, $\O_E\O$ is the integers of the join of $E$ with ${\rm Fr}(\O)$.
The isomorphism $\BMloc_{\Gg\otimes_{\Z_p}\O,\mu\otimes_{\Z_p}\O}=\BMloc_{\Gg,\mu}\otimes_{\O_E} \O_E\O$ is standard
 and follows from the construction of the local models, cf. \cite[Prop. 2.14]{HPR}, or by their characterization via $v$-sheaves in \cite{Schber}. The rest of the statement follows from the definitions. 
\end{proof}

\subsection{Very good embeddings: properties} We now give various results regarding very good integral Hodge embeddings.

\begin{para} We start with the following.

\begin{lemma}\label{endo}
Assume $(\Gg,\mu)\hook (\GL(\La),\mu_d)$ is a good  integral Hodge embedding. Let $t:\La\to \La$ be an endomorphism which is fixed by $\Gg\hook \GL(\La)$. Then $\ti t: \wtM_1\to \wtM_1$   is horizontal at $x$.
\end{lemma}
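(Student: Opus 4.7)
The plan is to deduce horizontality from the functoriality built into the construction in Lemma~\ref{319}. First, I would verify that $t$ upgrades to a morphism of pairs $t_M:(M,M_1)\to (M,M_1)$; the only non-trivial point is that $t_M$ preserves $M_1$. The reduction $\bar M_1 := M_1/\hat I_{R_G} M$ is the pullback (along $R_E\to R_G$) of the universal $d$-dimensional subbundle on $\Gr(d,\La)_{\O_E}$ via the closed immersion $\BMloc_{\Gg,\mu}\hook \Gr(d,\La)_{\O_E}$. On the generic fiber $X_\mu=G/P_\mu$ of $\BMloc_{\Gg,\mu}$, the universal subspace at the point $gP_\mu$ is $g\cdot F_\mu$, where $F_\mu\subset \La_{\ov F}$ is the sum of non-positive $\mu$-weight spaces. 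Since $t$ is $\Gg$-invariant it commutes with the cocharacter $\mu$, so $t$ preserves each $\mu$-weight space and hence $g\cdot F_\mu$ for every $g\in G$. The condition that $t$ preserves the universal subspace is closed on the Grassmannian, so it extends from $X_\mu$ to its closure in $\Gr(d,\La)_{\O_E}$, and hence (via the normalization map) to all of $\BMloc_{\Gg,\mu}$; thus $t_M$ preserves $M_1$, and the same reasoning applies at the closed fiber to show $t_{M_0}$ preserves $M_{0,1}$.

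Applying the functor $(N,N_1)\mapsto \wtN_1$ then produces $\ti t:\wtM_1\to\wtM_1$ and $\ti t_0:\wtM_{0,1}\to \wtM_{0,1}$. Setting $S=R_G/\fa_G$, I would next invoke the refined description of this functor given in the proof of Lemma~\ref{319}: after base change to $\whW(S)$, the assignment $(N,N_1)\mapsto \wtN_1$ factors through a functor on pairs $(N,N_{0,1})$, where one remembers only the reduction of $N_1$ modulo $\whW(\mathfrak m_S)$, and the connection isomorphism $c$ is precisely the natural comparison arising from this factorization. Under the fixed identification $M=M_0\otimes_{W(k)}\whW(R_G)$ we have $t_M=t_{M_0}\otimes 1$, because $t$ comes from a constant endomorphism of $\La$ independent of the base. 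Consequently, the morphism $t_M$ of $(M,M_1)$-pairs over $\whW(S)$ corresponds via the factoring functor to the morphism $t_{M_0}\otimes 1$ on $(M_0\otimes\whW(S),M_{0,1}\otimes\whW(S))$, and functoriality forces $c$ to intertwine $\ti t_0\otimes 1$ with $\ti t\otimes 1$. This is exactly horizontality of $\ti t$ at $x$.

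I expect the one point requiring real care to be the preservation of $M_1$ by $t_M$; once this is in hand, everything else is a formal consequence of the functoriality already present in Lemma~\ref{319}. No further serious obstacles are anticipated, and the statement for tensors given by endomorphisms will follow without additional input from the geometry of the local model.
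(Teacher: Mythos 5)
Your proof is correct and follows essentially the same route as the paper's: first show that the $\Gg$-invariant endomorphism $t$ preserves the universal subbundle $\F$, hence $t\otimes 1$ preserves $M_1$ (and $M_{0,1}$), and then invoke the functoriality of the connection isomorphism $c$ established in the proof of Lemma~\ref{319} for morphisms of pairs $(M,M_1)$ respecting the fixed identification $M=M_0\otimes_{W(k)}\whW(R_G)$, noting that $t_M=t_{M_0}\otimes 1$ since $t$ is constant. The only substantive difference is cosmetic: where the paper cites \cite[3.2.5]{KP} for the generic-fiber verification that $t(\F)\subset\F$, you unpack it into a direct $\mu$-weight-space argument before passing to the closure; the closedness argument and use of Lemma~\ref{319}'s functoriality are the same. (Your mention of the normalization map is harmless but unnecessary, since goodness of the embedding already makes $\BMloc_{\Gg,\mu}$ a closed subscheme of the Grassmannian.)
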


\begin{proof}
Since $t$ is fixed by $\Gg$, we have $t(\F)\subset \F$ for the universal point of ${\rm Gr}(d, \La)$ which corresponds to $\BMloc_{\Gg,\mu}\hook {\rm Gr}(d, \La)_{\O_E}$. Indeed, it is enough to check this on the generic fiber and this follows from  \cite[3.2.5]{KP}. Hence, $$t\otimes 1: M=\La\otimes\whW(R_G)\to M=\La\otimes\whW(R_G)$$ preserves the submodule $M_1.$ Then we see that $\ti t: \wtM_1\to \wtM_1$ is preserved by $c$, i.e. is horizontal. Indeed, this  follows from the functoriality of the isomorphism $c$ for homomorphisms of pairs $(M, M_1)$ which respect an isomorphism $M=M_0\otimes_{W(k)}\whW(R_G)$, see Lemma \ref{319} and its proof.
 \end{proof}

\begin{cor}\label{EL}
Assume $(\Gg,\mu)\hook (\GL(\La),\mu_d)$ is a good integral Hodge embedding. If $\Gg\hook\GL(\La)$ is cut out by a set of endomorphisms $t_a:\La\to \La$, then $(\Gg,\mu)\hook (\GL(\La),\mu_d)$ is very good at all $x$. \qed
\end{cor}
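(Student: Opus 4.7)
My plan is to observe that Corollary \ref{EL} is essentially an immediate bookkeeping consequence of Lemma \ref{endo} combined with Definition \ref{def:vgood}. I would proceed as follows.

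First, I would note that an endomorphism $t_a : \Lambda \to \Lambda$ is naturally a tensor in $\Lambda \otimes_{\mathbb{Z}_p} \Lambda^\vee \subset \Lambda^{\otimes}$, and that being fixed by $\calG \hookrightarrow \GL(\Lambda)$ in the tensor sense is the same as commuting with the $\calG$-action in the endomorphism sense. So the hypothesis that $\calG \hookrightarrow \GL(\Lambda)$ is cut out by the set of endomorphisms $(t_a)$ provides a set of tensors $(s_a) := (t_a) \subset \Lambda^{\otimes}$ cutting out $\calG$ in $\GL(\Lambda)$, each of which is an endomorphism in the sense of Lemma \ref{endo}.

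Next, I would apply Lemma \ref{endo} to each $t_a$. The lemma gives that the induced tensor $\tilde t_a : \wtM_1 \to \wtM_1$ is horizontal at $x$, i.e.\ $c(\tilde t_{a,0} \otimes 1) = \tilde t_a \otimes 1$ under the canonical connection isomorphism $c$ of Lemma \ref{319}. Since this holds for every tensor in our chosen cutting-out set $(s_a) = (t_a)$, Definition \ref{def:vgood} is satisfied and $(\calG,\mu)\hookrightarrow (\GL(\Lambda),\mu_d)$ is very good at $x$.

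Since $x \in \BMloc_{\calG,\mu}(k)$ was arbitrary, the embedding is very good at all $x$. There is essentially no obstacle here beyond the verifications already packaged into Lemma \ref{endo} — the content is entirely that horizontality, which is governed by the unipotent lower-triangular matrix \eqref{uni} appearing in the proof of Lemma \ref{319}, is automatic for any tensor that preserves the Hodge filtration on the universal point of the Grassmannian.
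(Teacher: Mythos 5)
Your proof is correct and is precisely the argument the paper intends: the corollary is marked \qed there because it is immediate from Lemma \ref{endo} applied to each $t_a$ in the cutting-out set, together with Definition \ref{def:vgood}. The closing observation about the unipotent matrix \eqref{uni} is a gloss on why Lemma \ref{endo} is true rather than a needed step, but it does not affect the correctness of the argument.
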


\begin{cor}\label{endoplus}  Suppose we have $(\Gg,\mu)\hook (\Gg',\mu')\hook (\GL(\La),\mu_d)$, where both $(\Gg',\mu')\hook (\GL(\La),\mu_d)$ and the composition are good integral Hodge embeddings.
Suppose that there are endomorphisms $t_a:  \La\to \La$
such that 
\[
\Gg=\Gg'\cap \{g\in \GL(\La)\ |\ g\cdot t_a=t_a\cdot g, \forall a\}
\]
as closed subschemes of $\GL(\La)$. Consider $x\in\BMloc_{\Gg,\mu}(k)$. 

Suppose that 
$(\Gg',\mu')\hook (\GL(\La),\mu_d)$ is very good at the image $x'\in \BMloc_{\Gg',\mu'}(k)$ of $x$. Then $(\Gg,\mu)\hook   (\GL(\La),\mu_d)$ is very good at $x$.
\end{cor}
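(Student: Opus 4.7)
The plan is to exhibit a cutting-out family of tensors for $\Gg\hook\GL(\La)$ all of which are horizontal at $x$, by assembling two complementary pieces: tensors descended from $\Gg'$ via the very-good embedding at $x'$, and the endomorphism tensors $(t_a)$ which one handles directly. Since $(\Gg',\mu')\hook(\GL(\La),\mu_d)$ is very good at $x'$, I may choose a finite set of tensors $(s'_b)\subset\La^\otimes$ cutting out $\Gg'$ in $\GL(\La)$ such that each $\ti s'_b$ is horizontal at $x'$. The hypothesis
\[
\Gg=\Gg'\cap\{g\in\GL(\La)\mid g\cdot t_a=t_a\cdot g,\ \forall a\}
\]
together with the fact that the $t_a$ are endomorphisms of $\La$ (and so lie in $\La\otimes_{\Z_p}\La^\vee\subset\La^\otimes$) shows that the combined family $(s'_b)_b\cup(t_a)_a$ cuts out $\Gg$ in $\GL(\La)$. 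It therefore suffices to show that each $\ti s'_b$ and each $\ti t_a$ is horizontal at $x$.

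The endomorphism tensors require no further work: since $(\Gg,\mu)\hook(\GL(\La),\mu_d)$ is itself a good integral Hodge embedding (by hypothesis on the composition) and each $t_a$ is fixed by $\Gg$, Lemma \ref{endo} applied at $x$ immediately gives that $\ti t_a$ is horizontal at $x$.

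For the tensors $\ti s'_b$, I would invoke functoriality of the connection isomorphism along the natural morphism of local models $\BMloc_{\Gg,\mu}\to\BMloc_{\Gg',\mu'}$ induced by $\Gg\hook\Gg'$, which carries $x$ to $x'$ and induces a local homomorphism $R_{G'}\to R_G$ sending $\fa_{G'}$ into $\fa_G$. Since both embeddings into $\GL(\La)$ are good, the universal pairs $(M^G,M^G_1)$ and $(M^{G'},M^{G'}_1)$ are the respective pullbacks from the single universal pair on $R_E$ (the completion of the Grassmannian at the common image point), so in particular $(M^G,M^G_1)$ is obtained from $(M^{G'},M^{G'}_1)$ by base change along $\whW(R_{G'})\to\whW(R_G)$, compatibly with the identifications $M^{G}=\La\otimes_{\Z_p}\whW(R_G)$ and $M^{G'}=\La\otimes_{\Z_p}\whW(R_{G'})$. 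Under this identification, the tensors $\ti s'_b$ over $R_G$ and over $R_{G'}$ coincide under base change, as do their reductions $\ti s'_{b,0}$ (both points share the same underlying point of the Grassmannian, hence the same $W(k)$-pair $(M_0,M_{0,1})$). The canonical connection isomorphism of Lemma \ref{319} is functorial for morphisms of pairs respecting the chosen identification $M=M_0\otimes_{W(k)}\whW(R)$, so the connection isomorphism $c$ over $\whW(R_G/\fa_G)$ is the base change of the one $c'$ over $\whW(R_{G'}/\fa_{G'})$. Horizontality of $\ti s'_b$ at $x'$, i.e. $c'(\ti s'_{b,0}\otimes 1)=\ti s'_b\otimes 1$, then pulls back to horizontality at $x$, completing the proof.

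The main point where care is needed is this last functoriality of $c$; this is precisely why the canonical construction of the connection isomorphism via Zink's logarithmic coordinates (as in the proof of Lemma \ref{319}) is essential — the non-canonical construction depending on a normal decomposition, of the sort used in the original \cite[Lem.\ 3.1.9]{KP}, would not transport horizontality along the base change $R_{G'}\to R_G$ without further choices.
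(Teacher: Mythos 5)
Your proposal is correct and follows essentially the same path as the paper's proof: both arguments combine (i) base change of $\wtM_1$ and of the canonical connection isomorphism along the surjection $R_{G',x'}\to R_{G,x}$ to transport horizontality of the $\Gg'$-cutting tensors from $x'$ to $x$, and (ii) Lemma \ref{endo} for the endomorphism tensors $t_a$, then observe that these two families together cut out $\Gg$. Your added emphasis on why the Zink-log-coordinate construction of $c$ is needed for the base-change compatibility is a faithful reflection of the point made in the proof of Lemma \ref{319}, not a divergence.
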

\begin{proof}
 Let $s_{a'}\in \Lambda^\otimes$  a set of tensors which cut out $\calG'$. The module $\wtM^G_1$ over $\whW(R_{G,x})$ which corresponds to $(\Gg,\mu)\hook   (\GL(\La),\mu_d)$ is the base change by the surjection $R_{G',x'}\to R_{G,x}$ of 
the module $\wtM^{G'}_1$ over $\whW(R_{G',x'})$ which corresponds to $(\Gg',\mu')\hook   (\GL(\La),\mu_d)$. The same is true for the corresponding connection isomorphisms, and hence $\ti s_{a'}\in\wtM^{G,\otimes}_1$ is horizontal at $x$.
Since $\calG$ is cut out by the union of the tensors $s_{a'}$ and $t_a$, the result  follows from Lemma \ref{endo}.
\end{proof}
\begin{Remark}
{\rm In the applications, we will apply the above corollary to the case  $\calG'=\Res_{\calO_K/\calO}\calH$ where $K/F$ is a field over which $G$ splits and $\calH$ is a hyperspecial subgroup of $G_K$. This will allow us to produce very good Hodge embeddings for general parahorics from those coming from Weil restrictions of hyperspecial subgroups. }
\end{Remark}
\end{para}
\begin{para} The next two lemmas show that very good embeddings behave well with respect to taking direct sums, and projections onto direct summands.
\begin{lemma}\label{diagonal}
Consider a good integral Hodge embedding $(\Gg,\mu)\hook (\GL(\La_1),\mu_{d_1})$ and a map of local model pairs  $(\Gg,\mu)\to (\GL(\La_2),\mu_{d_2})$. Set $\La=\La_1\oplus \La_2$, $\mu_d=\mu_{d_1}\times \mu_{d_2}$, and consider the diagonal embedding
\[
(\Gg,\mu)\hook (\GL(\La),\mu_d).
\]
\begin{altenumerate}
\item[a)] If the diagonal embedding is very good at $x$, then $(\Gg,\mu)\hook (\GL(\La_1),\mu_{d_1})$ is very good at $x$.

\item[b)] Suppose that there is an isomorphism $h: \La_1\xrightarrow{\sim} \La_2$ which intertwines the embeddings $(\Gg,\mu)\hook (\GL(\La_1), \mu_{d_1})$ and $(\Gg,\mu)\to (\GL(\La_2),\mu_{d_2})$. Suppose that $(\Gg,\mu)\hook (\GL(\La_1),\mu_{d_1})$ is very good at $x$. Then,  both $(\Gg,\mu)\hook (\GL(\La_2),\mu_{d_2})$  and the diagonal above are  very good at $x$.
\end{altenumerate}
\end{lemma}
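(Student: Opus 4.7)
The plan hinges on the observation that for the diagonal embedding, the pair $(M^G,M^G_1)$ and all associated data decompose as direct sums: $M^G=M^{G_1}\oplus M^{G_2}$ and $M^G_1=M^{G_1}_1\oplus M^{G_2}_1$. By the functoriality of the construction of $\wtM_1$ in Lemma \ref{319} applied to products of pairs---or, equivalently, by writing formula \eqref{uni} in an adapted basis respecting the decomposition, which forces the change-of-basis matrix $Z$ to be block diagonal---one obtains $\wtM^G_1=\wtM^{G_1}_1\oplus \wtM^{G_2}_1$ with connection isomorphism $c^G=c^{G_1}\oplus c^{G_2}$. Both (a) and (b) will follow from this compatibility together with Lemmas \ref{extratensor} and \ref{endo}.

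For part (a), since the $\La_1$-embedding is a good Hodge embedding, $\Gg$ is $\O_F$-flat and closed in $\GL(\La_1)$, so I can choose a finite set of tensors $(t_b)\subset \La_1^\otimes$ cutting out $\Gg$ in $\GL(\La_1)$, cf.~\cite[Prop. 1.3.2]{KP}. Viewed in $\La^\otimes$ via the inclusion $\La_1\hook \La$, each $t_b$ remains fixed by the diagonal $\Gg$-action on $\La$. Applying Lemma \ref{extratensor} to the (very good) diagonal embedding yields that $\ti t_b$ is horizontal in $(\wtM^G_1)^\otimes$. But $\ti t_b$ lies in the summand $(\wtM^{G_1}_1)^\otimes$, on which $c^G$ restricts to $c^{G_1}$; hence $\ti t_b$ is horizontal for $c^{G_1}$, proving (a).

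The first claim of (b), that $(\Gg,\mu)\hook(\GL(\La_2),\mu_{d_2})$ is very good at $x$, follows by transport along $h$: if $(s_a)\subset \La_1^\otimes$ cut out $\Gg$ in $\GL(\La_1)$ with $\ti s_a$ horizontal, then $(h_*s_a)\subset \La_2^\otimes$ cut out $\Gg$ in $\GL(\La_2)$ by the $\Gg$-equivariance of $h$, and $h$ induces compatible isomorphisms of all associated data (including $\wtM_1$ and the connection $c$), preserving horizontality.

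The main task is the very-goodness of the diagonal embedding. I will use as generating tensors the union of $(s_a), (h_*s_a)\subset \La^\otimes$ (via $\La_i\hook \La$) together with the two endomorphisms $H,H'\in \End_{\O_F}(\La)\subset \La^\otimes$ defined by $H(v_1,v_2)=(0,hv_1)$ and $H'(v_1,v_2)=(h^{-1}v_2,0)$. A direct block-matrix computation shows that an element $g=\bigl(\begin{smallmatrix}A&B\\C&D\end{smallmatrix}\bigr)\in \GL(\La)$ commutes with both $H$ and $H'$ if and only if $B=C=0$ and $D=hAh^{-1}$; combined with preservation of $(s_a)$ (forcing $A\in\Gg$) and $(h_*s_a)$, this pins $g$ down to the diagonal copy of $\Gg$. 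The same computation, using that $h$ intertwines the two actions, shows that $H$ and $H'$ are fixed by the diagonal $\Gg$. Horizontality of the tangent tensors then follows from Lemma \ref{endo} (for $\ti H,\ti H'$, which are $\Gg$-fixed endomorphisms of $\La$), together with the direct sum decomposition (for $\ti s_a$ and $\widetilde{h_*s_a}$, which lie in the summands $(\wtM^{G_1}_1)^\otimes$ and $(\wtM^{G_2}_1)^\otimes$ where horizontality has already been established). The only mildly subtle point throughout is the compatibility of the connection with the direct sum decomposition, addressed at the outset.
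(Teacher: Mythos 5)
Your proof is correct and follows essentially the same route as the paper: in both cases the key observation is that the diagonal immersion factors through $\Gr(d_1,\La_1)\times\Gr(d_2,\La_2)$, so $\wtM_1$ and the connection isomorphism $c$ decompose as direct sums, after which part (a) follows from Lemma \ref{extratensor} applied to the $\La_1$-tensors viewed in $\La^\otimes$, and part (b) from writing $\Gg\subset\GL(\La)$ as cut out by the two sets of factor tensors together with $\Gg$-fixed endomorphisms of $\La$ built from $h$, whose horizontality is Lemma \ref{endo}. Your choice of the two nilpotent endomorphisms $H,H'$ (rather than the paper's single reflection endomorphism plus the projections onto the two summands) is a cosmetic variant, though marginally tidier since commuting with $H$ and $H'$ alone already forces the off-diagonal blocks to vanish.
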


\begin{proof}
The diagonal immersion gives
\[
\BMloc_{\Gg,\mu}\hook {\rm Gr}(d_1, \La_1)_{\O_E}\times_{\O_E}{\rm Gr}(d_2, \La_2)_{\O_E}\subset {\rm Gr}(d, \La)_{\O_E}
\]
with $d=d_1+d_2$. Hence, the module $\wtM_1$ over $\whW(R_G)$ obtained from the diagonal immersion  is a direct sum 
$\wtM_{1,1}\oplus \wtM_{2,1}$ and we can see that we have $c=c_1\oplus c_2$, with obvious notation. 
Suppose that $\Gg\hook\GL(\La_1)$ is cut out by $(s_{a,1})\subset \La_1^\otimes$. Since $\La=\La_1\oplus\La_2$, we have 
$\La_1^{\otimes}\subset \La^\otimes$ and $\Gg$ fixes $s_{a,1}$ considered as tensors in $\La^\otimes$. The tensors $\widetilde{s}_{a,1}\in \wtM_{1,1}^\otimes\subset \wtM_{1}^\otimes$ are then horizontal at $x$ by Lemma \ref{extratensor}. Then   (a) follows.

Now let us show (b). It is easy to see that $(\Gg,\mu)\hook (\GL(\La_2),\mu_{d_2})$ is very good at $x$.  To discuss the diagonal, suppose that $(\Gg,\mu)\hook (\GL(\La_2),\mu_{d_2})$ is cut out by 
$(s_{b,2})$. Then $\Gg\subset \GL(\La)$ is cut out by the following tensors: the union of the sets $(s_{a,1} )$,  $(s_{b,2})$,   the isomorphism 
\[
\La_1\oplus \La_2\xrightarrow{h\times h^{-1}}\La_2\oplus \La_1\xrightarrow{\iota} \La_1\oplus \La_2
\]
(where $\iota$ is the obvious ``reflection'' map), and the
 tensors that cut out $\GL(\La_1)\times \GL(\La_2)$ in $\GL(\La)$. Since $\wtM_1=\wtM_{1,1}\oplus \wtM_{2,1}$ and the isomorphism $c$ is functorial and decomposes over the direct sum as above, this last set of tensors is  horizontal (we can also apply Lemma \ref{endo} to the corresponding projections). The result then follows as before.
\end{proof}

 \begin{Remark}
{\rm Suppose that $(\Gg,\mu)\hook (\GL(\La),\mu_{d})$ is a good integral Hodge embedding. Part (b) amounts to the statement that if this embedding is good at $x$, then the diagonal embedding $(\Gg,\mu)\hook (\GL(\La^{\oplus 2}),\mu_{2d})$ is also very good at $x$. On the other hand, suppose that $(\Gg,\mu)\hook (\GL(\La_i),\mu_{d_i})$ are two good integral Hodge embeddings that are very good at $x$, but are, in general, unrelated. It does not appear easy to show that the diagonal embedding  $(\Gg,\mu)\hook (\GL(\La_1\oplus\La_2),\mu_{d_1+d_2})$ is also very good at $x$.}
\end{Remark}

\begin{lemma}\label{lem: product vg}
			Let $\rho_i:(\Gg_i,\mu_i)\rightarrow (\GL(\La_i),\mu_{d_i})$, $i=1,\dotsc,r$, be very good integral Hodge embeddings. 	Set $\La=\oplus_{i=1}^r\La_i$, $\Gg=\prod_{i=1}^r\Gg_i$, $\mu=\prod_{i=1}^r\mu_i$, $d=\sum_{i=1}^r d_i$, and consider
	\[
	\rho: (\Gg, \mu)\to (\GL(\La), \mu_d)
	\]
	given as the composition of the product of $\rho_i$ with the standard group scheme embedding $\prod_{i=1}^r\GL(\La_i)\to \GL(\La)$. Then $\rho$ is also a very good integral  Hodge embedding. 
	\end{lemma}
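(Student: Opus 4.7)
My plan is to reduce the statement about $\rho$ to the horizontality inputs coming from each $\rho_i$ together with one application of Lemma~\ref{endo}. The starting point is that the local model for a product decomposes as a product of local models (after base change of reflex rings), so that if $x = (x_1,\dots,x_r) \in \BMloc_{\Gg,\mu}(k) \cong \prod_i \BMloc_{\Gg_i,\mu_i}(k)$, the completed local ring $R_G = R_{G,x}$ is the completed tensor product of the $R_{G_i,x_i}$ over $\O_E$. Correspondingly, the universal $\whW(R_G)$-submodule $M_1 \subset M = \La \otimes_{\Z_p} \whW(R_G)$ decomposes as $M_1 = \bigoplus_i M_{i,1}$, where $(M_i, M_{i,1})$ is the pullback along $R_{G_i} \to R_G$ of the pair attached to $(\Gg_i,\mu_i)$ at $x_i$. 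Choosing normal decompositions of each $(M_i,M_{i,1})$ and forming their sum gives a normal decomposition of $(M,M_1)$ adapted to the direct sum, and the construction recalled in \S\ref{1.1} yields $\wtM_1 = \bigoplus_i \wtM_{i,1}$ compatibly.

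By functoriality of the connection isomorphism under base change (Lemma~\ref{319} and its proof via Zink's logarithmic coordinates applied to each summand), the isomorphism $c: \wtM_{0,1} \otimes_{W(k)} \whW(R_G/\fa_G) \xrightarrow{\sim} \wtM_1 \otimes_{\whW(R_G)} \whW(R_G/\fa_G)$ is the direct sum of the base changes to $\whW(R_G/\fa_G)$ of the connection isomorphisms $c_i$ for each $(\Gg_i,\mu_i)$ at $x_i$. Now choose, for each $i$, a finite set of tensors $(s_{a_i,i}) \subset \La_i^\otimes$ cutting out $\Gg_i$ in $\GL(\La_i)$ with $(\widetilde{s}_{a_i,i})$ horizontal at $x_i$; such a set exists by the very goodness of $\rho_i$. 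View each $s_{a_i,i}$ as a tensor in $\La^\otimes$ via the inclusion $\La_i \hookrightarrow \La$ and the dual projection $\La^\vee \twoheadrightarrow \La_i^\vee$. The corresponding tensor $\widetilde{s}_{a_i,i} \in \wtM_1^\otimes$ then lies in $\wtM_{i,1}^\otimes \subset \wtM_1^\otimes$, and is horizontal at $x$ under $c = \bigoplus c_i$ precisely because it is horizontal under $c_i$ and $c$ restricts to the base change of $c_i$ on the $i$-th summand.

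To cut out $\Gg = \prod_i \Gg_i$ in $\GL(\La)$, the tensors $(s_{a_i,i})_{i,a_i}$ alone do not suffice; one must also force $g \in \GL(\La)$ to preserve each summand $\La_i$. This is achieved by adjoining the idempotent endomorphisms $e_i : \La \twoheadrightarrow \La_i \hookrightarrow \La \in \La \otimes \La^\vee$, which are manifestly fixed by $\prod_i \GL(\La_i)$ and hence by $\Gg$. By Lemma~\ref{endo}, each $\widetilde{e}_i$ is automatically horizontal at $x$. The combined set $\{s_{a_i,i}\} \cup \{e_i\}$ therefore cuts out $\Gg$ in $\GL(\La)$ and consists entirely of tensors whose associated tensors in $\wtM_1^\otimes$ are horizontal at $x$, which is exactly the condition for $\rho$ to be very good. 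The only genuinely non-formal ingredient is the compatibility of the connection isomorphism with direct sum decompositions of $(M,M_1)$; this is the heart of the bookkeeping, but it follows cleanly from the explicit description of $c$ via the unipotent matrix \eqref{uni} applied block-wise to a basis adapted to the direct sum.
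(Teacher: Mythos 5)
Your proof is correct and follows the same route as the paper's (terser) argument: cut out $\Gg$ by the tensors coming from each $\Gg_i$ plus the idempotent projections $\La \twoheadrightarrow \La_i \hookrightarrow \La$, handle the latter by Lemma~\ref{endo}, and handle the former by the compatibility of the connection isomorphism with the direct sum decomposition $(M,M_1)=\bigoplus_i(M_i,M_{i,1})$ together with base change along $R_{G_i}\to R_G$. The only difference is that you spell out the decomposition $c=\bigoplus_i c_i$ and its base-change compatibility explicitly, whereas the paper leaves this implicit (it is worked out in the proof of Lemma~\ref{diagonal}, to which the present lemma is a close cousin).
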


	\begin{proof}
We fix tensors $(s_{\alpha,i})\in \Lambda^\otimes_i$ which cut out the group $\calG_i$. Via the inclusion $\Lambda^\otimes_i\subset \Lambda^\otimes$, we may consider the $s_{\alpha,i}$ as tensors in $\Lambda^\otimes$. Then $\calG$ is cut out by the tensors $(s_{\alpha,i}), i=1,\dots,r$, and the tensors corresponding to the endomorphisms $p_i:\Lambda\rightarrow \Lambda$ defined by projection to the direct summand $\La_i$. By our assumption and Lemma \ref{endo}, these are horizontal at all points of $\BMloc_{\calG,\mu}=\prod_{i=1}^r\BMloc_{\calG_i,\mu_i}$, hence $(\calG,\mu)\rightarrow (\GL(\Lambda),\mu_{d})$ is very good.
 \end{proof}

\end{para}

\begin{para} The following proposition is a key result, which combined with the results in \S 4 allows us to produce very good Hodge embeddings in many  cases when the parahoric $\calG$ arises as the Weil restriction of a split reductive group.

\begin{prop}\label{span}
Assume $(\Gg,\mu)\hook (\GL(\La),\mu_d)$ is a good integral Hodge embedding. If the tangent space of the special fiber $\BMloc_{\Gg,\mu}\otimes_{\O_E} k$ at $x$ is  spanned by smooth formal curves (see Definition \ref{def:span}), then $(\Gg,\mu)\hook (\GL(\La),\mu_d)$ is very good at $x$.
\end{prop}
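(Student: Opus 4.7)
The plan is to identify the obstruction to horizontality as a linear functional on the tangent space $T_x(\BMloc_{\Gg,\mu}\otimes_{\O_E}k)$, and then use the spanning hypothesis to reduce its vanishing to a check on individual smooth formal curves. Concretely, for any choice of tensors $(s_a)\subset \La^\otimes$ cutting out $\Gg$ in $\GL(\La)$, set
\[
\delta_a := c(\ti s_{a,0}\otimes 1) - \ti s_a\otimes 1 \;\in\; \wtM_1^\otimes\otimes_{\whW(R_G)}\whW(R_G/\fa_G).
\]
Being very good at $x$ is equivalent to $\delta_a = 0$ for all $a$.

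I would first show that $\delta_a$ is detected by tangent vectors. By the explicit matrix form of $c$ in formula (\ref{uni}), $c$ reduces to the identity modulo $\whW(\frakm_G/\fa_G)$, while $\ti s_{a,0}\otimes 1$ and $\ti s_a\otimes 1$ both have the same image $s_a\otimes 1\in (\La\otimes_{\Z_p}W(k))^\otimes$ at the closed point. Hence $\delta_a\in \wtM_1^\otimes\otimes_{W(k)}\whW(\frakm_G/\fa_G)$. Since $\frakm_G/\fa_G$ is square-zero and annihilated by $\pi_E$, it is canonically the cotangent space $T_x^*(\BMloc_{\Gg,\mu}\otimes_{\O_E}k)$. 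Thus $\delta_a=0$ iff for every $k$-algebra homomorphism $\xi_v: R_G/\fa_G\to k[\epsilon]/(\epsilon^2)$ corresponding to a tangent vector $v$, the pushforward $(\xi_v)_*\delta_a$ vanishes. By the spanning hypothesis, it suffices to check this when $v$ is the tangent direction of a smooth formal curve $\gamma: \Spec(k\lps t\rps)\to\BMloc_{\Gg,\mu}\otimes_{\O_E}k$ with $\gamma(0)=x$.

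For such $\gamma$, by functoriality of the construction $(M,M_1)\mapsto \wtM_1$ and of the canonical isomorphism $c$ under base change (Lemma \ref{319}), the relevant pushforward equals the reduction modulo $\whW((t^2))$ of the analogous obstruction
\[
\delta_a^\gamma := c^\gamma(\ti s_{a,0}\otimes 1)-\ti s_a^\gamma\otimes 1 \;\in\; \wtM^\gamma_1{}^\otimes\otimes_{\whW(k\lps t\rps)}\whW(k[t]/t^2)
\]
attached to the pair $(M^\gamma,M^\gamma_1)$ over $\whW(k\lps t\rps)$ obtained by pullback along $\gamma$. The problem therefore reduces to showing $\delta_a^\gamma = 0$ on every smooth formal curve.

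The main obstacle is precisely this last step. The idea would be to extend $\gamma$ to $\bar\gamma: \Spec(k[[t]])\to \BMloc_{\Gg,\mu}$ using the properness of $\BMloc_{\Gg,\mu}/\O_E$, and then work directly over the regular one-dimensional ring $k[[t]]$. Choosing a normal decomposition for $(M^{\bar\gamma},M^{\bar\gamma}_1)$ adapted to the constant decomposition at the closed point, the explicit formula (\ref{uni}) expresses $c^{\bar\gamma}$ as $I+V^{-1}(Z^{\bar\gamma})$, where the off-diagonal block $Z^{\bar\gamma}$ encodes the deformation of the filtration along the curve. Because $\bar\gamma$ factors through $\BMloc_{\Gg,\mu}\hook {\rm Gr}(d,\Lambda)_{\O_E}$ and the filtration at the closed point is $\Gg$-related to the generic one via the good Hodge embedding, the entries of $V^{-1}(Z^{\bar\gamma})$ modulo $\whW((t^2))$ lie in the Lie algebra of $\Gg_{\whW(k[t]/t^2)}$, and hence annihilate the $\Gg$-invariant tensor $s_a\otimes 1$ in the tensor algebra. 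This gives $\delta_a^\gamma=0$, and combined with the spanning hypothesis yields $\delta_a=0$ as required.
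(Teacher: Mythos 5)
Your reduction to curves (steps 1--4) is sound and closely parallels the paper's argument: both use Zink's log-coordinates to identify the locus where $c$ fails to preserve the tensors with a subspace of the cotangent space $\frakm_G/\fa_G$, and both then use the spanning hypothesis to reduce the vanishing of that obstruction to a verification along each smooth formal curve. This part of your plan is fine.

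The genuine gap is in the final step, where you assert that ``the entries of $V^{-1}(Z^{\bar\gamma})$ modulo $\whW((t^2))$ lie in the Lie algebra of $\Gg_{\whW(k[t]/t^2)}$, and hence annihilate the $\Gg$-invariant tensor.'' This is not justified and I do not see how to make it true. The block matrix
\[
\begin{pmatrix} X & Y \\ Z & U \end{pmatrix}
\]
appearing in the construction of $c$ is merely a change of basis in $\GL(\La)(\whW(S))$ between two adapted bases of $M$; there is no reason for it to lie in $\Gg$, nor for the associated unipotent $\begin{pmatrix} I_d & 0 \\ V^{-1}(Z) & I_{n-d}\end{pmatrix}$ to do so. The fact that $\gamma$ factors through $\BMloc_{\Gg,\mu}$ controls where the filtration lives, not the shape of the transition matrix, and the operation $Z \mapsto V^{-1}(Z)$ is applied entrywise to Witt coordinates and has no compatible algebraic-group interpretation. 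In other words, the ``$\Gg$-relatedness'' of the filtrations at the closed point and along the curve does not translate into the needed membership in $\Lie\Gg$.

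The paper handles this step very differently, and in a way that does not pass through any such Lie-theoretic claim. Rather than working over $k\lps t\rps$ directly, it works over $\gS = W(k)\lps u\rps$ with the Frobenius lift $\phi(u)=u^p$: the filtration over $S = k\lps u\rps$ is lifted canonically to a $\gS$-lattice $N_1$ by taking the preimage of $\bar M_1$ under reduction mod $p$. The crucial input is Lemma \ref{perfect}, which shows $s_a\otimes 1\in N_1^\otimes$; this is proved by descending from the Witt vector affine Grassmannian over an algebraically closed $\K\supset k\llps u\lrps$ back to $\gS$ using the flatness of $\gS\to W(\K)$ and the identity $W\lps u\rps[1/p]\cap W(\K)=W\lps u\rps$. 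One then constructs an \emph{a priori} different connection isomorphism $\mathbf{c}_\gS$ using the fact that $\varphi$ on $\gS/\mathfrak n^2$ factors through $W(k)$, and, because $s_a\otimes 1\in N_1^\otimes$, this $\mathbf{c}_\gS$ automatically preserves the tensors. The remaining work is the explicit matrix comparison \eqref{eq:twocs}, which uses $\bar\lambda(u)=[u]$ and $V^{-1}([\bar c\, u])=0$ to identify $\mathbf{c}_{\whW(S)}$ with the base change of $c$. Your sketch is missing both the analogue of Lemma \ref{perfect} and the mixed-characteristic lift to $\gS$ that makes the Frobenius argument go.
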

\newcommand{\Su}{k\lps u\rps}
\newcommand{\Wu}{W \lps u  \rps}

\begin{proof}
  We thank the referee for suggesting the following presentation of the proof. The main point of the proof is showing the following statement: 
\smallskip
 
Let $v\in \BMloc_{\Gg,\mu}(k[ \ep ])$ be a tangent vector at $x$ and let 
$(M_{\whW(k[\epsilon])}, M_{\whW(k[ \ep ]), 1})$ be the pair obtained by base changing $(M, M_1)$ by the corresponding map $\whW(R_G)\to \whW(k[\ep])$. If $v$
  can be lifted to a $k\lps u\rps$-valued point of $\BMloc_{\Gg,\mu}$, then the isomorphism $c_{\whW(k[ \ep ])}$ for the pair $(M_{\whW(k[ \ep ])}, M_{\whW(k[\ep ]), 1} )$ preserves the tensors in $ M_{\whW(k[\ep ]), 1}^\otimes$ 
  obtained from $\ti s_a\in M_{\whW(R_G), 1}^\otimes$ by base change. (Here,  we write  for simplicity $k[\ep]=k\oplus k \ep$, $\ep^2=0$, for the ring of dual numbers.) 
 \smallskip

Before we start the proof, we will discuss certain frames and frame morphisms that we will use. 

We set $\gS = W(k) \lps u  \rps=W\lps u  \rps$.
  We equip $\gS$ with the standard Frobenius lift $\varphi$ 
given by $\phi(u )= u ^p$. The Frobenius lift $\phi$ gives a homomorphism $\lambda: \gS=W\lps u\rps \to W(\gS)$ with $\lambda(u)=[u]$. We can compose $\lambda$ with $ W(\gS)\to  W(\Su)$ given by the reduction $\gS\to \Su=\gS/p\gS$ to obtain
 \[
\bar\lambda: \gS=W\lps u\rps\to \wh W(\Su)\subset W(\Su).
\]
(The image indeed lands in Zink's ring $\wh W(\Su)$).  This induces a frame morphism
\[
\bar\lambda: (\gS, (p), \phi, p^{-1}\phi)\to {\mathscr D}_{\Su}=(\whW(k\lps u\rps),\hat I_{\Su},\phi, V^{-1}).
\]

Set $k[\ep]=\Su/(u^2)$, $W[\ep]=\Wu/(u^2)$, with $\ep\mapsto u\,{\rm mod}\, (u^2)$. 
We also have the frames $(W[\ep], (p), \phi, p^{-1}\phi)$ and $(W[\ep], (p,\ep), \phi, p^{-1}\phi )$; here $\phi(\ep)=0$. For clarity, we note that in the latter frame $$\phi_1=p^{-1}\phi :(p,\ep)=pW\oplus W\ep\to W[\ep]$$ is the sum of $p^{-1}\phi$ on the first component with the zero map on the second. In what follows, for simplicity, we will omit the notation of the maps $\phi$ and $\phi_1$ and write frames simply as pairs.  

Composing $\bar\lambda$ with   $\whW(\Su)\to \widehat W(\Su/(u^2))$ gives
\[
\bar\lambda_\ep: W[\ep]=\Wu/(u^2)\to \widehat W(\Su/(u^2))=\widehat W(k[\ep]).
\]
  This induces frame morphisms
\[
\bar\lambda_\ep: (W[\ep], (p))\to  {\mathscr D}_{k[\ep]}=(\whW(k[\ep]), \hat I_{k[\ep]}),
\]
\[
 \bar\lambda_\ep: (W[\ep], (p,\ep))\to   {\mathscr D}_{k[\ep]/k}=(\whW(k[\ep]), \hat I_{k[\ep]/k}),
\]
where the last frame is the relative Dieudonn\'e-Witt frame for $k[\ep]\to k$.
All together, we have a commutative diagram of  frame morphisms
\begin{equation}\label{framescd}
 \begin{aligned}
 \xymatrix
{(W\lps u\rps, (p))  \ar[r]\ar[d]_{\bar\lambda} &  (W[\ep], (p)) \ar[r]\ar[d]_{\bar\lambda_\ep} & (W[\ep], (p,\ep))  \ar[d]_{\bar\lambda_\ep} 
 \\
(\whW(\Su), \hat I_{\Su})\ar[r] &(\whW(k[\ep]), \hat I_{k[\ep]}) \ar[r]  &  (\whW(k[\ep]), \hat I_{k[\ep]/k}).}
\end{aligned}
\end{equation}

We now proceed with the main argument.

  Suppose  that $a$ is an $\Su$-valued point  of the local model $\BMloc_{\Gg,\mu}$ which lifts the $k$-valued point $x$. This gives an $\Su$-valued point of the  Grassmannian ${\rm Gr}(d, \Lambda)$ and, hence, a direct summand $M_{\Su,1}\subset  M_{\Su}=\Lambda\otimes_{\Z_p}\Su$. 
  
   Set $M_\gS=\Lambda\otimes_{\Z_p}\gS$, $M_{\Su}=\Lambda\otimes_{\Z_p}\Su$.  Denote by $M_{\gS,1}$ the inverse image of $M_{\Su,1}$ under the map
\[
\Lambda\otimes_{\Z_p}\gS\to \Lambda\otimes_{\Z_p}\Su
\]
given by reduction modulo $p$. Then 
\[
pM_\gS=p\Lambda\otimes_{\Z_p} \gS\subset M_{\gS,1}\subset M_\gS=\Lambda\otimes_{\Z_p}\gS
\]
and $M_{\gS,1 }$ is a free $\gS$-module with
\[
M_{\gS,1 }\otimes_\gS \gS[1/p]= \Lambda\otimes_{\Z_p}\gS[1/p].
\]
Then $(M_\gS, M_{\gS,1})$ is a pair over the frame $(\gS, (p))=(W\lps u\rps, (p))$.

\begin{lemma}\label{perfect}
The tensors \[s_a\otimes 1\in \Lambda^{\otimes }\otimes_{\Z_p}\gS[1/p]=M^{\otimes}_{\gS,1}\otimes_\gS \gS[1/p]\] lie in the submodule $M^{\otimes}_{\gS,1}$.  
\end{lemma}

\begin{proof}
Let $\K$ be an algebraic closure of $k\llps u\lrps$ and consider the natural map 
\[
\tau: \gS=W\lps u\rps\xrightarrow{\bar\lambda} W(\Su)\to W(k\llps u\lrps)\to W(\K),
\]
where $\bar\lambda$ is as above. Set $\calO_{\calE}=\varprojlim_n (W/p^nW)\llps u\lrps$
which identifies with the $p$-adic completion of the localization $\gS_{(p)}$ and   is a dvr with residue field $k\llps u\lrps$ and uniformizer $p$.
The map $\tau$ factors as
\[
\tau: \gS=W\lps u\rps\to\calO_\calE \to W(\K)
\]
and it is injective and flat. We have $W\lps u\rps[1/p]\subset W(\K)[1/p]$ via $\tau$ and
\[
W\lps u\rps[1/p]\cap W(\K)=W\lps u\rps,
\]
with the intersection taking place in $W(\K)[1/p]$.

Set 
\[
M_{W(\K), 1}=M_{\gS,1}\otimes_\gS W(\K),
\]
for which
\[
p\Lambda\otimes_{\Z_p} W(\K)\subset M_{W(\K), 1}\subset  \Lambda\otimes_{\Z_p} W(\K).
\]
Using the above, we see that $M^{\otimes}_{\gS,1}[1/p]\cap M_{W(\K), 1}^\otimes =M^{\otimes}_{\gS,1}$, and so it is enough to show that  $s_a\otimes 1\in M_{W(\K),1}^\otimes$. 

Now observe that $M_{W(\K), 1}$ is the $W(\K)$-lattice corresponding to a $\K$-point of the Witt vector affine Grassmannian ${\rm Gr}^W_{\GL(\La)}=L^W\GL(\La)/L^{W+}\GL(\La)$ for $\GL=\GL(\Lambda)$. This $\K$-point comes from 
\[
\Spec(\K)\to \Spec(\Su)\xrightarrow{a} \BMloc_{\Gg,\mu}\otimes_{\O_E}k\hook {\rm Gr}(d, \La)_k.
\]
Using (\ref{diagram:Witt}), we obtain 
\[
M_{W(\K), 1}=g\cdot (\Lambda\otimes_{\Z_p} W(\K))
\]
for some $g\in \Gg(W(\K)[1/p])=G(W(\K)[1/p])$. Since $g$ preserves the tensors $s_a\in \La^\otimes\subset \La^{\otimes}[1/p]$, we obtain that $s_a\otimes 1\in M^\otimes_{W(\K),1}$. Hence, by the above, we also have $s_a\otimes 1\in M_{\gS,1}^{\otimes}$.
\end{proof}
 \smallskip 

We now continue with the proof. We can apply the tilde functor of \S \ref{par:frametilde} to the pair $(M_\gS,M_{\gS,1})$ over the frame $(\gS, (p))$ to obtain $\widetilde M_{\gS,1}$. The ring $\gS$ is $p$-torsion free, and we can easily see that $\phi^*(M_{\gS,1})\to \phi^*(M_{\gS})$ is injective. Hence, by \S\ref{par:torsionfree}, we can identify
\[
\widetilde M_{\gS,1} = \varphi^*(M_{\gS,1})\subset \varphi^*(M_\gS)=\varphi^*(\Lambda\otimes_{\Z_p}\gS)\cong \Lambda\otimes_{\Z_p}\gS.
\]
Let us consider the base change $(M_0, M_{0,1})$ of the pair $(M_\gS, M_{\gS,1})$ by the frame morphism
$(\gS, (p))\to (W(k), (p))$ given by $u\mapsto 0$. We have
\[
M_{\gS, 1}\otimes_{\gS}W=M_{0,1}, \quad\hbox{\rm and}\quad \widetilde M_{\gS, 1}\otimes_\gS W\simeq \wtM_{0,1}.
\]
We can also consider the base change $(M_{W[\ep]}, M_{W[\ep],1})$ of the pair $(M_\gS, M_{\gS,1})$ by the frame morphism
$(\gS, (p))\to (W[\ep], (p))$. 

Lemma \ref{masterlemma2} now applies to the frame 
$(W[\ep], (p))$ with the natural morphisms ${\mathscr D}_k=(W, (p))\to (W[\ep], (p))\to 
{\mathscr D}_k$ and to the pair $(M_{W[\ep]}, M_{W[\ep],1})$ over $(W[\ep], (p))$ together with the identification $M_0\otimes_W W[\ep]=\Lambda\otimes_{\Z_p}W[\ep]=M_{W[\ep]}$. We obtain an isomorphism
\[
{ c}_{W[\ep]}: \wtM_{0,1}\otimes_W W[\ep]\xrightarrow{\sim}\widetilde M_{W[\ep],1}.
\]
\quash{Lemma \ref{masterlemma}  applies to $(W[\ep], (p))\to  (W[\ep], (p,\ep))$ and, hence, the tilde functor $\tau_{(W[\ep], (p))}: P_{(W[\ep], (p))}\to {\rm Mod}^{\rm ff}_{W[\ep]}$ factors 
\[
P_{(W[\ep], (p))}\to P_{(W[\ep], (p,\ep))}\to {\rm Mod}^{\rm ff}_{W[\ep]}.
\]
By following the proof of Lemma \ref{masterlemma2}, we see that by using this fact together with the identification $M_\gS=\Lambda\otimes_{\Z_p}\gS=M_0\otimes_W \gS$,
we obtain an isomorphism
\[
{ c}_{W[\ep]}: \wtM_{0,1}\otimes_W W[\ep]\xrightarrow{\sim}\widetilde M_{W[\ep],1}.
\]
}
Composing  ${ c}_{W[\ep]}$ with the base change isomorphism $\widetilde M_{W[\ep],1}\xrightarrow{\sim} \widetilde M_{\gS,1}\otimes_{\gS}W[\ep]$, 
gives a ``connection isomorphism"
\[
{ c}_\gS:  \wtM_{0,1}\otimes_W W[\ep]  \xrightarrow{\sim}  \widetilde M_{\gS,1}\otimes_\gS W[\ep].
\]
As in the proof of Lemma \ref{319} (b), we see that this fits in a commutative diagram
\begin{equation}\label{gSconnectiondiagram}
 \begin{aligned}
\xymatrix
{\widetilde M_{\gS,1}\otimes_\gS W[\ep]  \ar[r] & \varphi^*( M_\gS) \otimes_\gS W[\ep] \\
\wtM_{0,1}\otimes_W W[\ep]  \ar[r]\ar[u]^{{ c}_\gS} & \varphi^*(M_0)\otimes_W W[\ep]\ar[u]^\sim .}
 \end{aligned}
\end{equation}
Here, the vertical map on the right is inducing the identity on $\varphi^*(M_0)$ and the horizontal maps are induced by
tensoring the natural maps $$\widetilde M_{\gS,1}\to \varphi^*( M_\gS),\ \ \wtM_{0,1}\to \varphi^*(M_0).$$
The rings $\gS$ and $W[\ep]$ are $p$-torsion free and we have $\widetilde M_{\gS,1} = \varphi^*(M_{\gS,1})$, $\widetilde M_{0,1} = \varphi^*(M_{0,1})$, as above. Hence, the horizontal maps give isomorphisms after inverting $p$.
By Lemma \ref{perfect},
$s_a\otimes 1\in M_{\gS,1}^{\otimes}$. It follows that the connection isomorphism ${c}_\gS$ preserves the tensors $\ti s_a=s_a\otimes 1=\phi^*(s_a\otimes 1)\in \widetilde M^\otimes_{\gS,1} = \varphi^*(M_{\gS,1}^\otimes)$.

\begin{Remark}
{\rm  The isomorphism ${c}_{\gS}$ can be given more directly as follows: 
For any $\gS$-module $H,$ if we set $H_0 = H\otimes_\gS W,$ then we have canonically 
\[
 \varphi^*(H_0) \otimes_W W[\ep]     \xrightarrow{\sim} \varphi^*(H)\otimes_\gS W[\ep],
\]
because $\varphi$ induces a lift of Frobenius on $W[\ep]$ which factors through $W.$ 
If $H$ itself has the form $H_0\otimes_W\gS,$ then the above isomorphism induces the identity on $\varphi^*(H_0).$ 
Since the isomorphism is functorial, this holds for any isomorphism $H \simeq H_0\otimes_W\gS,$ which lifts 
the identity on $H_0.$ Applying this 	discussion to $H=M_{\gS, 1} \subset  M_\gS$ gives
\[
{ c}_\gS:  \wtM_{0,1}\otimes_W W[\ep]  \xrightarrow{\sim}  \widetilde M_{1,\gS}\otimes_\gS W[\ep].
\]
We can immediately see that this fits in the commutative diagram (\ref{gSconnectiondiagram}), hence it agrees with the map denoted by $c_\gS$ above.}
\end{Remark}
 
Let us set $M_{\whW(\Su)}=\Lambda\otimes_{\Z_p} \widehat W(\Su)$ and let $M_{\whW(\Su), 1}$ be the $\whW(\Su)$-module
\[
\hat I_{\Su}\otimes_{\Z_p}\Lambda \subset M_{\whW(\Su), 1}\subset M_{\whW(\Su)}=\Lambda\otimes_{\Z_p} \widehat W(\Su)
\]
obtained by lifting $ M_{\Su, 1}\subset M_{\Su}=\Lambda\otimes_{\Z_p}\Su$. The pair 
$(M_{\whW(\Su)}, M_{\whW(\Su), 1})$ is the base change of $(M_\gS, M_{\gS,1})$ under $\bar\lambda: (\gS, (p))\to (\whW(k\lps u\rps),\hat I_{\Su})$.
It is also the base change of the pair $(M, M_1)$ over $\whW(R_G)$ under the frame morphism underlying the map
$\whW(R_G)\to \whW(\Su)$ induced by $a: \Spec(\Su)\to \BMloc_{\Gg,\mu}$. (Recall $R_G$ is  the completion of the local ring at $x$ of $\BMloc_{\Gg,\mu}$.) We can apply the frame morphism $\bar\lambda_\ep: (W[\ep], (p))\to (\whW(k[\ep]), \hat I_{k[\ep]})$ appearing in the diagram (\ref{framescd}). By the functoriality of the construction, we have
\[
{ c}_{\whW(k[\ep])}={c}_\gS\otimes_{W[\ep],\bar\lambda_\ep}\whW(k[\ep]).
\]
But, by the above, $c_\gS$ preserves the tensors $\ti s_a\in \wtM_{\gS, 1}^\otimes$. Hence, ${c}_{\whW(k[\ep])}$ preserves the tensors $\ti s_a\in \wtM_{\whW(k[\ep]), 1}^\otimes$.\footnote{We often abuse notation and use the same symbol $\ti s_a$ to also denote a tensor obtained from $\ti s_a$ by base change.}
However, $(M_{\whW(k[\ep])}, M_{\whW(k[\ep]),1})$ is the base change of $(M, M_1)$ by the frame morphism underlying the map
\[
\whW(R_G)\to \whW(\Su)\to \whW(k[\ep])
\]
given by $a^*_{(2)}: R_G\to \Su\to k[\ep]$. It follows that
\[
{ c}_{\whW(k[\ep])}=c\otimes_{\whW(R_G/\fa_G), a^*_{(2)}}\whW(k[\ep]).
\]
Hence, the base change of $c$ by the map
\[
 \widehat W(R_G/\fa_G)\to \widehat W(\Su)\to \whW(k[\ep])
 \]
 induced by $a^*_{(2)}: R_G\to \Su\to k[\ep]$ preserves the tensors $\ti s_a$. This implies the statement given in the beginning of the proof.

We can now complete the proof of the Proposition.
Our assumption that the tangent space at $x$ is spanned by smooth formal curves gives the following: There are $a_i: \Spec(k\lps u_i\rps)\to \Spec(R_G)$, $i=1,\ldots , r$, lifting $x$, which give an injective map
 \[
\mathfrak J_G:= \mathfrak m_G/\fa_G=\mathfrak m_G/\mathfrak m_G^2+(\pi_E)\xrightarrow{\oplus_i a^*_i}  \oplus_{i=1}^r (u_i)/(u_i)^2.
 \]
Applying Zink's log coordinates to $\whW(\mathfrak J_G)$ for the square zero ideal $\mathfrak J_G$, 
we obtain
 \[
 \whW(R_G/\fa_G)\simeq W(k)\oplus (\bigoplus_{m\geq 0} \mathfrak J_G)\subset \bigoplus_{i=1}^r (W(k)\oplus (\bigoplus_{m\geq 0} \mathfrak (u_i)/(u_i)^2)).
 \]
Notice that, by using Zink's log coordinates, we see that $\whW(\mathfrak J_G)$ is a square zero ideal in  $\whW(R_G/\fa_G)$ with $p\cdot \whW(\mathfrak J_G)=0$. 
The ideal $\cal  I\subset \whW(R_G/\fa_G)$ cutting out the locus where $c(s_a\otimes 1)=\ti s_a$ is contained in $\whW(\mathfrak J_G)$.
 The modules $\widetilde M_1$ are free and the  connection  isomorphism  of  Lemma \ref{319} is compatible with base change. We can verify that the connection isomorphism respects the tensors after pulling back by all $a_i$, $i=1,\ldots ,r$; this was shown above. This implies that the $\cal I$ maps to $0$ under each $\whW(\mathfrak J_G)=\bigoplus_{m\geq 0} \mathfrak J_G\to \bigoplus_{m\geq 0} \mathfrak (u_i)/(u_i)^2$, induced by $a_i$ above, hence $\cal I=0$.
\end{proof}

\begin{cor}\label{smooth}
Assume $(\Gg,\mu)\hook (\GL(\La),\mu_d)$ is a good integral Hodge embedding. If $\BMloc_{\Gg,\mu}\otimes_{\O_E} k$ is smooth at $x$, then $(\Gg,\mu)\hook (\GL(\La),\mu_d)$ is very good at $x$.
\end{cor}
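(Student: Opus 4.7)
The plan is to deduce this directly from Proposition \ref{span}. By that result, it suffices to verify that when $\BMloc_{\Gg,\mu}\otimes_{\O_E} k$ is smooth at $x$, its tangent space $T_x(\BMloc_{\Gg,\mu}\otimes_{\O_E}k)$ is spanned by smooth formal curves in the sense of Definition \ref{def:span}.

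To verify this spanning property, I would argue as follows. Since $\BMloc_{\Gg,\mu}\otimes_{\O_E} k$ is smooth of some dimension $n$ at $x$, the completion $\hat{\calO}_{\BMloc_{\Gg,\mu}\otimes k,\, x}$ is isomorphic to a power series ring $k\lps t_1,\ldots, t_n\rps$. Fix a basis $\partial_1,\ldots,\partial_n$ of $T_x(\BMloc_{\Gg,\mu}\otimes_{\O_E} k)$ dual to $t_1,\ldots,t_n$. For each $i$, the $k$-algebra homomorphism
\[
k\lps t_1,\ldots,t_n\rps \to k\lps u\rps,\quad t_i\mapsto u,\ t_j\mapsto 0\ (j\ne i),
\]
defines a $k$-morphism $\Spec(k\lps u\rps)\to \BMloc_{\Gg,\mu}\otimes_{\O_E} k$ sending the closed point to $x$, and the image of the tangent space of $\Spec(k\lps u\rps)$ at the closed point under this morphism is precisely the line $k\cdot \partial_i$. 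Taking these curves for $i=1,\ldots,n$ produces smooth formal curves whose tangent vectors span $T_x(\BMloc_{\Gg,\mu}\otimes_{\O_E} k)$.

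Having verified the hypothesis of Proposition \ref{span}, the conclusion that $(\Gg,\mu)\hook(\GL(\La),\mu_d)$ is very good at $x$ follows immediately. There is no real obstacle here; this corollary is essentially a reformulation of Proposition \ref{span} in the smooth case, relying only on the elementary fact that on a smooth scheme every tangent direction at a point is the derivative of a smooth formal curve through that point.
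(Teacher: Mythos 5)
Your proposal is correct and is exactly the argument the paper intends: the paper's proof says in one line that smoothness implies each tangent vector extends to a smooth formal curve, and then invokes Proposition \ref{span}, which is precisely what you spell out by exhibiting the coordinate curves $t_i\mapsto u$, $t_j\mapsto 0$ in the formal completion. Your writeup simply makes the elementary spanning argument explicit; there is no substantive difference from the paper.
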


\begin{proof}
The smoothness assumption easily implies that each tangent vector extend to a smooth formal curve and so this follows from Prop. \ref{span}.
\end{proof}

 \end{para}

\begin{para}
We mention an interesting variant of Prop. \ref{span} which is not used in the rest of this paper.

Suppose that $X$ is a scheme over $\br \O_E$ and let $x\in X(k)$.
By definition, the mod $\pi_E$ tangent space of $X$ at $x$ is $ \bar T_x X=T_x (X\otimes_{\br \O_E}k):=(\frakm_{X, x}/\frakm_{X, x}^2+(\pi_E))^*$. We say that the mod $\pi_E$ tangent space $ \bar T_x X$ is \emph{spanned by arithmetic curves}, if the images of $\bar T_{0}\Spec(\O_K)\to \bar T_x X$   by all morphisms $a: \Spec(\O_K)\to X$ that map the closed point $0$ of $\Spec(\O_K)$ to $x$, where $K/\br E$ runs
over all finite extensions, generate the $k$-vector space $\bar T_x X$. Here, $\Spec(\O_K)$ is considered as an $\br\O_E$-scheme.

\begin{prop}\label{spanArithmetic}
Assume $(\Gg,\mu)\hook (\GL(\La),\mu_d)$ is a good  integral Hodge embedding. If the mod $\pi_E$ tangent space of  $\BMloc_{\Gg,\mu}\otimes_{\O_E} \br \O_E$ at $x$ is  spanned by arithmetic curves, then $(\Gg,\mu)\hook (\GL(\La),\mu_d)$ is very good at $x$.
\end{prop}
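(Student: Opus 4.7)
The plan is to adapt the argument of Proposition \ref{span}, with arithmetic curves $a:\Spec(\calO_K)\to \BMloc_{\Gg,\mu}\otimes_{\O_E}\br\O_E$ (for $K/\br E$ finite) playing the role of the smooth formal curves there. As in \emph{loc.\ cit.}, the spanning hypothesis reduces the statement to showing that for each such arithmetic curve $a$, the pullback of the canonical connection isomorphism $c$ along the induced map $\whW(R_G)\to\whW(\calO_K)$ preserves the tensors $\ti s_a$ modulo the image of $\fa_G$.

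To carry this out for each such $a$, I would set up a Breuil--Kisin-type substitute for the equicharacteristic construction used in Proposition \ref{span}. Let $\pi=\pi_K$, let $E(u)\in W(k)[u]$ be its Eisenstein polynomial over $W(k)$ (with the obvious modification when $\br\O_E\ne W(k)$), and put $\gS:=W(k)\lps u\rps$ with Frobenius $\phi(u)=u^p$; the surjection $\gS\to\calO_K$, $u\mapsto\pi$, has kernel $(E(u))$. Using smoothness of the Grassmannian, one lifts the direct summand $M_{1,\calO_K}\subset \La\otimes_{\Z_p}\calO_K$ determined by $a$ to a $\gS$-submodule $N_1\subset \La\otimes_{\Z_p}\gS$ containing $E(u)(\La\otimes\gS)$ and reducing modulo $E(u)$ to $M_{1,\calO_K}$. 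Setting $\wtM_{1,\gS}:=\phi^*(N_1)$, the Frobenius on $\gS$ factors through $W(k)$ modulo $\frakn^2=(p,u)^2$, and this yields a canonical isomorphism $\mathbf c_\gS:\wtM_{0,1}\otimes_{W(k)}\gS/\frakn^2\xrightarrow{\sim}\wtM_{1,\gS}\otimes_\gS\gS/\frakn^2$ exactly as in the proof of Proposition \ref{span}. One then uses the ring map $\bar\lambda:\gS\to\whW(\calO_K)$, $u\mapsto[\pi_K]$ (which lifts the specialization since the zeroth ghost of $[\pi_K]$ is $\pi_K$), to base-change $\mathbf c_\gS$ to an isomorphism over $\whW(\calO_K/(\frakm_K^2+(\pi_E)\calO_K))$. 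A direct normal-decomposition computation identical to the one at the end of the proof of Proposition \ref{span}, using that Zink's $V^{-1}$ annihilates Teichm\"uller elements arising from $\whW(\frakm_K/\frakm_K^2)$, then identifies this base change with the pullback $a^*c$.

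The main obstacle is the integrality statement $s_a\otimes 1\in N_1^\otimes$, the analog of Lemma \ref{perfect}: given it, functoriality of the construction $(M,M_1)\mapsto\wtM_1$ immediately forces $\mathbf c_\gS$ to preserve the $\ti s_a$, and the argument above concludes the proof. In Proposition \ref{span} the analogous fact was proved via the Witt vector affine Grassmannian, which relied on the underlying point living in characteristic $p$ so that $W(\K)$-theory could be applied. For arithmetic curves, the generic ($K$-)fiber of $a$ is a point of $G/P_\mu$, hence of the form $g_K\cdot P_\mu$ with $g_K\in G(K)$, so $M_{1,K}=g_K\cdot F^0_K$ is preserved by $g_K$ and the $s_a\otimes 1$ lie in $M_{1,K}^\otimes$ generically. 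Upgrading this to the required integral statement inside $N_1^\otimes$ over $\gS$ is the technical heart of the argument; I expect it to follow by lifting $g_K$ to a $\gS[1/E(u)]$-point of $G$ using smoothness, defining $N_1$ via this lift, and then intersecting $N_1^\otimes\otimes_\gS\gS[1/E(u)]$ with $(\La\otimes\gS)^\otimes$ in the spirit of Lemma \ref{perfect}, but without the benefit of the Witt vector affine Grassmannian structure available in the equicharacteristic case.
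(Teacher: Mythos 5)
Your general strategy coincides with the paper's in outline: set up a Breuil--Kisin ring $\gS=W(k)\lps u\rps$ with $\gS/(E(u))\cong\O_K$, define $N_1\subset\La\otimes_{\Z_p}\gS$ by pulling back the direct summand determined by the arithmetic curve $a$, base-change along $\bar\lambda\colon\gS\to\whW(\O_K)$, $u\mapsto[\pi_K]$, and identify the resulting map with $a^*c$ via the normal-decomposition/Teichm\"uller computation at the end of the proof of Proposition~\ref{span}. The paper's proof does exactly this.

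The gap is precisely where you flag it: you correctly identify the integrality statement $s_a\otimes 1\in N_1^\otimes$ as the crux, but then leave it unproved, offering only a speculative plan. Moreover, as stated the plan is not well-posed: $\Spec(K)$ is a quotient of $\gS[1/p]$, not of $\gS[1/E(u)]$ (the map $\gS\to\O_K$ kills $E(u)$, so inverting $E(u)$ moves away from $\O_K$), and so ``lifting $g_K$ to a $\gS[1/E(u)]$-point of $G$'' does not make sense. The paper resolves the step by observing that this exact integrality is supplied by \cite[Lem.\,3.2.6]{KP}: the $\Gg$-scheme of tensor-preserving isomorphisms $\underline{\Isom}_{(s_a)}(F,M_{\frakS})$ over the punctured spectrum $D^*=\Spec(\frakS)\setminus\{(p,u)\}$ is a torsor, which by the purity result of Ansch\"utz extends over all of $\Spec(\frakS)$ and is therefore trivial; the resulting trivialization respects the tensors, giving $s_a\otimes 1\in N_1^\otimes$. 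In particular, contrary to your expectation that the arithmetic-curve case would be harder because the Witt-vector Grassmannian device of Lemma~\ref{perfect} is unavailable, the paper points out it is in fact \emph{simpler}: the needed analogue of Lemma~\ref{perfect} is already available from \cite{KP} rather than requiring a fresh proof.
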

\end{para}

\begin{proof}
This is similar to the proof of Prop. \ref{span}: The crucial point is to show that the base change of $\ti s_a\in \wtM_1^\otimes$ by the map $\whW(R_G)\to \whW(\O_K)$ given by a
local $\br \O_E$-algebra homomorphism $a^*: R_G\to \O_K$ with $K/\br E$ finite, is horizontal over $\whW(\O_K/((\pi^2_K)+(\pi_E)))$. To prove that we write $\O_K=W(k)\lps x\rps/({\rm E}(x))$, where ${\rm E}(x)$ is an Eisenstein polynomial for a uniformizer $\pi_K$ of $K$ and then use $\bar\lambda: \gS\to \whW(\O_K)$ determined by $\bar\lambda(x)=[\pi_K]$, cf. \cite[Proof of Lem. 3.2.9]{KP}. The rest of the argument follows along the   lines of the proof of Proposition \ref{span} and, in fact, is somewhat simpler: the analogue of Lemma \ref{perfect} is now provided by 
\cite[Lem. 3.2.6]{KP}.  
\end{proof}

 \begin{para}
For future use, we will need a result for groups which are not connected, mainly to deal
 with orthogonal groups. We assume that $\Gg$ is smooth and affine of finite type over $\Z_p$, $G=\Gg\otimes_{\Z_p}\Q_p$. Denote by $G^\circ$ the neutral component. Assume that $G^\circ$ is reductive and that the Zariski closure $\Gg^0$ of $G^\circ$
  in $\Gg$ is a smooth stabilizer group scheme for $G^\circ$. Note  that $\Gg^0$ is not necessarily equal to the neutral component of the group scheme $\Gg$.

Let $\{\mu\}$ be the $G(\bar\Q_p)$-conjugacy class of $\mu:\Gm_{\bar\Q_p}\to G_{\bar\Q_p}$, with reflex field $E$. The coweight $\mu: \Gm_{\bar\Q_p}\to G_{\bar \Q_p}$ automatically factors through the neutral 
 component giving $\mu^\circ: \Gm_{\bar\Q_p}\to G^\circ_{\bar\Q_p}$. We assume that $\mu^\circ$ is minuscule. Hence, we have a local Shimura pair $(G^\circ, \mu^\circ)$. The reflex field $E^\circ$ of $(G^\circ, \mu^\circ)$ is an extension of $E$. 
 
Suppose now that we have an integral Hodge embedding $(\Gg,  \mu )\hook (\GL(\La), \mu_d)$ (with the obvious generalization of the definition to non-connected $G$).
 Since $\Gg^0\hook \Gg$ is  a closed immersion, we also have an integral Hodge embedding $(\Gg^0, \mu^\circ)\hook (\GL(\La), \mu_d)$. As usual, we assume that this is good, i.e. it induces a closed immersion $\BMloc_{\Gg^0, \mu^\circ}\hook {\rm Gr}(d,\La)_{\O_{E^\circ}}$. Consider $x\in \BMloc_{\Gg^0,\mu^\circ}(k)$ with $k=\bar k_{E^\circ}$.
 
 \begin{prop}\label{neutral}
 Suppose that   $\Gg\hook\GL(\La)$ is cut out by a set of tensors $(s_a)\subset \La^{\otimes}$ such that $\ti s_a\in \wtM_1^\otimes$   are horizontal at $x$. Then,   
 $(\Gg^0, \mu^\circ)\hook (\GL(\La), \mu_d)$ is very good at $x$.
 \end{prop}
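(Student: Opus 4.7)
My strategy is to show that the canonical connection isomorphism $c$ from Lemma \ref{319} descends from a $\calG$-torsor isomorphism (which we have by hypothesis) to a $\calG^\circ$-torsor isomorphism; by the equivalent formulation in Definition \ref{def:vgood}, this will establish that $(\calG^\circ,\mu^\circ) \hookrightarrow (\GL(\La),\mu_d)$ is very good at $x$.

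First I would choose tensors $(t_b) \supset (s_a)$ in $\La^\otimes$ cutting out $\calG^\circ \subset \GL(\La)$ and form the tensor-preserving Isom-scheme
$$
\calT^\circ := \underline{\Isom}_{(t_b),(t_{b,0})}(\La\otimes_{\Z_p}\whW(R_{G^\circ}), \wtM_1) \subset \calT.
$$
Since $\calG^\circ$ is parahoric, the argument of \cite[Cor. 3.2.11]{KP} combined with \cite{An} gives $\ti t_b \in \wtM_1^\otimes$ and shows that $\calT^\circ$ is a trivial $\calG^\circ$-torsor. The key observation is that because $\calG^\circ \subset \calG$ is open and closed (as $\calG/\calG^\circ$ is a finite \'etale group scheme), the inclusion $\calT^\circ \subset \calT$ is also open and closed: the quotient $\calT \to \calT/\calG^\circ$ is a finite \'etale $\calG/\calG^\circ$-torsor and $\calT^\circ$ is the fiber over the section given by the chosen trivialization. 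The same open-and-closed structure holds at the closed fiber for $\calT^\circ_0 \subset \calT_0$ over $W(k)$.

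Next, by hypothesis $c$ descends to a $\calG$-equivariant isomorphism
$$
c^{\calG}: \calT_0 \otimes_{W(k)} \whW(R_{G^\circ}/\fa_{G^\circ}) \xrightarrow{\sim} \calT \otimes_{\whW(R_{G^\circ})} \whW(R_{G^\circ}/\fa_{G^\circ}).
$$
From the explicit form \eqref{uni} in the proof of Lemma \ref{319}, the matrix of $c$ reduces to the identity modulo $\whW(\frakm_{G^\circ}/\fa_{G^\circ})$, so $c^{\calG}$ becomes the identity $\calT_0 \to \calT_0$ at the closed point of $\whW(R_{G^\circ}/\fa_{G^\circ})$; in particular it sends $\calT^\circ_0$ into itself there. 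Since $c^{\calG}|_{\calT^\circ_0 \otimes \whW(R_{G^\circ}/\fa_{G^\circ})}$ is $\calG^\circ$-equivariant, its image projects to a single section of the finite \'etale cover $(\calT/\calG^\circ) \otimes \whW(R_{G^\circ}/\fa_{G^\circ})$; that section reduces to the one corresponding to $\calT^\circ_0$ at the closed point and therefore coincides with the one corresponding to $\calT^\circ$. Hence the image lands in $\calT^\circ \otimes \whW(R_{G^\circ}/\fa_{G^\circ})$, producing the desired descent $c^{\calG^\circ}$. The main technical point to handle carefully is the open-and-closed structure of $\calT^\circ$ inside $\calT$ and the ensuing $\calG^\circ$-orbit decomposition, which rest on the smoothness of $\calG$ and the finite \'etaleness of $\calG/\calG^\circ$.
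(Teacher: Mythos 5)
Your proposal is correct and reaches the paper's conclusion, but the final descent step uses a genuinely different mechanism than the paper's. Both you and the paper first reduce, after choosing a trivialization of the $\calG^\circ$-torsor $\calT^\circ$ over the henselian ring $\whW(S)$ (where $S=R_{G^\circ}/\fa_{G^\circ}$), to showing that $c^\Gg$ — which by hypothesis descends to a $\calG$-torsor isomorphism reducing to the identity at the closed point — actually lands in $\calG^\circ$. You then argue via rigidity of sections of the \'etale quotient $(\calT/\calG^\circ)\otimes\whW(S)$ over the connected henselian base $\Spec\whW(S)$: the section cut out by the image of $\calT^\circ_0$ agrees with the one from $\calT^\circ$ at the closed point, hence everywhere. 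The paper instead observes that Zink's logarithmic coordinates make $\whW(\frakm_S)$ a square-zero ideal, so $c^\Gg$ lies in $\ker(\calG(\whW(S))\to\calG(W(k)))$, and then identifies this kernel with $\ker(\calG^\circ(\whW(S))\to\calG^\circ(W(k)))$ using that $\calG^\circ\hookrightarrow\calG$ is a closed immersion of smooth group schemes of the same dimension (equality of Lie algebras). The two routes are both valid; the paper's avoids having to discuss representability and the precise properties (finiteness vs.\ \'etaleness) of $\calG/\calG^\circ$, which your phrasing invokes — you claim $\calT\to\calT/\calG^\circ$ is a ``finite \'etale $\calG/\calG^\circ$-torsor,'' which conflates the two torsor structures: $\calT\to\calT/\calG^\circ$ is a $\calG^\circ$-torsor, while $\calT/\calG^\circ\to\Spec\whW(S)$ is the $\calG/\calG^\circ$-torsor. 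This is only a slip of language and does not undermine your argument, since what you actually need is that $\calT/\calG^\circ$ is \'etale over $\whW(S)$ and that sections of it are rigid over the henselian base, both of which hold because $\calG^\circ\subset\calG$ is open and closed (as you correctly identify). You should also note, as the paper does, that showing $\calT$ is a $\calG$-torsor (as opposed to merely the push-out of the $\calG^\circ$-torsor $\calT^\circ$) requires an extension of \cite[Cor.~3.2.11]{KP} to the disconnected $\calG$; you invoke it only for $\calG^\circ$, but the hypothesis of the proposition is phrased in terms of tensors cutting out $\calG$, and the identification of $\calT$ with a push-out is what makes your inclusion $\calT^\circ\subset\calT$ behave correctly.
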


\begin{proof} 
Recall that the construction in \cite[\S 3]{KP} which was reviewed above, when applied to 
$(\Gg^0, \mu^\circ)\hook (\GL(\La), \mu_d)$ gives a $\Gg^0$-torsor
$\calT^0$ over $\whW(R_{G^\circ})
$. In fact, the arguments in \emph{loc. cit.} extend to show that \cite[Cor. 3.2.11]{KP} also holds for $\Gg$: We have $\ti s_a\in \wtM_1^\otimes$ and the $\Gg$-scheme $\calT'$ of isomorphisms between  
$\wtM_1$ and $\La$ that take $\ti s_a$ to $s_a$ is a $\Gg$-torsor. To see this one observes that, since the coweight takes values in $G^\circ$, the $\Gg$-scheme $\underline{\rm Isom}_{(s_a)}(F, M_{\frakS})_{|D^*}$ over $D^*=\Spec(\frakS)-\{(0,p)\}$ as in the proof of \cite[Lem. 3.2.6]{KP}, is actually induced from a similar
$\Gg^0$-scheme which then comes from a trivial $\Gg^0$-torsor. This produces isomorphisms $F\xrightarrow{\sim} M_{\frakS}$ over $D$ which preserve the tensors $s_a$. This proves the claims of \cite[Lem. 3.2.6]{KP} in the current situation, and the rest of the argument goes through. It now follows that the natural map between $\calT'$ and the $\Gg$-torsor $\calT$ obtained
by pushing out $\calT^0$ by $\Gg^0\to\Gg$ is an isomorphism. 

Set $S=R_{G^\circ}/\frakm^2_{G^\circ}+(\pi_{E^\circ})$.
By assumption, the connection homomorphism 
$c$ on $\wtM_1$ over $\whW(S)$ respects the tensors $\ti s_a$. Hence, $c$ descends to a $\Gg$-torsor isomorphism
\[
c^\Gg: \calT_0\otimes_{W(k)}\whW(S)\xrightarrow{\sim} \calT\otimes_{\whW(R_{G^\circ})}\whW(S).
\]
We have to show that $c^{\Gg}$ further descends to an isomorphism 
\[
c^{\Gg^0}: \calT^0_0\otimes_{W(k)}\whW(S)\xrightarrow{\sim} \calT^0\otimes_{\whW(R_{G^0})}\whW(S)
\]
of the underlying $\Gg^0$-torsors. Since $\whW(S)$ is henselian with residue field $k$ and $\Gg^0$ is smooth, we can choose a section of $\calT^0$; this also induces a section of $\calT$. Since $c^{\Gg}$ is the identity modulo $\whW(\frakm_S)$, it is given by an element in 
\[
 {\rm ker}(\Gg(\whW(S))\to \Gg(W(k))).
\]
Recall that $\whW(S)=W(k)\oplus \whW(\frakm_S)$, and by Zink's log coordinates, $\whW(\frakm_S)$ is a square zero ideal. Since $\Gg^0\to\Gg$ is a closed immersion and  $\Gg$ and $\Gg^0$ are both smooth of the same dimension, 
\[
 {\rm ker}(\Gg^0(\whW(S))\to \Gg^0(W(k)))={\rm ker}(\Gg(\whW(S))\to \Gg(W(k))).
\]
This shows that the isomorphism $c^{\Gg}$ is given by a point of $\Gg^0(\whW(S))$ and so it descends to an isomorphism of the underlying $\Gg^0$-torsors.
\end{proof}

 \end{para}

\section{The construction of very good embeddings}\label{sec: very good embeddings}

In this section, we apply the previous results to construct very good integral Hodge embeddings 
for many local model triples $(G, \{\mu\}, \Gg)$.

\subsection{The non-exceptional cases} \label{sec: NE cases}
  
Let $(G, \{\mu\}, \Gg)$ be a local model triple over $\Q_p$ which satisfies the standard  assumptions.
We often need to assume the following condition on the pair $(G,\mu)$:
\begin{itemize}

\item[(NE)]
$(G^\ad,\mu^\ad)$ does not contain a simple factor of type $D^{\mathbb H}$, or a simple factor of type $A$ with adjoint group $\Res_{F/\Q_p}{\rm PGL}_m(D)$, with $D$ a central division $F$-algebra 
of index divisible by $p$. 
\end{itemize}
We will sometimes call $(G, \{\mu\}, \Gg)$ that satisfy (NE), non-exceptional.

We will now apply the results of the previous sections to show:

\begin{thm}\label{thm:main}
Let $(G, \{\mu\}, \Gg)$ be a local model triple over $\Q_p$ which satisfies the standard  assumptions and (NE), i.e. it is non-exceptional. Suppose that $\Gg$ is the stabilizer group scheme for a point $\bx$ in $\B(G, \Q_p)$ which is generic in its facet, that the centralizer of a maximal $\br\Q_p$-split torus of $G$ is $R$-smooth and that $p$ does not divide $|\pi_1(G^\der)|$.  Suppose $G\subset G'=\prod_{i=1}^s \Res_{F_i/\Q_p}H_i$, $H_i$ split over a tamely ramified extension of $F_i$, and   $G^\der=G'^\der$. Let $\rho'_i: H_i\to \GL(W_i)$ be faithful minuscule representations over $F_i$, such that the composition
\[
\rho':  G'=\prod_i \Res_{F_i/\Q_p}H_i\xrightarrow{\prod_i\Res_{F_i/\Q_p}\rho'_i }\prod_i \Res_{F_i/\Q_p}\GL(W_i)\to \GL(\oplus_i V_i)=\GL(V),
\]
where in the target $V_i$ is $W_i$ regarded as a $\Q_p$-vector space, gives a (local) Hodge embedding $(G',\mu')\hook (\GL(V),\mu_d)$, where $\mu'$ is the composition of $\mu$ with $G\subset G'$. Assume the restriction $\rho:=\rho'_{|G}$ also gives a Hodge embedding $(G,\mu)\hook (\GL(V),\mu_d)$.  

Then there is   a periodic $\Z_p$-lattice chain $\L$ in $V$ and an integral Hodge embedding $(\Gg,\mu)\hook (\GL(\L),\mu_{d})$ extending $\rho$ which is very good at all points of $\BMloc_{\Gg,\mu}$. 
\end{thm}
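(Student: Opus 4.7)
The plan is to reduce the problem to $G'$ factor-by-factor, use Proposition \ref{prop: better lattice} to exhibit each parahoric as an intersection of a Weil restriction of a split reductive group scheme with an endomorphism stabilizer, apply the tangent-space spanning result of Theorem \ref{corLMSpan}(2) together with Proposition \ref{span} to establish very goodness for the Weil restriction, and finally descend back via the endomorphism-stability mechanism of Corollary \ref{endoplus}.

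First, using that the centralizer of a maximal $\br\Q_p$-split torus of $G$ is $R$-smooth, Proposition \ref{prop: R-smoothness properties}(3) extends the inclusion $G \subset G'$ at $\bx$ to a closed immersion of stabilizer group schemes $\calG \hookrightarrow \calG' \cong \prod_{i=1}^s \Res_{\calO_{F_i}/\Z_p} \calH_i$, where each $\calH_i$ is a Bruhat--Tits stabilizer for $H_i$ at the appropriate image point. Since $G^{\der} = G'^{\der}$, the adjoint data $(G^{\ad}, \mu^{\ad})$ and $(G'^{\ad}, \mu'^{\ad})$ coincide, and hence, as in the proof of Theorem \ref{thm:twoLM} via passage to the adjoint for $v$-sheaves (\cite[Prop. 21.4.3, 21.5.1]{Schber}), there is a canonical isomorphism $\BMloc_{\calG, \mu} \cong \BMloc_{\calG', \mu'}$.

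Next, for each factor $(\calG_i', \mu_i')$ of $(\calG', \mu')$, I apply the construction of Theorem \ref{thm: LM embedding} refined by Proposition \ref{prop: better lattice}: after possibly replacing $\rho_i'$ by a direct sum $\rho_i'^{\oplus m_i}$ (permissible by Lemma \ref{diagonal}), one obtains a good integral Hodge embedding
\[
\rho_{\ti\Lambda_i}:\;\calG_i' \hookrightarrow \GL(\ti\Lambda_i),
\]
where $\ti\Lambda_i \subset W_i \otimes_{F_i} \ti F_i$ is a $\Gamma_i$-stable $\calO_{\ti F_i}$-lattice (viewed as a $\Z_p$-lattice). Moreover,
\[
\calG_i' \;=\; \Res_{\calO_{\ti F_i}/\Z_p}(H_{0,i} \otimes_{\Z_p} \calO_{\ti F_i}) \;\cap\; \{\,g \in \GL(\ti\Lambda_i) : g\,t_{a,i} = t_{a,i}\,g,\ \forall a\,\}
\]
for a collection of $\Z_p$-linear endomorphisms $t_{a,i}$ arising from the $\Gamma_i$-action on $\ti\Lambda_i$ and from multiplication by generators of $\calO_{\ti F_i}$ over $\Z_p$. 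The Weil restriction $\ffG_i := \Res_{\calO_{\ti F_i}/\Z_p}(H_{0,i} \otimes_{\Z_p} \calO_{\ti F_i})$ is a reductive group scheme over $\Z_p$, and since (NE) excludes factors of type $D_n^{\mathbb H}$, Theorem \ref{corLMSpan}(2) shows that the tangent space of $\BMloc_{\ffG_i, \ti\mu_i} \otimes_{\calO_E} k$ at every $k$-point is spanned by smooth formal curves. Proposition \ref{span} then implies that $\ffG_i \hookrightarrow \GL(\ti\Lambda_i)$ is very good at every point, and Corollary \ref{endoplus}, applied to the chain $\calG_i' \hookrightarrow \ffG_i \hookrightarrow \GL(\ti\Lambda_i)$ with the endomorphisms $t_{a,i}$, yields that $\rho_{\ti\Lambda_i}$ itself is very good at every point. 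Lemma \ref{lem: product vg} combines the factors into a very good embedding $\calG' \hookrightarrow \GL(\oplus_i \ti\Lambda_i)$, and passing via Lemma \ref{fixedptsGL} and Remark \ref{rem:diagram} to the lattice chain $\L$ in $V$ determined factor-wise by $\L_i = \{(\ti\pi_i^j \ti\Lambda_i)^{\Gamma_i}\}_{j \in \Z}$, together with Lemma \ref{diagonal}(a) (exploiting that $\mathrm{tot}(\L_i)$ is a direct summand of $\ti\Lambda_i$, cf. \eqref{generalDirectSum}), produces a very good integral Hodge embedding $(\calG', \mu') \hookrightarrow (\GL(\L), \mu_d)$.

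The main obstacle is the final descent from $\calG'$ to $\calG$: one must exhibit tensors cutting out $\calG$ in $\GL(\L)$ such that the additional ones beyond those cutting out $\calG'$ are horizontal. Since $G^{\der} = G'^{\der}$, the subgroup $G \subset G'$ is the common kernel of characters of the abelian quotient $G'^{\ab} \twoheadrightarrow G'/G$; in the setup of the theorem these characters can be realized through $\Z_p$-linear central scalar endomorphisms of the summands $V_i \subset V$ and hence as endomorphisms of $\L$. Horizontality of the additional tensors is then automatic by Lemma \ref{endo}, and combined with very goodness of $(\calG', \mu') \hookrightarrow (\GL(\L), \mu_d)$ this yields the desired very good integral Hodge embedding $(\calG, \mu) \hookrightarrow (\GL(\L), \mu_d)$ at every point of $\BMloc_{\calG, \mu}$.
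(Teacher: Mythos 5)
Your proposal reproduces the paper's strategy quite faithfully up to the point of establishing that $(\calG',\mu')\hookrightarrow(\GL(\Lambda),\mu_d)$ is very good. But the final step, the descent from $\calG'$ to $\calG$, has a genuine gap that does not seem repairable as stated.

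You assert that since $G\subset G'$ with $G^{\der}=G'^{\der}$, the group $G$ is cut out in $G'$ by characters of $G'^{\ab}$ which ``can be realized through $\Z_p$-linear central scalar endomorphisms of the summands $V_i\subset V$.'' This is false. Any endomorphism $t$ of $V=\oplus_i V_i$ commuting with $G$ also commutes with $G^{\der}=G'^{\der}$, and since $G'^{\der}$ acts absolutely irreducibly on each $V_i$, Schur's lemma forces $t$ to be scalar on each $V_i$ — and such a $t$ commutes with all of $G'$, not just $G$. Thus the centralizer of $G$ in $\End(V)$ coincides with that of $G'$, and no collection of endomorphisms can cut $\calG$ out inside $\calG'$. (Concretely: take $G'=\GL_n\times\GL_m$ and $G=\{(A,B):\det A=\det B\}$. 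No endomorphism of $V=V_1\oplus V_2$ distinguishes $G$ from $G'$.) The characters defining $G$ in $G'$ — built from determinants or similitude factors — live in $\Lambda^\otimes$ as genuine tensors, not as endomorphisms, so Lemma \ref{endo} is not available to establish their horizontality.

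The paper resolves this differently: it never descends from $\calG'$ to $\calG$ via endomorphisms of this type. Instead it realizes $\calG=(\Res_{\ti\Z_p/\Z_p}\ti\Gg)^\Gamma$, where $\ti\Gg$ is the Bruhat–Tits stabilizer for $G\otimes_{\Q_p}\ti\Q_p$ (note: for $G$, not $G'$). The endomorphisms of $\ti\La$ cutting out $\calG$ inside $\calJ:=\Res_{\ti\Z_p/\Z_p}\ti\Gg$ come from the $\Gamma$-action and the $\ti\Z_p$-module structure of $\ti\La$, exactly as in Proposition \ref{prop: better lattice}. The group $\calJ$ itself is typically not a product of Weil restrictions of split reductive group schemes (since $G$ may be smaller than $G'$), so Theorem \ref{corLMSpan}(2) does not apply to it directly. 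But $\calJ':=\Res_{\ti\Z_p/\Z_p}\ti\Gg'$ \emph{is} such a product and has isomorphic local model to $\calJ$ (both share the same adjoint), so the tangent-space spanning is established for $\calJ'$ and transferred to $\calJ$ by that isomorphism. Proposition \ref{span} then shows $\calJ\hookrightarrow\GL(\ti\La)$ is very good, and Corollary \ref{endoplus} descends this to $\calG$. The groups $\calG'$, $\ti\Gg'$ serve only as auxiliary devices (to construct lattices and to verify tangent-space spanning); the actual descent chain is $\calG\hookrightarrow\calJ\hookrightarrow\GL(\ti\La)$.
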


As before, set  $\La:={\rm tot}(\L)\subset V^{\oplus r}$ where $r$ is the number of lattices in 
a determining segment of $\L$.The conclusion means
 that $\rho^{\oplus r}: G\hook \GL(V^{\oplus r})$ extends to
an integral Hodge embedding $(\Gg,\mu)\hook (\GL(\La),\mu_{rd})$ which is very good at all points of $\BMloc_{\Gg,\mu}$, see \S \ref{par:VG}.

\begin{proof}
 Fix an algebraic closure $\bar\Q_p$ of $\Q_p$. If $F\subset \bar\Q_p$ is a finite extension of $\Q_p$, we will denote by $F^t$ the maximal field extension of $F$ which is contained in $\bar\Q_p$ and is tamely ramified over $F$. 
  
  \begin{lemma}\label{lemma:tame}
  Let $F$ be a finite field extension of $\Q_p$ contained in $\bar\Q_p$. Then $F^t$  is the compositum $F\Q_p^t$ in $\bar\Q_p$.
  \end{lemma}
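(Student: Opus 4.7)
My plan is to prove the equality by establishing the two inclusions $F\bbQ_p^t \subset F^t$ and $F^t \subset F\bbQ_p^t$ separately.

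For the first inclusion, I would argue that every finite subextension $L$ of $\bbQ_p^t/\bbQ_p$ is contained in $\bbQ_p^{\mathrm{ur}}(p^{1/n})$ for some $n$ coprime to $p$, so $FL \subset F^{\mathrm{ur}}(p^{1/n})$. Writing $p = u\pi_F^e$ with $u \in \calO_F^\times$ and using that every unit in $\calO_{F^{\mathrm{ur}}}^\times$ admits an $n$-th root by Hensel's lemma (since $p \nmid n$ and the residue field of $F^{\mathrm{ur}}$ is $\bar\bbF_p$), one sees that $p^{1/n}$ equals $\pi_F^{e/n}$ up to a unit. Hence $F^{\mathrm{ur}}(p^{1/n})/F^{\mathrm{ur}}$ has degree $n/\gcd(n,e)$, which is coprime to $p$, so $FL/F$ is tame. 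Taking the union over all such $L$ gives $F\bbQ_p^t \subset F^t$.

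The reverse inclusion is the more delicate step. Since $F^t = F^{\mathrm{ur}}(\pi_F^{1/m} : p\nmid m)$, it suffices to exhibit each $\pi_F^{1/m}$ inside $F\bbQ_p^t$. The main obstacle is that when $F/\bbQ_p$ is wildly ramified (so $p \mid e$), the element $p^{1/m}$ alone only generates a proper sub-extension of $F^{\mathrm{ur}}(\pi_F^{1/m})$ over $F^{\mathrm{ur}}$, so one cannot simply take $n=m$. My plan is to exploit the fact that $\bbQ_p^t$ supplies $n$-th roots of $p$ for \emph{all} $n$ coprime to $p$. Specifically, I would write $e = p^a e_0$ with $p\nmid e_0$ and set $n = m e_0$, which remains coprime to $p$. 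Then $p^{1/n} \in \bbQ_p^t$, and as above it coincides up to a unit in $F^{\mathrm{ur}}$ with $\pi_F^{e/n} = \pi_F^{p^a/m}$, placing $\pi_F^{p^a/m}$ inside $F\bbQ_p^t$.

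The final step is an application of B\'ezout: since $\gcd(p^a,m) = 1$, one can choose integers $r,s$ with $r p^a + s m = 1$, and then
\[
\bigl((\pi_F^{p^a/m})^r \pi_F^s\bigr)^m = \pi_F^{p^a r + sm} = \pi_F,
\]
so $\pi_F^{1/m}$ agrees, up to an $m$-th root of unity in $F^{\mathrm{ur}} \subset F\bbQ_p^t$, with an element of $F\bbQ_p^t$. This yields $F^t \subset F\bbQ_p^t$ and completes the proof. The crux of the argument is the choice $n = m e_0$ in the wildly ramified case; once this trick is in place, everything else reduces to a short manipulation with valuations and Hensel's lemma.
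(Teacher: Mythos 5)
Your proof is correct and takes essentially the same route as the paper: decompose the ramification index as $e = p^a e_0$ with $p\nmid e_0$, pull an $n$-th root of $p$ for $n = m e_0$ out of $\bbQ_p^t$, and combine B\'ezout for $\gcd(p^a,m)=1$ with Hensel's lemma --- your element $\gamma=(p^{1/n})^r\pi_F^s$ is precisely the paper's auxiliary element $\varpi$ after relabeling the variables. The only slip is in the final sentence: the correction factor is an $m$-th root of a unit of $F^{\mathrm{ur}}$ rather than merely an $m$-th root of unity, but it still lies in $F\bbQ_p^t$ by another application of Hensel's lemma (exactly as in the paper), so the argument goes through.
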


\begin{proof}
A similar statement holds for the maximal unramified extensions, i.e. $F^{\rm un}=F\Q_p^{\rm un}$. Now $F^t=\cup_{e}F^\un(\pi^{1/e})$, where $\pi$ is a uniformizer of $F$ and $e$ ranges over all integers prime to $p$; by Hensel's lemma this holds for all uniformizers $\pi$ and any choice of $\pi^{1/e}$. Similarly, $\Q_p^t=\cup_e\Q_p^\un(p^{1/e})$. We will show that for each $e$ prime to $p$, $\pi^{1/e}$ belongs to $F\Q_p^t$. Let $1/a$ be the $p$-adic valuation of $\pi$ so that $(p)=(\pi)^a$ in $\O_F$.
Write $a=p^mb$ with $b$ prime to $p$ and $1=up^m+ve$, with $u$, $v\in \Z$. Then $\varpi:=\pi^vp^{u/be}\in F\Q_p^t$ has valuation $1/p^mbe=1/ae$. Hence $\varpi^{e}\in F\Q_p^t$ has the same valuation as $\pi$ and so $\pi=\varpi^e \cdot \alpha$, $\alpha$ a unit of $F\Q_p^t$.
Then $\pi^{1/e}=\varpi\cdot \alpha^{1/e}$. Since  $\alpha^{1/e}$ is in $F\Q_p^t$ by Hensel's lemma, $\pi^{1/e}\in F\Q_p^t$. \end{proof}
\smallskip

We now fix embeddings $F_i\hook\bar\Q_p$, for all $i$. Using Lemma \ref{lemma:tame} and 
Proposition \ref{Fixed} applied to  $H_i$, for all $i$, we see that, under our assumptions, there is a finite tame Galois extension $\ti\Q_p/\Q_p$ with $\ti\Q_p\subset \bar\Q_p$  such that 

\begin{altitemize}
\item for each $i$, $\ti\Q_p$ contains the maximal tame subextensions of $F_i/\Q_p$,

\item for each $i$, the group $H_i$ splits over the compositum $\ti F_i:=F_i\ti\Q_p \subset \bar\Q_p$,  

\item 
for
\[
\bar \bx=(\bar\bx_i)_i\in \bar\B(G,\Q_p)= \B(G^\der, \Q_p)=\prod_i \B(H^\der_i, F_i),
\]
all the points $\bar\bx_i$ are hyperspecial in $\B(H^\der_i, \ti F_i)$.
\end{altitemize}

Set $\Gamma=\Gal(\ti\Q_p/\Q_p)$.
For each $i$, $$\Ga_i=\Gal(\ti F_i/F_i)=\Gal(F_i\ti\Q_p/F_i)\simeq \Gal(\ti\Q_p/F_i\cap \ti\Q_p )$$ is identified with a subgroup of $\Ga$. We have
\[
F_i\otimes_{\Q_p}\ti\Q_p\simeq \prod_{\ga\in \Ga/\Ga_i} F_i\ti\Q_p=\prod_{\ga\in \Ga/\Ga_i} \ti F_i.
\]
Now, using the standard argument of taking fixed points by a tame action, we can write 
\[
\Gg=\Gg_\bx=(\Res_{\ti\Z_p/\Z_p}\ti\Gg)^\Ga,
\]
with $\ti\Gg=\ti\Gg_{\bx}$, in which $\bx$ is considered as a point of $\B(G,\ti\Q_p)$. In particular, the natural morphism
\[
\Gg\hook \Res_{\ti\Z_p/\Z_p}\ti\Gg
\]	
is a closed immersion.  Consider the image $\bx'\in \B(G',\Q_p)$ of $\bx$ under the natural map $\B(G,\Q_p)\to\B(G',\Q_p)$; we have similar statements for the corresponding stabilizer group schemes $\Gg'=\Gg'_{\bx}$ over $\Z_p$ and $\ti\Gg'=\ti\Gg'_{\bx}$ over $\ti\Z_p$.
Using the $R$-smoothness condition, by Proposition \ref{prop: R-smoothness properties} (3), we see that the natural morphisms
\[
\Gg\to\Gg',\qquad \ti\Gg\to\ti\Gg',
\]
are closed immersions. Note that both $G\to G'$ and $\ti G\to \ti G'$ induce isomorphisms on adjoint groups.
 Write $\bx'=(\bx'_i)_i$ in $\B(G',\Q_p)=\prod_i\B(H_i, F_i)$. By the above, 
\[
G'\otimes_{\Q_p}\ti\Q_p=\prod_{i}
\Res_{F_i\otimes_{\Q_p}\ti\Q_p/\ti\Q_p}(H_i\otimes_{F_i}(F_i\otimes_{\Q_p}\ti\Q_p))=\prod_i\prod_{\ga\in \Ga/\Ga_i}\Res_{\ti F_i/\ti\Q_p}(H_i\otimes_{F_i}\ti F_i)
\]
with $ H_i\otimes_{F_i}\ti F_i$ split and $\bx'_i$  hyperspecial in $\B(H_i , \ti F_i)$. Note
 \[
\B(G',\ti\Q_p)=\prod_i \B(\Res_{F_i/\Q_p} H_i, \ti\Q_p)=\prod_{i}\prod_{\ga\in \Ga/\Ga_i}\B(H_i, \ti F_i).
\] 
Let $\ti\Hh_i$ be the reductive group schemes over $\ti \O_i:=\O_{\ti F_i} $ corresponding to $\bx'_i$ with generic fibers the split groups $H_i\otimes_{F_i}\ti F_i$. Then, 
\[
\ti\Gg'\simeq\prod_i\prod_{\ga\in \Ga/\Ga_i} \Res_{\ti \O_i/\ti\Z_p }\ti\Hh_i
\]
as group schemes over $\ti\Z_p$. Under the above isomorphism, the semi-linear action of $\Ga$ on $\ti\Gg'$ preserves the $i$-factors and corresponds, on each $i$-factor, to the action obtained by inducing the action of the subgroup $\Ga_i$ on $\Res_{\ti O_i/\ti\Z_p }\ti\Hh_i$ to the whole Galois group $\Ga$.
By Proposition \ref{prop:findlattice}, there are $\ti \O_i$-lattices $\ti\La_i\subset W_i\otimes_{F_i}\ti F_i$, which are $\Ga_i=\Gal(\ti F_i/F_i)$-stable, such that $\rho_i'\otimes_{F_i}\ti F_i$ extend to
\[
\ti\Hh_i\hook \GL(\ti \La_i)
\]
which are closed immersions. We combine these to get a closed immersion
\begin{equation}\label{imm612}
\ti\rho': \ti\Gg'\hook \GL(\ti\La)
\end{equation}
which extends $\rho'\otimes_{\Q_p}\ti\Q_p: G'\otimes_{\Q_p}\ti\Q_p \to \GL(V\otimes_{\Q_p}\ti\Q_p)$ and factors as
\[
\ti\Gg'=\prod_i\prod_{\ga\in \Ga/\Ga_i} \Res_{\ti \O_i/\ti\Z_p}\ti\Hh_i\hook \prod_i\prod_{\ga\in \Ga/\Ga_i} \Res_{\ti \O_i/\ti\Z_p}\GL( \ti \La_i)\hook \GL(\oplus_{i, \ga}  \ti\La_i)=\GL(\ti\La).
\]
Here, $\ti\La=\oplus_{i}\oplus_{\ga\in\Ga/\Ga_i}\ti\La_i$ is a $\ti\Z_p $-lattice in 
\[
V\otimes_{\Q_p}\ti\Q_p =(\oplus_i W_i)\otimes_{\Q_p}\ti\Q_p =\oplus_i (W_i\otimes_{F_i}(F_i\otimes_{\Q_p}\ti\Q_p ))=\oplus_i\oplus_{\ga\in\Ga/\Ga_i} W_i\otimes_{F_i}\ti F_i.
\]
(The action of $G'\otimes_{\Q_p}\ti\Q_p $ on $V\otimes_{\Q_p}\ti\Q_p$ given via $\rho'\otimes_{\Q_p}\ti\Q_p $ is also  induced from the subgroups $\Ga_i$ as above.) Note that $\ti\La$ is a $\Ga=\Gal(\ti\Z_p /\Z_p)$-stable lattice
and   (\ref{imm612}) is compatible with the $\Ga$-action on $\ti\Gg'$.

We now consider the composition
\begin{equation}\label{imm613}
\Gg\hook \Res_{\ti\Z_p /\Z_p}\ti\Gg\hook
\Res_{\ti\Z_p /\Z_p}\ti\Gg'\hook \GL(\ti\La).
\end{equation} 
This is a sequence of closed immersions given, more precisely, as
\[
\Gg=(\Res_{\ti\Z_p /\Z_p}\ti\Gg)^\Ga\hook \Res_{\ti\Z_p /\Z_p}\ti\Gg\hook
\Res_{\ti\Z_p /\Z_p}\ti\Gg'\xrightarrow{\Res_{\ti\Z_p /\Z_p}(\ti\rho')} \Res_{\ti\Z_p/\Z_p}\GL(\ti\La)\hook\GL(\ti\La).
\]
Here, in the target, $\ti\La$ is considered as a $\Z_p$-lattice by restriction of scalars.
 On the generic fibers, the composition gives
\[
G\to \Res_{\ti\Q_p/\Q_p}(G\otimes_{\Q_p}\ti\Q_p)\xrightarrow{\Res_{\ti\Q_p/\Q_p}(\rho\otimes_{\Q_p}\ti\Q_p)}\Res_{\ti\Q_p/\Q_p}\GL(V\otimes_{\Q_p}\ti\Q_p)\hook \GL(V\otimes_{\Q_p}\ti\Q_p).
\]
where in the target $V\otimes_{\Q_p}\ti\Q_p$ is considered as a $\Q_p$-vector space by restriction of scalars.

We can then see, using the same argument as in Proposition \ref{prop: better lattice}, that the group scheme $\Gg$ is cut out 
in $\Res_{\ti\Z_p /\Z_p}\ti\Gg\hook \GL(\ti\La)$ by a set of $\Z_p$-linear endomorphisms 
$e_a: \ti\La\to \ti\La$.

It now follows from Proposition \ref{prop: better lattice}, that the integral Hodge embeddings induced by $\Gg'\hook \GL(\ti\La)$ and by
\begin{equation}\label{imm615}
\Res_{\ti \Z_p/\Z_p}\ti\Gg'\hook \Res_{\ti\Z_p/\Z_p}\GL(\ti\La)\hook \GL(\ti\La),
\end{equation}
(i.e. the partial composition appearing in (\ref{imm613})), give closed immersions
\[
 \BMloc_{\Gg',\mu'}\to \BMloc_{\Res_{\ti \Z_p/\Z_p}\ti\Gg', \ti\mu'}\otimes_{\O_{\ti E'}}\O_{E'},\qquad \BMloc_{\Res_{\ti \Z_p/\Z_p}\ti\Gg', \ti\mu'}\to \BMloc_{\GL(\ti\La), \ti\mu'}\otimes_{\Z_p}\O_{\ti E'},
\] 
between the corresponding local models. Hence, these are good integral Hodge embeddings. 
Recall that the morphism of local model triples $(G,\{\mu\},\Gg)\to (G',\{\mu'\}, \Gg')$ induces an isomorphism
of local models
\[
\BMloc_{\Gg,\mu}\xrightarrow{\sim}\BMloc_{\Gg',\mu'}\otimes_{\O_{E'}}\O_{E}.
\]
Indeed, $G\to G'$ induces an isomorphism on adjoint groups, cf. \cite[Prop. 21.5.1]{Schber}, \cite[Pro. 2.14 (c)]{HPR}. Similarly, we have
\[
\BMloc_{\Res_{\ti \Z_p/\Z_p}\ti\Gg, \ti\mu} \xrightarrow{\sim}\BMloc_{\Res_{\ti \Z_p/\Z_p}\ti\Gg', \ti\mu'}\otimes_{\O_{\ti E'}}\O_{\ti E}.
\]
It follows that
\[
 \BMloc_{\Gg,\mu}\to \BMloc_{\Res_{\ti \Z_p/\Z_p}\ti\Gg, \ti\mu}\otimes_{\O_{\ti E}}\O_{E},\qquad \BMloc_{\Res_{\ti \Z_p/\Z_p}\ti\Gg, \ti\mu}\to \BMloc_{\GL(\ti\La), \ti\mu}\otimes_{\Z_p}\O_{\ti E}
\] 
are also closed immersions. Hence, the integral Hodge embeddings induced by $\Gg\hook \GL(\ti\La)$ of (\ref{imm613}) and by
\begin{equation}\label{imm614}
\Res_{\ti \Z_p/\Z_p}\ti\Gg\hook  \GL(\ti\La),
\end{equation}
are also good integral Hodge embeddings.

Now consider $x\in \BMloc_{\Gg, \mu}(k)$. 
 Set, for simplicity, $\calJ=\Res_{\ti \Z_p/\Z_p}\ti\Gg$, $\calJ'=\Res_{\ti \Z_p/\Z_p}\ti\Gg'$. The second group scheme is isomorphic to
a product of restriction of scalars of the (split) reductive group schemes $\ti\Hh_i$. Hence, since we exclude factors of type $D^{\mathbb H}_n$, Theorem \ref{corLMSpan} (2) implies that, at all points of
$\BMloc_{\calJ',\ti\mu'}(k)$, the tangent space of the special fiber of $\BMloc_{\calJ',\ti\mu'}$ is spanned by smooth formal curves.
Since $\BMloc_{\calJ,\ti\mu}\simeq \BMloc_{\calJ',\ti\mu'}\otimes_{\O_{E'}}\O_E$, the same holds for the tangent spaces of the special fiber of
$\BMloc_{\calJ,\ti\mu}$. Proposition \ref{span} then implies that the integral Hodge embedding given by $\calJ \hook  \GL(\ti\La)$ of (\ref{imm614}) is  very good  
at the image $x'\in\BMloc_{\calJ,\ti\mu}(k)$ of $x$ under $\BMloc_{\Gg,\mu}\to \BMloc_{\calJ,\ti\mu}\otimes_{\O_{\ti E}}\O_{E}$.
Since, as we have seen above, $\Gg$ is cut out in $\calJ \hook \GL(\ti\La)$ by endomorphisms of $\ti\La$, Corollary  \ref{endoplus} now implies that the embedding given by $\Gg\hook \GL(\ti\La)$ of (\ref{imm613}) is 
very good at $x$. 

Finally, we let $\L$ be the lattice chain in $V=(V\otimes_{\Q_p}\ti \Q_p)^\Ga$ which is given by $\{(\ti\pi^i\ti\La)^\Ga\}_{i\in \Z}$, see Lemma \ref{fixedptsGL}. Then ${\rm tot}(\L)\subset V^{\oplus r}$, where $r$ is the number of lattices in a determining segment of $\L$. Set $\La={\rm tot}(\L)$. We now have a diagram of closed group scheme immersions
\begin{equation}\label{eq:diagram2}\begin{aligned}
\xymatrix{\Res_{\ti \Z_p/\Z_p}\ti\Gg\ar[r]& \GL(\ti\La ) &\\
 \Gg\ar[u]\ar[r] & \GL(\L)\ar[u]\ar[r] & \GL({\rm tot}(\L))=\GL(\La)}\end{aligned}
\end{equation}
inducing a corresponding diagram of local model triples which are all good integral Hodge embeddings, cf. \S \ref{rem:diagram}. It remains to deduce that $\Gg\hook \GL(\La)$ is also   very good at $x$. Observe that, after an unramified extension, $\Gg\hook \GL(\La)$ becomes a direct summand in 
 $\Gg\hook \GL(\ti\La)$, cf. (\ref{generalDirectSum}). Then, since $\Gg\hook \GL(\ti\La)$ gives a very good integral Hodge embedding at $x$, the argument in \S\ref{2immersions} together with Lemmas  \ref{diagonal} (a) and \ref{unramified}, implies that $\Gg\hook \GL(\La)$ gives a very good integral Hodge embedding at $x$.
 \end{proof}

  \begin{para}\label{sss:simil1} Here we present a variant
   of Theorem \ref{thm:main} in the presence of alternating forms. This is needed   in the final section of the paper when we consider Hodge embeddings in symplectic groups.
  
  We continue with the same notation. Suppose that there are perfect alternating $F_i$-bilinear forms $\psi_i: W_i\times W_i\to F_i$ such that $\rho_i: H_i\to \GL_F(W_i)$ factors through $\GSp_{F_i}(W_i)$, for all $i$. Recall that $V_i$ is $W_i$ regarded as a $\Q_p$-vector space by restriction of scalars. For each $i$, equip $V_i$ with a perfect alternating $\Q_p$-bilinear form  given by 
 \[
 \psi^{0}_i(v, v')={\rm Tr}_{F_i/\Q_p}(\delta_{F_i/\Q_p}^{-1} \psi_i(v, v'))
 \]
where $\delta_{F_i/\Q_p}$ is a generator of the different ideal of the extension $F_i/\Q_p$. (The form depends on this choice.)
Then the sum
\[
\psi(v, v')=\sum_i \psi_i^{0}(v_i, v'_i), \qquad v=(v_i)_i,\quad v'_i=(v'_i)_i,
\]
 gives a perfect alternating $\Q_p$-bilinear form $\psi$ on $V=\oplus_i V_i$.  We use the superscript $^\vee$ to denote the $\psi$-dual of a $\bbZ_p$-lattice (resp. $\ti\Z_p$-lattice) in $V$ (resp. $V\otimes_{\bbQ_p}\ti\bbQ_p$). If $\calL$ is a periodic lattice chain in $V$, we let $\calL^\vee$ denote the periodic lattice chain whose constituent lattices are given by $\Lambda^\vee$ for $\Lambda\in \calL$.
 
  \begin{thm}\label{thm:mainGSp} Suppose that $(G, \{\mu\}, \Gg)$ is a local model triple over $\Q_p$ satisfying the assumptions of Theorem \ref{thm:main}. With the notations of that Theorem, we assume there are perfect alternating $F_i$-bilinear forms $\psi_i: W_i\times W_i\to F_i$ such that $\rho_i: H_i\to \GL_F(W_i)$ factors through $\GSp_{F_i}(W_i)$, for all $i$. We define $\psi: V\times V\to F$ as in the paragraph above and suppose that the image $\rho(G)$ lies in the 
 	symplectic similitude group $\GSp(V)=\GSp(V,\psi)$.  
 	
 	Then there is a periodic lattice chain $\L$ in $V$ such that $\rho$ extends 
	to closed immersions $
	\Gg\hook \GL(\L)$, $ \Gg\hook \GL(\L^\vee)$
	which both give very good integral Hodge embeddings \[
	(\Gg,\mu)\hook (\GL(\L),\mu_{d}), \quad (\Gg,\mu)\hook (\GL(\L^\vee),\mu_{d}).
	\]
	 In addition, the direct sum  
	$
	(\Gg,\mu)\hook (\GL(\L\oplus \L^\vee),\mu_{2d})
	$
	is a very good integral Hodge embedding.
	\end{thm}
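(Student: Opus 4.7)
The plan is to rerun the construction in the proof of Theorem \ref{thm:main}, choosing the auxiliary lattices compatibly with the symplectic form $\psi$. I would first follow the same tame extension setup: pick a finite tame Galois extension $\ti\bbQ_p/\bbQ_p$ splitting each $H_i$ and making $\mathbf{x}'$ hyperspecial, producing a chain of closed immersions $\Gg\hook\Res_{\ti\bbZ_p/\bbZ_p}\ti\Gg\hook\Res_{\ti\bbZ_p/\bbZ_p}\ti\Gg'\hook\GL(\ti\Lambda)$ with $\ti\Lambda=\bigoplus_{i,\gamma}\ti\Lambda_i$ for $\Gamma_i$-stable $\ti\calO_i$-lattices $\ti\Lambda_i\subset W_i\otimes_{F_i}\ti F_i$. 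The key additional requirement is to choose each $\ti\Lambda_i$ so that its $\psi_i$-dual is an $\ti\calO_i$-scalar multiple of $\ti\Lambda_i$. This is achieved by applying the averaging argument of Proposition \ref{prop:findlattice} to the compact group $\ti\calH_i(\calO_M)\rtimes\Gal(M/\bbQ_p)$ acting on the pair $(\ti\Lambda_i,\ti\Lambda_i^\vee)$, using that $\psi_i$ intertwines these two actions up to the similitude character (which is integral on the compact group).

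With such a $\psi$-compatible $\ti\Lambda$, I would define $\L$ to be the periodic $\bbZ_p$-lattice chain $\{(\ti\pi^i\ti\Lambda)^\Gamma\}_{i\in\bbZ}$ as in Lemma \ref{fixedptsGL}. By construction $\L^\vee$ is a reindexed shift of $\L$, and both $\Gg\hook\GL(\L)$ and $\Gg\hook\GL(\L^\vee)$ are closed immersions. For the very good property of $(\Gg,\mu)\hook(\GL(\L),\mu_d)$, the same reductions as in the proof of Theorem \ref{thm:main} apply: the sequence of good integral Hodge embeddings $\Gg\hook\Res_{\ti\bbZ_p/\bbZ_p}\ti\Gg\hook\GL(\ti\Lambda)$ is very good at all points of $\BMloc_{\Res_{\ti\bbZ_p/\bbZ_p}\ti\Gg,\ti\mu}$ by combining Theorem \ref{corLMSpan}(2) with Proposition \ref{span} (using (NE) to exclude factors of type $D^{\bbH}_n$), and since $\Gg$ is cut out in the Weil restriction by $\bbZ_p$-linear endomorphisms from the Galois action, Corollary \ref{endoplus} transfers this down to the embedding $\Gg\hook\GL(\ti\Lambda)$. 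Descent to $\GL(\L)$ then follows, as in the proof of Theorem \ref{thm:main}, from Lemma \ref{diagonal}(a) combined with Lemma \ref{unramified}, using that after an unramified base change $\mathrm{tot}(\L)$ is a direct summand of $\ti\Lambda$ via \eqref{generalDirectSum}. The very good property for $\L^\vee$ follows either by rerunning the argument symmetrically or, more directly, via Lemma \ref{diagonal}(b) applied to the $\psi$-induced isomorphism $\L\cong\L^\vee$ of $\Gg$-equivariant chains.

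For the direct sum, the very good property of $(\Gg,\mu)\hook(\GL(\L\oplus\L^\vee),\mu_{2d})$ follows immediately from Lemma \ref{lem: product vg} applied to the two very good integral Hodge embeddings just established.

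The hard part will be the first step, namely producing $\Gamma_i$-stable $\ti\calO_i$-lattices $\ti\Lambda_i$ that simultaneously support $\ti\calH_i\hook\GL(\ti\Lambda_i)$ as a closed immersion and are stable under $\psi_i$-duality up to a scalar. I expect the averaging argument of \cite[Lem. 2.3.1]{KisinJAMS} applied to $\ti\Lambda_i\oplus\ti\Lambda_i^\vee$ acted on by the compact group $\ti\calH_i(\calO_M)\rtimes\Gal(M/\bbQ_p)$ to produce this, with the integrality of the similitude character ensuring the self-duality up to scalar. However, careful bookkeeping with the ramification indices $e_i$ of $\ti F_i/F_i$ and the distinct uniformizers $\ti\pi_i$ is required to ensure that the resulting $\bbZ_p$-chain $\L$ obtained by Galois descent is genuinely $\psi$-stable and not merely stable up to a character that fails to be integral at the level of $\bbZ_p$.
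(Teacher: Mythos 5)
Your overall scaffolding — redo the Theorem \ref{thm:main} construction, then descend to the $\Gamma$-fixed lattice chain — matches the paper, but the paper's proof does \emph{not} attempt to make the auxiliary lattice $\ti\La$ self-dual up to homothety. It simply takes the $\ti\La_i$ produced by Proposition \ref{prop:findlattice} as they are, sets $\ti\La^*_i$ equal to the $\psi_i$-dual of $\ti\La_i$ (a \emph{different}, but still $\Gamma_i$-stable, $\ti\Hh_i$-stable lattice), and observes that the dual lattice chain $\L^\vee$ is exactly the $\Gamma$-descent $\{(\ti\pi^m\ti\La^*)^\Gamma\}_m$: one checks that $\ti\pi^{-m}\delta^{-1}\ti\La^\vee$ is the $\psi$-dual of $\ti\pi^m\ti\La$, where $\delta$ generates the different of $\ti\Q_p/\Q_p$, and duality commutes with $\Gamma$-fixed points. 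The very-good argument of Theorem \ref{thm:main} is then applied separately to $\ti\La$, to $\ti\La^*$, and to $\ti\La\oplus\ti\La^*$; no identification $\L\cong\L^\vee$ is needed.

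Your plan instead hinges on choosing each $\ti\La_i$ so that $\ti\La_i^\vee$ is an $\ti\O_i$-scalar multiple of $\ti\La_i$, and this is the genuine gap. The averaging argument of \cite[Lem.\ 2.3.1]{KisinJAMS} underlying Proposition \ref{prop:findlattice} produces a lattice stable under the compact group by summing translates; but duality reverses inclusions, so if $L=\sum_g gL_0$ is stable, then $L^\vee=\bigcap_g (gL_0)^\vee$ is also stable, yet nothing forces $L^\vee$ to be homothetic to $L$. Taking $L_0+L_0^\vee$ or $L_0\cap L_0^\vee$ preserves stability but still does not yield a self-dual lattice in general (the group $\ti\Hh_i$ need only fix a self-dual lattice \emph{chain} inside $\GSp$, not a self-dual vertex). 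You acknowledge this uncertainty in your last paragraph, but the argument as written does not close it, and the subsequent reduction of $\L^\vee$ to $\L$ via Lemma \ref{diagonal}(b) also needs an intertwining isomorphism, which the form $\psi$ provides only up to the similitude character. Since the theorem does not actually assert that $\L$ is self-dual, the cleanest fix is to drop the self-duality requirement and run the argument for $\ti\La$ and its dual $\ti\La^*$ independently, as the paper does; your treatment of the direct sum via Lemma \ref{lem: product vg} then goes through.
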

 \begin{proof}We choose $\tilde{\calO}_i$-lattices $\tilde{\Lambda}_i$ as in Theorem \ref{thm:main}, and let $\ti\Lambda_i^*$ denote the $\psi_i$ dual of $\ti\Lambda_i$.  We have $\Ga$-invariant $\ti\Z_p$-lattices $\ti\La:=\oplus_i \oplus_{\in \Ga/\Ga_i}\ti\La_i$ and 
 	$\ti\La^*:=\oplus_i \oplus_{\in \Ga/\Ga_i}\ti\La^*_i$
 	in 
 	$V\otimes_{\Q_p}\ti\Q_p$. If we consider $\ti\Lambda^*$ as a $\ti\Z_p$-lattice in $V\otimes_{\Q_p}\ti\Q_p$, then we have $\ti\Lambda^*=\ti\Lambda^\vee$.
 	
 	 Let $\calL$ denote the lattice chain $\{(\ti\pi^i\ti\La)^\Ga\}_{i\in \Z}$ in $V$. Then $\calL^\vee=\{(\ti\pi^i\ti\La^*)^\Ga\}_{i\in \Z}$ and
	$\L\oplus \calL^\vee=\{(\ti\pi^i(\ti\La\oplus \ti\La^*))^\Ga\}_{i\in \Z}$.
	  Indeed, $
 	 \ti\pi^{-m}\delta^{-1}\ti\La^\vee
 	 $  is the $\psi$-dual of the $\ti\Z_p$-lattice $\ti\pi^{m}\ti\La$, and hence  $(\ti\pi^{-m}\delta^{-1}\ti\La^\vee)^\Gamma$ is the $\psi$-dual of $(\ti\pi^{-m}\delta^{-1}\ti\La)^{\Gamma}.$ Here, the element $\delta $ generates the different of the extension $\ti\Q_p/\Q_p$. Then the argument in the proof of Theorem \ref{thm:main} applies to $\ti\La$, $\ti\La^*$ and $\ti\La\oplus\ti\La^*$, and shows that   $(\calG,\mu)\hookrightarrow (\GL(\calL),\mu_d), (\calG,\mu)\hookrightarrow (\GL(\calL^\vee),\mu_d)$ and their direct sum are all very good integral Hodge embeddings.
\end{proof}

 \begin{Remark}\label{exclude}
 {\rm When $G^{\ad}$ is simple over $\Q_p$, the assumptions of Theorem \ref{thm:main} exclude:

 \begin{altitemize}
 \item[1)]Types $D^{\mathbb H}_n$, i.e. with $G^\ad=\Res_{F/\Q_p}H^\ad$, 
 $H^\ad\otimes_F\bar \Q_p\simeq {\rm PSO}_{2n}$, such that $\mu^\ad\neq 1$ and, for each $\phi: F\to \bar\Q_p$, $\mu^\ad_{\phi}: \Gm_{\bar\Q_p}\to 
 H^\ad\otimes_F\bar\Q_p\simeq {\rm PSO}_{2n}$ is of type $\varpi^\vee_n$, $\varpi^\vee_{n-1}$, or is trivial. Here, $n\geq 4$.

 \item[2)] Types $A_n$ with adjoint group $G^\ad=\Res_{F/\Q_p}{\rm PGL}_m(D)$, where $D$ is a division $F$-algebra such that $p$ divides the index of $D$. 
 \end{altitemize}
 
We will handle such cases by explicit ad hoc arguments and give ``sufficient" local model triples with very good Hodge embeddings. Roughly, the main idea is that in these cases there are enough Hodge embeddings which are (essentially) of PEL type. This is discussed in the next paragraphs.}
  \end{Remark}

  \end{para}
  
\subsection{$D^{\mathbb H}_n$ types}\label{ss:DnH} 

\begin{para} 
Let $V$ be a $K$-dimensional vector space of even dimension $2n$, equipped with a perfect symmetric $K$-bilinear  form $h: V\times V\to K$. For a $K$-algebra $R$, we set $V_R=V\otimes_KR$. The group of orthogonal similitudes ${\rm GO}(V)={\rm GO}(V, h)$ has $R$-valued points
\[
{\rm GO}(V, h)(R)=\{g\in \GL_R(V_R)\ |\ \exists\ c(g)\in R^\times,\ h(gv, gv')=c(g)h(v, v'),\ \forall v, v'\in V_R \}.
\]
This group has two connected components; the neutral component is the subgroup
${\rm GO}^+(V)$ of $g\in {\rm GO}(V)(R)$ with $c(g)^n=\det(g)$.
 \end{para}

\begin{para}\label{par:DnH}
Suppose $G^\ad$ is  simple over $\Q_p$ and $(G^\ad,\mu^\ad)$ is of type $D^{\mathbb H}_n$, as above. As in \cite[\S 5.3.8]{PZ}, \cite{Gross}, we see that $G^\ad\simeq \Res_{K/\Q_p}G'^\ad$, with 
$G'$ as in one of the following cases:

 a) There is a $K$-vector space $V\simeq K^{2n}$ and a perfect symmetric $K$-bilinear $h: V\times V\to K$ such that $G'=\GO^+(V, h)$.
 
 In this case, we can obtain (symplectic) representations of $G'$ that give local Hodge embeddings as follows.
 Let $\tau: G'\hook \GL(V)$ be the natural embedding. Suppose $V_0\simeq K^{2s}$ is equipped  with a perfect alternating $K$-bilinear form 
 $S: V_0\times V_0\to K$ and set $W=V_0\otimes_K V$. This is an ${\rm End}_K(V_0)$-module and 
 supports the perfect alternating form $\psi$ given by
 \[
 \psi(x_1\otimes v_1, x_2\otimes v_2)=S(x_1, x_2)h(v_1, v_2).
 \]
We have the intersection
\[
\GO(V, h)=\GL_{{\rm End}_K(V_0)}(W)\cap \GSp(W)
\]
 and an embedding 
\[
\sigma_{V_0}: G'\subset \GO(V, h)\hook\GSp(W)\hook \GL(W).
\]
 (Note that ${\rm SO}(V, h)$ and ${\rm Sp}(V_0, S)$ form a dual pair in ${\rm Sp}(W, \psi)$.)
 
 Since $(G, \mu)$ is of type $D^{\mathbb H}_n$, both $\tau$ and $\sigma_{V_0}$, followed by taking restriction of scalars, give (local) Hodge embeddings 
 \[
 (G,\mu)\hook (\GL(V), \mu'),\quad (G,\mu)\hook (\GL(W), \mu'')
 \]
 where $V$ and $W$ as considered as $\Q_p$-vector spaces   and $\mu'$, $\mu''$ are the corresponding (minuscule) coweights 
 obtained by composing $\Res_{K/\Q_p}\tau$ and $\Res_{K/\Q_p}\sigma_{V_0}$ with $\mu$.
 
Note that we can choose a Lagrangian basis $\{e_1,\ldots, e_{2s}\}$ of $(V_0, S)$, i.e. such that $S(e_i, e_{2s+1-i})=1$, if $1\leq i\leq s$, and $S(e_i, e_j)=0$ if $1\leq i, j\leq s$, or $s+1\leq i, j\leq 2s$. The representation $\sigma_{V_0}: G'\to \GSp(W)\subset \GL(W)$ is isomorphic to a direct sum 
of $s$ copies of $\sigma_{K^2}$ obtained from $V_0=K^2$ with its standard alternating form; the resulting alternating form on $
W$ is identified with the corresponding orthogonal direct sum.

 b) There is a (left) $D$-module $T\simeq D^n$ for a division quaternion $K$-algebra $D$ and a non-degenerate quaternionic anti-hermitian form $\phi: T\times T \to D$ 
 for the main involution $d\mapsto \bar d$ on $D$, such that $G'={\rm GU}^+(T, \phi)$, where ${\rm GU}(T, \phi)$ is the corresponding unitary similitude group, and ${}^+$ signifies taking the neutral component. 
 Here ${\rm GU}(T, \phi)$ can also be given as follows: Consider the alternating $K$-bilinear form $\psi: T\times T\to K$ given by 
 \[
 \psi(t_1, t_2)={\rm Tr}_{D/K}(\phi(t_1, t_2))
 \]
where ${\rm Tr}_{D/K}: D\to K$ is  the reduced trace
(cf.    \cite[\S 5.3.8]{PZ}, \cite[Prop. A.53]{R-Z}, applied to $n=1$.) 
For a $K$-algebra $R$, ${\rm GU}(T, \phi)(R)$ is given by $D\otimes_K R$-linear automorphisms of $T\otimes_KR$  that respect $\psi$ up to a similitude in $R^*$. Hence,
\[
{\rm GU}(T, \phi)=\GL_{D}(T)\cap \GSp(T, \psi).
\] 
This gives an embedding $\sigma: G'\hook  \GSp(T, \psi)\hook \GL(T)$ which produces a local Hodge embedding  for $(G,\mu)$.

We can obtain more symplectic representations of $G'$ that give local Hodge embeddings as follows.
Let $T_0\simeq D^s$ be a right $D$-module with a non-degenerate quaternionic hermitian form $S: T_0\times T_0\to D$, again for the main involution. We can consider
the $K$-vector space $W=T_0\otimes_D T$ with $K$-bilinear alternating form 
\[
\psi(t_0\otimes t, t'_0\otimes t')={\rm Tr}_{D/K}( S(t_0', t_0)\phi(t, t')).
\]
Then we have
\[
{\rm GU}(T, \phi)=\GL_{{\rm End}_D(T_0)}(W)\cap \GSp(W, \psi).
\]
This gives an embedding $\sigma_{T_0}: G'\hook  \GSp(W, \psi)\hook \GL(W)$ which produces a 
local Hodge embedding  for $(G,\mu)$. Taking $T_0=D$ as a right $D$-module with the standard  hermitian form $S(d, d')=\bar d d'$ gives $W=T$ and the embedding $\sigma$ as above. In fact, there is always a $D$-basis $T_0= D^s$ for which $S$ is the standard hermitian form $S((d_i), (d'_i))=\sum_{i=1}^s \bar d_id'_i$, cf. \cite{ShimuraQ}. Hence, the representation $\sigma_{T_0}: G'\to \GSp(W)\subset \GL(W)$ is isomorphic to a direct sum 
of $s$ copies of $\sigma_{D}=\sigma$ obtained from $T_0=D$ with its standard hermitian form; the resulting alternating form on $
W$ is identified with the corresponding orthogonal direct sum.

Let $L/K$ be a degree $2$ unramified extension with $L\subset D$ as $K$-algebras. Then we can write $D=L\oplus L\cdot \Pi$ with $\Pi^2=\pi$. Base changing from $K$ to $L$ splits $D$: $D\otimes_K L\simeq  {\rm M}_2(L)$. Morita equivalence then gives $T_L=L^2\otimes_L V_L$ for a $2n$-dimensional $L$-vector space $V_L$. The base change $\phi\otimes_KL$ is determined by a symmetric $L$-bilinear form $h_L: V_L\times V_L\to L$ as in case (a) above,
cf. \cite[Prop. A.53]{R-Z}. We can see that the base change of the pair of the group $G'={\rm GU}^+(T, \phi)$ with its representation $\sigma=\sigma_{D}$ in case (b), is isomorphic to 
$ {\rm GO}^+(V_L, h_L)$ with the representation $\sigma_{L^2}$  in case (a). 

For a lattice chain $\calL$  of $\bbZ_p$-lattices in $W$ (in cases (a) or (b)), we write $\calL^\vee$ for the dual lattice chain with respect to the alternating form  $\Tr_{K/\bbQ_p}\circ (\delta^{-1}_{\rmK/\bbQ_p}\psi).$
\end{para}

  \begin{thm}\label{thm:DnH} Let $G= \Res_{K/\Q_p}G'$ with $G'$ as in   \S\ref{par:DnH}   and let $(G, \{\mu\}, \Gg)$ be a local model triple 
 of  $D^{\mathbb H}_n$ type.  Assume that $\Gg$  is the stabilizer group scheme for a point $\bf x$ in $\B(G, \Q_p)= \B(G', K)$ which is generic in its facet.
 Let $\rho'=\sigma_{V_0}: G'\hook \GL(W)$ (in case (a)) and $\rho'=\sigma_{T_0}: G'\hook \GL(W)$ (in case (b)) be as above. 
 
 Then there is a periodic lattice chain $\calL$ of $\bbZ_p$-modules in $W$ which is self-dual (i.e. $\calL=\calL^\vee$) such that $\rho'$ extends to a very good Hodge embedding $(\calG,\mu)\rightarrow (\GL(\calL),\mu'')$.
 \end{thm}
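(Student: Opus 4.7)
The plan is to exploit the PEL-type description of $G'$, which realizes $G$ as cut out in a larger symplectic similitude group by endomorphism conditions, and then apply Theorem \ref{thm:mainGSp} to that larger group (which is of type $C$ and thus satisfies (NE)), descending to $G$ via Corollary \ref{endoplus}. Concretely, let $\psi_K : W\times W\to K$ denote the $K$-alternating form $S\otimes h$ on $W=V_0\otimes_K V$ in case (a), respectively the form $(t_0\otimes t,t_0'\otimes t')\mapsto\Tr_{D/K}(S(t_0',t_0)\phi(t,t'))$ on $W=T_0\otimes_D T$ in case (b). Set $B=\End_K(V_0)$ (resp.\ $B=\End_D(T_0)$). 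Then \S\ref{par:DnH} gives
\[
G'\;=\;\GSp_K(W,\psi_K)\,\cap\,\GL_B(W)\qquad\text{inside }\GL_K(W),
\]
where $\GL_B(W)\subset\GL_K(W)$ is cut out by the finitely many $\bbQ_p$-linear endomorphisms of $W$ coming from a $\bbZ_p$-generating set of a $\Z_p$-order $\calO_B\subset B$. I introduce the auxiliary Weil restriction
\[
\tilde G\;:=\;\Res_{K/\bbQ_p}\GSp_K(W,\psi_K),
\]
through which $\rho'$ factors as $G\hookrightarrow\tilde G\hookrightarrow\GL(W)$, with $\tilde G$ landing in $\GSp_{\bbQ_p}(W,\psi)$ for $\psi=\Tr_{K/\bbQ_p}(\delta_{K/\bbQ_p}^{-1}\psi_K)$. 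Let $\tilde{\bf x}$ be the image of $\bf x$ in $\calB(\tilde G,\bbQ_p)$; after perturbing within its facet without affecting the stabilizer of $\bf x$, we may assume $\tilde{\bf x}$ is generic in its facet, and let $\tilde\calG$ be its stabilizer group scheme.

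Next I apply Theorem \ref{thm:mainGSp} to $(\tilde G,\tilde\mu,\tilde\calG)$. The hypotheses hold: $\tilde G$ is the Weil restriction along the tame extension $K/\bbQ_p$ of the split symplectic similitude group $\GSp_K(W,\psi_K)$, hence is essentially tame, classical and of type $C$ (so satisfies (NE)); the centralizer of a maximal $\breve\bbQ_p$-split torus in $\tilde G$ is a Weil restriction of a split torus, hence $R$-smooth by Proposition \ref{prop: R-smoothness properties}(1); and $p\nmid|\pi_1(\tilde G^{\der})|$. Taking $s=1$, $F_1=K$, $H_1=\GSp_K(W,\psi_K)$, $\psi_1=\psi_K$, and $\rho'_1$ the natural symplectic representation, Theorem \ref{thm:mainGSp} produces a self-dual periodic $\bbZ_p$-lattice chain $\calL$ in $W$ (for $\psi$) such that the closed immersion $\tilde\calG\hookrightarrow\GL(\calL)$ extends $\rho'$ and is a very good integral Hodge embedding at every point of $\BMloc_{\tilde\calG,\tilde\mu}$.

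Finally, by the PEL description, $G$ is cut out in $\tilde G$ as a closed subscheme of $\GL(W)$ by the endomorphisms $t_a:W\to W$ coming from the $\bbZ_p$-generators of $\calO_B$. Provided $\calL$ is $\calO_B$-stable, these $t_a$ preserve $\calL$, and the hypotheses of Corollary \ref{endoplus} are met: the composition $\calG\hookrightarrow\tilde\calG\hookrightarrow\GL(\calL)$ is a good integral Hodge embedding (by Theorem \ref{thm: LM embedding} applied to $\calG$ and the compatibility of the resulting local model with the one for $\tilde\calG$), $\tilde\calG\hookrightarrow\GL(\calL)$ is very good, and $\calG=\tilde\calG\cap\{g:g\,t_a=t_a\,g\}$. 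The corollary then yields that $(\calG,\mu)\hookrightarrow(\GL(\calL),\mu'')$ is very good at every point of $\BMloc_{\calG,\mu}(k)$. Here the fact that $\calG\hookrightarrow\tilde\calG$ is itself a closed immersion follows from $\calG\hookrightarrow\GL(\calL)$ being one, by the faithfulness of the Hodge embedding together with \cite[Prop.~1.7.6]{BT2}, combined with $\tilde\calG\hookrightarrow\GL(\calL)$ being a closed immersion.

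The main technical obstacle is to arrange simultaneously the $\calO_B$-stability and the self-duality of $\calL$. Both properties can be built into the construction underlying Theorem \ref{thm:mainGSp}: the key auxiliary $\ti\calO$-lattice is obtained via Proposition \ref{prop:findlattice} by averaging a compact subgroup of the form $\tilde\calG(\calO_M)\rtimes\Gal(M/\bbQ_p)$ against an initial lattice. Since $\calO_B$ commutes with both the $\tilde\calG$-action and the Galois action, starting from an $\calO_B$-stable initial lattice of the form $\ti V_0\otimes_{\calO_K}\ti\Lambda$, with $\ti V_0\subset V_0$ self-dual for $S$ and $\calO_B$-stable and $\ti\Lambda\subset V$ self-dual for $h$ (and analogously in case (b)), the averaging step preserves both $\calO_B$-stability and self-duality, and propagates these through the Weil restriction and fixed-point constructions to yield the desired lattice chain $\calL$ in $W$.
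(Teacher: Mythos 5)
Your overall strategy is the same in spirit as the paper's: realize $G'$ as cut out inside a symplectic similitude group by endomorphisms, prove the very-good property for that larger type-$C$ group via Theorem~\ref{thm:mainGSp}, and descend to $\calG$ via Corollary~\ref{endoplus}. However, there is a genuine gap at the descent step.

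The identification $G' = \GSp_K(W,\psi_K)\cap\GL_B(W)$ is not correct. By \S\ref{par:DnH}, that intersection is $\GO(V,h)$ in case (a) (resp.\ $\GU(T,\phi)$ in case (b)), whereas $G' = \GO^+(V,h)$ (resp.\ $\GU^+(T,\phi)$) is only the \emph{neutral component} of the intersection. Consequently the scheme-theoretic equality $\calG = \tilde\calG\cap\{g: g\,t_a = t_a\,g\}$ that you invoke cannot hold: the right-hand side is a stabilizer scheme for the non-connected group $\Res_{K/\Q_p}\GO(V,h)$, which strictly contains $G$ on generic fibers. Corollary~\ref{endoplus} therefore does not apply to $\calG$ as written; what you actually get is horizontality of a tensor set cutting out the non-connected group scheme, and you are one step short of showing that the embedding for $\calG$ itself is very good. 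The paper handles exactly this point with Proposition~\ref{neutral}, which descends the connection isomorphism from a torsor under a non-connected smooth group scheme to a torsor under its neutral component; the key input is that the kernel of reduction modulo the relevant nilpotent ideal is the same for the group scheme and its neutral component. This passage to $\GO^+$ (or $\GU^+$) is not optional.

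Two further points are sketchier than they need to be. First, the last paragraph, where you arrange simultaneously the $\calO_B$-stability and the $\psi$-self-duality of the lattice chain, is where most of the actual work resides. The paper carries this out by first passing to a tame Galois extension $\ti K/K$ which splits $G'$ and over which $\bx$ becomes hyperspecial, choosing there an $h$-self-dual, $\Ga$-stable lattice $\ti\La$ (adjusting the homothety class after a further tame enlargement to make it strictly self-dual), setting $\ti M = \calO^2\otimes\ti\La$, and then using the integral identity $\GO(\ti\La) = \GL_{{\rm M}_2(\calO)}(\ti M)\cap\GSp(\ti M,\psi)$ from [R-Z, Prop.~A.18--A.19] so that the intersection is already smooth at the integral level. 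Working directly over $K$ with $\tilde G = \Res_{K/\Q_p}\GSp_K(W)$, it is not clear without this that the intersection $\tilde\calG\cap\GL_{\calO_B}(\Lambda)$ is a smooth group scheme with the expected generic fiber, which is what makes Corollary~\ref{endoplus} applicable as a statement about group schemes. Second, the claim that one may perturb $\tilde{\bf x}$ within its facet ``without affecting the stabilizer of $\bf x$'' needs justification: shrinking $\tilde\calG_{\tilde{\bf x}}$ to the stabilizer of a generic point of the facet can break the containment of the image of $\calG(\br\Z_p)$, and hence the closed immersion $\calG\hookrightarrow\tilde\calG$. The paper avoids this issue by constructing the lattice directly.
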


  \begin{proof}
  Let us  discuss case (a). Since $\sigma_{V_0}$ is isomorphic to a direct sum of copies of $\sigma_{K^2}$, we see, using Lemma \ref{diagonal}, that it is enough to show the statement of $\sigma_{K^2}$.  
  By Prop. \ref{Fixed}, there is a tame Galois extension $\ti K/K$ such that $G'\otimes_K\ti K$ splits and the stabilizer group scheme for $\bx\in \B(G',\ti K)$ is hyperspecial. Hence, it is the stabilizer of an $\ti\O$-lattice $\ti\La$ in $V\otimes_K\ti K$ which is $\Ga=\Gal(\ti K/K)$-stable and is self-dual  up to homothety, i.e. $\ti\La^\vee=\ti\pi^a\ti\La$, for $h_{\ti K}$ (see \cite{BTclassII}, \cite[15.2]{KalethaPrasad}). By further enlarging $\ti K$ to allow a square root of $\ti\pi$, we can change  $\ti\La$ in its homothety class and assume it is  self-dual $\ti\La^\vee=\ti\La$. We set $\ti\Gg'=\GO^+(\ti\La, h)$. 
 Now set   $\ti M:=\O^2\otimes_\O \ti\La\subset W_{\ti K}=V_{\ti K}\oplus V_{\ti K}$ which is 
 $\Ga$-stable and $\psi$-self-dual.

The argument in the proof of Thm \ref{thm:main} produces
\begin{equation}\label{eqn: morphism D_n}
\Res_{\ti \O/\Z_p}\ti\Gg'\hook \Res_{\O/\Z_p}\GL(\ti M)\hook \GL(\ti M)
\end{equation}
which gives a good integral Hodge embedding. The proof of the conclusion of Theorem \ref{thm:main} applies provided we can ensure that this gives a very good embedding. Note the self-duality of the resulting lattice chain $\calL$ follows from the $\psi$-self-duality of $\ti M$.

Observe that we have
\begin{equation}\label{M2action}
\GO(\ti\La, h)=\GL_{{\rm End}_\O(\O^2)}(\ti M)\cap \GSp(\ti M,\psi)
\end{equation}
 as a scheme-theoretic intersection.
Indeed, this situation falls in case (II) considered in \cite[App. to Ch. 3]{R-Z} and (\ref{M2action}) follows from 
 \emph{loc. cit.} Prop A. 18, Prop. A. 19. In what follows, we will omit the notation of the forms $h$ and $\psi$.
 Using (\ref{M2action}) we see that $\Res_{\ti \O/\Z_p}\GO(\ti\La)$ is cut out in $\Res_{\ti \O/\Z_p}\GSp(\ti M)$ by a set of endomorphisms $ \ti M\to\ti M$. On the other hand, the integral Hodge embedding given by
$\Res_{\ti \O/\Z_p}\GSp(\ti M)\hook \GL(\ti M)$ is very good by an application of Theorem \ref{thm:main} to the symplectic 
similitude group. Hence,  as in the argument of Proposition \ref{prop: better lattice},  Cor. \ref{endoplus} implies that the composition
\begin{equation}\label{integralDn}
\Res_{\ti \O/\Z_p}\GO(\ti\La)\hook \Res_{\ti \O/\Z_p}\GSp(\ti M)\hook \GL( \ti M)
\end{equation}
 is cut out by a set of tensors $(s_a)\in \ti M^{\otimes}$ such that $\ti s_a$ are horizontal. 
  Now $\Res_{\ti O/\Z_p}\GO^+(\ti\La)$ is the Zariski closure of $\Res_{\ti K/\Q_p}\GO^+(V)$ in 
 $ \Res_{\ti \O/\Z_p}\GO(\ti\La)$. Hence, we can apply Prop. \ref{neutral} and conclude that the restriction 
\begin{equation}\label{integralDn2}
\Res_{\ti \O/\Z_p}\ti\Gg'=\Res_{\ti \O/\Z_p}\GO^+(\ti\La)\hook \GL( \ti M)
 \end{equation}
 of (\ref{integralDn})  gives a  very good integral Hodge embedding. 
This is now enough to deduce the result by using the argument in the proof of Theorem \ref{thm:main}, as we mentioned above. 
This completes the proof in case (a).

Case (b) is now similar: First, we reduce to the case of $\sigma$, using Lemma \ref{diagonal}.  
  By Prop. \ref{Fixed}, there is a tame Galois extension $\ti K/K$ such that $G'\otimes_K\ti K$ splits and the stabilizer group scheme for $\bx\in \B(G',\ti K)$ is hyperspecial. In fact, by possibly enlarging $\ti K$, we can also make sure that the base change $\sigma\otimes_K\ti K$ is isomorphic to $\sigma_{\ti K^2}$ as obtained from the standard split symmetric form on $\ti K^{2n}$ in case (a). The same argument as in case (a) now goes through. (Note that $\sigma$ and $\sigma_{K^2}$ are forms of each other, so the action of the Galois group $\Ga$ is different in the two cases.)
 \end{proof}

\begin{para}\label{para:modification D_n}
 For global applications later, we will need to consider a modification of the groups $G$ and $G'$ above. 
 
  In case (a) we let $\sigma_{K^2}:G'\rightarrow \GL(W)$ be the representation above where we take $V_0=K^2$ with the standard alternating form. 
  Set $G'_1$ to be the subgroup of $\GL(W)$ generated  by $G'$ and $K^\times\times K^\times$ acting on the first factor  $V_0=K^2=Ke_1\oplus Ke_2$ of $W=V_0\otimes_K V$ by $(a,b)\cdot e_1=ae_1$, $(a,b)\cdot e_2=be_2$.
 
  In case (b), we let $\sigma=\sigma_{D}:G'\rightarrow \GL(W)$ be the representation above. Let $L/K$ be the degree $2$ unramified extension; we assume $L\subset D$. Let $L^\times$ act diagonally on the left on $T_0=D$ and hence on $W=D\otimes_D T$. Set $G'_1$ to be the subgroup of $\GL(W)$ generated by $G'$ and $L^\times$ acting as above. 
  
After base changing to $L$,  these groups are identified under the isomorphism induced by Morita equivalence. 
  
 We set $G_1:=\Res_{K/\Q_p}G'_1$, and $\sigma_1:G'_1\rightarrow \GL(W)$ with $W=K^2\otimes_K V$ or $W=T$, to be the canonical representation obtained as above from $\sigma_{K^2}$ or $\sigma$ in cases (a) and (b) respectively.

\begin{Remark}
{\rm The reason for considering the modification $G_1'$ is that this is the group which naturally arises when applying Deligne's construction of Hodge type liftings for abelian type Shimura datum of type $D_n^{\bbH}$. The extra factor of $K^\times\times K^\times$ or $L^\times $ in cases (a)  and (b) respectively is needed to modify the Hodge cocharacter so that the dimensions of the weight 0 and weight 1 spaces are equal in the representation $W$. This modification becomes necessary when some of the cocharacters $\mu_\phi$, $\phi: K\hook \Q_p$, that constitute $\mu$, are trivial.}
	\end{Remark}

\begin{cor}\label{cor:DnH} With notations as above, let $(G_1,\mu,\calG_1)$ be a local model triple of $D^{\mathbb H}_n$ type with $\calG_1$ a stabilizer group scheme for a point $\mathbf{x}\in \calB(G_1,\bbQ_p)$ which is generic in its facet. Let $\rho_1:G'_1\rightarrow \GL(V'')$ a direct sum of $s$ copies of $\sigma_1:G'_1\rightarrow \GL(W)$, $s\geq 1$. Then the conclusion of Theorem \ref{thm:DnH} holds for $(G_1,\mu,\calG_1)$ and $\rho_1$.
\end{cor}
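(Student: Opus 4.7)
The plan is to adapt the proof of Theorem \ref{thm:DnH} to the slightly larger group $G_1'$, which differs from $G'$ by a central torus factor. By Lemma \ref{lem: product vg}, it suffices to treat the case $s=1$, reducing to the representation $\rho_1=\sigma_1:G_1'\to\GL(W)$.

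Following the setup in Theorem \ref{thm:DnH}, I would choose a tame Galois extension $\ti K/K$ splitting $G'$; after possibly enlarging $\ti K$ to contain $L$ in case (b), the same extension splits $G_1'$, and $\mathbf{x}$ becomes hyperspecial in $\calB(G_1',\ti K)$. Choose a $\Gamma$-stable self-dual $\ti\calO$-lattice $\ti\Lambda\subset V_{\ti K}$ and form $\ti M=\calO^2\otimes_\calO\ti\Lambda$ (case (a)) or the analogous lattice (case (b)). The stabilizer $\ti\calG_1'$ of $\mathbf{x}$ in $G_1'\otimes_K\ti K$ is a split reductive group scheme over $\ti\calO$ containing $\ti\calG'=\GO^+(\ti\Lambda,h)$ (or its case-(b) analogue) as a closed subgroup; the two differ by the central torus factor of $G_1'$, which preserves $\ti M$ integrally. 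This yields a closed immersion $\ti\calG_1'\hook\GL(\ti M)$ extending the embedding for $\ti\calG'$.

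The key step is to show that $\Res_{\ti\calO/\bbZ_p}\ti\calG_1'\hook \GL(\ti M)$ is very good. The proof of Theorem \ref{thm:DnH} already establishes this for $\Res_{\ti\calO/\bbZ_p}\ti\calG'\hook\GL(\ti M)$. Since $\Res_{\ti\calO/\bbZ_p}\ti\calG_1'\supset \Res_{\ti\calO/\bbZ_p}\ti\calG'$, any tensor in $\ti M^\otimes$ fixed by the larger group is automatically fixed by the smaller one; by Lemma \ref{extratensor}, its twist is horizontal. Applying \cite[Prop. 1.3.2]{KP} to the smooth affine group scheme $\Res_{\ti\calO/\bbZ_p}\ti\calG_1'$ gives a finite set of such tensors that cuts it out in $\GL(\ti M)$, and their horizontality yields the very good property.

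The rest of the argument mirrors the conclusion of the proof of Theorem \ref{thm:DnH}. The group $\calG_1$ is cut out inside $\Res_{\ti\calO/\bbZ_p}\ti\calG_1'$ by the $\bbZ_p$-linear endomorphisms encoding the $\Gamma$-action, so Corollary \ref{endoplus} yields the very good property for $\calG_1\hook\GL(\ti M)$; the self-dual periodic lattice chain $\calL=\{(\ti\pi^i\ti M)^\Gamma\}_{i\in\bbZ}$ is then produced as in that proof (self-dual because $\ti M$ is), and Lemmas \ref{diagonal}(a) and \ref{unramified} yield the desired very good integral Hodge embedding $(\calG_1,\mu)\hook(\GL(\calL),\mu'')$ with $\calL=\calL^\vee$. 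The main obstacle is the first transfer step, namely propagating ``very good'' from $\ti\calG'$ to the larger group $\ti\calG_1'$; this is resolved cleanly via Lemma \ref{extratensor}, which shows that horizontality of tensor twists under a very good embedding passes to every tensor fixed by the group, and hence in particular to a cutting set for any larger stabilizer.
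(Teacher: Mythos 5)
Your approach diverges from the paper's in the key transfer step, and that divergence introduces a genuine gap.

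You propagate the very-good property from the \emph{smaller} group $\ti\calG'$ (the stabilizer for $G'\otimes_K\ti K$, embedded diagonally in $\GL(\ti M)$) up to the \emph{larger} group $\ti\calG_1'$ via Lemma \ref{extratensor}. For this to be meaningful you need a local model triple $(\Res_{K/\Q_p}G',\{\mu'\},\Res\calH')$ whose local model, under the diagonal embedding into $\Gr(d,\ti M)$, coincides with $\BMloc_{\calG_1,\mu}$ at the point $x$ in question; that is, you need the given cocharacter $\mu$ of $G_1'$ to factor through the diagonal $G'\subset G_1'$, or at least to be replaceable by such a cocharacter without changing the image of the local model in the Grassmannian. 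But as the remark after \S\ref{para:modification D_n} makes explicit, the whole reason for enlarging $G'$ to $G_1'$ is that the relevant Hodge cocharacter does \emph{not} generally factor through $G'$: when some of the constituent $\mu_\phi$ are trivial, one must multiply by a nontrivial cocharacter of the extra central torus to balance the weight spaces in $W$. In those cases there is no local model triple for $G'$ that Theorem \ref{thm:DnH} can be applied to, and your invocation of ``the proof of Theorem \ref{thm:DnH} already establishes this for $\Res\ti\calG'\hook\GL(\ti M)$'' has no referent. Lemma \ref{extratensor} would then yield horizontality at the wrong scheme, because horizontality is defined relative to $R_G$, the completed local ring of $\BMloc_{\Gg,\mu}$ inside the Grassmannian, and that subscheme shifts when the cocharacter is replaced by a centrally different one.

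The paper's proof sidesteps this precisely by not working with the diagonal subgroup $\ti\calG'\subset\ti\calG_1'$ at all. Instead it writes $\ti\calG_1'$ as a \emph{scheme-theoretic intersection} $\ti\calG_1'=[\GO^+(\ti\Lambda)\times\GO^+(\ti\Lambda)]\cap\GSp(\ti M)$ inside $\GL(\ti M)$. The two $\GO^+$ factors and $\GSp(\ti M)$ are each strictly \emph{larger} than $\ti\calG_1'$, so the needed cocharacters are obtained simply by \emph{projecting} $\mu$ along the inclusions $G_1'\hook\GO^+(V)\times\GO^+(V)$ and $G_1'\hook\GSp(W)$, and these projections always exist. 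One then applies Theorem \ref{thm:DnH} (via Lemma \ref{diagonal}(b) and Lemma \ref{lem: product vg}) to each $\GO^+$ factor and Theorem \ref{thm:main} to the symplectic group to get horizontal cutting tensors for each, and since $\ti\calG_1'$ is the scheme-theoretic intersection, the union of those tensor sets cuts out $\ti\calG_1'$. Note that the intersection structure is doing real work here: the cutting tensors for $\ti\calG_1'$ are \emph{not} fixed by the larger groups, so Lemma \ref{extratensor} alone would not apply; what is used instead is Corollary \ref{endoplus}-style reasoning where the union of two horizontal cutting sets, one for each intersectand, cuts out the intersection.

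Two minor remarks. First, your reduction to $s=1$ should cite Lemma \ref{diagonal}(b) (multiple copies of the same representation of a fixed group), not Lemma \ref{lem: product vg} (direct sums over a product of groups). Second, your final paragraph matches the paper's treatment of the $\Gamma$-fixed-points step and the production of the self-dual lattice chain $\calL$, and that part is fine; it is specifically the transfer from $\ti\calG'$ to $\ti\calG_1'$ that needs to be replaced by the intersection argument.
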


\begin{proof}By Lemma \ref{diagonal}, it suffices to prove the result for $\rho_1=\sigma_1:G'_1\rightarrow \GL(W)$. Upon modifying $\mathbf{x}$ by an element of the center, we may assume it lies in the image of $\calB(G,\bbQ_p)$. 

We only discuss case (a), as case (b) is similar. As in the proof of Theorem \ref{thm:DnH}, we let $\tilde{\Lambda}\subset V_{\tilde{K}}$ be an $h$-self-dual $\Ga$-stable $\ti\calO$-lattice corresponding to the image of $\mathbf{x}$ in $\calB(\GL(V),\tilde{K})$, and set $\tilde{M}=\tilde{\Lambda}\oplus\tilde{\Lambda}$.
	
	We let $\tilde{\calG}_1'$ denote the hyperspecial parahoric for $G'_{1,\ti K}$ corresponding to the image of $\mathbf{x}$ in $\calB(G'_1,\tilde{K})$.  Then we have a scheme theoretic intersection
	$$
	\tilde\calG'_1=[\GO^+(\tilde{\Lambda})\times\GO^+(\tilde{\Lambda})]\cap \GSp(\tilde{M}).$$
As in the proof of Theorem \ref{thm:DnH}, the group scheme homomorphism
\[
\Res_{\ti\O/\bbZ_p}\GO^+(\tilde{\Lambda})\hook \GL(\ti\La)\times \GL(\ti\La)\hook \GL(\ti M)
\]
extending $\sigma_{K^2}\otimes_K\ti K$ gives a  very good Hodge embedding. Hence, by Lemma \ref{diagonal} and Lemma \ref{lem: product vg}, the embeddings $\Res_{\ti \O/\bbZ_p}\GO^+(\tilde{\Lambda})\hook \GL(\ti\La)$ and then
$$
\Res_{\ti\calO/\bbZ_p}\GO^+(\tilde{\Lambda})\times\Res_{\ti\O/\bbZ_p}\GO^+(\tilde{\Lambda})\hook \GL(\ti\La)\times\GL(\ti\La)\hook \GL(\tilde{M})
$$
are very good. By Theorem \ref{thm:main},
$$
\Res_{\ti\O/\bbZ_p}\GSp(\tilde{M})\hook \GL(\tilde{M})
$$  
also gives a very good Hodge embedding. Hence, $\Res_{\ti\O/\bbZ_p}\tilde{\calG}'_1\hook \GL(\tilde{M})$ is cut out by horizontal tensors, and hence is very good. The argument as before proves the result.
\end{proof}
\end{para}

 \subsection{Exceptional $A_n$ types}\label{ss:A_n}
 \begin{para}
 Here we give a result covering some $A_n$ types which are excluded in Theorem \ref{thm:main}, cf. Remark \ref{exclude}.

Let $G=A^*=\Res_{K/\Q_p}\GL_m(D)$, where $A={\rm M}_m(D)$  with $D$ a division central $K$-algebra. Let $V=D^m$ considered as a $\bbQ_p$-vector space and let $\rho:G\rightarrow \GL(V)$ denote the representation given by left multiplication of $A$ on $D^m$.  Similarly,  let $\overline{V}=D^{\mathrm{opp},m}$ and let $\overline{\rho}:G\rightarrow \GL(\overline{V})$ be the representation where $x\in A$ acts on $\overline{V}$ via left multiplication by $x^{-1}$.

Now let $(G, \mu, \Gg)$ be a local model triple. Write $\mu'=\rho\circ \mu$ and $\bar\mu'=\bar\rho\circ \mu$. The representations $\rho$ and $\overline{\rho}$ give local Hodge embeddings $(G,\mu)\hook (\GL(V),\mu')$, resp. $(G,\mu)\hook (\GL(\overline{V}),\bar\mu')$. By \cite{BTclass}, each point $\bx$ in the building of $G=A^*$ corresponds to a graded periodic (right) $\O_D$-lattice chain $(\L, c)$ in $V$.
By \cite[3.6, Thm]{BTclass}, the stabilizer group scheme $\Gg=\Gg_\bx$ is given as the group scheme of $\O_D$-automorphisms of the
$\O_D$-lattice chain $\L$. Thus there is a corresponding closed group scheme immersion $\Gg\hook \GL(\L)$. Similarly, there is a lattice chain $\overline{\calL}$ of right $\O_D$-modules  in $\overline{V}$ such that $\calG$ is the group scheme stabilizer of $\overline{\calL}$ under the representation $\overline{\rho}$. Then $\overline{\calL}$ has the property that there is bijection $\Lambda_i\mapsto \overline{\Lambda}_i$ between determining segments for $\calL$ and $\overline{\calL}$ such that the stabilizer of $\Lambda_i$ and $\overline{\Lambda}_i$ are identified. 
Then we obtain   a closed immersion $\Gg\hook \GL(\overline{\L})$.

\begin{prop}\label{cor:ELdiv}
The integral Hodge embeddings \[
\rho: (\Gg,\mu)\hook (\GL(\calL), \mu'),\quad \overline{\rho}: (\Gg,\mu)\hook (\GL(\overline{\calL}),\bar\mu'),
\]  are very good.
\end{prop}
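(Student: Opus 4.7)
My plan is to apply Corollary \ref{EL} after reducing from the lattice chain $\calL$ to the single lattice $\mathrm{tot}(\calL)$ as in \S\ref{par:variantGood}. That is, I will show that the composed embedding $(\Gg,\mu)\hook(\GL(\mathrm{tot}(\calL)),\mu_{rd})$ is good and that $\Gg$ is cut out in $\GL(\mathrm{tot}(\calL))$ by endomorphism tensors of $\mathrm{tot}(\calL)=\bigoplus_{i=0}^{r-1}\La_i$. I will write out the argument for $\rho$; the proof for $\bar\rho$ is formally identical after replacing $\O_D$ by $\O_{D^{\mathrm{opp}}}$ and $\calL$ by $\overline{\calL}$, using that $\overline{\calL}$ is the lattice chain for which $\Gg$ appears as the stabilizer under $\bar\rho$.

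For the endomorphism presentation, recall that by Bruhat--Tits, $\Gg=\Gg_\bx$ is the group scheme of $\O_D$-linear automorphisms of $\calL$. I would pick $d_1,\dots,d_s\in\O_D$ generating $\O_D$ as a $\Z_p$-algebra and consider the endomorphisms of $\mathrm{tot}(\calL)$ given by: (a) the projection-then-inclusion maps cutting out $\prod_i\GL(\La_i)$ inside $\GL(\mathrm{tot}(\calL))$, (b) the chain inclusions $\La_i\hookrightarrow\La_j$ for $i\geq j$, composed with the obvious projection and inclusion to give elements of $\End_{\Z_p}(\mathrm{tot}(\calL))$, which cut out the diagonal $\GL(\calL)\hookrightarrow\prod_i\GL(\La_i)$ as in Lemma \ref{BTGL}, and (c) right multiplication by $d_1,\dots,d_s$ on each $\La_i$, which cut out $\Gg$ inside $\GL(\calL)$. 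This exhibits $\Gg$ as the scheme-theoretic centralizer of a finite family of endomorphisms of $\mathrm{tot}(\calL)$, as required.

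For the goodness, I must show that the natural map $\BMloc_{\Gg,\mu}\to\BMloc_{\GL(\mathrm{tot}(\calL)),\mu_{rd}}\otimes_\O\O_E=\mathrm{Gr}(rd,\mathrm{tot}(\calL))_{\O_E}$ is a closed immersion. The cleanest route is via the $v$-sheaf characterization of local models: the closed immersion of group schemes $\Gg\hook\GL(\mathrm{tot}(\calL))$ induces a closed immersion of associated Beilinson--Drinfeld $v$-sheaf affine Grassmannians, and formation of the $\mu$-closure is compatible with closed immersions, yielding a closed immersion of $v$-sheaf local models $\mathrm{M}^v_{\Gg,\mu}\hook\mathrm{M}^v_{\GL(\mathrm{tot}(\calL)),\mu_{rd}}$. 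Descending to schemes (using that both sides are represented by schemes, cf.\ Theorem \ref{thm:twoLM} and, for our non-tame setting, the representability results of \cite{AGLR}, \cite{GL}) gives the desired closed immersion. An alternative, more classical, route would be to write down an explicit EL moduli description of $\BMloc_{\Gg,\mu}$ as the $\O_D$-stable locus inside the naive linear local model and invoke the known flatness and normality of EL local models for $\GL_m(D)$, together with Lemma \ref{lemma:goodLM}.

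Once (i) and (ii) are in place, Corollary \ref{EL} applied to $(\Gg,\mu)\hook(\GL(\mathrm{tot}(\calL)),\mu_{rd})$ gives very-goodness at every point, and hence $(\Gg,\mu)\hook(\GL(\calL),\mu')$ is very good by the equivalence recorded in \S\ref{par:variantGood}. The main obstacle is the goodness step: precisely because $p$ may divide the index of $D$, the group $G=\Res_{K/\Q_p}\GL_m(D)$ is not quasi-tame, so the previous Theorems \ref{thm: LM embedding} and \ref{thm:twoLM} do not apply, and one has to bring in either the general representability results for $\BMloc$ or the ad hoc EL moduli description to bypass this restriction; this is exactly why these cases are treated separately in \S\ref{ss:A_n} under the heading ``essentially of PEL type''.
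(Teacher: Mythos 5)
Your overall strategy matches the paper: reduce $\GL(\calL)$ to $\GL(\mathrm{tot}(\calL))$ via \S\ref{par:variantGood}, show the resulting embedding is good, observe that $\Gg$ is cut out in $\GL(\mathrm{tot}(\calL))$ by a family of $\Z_p$-linear endomorphisms, and invoke Corollary \ref{EL}. Your explicit list of endomorphisms (the idempotent projections, the chain-inclusion maps, and right multiplication by generators of $\O_D$) is a correct and useful elaboration of what the paper leaves implicit when it cites \cite[3.6 Thm.]{BTclass}.

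Two points to fix. First, your diagnosis of why this case lies outside the earlier machinery is wrong: $G=\Res_{K/\Q_p}\GL_m(D)$ \emph{is} quasi-tame, since $D$ splits over the unramified degree-$n$ extension of $K$, and unramified extensions are always tame. The actual obstruction is that Proposition \ref{Fixed} — writing the stabilizer as tame-Galois fixed points of a split hyperspecial — requires the index of $D$ to be prime to $p$ (see its hypothesis), so the ``very good via spanning by curves'' route of \S\ref{sec: NE cases} is unavailable. But Theorem \ref{thm: LM embedding} itself \emph{does} apply: its proof treats the division-algebra case by an unramified base change to a quasi-split inner twist, and since $\pi_1(G)_I$ is torsion-free here the resulting $\Gg'$ equals $\Gg$. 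This is exactly what the paper cites, together with \cite[Prop.~8.1, \S8.2.3]{PZ}, to get the closed immersion $\BMloc_{\Gg,\mu}\hook \Gr(rd,\mathrm{tot}(\calL))_{\O_E}$. Second, your primary $v$-sheaf route for goodness is incomplete as stated: a closed immersion on $v$-sheaves together with representability of both sides does not by itself descend to a closed immersion of schemes; one needs to know that the scheme-theoretic closure of $G/P_\mu$ in the Grassmannian is already normal with reduced special fiber (this is precisely the content of Lemma \ref{lemma:goodLM} and what Theorem \ref{thm: Levin} / \cite{PZ} supplies). Your ``alternative, more classical'' route through the explicit EL local model for $\GL_m(D)$ plus Lemma \ref{lemma:goodLM} is the one that actually closes the gap, and it coincides with what the cited \cite[Prop.~8.1, \S8.2.3]{PZ} does.
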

\begin{proof}
Set $\Lambda=\mathrm{tot}(\calL)$ and write $\mu'=\mu_d$. Then	by Theorem \ref{thm: LM embedding} and its proof, cf. \cite[Prop. 8.1, \S 8.2.3]{PZ}, the group scheme homomorphism $\Gg\hook \GL(\La)$ induces an equivariant closed immersion $\BMloc_{\Gg, \mu}\hook \Gr(\Lambda,rd)_{\calO_E}$ and  so   $\rho^r: (\Gg,\mu)\hook (\GL(\La), \mu_{rd})$ is a good integral Hodge embedding. The fact that it is very good follows  by applying Corollary \ref{EL}. The result for $\overline{\rho}$ is proved in the same way.
\end{proof}

\begin{Remark}
	{\rm   Prop. \ref{cor:ELdiv}  is not covered by the previous results when $p$ divides the index of $D$. Note though that  this statement is restricted to   ``standard" Hodge embeddings and does not cover 
		Hodge embeddings for central quotients $(A^*/C, \mu)$ which are given by other fundamental weights. For example, these can occur when, for each $\phi$, the cocharacter $\mu_\phi$ is either of type $\varpi^\vee_1$ or is trivial. }
	\end{Remark}

\end{para}

\begin{para}\label{para: modification A-n}As in the case of type $D_n^{\bbH}$, we prove a modified version of this result in the presence of an alternating form which is needed in the global applications. 

We set $W=V\oplus \overline{V}\cong (D\times D^{\mathrm{opp}})^m$, and we let $G_1$ denote the subgroup of $\GL(W)$ generated by the image of $G$ under $\rho\oplus \overline\rho$ and $K^\times\times K^\times$, where the first and second factors of $K^\times$ correspond to scalar multiplication on $V$ and $\overline{V}$ respectively. We write $\rho_1:G_1\rightarrow \GL(W)$ for the natural representation. We define an alternating form
 $$
 \psi:W\times W\rightarrow K
 $$  
 as follows. Consider the involution $\tau$ of $D\times D^{\mathrm{opp}}$ given by $(d,d')\mapsto (d', d)$. Choose
 $\xi \in K^\times \times K^{\times}$ such that $\tau(\xi)=-\xi$, so  $\xi=\pi^a\cdot (u,-u)$, for $u\in \O^\times$, $a\in \Z$. For $x=(x_1,\dotsc,x_m)\in (D\times D^{\mathrm{opp}})^m$, $y=(y_1,\dotsc,y_m)\in (D\times D^{\mathrm{opp}})^m$, we set 
 $$
 \psi(x,y)=\sum_{i=1}^m\mathrm{Tr}_{D\times D^{\mathrm{opp}}/K}(\xi \tau(x_i) y_i)=\pi^a\cdot \sum_{i=1}^m\mathrm{Tr}_{D\times D^{\mathrm{opp}}/K}((u, -u) \tau(x_i) y_i).
 $$
 Then we have $G_1=(G\times G)\cap\mathrm{GSp}(W, \psi)$.

For a lattice chain $\calL'$ of $\bbZ_p$-modules in a direct sum $W^s$ of $W$, we let $\calL'^\vee$ denote the lattice chain whose constituent lattices are given by the dual of those in $\calL'$ with respect to the form $[\Tr_{K/\bbQ_p}\circ\delta_{K/\bbQ_p}^{-1}\psi]^s$.

\begin{cor}\label{cor:PELdiv}
	Consider $\mathbf{x}\in\calB(G_1,\bbQ_p)$ with  corresponding stabilizer group scheme $\calG_1$, and let $(G_1,\mu,\calG_1)$ be a local model triple. Then there is a self-dual lattice chain $\calL'$ 	in $W^s$	such that   $\rho_1^s$ extends to a very good Hodge embedding $(\calG_1,\mu)\rightarrow (\GL(\calL),\mu_{ds})$.
\end{cor}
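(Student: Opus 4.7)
The plan is to realize $\calG_1$ as the scheme-theoretic intersection of a product parahoric with a symplectic similitude parahoric inside $\GL(\calL_W)$, for a suitable self-dual lattice chain $\calL_W\subset W$, and apply Proposition \ref{cor:ELdiv} together with Theorem \ref{thm:main} to the two factors separately. By Lemma \ref{diagonal}(b), it suffices to treat $s=1$: the embedding $\rho_1^s$ is the $s$-fold diagonal of $\rho_1$, so once very-goodness is shown for some self-dual $\calL_W\subset W$, iterating Lemma \ref{diagonal}(b) yields the conclusion with $\calL'=\calL_W^s$, which remains self-dual in $W^s$. Modifying $\mathbf{x}$ by an element of the central torus $G_1/G$ (the similitude factor), we may assume $\mathbf{x}$ lies in the image of $\calB(G,\bbQ_p)\to\calB(G_1,\bbQ_p)$ and take $\calG\subset G$ to be the corresponding stabilizer.

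Applying Proposition \ref{cor:ELdiv} to both $\rho$ and $\bar\rho$ yields $\calO_D$-lattice chains $\calL\subset V$ and $\bar\calL\subset\bar V$ with stabilizer $\calG$, and very good integral Hodge embeddings $(\calG,\mu_V)\hookrightarrow(\GL(\calL),\mu')$ and $(\calG,\mu_{\bar V})\hookrightarrow(\GL(\bar\calL),\bar\mu')$. The $\psi$-dual $\calL^\vee\subset\bar V$ is a $\calO_D^{\mathrm{opp}}$-stable lattice chain with stabilizer $\calG$, so after replacing $\bar\calL$ by $\calL^\vee$ we may assume the chain $\calL_W:=\calL\oplus\bar\calL$ is $\psi$-self-dual in $W$. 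By Lemma \ref{lem: product vg}, the product embedding $(\calG\times\calG,\mu_V\times\mu_{\bar V})\hookrightarrow(\GL(\calL_W),\mu'+\bar\mu')$ is then very good. Separately, $\GSp(W,\psi)$ is a split symplectic similitude group over $\bbQ_p$ of classical type $C$ (since any two non-degenerate alternating $\bbQ_p$-forms of a given rank are equivalent), so Theorem \ref{thm:main} applies and the embedding $(\calG_{\GSp},\mu_{\GSp})\hookrightarrow(\GL(\calL_W),\mu_d)$ of the stabilizer of $\calL_W$ in $\GSp(W,\psi)$ is very good.

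Using the defining identity $G_1=(G\times G)\cap\GSp(W,\psi)$ and the compatibility of Bruhat--Tits stabilizers with intersections, I obtain the scheme-theoretic identity $\calG_1=(\calG\times\calG)\cap\calG_{\GSp}$ in $\GL(\calL_W)$. If $(s_a)$ cuts out $\calG\times\calG$ with horizontal $\tilde{s}_a$ (from the product step) and $(t_b)$ cuts out $\calG_{\GSp}$ with horizontal $\tilde{t}_b$ (from the symplectic step), then $(s_a)\cup(t_b)$ cuts out $\calG_1$ and all its associated tensors are horizontal. Hence $(\calG_1,\mu)\hookrightarrow(\GL(\calL_W),\mu_d)$ is very good, provided it is good; this latter property follows from Theorem \ref{thm: LM embedding} after possibly passing to $\rho_1^{\oplus s}$ for sufficiently large $s$.

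The main obstacle is verifying the scheme-theoretic identity $\calG_1=(\calG\times\calG)\cap\calG_{\GSp}$ inside $\GL(\calL_W)$. On the generic fiber this is immediate, but integrally it requires identifying the stabilizer of $\mathbf{x}$ in $G_1$ with the common intersection of the stabilizers in $G\times G$ and $\GSp(W,\psi)$, at compatibly chosen points of the three respective buildings. This is analogous to the corresponding step in the proof of Corollary \ref{cor:DnH}, where the parallel identity $\tilde\calG'_1=[\GO^+(\tilde\Lambda)\times\GO^+(\tilde\Lambda)]\cap\GSp(\tilde M)$ is used, and relies on standard functoriality of Bruhat--Tits stabilizer schemes for closed immersions of reductive groups.
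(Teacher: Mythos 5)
Your proposal is correct and follows essentially the same route as the paper: reduction to $s=1$ via Lemma \ref{diagonal}, reduction to $\mathbf{x}$ lying in the image of $\calB(G,\bbQ_p)$, construction of the self-dual chain $\calL\oplus\overline{\calL}$, very-goodness of $\calG\times\calG\hookrightarrow\GL$ via Corollary \ref{cor:ELdiv} and Lemma \ref{lem: product vg}, very-goodness of $\GSp(\Lambda')\hookrightarrow\GL(\Lambda')$ via Theorem \ref{thm:main}, and then the scheme-theoretic intersection $\calG_1=(\calG\times\calG)\cap\GSp(\Lambda')$ to combine the horizontal tensor sets. The "obstacle" you flag at the end — justifying the integral scheme-theoretic identity — is indeed used in the paper's proof without elaboration, so your caveat is appropriate rather than indicative of a gap.
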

\begin{proof}
	By Lemma \ref{diagonal}, it suffices to prove this for the representation $\rho_1$. Upon modifying $\mathbf{x}$ by an element of the center of $G_1$, we may assume it lies in the image of $\calB(G,\bbQ_p)$. Then, as above, $\mathbf{x}$ corresponds to a  lattice chain $\calL$ in $V$ and a lattice chain $\overline{\calL}$ in $\overline{V}$. We let $\calL'$ denote the (periodic) lattice chain in $W$ whose constituent lattices are the scalar multiples of $\Lambda_i':=\Lambda_i\oplus\overline{\Lambda}_i$ for $\Lambda_i$, resp. $\overline{\Lambda}_i$, members of a determining segment for $\calL$, resp. $\overline{\calL}$. We can choose $\overline{\Lambda}_i$ so that $\Lambda_i'$ is self-dual for $\psi$. Then $\calL'$ is a self dual lattice chain in $W$, and  for $\Lambda'=\mathrm{tot}(\calL')\subset W^r$, the embedding $\calG\times \calG\rightarrow \GL(\Lambda')$  is a very good Hodge embedding by Corollary \ref{cor:ELdiv} and Lemma \ref{lem: product vg}.  
	
We let $\psi'$ denote the alternating form on $W^r$ given by the sum of those on $W$; then $\Lambda'$ is self dual for $\psi'$.
	We have a scheme-theoretic intersection $\calG_1=(\calG\times\calG)\cap \mathrm{GSp}(\Lambda')$. Hence, 
	by Theorem \ref{thm:main} applied to $\GSp(\La')\hook \GL(\La')$ and the above, we see that $\calG_1\rightarrow \GL(\Lambda')$ gives a very good Hodge embedding. 
	\end{proof}

\end{para}

\section{Shimura varieties}\label{sec:SV}

In this section, we use the local results of \S \ref{sec: very good embeddings} to obtain our main results on integral models of Shimura varieties. 

\subsection{Integral models}\label{subsec: integral models}
\begin{para}  Let $(\bfG,X)$ be a Shimura datum in the sense of \cite{DelBour} so that $\bfG$ is a reductive group over $\bbQ$ and $X$ is a $\bfG_{\bbR}$-conjugacy class of homomorphisms $\bbS:=\mathrm{Res}_{\bbC/\bbR}\bbG_m\rightarrow \bfG_{\bbR}$.
We fix a prime $p>2$ and write $G$ for the base change of $\bfG$ to $\bbQ_p$. Let $\bbA_f$ denote the ring of finite adeles and $\bbA_f^p$ the ring of prime-to-$p$ adeles which we consider as the subgroup of $\bbA_f$ with trivial $p$-component. Let $\rmK_p\subset \bfG(\bbQ_p)$ and $\rmK^p\subset\bfG(\bbA_f)$ be  compact open subgroups  and write $\rmK:=\rmK_p\rmK^p$. Then if $\rmK^p$ is sufficiently small, we have the associated Shimura variety $\Sh_{\rmK}(\bfG,X)$ defined over the reflex field $\bfE\subset \bbC$ whose complex points are given by the double quotient
$$
\Sh_{\rmK}(\bfG,X)(\bbC)=\bfG(\bbQ)\backslash X\times \bfG(\bbA_f)/\rmK;
$$
see  \cite{DeligneCorvallis} for the construction in the case of Shimura varieties of abelian type, which is all that we consider in this paper. Here, $\bfE$ is  defined to be the field of definition of the conjugacy class of Hodge cocharacters $\{\mu_h\}$ associated to $h$.

We also define the pro-variety $$\Sh_{\rmK_p}(\bfG,X):=\lim_{\leftarrow\rmK^p}\Sh_{\rmK_p\rmK^p}(\bfG,X)$$ 

\end{para}
\begin{para}We now assume that there is an embedding of Shimura data $$\iota:(\bfG,X)\rightarrow (\mathbf{GSp}(V),S^{\pm})$$ with $\mathbf{GSp}(V)$ the group of symplectic similitudes of a $\bbQ$-vector space $V$ of dimension $2d$ equipped with a perfect alternating bilinear
form $\psi$, and $S^\pm$ is the Siegel double space. We call $\iota$ a Hodge embedding.

Let  $v|p$ be a prime of $\bfE$ and let   $E$ denote the completion of $\bfE$ at $v$. 
We let $k_E$ denote the residue field at $v$ and we fix an algebraic closure $k$ of $k_E$. Let $\calG$ be the Bruhat--Tits stabilizer group scheme corresponding to some $\bx\in \calB(G,\bbQ_p)$ which is generic in its facet. We obtain  a local model triple  $(G,\{\mu_h\},\calG)$ with attached local model $\BMloc_{\calG,\mu_h}$. We now make the following assumptions.

\begin{altenumerate}
	\item[(A)] $\rmK_p=\calG(\bbZ_p)$.
	\item[(B)] $G$ is $R$-smooth and $p\nmid |\pi_1(G^{\der})|$. 
	\item[(C)] $\iota_{\bbQ_p}:G\rightarrow \GL(V_{\Q_p})$ extends to a very good Hodge embedding $(\calG,\mu_h)\rightarrow (\GL(\Lambda),\mu_d)$ where $\Lambda\subset V_{\bbQ_p}$ is a  $\bbZ_p$-lattice which is contained in its $\psi$-dual.
\end{altenumerate}
We write $\rmK_p'$ for the stabilizer in $\mathbf{GSp}(V_{\bbQ_p})$ of the lattice $\Lambda$ and we fix $\rmK'^p\subset\bfG(\bbA_f^p)$ a compact open subgroup containing $\rmK^p$. We set $\rmK'=\rmK'_p\rmK'^p$. We then obtain a morphism of Shimura varieties 
$$
\Sh_{\rmK}(\bfG,X)\rightarrow \Sh_{\rmK'}(\bfGSp(V),S^\pm)_{\bfE}
$$ 
which  is a closed immersion if $\rmK'^p$ is sufficiently small.

We set $V_{\bbZ_{(p)}}:=V\cap \Lambda$ which is a $\bbZ_{(p)}$-submodule of $V$, and we let $\G_{\bbZ_{(p)}}$ denote the Zariski closure of $\G$ in $\GL(V_{\bbZ_{(p)}})$. The choice of $V_{\bbZ_{(p)}}$ gives rise to an interpretation of $\Sh_{\rmK'}(\bfGSp(V),S^\pm)$ as a moduli space of polarized abelian varieties, and hence to an integral model $\SSh_{\rmK'}(\bfGSp(V),S^\pm)$	over $\bbZ_{(p)}$, cf. \cite[\S 6.3]{Zhou}. We define the integral model $\SSh_{\rmK}(\bfG,X)$ over $\calO_E$  to be the normalization of the Zariski closure of $\Sh_{\rmK}(\bfG,X)$ in $\SSh_{\rmK'}(\bfGSp(V),S^\pm)_{\calO_E}$.  
Under these assumptions, the following theorem summarizes the main results concerning $\SSh_{\rmK}(\bfG,X)$.  

\begin{thm}[cf. \cite{KP},\,\cite{KZhou}]\label{thm: main SV Hodge}
Under the assumptions (A), (B) and (C), the schemes $\SSh_{\rmK}(\bfG,X)$ satisfy the following properties.
\begin{altenumerate}\item For $R$ a discrete valuation ring of mixed characteristic $(0,p)$, we have a bijection 
$$
\varprojlim_{\rmK^p}\SSh_{\rmK_p\rmK^p}(\bfG,X)(R)=\Sh_{\rmK_p}(\bfG,X)(R[1/p]).
$$

\item There exists a local model  diagram 
	\[
	\xymatrix{ & \widetilde{\SSh}_{\rmK}(\bfG,X)\ar[ld]_{\pi}\ar[rd]^q&\\ \SSh_{\rmK}(\bfG,X)& &\BMloc_{\calG,\mu_h}}
	\] 
	where $\pi$ is a $\calG$-torsor and $q$ is  $\calG$-equivariant and smooth of relative dimension $\dim G$.
	
	\item If in addition, we have $\calG=\calG^\circ$, i.e. the stabilizer group scheme is connected, then for each $x\in \SSh_{\rmK}(\bfG,X)(k')$ with $k'/k_E$ finite, there is a point $y\in \BMloc_{\calG,\mu_h}(k')$ such that we have an isomorphism of henselizations 
	$$
	\calO^{\rm h}_{\SSh_{\rmK}(\bfG,X),x}\simeq \calO^{\rm h}_{ \BMloc_{\calG,\mu_h},y}.
	$$
	\end{altenumerate}
\end{thm}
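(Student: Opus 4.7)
The plan is to follow the strategy of \cite{KP,KZhou}, using the very good embedding hypothesis (C) to fill the gap noted in the introduction. For (1), I would use the moduli interpretation of $\SSh_{\rmK'}(\bfGSp(V),S^\pm)$ as parametrising polarised abelian varieties with prime-to-$p$ level structure. Any $R[1/p]$-point of $\Sh_{\rmK_p}(\bfG,X)$ gives rise to an abelian variety over $R[1/p]$; since the associated $p$-adic Galois representation is crystalline (Hodge type condition), the N\'eron--Ogg--Shafarevich criterion yields an abelian scheme over $R$, hence an $R$-point of $\SSh_{\rmK'}(\bfGSp(V),S^\pm)$. Because $R$ is normal and $\SSh_{\rmK}(\bfG,X)$ is defined as the normalisation of a closed subscheme, this $R$-point factors uniquely through $\SSh_{\rmK}(\bfG,X)(R)$; passing to the limit over sufficiently small $\rmK^p$ then gives the bijection (the prime-to-$p$ level structures  extend and are rigidified by the Tate module).

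For (2) and (3), I would fix $x\in\SSh_{\rmK}(\bfG,X)(k)$ with associated $p$-divisible group $\scrG_x$ and Dieudonn\'e module $\bbD$. Following the argument of \cite[\S 3.3]{KP}, with the purity result of \cite{An} used in place of the tamely ramified hypothesis, the tensors $(s_a)\subset\Lambda^\otimes$ cutting out $\calG$ produce Frobenius-invariant tensors $s_{a,0}\in\bbD^\otimes$ independent of any characteristic-$0$ lift, and the tensor-preserving isomorphism scheme $\SIsom_{(s_a),(s_{a,0})}(\Lambda\otimes_{\bbZ_p}W(k),\bbD)$ is a trivial $\calG$-torsor. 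A choice of trivialisation identifies the Hodge filtration on $\bbD\otimes_{W(k)}k$ with a $k$-point $y\in\BMloc_{\calG,\mu_h}(k)$; let $R_G$ denote the completed local ring at $y$.

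The crux is now to construct a universal deformation of $(\scrG_x,s_{a,0})$ over $R_G$. I would form the pair $(M,M_1)$ over $\widehat W(R_G)$ with $M=\Lambda\otimes_{\bbZ_p}\widehat W(R_G)$ and $M_1$ the $\widehat W(R_G)$-submodule determined by the tautological point of $\BMloc_{\calG,\mu_h}(R_G)\hookrightarrow\Gr(d,\Lambda)(R_G)$, together with the induced $\widetilde M_1\subset\varphi^*M[1/p]$. By hypothesis (C), the tensors $\widetilde s_a = s_a\otimes 1 \in \widetilde M_1^\otimes$ are horizontal for the canonical connection isomorphism of Lemma \ref{319}. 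This allows the construction, following \cite[\S 3.2--3.3]{KP} as corrected in \S \ref{1.1}, of a Frobenius structure $\Psi:\widetilde M_1\to M$ that preserves the tensors and is constant modulo $\fraka_G$ in the sense of \S \ref{par:Psi}. The resulting Dieudonn\'e display produces a $p$-divisible group $\scrG$ over $R_G$ deforming $\scrG_x$ with compatible tensors. A Serre--Tate/Grothendieck--Messing argument then identifies this with the (appropriately rigidified) universal deformation, giving an isomorphism $R_G\xrightarrow{\sim}\widehat{\calO}_{\SSh_{\rmK}(\bfG,X),x}$ which proves (3) (the connectedness hypothesis $\calG=\calG^\circ$ is used to descend from formal neighbourhoods to henselian local rings via Artin approximation, and to identify the torsor fibres). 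For (2), I would then globalise: define $\widetilde{\SSh}_{\rmK}(\bfG,X)$ to be the scheme of tensor-preserving trivialisations of the universal filtered Dieudonn\'e crystal on $\SSh_{\rmK}(\bfG,X)$; this is a $\calG$-torsor over $\SSh_{\rmK}(\bfG,X)$, and the Hodge filtration supplies a $\calG$-equivariant morphism $q$ to $\BMloc_{\calG,\mu_h}$. Smoothness of $q$ of relative dimension $\dim G$ then follows from the local calculation in the previous step.

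The main obstacle is precisely the construction of the universal deformation with extending tensors: the naive display construction fails because the tensors $\widetilde s_a$ need not be horizontal for the connection isomorphism $c$, which is exactly the gap in the original \cite{KP}. Hypothesis (C) is introduced to rectify this, and essentially all the work in \S \ref{s:ParahoricFixed}--\S \ref{sec: very good embeddings} goes into producing very good embeddings in the cases of interest; once (C) holds, the remainder of the argument (moduli-theoretic extension, torsor structure, smoothness of $q$) is a direct adaptation of \cite[\S 4.1--4.2]{KP}.
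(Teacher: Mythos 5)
Your proposal follows essentially the same route as the paper's proof: Néron--Ogg--Shafarevich for the extension property, the $[\textrm{An}]$ purity result to build the crystalline torsor, hypothesis (C) to make the connection isomorphism respect the tensors and hence to run the corrected display construction of \cite[\S 3.2]{KP}, and finally the comparison of completed local rings plus the connectedness of $\calG$ to pass to henselizations. The only small imprecision is that the versal display (and hence the $p$-divisible group $\scrG$) is actually produced over the regular ring $R_E$ (the completed Grassmannian local ring), with $R_G$ then cut out as the locus where the tensors extend, via \cite[Prop.\ 2.3.17]{KP}; your phrasing compresses this into ``a $p$-divisible group over $R_G$'' and a ``Serre--Tate/Grothendieck--Messing argument,'' but this is the same argument.
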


\begin{Remark} 
	{\rm\begin{altenumerate}

\item In the reference \cite{KP} and  previous versions of \cite{KZhou}, the assumption (C) concerning the property of a \emph{very good} (as opposed to just \emph{good}) embedding was erroneously omitted. With this assumption in place, the result follows from the proofs in \emph{op. cit.}. We recall  the argument and the role played by assumption (C) below.
	\item 	The results in \S\ref{sec: very good embeddings} shows that Assumption (C) is satisfied in many cases. In the following subsection, we will show that the cases covered by those  results are sufficient to construct good integral models in all abelian type settings.
\end{altenumerate}
	}
\end{Remark}

\begin{proof}[Proof of Theorem \ref{thm: main SV Hodge}]Property (1) follows by the construction of the models and the N\'eron--Ogg--Shafarevich criterion.	For (2) and (3), we fix a collection of tensors $s_\alpha\in V_{\bbZ_{(p)}}^\otimes$ whose stabilizer is  $\G_{\bbZ_{(p)}}$. The Betti-\'etale comparison isomorphism gives  corresponding  tensors $s_{\alpha,\et}\in \calV_p^\otimes$, where $\calV_p$ is the $\bbZ_p$-local system on $\Sh_{\rmK}(\bfG,X)$ corresponding to the  dual of the $p$-adic Tate-module of the pullback of the  universal abelian variety $\calA$ obtained by pullback from $\SSh_{\rmK'}(\mathbf{GSp}(V),S^\pm)_{\calO_E}$.

For $x\in\SSh_{\rmK}(\bfG,X)(k)$, we let $\scrG_x:=\calA_x[p^\infty]$ denote the $p$-divisible group over $k$ associated to the pullback $\calA_x$ of $\calA$ along $x$, and $\bbD$ the Dieudonn\'e module of $\scrG_x$. Then for $K/\breve \bbQ_p$ finite and  $\tilde{x}\in\SSh_{\rmK}(\bfG,X)(\calO_K)$ a point lifting $x$, the $p$-adic comparison isomorphism gives rise to tensors $s_{\alpha,0}\in \bbD[1/p]^\otimes$, which lie in the submodule $\bbD^\otimes$ by the argument in \cite[\S 3.3]{KP} and are independent of the choice of lift $\tilde{x}$. Moreover, the scheme of tensor preserving isomorphisms $\underline{\Isom}_{s_\alpha,s_{\alpha,0}}(V^\vee_{\bbZ_p},\bbD)$ is a trivial $\calG$-torsor. Here, one needs to use the purity result \cite[Prop. (10.3)]{An} or \cite[Thm. A.3.2]{PRShtuka}, instead of \cite[Prop. 1.4.3]{KP}. This construction globalizes to give the $\calG$-torsor $\widetilde{\SSh}_{\rmK}(\bfG,X)$  by considering the scheme of tensor preserving trivializations of the de Rham cohomology of $\calA$, and the $\calG$-equivariant morphism $q$ is induced by pulling back the Hodge filtration along this trivialization; see \cite[Thm. 4.2.7]{KP}.

The assumption (C) is used in showing (3) and the smoothness of $q$ in (2). More precisely, given $x\in \SSh_{\rmK}(\bfG,X)(k)$, the filtration on $\bbD\otimes_{\breve \bbZ_p}k$ corresponds to a point $y\in \BMloc_{\calG,\mu_h}(k)$. We let $R_G$ (resp. $R$) denote the completion of  local ring of $\BMloc_{\calG,\mu_h}$ (resp. $\Gr(d, \Lambda)$) at $y$. Under assumption (C), the construction in \cite[3.2.12]{KP} goes through and it produces a versal $p$-divisible group $\scrG$ over $\Spf R_E$, see  \cite[Lem. 3.1.12]{KP} and \S\ref{par:Psi}. The Dieudonn\'e display of the restriction of $\scrG$ to $\Spf R_G$ carries tensors  that lift $s_{a,0}$ and \cite[Prop. 2.3.17]{KP} gives a crucial property of $\scrG$, see also \S \ref{par:adapted} below.   The argument in \cite[Prop. 4.2.2, Thm. 4.2.7]{KP} now
shows that we have an isomorphism of completions $\widehat{\calO}_{\SSh_{\rmK}(\bfG,X),x}\cong R_G$, and that  $q$ is smooth.  The isomorphism of henselizations in (3)  then follows formally using (2) and the fact that the torsor $\widetilde{\SSh}_{\rmK}(\bfG,X)$ is for a connected group scheme. 
\end{proof}

\end{para}

\begin{para}\label{par:adapted}  The versal $p$-divisible group $\scrG$ over $\Spf R_E$, which is constructed in the course of the above proof, satisfies the following property: For $K/\breve\bbQ_p$ finite, a local ring homomorphism $u:R\rightarrow \calO_K$  factors through $R_G$ if and only if $\scrG_u$ is $(\calG,\mu_h)$-adapted in the sense of \cite[Def. 3.2.4]{KZhou}, cf. \cite[\S7.1]{PRlsv}. Hence, as a byproduct of the above  argument, we also obtain the following deformation theoretic description of the formal neighbourhood $\widehat{U}_x$ of $x\in \SSh_{\rmK}(\bfG,X)(k)$.
	\begin{prop}
	 Let $K/\breve\bbQ_p$ be finite. Then  
	 a deformation $\scrG_{\calO_K}$ of $\scrG_x$  over $\O_K$ corresponds to an $\O_K$-point of $\widehat{U}_x$ if and only if $\scrG_{\calO_K}$ is $(\calG,\mu_h)$-adapted.
		\end{prop}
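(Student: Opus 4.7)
The plan is to combine the isomorphism $\widehat{\calO}_{\SSh_{\rmK}(\bfG,X),x}\simeq R_G$ from Theorem \ref{thm: main SV Hodge}(3) with the characterization of local homomorphisms $u\colon R\to \calO_K$ factoring through $R_G$ recalled at the start of \S\ref{par:adapted}. Recall that here $R$ denotes the completion of the local ring of $\Gr(d,\Lambda)$ at the image $y$ of $x$ under the composition $\SSh_{\rmK}(\bfG,X)\to \SSh_{\rmK'}(\mathbf{GSp}(V),S^\pm)\to \Gr(d,\Lambda)$, and $R_G\subset R$ is the quotient cutting out the local model $\BMloc_{\calG,\mu_h}$.

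First I would unpack the identification of $\widehat U_x$ with $\Spf R_G$ in deformation-theoretic terms. By the construction of $\SSh_{\rmK}(\bfG,X)$ as the normalization of the Zariski closure in the Siegel integral model, an $\calO_K$-point of $\widehat U_x$ is the same as an $\calO_K$-deformation of the polarized abelian scheme $\calA_x$ equipped with Hodge-tensors extending $s_{\alpha,\et}$; by the Serre--Tate theorem this is the same datum as a deformation $\scrG_{\calO_K}$ of $\scrG_x=\calA_x[p^\infty]$ together with compatible lifts of the crystalline tensors $s_{\alpha,0}$. The construction in \cite[3.2.12]{KP}, which goes through thanks to assumption (C) (\emph{very goodness}), produces a versal $p$-divisible group $\scrG$ over $\Spf R$ whose restriction to $\Spf R_G$ carries a canonical system of tensors lifting $s_{\alpha,0}$. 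The proof of Theorem \ref{thm: main SV Hodge} moreover identifies, under $\widehat{\calO}_{\SSh_{\rmK}(\bfG,X),x}\simeq R_G$, the pullback of the universal $p$-divisible group of $\calA$ with the restriction of $\scrG$ to $\Spf R_G$.

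Second, given an arbitrary deformation $\scrG_{\calO_K}$ of $\scrG_x$, by versality of $\scrG$ over $\Spf R$ it is isomorphic to $\scrG_u$ for a unique local $\breve\bbZ_p$-algebra map $u\colon R\to\calO_K$. Hence $\scrG_{\calO_K}$ comes from an $\calO_K$-point of $\widehat U_x\simeq\Spf R_G$ if and only if $u$ factors through $R_G$. Applying the property highlighted at the start of \S\ref{par:adapted} -- that $u$ factors through $R_G$ if and only if $\scrG_u$ is $(\calG,\mu_h)$-adapted in the sense of \cite[Def.~3.2.4]{KZhou} -- one obtains the two implications of the proposition.

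The only non-formal step is the assertion that the universal $p$-divisible group on $\widehat U_x$ coincides, under the isomorphism $\widehat{\calO}_{\SSh_{\rmK}(\bfG,X),x}\simeq R_G$, with the restriction of $\scrG$ from \S\ref{par:adapted}. This is precisely the content of the identification of formal completions in Theorem \ref{thm: main SV Hodge}(3), whose proof builds $R_G\to \widehat{\calO}_{\SSh_{\rmK}(\bfG,X),x}$ by transporting the $p$-divisible group $\scrG|_{\Spf R_G}$ with its tensor structure across the Hodge embedding, using the torsor $\widetilde{\SSh}_{\rmK}(\bfG,X)$ and \cite[Prop. 2.3.17]{KP}. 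Once that compatibility is in hand, the proposition is a direct translation of the defining property of $R_G\subset R$ via $(\calG,\mu_h)$-adaptedness; this compatibility is therefore what I expect to be the main -- and indeed only -- point requiring care.
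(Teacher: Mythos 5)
Your proposal is correct and takes essentially the same route as the paper, which simply cites the characterization in \S\ref{par:adapted} together with \cite[Prop.~2.3.17]{KP} and \cite[Prop.~4.1.9]{KZhou}; your write-up is just a more explicit unpacking of what those references supply (the identification $\widehat U_x\simeq\Spf R_G$, the versality of $\scrG$ over $\Spf R$, and the compatibility of the universal $p$-divisible groups under that identification). One small slip: $u$ should be a local $\O_{\br E}$-algebra (equivalently $W(k)$-algebra) homomorphism, not a $\br\bbZ_p$-algebra one, since $R$ is an $\O_{\br E}$-algebra.
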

	\begin{proof} This follows from the above, and from \cite[Prop. 2.3.17]{KP} and its proof. See \cite[Prop. 4.1.9]{KZhou}.
		\end{proof}
 
	\end{para}

\begin{para}\label{par:sh}
Before continuing, let us mention that if we are willing to replace henselization by strict henselization in Theorem \ref{thm: main SV Hodge} (3), there is a more general result available which does not require  assuming (B) or ``very good" in (C). The proof of this result uses, in addition to the above, results on $p$-adic shtukas.

\begin{thm}\label{thm:sh}
Let $(\G, X)$ be a Shimura datum of Hodge type. Suppose $p>2$ and let $\Gg$ be a stabilizer group scheme for $G=\G_{\Q_p}$.   
Let $\iota:(\bfG,X)\rightarrow (\mathbf{GSp}(V),S^{\pm})$ be a Hodge embedding and suppose there is a 
 self dual periodic $\bbZ_p$-lattice chain $\L$ in $V_{\bbQ_p}$ such that 
\[
\Gg(\br\Z_p)=\iota^{-1}_{\br\Q_p}(\GSp(\L)(\br\Z_p))\cap G(\br\Q_p).
\]
Let $\SSh_{\rmK}(\bfG,X)$  for $\rmK_p=\Gg(\Z_p)$, be the normalization of the Zariski closure of $\Sh_{\rmK}(\bfG,X)$ in the Siegel moduli scheme
with parahoric level given by $\L$, as above. Then for each $x\in \SSh_{\rmK}(\bfG,X)(k)$, there exists  $y\in \BMloc_{\calG,\mu_h}(k)$ such that there is an isomorphism of (strict) henselizations 
$$
\calO^{\rm sh}_{\SSh_{\rmK}(\bfG,X),x}\simeq \calO^{\rm sh}_{ \BMloc_{\calG,\mu_h},y}.
$$
\end{thm}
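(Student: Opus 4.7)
The plan is to bypass the display-theoretic construction used in Theorem \ref{thm: main SV Hodge}, which depended essentially on the ``very good'' and $R$-smoothness hypotheses, and to instead work in the category of $p$-adic shtukas developed in \cite{PRShtuka}, \cite{PRlsv}, \cite{DvHKZ}. The key advantage is that the relevant deformation problem for shtukas can be formulated on the $v$-sheaf level, where $\BMloc_{\calG,\mu_h}$ is characterized by the Scholze--Weinstein conjecture (Theorem \ref{thm:twoLM}) without any embedding-theoretic input.

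First I would associate to the universal abelian scheme $\calA$ on $\SSh_\rmK(\bfG,X)$ an integral $(\calG,\mu_h)$-shtuka. The $p$-divisible group $\calA[p^\infty]$ corresponds, via \cite{Schber}, to a $1$-bounded shtuka for $\GL(\mathrm{tot}(\L))$ with parahoric level determined by $\L$. The Hodge tensors $s_\alpha \in V_{\bbZ_{(p)}}^\otimes$ cutting out $\bfG_{\bbZ_{(p)}}$ give rise, via their \'etale realizations, to tensors of this shtuka; the purity results of \cite{An} and their extensions in \cite{PRShtuka} ensure that these tensors extend integrally. The hypothesis $\Gg(\br\Z_p)=\iota^{-1}_{\br\Q_p}(\GSp(\L)(\br\Z_p))\cap G(\br\Q_p)$ is exactly what is needed to guarantee that the resulting refinement of the structure group to $\calG$ is compatible with the parahoric level given by $\L$.

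Next I would invoke \cite[Thm. 4.5.2]{PRShtuka}, as extended by \cite{DvHKZ} and Daniels--Youcis \cite{DY}, to identify $\SSh_\rmK(\bfG,X)$ with the canonical integral model of $\Sh_\rmK(\bfG,X)$ in the sense of \cite{PRShtuka}. For a point $x \in \SSh_\rmK(\bfG,X)(k)$, the filtration on the Dieudonn\'e module $\bbD$ at $x$ selects a point $y \in \BMloc_{\calG,\mu_h}(k)$, and by the representability results for local shtuka moduli in \cite{PRlsv}, combined with Theorem \ref{thm:twoLM}, the deformation functor of the resulting $(\calG,\mu_h)$-shtuka at $x$ is pro-represented by a formal scheme that becomes isomorphic to $\Spf \widehat{\calO}^{\mathrm{sh}}_{\BMloc_{\calG,\mu_h},y}$ after passage to strict henselization. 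The universal property of the canonical integral model then furnishes the required isomorphism $\calO^{\mathrm{sh}}_{\SSh_\rmK(\bfG,X),x} \simeq \calO^{\mathrm{sh}}_{\BMloc_{\calG,\mu_h},y}$.

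The main obstacle will be verifying that the shtuka extracted from $\calA[p^\infty]$ with its tensors genuinely has integral structure group $\calG$, and that the comparison to the canonical model is compatible with both the parahoric level and the finitely many tensors, without invoking $R$-smoothness. The passage to strict (rather than ordinary) henselization is forced precisely at this stage: the analogue of the torsor $\widetilde{\SSh}_\rmK(\bfG,X)$ in the local model diagram may have non-trivial $\calG^{\mathrm{ad}}/\calG^{\mathrm{ad},\circ}$-type torsorial structure over the non-algebraically closed residue field, which is trivialized only after geometric base change.
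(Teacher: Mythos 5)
Your proposal follows the same overall strategy as the paper: reduce to a comparison of completed local rings via the theory of $p$-adic shtukas, using the characterization of $\BMloc_{\calG,\mu_h}$ by its $v$-sheaf (Theorem \ref{thm:twoLM}) and the full-faithfulness of the diamond functor from \cite[Prop.~18.4.1]{Schber}. But there are three points where your write-up diverges or goes astray.

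First, the paper does not route through canonicity. It applies \cite[Thm.~1.3.2(c)]{PRShtuka} (extended by \cite[Thm.~4.2.3]{DvHKZ} for non-connected $\Gg$) directly to identify the $v$-sheaf of $\widehat{\calO}_{\SSh_{\rmK}(\bfG,X)\otimes\O_{\breve E},\,x}$ with the formal completion of a moduli of $\Gg$-shtukas, and then \cite[Thm.~2.5.5]{PRlsv} to identify that with $\widehat{\O}_{\BMloc_{\calG,\mu_h}\otimes\O_{\breve E},\,y}$. Invoking \cite[Thm.~4.5.2]{PRShtuka} and ``the universal property of the canonical integral model'' to close the loop is vaguer and, as stated, does not obviously pin down the local ring isomorphism: the universal property of canonical models is formulated in terms of prime-to-$p$ \'etale covers and shtuka functoriality, not directly in terms of completed local rings. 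The paper's cleaner route is the $v$-sheaf comparison followed by diamond full-faithfulness.

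Second, your explanation of why one obtains strict (rather than ordinary) henselizations is incorrect. You attribute it to possible $\calG^{\ad}/\calG^{\ad,\circ}$-torsorial structure, but $\calG^{\ad}$-torsors only enter in the abelian-type local model diagram, not in the Hodge-type setting here. The actual reason is simpler and built into the method: the $v$-sheaf results produce an isomorphism of completions of local rings over $\O_{\breve E}$ (algebraically closed residue field $k$), and \cite[Cor.~2.6]{Artin} then upgrades that to an isomorphism of henselizations over $\O_{\breve E}$, which are exactly the strict henselizations over $\O_E$. The distinction between strict and ordinary henselization is governed by $\calG/\calG^\circ$ (not $\calG^{\ad}/\calG^{\ad,\circ}$), and it is only exploited in Theorem~\ref{thm: main SV Hodge}(3), which obtains ordinary henselizations under the extra hypothesis $\calG=\calG^\circ$.

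Third, you flag as ``the main obstacle'' the verification that the shtuka attached to $\calA[p^\infty]$ with its tensors genuinely has integral structure group $\calG$; that is not an open issue to be resolved here, but is precisely the content of the cited results \cite[Thm.~1.3.2]{PRShtuka}, \cite[Thm.~4.2.3]{DvHKZ}. Relatedly, your framing that this approach dispenses with ``any embedding-theoretic input'' is misleading: \cite[Thm.~2.5.5]{PRlsv} itself relies on the very good embedding theory of \S\ref{s:Displays}--\ref{sec: very good embeddings} of this paper, so the shtuka route avoids invoking ``very good'' at the level of this particular theorem but does not eliminate the dependence overall.
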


\begin{proof}
Given $x\in \SSh_{\rmK}(\bfG,X)(k)$, a point $y\in \BMloc_{\calG,\mu_h}(k)$ is provided as above. By \cite[Cor. 2.6]{Artin}, it is enough to show that 
there is an isomorphism 
\[
\widehat{\calO}_{\SSh_{\rmK}(\bfG,X),x}\simeq \widehat{\O}_{\BMloc_{\calG,\mu_h}, y}
\]
between the completions of the local rings of $\SSh_{\rmK}(\bfG,X)\otimes_{\O_E}\O_{\breve E}$ and 
$\BMloc_{\calG,\mu_h}\otimes_{\O_E}\O_{\breve E}$ at $x$ and $y$ respectively. Note that both these rings are normal. 

If $\Gg=\Gg^\circ$, i.e. the stabilizer $\Gg$ is parahoric, then \cite[Thm. 1.3.2 (c)]{PRShtuka} implies that the $v$-sheaf associated to $\widehat{\calO}_{\SSh_{\rmK}(\bfG,X),x}$ is isomorphic to the $v$-sheaf given by the ``formal completion" of a corresponding integral moduli of $\Gg$-shtuka. For stabilizers $\Gg$ which are not necessarily connected, the same result follows by \cite[Thm. 4.2.3]{DvHKZ} and its proof (this extends \cite[Thm. 1.3.2]{PRShtuka}).
By \cite[Thm. 2.5.5]{PRlsv}, this formal completion is in turn isomorphic to the $v$-sheaf represented by $\widehat{\O}_{\BMloc_{\calG,\mu_h}, y}$. The result then follows by the full-faithfulness of the diamond functor, \cite[Prop. 18.4.1]{Schber}.
\end{proof}

\begin{Remark}
{\rm a) The proof of \cite[Thm. 2.5.5]{PRlsv} and hence of Theorem \ref{thm:sh} relies on the results in the present paper and, in particular, on the results about very good embeddings in \S \ref{s:Displays} and \S\ref{sec: very good embeddings}. 

b) Under the assumptions of Theorem \ref{thm:sh}, \cite[Thm. 1.3.2]{PRShtuka} and \cite[Thm. 4.2.3]{DvHKZ} imply that $\SSh_{\rmK}(\bfG,X)$ is the canonical integral model of ${\rm Sh}_{\rmK}(\G,X)$ in the sense of \cite{PRShtuka}. Hence, by \emph{loc. cit.}, $\SSh_{\rmK}(\bfG,X)$ is independent of the choice of Hodge embedding and lattice.  

c) The stronger result of Theorem \ref{thm: main SV Hodge} (3) concerning henselizations, as well as the local model diagram in (2), is needed in applications towards determining the local zeta factors of the Shimura variety over $p$ via the Langlands-Kottwitz method, cf. \cite{HZZ}.}
\end{Remark}
\end{para}

	\begin{para}\label{para: ab type reduction assumptions}We now deduce  corresponding results for Shimura varieties of abelian type and for parahoric level (as opposed to stabilizer level). We continue to fix $p>2$ and let  $(\bfG,X)$ be a Shimura datum of Hodge type with reflex field $\bfE$ as above and we assume that it satisfies assumptions (A), (B) and (C). We also introduce two further assumptions. As before, for a group scheme $\bfH$ over $\bbQ$, we write $H$ for its base change to $\bbQ_p$. We also write $\bfC$ for the kernel of the morphism $\bfG^{\mathrm{sc}}\rightarrow \bfG^{\der}$, where $\bfG^{\mathrm{sc}}$ is the simply-connected cover of the derived group $\bfG^{\der}$.
		
		\begin{altenumerate}
			\item[(D)] If $c\in\rmH^1(\bbQ,\bfC)$ satisfies $c_\ell=0$ for all $\ell\neq p$, then $c_p=0$, cf. \cite[(4.3.4)]{KP}.
			\item[(E)] The center of $Z_G$ of $G$ is an $R$-smooth torus.
		\end{altenumerate}
		
	We set $\rmK_p^\circ=\calG^\circ(\bbZ_p)$ and $\rmK^\circ=\rmK_p^\circ\rmK^p$.	There is a natural finite map of Shimura varieties $\Sh_{\rmK^\circ}(\bfG,X)\rightarrow \Sh_{\rmK}(\bfG,X)$, and we define the integral model $\SSh_{\rmK^{\circ}}(\bfG,X)$ to be the normalization of $\SSh_{\rmK}(\bfG,X)$ in $\Sh_{\rmK^\circ}(\bfG,X)_E$.
	The discussion in \cite[\S4.3]{KP} extends verbatim to our setting and we obtain the following proposition, cf. \cite[Prop. 4.3.9]{KP}.
		
		\begin{prop}
			Assume (A)--(D) are satisfied. 
			
			\begin{altenumerate}
				\item The natural map $\SSh_{\rmK^\circ}(\bfG,X)\rightarrow \SSh_{\rmK}(\bfG,X)$ is \'etale.
				\item The geometric connected components of $\SSh_{\rmK^\circ}(\bfG,X)$  are defined over the maximal extension $\bfE^p$  of $\bfE$ unramified at all places lying above $p$.
			\end{altenumerate}
		\end{prop}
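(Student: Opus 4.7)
Since the text announces that the argument of \cite[\S4.3]{KP} extends verbatim, the job is to verify that the KP strategy still works under our weaker hypotheses: no tameness, but $R$-smoothness of $G$ and $p\nmid|\pi_1(G^{\der})|$ (from (B)) together with the Hasse-principle assumption (D) on $\bfC=\ker(\bfG^{\mathrm{sc}}\to\bfG^{\der})$. The proof splits into two parts, mirroring \cite{KP}.

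For part (1), on the generic fiber the map $\Sh_{\rmK^\circ}(\bfG,X)_E\to\Sh_{\rmK}(\bfG,X)_E$ is a finite \'etale Galois cover for the finite abelian group $\Delta:=\calG(\bbZ_p)/\calG^\circ(\bbZ_p)$, which via the Kottwitz homomorphism embeds into the torsion subgroup of $\pi_1(G)_I$. First I would lift this action to an integral $\Delta$-action on $\SSh_{\rmK^\circ}(\bfG,X)$ over $\SSh_{\rmK}(\bfG,X)$. Following \cite{KP}, this is done by a central twisting construction: one enlarges the Hodge datum by a $\bbQ$-torus $\bfT$ equipped with a map $\bfT\to Z_\bfG$ chosen so that the image of $T(\bbZ_p)$ in the Kottwitz quotient surjects onto $\Delta$. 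The action of the N\'eron model of $T$ on the Siegel scheme induces an integral $\Delta$-action on the closure, and (D) guarantees that the twist descends to the Shimura variety for $\bfG$ without Hasse-principle obstruction at primes away from $p$. The role played by tameness in \cite{KP} is here replaced by the $R$-smoothness hypothesis (B), which via Proposition \ref{prop: R-smoothness properties} provides the required closed immersions of stabilizer schemes into Weil restrictions so that the N\'eron models of the auxiliary tori behave integrally as needed.

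Next I would check that the resulting $\Delta$-cover is \'etale. Both $\SSh_{\rmK^\circ}(\bfG,X)$ and $\SSh_{\rmK}(\bfG,X)$ are normal by construction, and the local model diagram of Theorem \ref{thm: main SV Hodge}(2), applied simultaneously to both models, gives smooth-local control at primes above $p$. The $\Delta$-action is free on the generic fiber and hence free on the Zariski closure, so normality plus finiteness of the map reduces the assertion to showing the map induces an isomorphism on formal neighbourhoods of the quotient, which is immediate from the compatibility of the local model diagrams for $\calG$ and $\calG^\circ$ (they share the same local model, cf. the statement of Theorem \ref{introthm: LMD abelian type}).

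For part (2), the plan is to apply Deligne's description: $\pi_0\bigl(\Sh_{\rmK^\circ}(\bfG,X)_{\bar{\bfE}}\bigr)$ is identified with a double coset space of the form $\bfG(\bbQ)^+\backslash\bfG(\bbA_f)/\rmK^\circ$, and the Galois action on it factors through a reciprocity map attached to the cocenter of $\bfG$. Assumption (D) says precisely that the obstruction class in $\rmH^1(\bbQ,\bfC)$ controlling the ramification of this action away from $p$ vanishes; consequently the action factors through $\mathrm{Gal}(\bfE^p/\bfE)$, as required. The main obstacle in the whole argument is Step 1 above, namely producing the integral $\Delta$-action in the absence of tameness; the combination of $R$-smoothness of $G$ and $p\nmid|\pi_1(G^{\der})|$ is precisely what makes the KP twisting construction go through unchanged.
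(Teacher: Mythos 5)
Your reconstruction of the argument has several genuine problems, the most serious being a circularity and a false intermediate claim.

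First, the step ``the local model diagram of Theorem \ref{thm: main SV Hodge}(2), applied simultaneously to both models'' is circular. Theorem \ref{thm: main SV Hodge} furnishes a local model diagram only for the stabilizer-level scheme $\SSh_{\rmK}(\bfG,X)$, where $\rmK_p=\calG(\bbZ_p)$. A local model diagram for the parahoric-level scheme $\SSh_{\rmK^\circ}(\bfG,X)$, with $\rmK_p^\circ=\calG^\circ(\bbZ_p)$, is only obtained \emph{after} this Proposition by pulling back along the map $\SSh_{\rmK^\circ}\to\SSh_{\rmK}$ once one knows it is \'etale; this is part of what Proposition \ref{prop: auxiliary SV construction} establishes. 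So you cannot assume it here. What the argument in \cite[\S 4.3]{KP} actually does is use the local model diagram for $\SSh_{\rmK}$ alone to lift closed points of $\SSh_{\rmK}(\bar{\mathbb F}_p)$ to mixed characteristic, and then use assumption (D) (a Hasse principle for $\bfC=\ker(\bfG^{\rm sc}\to\bfG^{\der})$) to control the ambiguity of these lifts; this is how one gets the fiber count and eventually \'etaleness. Nothing about the local structure of $\SSh_{\rmK^\circ}$ is assumed a priori.

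Second, the assertion ``the $\Delta$-action is free on the generic fiber and hence free on the Zariski closure'' is false in general. Freeness on the generic fiber of a finite morphism of normal $\calO_E$-schemes does not propagate to the integral model: already $\Spec(\bbZ_p[\zeta_p])\to\Spec(\bbZ_p)$ is generically a $(\bbZ/p)^\times$-torsor but ramified over $(p)$. The whole content of part (1) is precisely that this pathology does not occur; it cannot be waved away by normality.

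Third, the ``central twisting construction'' you invoke is misplaced. In \cite{KP}, the twisting by an auxiliary torus appears in \S 4.4--\S 4.6 (and here in the proof of Proposition \ref{prop: auxiliary SV construction}), to propagate the construction from Hodge to abelian type, not in \S 4.3. Relatedly, $R$-smoothness (assumption (B)) does not enter this proposition's proof directly; it enters via the hypotheses of Theorem \ref{thm: main SV Hodge}. The paper's point in saying the discussion extends ``verbatim'' is that once Theorem \ref{thm: main SV Hodge} is in place, the argument of \cite[\S 4.3]{KP} goes through unchanged, since that section uses only the local model diagram for $\SSh_{\rmK}$, the lifting of closed points it provides, and assumption (D).

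Your sketch for part (2), identifying $\pi_0$ via Deligne's double-coset description and using (D) to control the ramification of the Galois action, is roughly the right shape.
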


	\end{para}	
	
	\begin{para}\label{par:7.1.12}
	Now let $(\bfG_2,X_2)$ be  a Shimura datum which is equipped with a central isogeny 
		$\alpha:\bfG^{\der}\rightarrow \bfG_{2}^{\mathrm{der}}$ inducing an isomorphism $(\bfG^{\ad},X^{\ad})\cong (\bfG_2^{\mathrm{ad}},X_2^{\mathrm{ad}})$. Let $\bx^{\ad}$ be the image of $\bx$ 
		in $\calB(G^{\ad},\bbQ_p)$ and we fix $\bx_2\in \calB(G_2,\bbQ_p)$ lifting $\bx^{\ad}$. We write  $\calG_2$ (resp.   $\calG_2^{\circ}$) for the stabilizer group scheme (resp. parahoric) corresponding to the point $\bx_2$. 
		In this case, we say that the stabilizer group scheme $\calG$ lifts $\calG_2$. We also set $\calG^{\ad}:=\calG/\calZ$ where $\calZ$ is the Zariski closure of $Z_G$ inside $\calG$, and we let $\calG^{\ad,\circ}$ denote its neutral component. 
		Note that in general, $\calG^{\ad}$ is not necessarily the Bruhat--Tits stabilizer group scheme associated to $\bx^{\ad}$. 
		However, assumption (E) implies that $\calG^{\ad}$ is smooth and $\calG^{\ad,\circ}$ is equal to the parahoric group scheme associated to $\bx^{\ad}$, cf. \cite[Lemma 4.6.2]{KP}, \cite[Prop. 2.4.13]{KZhou}. 
		We set $\rmK_{2,p}:=\calG_2(\bbZ_p)$ and $\rmK_{2,p}^\circ=\calG_2^{\circ}(\bbZ_p)$. We write $\bfE_2$ for the reflex field of $(\bfG_2,X_2)$ and we let $\bfE':=\bfE.\bfE_2.$ 
		We fix a place $v'$  of  $\bfE'$  lying above $v$  and we set $E':=\bfE'_{v'}$ to be the completion at $v'$.
		
		Fix  a connected component $X^+\subset X$. By real approximation, upon modifying the isomorphism $\bfG^{\ad}\cong \bfG_2^{\mathrm{ad}}$ by an element of $\bfG^{\ad}(\bbQ)$,  we may assume that the image of  $X_2\subset X_{2}^{\mathrm{ad}}$ contains the image of $X^+.$ We write $X_2^+$ for $X^+$ viewed as a subset of $X_2.$  
		We denote by $\Sh_{\rmK^\circ_p}(\bfG, X)^+ \subset \Sh_{\rmK^\circ_p}(\bfG, X)$ and 
		$\Sh_{\rmK_{2^\circ,p}}(\bfG_2, X_2)^+  \subset \Sh_{\rmK^\circ_{2,p}}(\bfG_2, X_2)$ the geometrically connected components corresponding 
		to $X^+$ and $X_2^+$. These are defined over extensions  of $\bfE$ and $\bf E'$ respectively, which are unramified at primes above $p$ by Assumption (D). The identification $X_2^+ \simeq X^+$ induces a finite map 
		\begin{equation}\label{eqn:mapconncomps}
			\Sh_{\rmK^\circ_p}(\bfG, X)^+ \rightarrow \Sh_{\rmK^\circ_{2,p}}(\bfG_2, X_2)^+ 
		\end{equation}
		
		We then have the following generalization of \cite[Cor. 4.6.18]{KP}.
		
		\begin{prop}\label{prop: auxiliary SV construction}Under the assumptions (A)--(E), there is a $\bfG_2(\bbA^p_f)$-equivariant extension of $\Sh_{\rmK^\circ_{2,p}}(\bfG_2, X_2)$ to an $\calO_{E'}$-scheme with 
			$\bfG_2(\bbA^p_f)$-action 
			$\mathscr{S}_{\rmK^\circ_{2,p}}(\bfG_2,X_2)$ such that
			\begin{altenumerate}
				\item For  $R$  a discrete valuation ring of mixed characteristic  $(0,p)$, the map 
				$$\mathscr{S}_{\rmK^\circ_{2,p}}(\bfG_2,X_2)(R)\rightarrow\Sh_{\rmK^\circ_{2,p}}(\bfG_2,X_2)(R[1/p])$$ 
				is a bijection.
				\item The map (\ref{eqn:mapconncomps}) induces a finite map of $\O_{E^{\prime\ur}}$-schemes
				$$\mathscr{S}_{\rmK^\circ_p}(\bfG, X)^+ \rightarrow \mathscr{S}_{\rmK^\circ_{2,p}}(\bfG_2, X_2)^+, $$ 
				where $\mathscr{S}_{\rmK^\circ_{2,p}}(\bfG_2, X_2)^+$ denotes the closure of $\Sh_{\rmK^\circ_{2,p}}(\bfG_2, X_2)^+$ 
				in the $\calO_{E'^{\mathrm{ur}}}$-scheme  $\mathscr{S}_{\rmK^\circ_{2,p}}(\bfG_2, X_2)_{\calO_{E'^{\mathrm{ur}}}},$ and similarly for $\mathscr{S}_{\rmK^\circ_{p}}(\bfG, X)^+.$
				\item There exists a diagram 
				\begin{equation}\label{eqn: local model diagram abelian type}
				\begin{aligned}\xymatrix{ &\widetilde{\mathscr{S}}^{\mathrm{ad}}_{\rmK^\circ_{2,p}}(\bfG_2,X_2)\ar[dr]^q\ar[dl]_\pi&\\
						\mathscr{S}_{\rmK^\circ_{2,p}}(\bfG_2,X_2) & &\bbM^{\mathrm{loc}}_{\Gg^\circ_2, \mu_{h_2}}\otimes_{\calO_E}\calO_{E'}}
						\end{aligned}\end{equation}
						where $\pi$ is a $\bfG_2(\bbA_f^p)$-equivariant ${\calG}^{\ad}$-torsor and $q$ is $\Gg^\ad$-equivariant, smooth of relative dimension				 $\dim \bfG^{\ad},$ and 
				$\bfG_2(\bbA_f^p)$-equivariant for the trivial $\bfG_2(\bbA_f^p)$-action on $\bbM^{\mathrm{loc}}_{\calG_2, \mu_{h_2}}$. 
				If in addition, we have $\calG=\calG^\circ$, then $\pi$ reduces to a $\calG^{\ad,\circ}$ torsor. 
			\end{altenumerate}
		\end{prop}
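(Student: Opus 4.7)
The plan is to follow the strategy of \cite[\S4.6]{KP}, using Deligne's construction of abelian type Shimura varieties from Hodge type, but replacing the tameness hypothesis there by Assumption (E) and the twisting machinery developed in \cite{KZhou}. I will first produce a model of the geometrically connected component, and then extend to the full Shimura variety via a group action; the local model diagram is transported from Theorem \ref{thm: main SV Hodge} using that $(\bfG,X)$ and $(\bfG_2,X_2)$ have the same adjoint datum.

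First I would build the integral model of a connected component. By Theorem \ref{thm: main SV Hodge} applied to $(\bfG,X)$ and the hypothesis on $(\bfG_2,X_2)$, we have $\mathscr{S}_{\rmK^\circ_p}(\bfG,X)$ and its geometrically connected component $\mathscr{S}_{\rmK^\circ_p}(\bfG,X)^+$ over $\calO_{E^{\ur}}$. The map on complex points coming from $\bfG^{\der}\to\bfG_2^{\der}$ identifies $\Sh_{\rmK^\circ_{2,p}}(\bfG_2,X_2)^+$ with the quotient $\mathscr{A}(\bfG_2)^\circ \backslash \Sh_{\rmK^\circ_p}(\bfG,X)^+$, where $\mathscr{A}(\bfG_2)^\circ$ is the group constructed from $Z(\bfG_2)(\bbQ)^-$ and the parahoric subgroup of $Z(\bfG_2)(\bbQ_p)$ as in \cite[\S4.5, \S4.6]{KP}. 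Since $Z(\bfG_2)$ is $R$-smooth (which follows from Assumption (E) together with the fact that the groups $(\bfG_2,X_2)$ produced via Deligne's recipe satisfy this; if needed we pick $(\bfG_2,X_2)$ with this property by the construction in \S\ref{para: Deligne Hodge type lifting}), the quotient is well-behaved integrally, so I would define
\[
\mathscr{S}_{\rmK^\circ_{2,p}}(\bfG_2,X_2)^+ := [\mathscr{A}(\bfG_2)^\circ\backslash\mathscr{S}_{\rmK^\circ_p}(\bfG,X)^+]
\]
over $\calO_{E^{\prime\ur}}$. The key input ensuring that this quotient by a pro-finite group acts freely on the deformation rings, and hence that the quotient is a normal flat scheme, is the local model diagram in Theorem \ref{thm: main SV Hodge}(2) together with the fact that $\mathscr{A}(\bfG_2)^\circ$ acts through the center, hence trivially on the local model factor; this is exactly the argument of \cite[Lem.~4.6.6--Prop.~4.6.14]{KP}, which only requires $R$-smoothness of the relevant tori.

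Next I would extend to the full Shimura variety by gluing connected components. Following \cite[Prop.~4.6.16, Cor.~4.6.18]{KP}, we use the action of $\bfG_2(\bbA_f^p)$ on the set of connected components: we define
\[
\mathscr{S}_{\rmK^\circ_{2,p}}(\bfG_2,X_2) := \left[\mathscr{E}(\bfG_2)\times \mathscr{S}_{\rmK^\circ_{2,p}}(\bfG_2,X_2)^+\right]/\mathscr{E}(\bfG)^+,
\]
with appropriate twisting; here $\mathscr{E}$ denotes the relevant extended Shimura groupoids as in \cite[\S4.6]{KP}. The $R$-smoothness of $Z_G$ (Assumption (E)) combined with Proposition \ref{prop: R-smoothness properties} and the results of \cite{KZhou} are what allow the construction to work in the presence of wild ramification; this replaces the direct use of tame descent in \cite{KP}. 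Property (1) (extension and N\'eron--Ogg--Shafarevich) then follows for $\mathscr{S}_{\rmK^\circ_{2,p}}(\bfG_2,X_2)$ from the corresponding statement for $\mathscr{S}_{\rmK^\circ_p}(\bfG,X)$ (Theorem \ref{thm: main SV Hodge}(1)) by checking on connected components, and property (2) (finiteness of the map of closures) follows directly from the construction as a quotient.

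For the local model diagram in (3), the point is that $(\bfG,X)$ and $(\bfG_2,X_2)$ have canonically identified adjoint Shimura data, hence canonically identified adjoint local models: $\bbM^{\mathrm{loc}}_{\calG^{\ad,\circ},\mu_h^{\ad}} \cong \bbM^{\mathrm{loc}}_{\calG_2^{\ad,\circ},\mu_{h_2}^{\ad}}$ after base change to $\calO_{E'}$. Under Assumption (E), by \cite[Lem. 4.6.2]{KP} and \cite[Prop. 2.4.13]{KZhou}, $\calG^{\ad,\circ}$ (resp. $\calG_2^{\ad,\circ}$) is the parahoric of the adjoint group, so these local models agree with those in \S\ref{s:LM}. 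Take the $\calG^{\ad}$-torsor $\widetilde{\mathscr{S}}^{\ad}_{\rmK^\circ_p}(\bfG,X) \to \mathscr{S}_{\rmK^\circ_p}(\bfG,X)$ obtained from Theorem \ref{thm: main SV Hodge}(2) by pushing out along $\calG \to \calG^{\ad}$, and transport it through the quotient construction above to define $\widetilde{\mathscr{S}}^{\ad}_{\rmK^\circ_{2,p}}(\bfG_2,X_2)$. Since $\mathscr{A}(\bfG_2)^\circ$ acts trivially on $\calG^{\ad}$ and on the local model, the torsor and the smooth equivariant morphism $q$ both descend. The reduction of $\pi$ to a $\calG^{\ad,\circ}$-torsor when $\calG = \calG^\circ$ is obtained by observing that the obstruction lives in $\pi_0(\calG^{\ad})$, which is trivial on connected components of the Shimura variety when $\calG$ itself is connected.

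The main obstacle, and the reason that this proposition is not entirely formal from Theorem \ref{thm: main SV Hodge}, is verifying that the quotient-and-twist construction goes through without tameness. This is precisely where we use $R$-smoothness of $Z_G$ from Assumption (E): it guarantees that the lft N\'eron model of $Z_G$ sits nicely inside that of a product of $R$-smooth tori, so that the action of $\mathscr{A}(\bfG_2)^\circ$ on $\mathscr{S}_{\rmK^\circ_p}(\bfG,X)^+$ extends to an action on the integral model with the right properties (e.g., it acts freely through a pro-finite quotient, cf. \cite[Prop. 4.6.14]{KP}). Once this is established, the remainder is bookkeeping: all three stated properties descend routinely from the Hodge type case, exactly as in \cite[\S4.6]{KP}.
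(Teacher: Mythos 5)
Your proposal is correct and takes essentially the same approach as the paper: both transplant the argument of \cite[\S 4.4--4.6]{KP}, using the identification of adjoint local models together with the $R$-smoothness of $Z_G$ from Assumption (E) to replace the tameness hypothesis (specifically, to show that $\G^{\ad,\circ}_{\Z_{(p)}}$ is smooth and its $p$-adic completion equals the parahoric of $G^{\ad}$, giving the analogue of \cite[Lem.~4.6.2(2)]{KP}). One minor slip worth noting: it is $(\bfG,X)$, not $(\bfG_2,X_2)$, that is produced via Deligne's recipe and can be chosen, and the $R$-smoothness the argument actually needs is that of $Z_G$ (Assumption (E)) rather than of $Z(\bfG_2)_{\bbQ_p}$.
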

		\begin{proof}  This is deduced from Theorem \ref{thm: main SV Hodge} by following the arguments in \cite[\S 4.4-\S 4.6]{KP}  
		and noting that we have an equivariant isomorphism $\BMloc_{\Gg, \mu_h}\otimes_{\O_E}\O_{E'}\cong \BMloc_{\Gg^\circ_2,\mu_{h_2}}\otimes_{\O_{E_2}}\O_{E'}$
		obtained by combining the isomorphisms induced from $G_2\to G_2^\ad$ and $G\to G^\ad\cong G_2^\ad$ by \cite[Prop. 21.5.1]{Schber} and the full-faithfulness of the diamond functor. We sketch some details, enough to explain how we use assumption (E). 
		
		Let $\G^\circ_{\Z_{(p)}}$ (resp. $\G^{\ad,\circ}_{\Z_{(p)}}$) denote the $\Z_{(p)}$-model of $\G$ (resp. $\G^\ad$) associated to
$\Gg^\circ$ (resp. $\Gg^{\ad,\circ}$) via the construction in \cite[\S 4.6.1]{KP}. 
Let ${\mathbf Z}_{\bfG}$ denote the center of $\G$
and ${\mathbf Z}_{\Z_{(p)}}$ the closure of ${\mathbf Z}_{\G}$ in $\G^\circ_{\Z_{(p)}}$. The assumption of
$R$-smoothness on the torus $Z_G$ and descent implies that ${\mathbf Z}_{\Z_{(p)}}$ and $\G^{\ad,\circ}_{\Z_{(p)}}$ are smooth and that the 
 $p$-adic completion of $\G^{\ad,\circ}_{\Z_{(p)}}$  agrees with the 
parahoric group scheme of $G^\ad$ associated to $\bx^\ad$. 
This gives us the analogue of \cite[Lem. 4.6.2(2)]{KP}
and allows us to carry out the constructions of \S 4.6 of loc. cit.
	\end{proof}

		\end{para}
\subsection{Existence of very good Hodge type liftings}

\begin{para}\label{para: Deligne}
In order to obtain  unconditional results, we show in this subsection that given an abelian type Shimura datum $(\bfG_2,X_2)$, we can find a Hodge type Shimura datum $(\bfG,X)$ satisfying assumptions (A)-(E).	We carry this out in two steps. First we consider the case when $\bfG_2^{\ad}$ is simple; this case is divided into two parts, the non-exceptional (NE) case and the exceptional type $A$ and $D_n^{\bbH}$ cases. The last step consists of deducing the case of general $\bfG_2$ from the case where $\bfG_2^{\ad}$ is simple using a modified product construction.

We begin by recalling Deligne's construction of Hodge type liftings in \cite{DeligneCorvallis}. Let $H$ be a simple, adjoint, reductive group over $\mathbb R,$ which is of classical type, and is 
associated to a Hermitian symmetric domain; in particular $H(\mathbb R)$ is not compact. 
Thus $H$ is of type $A, B, C, D^{\mathbb R}, D^{\mathbb H}$ in the classification 
of \cite[1.3.9]{DeligneCorvallis}, with the type $A$ case including unitary groups of any signature $U(p,q)$ with $p,q \neq 0.$
We set $H^\sharp = H^{\mathrm{sc}},$ the simply connected cover of $H,$ unless $H$ is of type $D^{\mathbb H},$ 
in which case we set $H^\sharp$ equal to the image of $H^{\mathrm{sc}}$ in the representation corresponding to the standard 
representation of the orthogonal group. 

Now let $\rmF$ be a totally real field, and $\bfH$ a simple, adjoint reductive group of classical type over $\rmF.$ 
Assume that
\begin{altitemize} 
	\item for every embedding $\sigma: \rmF \hookrightarrow \mathbb R,$ $\bfH\otimes_{\sigma,\rmF} \mathbb R$ is either compact or associated to a Hermitian symmetric domain.
	\item $\bfH\otimes_{\sigma,\rmF} \mathbb R$ is non-compact for some $\sigma.$ 
	\item If $\bfH$ is of type $D,$ then for those $\sigma$ such that $\bfH\otimes_{\sigma,F} \mathbb R$ is non-compact, the associated Hermitian symmetric domain does not depend on $\sigma.$ That is, it is always of type $D^{\mathbb R}$ or always of type $D^{\mathbb H}.$
\end{altitemize}
We define $\bfH^{\sharp}$ to be $\bfH^{\mathrm{sc}}$ unless $\bfH$ is of type $D,$ in which case we define $\bfH^{\sharp}$ 
to be the unique quotient of $\bfH^{\mathrm{sc}}$ such that 
$\bfH^{\sharp}\otimes_{\sigma,F} \mathbb R = (\bfH\otimes_{\sigma,F} \mathbb R)^{\sharp}$ whenever 
$\bfH\otimes_{\sigma,F} \mathbb R$ is non-compact.

Now suppose $\bfH$ is a reductive group over $\rmF,$ with $\bfH^{\ad} = \prod_{i=1}^s \bfH_i$ where each $\bfH_i$ is 
a simple, adjoint reductive group of classical type over $F$ satisfying the three conditions above. 
Then we set $\bfH^{\sharp} = \prod_{i=1}^s \bfH_i^{\sharp}.$

Now let $(\bfH,Y)$ be a Shimura datum such that $(\bfH^{\ad},Y^{\ad})$ is of abelian type. 
Recall \cite[1.3.10, 2.3.10] {DeligneCorvallis} that in this case the three conditions above are satisfied, so $\bfH^{\sharp}$ is well defined
\footnote{In \cite[4.6.21]{KP} it is incorrectly asserted that $\bfH^{\sharp}$ is defined for any $(H,Y)$ with $H$ of classical type, 
	however $H$ may not satisfy the third condition above. This is however satisfied if $(\bfH^{\ad},Y^{\ad})$ is of abelian type.}, and $(\bfH,Y)$ is of abelian type if and only if $\bfH^{\der}$ is a quotient of 
$\bfH^\sharp$.

\end{para}\begin{para}

Let $(\bfG_2,X_2)$ be a Shimura datum of abelian type such that $\bfG_2^{\ad}$ is  $\bbQ$-simple. Then $\bfG_2^{\ad}\cong \Res_{\rmF/\bbQ}\bfH$ for $\bfH$ an absolutely simple group over $\rmF$. Let $I$ be the set of real places of $\rmF$, and   $I_{nc}$ (resp. $I_c$) the set of places where $\bfH$ is non-compact (resp. compact). 
	
	For $v\in I$, we write $D_v$ for the Dynkin diagram of $H_v:=\bfH\otimes_{\rmF,v}\bbC$; then the Dynkin diagram $D$ of $\bfG_{\bbC}$ is the union of the $D_v$.  We write $D_{nc}$ (resp. $D_c$) for the union of the $D_v$ for $v\in I_{nc}$ (resp. $v\in I_{c}$).
	
	Let $S\subset D$ be a subset of vertices of $D$ such that
	\begin{altenumerate}
		\item $S$ is stable under $\Gal(\overline{\bbQ}/\bbQ)$.
		\item $S\cap D_{nc}$ is a subset of the underlined vertices in Deligne's table \cite[1.3.9]{DeligneCorvallis}.
	\end{altenumerate}

	For $s\in S$, let $W(s)$ be the irreducible complex representation of $\bfG^{\mathrm{sc}}$ with highest weight corresponding to $S$. Then for suitable $n$, there is a representation $W$ of $\bfG^{\mathrm{sc}}$ defined over $\bbQ$ such that the representation $ \oplus_{s\in S}W(s)^n\cong W_{\bbC}$. Let  $W_s\subset W_{\bbC}$ denote the subspace $W(s)^n$. As in \cite{DeligneCorvallis}, we identify $S$ with $\Hom(\rmK_S,\bbC)$ for $\rmK_S$ a suitable product of totally real or CM fields, and we obtain an action of $\rmK_S$ on $W$ via the decomposition $W_{\bbC}\cong \oplus_{s\in S}W(s)^n$. 
	
\end{para}\begin{para}\label{para: Deligne Hodge type lifting}	In what follows, we choose $S$ as follows:
	\begin{altitemize}
		\item If $(\bfG_2^{\ad},X_2^{\ad})$ is not of type $A$ or of type $D_n^{\bbH}$, then we choose $S$ maximal satisfying the two conditions above (this is the choice used in \cite[Proposition 2.3.10]{DeligneCorvallis}). 
		
		\item If $(\bfG_2^{\ad},X_2^{\ad})$ is of type $A_n$, we choose $S$ to be $S=\{\varpi_{v,1},\varpi_{v,n}|v\in I\}$ i.e. the union of the leftmost and rightmost vertices in $D_v$ in \cite[Table 1.3.9]{DeligneCorvallis} for each $v$. Then $S$ is a single orbit for the action of $\Gal(\overline{\bbQ}/\bbQ)$,  since complex conjugation acts on $D_v$ via the opposition involution. Thus $\rmK_S$ is a CM extension of $F$.
		
		\item If $(\bfG_2^{\ad},X_2^{\ad})$ is of type $D_n^{\bbH}$, then we choose $S=\{\varpi_{v,1}|v\in I\}$, i.e. in each $D_v$ we choose the leftmost vertex in \cite[Table 1.3.9]{DeligneCorvallis}. Then $\rmK_S=\rmF$.
	\end{altitemize}
	In each case we find that the largest quotient of $\bfG^{\mathrm{sc}}$ through which the representation $\bfG^{\mathrm{sc}}\rightarrow \mathbf{GL}(W)$ factors is $\bfG^{\sharp}:=\Res_{\rmF/\bbQ}\bfH^{\sharp}$.

	Let $\rmK$ be a CM extension of $\rmF$ disjoint from $\rmK_S$ such that every prime of $\rmF$ lying above $p$ splits in $\rmK$, and we fix a set $T$ of embeddings $\rmK\rightarrow \bbC$ satisfying the same conditions in \cite[Lemma 4.6.22]{KP}.
	We let $V=W\otimes_{\rmF}\rmK$  which we consider as a vector space over $\bbQ$ and let $\bfG''\subset \mathbf{GL}(V)$ be the subgroup generated by $\rmK_S^\times, \mathrm{Res}_{\rmF/\bbQ}\bfH^{\sharp}$ and $\rmK^\times$ (this is the group $G_3$ in Deligne's notation). We let $\bfG'\subset \bfG''$ be the subgroup generated by $\Res_{\rmF/\bbQ}\bfH^\sharp$, $\rmF^\times$ and the maximal compact subtorus of the center of $\bfG''$. Then $\bfG'$ is of the form $\Res_{\rmF/\bbQ}\bfH'$ for $\bfH'$ a group over $\rmF$ which is tamely ramified at all places lying above $p$, and the morphism $\bfG'\rightarrow \mathbf{GL}(V)$ arises from a morphism of $\rmF$-group schemes  $\bfH'\rightarrow \mathbf{GL}_{\rmF}(W\otimes_{\rmF}\rmK);$ here the subscript $\rmF$ means we consider automorphisms of $W\otimes_{\rmF}\rmK$ as an $\rmF$-vector space.  The vector space  $V$ is equipped with a Hodge structure of type $(0,-1), (-1,0)$  which arises from a homomorphism $h':\bbS\rightarrow \bfG'_{\bbR}$. We then obtain via \cite[Corollaire 2.3.3]{DeligneCorvallis}  a Shimura datum $(\bfG,X)$ with $\bfG\subset \bfG'$ and an alternating form $\psi:V\times V\rightarrow \bbQ$ such that there is a Hodge embedding $(\bfG,X)\rightarrow (\bfGSp(V),S^\pm)$. Explicitly, $\bfG$ is generated by $\bfG'^{\der}=\Res_{\rmF/\bbQ}\bfH^\sharp$, the maximal compact subtorus of $\bfZ_{\bfG'}$ and the scalars $\bbG_m$; equivalently, $\bfG$ is given by the neutral component  $(\bfG'\cap\mathbf{GSp}(V))^0$ of $\bfG'\cap\mathbf{GSp}(V)$.
\end{para}
\begin{para}\label{para: kernel multiplier} Now let $(\bfG,X)$ be a Shimura datum of Hodge type with $\bfG^{\ad}$ simple.  The center $\bfZ_{\bfG}$ of $\bfG$ splits over a CM field, and hence the largest compact subtorus $\bfZ_{\bfG,0}$ of $\bfZ_{\bfG}$ is defined over $\bbQ$.
	We let $\bfG^c$  denote the subgroup of $\bfG$ generated by $\bfG^{\der}$ and $\bfZ_{\bfG,0}$.
 Similarly, we let $\bfZ_{\bfG}^c$ denote the subgroup of $\bfZ_{\bfG}$ generated by $\bfZ_{\bfG^{\der}}$ and $\bfZ_{\bfG,0}$. As before, we let $G^c$ and $Z_{G}^c$ denote the base change of these groups to $\bbQ_p$.
	\begin{lemma}\label{lem: symp multiplier kernel}We  have exact sequences
		\[\xymatrix{1\ar[r]& \bfG^c\ar[r]&\bfG\ar[r] &\bbG_m\ar[r]&1}\]and 
						\[\xymatrix{1\ar[r]& \bfZ_{\bfG}^c\ar[r]&\bfZ_{\bfG}\ar[r] &\bbG_m\ar[r]&1},\]
		where the maps $\bfG\rightarrow \bbG_m$ and $\bfZ_{\bfG}\rightarrow\bbG_m$ are induced by the symplectic multiplier homomorphism induced by some (equivalently any) Hodge embedding for $(\bfG,X)$.
	\end{lemma}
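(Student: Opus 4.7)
The plan is to verify the two short exact sequences in three steps: containment of $\bfG^c$ (resp.\ $\bfZ_{\bfG}^c$) in the relevant kernel, surjectivity of $c$, and equality of the kernels with these subgroups.

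Since $\bbG_m$ is abelian, $c$ is automatically trivial on $\bfG^{\der}$; moreover, any morphism of $\bbQ$-tori from a torus with compact real points to $\bbG_m$ is trivial, since after base change to $\bbR$ its image lies in the maximal compact subtorus of $\bbG_{m,\bbR}$, which is trivial. Hence $c$ is trivial on $\bfZ_{\bfG,0}$, giving $\bfG^c\subseteq \ker c$, and intersecting with $\bfZ_{\bfG}$ gives $\bfZ_{\bfG}^c\subseteq \ker c|_{\bfZ_{\bfG}}$. For surjectivity, $\bfG$ contains the scalar subgroup $\bbG_m\subset \mathbf{GL}(V)$ coming from the Hodge embedding (equivalently, the image of the weight cocharacter of $h$, which is defined over $\bbQ$ by the Shimura datum axioms), and $c$ acts on these scalars by $\lambda\mapsto\lambda^2$, since the symplectic form has weight $-2$ relative to the weight $-1$ Hodge structure on $V$. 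So both $c$ and $c|_{\bfZ_{\bfG}}$ are surjective.

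Because $\bfG=\bfG^{\der}\cdot \bfZ_{\bfG}$ and $\bfZ_{\bfG}\cap \bfG^c=\bfZ_{\bfG^{\der}}\cdot \bfZ_{\bfG,0}=\bfZ_{\bfG}^c$, we have $\bfG/\bfG^c=\bfZ_{\bfG}/\bfZ_{\bfG}^c$, so both sequences reduce to the same quotient identification. Comparing dimensions, $\dim \bfZ_{\bfG}^c=\dim \bfZ_{\bfG,0}$ since $\bfZ_{\bfG^{\der}}$ is finite, while $\dim \ker c|_{\bfZ_{\bfG}}=\dim \bfZ_{\bfG}-1$ by surjectivity; so the equality of kernels is equivalent to showing that the $\bbR$-split rank of the $\bbQ$-torus $\bfZ_{\bfG}$ is exactly one.

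The hard part is this rank-one computation, which relies on the almost-simplicity of $\bfG$ together with the specific Deligne-style construction in \S\ref{para: Deligne Hodge type lifting}: there $\bfG$ is generated by $\bfG^{\der}$, the maximal compact subtorus of the center of the auxiliary group $\bfG'$, and the scalars $\bbG_m\subset \mathbf{GL}(V)$. Consequently $\bfZ_{\bfG}$ is generated by $\bfZ_{\bfG}^c$ and the scalars, so $\bfZ_{\bfG}/\bfZ_{\bfG}^c$ is a quotient of $\bbG_m$ by a finite subgroup, hence itself isomorphic to $\bbG_m$, and the induced map $\bfZ_{\bfG}/\bfZ_{\bfG}^c\to \bbG_m$ (descended from $\lambda\mapsto\lambda^2$ on the scalars) is an isogeny. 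To promote it to an isomorphism, I would verify that $-1$, the kernel of squaring on the scalars, lies in $\bfZ_{\bfG^{\der}}\cdot \bfZ_{\bfG,0}$: it sits in the center of $\bfG^{\der}=\Res_{\rmF/\bbQ}\bfH^{\sharp}$ for types $B$, $C$, $D$ (and for even-rank type $A$), and otherwise lies in $\bfZ_{\bfG,0}$ via the CM structure, e.g.\ $-1\in \ker(N_{\rmK/\rmK^+})$ when $\bfZ_{\bfG}$ contains $\Res_{\rmK/\bbQ}\bbG_m$ for $\rmK$ a CM field.
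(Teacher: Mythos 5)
Your opening steps---the inclusion $\bfG^c\subseteq\ker c$ (via triviality of $c$ on $\bfG^{\der}$ and on the compact torus $\bfZ_{\bfG,0}$) and the surjectivity of $c$ (via the weight cocharacter acting on the scalars by squaring)---coincide with the paper's argument. The divergence, and the gap, is in the reverse inclusion $\ker c\subseteq\bfG^c$.

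The paper handles this step uniformly: by \cite[1.1.18]{DeligneCorvallis}, the real group $\bfG^c_{\bbR}$ contains $h(U^1)$, and since $-1\in U^1(\bbR)$ with $w_h(-1)=h(-1)$, one gets $w_h(\mu_2)\subset h(U^1)\subset\bfG^c$ directly. Combined with $\bfG=\bfG^c\cdot w_h(\bbG_m)$ and $\ker(c|_{w_h(\bbG_m)})=w_h(\mu_2)$, this yields $\ker c=\bfG^c$ with no case distinction; the $\bfZ_{\bfG}$ statement follows by intersecting. Your route instead reduces to checking that the scalar $-1$ lies in $\bfZ_{\bfG^{\der}}\cdot\bfZ_{\bfG,0}$ and then attempts a type-by-type verification. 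That reduction correctly identifies the obstruction (the degree-two issue coming from squaring), but the verification is explicitly left unfinished, and more importantly it is anchored to the specific Deligne lifting $\bfG^{\der}=\Res_{\rmF/\bbQ}\bfH^{\sharp}$. The lemma, however, is stated for an arbitrary Hodge-type datum with $\bfG$ almost $\bbQ$-simple, not just those arising from the lifting construction, so the case analysis would have to cover all such $\bfG$. In particular the odd-rank type $A$ case, where $-1\notin\bfZ_{\bfG^{\der}}$, requires matching the scalar $-1\in\GL(V)$ to a point of a norm-one CM torus inside $\bfZ_{\bfG,0}$, which is not automatic without the specific $\rmK$-module structure on $V$ supplied by the lifting. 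The Deligne input $h(U^1)\subset\bfG^c_{\bbR}$ replaces all of this in one stroke and is the ingredient you are missing.
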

	\begin{proof}Let $c:\bfG\rightarrow \bbG_m$ be the symplectic multiplier homomorphism associated to  some Hodge embedding $\iota$. Then it is clear that $\bfG^{\der}$ and $\bfZ_{\bfG,0}$ are contained in $\ker(c)$, and hence $\bfG^c$ and $\bfZ_{\bfG}^c$ are contained in $\ker(c)$.  
		
		Note that $\bfG$ is generated by $\bfG^c$ and  $w_h(\bbG_m)$.  By \cite[\S1.1.18]{DeligneCorvallis}),  $\bfG_{\bbR}^c$ contains $h(U^1)$, where $U_1=(\Res_{\bbC/\bbR}\bbG_m)^{\mathrm{Nm}_{\bbC/\bbR}=1}$ is the unit circle, and hence $\bfG^c$ contains $w_h(\mu_2)\subset h(U_1)$. Thus $\ker(c|_{w_h(\bbG_m)})=w_h(\mu_2)$ is contained in $\bfG^c$, and hence $\bfG^c=\ker(c)$, so that we obtain the first exact sequence.

For the second exact sequence, we have  $\bfZ_{\bfG}^c=\bfZ_{\bfG}\cap\bfG^c$ and hence $w_h(\mu_2)\subset \bfZ_{\bfG}^c$. Then since $\bfZ_{\bfG}$ is generated by $\bfZ_{\bfG}^c$ and $w_h(\bbG_m)$, it follows as above that $\ker(c|_{\bfZ_{\bfG}})=\bfZ_{\bfG}^c$.
		\end{proof}
\end{para}
\begin{para}We now introduce a technical condition on a Hodge embedding for $(\bfG,X)$ which is needed to ensure the assumptions of Theorem \ref{thm:mainGSp} are satisfied.	We assume the following property:
\[
\text{$(\dagger)$ \ \ \ $\bfG^c\cong\mathrm{Res}_{\rmF/\bbQ}\bfH^c$ for an $\rmF$-group $\bfH^c$ with $\bfH^{c,\ad}$ absolutely simple.}
\]

\begin{Definition}
Let $\iota:(\bfG,X)\rightarrow (\mathbf{GSp}(V),X^\pm)$ be a Hodge embedding. We say that $\iota$ is \emph{fundamental} 
if $V$ has the structure of an $\rmF$-vector space such that $i|_{\bfG^c}$ factors as $$\mathrm{Res}_{\rmF/\bbQ}\bfH^c\rightarrow \Res_{\rmF/\bbQ}\mathbf{GL}_{\rmF}(V)\rightarrow \mathbf{GL}(V)$$ where the first map arises via Weil restriction from a morphism of group schemes over $\rmF$, and the second map is restriction of structure. Here, $\mathbf{GL}_{\rmF}(V)$ denotes the group of $\rmF$-linear automorphisms of $V$.
\end{Definition}
If $(\bfG,X)$ satisfies $(\dagger)$ as above, and $\iota:(\bfG,X)\rightarrow (\mathbf{GS}(V),S^\pm)$ is any Hodge embedding, then we obtain a fundamental Hodge embedding $$\iota':(\bfG,X)\rightarrow (\mathbf{GSp}(V'),S'^\pm),$$ where $V'=V\otimes_{\bbQ}\rmF$ considered as an $F$-vector space equipped with the alternating form $\Tr_{\rmF/\bbQ}\circ(\psi\otimes\rmF)$, and $\iota'$ is the composition of $\iota$ with the diagonal map $\mathbf{GSp}(V)\rightarrow \mathbf{GSp}(V')$.

	Given such a fundamental Hodge embedding, we let $\bfH'$ denote the subgroup of $\mathbf{GL}_{\rmF}(V)$ generated by $\bfH^c$ and the homotheties $\rmF^\times$, and we set $\bfG':=\Res_{\rmF/\bbQ}\bfH'$. We thus have an inclusion $\bfG\subset \bfG'$, and the embedding $\bfG\rightarrow \mathbf{GSp}(V)$ extends to an embedding $\bfG'\rightarrow \mathbf{GL}(V)$, which arises via restriction of structure from an $\rmF$-morphism $\bfH'\rightarrow \mathbf{GL}(V)$. The Hodge type liftings discussed in the last subsection are easily seen to satisfy $(\dagger)$, and the Hodge embeddings constructed   are fundamental.  Morever, the definition of the groups $\bfH',\bfG'$ coincide in the two discussions. 
	\begin{lemma}\label{lem:form over F}
	Let $(\bfG,X)\rightarrow (\mathbf{GSp}(V),S^\pm)$ be a fundamental Hodge embedding.  Then	 the alternating form $\psi$ on $V$ may  be chosen to satisfy the following properties:
		\begin{altenumerate}
			\item $\psi$ is of the form $\mathrm{Tr}_{\rmF/\bbQ}\circ \Psi$, where $\Psi:V\times V\rightarrow \rmF$ is an $\rmF$-bilinear alternating form.
			\item The morphism $\bfH'\rightarrow \mathbf{GL}_{\rmF}(V)$ factors through an $\rmF$-morphism to $\mathbf{GSp}_{\rmF}(V,\Psi)$.
		\end{altenumerate} 
	\end{lemma}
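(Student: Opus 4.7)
The plan is to construct $\Psi$ out of the given $\psi$ by projecting onto the ``diagonal'' component of $V\otimes_\bbQ V$. Since $\rmF/\bbQ$ is finite separable, the multiplication map $\mu:\rmF\otimes_\bbQ \rmF\to \rmF$ is a surjection of finite \'etale $\bbQ$-algebras that splits canonically, so there is an idempotent $e\in \rmF\otimes_\bbQ \rmF$ with $\mu(e)=1$ and $e\cdot\ker(\mu)=0$. Because $\ker(\mu)$ is generated by elements $a\otimes 1-1\otimes a$ which are antisymmetric under the swap involution, $e$ itself is swap-invariant. The module $V\otimes_\bbQ V$ carries an $\rmF\otimes_\bbQ \rmF$-action (via the two factors), giving a $\bbQ$-linear decomposition $V\otimes_\bbQ V = e(V\otimes_\bbQ V)\oplus (1-e)(V\otimes_\bbQ V)$ in which the first summand is canonically identified with $V\otimes_\rmF V$.

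In the first step I restrict $\psi$ to the summand $V\otimes_\rmF V$ to obtain a $\bbQ$-linear $\psi^{\mathrm{diag}}:V\otimes_\rmF V\to \bbQ$, and apply the $\Tr$-adjunction
\[
\Hom_\bbQ(V\otimes_\rmF V,\bbQ)\;\cong\;\Hom_\rmF(V\otimes_\rmF V,\Hom_\bbQ(\rmF,\bbQ))\;\cong\;\Hom_\rmF(V\otimes_\rmF V,\rmF),
\]
induced by $\Tr_{\rmF/\bbQ}:\rmF\xrightarrow{\sim}\Hom_\bbQ(\rmF,\bbQ)$, to produce the desired $\rmF$-bilinear form $\Psi$ characterised by $\psi^{\mathrm{diag}}=\Tr_{\rmF/\bbQ}\circ\Psi$. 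The new $\bbQ$-form is $\psi_{\mathrm{new}}:=\Tr_{\rmF/\bbQ}\circ\Psi$, yielding~(1). Swap-invariance of $e$ implies that $\mathrm{Sym}^2_\rmF V$ embeds into $e(\mathrm{Sym}^2_\bbQ V)$, so $\psi^{\mathrm{diag}}$ vanishes on $\mathrm{Sym}^2_\rmF V$ and $\Psi$ is alternating.

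For the group-theoretic invariance, I use that $\bfG\subset \bfG'=\Res_{\rmF/\bbQ}\bfH'$ acts on $V$ by $\rmF$-linear automorphisms, hence commutes with the $\rmF\otimes_\bbQ \rmF$-action and preserves the splitting. By Lemma~\ref{lem: symp multiplier kernel} the subgroup $\bfG^c=\Res_{\rmF/\bbQ}\bfH^c$ preserves $\psi$ (not merely up to similitude); hence it preserves $\psi^{\mathrm{diag}}$, and by naturality of the $\Tr$-adjunction it preserves $\Psi$. Unwinding Weil restriction, $\bfH^c$ preserves $\Psi$ as an $\rmF$-group scheme. Since any homothety $a\in \rmF^\times\subset \bfH'$ scales the $\rmF$-bilinear form $\Psi$ by $a^2\in \rmF^\times$, the $\rmF$-morphism $\bfH'\to \mathbf{GL}_\rmF(V)$ factors through $\mathbf{GSp}_\rmF(V,\Psi)$, giving~(2); applying $\Tr$ shows $\bfG$ preserves $\psi_{\mathrm{new}}$ up to similitude.

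The last step, which is the main subtlety, is to verify that $\iota:(\bfG,X)\to(\mathbf{GSp}(V,\psi_{\mathrm{new}}),S^\pm)$ remains a Hodge embedding, i.e.\ that the Riemann positivity for $\psi_{\mathrm{new}}$ still holds. Here I use that $\rmF$ is totally real, so $V\otimes_\bbQ \bbR=\bigoplus_{v\mid\infty}V_v$ and the original form decomposes as $\psi|_{V_\bbR}=\sum_{v,w}\psi_{vw}$ with $\psi_{vw}:V_v\times V_w\to\bbR$; the new form picks out precisely the diagonal contributions, $\psi_{\mathrm{new}}|_{V_\bbR}=\sum_v\psi_{vv}$. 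Since $\bfG_\bbR\subset\prod_v\bfH'_v$ acts componentwise, any $h\in X$ decomposes as $h=(h_v)$ preserving each $V_v$; testing positivity of $\psi$ on a pure component $x=x_v\in V_v$ gives $0<\psi(x_v,h(i)x_v)=\psi_{vv}(x_v,h_v(i)x_v)$. Summing over $v$ yields $\psi_{\mathrm{new}}(x,h(i)x)>0$ for every $0\neq x\in V_\bbR$, establishing the Riemann positivity and completing the proof.
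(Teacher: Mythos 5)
Your proof is correct, but it takes a genuinely different route from the paper. The paper's proof abandons the given $\psi$ entirely and instead chooses a new $\Psi$ by a general-position argument: it considers the $\rmF$-vector space $\mathrm{Bil}_{\bfH^c}(V)$ of $\bfH^c$-invariant $\rmF$-bilinear forms, notes that after tensoring with $\bbR$ it decomposes as $\prod_{\sigma}\mathrm{Bil}_{\bfH^c_{\bbR,\sigma}}(V_{\bbR,\sigma})$, and that at each archimedean place $\sigma$ the polarization forms for $h_\sigma(i)$ form a non-empty open subset $U_\sigma$ (by Deligne \cite[1.1.18(a)]{DeligneCorvallis}); density of the $\bbQ$-points then yields a $\Psi$ in $\mathrm{Bil}_{\bfH^c}(V)\cap\prod_\sigma U_\sigma$, and one sets $\psi=\Tr_{\rmF/\bbQ}\circ\Psi$. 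You instead construct $\Psi$ explicitly from the given $\psi$ by projecting $V\otimes_\bbQ V$ onto the diagonal summand $e(V\otimes_\bbQ V)\cong V\otimes_\rmF V$ via the separability idempotent $e\in\rmF\otimes_\bbQ\rmF$, and then lifting through the $\Tr_{\rmF/\bbQ}$-adjunction; the swap-invariance of $e$ gives that $\Psi$ is alternating, $\bfG^c=\ker(c)$ gives the $\bfH^c$-invariance, and the archimedean decomposition $V_\bbR=\bigoplus_v V_v$ yields positivity (and hence nondegeneracy, which you should perhaps flag explicitly since $\mathbf{GSp}_\rmF(V,\Psi)$ presupposes it) by testing the original Riemann positivity on pure components $x_v\in V_v$. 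Your construction buys concreteness and a canonical choice of $\Psi$ given $\psi$; the paper's density argument is shorter because it never has to verify that the output is alternating or nondegenerate -- it simply selects $\Psi$ from among the polarization forms to begin with. Both ultimately reduce positivity to the same place-by-place decomposition, so the underlying mechanism is the same.
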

	\begin{proof}
Let $\mathrm{Bil}_{\bfH^c}(V)$ denote the $\rmF$-vector space of $\bfH^c	$-invariant $\rmF$-bilinear maps $V\times V\rightarrow \rmF$. Then we have  an isomorphism $$\mathrm{Bil}_{\bfH^c}(V)\otimes_{\bbQ}\rmF \cong \prod_{\sigma:\rmF\rightarrow \bbR}\mathrm{Bil}_{\bfH^c_{\bbR,\sigma}}(V_{\bbR,\sigma}),$$ where $\mathrm{Bil}_{\bfH^c_{\bbR,\sigma}}(V_{\bbR,\sigma})$ is the $\bbR$-vector space of $\bfH^c_{\bbR,\sigma}(:=\bfH^c\otimes_{\rmF,\sigma}\bbR)$-invariant bilinear maps $V_{\bbR,\sigma}\times V_{\bbR,\sigma}\rightarrow \bbR$. We also have an isomorphism $$\bfG'_{\bbR}\cong \prod_{\sigma:\rmF\rightarrow \bbR}\bfH'_{\bbR,\sigma}.$$ Let $h\in X$; then considering $h$ as a morphism $h:\bbS\rightarrow \bfG'_{\bbR}$, we have $h=\prod_{\sigma:\rmF\rightarrow \bbR}h_\sigma$, for some $h_\sigma:\bbS\rightarrow \bfH'_{\rmF,\sigma}$. Then $h_\sigma(i)$ is a Cartan involution of $\bfH'_{\bbR,\sigma}/w_\sigma(\bbR)^\times$; here 	$w_\sigma:\bbG_m\rightarrow \bfH'_{\bbR,
		\sigma}$ is the weight homomorphism for $h_\sigma$.	We let $U_\sigma\subset \mathrm{Bil}_{\bfH^c_{\bbR,\sigma}}(V_{\bbR,\sigma})$ denote the subset consisting of polarizations forms $V_{\bbR,\sigma}\times V_{\bbR,\sigma}\rightarrow \bbR(-1)$ for $h_\sigma(i)$ in the sense of \cite[1.1.10]{DeligneCorvallis}. Then $U_\sigma$ is open and non-empty by \cite[1.1.18 (a)]{DeligneCorvallis}. 
		
		We choose $\Psi\in \mathrm{Bil}_{\bfH^c}\cap \prod_{\sigma\in \rmF}U_\sigma$. Then $\bfH'\rightarrow \mathbf{GL}_{\rmF}(V)$  factors through a  morphism $\bfH'\rightarrow \mathbf{GSp}_{\rmF}(V,\Psi)$. Moreover, if we set $\psi=\mathrm{Tr}_{\rmF/\bbQ}\circ\Psi$, then $\psi$ is a polarization form for $h(i)$ and the result follows.
	\end{proof}
	
\end{para}
\begin{para} We now prove the existence of the desired Hodge type liftings in the non-exceptional (NE) cases.

	\begin{prop}\label{prop: HT lift simple} Let $(\bfG_2,X_2)$ be a Shimura datum of abelian type with $\bfG_2^{\ad}\cong \mathrm{Res}_{\rmF/\bbQ}\bfH$ for $\bfH$ an absolutely simple group over $\rmF$ and $\calG_2^\circ$ a parahoric group scheme of $G_2$. Assume  $p>2$ and  that the pair $(G_2,\mu_{h_2})$ is (NE).

		Then there exists a Shimura datum $(\bfG,X)$ of Hodge type together with a central isogeny $\bfG^{\der}\rightarrow \bfG_{2}^{\der}$ which induces an isomorphism $(\bfG^{\ad},X^{\ad})\cong (\bfG_{2}^{\ad},X_{2}^{\ad})$. Moreover, $(\bfG,X)$ may be chosen to satisfy the following conditions.
		
		\begin{altenumerate}
			\item  $\bfG^{\der}\cong \Res_{\rmF/\bbQ}\bfH^{\sharp}$.
			\item Any prime $v_2|p$ of $\bfE_2$  splits in the composite $\bfE':=\bfE.\bfE_2$.
	
			\item 	$\bfG$ satisfies $(\dagger)$, and there exists a fundamental Hodge embedding $
			\iota:(\bfG,X)\rightarrow(\mathbf{GSp}(V),S^\pm)$, such that there is a stabilizer group scheme $\calG$ for $G$ lifting $\calG^\circ_2$ and a self-dual  lattice $\Lambda\subset V_{\bbQ_p}$  such that  $\iota$ extends to a very good Hodge embedding $$(\calG,\mu_h)\rightarrow (\GL(\Lambda),\mu_d).$$

			\item $Z_G^c$ is a quasi-tame torus, and  $X_*(Z_G^c/Z_{G^{\der}})_I$ is torsion free, where $I$ is the inertia subgroup of $\Gal(\overline{\bbQ}_p/\bbQ_p)$.

		\end{altenumerate}
	\end{prop}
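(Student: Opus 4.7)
The strategy is to carry out the Deligne-style Hodge-type lifting construction described in \S\ref{para: Deligne Hodge type lifting}, and then verify each of the four properties by making careful choices of the auxiliary CM data. First, since $(G_2^{\ad},\mu_{h_2}^{\ad})$ satisfies (NE), the group $\bfH$ is of classical type with no exceptional type $A$ factors involving division algebras of $p$-divisible index, and is not of type $D^{\mathbb H}_n$; in particular $\bfH^\sharp=\bfH^{\mathrm{sc}}$, so the resulting Shimura datum $(\bfG,X)$ coming from Deligne's construction will automatically have $\bfG^{\der}\cong \Res_{\rmF/\bbQ}\bfH^{\mathrm{sc}}$, yielding (1), and also $\pi_1(G^{\der})=0$ so that $p\nmid |\pi_1(G^\der)|$.

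The proof then proceeds by specialising the choices in \S\ref{para: Deligne Hodge type lifting}. One selects the auxiliary CM field $\rmK$ (a CM extension of $\rmF$ disjoint from $\rmK_S$) so that (a) every prime of $\rmF$ above $p$ splits completely in $\rmK$, and (b) $\rmK$ is tamely ramified at $p$ over $\rmF$; this is possible since we only impose splitting at finitely many primes above $p$ and avoid wild ramification. Property (a) ensures that the reflex field $\bfE$ of $(\bfG,X)$ contains enough splitting data so that every prime $v_2|p$ of $\bfE_2$ splits in $\bfE'=\bfE\cdot\bfE_2$, giving (2). For (4), the center $\bfZ_{\bfG}^c$ is built from the maximal compact subtorus of the center of $\bfG'$, which in turn is constructed from $\rmK_S^\times$ and $\rmF^\times$, both Weil restrictions of $\mathbb G_m$ from fields that are tame over $\bbQ_p$ by our choice. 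By Proposition \ref{prop: R-smoothness properties}(1), such tori are quasi-tame, hence $R$-smooth. The torsion-freeness of $X_*(Z_G^c/Z_{G^{\der}})_I$ can then be verified by a direct examination of the cocharacter lattice, using that $Z_G^c/Z_{G^{\der}}$ is an induced torus coming from the compact part of the center of Deligne's construction.

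The main step is (3), which requires producing a fundamental Hodge embedding that extends to a very good integral embedding. By Lemma \ref{lem:form over F}, the Hodge embedding from Deligne's construction can be refined so that $\psi=\mathrm{Tr}_{\rmF/\bbQ}\circ\Psi$ for an $\rmF$-bilinear alternating form $\Psi$, and so that the embedding $\bfH'\to\mathbf{GL}_\rmF(V)$ factors through $\mathbf{GSp}_\rmF(V,\Psi)$. Passing to $\bbQ_p$, we obtain
\[
G \subset G'=\prod_{v|p}\Res_{\rmF_v/\bbQ_p}\bfH'_v,
\]
where $G^{\der}=G'^{\der}$ by construction, each $\bfH'_v$ is a reductive group over $\rmF_v$ splitting over a tame extension of $\rmF_v$ (by the choice of $\rmK$ and the fact that $\bfH^{\mathrm{sc}}$ is tame), and the composition with $\prod_v\Res_{\rmF_v/\bbQ_p}\mathbf{GSp}_{\rmF_v}(V_v,\Psi_v)\hookrightarrow \mathbf{GSp}(V_{\bbQ_p})$ gives the Hodge embedding of the required form described just before Theorem \ref{thm:mainGSp}. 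The assumptions (NE), $R$-smoothness of $G$, and $p\nmid|\pi_1(G^\der)|$ have all been checked. We may replace the point $\mathbf{x}_2\in\calB(G_2^{\ad},\bbQ_p)$ by a nearby generic point in its facet without changing the parahoric, and lift it to a point $\mathbf{x}\in\calB(G,\bbQ_p)$ whose stabilizer group scheme $\calG$ lifts $\calG_2^\circ$ as required.

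At this point, Theorem \ref{thm:mainGSp} applies and yields a periodic lattice chain $\calL$ in $V_{\bbQ_p}$ together with a very good integral Hodge embedding $(\calG,\mu_h)\hook(\GL(\calL\oplus\calL^\vee),\mu_{2d})$. Replacing $V$ by $V^{\oplus r}$ where $r$ is the period of $\calL$ and taking $\Lambda=\mathrm{tot}(\calL\oplus\calL^\vee)$, we obtain a single $\psi^{\oplus r}$-self-dual $\bbZ_p$-lattice realising the embedding. This modification only changes the Hodge embedding by a diagonal direct sum and hence preserves the fundamental property and the structure $(\dagger)$. The hardest part of the argument is ensuring that the auxiliary choices (the set $S$, the fields $\rmK_S$ and $\rmK$) can be made simultaneously compatibly with both the Galois-theoretic constraints needed for (2) and (4), and the local-Hodge-theoretic constraints needed to invoke Theorem \ref{thm:mainGSp} for (3); the verification relies crucially on the exclusion of exceptional type $A$ and $D^{\mathbb H}_n$ factors imposed by the (NE) hypothesis.
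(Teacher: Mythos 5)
Your overall strategy is the right one, and much of the route matches the paper, but there are two concrete gaps.

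\textbf{Self-duality of the lattice.} You claim that $\La = \mathrm{tot}(\calL\oplus\calL^\vee)$ is $\psi^{\oplus r}$-self-dual. This does not work. First there is a dimension mismatch: $\mathrm{tot}(\calL\oplus\calL^\vee)$ lives inside $V^{\oplus 2r}$, not $V^{\oplus r}$. More importantly, writing $\La' = \mathrm{tot}(\calL)$, the dual of $\La'\oplus \La'^{\vee}$ with respect to the direct sum form $\psi^{\oplus 2r}$ is $\La'^{\vee}\oplus\La'$, which equals $\La'\oplus\La'^{\vee}$ only when $\La'=\La'^\vee$ — i.e.\ only in the self-dual case one is trying to reduce to. A self-dual lattice is obtained in the paper by Zarhin's trick: replace $\iota$ by $\iota^{8r}$ and take $\La=\La'^{4}\oplus\La'^{\vee,4}$, which is self-dual for a genuinely different alternating form on $V^{8r}$ (the one from the group-theoretic version of Zarhin's trick), and the very-goodness is then transported via Lemma~\ref{diagonal}. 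Your proposal omits this step entirely, and the self-duality you assert is false as stated.

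\textbf{Torsion-freeness and quasi-tameness.} For condition (4) you write that the torsion-freeness of $X_*(Z_G^c/Z_{G^{\der}})_I$ ``can then be verified by a direct examination of the cocharacter lattice''. This is not so: the Shimura datum produced directly by Deligne's construction need not satisfy this, and the paper has to modify $(\bfG,X)$ twice — first replacing $\bfG$ by $\bfG_1=\bfG\times^{\bfZ_\bfG}\bfT$ for a carefully chosen maximal torus $\bfT=\bfG\cap\Res_{\rmF/\bbQ}\bfS'$ (to make $Z_G^c=\bfT_0$ quasi-tame), and then performing a further modification of the form $(\bfG\times\bfT'\times\bfT'')/(\bfZ_{\bfG^{\der}}\times\bfZ_{\bfG,0})$ as in \cite[Lem.~4.6.22]{KP} to force torsion-freeness. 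Both modifications must be checked to preserve properties (1)–(3) and the condition $(\dagger)$, which your sketch does not address. (A smaller point: you also assert $R$-smoothness of $G$ after only having discussed $Z_G^c$; the paper deduces $R$-smoothness of $Z_G$ from Lemma~\ref{lem: symp multiplier kernel} plus quasi-tameness of $Z_G^c$, and then $R$-smoothness of $G$ from Lemma~\ref{lem: Z R-sm implies T R-sm}, which is needed because the centralizer of a maximal $\br{\bbQ}_p$-split torus is larger than the center.)
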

	\begin{proof} We follow the proof of \cite[Lem. 4.6.22]{KP}. We choose $S, \rmK$ and $T$ as in \S\ref{para: Deligne Hodge type lifting}. Then we obtain a Shimura datum $(\bfG,X)$ with  $\bfG^{\der}=\Res_{\rmF/\bbQ}\bfH^{\sharp}$ and hence (1) is satisfied. Moreover the choice of $T$ implies that any prime $v_2|p$ of $\bfE_2$ splits in $\bfE'$; thus (2) is satisfied.  As explained above,  $\bfG$ satisfies $(\dagger)$ and the  Hodge embedding $$(\bfG,X)\rightarrow (\bfGSp(V),S^\pm)$$ is fundamental, so the first part of (3) is satisfied. 
	
	To arrange so that condition (4) is satisfied, we argue as in \cite[Lem. 4.6.22]{KP}. Note that we have a containment of $\rmF$-groups $\bfH^c\subset \bfH'$, and so by the discussion in \S\ref{para: Deligne Hodge type lifting}, $\bfH^c$ splits over an extension which is tamely ramified at all $p$-adic places of $\rmF$. In particular $G^c$ is quasi-tame. Let $\fkp_1,\dotsc,\fkp_r$ denote the primes of $\rmF$ above $p$ and  $F_i$ the completion of $\rmF$ at $\fkp_i$. We set $H'_i:=\bfH'_{F_i}$ and let $S'_i\subset H'_i$ be the centralizer of a maximal $\breve F_i$-split torus.
	 Arguing as in \cite[Prop. 2.2.4]{KisinJAMS}, we may choose a maximal torus $\bfS'$ in $\bfH'$ such that the following two conditions are satisfied:
	\begin{enumerate}
		\item $\bfT':=\mathrm{Res}_{\rmF/\bbQ}\bfS'\subset \bfG'$ contains the image of some $h\in X$.
		\item $\bfS'_{\rmF_i}$ is $H_i(F_i)$ conjugate to $S'_i$.
	\end{enumerate}
	Let $\bfT=\bfG\cap \bfT'$ which is a maximal torus in $\bfG$. Then its maximal compact subtorus $\bfT_0$ is of the form $\Res_{\rmF/\bbQ}\bfS_0$ for an $\rmF$-torus $\bfS_0$, and its base change to $\bbQ_p$ is quasi-tame.
As in \cite[Lemma 4.6.22]{KP}, we set $\bfG_1=\bfG\times^{\bfZ_{\bfG}}\bfT$  and  let $X_1$ be the $\bfG_{1,\bbR}$-conjugacy class of Deligne homomoprhisms of $\bfG_1$ induced by $h\times 1$. 
	As in \emph{loc. cit.}, $(\bfG_1,X_1)$ is of Hodge type and  satisfies condition (1) and (2). We also have $$\bfG_1^c=\bfG^c\times^{\bfZ_{\bfG,0}}\bfT_0=\mathrm{Res}_{\rmF/\bbQ}\bfH_1^c$$ for some $\rmF$-group $\bfH_1^c$ and hence $\bfG_1$ satisfies ($\dagger$).   By construction, we have $\bfZ_{\bfG_1}=\bfT$ and  $\bfZ_{\bfG_1^{\der}}=\bfZ_{\bfG^{\der}}\subset \bfT$. It follows that $\bfZ_{\bfG_1}^c=\bfT_0$ and hence $Z_{G_1}^c$ is a quasi-tame torus. Upon replacing $(\bfG,X)$ by $(\bfG_1,X_1)$, we may assume $Z_{G}^c$ is a quasi-tame torus.
	
	We may further modify $(\bfG,X)$ as in \cite[Lemma 4.6.22]{KP} to ensure that in addition $X_*(Z_G^c/Z_{G^{\der}})_I$ is torsion free.  The modification in \emph{loc. cit.} is given by $\bfG_1=(\bfG\times \bfT'\times\bfT'')/(\bfZ_{\bfG^{\der}}\times \bfZ_{\bfG,0})$ for certain  tori $\bfT'$ and $\bfT''$ which are Weil restrictions of $\rmF$-tori whose base change to $\bbQ_p$ are quasi-tame. In particular $\bfG_1^c=(\bfG^c\times \bfT'\times\bfT'')/(\bfZ_{\bfG^{\der}}\times \bfZ_{\bfG,0})$ is the Weil restriction of an $\rmF$-group and hence satisfies $(\dagger)$. The other previously arranged conditions continue to be satisfied as in \cite[Lemma 4.6.22]{KP}. We may therefore assume that $(\bfG,X)$ satisfies (1), (2),  (4) and the condition $(\dagger)$.

	It remains to verify the last part of (3). 	We fix a fundamental Hodge embedding $\iota:(\bfG,X)\rightarrow (\mathbf{GSp}(V),S^\pm)$, so that $V$ is a vector space over $\rmF$. By Lemma \ref{lem:form over F}, we may assume the alternating form $\psi$ on $V$ is of the form $\mathrm{Tr}_{\rmF/\bbQ}\circ \Psi$ for $\Psi:V\times V\rightarrow\rmF$ an alternating $\rmF$-bilinear form on $V$, and that $\bfG'\rightarrow \mathbf{GL}(V)$ arises from an morphism $\bfH'\rightarrow \mathbf{GL}_{\rmF}(V)$ via restriction of structure.
	
	Let $\mathbf{x}\in \calB(G,\bbQ_p)$ be a point which is generic in its facet and  whose image in $\calB(G^{\ad},\bbQ_p)$ is  the image of a point  $\bx_2\in \calB(G_2,\bbQ_p)$ corresponding to  $\calG^\circ_2$. We let $\calG=\calG_{\bx}$ be the associated stabilizer group scheme.
	As above, let $H'_i=\bfH'_{F_i}$. Then we have $G\subset G'\cong \prod_{i=1}^r\Res_{F_i/\bbQ_p}H'_i$. Since $\iota$ is fundamental, and by our assumption on $\psi$, the conditions of Theorem \ref{thm:mainGSp} are satisfied (up to modifying the local forms $\Psi_{F_i}:V_{F_i}\times V_{F_i}\rightarrow F_i$ by the different). 
Condition (4) and Lemma \ref{lem: symp multiplier kernel} imply that $Z_G$ is an $R$-smooth torus (cf. Proposition \ref{prop: R-smoothness properties}), and hence $G$ is $R$-smooth by Lemma \ref{lem: Z R-sm implies T R-sm} below. Thus  by Theorem \ref{thm:mainGSp}, $\iota$ extends to very good Hodge embeddings $(\calG,\mu_h)\rightarrow (\GL(\calL),\mu_d),$ $(\calG,\mu_h)\rightarrow (\GL(\calL^\vee),\mu_d)$ for some lattice chain $\calL$ in $V_{\bbQ_p}$,   and the direct sum $(\calG,\mu_h)\rightarrow (\GL(\calL\oplus \L^\vee),\mu_{2d})$ is also very good.

We can choose the determining segments for $\L$ and $\L^\vee$ so that $\mathrm{tot}(\L^\vee)$ is a lattice in $V^r_{\Q_p}$ which is obtained from the dual $\La'^\vee$ of $\La':=\mathrm{tot}(\L)$ by permuting the constituent direct summands. Here $\Lambda'^\vee$ is the dual of $\La'$ 
with respect to the alternating form on $V^r_{\Q_p}$ given by the sum of $\psi$. It follows, by using Lemma \ref{diagonal}, that $(\calG,\mu_h)\rightarrow (\GL(\Lambda'),\mu_{rd})$,
$(\calG,\mu_h)\rightarrow (\GL(\Lambda'^\vee),\mu_{rd})$ are very good and a similar argument shows that  $(\calG,\mu_h)\rightarrow (\GL(\La'\oplus \La'^\vee),\mu_{2rd})$ is also very good.

 In order to obtain an embedding into a self-dual lattice, we apply Zarhin's trick \cite{Zarhin}. Thus we  replace $\iota$ by $\iota^{8r}$ and set $\Lambda=\Lambda'^4\oplus\Lambda'^{\vee,4}\subset V^{8r}$. Then the group-theoretic formulation of Zarhin's trick implies that there is an alternating form  on $V^{8r}$ for which $\Lambda$ is self-dual, we refer to \cite[\S4.5.9]{Madapusi-v2} for the explicit description of this form. The embedding  $\iota$ extends to a very good Hodge embedding $(\calG,\mu)\rightarrow (\GL(\Lambda),\mu_{8rd})$ by Lemma \ref{diagonal} and the above.
		\end{proof}
			\begin{lemma}\label{lem: Z R-sm implies T R-sm}
			Let $p>2$ and $(\bfG,X)$ a Shimura datum of abelian  type, and let $\bfZ_{\bfG}$ denote the center of $\bfG$. Suppose  $Z_G$ is an $R$-smooth torus.  Then $G$ is $R$-smooth.
		\end{lemma}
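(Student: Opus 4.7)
The plan is to reduce the $R$-smoothness of $G$ to that of $Z_G$ and of a maximal torus in $G^{\ad}$ via the short exact sequence relating them, and then to exploit the abelian type hypothesis (which controls the ramification of $G^{\ad}$) to handle the adjoint piece directly.

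First, recall that by definition $G$ is $R$-smooth if the centralizer $T := Z_G(S)$ of some (equivalently any) maximal $\breve{\bbQ}_p$-split torus $S\subset G$ is $R$-smooth. Let $\bar S\subset G^{\ad}$ denote the image of $S$; this is again a maximal $\breve{\bbQ}_p$-split torus, and its centralizer is a maximal torus of $G^{\ad}$ which coincides with $T/Z_G$. Thus we have a short exact sequence of tori over $\bbQ_p$
\[
1 \longrightarrow Z_G \longrightarrow T \longrightarrow T/Z_G \longrightarrow 1.
\]
By Proposition \ref{prop: R-smoothness properties}(2), it suffices to show that $T/Z_G$ is $R$-smooth.

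Since $(\bfG,X)$ is of abelian type and $p>2$, Remark \ref{rem: standard assumption}(b) ensures that the pair $(G,\mu_h)$ satisfies the standard assumptions of \S\ref{s:LM}; in particular, $G$ is essentially tame, i.e.\ $G^{\ad}$ is quasi-tame. Writing
\[
G^{\ad} \cong \prod_{i=1}^{r} \Res_{K_i/\bbQ_p} H_i,
\]
where each $H_i$ is reductive over $K_i$ and splits over a tamely ramified extension of $K_i$, the centralizer $T/Z_G$ of the maximal $\breve{\bbQ}_p$-split torus $\bar S$ decomposes as $\prod_i \Res_{K_i/\bbQ_p} T_i'$, where $T_i'$ is the centralizer of a maximal $\breve K_i$-split torus in $H_i$. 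Each $T_i'$ is a torus that splits over a tame extension of $K_i$, i.e.\ is quasi-tame in the sense of Definition \ref{def: quasi-tame}, so $T/Z_G$ is a quasi-tame torus.

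Applying Proposition \ref{prop: R-smoothness properties}(1) gives that $T/Z_G$ is $R$-smooth. Combined with the hypothesis that $Z_G$ is $R$-smooth, Proposition \ref{prop: R-smoothness properties}(2) applied to the displayed exact sequence yields that $T$ is $R$-smooth, which is precisely the $R$-smoothness of $G$. The only mildly delicate point is the verification that $T/Z_G$ really is the centralizer of $\bar S$ (and not a proper subgroup of it), which is automatic since over $\breve{\bbQ}_p$ both $T$ and the centralizer of $\bar S$ are maximal tori, and the inclusion $T/Z_G \subset Z_{G^{\ad}}(\bar S)$ of maximal tori containing $\bar S$ forces equality.
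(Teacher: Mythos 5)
Your proof is correct and takes essentially the same route as the paper: it identifies $T/Z_G$ with the centralizer of a maximal $\breve{\bbQ}_p$-split torus in $G^{\ad}$ (which the paper calls $T^{\ad}$), invokes essential tameness of $G^{\ad}$ from the abelian type hypothesis to deduce that this centralizer is quasi-tame hence $R$-smooth, and then applies the two-out-of-three property from Proposition~\ref{prop: R-smoothness properties}(2) to the resulting short exact sequence. The additional detail you include (the explicit Weil-restriction decomposition of $G^{\ad}$ and the verification that $T/Z_G$ is the full centralizer) is implicit in the paper's argument.
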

		\begin{proof}
			If $T$ is the centralizer of a maximal $\brQ$-split torus, then we have an exact sequence \[\xymatrix{1\ar[r] & Z_G\ar[r] & T\ar[r] & T^{\ad}\ar[r] &1}\] where $T^{\ad}$ is the image of $T$ in $G^{\ad}$. Since $(\bfG,X)$ is abelian type, $G^{\ad}$ is quasi-tame, cf. Remark \ref{rem: standard assumption}, and hence  $T^{\ad}$ is quasi-tame. Thus $T^{\ad}$ is  $R$-smooth by Proposition \ref{prop: R-smoothness properties} (1), and since $Z_G$ is $R$-smooth, $T$ is $R$-smooth by Proposition \ref{prop: R-smoothness properties} (2).
		\end{proof}
	\end{para}
	\begin{para}
We now use the results of \S\ref{ss:DnH}, \ref{ss:A_n} to deduce corresponding results in the exceptional type $A$ and type $D_n^{\bbH}$ cases. We first need the next two lemmas, which apply to general reductive groups over $\bbQ_p$.	 

\begin{lemma}\label{lem: intersection vg} Let $(G',\{\mu'\},\calG')$ be a local model triple and $(\calG',\mu')\rightarrow (\GL(\Lambda),\mu_d)$ a very good local Hodge embedding with $\Lambda_{\bbQ_p}=V$, and suppose $V$ is equipped with an  alternating perfect bilinear form $\psi$. Let $G$ be the neutral component of $G'\cap \GSp(V)$ and assume $G$ is $R$-smooth. Assume $G^{\der}\cong G'^{\der}$ and $\mu'$ arises from a cocharacter $\mu$ of $G$. Let $\Gg$ be the stabilizer group scheme of $G$ that corresponds to $\Gg'$. Assume in addition that $\La$ is a self-dual  lattice for $\psi$, i.e. $\La=\La^\vee$, and that the scheme theoretic intersection
	$\calG'\cap \GSp(\La)$ is smooth.  Then the embedding $(\calG,\mu)\rightarrow (\GL(\Lambda),\mu_d)$ is very good.
	
\end{lemma}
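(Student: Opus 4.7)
The plan is to realize $\calG$ as the neutral component of a smooth scheme-theoretic intersection which, after passing to a doubled lattice, can be cut out by horizontal tensors, and then to descend to the original embedding via Lemma \ref{diagonal}(a). The key observation is that even though the symplectic condition cannot obviously be cut out by a single endomorphism of $\Lambda$, it can be encoded by an endomorphism of $\Lambda \oplus \Lambda$ (the swap) once one embeds diagonally into a product.

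First, since $\Lambda=\Lambda^{\vee}$, the group scheme $\GSp(\Lambda)$ is reductive over $\bbZ_p$ and its local model $\BMloc_{\GSp(\Lambda),\mu}$ is the Lagrangian Grassmannian, which is smooth over $\bbZ_p$. By Corollary \ref{smooth}, the standard embedding $(\GSp(\Lambda),\mu)\hook (\GL(\Lambda),\mu_d)$ is very good at every point. Combined with the assumption that $(\calG',\mu')\hook (\GL(\Lambda),\mu_d)$ is very good, Lemma \ref{lem: product vg} yields that the product embedding
\[
(\calG'\times \GSp(\Lambda),\mu'\times \mu) \hook (\GL(\Lambda\oplus \Lambda),\mu_{2d})
\]
is very good; in particular there exist tensors $(r_c)\subset (\Lambda\oplus \Lambda)^{\otimes}$ cutting out $\calG'\times \GSp(\Lambda)$ in $\GL(\Lambda\oplus \Lambda)$ whose associated $\widetilde r_c$ are horizontal at every point.

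Next, set $H:=\calG'\cap \GSp(\Lambda)$, the scheme-theoretic intersection, which is smooth by hypothesis with generic fibre $G'\cap \GSp(V)$ and neutral component $G$. Embed $H$ diagonally into $\calG'\times \GSp(\Lambda)\subset \GL(\Lambda\oplus \Lambda)$ via $g\mapsto (g,g)$. A direct matrix computation shows that the diagonal image of $H$ is cut out inside $\calG'\times \GSp(\Lambda)$ by the endomorphism $\sigma:\Lambda\oplus \Lambda\to \Lambda\oplus \Lambda$, $(v,w)\mapsto (w,v)$. Since $\sigma$ commutes with every diagonally acting element, it is $\calG$-fixed. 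The composition $\BMloc_{\calG,\mu}\to \Gr(d,\Lambda)\to \Gr(d,\Lambda)\times \Gr(d,\Lambda)\hook \Gr(2d,\Lambda\oplus \Lambda)$ is a closed immersion (using that $\BMloc_{\calG,\mu}\cong \BMloc_{\calG',\mu'}$ since they descend from $\BMloc_{\calG^{\ad},\mu^{\ad}}$), so the diagonal is a good integral Hodge embedding. Lemma \ref{endo} therefore gives that $\widetilde\sigma$ is horizontal at every $x\in \BMloc_{\calG,\mu}(k)$. Moreover, the tensors $\widetilde r_c$ remain horizontal at $x$, since horizontality pulls back along the functorial map $\BMloc_{\calG,\mu}\to \BMloc_{\calG'\times \GSp(\Lambda),\mu'\times \mu}$. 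Consequently the smooth scheme $H$ is cut out in $\GL(\Lambda\oplus \Lambda)$ by tensors all of whose tildes are horizontal at $x$.

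Proposition \ref{neutral}, applied to $H$ with $H^{\circ}=\calG$, then implies that the diagonal embedding $(\calG,\mu)\hook (\GL(\Lambda\oplus \Lambda),\mu_{2d})$ is very good at $x$; Lemma \ref{diagonal}(a) then yields that the factor embedding $(\calG,\mu)\hook (\GL(\Lambda),\mu_d)$ is very good at $x$, as required. The main technical point will be the identification of $\calG$ with $H^{\circ}$: both are smooth $\bbZ_p$-group schemes with generic fibre $G$, and they share the same set of $\br\bbZ_p$-points, since $\calG'$ and $\GSp(\Lambda)$ are the stabilizer schemes for the images of $\bx$ in $\calB(G',\bbQ_p)$ and $\calB(\GSp(V),\bbQ_p)$ respectively, so that a standard argument with Bruhat--Tits theory and \cite[Prop.~1.7.6]{BT2} identifies $H^{\circ}$ with the stabilizer scheme of the corresponding point in $\calB(G,\bbQ_p)$.
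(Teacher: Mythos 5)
Your proposal is correct in outline but takes a noticeably more circuitous route than the paper's argument, and it skips the one step that genuinely requires the $R$-smoothness hypothesis.

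The paper's proof is short and direct: (i) $R$-smoothness of $G$ and Proposition \ref{prop: R-smoothness properties}(3) give a closed immersion $\Gg\hook\Gg'$, so that $\Gg$ sits inside $\ti\Gg:=\Gg'\cap\GSp(\La)$ as a union of connected components and the embedding $\Gg\hook\GL(\La)$ is good; (ii) $\ti\Gg$ is cut out in $\GL(\La)$ by the \emph{union} of a tensor set for $\Gg'$ and a tensor set for $\GSp(\La)$; (iii) the first set is horizontal by the very-good hypothesis, the second by Theorem \ref{thm:main} (equivalently Corollary \ref{smooth}, since $\BMloc_{\GSp(\La),\mu}$ is smooth), and horizontality is inherited along the surjection of deformation rings as in Corollary \ref{endoplus}; (iv) Proposition \ref{neutral} then gives very-goodness of the neutral piece $\Gg$. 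There is no need for the doubled lattice, the diagonal, the swap endomorphism $\sigma$, or Lemma \ref{diagonal}(a): the intersection of two subschemes each cut out by horizontal tensors is cut out by horizontal tensors, and that already finishes the job on $\La$ itself. Your doubling-and-swap trick is a correct way to turn ``intersect with $\GSp(\La)$'' into a commutation condition with an endomorphism so that Lemma \ref{endo} applies, but it repackages work the paper already has in Corollary \ref{endoplus} and Lemma \ref{extratensor}, and it forces you to check goodness of an auxiliary embedding into $\GL(\La\oplus\La)$ before you can even invoke Lemma \ref{endo}.

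The genuine gap is in your last paragraph, which you yourself flag as the ``main technical point.'' You need (a) a closed immersion $\Gg\hook\Gg'$, hence $\Gg\hook H:=\Gg'\cap\GSp(\La)$, and (b) that $\Gg\hook\GL(\La)$ (and then the diagonal into $\GL(\La\oplus\La)$) is a \emph{good} integral Hodge embedding, which is a hypothesis in both Lemma \ref{endo} and Lemma \ref{diagonal}(a). Your appeal to ``a standard argument with Bruhat--Tits theory and \cite[Prop.~1.7.6]{BT2}'' does not produce (a) or (b): \cite[Prop.~1.7.6]{BT2} lets you compare two smooth models once you already know they have the same $\br\Z_p$-points, but producing the map of schemes $\Gg\to\Gg'$ (hence to $H$) in the first place is exactly what Proposition \ref{prop: R-smoothness properties}(3) is for, and that is where the $R$-smoothness hypothesis on $G$ enters. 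Without it, $\Gg$ need not embed as a closed subgroup scheme of $\Gg'$, and the identification of $\Gg$ with (a union of components of) $H$ can fail. This hypothesis appears in the statement of the lemma precisely so this step works; your proof never uses it.
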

\begin{proof}
	By $R$-smoothness of $G$ and Proposition \ref{prop: R-smoothness properties} (3), $G\hook G'$ extends to  a closed immersion $\Gg\hook\Gg'$. Since $\La$ is self-dual, the parahoric $\GSp(\La )$ is reductive over $\Z_p$ and is the closed subgroup scheme
	of $\GL(\La)$ given as the Zariski closure of $\GSp(V)$ in $\GL(\La)$. Hence, the smooth  $\ti\calG:= \calG'\cap \GSp(\La )$ contains the   Zariski closure   of $G$ in $\GL(\La)$ which is $\calG$.   Then $\calG$ is a union of connected components of $\ti\calG$. The result now follows from Prop. \ref{neutral} and Theorem \ref{thm:main} applied to the (local) Hodge embedding given by $\GSp(V)\hook \GL(V)$.
\end{proof}

\begin{lemma}\label{lem:sp smooth}
	Suppose  that $\Gg$ is a smooth group scheme over $\Z_p$ and $\Gg\hook \GSp(\La)$  is a closed immersion, where $\La=\La^\vee$. Suppose $p>2$ and $\Gg$ contains the central diagonal torus  ${\rm diag} :\Gm\hook \GSp(\La)$. Then  the similitude $c: \Gg\xrightarrow{\ }\Gm$ is a smooth morphism.
\end{lemma}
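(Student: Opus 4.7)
The plan is to reduce smoothness of $c: \Gg \to \Gm$ to a one-line Lie-algebra computation that exploits the hypothesis $p > 2$.

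First I would observe that the composition $c \circ \mathrm{diag}: \Gm \to \Gm$ equals the squaring map $[2]$, since a scalar $a \in \Gm$ acts on $\La$ via the homothety $a \cdot \mathrm{id}_\La$ and $\psi(av, aw) = a^2 \psi(v, w)$. Because $p > 2$, the map $[2]$ is \'etale on $\Gm$, so its differential at the identity is the isomorphism $2 \cdot : \Z_p \xrightarrow{\sim} \Z_p$. Since $\mathrm{diag}: \Gm \hookrightarrow \Gg$ is a closed immersion into the smooth group scheme $\Gg$ (by hypothesis, this diagonal factors through $\Gg \hookrightarrow \GSp(\La)$), the chain rule forces the differential $dc_e: \mathrm{Lie}(\Gg) \to \mathrm{Lie}(\Gm)$ to be surjective at the identity $e$. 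Because $c$ is a group-scheme homomorphism, left translation by $\Gg$-valued points propagates this surjectivity, so the relative differential of $c$ is surjective at every point of $\Gg$.

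Next I would deduce smoothness fibrewise. On each fibre of $\Spec \Z_p$ (the generic and the closed point) we have a homomorphism $c_s: \Gg_s \to (\Gm)_s$ of smooth group schemes over a field whose differential is surjective at every point; by the Jacobian criterion such a morphism is smooth, and in particular flat. Since $\Gg$ and $\Gm$ are both flat over $\Z_p$, the fibre criterion of flatness then gives flatness of $c$, and combining this with the fibrewise smoothness just established yields smoothness of $c$ over $\Z_p$.

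The argument is essentially formal once the equality $c \circ \mathrm{diag} = [2]$ is spelled out, and the only place where a hypothesis can genuinely be used is the assertion that $[2]$ is \'etale, which is exactly where one needs $p > 2$. If $p = 2$, then $[2]$ would fail to be \'etale in the special fibre and the differential of $c$ could drop in rank there, so this is the one point where the odd-residue-characteristic assumption enters in an essential way. Beyond this, there is no serious obstacle: the surjectivity of the Lie-algebra map, combined with the standard fibrewise smoothness/flatness formalism over the Dedekind base $\Z_p$, finishes the proof.
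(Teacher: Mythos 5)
Your argument is correct, but it follows a genuinely different route from the paper's. Both proofs hinge on the same starting observation that $c\circ\mathrm{diag}=[2]$ and that $[2]$ is \'etale because $p>2$, but they diverge from there. The paper pulls back the extension $1\to\ker(c)\to\Gg\xrightarrow{c}\Gm\to 1$ along the \'etale squaring map $[2]:\Gm\to\Gm$: the fibre product $\widetilde{\Gg}=\Gg\times_{\Gm,[2]}\Gm$ is \'etale over $\Gg$ hence smooth, the section $\lambda\mapsto(\mathrm{diag}(\lambda),\lambda)$ splits the pulled-back sequence so that $\widetilde{\Gg}\to\Gm$ is a (smooth) split projection, and then \'etale descent along $[2]$ gives smoothness of $c$. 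You instead work infinitesimally: the chain rule applied to $c\circ\mathrm{diag}=[2]$ shows $dc_e$ is surjective over $\bbZ_p$ (since $2$ is a unit), translation-invariance makes the differential surjective along each fibre, the Jacobian criterion over a field gives that $c_{\bbQ_p}$ and $c_{\bbF_p}$ are smooth, and the fibre criterion of flatness plus "flat $+$ smooth fibres $\Rightarrow$ smooth" concludes. The paper's argument is slightly slicker in that it never passes through flatness as a separate step, while yours is more elementary and makes the role of $p>2$ visible directly at the Lie-algebra level. One small point worth spelling out in your version: the passage from "$dc_e$ surjective on $\mathrm{Lie}$" to "$c_s$ smooth on each fibre" uses that a homomorphism of smooth group schemes of finite type over a field, with connected target, is smooth as soon as it is surjective on Lie algebras (the kernel then has the expected tangent-space dimension, hence is smooth, and the image has full dimension, hence is all of the connected target); it is worth either citing this or including the short dimension count, since it is the bridge between the Lie-algebra computation and the Jacobian criterion.
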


\begin{proof} Since  $c({\rm diag}(\lambda))=\lambda^2$, the sequence
	\[\xymatrix{1\ar[r]& \ker(c)\ar[r]&\calG\ar[r]^c &\Gm\ar[r]&1}\]
	is fppf exact. Its pull-back by the \'etale $[2]: \Gm\xrightarrow{x\mapsto x^2}\Gm$ gives a split exact sequence. 
	If $\ti\Gg=\Gg\times_{\Gm, [2]}\Gm$ is the fiber product, then $\ti\Gg\to \Gg$ is \'etale and so $\ti\Gg$ is also smooth. The base change of $c$
	by $[2]$ is the split projection  $\ti\Gg\to \Gm$, hence it is smooth. By \'etale descent $c$ is smooth.
\end{proof}

\end{para}
\begin{para}We now assume $(\bfG_2,X_2)$ is a Shimura datum of abelian type with $\bfG^\ad_2=\mathrm{Res}_{\rmF/\bbQ}\bfH$ simple.
		
		\begin{prop}\label{prop: HT lift A/D}Assume that   either:
			
			\begin{altenumerate}	\item $(G_2^{\ad},\mu_2^{\ad})$ contains a simple factor of type $D_n^{\bbH}$.
				\item $G_2^{\ad}$ contains a simple factor of type $A$ of the form $\Res_{F/\bbQ_p}\mathrm{PGL}_m(D)$, with $D$ a central division $F$-algebra of index divisible by $p$.
	
			\end{altenumerate}
			Then the conclusion of Proposition \ref{prop: HT lift simple} holds, apart from $X_*(Z_G^c/Z_{G^{\der}})_I$ being torsion free in case (2).
			\end{prop}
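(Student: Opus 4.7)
The plan is to adapt the proof of Proposition \ref{prop: HT lift simple}, substituting Corollaries \ref{cor:DnH} and \ref{cor:PELdiv} for Theorem \ref{thm:mainGSp} in the step producing the very good Hodge embedding. First I would apply Deligne's construction of \S\ref{para: Deligne Hodge type lifting} with the choice of $S$ prescribed there for each case: $S=\{\varpi_{v,1}\mid v\in I\}$ in case (1) (so $\rmK_S=\rmF$), and $S=\{\varpi_{v,1},\varpi_{v,n}\mid v\in I\}$ in case (2) (so $\rmK_S$ is a CM extension of $\rmF$). Choosing the auxiliary CM field $\rmK$ and the set $T$ of embeddings as in Proposition \ref{prop: HT lift simple} yields a Hodge type datum $(\bfG,X)$ satisfying (1) and (2). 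The representation $V$ carries, in case (1), the structure of a module over either $\rmF\oplus\rmF$ (via $V_0\otimes_\rmF V$) or the quaternionic algebra $D$; in case (2) it carries the structure of a $D\oplus D^{\mathrm{opp}}$-module. In either situation $\bfG$ is contained in the enlarged group $\bfG'_1$ of \S\ref{para:modification D_n} (resp. \S\ref{para: modification A-n}).

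Next, I would arrange property (4) by the torus twisting argument of \cite[Lem.~4.6.22]{KP} as in Proposition \ref{prop: HT lift simple}. The twisting tori can be taken to be Weil restrictions from $\rmF$ of $\rmF$-tori which split over an extension that is tamely ramified at primes above $p$; this preserves $(\dagger)$ and keeps the Hodge embedding factoring through $\bfG'_1$. After twisting, $Z_G^c$ is quasi-tame, hence $R$-smooth, and so $G$ is $R$-smooth by Lemma \ref{lem: Z R-sm implies T R-sm}. In case (1) a further twist by a Weil restriction of $\rmF$-tori gives torsion-freeness of $X_*(Z_G^c/Z_{G^{\der}})_I$. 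In case (2), however, the corresponding twist would have to involve tori which intertwine with the $\rmK_S^\times\times\rmK_S^\times$ acting on the CM summands of $V$; such a twist cannot be arranged while preserving the $D\oplus D^{\mathrm{opp}}$-module structure that is needed to invoke Corollary \ref{cor:PELdiv}, which is why the torsion-freeness condition must be dropped in that case.

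For property (3), choose $\bx\in\calB(G,\bbQ_p)$ generic in its facet lifting the point determining $\calG_2^\circ$, and let $\calG=\calG_{\bx}$ be the associated stabilizer. The image of $\bx$ in $\calB(G'_1,\bbQ_p)$ determines a stabilizer $\calG'_1\supset \calG$, and by Proposition \ref{prop: R-smoothness properties}(3) (using $R$-smoothness of $G$), the inclusion $\calG\hook\calG'_1$ is a closed immersion. Applying Corollary \ref{cor:DnH} in case (1) or Corollary \ref{cor:PELdiv} in case (2) to $(G'_1,\mu_h,\calG'_1)$ with the representation $\rho_1^s$ produces, after possibly replacing $\iota$ by a direct sum, a self-dual periodic lattice chain $\calL$ in $V^s_{\bbQ_p}$ and a very good Hodge embedding $(\calG'_1,\mu_h)\hook(\GL(\calL),\mu_d)$. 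Since $\bfG$ is cut out in $\bfG'_1$ by the symplectic multiplier condition, $\calG$ is cut out in $\calG'_1$ inside $\GL(\calL)$ by $\calG'_1\cap\GSp(\calL)$, which is a closed subgroup scheme cut out by endomorphisms (and by the symplectic form, a $\calG'_1$-invariant tensor). Lemma \ref{extratensor} together with Corollary \ref{endoplus} then ensures that the induced embedding $(\calG,\mu_h)\hook(\GL(\calL),\mu_d)$ is very good. Finally, Zarhin's trick as in the proof of Proposition \ref{prop: HT lift simple} upgrades $\calL$ to a self-dual lattice, using Lemma \ref{diagonal} and Lemma \ref{lem: product vg} to preserve very-goodness.

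The main obstacle is the verification in case (2) that the $R$-smoothness twist can be performed without destroying the $D\oplus D^{\mathrm{opp}}$-module structure on $V$; this constrains the admissible twists and is precisely the reason the torsion-freeness of $X_*(Z_G^c/Z_{G^{\der}})_I$ has to be sacrificed. Apart from that, all other steps are routine adaptations of the arguments already carried out in Proposition \ref{prop: HT lift simple}.
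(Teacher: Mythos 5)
Your proposal departs from the paper's actual proof in a way that introduces genuine gaps.

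First and most significantly, the paper does \emph{not} perform the torus-twisting argument of \cite[Lem.~4.6.22]{KP} in Proposition \ref{prop: HT lift A/D}. The whole point of the restricted choice of $S$ in \S\ref{para: Deligne Hodge type lifting} (namely $S=\{\varpi_{v,1}\mid v\in I\}$ in the $D_n^{\bbH}$ case and $S=\{\varpi_{v,1},\varpi_{v,n}\mid v\in I\}$ in the type-$A$ case) is that the center $\bfZ_{\bfG}^c$ can then be computed explicitly in terms of $\rmK$, $\rmK_S$ and $\mu_2$, and one sees directly that $Z_G^c$ is quasi-tame because $\rmK/\rmF$ is split above $p$. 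No modification of $(\bfG,X)$ is required. In particular, your claim that "a further twist by a Weil restriction of $\rmF$-tori gives torsion-freeness of $X_*(Z_G^c/Z_{G^{\der}})_I$" in case (1) is wrong: the paper gets torsion-freeness \emph{for free}, since $Z_G^c/Z_{G^{\der}}\cong\prod_i\Res_{F_i/\bbQ_p}\bbG_m$. And in case (2), the obstruction is that $\rmK_S$ need not be split above $p$, so the analogous quotient need not be torsion-free; there is no twist to attempt, rather than a twist that fails. Your intuition that an extra twist would damage the $D\oplus D^{\mathrm{opp}}$-module structure needed for Corollary \ref{cor:PELdiv} is reasonable, but it is a post-hoc rationalization for a step the paper does not take, and your assertion that a twist by quasi-tame tori "preserves $(\dagger)$ and keeps the Hodge embedding factoring through $\bfG'_1$" is not justified and is not obviously true.

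Second, your reduction from a very good embedding for $\calG'$ (your $\calG'_1$) to one for $\calG$ is handled by the wrong tools. You invoke Lemma \ref{extratensor} and Corollary \ref{endoplus}, but the symplectic-multiplier condition cutting $\calG$ out of $\calG'$ inside $\GL(\Lambda)$ is not an endomorphism condition in the sense of Corollary \ref{endoplus}: the alternating form lives in $(\La^\vee)^{\otimes 2}$, not in $\La\otimes\La^\vee$. Moreover the relevant subgroup is the \emph{neutral component} of $\calG'\cap\GSp(\Lambda)$, so a separate argument for taking components is needed. This is exactly what Lemma \ref{lem: intersection vg} (which combines Theorem \ref{thm:main}, Proposition \ref{neutral}, and the smoothness criterion Lemma \ref{lem:sp smooth}) is for, and the paper applies it \emph{after} running Zarhin's trick to obtain a genuinely self-dual lattice. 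Your proposal reverses this order, restricting to $\calG$ before Zarhin's trick, which doesn't match Lemma \ref{lem: intersection vg}'s hypotheses of self-duality and the smoothness of $\calG'\cap\GSp(\Lambda)$. The corollaries \ref{cor:DnH} and \ref{cor:PELdiv} are also local statements at each prime $\fkp_i$ of $\rmF$, and the paper applies them factor by factor and then combines via Lemma \ref{lem: product vg}; your proposal blurs the local/global distinction by speaking of one global enlarged group.
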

			
			\begin{proof}		
	We choose $S,\rmK$ and $T$ as in \S\ref{para: Deligne Hodge type lifting} and let $(\bfG,X)$  be the Shimura datum thus obtained with $\bfG^{\der}=\Res_{\rmF/\bbQ}\bfH^{\sharp}$.  
				As before, properties (1) and (2)  are satisfied and there is a  fundamental Hodge embedding $\iota:(\bfG,X)\rightarrow (\bfGSp(V),S^\pm)$. As before, we choose the alternating form $\psi$ to be given by $\Tr_{\rmF/\bbQ}\circ\Psi:V\times V\rightarrow \bbQ$. We now verify the remaining properties.
				
				Let $\fkp_i,i=1,\dotsc,r$ denote the primes of $\rmF$ lying above $p$ and $F_i:=\rmF_{\fkp_i}$ the completion of $F$ at $\fkp_i$. As before, $G'\rightarrow \GL(V_{\bbQ_p})$ arises as a product of representations $$\rho_i:G_i':=\mathrm{Res}_{F_i/\bbQ_p}H_i'\rightarrow \GL(V_i)$$ where $H_i'=\bfH_{F_i}$.  Let $\mu_{i}'$ denote the factor of $\mu'$ in $G_i':=\Res_{F_i/\bbQ_p}H_i'$. The alternating form $\Psi_{\bbQ_p}$ decomposes as a sum of forms $\Psi_i:V_i\times V_i\rightarrow F_i$.
				
 			(1) Type $D_n^{\bbH}$. Recall that $\rmK_S=\rmF$ and $\rmK$ is a CM extension of $\rmF$. Thus $\bfZ_{\bfG}$ is generated by  $\bfZ_{\bfG^{\der}}$, $(\Res_{\rmK/\bbQ} \bbG_m)^{\mathrm{Nm}_{\rmK/\rmF}=1}$ and $\bbG_m$ considered as subgroups of $\mathbf{GL}(V)$, and its maximal compact subtorus $\bfZ_{\bfG,0}$ is given by $(\Res_{\rmK/\bbQ} \bbG_m)^{\mathrm{Nm}_{\rmK/\rmF}=1}$. We find that $\bfZ_{\bfG^{\der}}=\Res_{\rmF/\bbQ}\mu_2\subset (\Res_{\rmK/\bbQ} \bbG_m)^{\mathrm{Nm}_{\rmK/\rmF}=1}$, and hence  $$\bfZ^c_{\bfG}\cong (\Res_{\rmK/\bbQ} \bbG_m)^{\mathrm{Nm}_{\rmK/\rmF}=1}.$$
Since $\rmK/\rmF$ is split at all primes lying above $p$,  we have $Z_G^c=\prod_{i=1}^r\Res_{F_i/\bbQ_p}\bbG_m$ is a quasi-tame torus, and $Z_{G^{\der}}$ is identified with the subgroup $\prod_{i=1}^r\Res_{F_i/\bbQ_p}\mu_2$. Then we have $$Z_{G^c}/Z_{G^{\der}}\cong\prod_{i=1}^r\Res_{F_i/\bbQ_p}\bbG_m$$  and hence $X_*(Z_{G}^c/Z_{G^{\der}})_I$ is torsion free so that (4) is satisfied. It remains to verify the last part of (3). 

We first show each $\rho_i:G_i'\rightarrow \GL(V_i)$ extends to a very good Hodge embedding $(\calG'_i,\mu_i')\rightarrow(\GL(\calL),\mu_d)$ for $\calL$ a self-dual lattice chain. We may also restrict to those factors for which $\mu_i$ is non-trivial as otherwise the local model is 0-dimensional.
Thus we	may assume $G'^{\der}_i\cong \Res_{F_i/\bbQ_p}\mathrm{SO}^+(V^{\mathrm{st}}_i)$  in case \ref{par:DnH} (a)  or  $\Res_{F_i/\bbQ_p}\mathrm{SU}^+(W^{\mathrm{st}},\varphi)$ in case \ref{par:DnH} (b). By our choice of $S$, we have $G_i'$ is isomorphic to the group $G_1$ considered in
 \ref{para:modification D_n}, and the representation $\rho_i:G_i'\rightarrow \GL(V_i)$ is a direct sum of the representation denoted $\sigma$ in loc. cit.. The  discussion in \cite[2.2]{Satake} implies that the alternating form $\Psi_i$  is of the form considered in \ref{par:DnH}. Thus the result follows by Corollary \ref{cor:DnH}. 

By an argument as in the proof of  Proposition \ref{prop: HT lift simple}, upon replacing $\iota$ by $\iota^{8r}$, we obtain a Hodge embedding and a self-dual lattice $\Lambda\subset V$ for which $\iota$ extends to a very good Hodge embedding $(\calG',\mu')\rightarrow (\GL(\Lambda),\mu_d)$. Note that, by the construction, the lattice $\Lambda$ with its alternating form is obtained as a direct sum of lattices $\Lambda_i$ with forms ${\rm Tr}_{F_i/\bbQ_p}\circ \Psi_i$, for $i=1,\ldots, r$.  Then the scheme theoretic intersection $ \calG'\cap\mathrm{GSp}(\Lambda)$ arises from the pullback of the map
 \begin{equation}\label{eqn: product similitude}
 \calG'=\prod_{i=1}^r\calG_i'\rightarrow \prod_{i=1}^r \Res_{\calO_{F_i}/\bbZ_p}\bbG_m
 \end{equation} 
 induced by the product of the similitude factors of the forms $ \Psi_i$, along the diagonal map
  $$
  \bbG_m\rightarrow \prod_{i=1}^r \Res_{\calO_{F_i}/\bbZ_p}\bbG_m.
  $$
   By Lemma \ref{lem:sp smooth}, the map in \eqref{eqn: product similitude} is smooth, and hence the intersection $\calG'\cap\mathrm{GSp}(\Lambda)$ is smooth. The result then follows from Lemma \ref{lem: intersection vg}.

		(2) Type $A$.  
			 Recall that $\rmK_S$ and $\rmK$ are disjoint CM extensions of $\rmF$. Then the center $\bfZ_{\bfG}$ is generated by  $\bfZ_{\bfG^{\der}}$, $(\Res_{\rmK/\bbQ} \bbG_m)^{\mathrm{Nm}_{\rmK/\rmF}=1}$, $(\Res_{\rmK_S/\bbQ} \bbG_m)^{\mathrm{Nm}_{\rmK_S/\rmF}=1}$ and the scalars $\bbG_m$ as subgroups of $\mathbf{GL}(V)$.
			 The maximal compact subtorus $\bfZ_{\bfG,0}$ is generated by $(\Res_{\rmK/\bbQ} \bbG_m)^{\mathrm{Nm}_{\rmK/\rmF}=1}$ and  $(\Res_{\rmK_S/\bbQ} \bbG_m)^{\mathrm{Nm}_{\rmK_S/\rmF}=1}$.
			 We find that $$\bfZ_{\bfG^{\der}}=(\Res_{\rmK_S/\rmF}\mu_n)^{\mathrm{Nm}_{\rmK_S/\rmF}=1}\subset (\Res_{\rmK_S/\bbQ}\bbG_m)^{\mathrm{Nm}_{\rmK_S/\rmF}=1}$$ and hence  $$\bfZ_{\bfG}^c\cong(\Res_{\rmK/\bbQ} \bbG_m)^{\mathrm{Nm}_{\rmK/\rmF}=1}\times^{\Res_{\rmF/\bbQ}\mu_2}(\Res_{\rmK_S/\bbQ} \bbG_m)^{\mathrm{Nm}_{\rmK_S/\rmF}=1}.$$ 
			 Thus $Z_G^c$ is a quasi-tame torus since $p$ is odd.
	 
It remains to verify the last part  of (3). As in case of type $D_n^{\bbH}$, we first show that $\rho_i:G_i'\rightarrow \GL(V_i)$ extends to a very good Hodge embedding $(\calG'_i,\mu_i')\rightarrow(\GL(\calL),\mu_d)$ for $\calL$ a self-dual lattice chain. It suffices to consider those cases which are not covered by Theorem \ref{thm:main}. Thus	we may assume $G_i'^{\der}\cong \SL_{m_i}(D_i)$ as in \S\ref{ss:A_n}; we also assume $\mu_i'$ is non-trivial as otherwise the local model at that place is $0$-dimensional. Our choice of $S$ implies that there is an inclusion $G_1\subset G_i'$, where $G_1$ is the group considered in \ref{para: modification A-n}, and $V_i|_{G_1}$ is a sum of the representation denoted $\rho_1$ in \ref{cor:PELdiv}. Moreover, $\mu_i'$ factors through $G_1$, and $\Psi_i$ is of the form given in \ref{para: modification A-n} by \cite[2.1]{Satake}. The result then follows from Lemma \ref{lem: intersection vg} and  Corollary \ref{cor:PELdiv}. The rest follows as in case (1).
	\end{proof}

\end{para}
\begin{para}
	We now relax the assumption that $\bfG_2^{\ad}$ is  $\bbQ$-simple. 	The following is a generalization and refinement of \cite[Lem. 4.6.22]{KP}. 
	
	\begin{prop}\label{prop: HT lift general}
	Let $p>2$. Let $(\bfG_2,X_2)$ be a Shimura datum of abelian type and $\calG^\circ_2$ a parahoric of $G_2$.  
Then there exists a Shimura datum $(\bfG,X)$ of Hodge type together with a central isogeny $\bfG^{\der}\rightarrow \bfG^\der_{2}$ which induces an isomorphism $(\bfG^{\ad},X^{\ad})\cong (\bfG^{\ad}_{2},X^{\ad}_{2})$. Moreover, $(\bfG,X)$ may be chosen to satisfy the following conditions.

		\begin{altenumerate}

			\item $\pi_1(G^{\der})$ is a $2$-group and is trivial if $(\bfG_2^{\ad},X_2^{\ad})$ has no factors of type $D^{\bbH}$.  Moreover $(\bfG,X)$ satisfies assumption (D) of \S\ref{para: ab type reduction assumptions}.
			\item Any prime $v_2|p$ of $\bfE_2$  splits in the composite $\bfE':=\bfE.\bfE_2$.
			\item $Z_G$ is an $R$-smooth torus with $Z_G^c$ quasi-tame.
			\item $(\bfG,X)$ admits a Hodge embedding $$\iota:(\bfG,X)\rightarrow (\mathbf{GSp}(V),S^\pm)$$ which extends to a very good local Hodge embedding $(\calG,\mu)\rightarrow(\GL(\Lambda),\mu_d)$ for $\calG$ a stabilizer group scheme of $G$ lifting $\calG_2$ and  $\Lambda\subset V_{\bbQ_p}$ is a self-dual lattice.

		\end{altenumerate}
		In particular, the Shimura datum $(\bfG,X)$ satisfies Assumptions (A)--(E) of \S\ref{subsec: integral models}.

			If moreover, $G_2^{\ad}$ does not contain a simple factor involving division algebras with index divisible by $p$, then $(\bfG,X)$ may be chosen in addition to satisfy the property that $X_*(G^{\ab})_I$ is torsion free.

	\end{prop}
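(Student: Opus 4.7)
The plan is to reduce to the almost $\bbQ$-simple cases already treated in Propositions \ref{prop: HT lift simple} and \ref{prop: HT lift A/D}, via a fibre-product construction over the symplectic similitude character. First I would decompose the adjoint into its almost $\bbQ$-simple factors $\bfG_2^\ad = \prod_{j=1}^s \bfH_j$, producing compatible decompositions $X_2^\ad = \prod_j X_{2,j}^\ad$, $\calG_2^\circ = \prod_j \calG_{2,j}^\circ$ (using $\calB(G_2,\bbQ_p) = \prod_j \calB(H_j,\bbQ_p)$), and $\bfE_2 = \bfE_{2,1}\cdots\bfE_{2,s}$. For each $j$, I would apply Proposition \ref{prop: HT lift simple} in the non-exceptional case, and Proposition \ref{prop: HT lift A/D} in the exceptional type $A$ or $D^\bbH$ cases, to the adjoint Shimura datum $(\bfH_j, X_{2,j}^\ad)$ equipped with the parahoric $\calG_{2,j}^\circ$. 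This yields a Hodge-type Shimura datum $(\bfG'_j, X'_j)$, a central isogeny $(\bfG'_j)^\der \to \bfH_j$ inducing $((\bfG'_j)^\ad, (X'_j)^\ad) \cong (\bfH_j, X_{2,j}^\ad)$, a stabilizer $\calG'_j$ of $G'_j$ lifting $\calG_{2,j}^\circ$, and a fundamental Hodge embedding $\iota_j\colon (\bfG'_j,X'_j) \hookrightarrow (\GSp(V_j,\psi_j),S_j^\pm)$ extending to a very good Hodge embedding $(\calG'_j,\mu_{h_j}) \hookrightarrow (\GL(\Lambda_j),\mu_{d_j})$ with $\Lambda_j \subset V_{j,\bbQ_p}$ a self-dual $\bbZ_p$-lattice.

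Next I would form the fibre product
\[
\bfG := \bfG'_1 \times_{\Gm} \bfG'_2 \times_{\Gm} \cdots \times_{\Gm} \bfG'_s
\]
along the symplectic multiplier characters, with Shimura datum $X$ induced by the tuple of Deligne morphisms $(h_j)_j$ (whose weight cocharacters match, so the tuple lands in the fibre product). Setting $V := V_1 \oplus \cdots \oplus V_s$ with orthogonal-sum form $\psi := \psi_1 \perp \cdots \perp \psi_s$, the product $\prod_j \iota_j$ factors through a Hodge embedding $\iota\colon (\bfG,X) \hookrightarrow (\GSp(V,\psi),S^\pm)$ since the fibre-product condition enforces equality of similitudes, and $\Lambda := \Lambda_1 \oplus \cdots \oplus \Lambda_s$ is self-dual for $\psi$. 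The derived group decomposes as $\bfG^\der = \prod_j (\bfG'_j)^\der$ which covers $\bfG_2^\ad = \prod_j\bfH_j$ as a product of central isogenies; since $(\bfG_2,X_2)$ is of abelian type, Deligne's classification \cite[Prop.\ 2.3.10]{DeligneCorvallis} expresses $\bfG_2^\der$ as a quotient of $\bfG_2^{\mathrm{sc}}$ by a subgroup contained in $\ker(\bfG_2^{\mathrm{sc}} \to \prod_j \bfH_j^\sharp)$, providing the required central isogeny $\bfG^\der \to \bfG_2^\der$ inducing the isomorphism of adjoint Shimura data.

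I would then verify the four conditions factorwise. Condition (1) follows from $\pi_1(\bfG^\der) = \prod_j \pi_1((\bfG'_j)^\der)$ being a product of $2$-groups by the simple-case propositions, trivial if no $D^\bbH$-factor appears; (D) reduces to the simple case via $\bfC = \ker(\bfG^{\mathrm{sc}} \to \bfG^\der) = \prod_j \bfC_j$. Condition (2) follows factorwise from $\bfE = \bfE_1\cdots \bfE_s$ and the splitting of primes above $p$ for each pair $(\bfG'_j,\bfG_{2,j})$. For (3), quasi-tameness of $Z_G^c$ follows from the corresponding property for each $Z_{G'_j}^c$ via the fibre-product presentation, then $Z_G$ is $R$-smooth by Proposition \ref{prop: R-smoothness properties}(2), and $G$ is $R$-smooth by Lemma \ref{lem: Z R-sm implies T R-sm}. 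For (4), the stabilizer $\calG$ of $G$ lifting $\calG_2$ is the scheme-theoretic preimage in $\prod_j \calG'_j$ of the diagonal $\Gm \subset \Gm^s$ under the similitude morphisms; its smoothness follows from smoothness of each $c_j\colon \calG'_j \to \Gm$, guaranteed by Lemma \ref{lem:sp smooth}. Lemma \ref{lem: product vg} yields that $(\prod_j \calG'_j, \prod_j \mu_{h_j}) \hookrightarrow (\GL(\Lambda),\mu_d)$ is very good, and Lemma \ref{lem: intersection vg} applied to the scheme-theoretic intersection $\calG = (\prod_j \calG'_j) \cap \GSp(\Lambda)$ inside $\GL(\Lambda)$ promotes this to a very good Hodge embedding $(\calG,\mu_h) \hookrightarrow (\GL(\Lambda),\mu_d)$. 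Assumptions (A)--(E) of \S\ref{subsec: integral models} then follow directly from (1)--(4). The final torsion-freeness assertion for $X_*(\bfG^\ab)_I$ in the division-free case follows analogously from Proposition \ref{prop: HT lift simple}(4) applied factorwise.

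The main obstacle will be twofold. First, one must ensure that the central isogeny $\bfG^\der \to \bfG_2^\der$ exists as a morphism of $\bbQ$-group schemes rather than merely at the adjoint level, which is precisely where the abelian-type hypothesis on $(\bfG_2,X_2)$ and Deligne's classification play an essential role in constraining $\pi_1(\bfG_2^\der)$ compatibly with $\prod_j \pi_1(\bfH_j^\sharp)$. Second, one must descend the very good property from the product $\prod_j \calG'_j$ down to the fibre product $\calG$ via Lemma \ref{lem: intersection vg}, which requires the simultaneous verification of smoothness of $\prod_j \calG'_j \cap \GSp(\Lambda)$ (via Lemma \ref{lem:sp smooth}), $R$-smoothness of $G$, and self-duality of $\Lambda$ for the total alternating form $\psi$; these are handled by the factorwise analysis outlined above.
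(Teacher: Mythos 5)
Your proof is correct and follows essentially the same route as the paper's: decompose $\bfG_2^{\ad}$ into $\bbQ$-simple factors, apply Propositions \ref{prop: HT lift simple} and \ref{prop: HT lift A/D} to each, and glue the resulting Hodge-type data via the fibre product over the symplectic multiplier characters (your $\bfG'_1\times_{\Gm}\cdots\times_{\Gm}\bfG'_s$ is the paper's $(\prod_i\bfG^{(i)})\times_{\bbG_m^r}\bbG_m$), deducing very goodness from Lemmas \ref{lem: product vg}, \ref{lem:sp smooth} and \ref{lem: intersection vg} exactly as in the text. The only small imprecisions are notational (the decomposition of $\calG_2^\circ$ should go through $\calB(G_2^{\ad},\bbQ_p)$ rather than $\calB(G_2,\bbQ_p)$, and the $2$-group claim in (1) is a consequence of $(\bfG'_j)^{\der}\cong\Res_{\rmF_j/\bbQ}\bfH_j^{\sharp}$ rather than something the simple-case propositions state directly), but these do not affect the argument.
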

	\begin{proof}
		We write $(\bfG_2^{\ad},X^{\ad})=\prod_{i=1}^r(\bfG_2^{(i)},X_2^{(i)})$ where each $\bfG^{(i)}$ is $\bbQ$-simple.  For each $i=1,\dotsc,r$ we let $(\bfG^{(i)},X^{(i)})$ be the lifting constructed in Proposition \ref{prop: HT lift simple} if $(G_2^{(i)},\mu_{h_2}^{(i)})$ is (NE), and that constructed in Proposition \ref{prop: HT lift A/D} if $(G_2^{(i)},\mu_{h_2}^{(i)})$ contains factors of exceptional type $A$ or $D$. These are equipped with Hodge embeddings 
		$$
		(\bfG^{(i)},X^{(i)})\rightarrow (\bfGSp(V^{(i)}),S^{(i),\pm})
		$$
		 which extend to very good local Hodge embeddings $(\calG^{(i)},\mu_h^{(i)})\rightarrow (\GL(\Lambda^{(i)}),\mu^{(i)}_{d_i})$ where $\Lambda^{(i)}$ is a self dual lattice in $V^{(i)}_{\bbQ_p}$ and $\calG^{(i)}$ is a stabilizer scheme lifting the corresponding factor of the parahoric $\calG_2^{\ad}$ of $G_2^{\ad}$ corresponding to $G_2$. We let $c^{(i)}: \bfG^{(i)}\rightarrow\bbG_m$ denote the symplectic multiplier homomorphism. 
		
		We set 
		$$
		\bfG'=\prod_{i=1}^r\bfG^{(i)},\ \ \ \bfG:=(\prod_{i=1}^r\bfG^{(i)})\times_{\bbG_m^r}\bbG_m,
		$$ where $\prod_{i=1}^r\bfG^{(i)}\rightarrow \prod_{i=1}^r\bbG_m$ is given by the product of $c^{(i)}$, and $\bbG_m\rightarrow \prod_{i=1}^r\bbG_m$ is the diagonal embedding.   Then $\bfG$ is an extension of $\bbG_m$ by the  group $\prod_{i=1}^r\bfG^{(i),c}$ (cf. \S\ref{para: kernel multiplier}) and hence $\bfG$ is a connected reductive group.
	 If $h\in \prod_{i=1}^r X^{(i)}$, then $h$ factors through $\bfG$ and we let $X$ be the $\bfG_{\bbR}$ conjugacy class of $h$. We thus obtain a Shimura datum $(\bfG,X)$.
	 
	 Let $V=\oplus_{i=1}^rV^{(i)}$ equipped with the alternating form given by the direct sum of those on $V^{(i)}$.
	Then we obtain a Hodge embedding $\iota:(\bfG,X)\rightarrow (\bfGSp(V),S^\pm)$, which arises from a morphism $\rho':=\bfG'\rightarrow \mathbf{GL}(V)$.  This extends to a very good Hodge embedding $(\calG',\mu')\rightarrow (\GL(\Lambda),\mu_d)$, where $\Gg'=\prod_{i=1}^r\Gg^{(i)}$ and $\Lambda=\oplus_{i=1}^r\Lambda^{(i)}$ is a self dual lattice in $V_{\bbQ_p}$. We have closed immersions $\Gg^{(i)}\hook \GSp(\La^{(i)})$ and
		\[
		\Gg'\cap \GSp(\La)={\prod_{i=1}^r\Gg^{(i)}}\times_{ \bbG_m^r} \bbG_m
\]
where, in the fiber product, $\Gg'=\prod_{i=1}^r\Gg^{(i)}\to  \bbG_m^r $ is the product of the similitudes and $\bbG_m\to  \prod_{i=1}^r{\bbG_m^r}$ is the diagonal. 
We now see   that $\Gg'\cap \GSp(\La)$ is smooth, since by Lemma \ref{lem:sp smooth} the above fiber product is smooth.

		It now follows by Lemma \ref{lem: intersection vg}, that we obtain a very good Hodge embedding $(\calG,\mu)\rightarrow(\GL(\Lambda),\mu_d)$, and so we obtain (4). Property (1) follows  since   $\bfC=\ker(\bfG^{\mathrm{sc}}\rightarrow \bfG^{\der})$ is isomorphic to a product of groups of the form $\Res_{\rmF/\bbQ}\mu_2$ for $\rmF/\bbQ$ totally real, with non-trivial factors coming from simple factors of type $D^{\bbH}$. Property  (2) follows by the corresponding property for each $(\bfG^{(i)},X^{(i)})$.
		By assumption each $Z^c_{G^{(i)}}$ is a quasi-tame torus. Thus by Lemma \ref{lem: symp multiplier kernel}, $Z_{G}$ is an extension of  $\bbG_m$ by the quasi-tame torus $\prod_{i=1}^rZ^c_{G^{(i)}}$, and hence $Z_G$ is $R$-smooth giving property (3).
		
		Conditions (1)--(4) immediately implies Assumptions (A)--(E). (A) is satisfied by definition and (E) follows from (3). (B) follows from from (1), (3) and Lemma \ref{lem: Z R-sm implies T R-sm}. (C) follows from (4) and (D) is part of condition (1).

If in addition $G_2^{\ad}$ does not contain a simple factor involving division algebras with index divisible by $p$, then we have $$X_*(Z_G^c/Z_{G^{\der}})_I=\prod_{i=1}^rX_*(Z^c_{G^{(i)}}/Z_{G^{(i),\der}})_I	$$   is torsion free. Since $X_*(G^{\ab})_I$ is an extension of $\bbZ$ by $X_*(Z_G^c/Z_{G^{\der}})_I$, it is torsion free.
	\end{proof}
\end{para}

\begin{para}\label{para: main thm}
Combining \ref{prop: HT lift general} and Proposition \ref{prop: auxiliary SV construction} we obtain the main result on the existence of local model diagrams for Shimura varieties of abelian type.

\begin{thm}\label{thm: LMD abelian type}
Assume $p>2$.	Let $(\bfG_2,X_2)$ be a Shimura datum  of abelian type and $\rmK_{2,p}^\circ=\calG_2^\circ(\bbZ_p)$ a parahoric subgroup. There exists a pro-system of $\calO_{E_2}$-schemes   $\mathscr{S}_{\rmK^\circ_{2,p}\rmK_2^p}(\bfG_2,X_2)$ with generic fibers $\Sh_{\rmK_{2,p}^\circ\rmK_2^p}(\bfG_2,X_2)$ and with finite \'etale transition maps, for varying sufficiently small $\rmK_2^p\subset \G_2({\mathbb A}^p_f)$, such that the $\calO_{E_2}$-scheme
\[	\mathscr{S}_{\rmK^\circ_{2,p}}(\bfG_2,X_2)=\varprojlim_{\rmK_2^p}\SSh_{\rmK^\circ_{2,p}\rmK_2^p}(\bfG_2,X_2)
	\]
	with $\bfG_2(\bbA^p_f)$-action extends  $\Sh_{\rmK^\circ_{2,p}}(\bfG_2,X_2)=\varprojlim_{\rmK_2^p}\Sh_{\rmK^\circ_{2,p}\rmK_2^p}(\bfG_2,X_2)$ and satisfies
	\begin{altenumerate}
		\item For  $R$  a discrete valuation ring of mixed characteristic  $(0,p)$, the map 
		$$\mathscr{S}_{\rmK^\circ_{2,p}}(\bfG_2,X_2)(R)\rightarrow\Sh_{\rmK^\circ_{2,p}}(\bfG_2,X_2)(R[1/p])$$ 
		is a bijection.
		\item For $\rmK_2^p\subset \bfG_2(\bbA_f^p)$ a sufficiently small compact open subgroup, 
	$	\SSh_{\rmK^\circ_{2,p}\rmK_2^p}(\bfG_2,X_2)	$
	is \'etale locally isomorphic to the local model $\bbM^{\mathrm{loc}}_{\calG^\circ_2,\mu_{h_2}}$.
		\item There  exists a diagram 
		\begin{equation}\label{LMDthm7.2.16}\begin{aligned}
		\xymatrix{ &\widetilde{\mathscr{S}}^{\mathrm{ad}}_{\rmK^\circ_{2,p}}(\bfG_2,X_2)\ar[dr]^q\ar[dl]_\pi&\\
				\mathscr{S}_{\rmK^\circ_{2,p}}(\bfG_2,X_2) & &\bbM^{\mathrm{loc}}_{\Gg^\circ_2, \mu_{h_2}} }
				\end{aligned}\end{equation}
				where $\pi$ is a $\bfG_2(\bbA_f^p)$-equivariant ${\calG}^{\ad}$-torsor and $q$ is $\Gg^\ad$-equivariant, smooth of relative dimension				 $\dim \bfG^{\ad},$ and 
				$\bfG_2(\bbA_f^p)$-equivariant for the trivial $\bfG_2(\bbA_f^p)$-action on $\bbM^{\mathrm{loc}}_{\calG^\circ_2, \{\mu_{h_2}\}}$. If in addition $(G_2,\mu_{h_2})$ is (NE), then $\pi$ reduces to a ${\calG}^{\ad,\circ}$-torsor.
	\end{altenumerate}
\end{thm}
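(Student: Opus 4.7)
The plan is to deduce the theorem directly from the Hodge-type lifting result of Proposition~\ref{prop: HT lift general} combined with Proposition~\ref{prop: auxiliary SV construction}. First I would invoke Proposition~\ref{prop: HT lift general} to produce a Shimura datum $(\bfG,X)$ of Hodge type, together with a central isogeny $\bfG^{\der}\to\bfG_2^{\der}$ inducing $(\bfG^{\ad},X^{\ad})\simeq (\bfG_2^{\ad},X_2^{\ad})$, satisfying Assumptions (A)--(E) of \S\ref{subsec: integral models}; in particular, there is a Hodge embedding $\iota\colon(\bfG,X)\hookrightarrow(\mathbf{GSp}(V),S^\pm)$ extending to a very good integral Hodge embedding $(\calG,\mu_h)\hookrightarrow(\GL(\Lambda),\mu_d)$ for a self-dual lattice $\Lambda$ and a stabilizer group scheme $\calG$ for $G$ lifting $\calG_2$. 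Note that the parahoric $\calG^\circ$ of $G$ associated to $\calG$ maps to the parahoric $\calG^{\ad,\circ}$ of $G^{\ad}\simeq G_2^{\ad}$, which then lifts (via the isogeny $G_2\to G_2^{\ad}$) to $\calG_2^\circ$, so the level data are properly aligned.

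Next I would apply Theorem~\ref{thm: main SV Hodge} to the Hodge-type datum $(\bfG,X)$ to obtain the integral models $\SSh_{\rmK^\circ}(\bfG,X)$, together with their local model diagram. Proposition~\ref{prop: auxiliary SV construction} then upgrades this to the abelian-type situation: it produces the required pro-system $\mathscr{S}_{\rmK^\circ_{2,p}\rmK_2^p}(\bfG_2,X_2)$ over $\calO_{E'}$ (where $E'=\bfE'_{v'}$) equipped with the diagram~\eqref{eqn: local model diagram abelian type}. Property (1) of the theorem is exactly part (1) of Proposition~\ref{prop: auxiliary SV construction} (it is essentially the extension property for the tower and follows from N\'eron--Ogg--Shafarevich). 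The diagram in part (3) of the proposition, together with the identification $\bbM^{\mathrm{loc}}_{\calG,\mu_h}\otimes_{\calO_E}\calO_{E'}\simeq \bbM^{\mathrm{loc}}_{\calG_2^\circ,\mu_{h_2}}\otimes_{\calO_{E_2}}\calO_{E'}$ (which is part of the proof of Proposition~\ref{prop: auxiliary SV construction} via the adjoint isomorphism of local models and full-faithfulness of the diamond functor), gives the local model diagram in our part (3), and then the \'etale local isomorphism of part (2) follows formally from the diagram since $\pi$ is a $\calG^{\ad}$-torsor and $q$ is smooth of the correct relative dimension.

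For the descent of the model from $\calO_{E'}$ back to $\calO_{E_2}$, I would use condition (2) of Proposition~\ref{prop: HT lift general}, which guarantees that any prime $v_2\mid p$ of $\bfE_2$ splits completely in $\bfE'/\bfE_2$; therefore the completion of $\bfE'$ at any prime above $v_2$ equals $E_2$, so the $\calO_{E'}$-scheme above is canonically an $\calO_{E_2}$-scheme. For the refinement in the (NE) case, the last sentence of Proposition~\ref{prop: HT lift general} asserts that $X_*(G^{\ab})_I$ is torsion free; by \cite[Rem.~11]{HR}, this forces $\calG=\calG^\circ$, and then the final clause of part (3) of Proposition~\ref{prop: auxiliary SV construction} shows that $\pi$ reduces to a $\calG^{\ad,\circ}$-torsor, giving the refined diagram~\eqref{LMDthm7.2.16}.

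The main obstacle has already been absorbed into the statements we invoke: namely, the construction of a sufficiently good Hodge-type lift (Proposition~\ref{prop: HT lift general}) with a \emph{very good} integral Hodge embedding, which is precisely what enables assumption (C) of Theorem~\ref{thm: main SV Hodge} to hold. The bulk of the paper (\S\ref{s:ParahoricFixed}--\S\ref{sec: very good embeddings}) is devoted to producing such embeddings. At the level of this final theorem, the only remaining subtlety is matching up the parahoric data and the reflex field between the Hodge-type lift and the abelian-type target, which is controlled by conditions (1)--(4) of Proposition~\ref{prop: HT lift general}.
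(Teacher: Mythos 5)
Your proposal follows the same route as the paper: invoke Proposition~\ref{prop: HT lift general} to produce a Hodge-type lift $(\bfG,X)$ satisfying (A)--(E), then apply Proposition~\ref{prop: auxiliary SV construction} to get the integral models and the local model diagram, identify the reflex field completions via condition (2) of Proposition~\ref{prop: HT lift general}, and deduce (2) formally from (3). That all matches.

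The one spot that is imprecise is the final (NE) refinement. You claim that ``$X_*(G^{\ab})_I$ torsion free, by [HR, Rem.~11], forces $\calG=\calG^\circ$.'' That reference is about $\pi_1(G)_I$ being torsion free, which is a stronger condition than $X_*(G^{\ab})_I$ torsion free (there is an exact sequence $\pi_1(G^{\der})_I\to\pi_1(G)_I\to X_*(G^{\ab})_I\to 0$, and the left-hand group contributes torsion when $\pi_1(G^{\der})\neq 1$). In fact, $X_*(G^{\ab})_I$ torsion free does \emph{not} by itself force $\calG=\calG^\circ$ for an arbitrary choice of $\bx$; what the paper does is invoke the argument in the proof of \cite[Thm.~4.6.23]{KP}, which uses $X_*(G^{\ab})_I$ torsion free together with a careful choice of $\bx\in\calB(G,\bbQ_p)$ \emph{lifting} $\bx^{\ad}$, arranged so that the resulting stabilizer is already a parahoric. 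You should replace your appeal to [HR, Rem.~11] with this KP argument about the choice of $\bx$. The rest of the proof is sound and matches the paper's logic.
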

\begin{proof}
	Proposition \ref{prop: HT lift general} implies that we may choose $(\bfG,X)$ satisfying the assumptions of Proposition \ref{prop: auxiliary SV construction}, and so we obtain (1) and the first part of (3). If $(G_2,\mu_{h_2})$ is (NE), then we may choose $(\bfG,X)$ such that $X_*(G^{\ab})_I$ is torsion-free. The argument in the proof of \cite[Thm. 4.6.23]{KP} then shows that we may choose $\bx\in \calB(G,\bbQ_p)$ lifting $\bx^{\ad}$ such that $\calG=\calG^\circ$, and so the ``in addition" part follows. Part (2) follows formally from (3).
\end{proof}
\end{para}

\begin{para}
 Using recent work of Daniels--van Hoften--Kim--Zhang \cite{DvHKZ}, we can further relax the (NE) assumption in Theorem \ref{thm: LMD abelian type}. Since \emph{op. cit.} uses the theory of $p$-adic shtukas, we state this refinement as a separate corollary to make clear what can be done without resorting to this.
\begin{cor}\label{cor: torsor for connected gp} The $\calG^{\ad}$-torsor in Theorem \ref{thm: LMD abelian type} (3) can be refined to a $\calG^{\ad,\circ}$-torsor. This fits into a $\calG^{\ad,\circ}$-equivariant local model diagram refining (\ref{LMDthm7.2.16}). 
	
\end{cor}

\begin{proof}We choose $(\bfG,X)$ satisfying the assumptions of Proposition \ref{prop: auxiliary SV construction} as above. The local model diagram in Theorem \ref{thm: main SV Hodge} induces  a $\calG$-equivariant local model diagram for  $\SSh_{\rmK^\circ_{p}}(\bfG,X)$ by pullback.  By \cite[Proposition 4.3.3]{DvHKZ}, the corresponding $\calG$-torsor admits a reduction to a $\calG^\circ$-torsor, and hence we obtain a corresponding diagram	\begin{equation}\label{LMDthm Cor 7.2.24}\begin{aligned}
	\xymatrix{ &\widetilde{\mathscr{S}}^{\mathrm{ad}}_{\rmK^\circ_{p}}(\bfG,X)\ar[dr]^{q^\circ}\ar[dl]_{\pi^\circ}&\\
		\mathscr{S}_{\rmK^\circ_{p}}(\bfG,X) & &\bbM^{\mathrm{loc}}_{\Gg^\circ, \mu_{h}} } 
	\end{aligned}\end{equation}
	with $\pi^\circ$ a $\calG^{\circ}$-torsor. The construction in Proposition \ref{prop: auxiliary SV construction} then gives the desired refinement to a $\calG^{\ad,\circ}$-torsor for $\widetilde{\mathscr{S}}^{\mathrm{ad}}_{\rmK^\circ_{2,p}}(\bfG_2,X_2)\rightarrow \mathscr{S}^{\mathrm{ad}}_{\rmK^\circ_{2,p}}(\bfG_2,X_2)$.
\end{proof}
 \end{para}

 \subsection{Errata}\label{ss:Errata}
 
\begin{para} 1) Correction to the proof of \cite[Thm. 4.2.7]{KP}: The morphism $q^{\rm loc}$ is not a $\Gg$-torsor as stated there: Instead, it is isomorphic to the action morphism
$\Gg\times {\rm M}^{\loc}_{G,X}\to {\rm M}^{\loc}_{G, X}$. The action morphism is smooth since it is the composition of 
the isomorphism $\Gg\times {\rm M}^{\loc}_{G,X}\xrightarrow{\sim} \Gg\times {\rm M}^{\loc}_{G,X}$ given by $(g, m)\mapsto (g, g\cdot m)$ with the projection 
$\Gg\times {\rm M}^{\loc}_{G,X}\to {\rm M}^{\loc}_{G,X}$; the rest of the proof is the same.

2) Correction to the proof of \cite[Lem. 3.1.17]{KP}: The ring $\whW(A)[1/p]=\whW(A)\otimes_{\Z_p}\Q_p$  is not complete for the topology $\tau$ defined there and so proving $p^{-m}\phi^m(x)\to 0$ in $\tau$ is not enough to complete the proof (we thank M. Hoff for pointing this out). However, as we will show, $\whW(A)[1/p]$ is complete and separated for the $p$-adic topology and for $x\in \whW(\frakM_A)$, $p^{-m}\phi^m(x)\to 0$, in the $p$-adic topology. This is enough to complete the proof.  

Following  \cite[\S 2]{ZinkWindows} set $\N=\frakM_A$ which is a $p$-adic ring with no unit. Since $\frakM_A^N\subset pA$, for all $a\in \N/p\N$ we have $a^{N+1}=0$, and $\N$ is ``modulo $p$ bounded nilpotent'' in the terminology of \emph{loc. cit.}.  
We also have
\[
\whW(\calN)=\varprojlim_n \whW(\frakM_A/\frakM_A^n)=\varprojlim_n \whW(\calN/p^n\calN)\subset W(\calN).
\]
By \cite[Prop. 2.3, 2.4]{ZinkWindows}, $\whW(\N)$ is closed in $W(\N)$ and is $p$-adically complete and separated.
Since $\whW(A)=W(k)\oplus \whW(\N)
$ and $\whW(A)$ is $p$-torsion free, it follows that $\whW(A)[1/p]$ is $p$-adically complete and separated.

We now show that for $x\in \whW(\N)[1/p]$, $p^{-m}\phi^m(x)\to 0$, in the $p$-adic topology of $\whW(\N)[1/p]$. 
By \cite[Lem. 2.2]{ZinkWindows} the group $\whW(\N/p\N)$ is annihilated by a power of $p$. Hence, $p^a\cdot x\in \whW(p\N)$, for $a\gg 0$, and it is enough to assume $x\in  \whW(p\N)$. 
Since $p>2$ we can use Zink's logarithmic coordinates \cite[p. 35]{ZinkDisplay},   coming from the divided power structure on $p\N$:
There is a group homomorphism
\[
\log: \whW(p\N)\xrightarrow{\sim}     \widehat\bigoplus_{i\geq 0}   p\N \subset  \prod_{i\geq 0}  p\N,
\]
with  $\widehat\bigoplus $ signifying the subgroup of the product consisting of $z=[z_0,  \ldots , z_i,\ldots ]$, for which $z_i\to 0$, $p$-adically
(\cite{ZinkWindows}). By \cite[(49), p. 35]{ZinkDisplay}  the action of $p^{-m}\phi^m$  
on the target of $\log$ is given by 
\[
(p^{-m}\phi^m)([z_0, z_1,\ldots , z_i,\ldots ])=[z_m, z_{m+1},\ldots , z_{m+i},\ldots  ].
\]
Set $z=\log(x)$. Since $z_i\to 0$ in the $p$-adic topology of $\N$, this gives $p^{-m}\phi^m(z)\to 0$ in the $p$-adic topology of $\widehat\bigoplus_{i\geq 0} p\N\subset \prod_{i\geq 0} p\N$ and so $p^{-m}\phi^m(x)\to 0$  in the $p$-adic topology of $\whW(p\N)$.

3) Correction to \cite[Lemma 4.6.13]{KP} and \cite[Corollary 4.6.15]{KP}. The description of $\Sh_{\rmK_2^\circ}(G_2,X_2)$ in \cite[Lemma 4.6.13]{KP} is not correct; we thank Yu Luo and Peihang Wu for pointing this out. In the statement, the connected Shimura varieties appearing in the disjoint union should have different levels given by the conjugate of $\rmK_p^\circ$ by the element $j\in J$; here $J\subset G_2(\bbQ_p)$ is as in \cite[\S4.6.12]{KP}. More precisely, the correct description is as follows.

	 For each $j\in J$, let $\Sh_{\rmK_p^\circ}(G,X)^{j,+}$ denote the connected component of $\Sh_{\rmK_p^\circ}(G,X)$ containing $(j,X^+)$. Note that $\Sh_{\rmK_p^\circ}(G,X)^{j,+}$ is isomorphic to  $\Sh_{j\rmK_p^\circ j^{-1}}(G,X)^+$ and is independent of the choice of representative $j$.  Then taking the quotient of the isomorphism $$\Sh(G_2,X_2)\cong [\Sh(G,X)^+\times \mathscr{A}(G_2)]/\mathscr{A}(G)^\circ $$ by $\rmK_{2,p}^\circ$ gives  \begin{align*}
\Sh_{\rmK_{2,p}^\circ}(G_2,X_2)&\cong [\Sh(G,X)^+\times \mathscr{A}(G_2)/\rmK_{2,p}^\circ]/\mathscr{A}(G)^\circ) \\
&\cong \coprod_{j \in J}    [\Sh(G,X)^+\times j\mathscr{A}(G_{2,\bbZ_{(p)}})]/\tilde{\mathscr{A}}(G_{\bbZ_{(p)}})^\circ\\ 
&\cong\coprod_{j\in J}[\Sh_{\rmK_{p}^\circ}(G,X)^{j,+}\times \mathscr{A}(G_{2,\bbZ_{(p)}})]/\mathscr{A}(G_{\bbZ_{(p)}})^\circ. \end{align*}
Here the second isomorphism follows from the definition of $J$ and the last isomorphism follows from the fact that $$\mathrm{Stab}_{\tilde{\mathscr{A}}(G_{\bbZ_{(p)}})^\circ}(j\mathscr{A}(G_{2,\bbZ_{(p)}}))=j\rmK_p^\circ j^{-1}\cap \tilde{\mathscr{A}}(G_{\bbZ_{(p)}})^\circ$$ and  the fact that $$\Sh_{j\rmK_p^\circ j^{-1}}(G,X)^+\cong \Sh(G,X)^+/j\rmK_p^\circ j^{-1}\cap \tilde{\mathscr{A}}(G_{\bbZ_{(p)}})^\circ.$$

The corresponding construction of the integral model in \cite[Corollary 4.6.15]{KP} should then be $$\mathscr{S}_{\rmK_{2,p}^\circ}(G_2,X_2)=\coprod_{j\in J}[\mathscr{S}_{\rmK_{p}^\circ}(G,X)^{j,+}\times \mathscr{A}(G_{2,\bbZ_{(p)}})]/\mathscr{A}(G_{\bbZ_{(p)}})^\circ,$$ where $\mathscr{S}_{\rmK_{p}^\circ}(G,X)^{j,+}$ is the $\calO_{E^p}$-scheme given by the Zariski closure of $\Sh_{\rmK_p^\circ}(G,X)^{j,+}$ in 
$\mathscr{S}_{\rmK_p^\circ}(G,X)_{\calO_{E^p}}$ and $E^p={\rm E}^p_v$. The same proof as \cite[Corollary 4.6.15]{KP} now works, noting that this scheme has the correct generic fiber.
 \end{para}
 \begin{para}
 The assumption that $(\Gg,\mu)\hookrightarrow (\GL(\La),\mu_d)$ is very good as in Definition \ref{def:vgood},
 has to be added to the statements of the main results of \cite{PCan}.
More specifically, this condition has to be assumed for the constructions in \S 4.5, in Prop. 4.5.3, and for the results in \S 8 of \cite{PCan}. 
(\cite[Prop. 4.5.3]{PCan} asserts that the isomorphism $c$ respects the tensors, but the proof is based on the erroneous construction of $c$ in \cite[Lem. 3.1.9]{KP}; see the proof of Lemma \ref{319}.)
In particular, the independence of \cite[Thm. 8.1.6]{PCan} is for integral models constructed using different very good Hodge embeddings.
 \end{para}

 \end{document}